\documentclass{article}

\usepackage{amsthm}
\usepackage{amsmath}
\usepackage{amssymb}
\usepackage{mathrsfs}
\usepackage{graphicx}
\usepackage{hyperref}
\usepackage{bm}
\usepackage{dsfont}
\usepackage{enumitem}
\usepackage{amsfonts}
\usepackage{textcomp}
\usepackage{MnSymbol}
\usepackage{geometry}
\geometry{hmargin=3cm,vmargin=3cm}

\usepackage[english]{babel}
\usepackage[T1]{fontenc}

\usepackage{caption}
\usepackage{graphicx}
\graphicspath{ {./images/} }
\usepackage{pst-knot}
\usepackage{tikz}
\usetikzlibrary{patterns}
\usepackage[utf8]{inputenc}

\newtheorem{theo}{Theorem}[section]

\newtheorem{lem}[theo]{Lemma}

\newtheorem{definition}[theo]{Definition}

\newtheorem{proposition}[theo]{Proposition}

\newtheorem{corollary}[theo]{Corollary}

\newtheorem{conjecture}[theo]{Conjecture}

\newtheorem{ex}[theo]{Example}

\title{Strong bolicity and the Baum-Connes conjecture for relatively hyperbolic groups}
\author{Hermès Lajoinie-Dodel \footnote{Hermès Lajoinie-Dodel, lajoinie@math.uni-bielefeld.de, Universität Bielefeld, Germany}}
\date{December 2025}

\begin{document}
\maketitle

\begin{center}
\begin{minipage}{0.8\textwidth}
\textsc{Abstract.}  We construct a \emph{strongly bolic metric} for a certain class of \emph{relatively hyperbolic groups}, which includes those with \emph{$CAT(0)$ parabolics and virtually abelian parabolics}. If we further assume that the parabolics satisfy $(RD)$, applying a theorem of Lafforgue, we deduce the Baum-Connes conjecture for these groups. One of the key ingredients in our construction is the use of random coset representatives called \emph{masks}, developed by Chatterji and Dahmani.

\end{minipage}
\end{center}

\let\thefootnote\relax\footnotetext{{\bf Keywords} : Relatively hyperbolic groups, Baum-Connes conjecture, strong bolicity, Gromov hyperbolicity. {\bf AMS codes}: 20F65, 20F67, 58B34.}

\section{Introduction} \label{section : Intro }

Originally formulated by Baum and Connes in 1982 in a paper that was eventually published in 2000 in \cite{BaumConnes1}, and later revisited by Baum, Connes, and Higson in 1994 in \cite{BaumConnes2Nigel}, the Baum–Connes conjecture establishes a bridge between commutative and noncommutative geometry. More precisely, it states that for any locally compact group 
$G$, the $G$-equivariant $K$-homology of the classifying space for proper $G$-actions is isomorphic to the $K$-theory of the group reduced $C^{\ast}$-algebra. The particular case of discrete groups is already very interesting and highly nontrivial.

Proving the Baum–Connes conjecture for a given group $G$ implies several famous conjectures in topology, geometry, and functional analysis. For instance, the injectivity of the Baum–Connes map implies the celebrated Novikov conjecture \cite{LandInjBcImpliesNovikov}, while its surjectivity implies according to \cite{AparicioJulgValetteBaumConnesConjecture} the Kaplansky–Kadison conjecture, which states that when 
$G$ is torsion-free, its reduced $C^{\ast}$-algebra contains no nontrivial idempotent.\\

According to Higson and Kasparov in \cite{HigsonKasparovHaagerupimpliqueBaum-Connes}, the Haagerup property implies the Baum-Connes conjecture. Consequently, the conjecture holds for groups that act properly on a tree, or more generally on a median metric space \cite{Tmedianviewpoint}. However, this criterion does not allow one to prove it for infinite groups having Kazhdan Property $(T)$. The first examples of infinite groups with Property $(T)$ satisfying the Baum–Connes conjecture are due to the remarkable work of Vincent Lafforgue, notably through the following theorem.
\begin{theo}\cite{LafforgueBaumConnesConjecture}\label{theo : Baum-Connes=Rapid decay + Strongly Bolic}

Let $G$ be a finitely generated group satisfying the following.

\begin{itemize}
    \item $G$ has the rapid decay property.

    \item $G$ acts properly by isometries on a uniformly locally finite, strongly bolic, weakly geodesic metric space.
\end{itemize}

Then $G$ satisfies the Baum-Connes conjecture.

\end{theo}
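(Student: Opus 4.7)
The plan is to follow the strategy Lafforgue developed to handle groups with property $(T)$: produce a $\gamma$-element in a suitable bivariant K-theory, prove it equals the identity, and then descend this to the assembly map. Since property $(T)$ obstructs any such construction within classical (Hilbert-space) Kasparov KK-theory, I would work throughout in a Banach version $KK^{\mathrm{ban}}$ in which representations may take values on Banach spaces of controlled exponential growth. The rapid decay hypothesis is precisely what makes the relevant Banach algebras behave K-theoretically like $C^{\ast}_r(G)$, so that an isomorphism at the Banach level transports back to $C^{\ast}_r(G)$.

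First, use $(RD)$ to introduce a Jolissaint-type Sobolev completion $H^s_\ell(G)$ of $\mathbb{C}[G]$, dense in $C^{\ast}_r(G)$ and stable under holomorphic functional calculus, so that $K_\ast(H^s_\ell(G)) \cong K_\ast(C^{\ast}_r(G))$. Reduce the problem to an assembly statement valued in $K_\ast(H^s_\ell(G))$. Second, use the proper isometric $G$-action on the uniformly locally finite, weakly geodesic, strongly bolic space $X$ to construct a Dirac-type class $\alpha \in KK^{\mathrm{ban}}$. Uniform local finiteness supplies the compact support conditions, weak geodesicity provides the approximate midpoints used to write down the relevant operators, and strong bolicity, which strengthens Gromov hyperbolicity by imposing quantitative midpoint behaviour at all scales, yields the Hilbert-Schmidt-type estimates on commutators that make $\alpha$ a bona fide bounded class.

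Third, build a dual-Dirac class $\beta$, for instance via a Mineyev-style homological bicombing of $X$ or through a Clifford-like Banach bundle adapted to the bolic structure; the quantitative control from strong bolicity again governs the growth of the associated cocycles. The main obstacle will be establishing the identity $\beta \otimes_{\mathcal{A}} \alpha = 1$ in $KK^{\mathrm{ban}}(\mathbb{C},\mathbb{C})$, which is what genuinely requires all three hypotheses at once: one must exhibit a homotopy interpolating between $\beta \otimes \alpha$ and the identity whose intermediate representations have exponential growth tame enough to be absorbed by the rapid decay weights on $H^s_\ell(G)$. Once this "$\gamma = 1$" statement is in hand, a standard Kasparov-style descent argument promotes it to the desired isomorphism on the Baum-Connes assembly map.
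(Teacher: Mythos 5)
The paper does not prove this theorem: it is quoted verbatim as a result of Lafforgue (\cite{LafforgueBaumConnesConjecture}) and used as a black box. There is therefore no internal proof against which to compare your sketch. That said, your outline is a broadly correct high-level description of how Lafforgue actually proves it: build a Jolissaint/Schwartz-type unconditional completion of $\mathbb{C}[G]$ whose $K$-theory agrees with that of $C^{\ast}_r(G)$ thanks to $(RD)$; work in a Banach version of bivariant $K$-theory where property $(T)$ is no longer an obstruction; use the proper isometric action on the strongly bolic, weakly geodesic, uniformly locally finite space to construct Dirac and dual-Dirac classes; prove the ``$\gamma = 1$'' identity there; and descend to conclude that the assembly map is an isomorphism. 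As a summary of the strategy this is faithful, though it is at the level of a programme rather than a proof.

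One genuine misstatement should be flagged: strong bolicity does \emph{not} strengthen Gromov hyperbolicity. Euclidean spaces of dimension at least two, and more generally $CAT(0)$ spaces and $L^p$ spaces for $1 < p < \infty$, are strongly bolic but not hyperbolic; conversely, the word metric on a hyperbolic group is generally \emph{not} strongly bolic (the entire point of Mineyev--Yu, and of Section~\ref{section : bolicité forte pour gp hyperbolique} of this paper, is to replace it by one that is). Strong bolicity is a smoothness-of-distance ($B1$) plus convexity-of-balls ($B2'$) condition; it is orthogonal to curvature sign. This slip does not invalidate the outline of the argument, but it would mislead a reader as to why the hypothesis is needed.
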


The $(RD)$ property is a property of the group reduced $C^{\ast}$-algebra. It holds when the operator norm can be controlled by a simpler norm. In particular, this property is satisfied, among others, by hyperbolic groups according to results due to de la Harpe \cite{delaHarpeRD} and Jolissaint \cite{JolissaintRD}. In our work, we focus on the second point of the theorem.

Bolic metric spaces were defined by Kasparov and Skandalis in \cite{KasparovSkandalisBolic1}, \cite{KasparovSkandalisBolic2} in connection with the Novikov conjecture, and were later strengthened by Lafforgue in the context of Theorem \ref{theo : Baum-Connes=Rapid decay + Strongly Bolic}. A metric space is said to be strongly bolic when its balls satisfy a smoothness and convexity condition. In most cases, strongly bolic metrics take non-integer values, which prevents them from being realized as graph metrics. Examples of such metric spaces include $CAT(0)$ spaces as well as $L^{p}$
 spaces for $1<p<\infty$.

Using Theorem \ref{theo : Baum-Connes=Rapid decay + Strongly Bolic}, Vincent Lafforgue showed that “classical” hyperbolic groups, i.e. uniform lattices in simple Lie groups of rank 1 satisfy the Baum–Connes conjecture. Therefore, the uniform lattices in
$Sp(n,1)$ constitute the first example of infinite groups with Property $(T)$ satisfying the Baum–Connes conjecture.

This result was later generalized by Mineyev and Yu in \cite{MineyevYuStrongBolicityhypergroups} to all hyperbolic groups, proving that they satisfy the Baum–Connes conjecture, although it is unknown whether these groups are $CAT(0)$.\\

A natural question then is to ask to which generalizations of hyperbolic groups this result can be extended. In this paper, we focus on relatively hyperbolic groups. Relative hyperbolicity was defined by Gromov in 1987 in \cite{Gromovhypgroups}.  It is a geometric generalization to hyperbolic groups for a larger class of groups.  The rough idea is that a group $G$ is hyperbolic relative to a family of subgroups $\mathcal{P}$ if the geometry of $G$ is hyperbolic outside of $\mathcal{P}$. Among the many definitions of these groups, we will here use only Bowditch’s definition \cite{Bowditchrelhyp}, which involves the coned-off graph and requires that this graph be hyperbolic and fine.

According to \cite{DrutuSapirRdrelivementHyp}, the $(RD)$ property is well understood for relatively hyperbolic groups. Indeed, a group that is hyperbolic relative to groups satisfying the 
$(RD)$ property also satisfies 
$(RD)$. Therefore, we focus on the second point of Theorem \ref{theo : Baum-Connes=Rapid decay + Strongly Bolic}, namely the construction of a strongly bolic metric. The strongest result we can hope for is for groups hyperbolic relatively to subgroups with a strongly bolic action.

In this article, we provide a partial answer to this question in the case where the parabolic subgroups are $CAT(0)$.

\begin{theo}\label{theo: main theo}

Let 
$G$ be a group hyperbolic relative to $CAT(0)$ subgroups. Then 
$G$ admits a metric $\hat{d}$ satisfying the following properties:

\begin{itemize}
\item 
$\hat{d}$ is invariant under the action of $G$, i.e., 
$\hat{d}(g.x,g.y)=\hat{d}(x,y)$ for all $g\in G$ and $x,y \in G$,

\item $\hat{d}$ is uniformly locally finite and quasi-isometric to the word metric,

\item the metric space $(G,\hat{d})$ is weakly geodesic and strongly bolic.

\end{itemize}
    
\end{theo}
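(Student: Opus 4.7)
The overall strategy is to treat the ``hyperbolic part'' of $G$ and the ``parabolic part'' separately, then glue the two metrics in a way that is forced to be $G$-invariant by symmetrising over the random coset representatives of Chatterji and Dahmani. On the hyperbolic side I use Bowditch's coned-off Cayley graph $\hat{\Gamma}$, which is hyperbolic and fine, and feed it into a Mineyev--Yu-type construction to obtain a $G$-invariant, strongly bolic, weakly geodesic metric $\hat d_{\mathrm{hyp}}$ on the vertex set of $\hat\Gamma$. On the parabolic side I use, for each $P \in \mathcal{P}$, a $G$-invariant $CAT(0)$ metric $d_P$ on $P$; these are already strongly bolic by a classical theorem of Lafforgue.

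To combine them I would define, for $x,y \in G$, a path in $\hat{\Gamma}$ from $x$ to $y$ as a sequence that alternates between ``hyperbolic transitions'' and ``visits'' to cone points representing parabolic cosets $g_i P_i$. Each visit to a cone point has an entry letter $a_i$ and an exit letter $b_i$ in the coset $g_i P_i$, and the natural cost of such a visit should be $d_{P_i}(g_i^{-1}a_i, g_i^{-1}b_i)$. The problem with defining $\hat d(x,y)$ directly as the infimum of such costs is that the entry and exit letters depend on an arbitrary choice of coset representative. This is where masks come in: Chatterji--Dahmani provide a family of probability measures $\mu_{gP}$ on $gP$, $G$-equivariant in the sense that $g_*\mu_{hP} = \mu_{ghP}$, and with enough integrability. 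Using a mask one defines the cost of traversing a coset as the expectation, under $\mu_{gP}$, of the $CAT(0)$ distance to a random reference point, so that the resulting metric on $G$ becomes automatically $G$-invariant. The global metric $\hat d$ is then obtained by minimising the sum of hyperbolic costs (from $\hat d_{\mathrm{hyp}}$) and expected parabolic costs over all paths.

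The remaining work breaks into four verifications. Quasi-isometry to the word metric and uniform local finiteness follow from the analogous properties for $\hat d_{\mathrm{hyp}}$, the control on $d_P$ within each coset, and the moment bounds on the masks inherited from \cite{Bowditchrelhyp} and the $CAT(0)$ assumption. $G$-invariance is built into the construction via the equivariance of the masks. Weak geodesicity is obtained by concatenating almost-geodesics in $\hat\Gamma$ with $CAT(0)$ geodesics inside each parabolic coset. The serious work is strong bolicity: one must check Lafforgue's midpoint/convexity estimate for arbitrary pairs $(x,y)$ and $(x',y')$ in $G$. The strategy is to split any quasi-midpoint configuration into its projection on $\hat\Gamma$ and its restriction to each parabolic coset encountered, invoke Mineyev--Yu bolicity on the former and $CAT(0)$ bolicity on the latter, and then check that the ``transition regions'' between hyperbolic and parabolic segments do not destroy the estimate, using fineness of $\hat\Gamma$ to ensure that only finitely many cosets interfere with a given midpoint.

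The main obstacle I expect is precisely this last point: controlling the interaction of the two metrics at the interfaces between hyperbolic transitions and parabolic visits. Strong bolicity is a delicate quantitative convexity condition, and naive gluings typically fail it because midpoints can jump across the interface. The role of the masks is crucial here, as averaging smooths out the entry/exit positions and allows one to reduce bolicity of $\hat d$ to bolicity of $\hat d_{\mathrm{hyp}}$ plus bolicity of the $d_P$, up to an error that decays thanks to $\delta$-hyperbolicity of $\hat\Gamma$ and fineness. Establishing this reduction rigorously, with quantitative control on all constants, is where the bulk of the technical effort of the paper will lie.
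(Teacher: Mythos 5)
Your proposal correctly identifies two ingredients of the paper (the coned-off graph and the Chatterji--Dahmani masks) but the architecture you build around them is different from the paper's, and I believe it has a gap that would be hard to repair.

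The first problem is the step where you ``feed'' the coned-off graph $\hat\Gamma$ into ``a Mineyev--Yu-type construction'' to obtain a strongly bolic metric $\hat d_{\mathrm{hyp}}$. Mineyev--Yu's theorem produces a strongly bolic metric on a \emph{uniformly locally finite} hyperbolic graph; when the parabolics are infinite, $\hat\Gamma$ is \emph{not} locally finite at the cone points, so the theorem does not apply. This is not a minor technicality: a large part of the paper's effort (cones, masks, the finiteness properties of Proposition~\ref{cardinal cones} and Corollary~\ref{corollary : cardinal des points paraboliques avec distance entre les barycentres des masks nest pas bien controlee}) is precisely a substitute for the local finiteness that a Mineyev--Yu argument would require. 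In fact the paper never constructs any strongly bolic metric on $\hat\Gamma$ itself; the coned-off metric is used only as an auxiliary tool to control angles and distances.

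The second and more structural problem is the gluing by path-minimisation. You propose to define $\hat d(x,y)$ as an infimum, over paths, of a sum of ``hyperbolic costs'' and ``parabolic costs''. You yourself note that strong bolicity is ``a delicate quantitative convexity condition'' which ``naive gluings typically fail''; I would go further and say that the strong-$B1$ inequality is an \emph{equality-type} estimate for a four-point configuration, and infima over paths are extremely hard to control with that precision because the optimal paths for the four distances can look nothing alike. The paper's construction avoids this entirely: it does not take an infimum over paths at all. It defines, for \emph{every} vertex $a$ of $X_c$ (not just the cone points), a pseudo-metric $d_a(x,y)$ built from the masks $\mu_x(a)$, $\mu_y(a)$ (a barycentric $CAT(0)$ distance when $a$ has infinite valence, and the $\ell^1$-difference of masks when $a$ has finite valence), and then sets
$$D(x,y)=\sum_{a\in X_c}\theta\bigl(d_a(x,y)\bigr),$$
where $\theta$ behaves like $t^{P}$ for small $t$ and like a linear function for large $t$ (Definition~\ref{definition: la fonction theta}). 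It is the \emph{additive} structure of this formula, combined with the exponential decay of $d_a(x,y)$ in the distance from $a$ to a geodesic $[x,y]_c$ (Theorem~\ref{theo: theo stylé sur les barycentres et les masques}), that allows strong-$B1$ to be verified \emph{termwise}: far-away $a$ contribute exponentially small errors (as in the hyperbolic case, Section~\ref{section : bolicité forte pour gp hyperbolique}), and the finitely many $a$ on the geodesic with large $d_a$ are handled by the strong-$B1$ property of the $CAT(0)$ parabolics because $\theta$ is linear there. Your proposal also uses the masks in a different and weaker way (as random reference points for entry/exit letters) than the paper does (where $\mu_x(a)$ depends on both the source $x$ and the vertex $a$ through the flow construction, and is the core object on which $d_a$ is built).

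In short: the two gaps are (1) Mineyev--Yu cannot be invoked on $\hat\Gamma$, and (2) the infimum-over-paths gluing replaces the paper's summation-over-vertices, which is exactly the mechanism that makes the strong-$B1$ estimate tractable. To recover the argument you would need to abandon the gluing and instead sum the mask-based pseudo-metrics over all vertices, with an interpolating function such as $\theta$ to reconcile the ``hyperbolic'' and ``parabolic'' regimes.
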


As a consequence, we get the following corollary.

\begin{corollary}
Let $G$ be a group hyperbolic relatively to $CAT(0)$ subgroups with the $(RD)$ property. Then $G$ satisfies the Baum-Connes conjecture.
    
\end{corollary}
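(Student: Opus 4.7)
The corollary is purely a hypothesis-checking exercise for Lafforgue's Theorem~\ref{theo : Baum-Connes=Rapid decay + Strongly Bolic}: I need to verify, for the group $G$, both $(RD)$ and the existence of a proper isometric action on a uniformly locally finite, weakly geodesic, strongly bolic metric space. The real content sits entirely in Theorem~\ref{theo: main theo} and in a known transfer principle for $(RD)$ along relative hyperbolicity; the corollary itself is a formal combination.

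\medskip

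For the $(RD)$ hypothesis, I invoke the theorem of Drutu-Sapir from \cite{DrutuSapirRdrelivementHyp}, which states that a finitely generated group hyperbolic relative to a family of subgroups each satisfying $(RD)$ itself satisfies $(RD)$. This applies verbatim to $G$ since its parabolic subgroups are assumed to have $(RD)$; finite generation of $G$ is automatic from relative hyperbolicity with finitely generated parabolics, so nothing extra is needed here.

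\medskip

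For the geometric hypothesis, I use the metric $\hat{d}$ produced by Theorem~\ref{theo: main theo} and consider the left action of $G$ on itself. Since $\hat{d}$ is $G$-invariant the action is by isometries, and the left regular action of a discrete group on itself is free, hence proper. The space $(G,\hat{d})$ is uniformly locally finite, weakly geodesic, and strongly bolic by the remaining bullet points of Theorem~\ref{theo: main theo}. Both hypotheses of Theorem~\ref{theo : Baum-Connes=Rapid decay + Strongly Bolic} are therefore met, and we conclude that $G$ satisfies the Baum-Connes conjecture. There is no genuine obstacle at this stage: all the difficulty has been absorbed into the construction of $\hat{d}$ carried out in Theorem~\ref{theo: main theo}.
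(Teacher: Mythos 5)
Your argument is correct and is essentially the paper's own proof: take the strongly bolic, weakly geodesic, uniformly locally finite, $G$-invariant metric $\hat d$ from Theorem~\ref{theo: main theo}, get $(RD)$ for $G$ from the Drutu--Sapir transfer \cite{DrutuSapirRdrelivementHyp}, and feed both hypotheses into Lafforgue's Theorem~\ref{theo : Baum-Connes=Rapid decay + Strongly Bolic}. One small caveat: the phrase ``free, hence proper'' is not a valid implication in general; properness of the left action on $(G,\hat d)$ comes from the uniform local finiteness of $\hat d$ (equivalently, from its quasi-isometry with the word metric, which is how the paper phrases it), since then any bounded set $B$ is finite and $\{g : gB\cap B\neq\emptyset\}\subseteq BB^{-1}$ is finite.
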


More precisely, we obtain the following corollary for families of $CAT(0)$ groups that have $(RD)$ property.

\begin{corollary}
Groups hyperbolic relatively to following groups:
\begin{itemize}
    \item virtually abelian,

    \item cocompaclty cubulated,

    \item Coxeter groups,
\end{itemize}

and their subgroups satisfy the Baum-Connes conjecture.

\end{corollary}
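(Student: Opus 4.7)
The plan is to apply the previous corollary to each of the three listed classes by verifying that each class consists of $CAT(0)$ groups satisfying $(RD)$, and then to treat the subgroup claim by invoking permanence properties of the Baum-Connes conjecture.

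For the $CAT(0)$ hypothesis, each class admits a well-known proper cocompact isometric action on a $CAT(0)$ space: a virtually abelian group acts, via its finite-index abelian subgroup, on a Euclidean space $\mathbb{R}^{n}$; a cocompactly cubulated group acts on a $CAT(0)$ cube complex by definition, such a complex being $CAT(0)$ by Gromov's link condition; and every Coxeter group acts properly and cocompactly on its Davis complex, which is $CAT(0)$ by Moussong's theorem.

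For the $(RD)$ hypothesis, virtually abelian groups have polynomial growth and therefore satisfy $(RD)$ by Jolissaint's criterion; cocompactly cubulated groups satisfy $(RD)$ by a theorem of Chatterji and Ruane, where cocompactness is used to ensure the required finite dimensionality of the cube complex; and $(RD)$ for Coxeter groups has been established in the literature. Combining these with the previous corollary, which already packages Drutu-Sapir's inheritance of $(RD)$ from parabolics to the ambient relatively hyperbolic group, one obtains the Baum-Connes conjecture for each relatively hyperbolic group in the listed families.

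For the subgroup statement, the conjecture passes to closed subgroups by standard permanence results in the equivariant $KK$-theoretic framework, which carry over to Lafforgue's Banach variant underlying the previous corollary. I do not foresee any substantive obstacle in this verification: each step is a citation of an existing result, and the only point requiring mild care is to match the precise formulation of $(RD)$ for the Coxeter case and to confirm that the subgroup permanence indeed applies in the exact framework produced by Lafforgue's theorem.
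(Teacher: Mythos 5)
Your handling of the $CAT(0)$ and $(RD)$ hypotheses for each of the three classes matches the paper's proof in substance and in the references invoked (Jolissaint for virtually abelian, Chatterji--Ruane for cocompactly cubulated and Coxeter groups, Davis/Moussong and the link condition for the $CAT(0)$ geometry). This part is fine, modulo the mild imprecision that passing from a finite-index abelian subgroup acting on $\mathbb{R}^n$ to a $CAT(0)$ action of the whole virtually abelian group is not completely automatic --- the paper outsources this to a result of Minasyan.

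The genuine gap is in your treatment of the subgroup claim. You invoke ``standard permanence results'' saying the Baum--Connes conjecture passes to (closed) subgroups. This is not a theorem: the paper itself points out explicitly in Section~2.2 that it remains an open problem whether the Baum--Connes conjecture (without coefficients) is inherited by subgroups in general. What is true, and what the paper actually uses, is that \emph{Lafforgue's criterion} (Theorem~\ref{theo : Baum-Connes=Rapid decay + Strongly Bolic}) passes to subgroups for the trivial reason that its two hypotheses do: if $H \leq G$ then $H$ acts properly by isometries, by restriction, on the same uniformly locally finite, strongly bolic, weakly geodesic space $(G,\hat{d})$, and property $(RD)$ is stable under passage to subgroups. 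So one re-applies Lafforgue's theorem directly to $H$ rather than quoting a nonexistent permanence of the conjecture itself. Your appeal to ``the equivariant $KK$-theoretic framework'' may be conflating this with the Baum--Connes conjecture \emph{with coefficients}, for which subgroup permanence is known but which is a stronger statement than what Theorem~\ref{theo : Baum-Connes=Rapid decay + Strongly Bolic} yields.
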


This result applies in particular to the fundamental groups of finite-volume real hyperbolic manifolds. In this way, we re-establish that these groups satisfy the Baum–Connes conjecture.

In our article, we provide a new proof that hyperbolic groups admit a strongly bolic metric. Our metric has the advantage of being relatively simple to define. It uses a notion of angle for hyperbolic spaces introduced in \cite{anglehyperbolicHaettelChatterjiDahmaniLecureux} (see Proposition \ref{Proposition: angle for hyperbolic metric space}), which generalizes the visual angles of $CAT(-1)$ spaces. In the relatively hyperbolic case, our metric builds on the construction for hyperbolic groups presented in Section \ref{section : bolicité forte pour gp hyperbolique}, as well as on a notion of random representatives, called masks, introduced by Chatterji and Dahmani in \cite{ChatterjiDahmani}.\\

It is important to note that independently and simultaneously in a recent preprint, Nishikawa and Petrosyan obtained in \cite{NishikawaPetrosyanBC}, by different methods, a stronger version of the Baum–Connes conjecture for a very significant class of relatively hyperbolic groups. This class includes groups all lattices in rank one Lie groups, which are not covered by our theorem. We nevertheless believe that our work is of interest in its geometric nature.\\

\textbf{Structure of the article.} Section \ref{section: Baum-Connes} is a quick overview on the Baum-Connes conjecture with an emphasis on strong bolicity.\\
Section \ref{section : groupe relativement hyperbolic} is dedicated to hyperbolic geometry and relatively hyperbolic groups in particular.\\
In Section \ref{section : bolicité forte pour gp hyperbolique}, we give a new proof of the fact that hyperbolic groups admit a strongly bolic metric.\\
To conclude, in Section \ref{section : bolicité forte pour gp relativement hyperbolique}, we prove Theorem \ref{theo: main theo}. As explained above, the main ingredient is the notion of masks, developed by Dahmani and Chatterji in \cite{ChatterjiDahmani}, which we present here.\\

\textbf{Acknowledgements.}
This article is part of the author's PhD thesis. The author would like to thank Rémi Coulon and Mikael de la Salle, as thesis reviewers, for their detailed reading of the article, which greatly contributed to its improvement. The author also sends warm thanks to Moulay Tahar Benammeur for engaging and insightful discussions, as well as for agreeing to serve on the thesis jury. The author is grateful to Indira Chatterji and François Dahmani for accepting to be members of the jury. The author also thanks Ashot Minasyan for pointing out reference \cite{minasyan}. Finally, the author warmly thanks Thomas Haettel for his support, availability, and numerous careful readings of this text.

\section{Baum-Connes conjecture and strong bolicity}\label{section: Baum-Connes}

In this section, we provide a brief introduction to this conjecture. For more general references, we refer to \cite{ValetteIntroBaumConnes} and \cite{AparicioJulgValetteBaumConnesConjecture}.

\subsection{Quick presentation of the conjecture}
The Baum-Connes conjecture is part of Alain Connes' ''noncommutative geometry'' program.
 The Baum-Connes conjecture was formulated in \cite{BaumConnes1} in 1982 in an article finally published in 2000. The modern version comes from the article \cite{BaumConnes2Nigel} of Baum, Connes and Higson. This conjecture builds a bridge between geometry/topology on one hand and analysis on the other hand.

In fact, for every locally compact group $G$, there is a Baum-Connes conjecture.
For every locally compact group $G$, we can always associate four abelian groups $K_{\ast}^{top}(G)$ and $K_{\ast}(C_{r}^{\ast}(G))$ (with $\ast=0,1$). We can also construct two group morphisms:
$$\mu_{r}: K_{\ast}^{top}(G) \rightarrow K_{\ast}(C_{r}^{\ast}(G)).$$

The Baum-Connes conjecture states that $\mu_{r}$ is an isomorphism for $\ast=0,1$.

\begin{conjecture}\label{conjecture: Baum-Connes}(Baum-Connes conjecture)

Let $G$ be a locally compact group.
The assembly map:
$$ \mu_{r}: K_{\ast}^{top}(G) \rightarrow K_{\ast}(C_{r}^{\ast}(G))$$
is an isomorphism for $\ast=0,1$.
    
\end{conjecture}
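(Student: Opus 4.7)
The plan is to attack the statement via the Dirac/dual-Dirac machinery of Kasparov, which is the principal general-purpose tool available for assembly maps. First I would re-express $K_\ast^{top}(G)$ as the inductive limit $\varinjlim_{Y} KK^G_\ast(C_0(Y),\mathbb{C})$ taken over $G$-compact subsets $Y$ of a universal space $\underline{E}G$ for proper actions, and identify $\mu_r$ with the Kasparov product against the canonical class given by the index of a $G$-equivariant Dirac-type operator on $\underline{E}G$, composed with the descent morphism into $K_\ast(C^\ast_r(G))$.

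Second, I would aim to construct a Kasparov $\gamma$-element: a class $\gamma \in KK^G(\mathbb{C},\mathbb{C})$ obtained as the Kasparov product of a \emph{dual-Dirac} class $\eta \in KK^G(\mathbb{C},A)$ with a \emph{Dirac} class $D \in KK^G(A,\mathbb{C})$, where $A$ is a proper $G$-$C^\ast$-algebra. The formal properties of this construction show that $\gamma$ acts as the identity on the range of $\mu_r$ and factors the assembly map; hence, provided one can verify the homotopy $\gamma = 1$ in $KK^G(\mathbb{C},\mathbb{C})$, both injectivity and surjectivity of $\mu_r$ follow simultaneously.

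Third, producing $\eta$ and exhibiting the required homotopy require genuine geometric input. The strategy would be to seek a proper isometric $G$-action on a space with enough curvature or convexity to support a rotation homotopy: a tree, a Hilbert space, a $CAT(0)$ cube complex, or a uniformly locally finite strongly bolic space in the sense of Theorem \ref{theo : Baum-Connes=Rapid decay + Strongly Bolic}. From such data one builds $\eta$ via an infinite-dimensional Bott element (Higson-Kasparov, for Haagerup groups) or via Clifford bundle constructions (Kasparov-Skandalis, for bolic actions), and one deforms $\gamma$ to $1$ along a canonical rotation path. When Property $(T)$ obstructs working within Hilbertian $KK$-theory, the passage to Lafforgue's Banach $KK$-theory framework is needed, typically combined with rapid decay as in Theorem \ref{theo : Baum-Connes=Rapid decay + Strongly Bolic}.

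The main obstacle is exactly this last step for an arbitrary locally compact $G$: there is no known uniform construction of a dual-Dirac element covering every locally compact group, and the analogous conjecture with coefficients is known to fail for certain Gromov monster groups by Higson-Lafforgue-Skandalis, which rules out any proof strategy that would automatically pass to the coefficient version. Consequently, the statement as written remains an open conjecture in full generality, and the present article only establishes the geometric hypothesis of Theorem \ref{theo : Baum-Connes=Rapid decay + Strongly Bolic} for the subclass of groups described by Theorem \ref{theo: main theo}, from which the conclusion of Conjecture \ref{conjecture: Baum-Connes} is then deduced in that restricted setting.
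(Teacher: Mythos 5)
The statement you were asked about is Conjecture \ref{conjecture: Baum-Connes} itself, which the paper states as an open conjecture and does not prove (nor claims to prove) for general locally compact groups; your proposal correctly recognizes this, and your concluding paragraph matches the paper's actual role for the conjecture, namely that only the restricted case covered by Theorem \ref{theo: main theo} together with Lafforgue's criterion (Theorem \ref{theo : Baum-Connes=Rapid decay + Strongly Bolic}) is established. Your sketch of the Dirac/dual-Dirac and $\gamma$-element machinery is accurate background, but since no proof of the general statement exists, the honest outcome you reach --- that the conjecture remains open in full generality --- is exactly the correct one.
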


The left-hand side $K_{\ast}^{top}(G)$ is mentioned as the \emph{geometric, topological} or the \emph{commutative} side. It involves a space $\underline{EG}$, the classifying space for proper action of $G$, and $K_{\ast}^{top}(G)$ is the $G$-equivariant $K$-homology of $\underline{EG}$.

The right-hand side, $K_{\ast}(C_{r}^{\ast}(G))$ is mentioned as the \emph{analytic} side or the \emph{non-commutative} side. $C_{r}^{\ast}(G)$ is \emph{the reduced $C^{\ast}$-algebra} of $G$, which is the $C^{ \ast}$-algebra generated by $G$ in its left regular representation on the Hilbert space $\ell^{2}(G)$.

The assembly map $\mu_{r}$ is defined with Kasparov $KK$-theory and is related to index theory.\\

The Baum-Connes Conjecture implies various other famous conjectures. We cite these conjectures here without providing precise statements. For a more precise discussion about the consequences of the Baum-Connes conjecture, see \cite[Section 4.5.]{AparicioJulgValetteBaumConnesConjecture}

It is often said that the surjectivity of $\mu_{r}$ has implication in analysis, while injectivity has implication in topology.

The injectivity of the assembly map implies the Novikov conjecture according to \cite{LandInjBcImpliesNovikov}. We refer to \cite{SurveyNovikov} for a survey on this conjecture.

The injectivity of the assembly map implies also the Gromov-Lawson-Rosenberg conjecture, this fact is proved in \cite{BaumConnes2Nigel}, see \cite{RosenbergStolz} for a discussion on the Gromov-Lawson-Rosenberg conjecture.

The surjectivity implies the Kadison-Kaplansky conjecture, which we state now.
\begin{conjecture}\label{conjecture: Kaplansky-Kadison}
If $G$ is torsion-free, then $C_{r}^{\ast}(G)$ has no idempotents except $0$ and $1$.
\end{conjecture}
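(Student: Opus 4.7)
The plan is to derive the conclusion from the hypothesis that $\mu_{r}$ is surjective, which is the implication flagged in the sentence just above the conjecture. The central tool is the canonical faithful tracial state $\tau : C_{r}^{\ast}(G) \to \mathbb{C}$ defined by $\tau(a) = \langle a\delta_{e}, \delta_{e} \rangle_{\ell^{2}(G)}$, where $\delta_{e}$ is the Dirac mass at the neutral element. This trace induces a group homomorphism $\tau_{\ast} : K_{0}(C_{r}^{\ast}(G)) \to \mathbb{R}$ by sending the class of a projection $p$ to $\tau(p)$, and the argument will proceed by constraining the image of $\tau_{\ast}$ to the integers.

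The key intermediate step is to show that the composition $\tau_{\ast} \circ \mu_{r} : K_{0}^{top}(G) \to \mathbb{R}$ factors through $\mathbb{Z}$. This is essentially Atiyah's $L^{2}$-index theorem: for a class $[D] \in K_{0}^{top}(G)$ represented by a $G$-invariant elliptic operator $D$, the image $\tau_{\ast}(\mu_{r}([D]))$ equals the $L^{2}$-index of $D$, which is an integer. It is precisely here that torsion-freeness enters, since the presence of a finite subgroup $F \le G$ would a priori introduce denominators of the form $1/|F|$ and only force the image to lie in $\mathbb{Z}[1/|F|]$.

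Granting integrality and using the surjectivity hypothesis, every class in $K_{0}(C_{r}^{\ast}(G))$ lies in the image of $\mu_{r}$, so $\tau_{\ast}$ takes only integer values on $K_{0}(C_{r}^{\ast}(G))$. To conclude, pick an idempotent $e \in C_{r}^{\ast}(G)$. A standard spectral argument produces an invertible element $u \in C_{r}^{\ast}(G)$ such that $p := ueu^{-1}$ is a self-adjoint projection similar to $e$, so that $[e] = [p]$ in $K_{0}$ and, by traciality, $\tau(e) = \tau(p)$. Since $\tau$ is a state and $p$ a projection, $\tau(p) \in [0,1]$; combined with the integrality of $\tau_{\ast}([e])$, this forces $\tau(p) \in \{0,1\}$, and faithfulness of $\tau$ then yields $p = 0$ or $1 - p = 0$, hence $e = 0$ or $e = 1$.

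The main obstacle is the integrality step: making sense of $\tau_{\ast} \circ \mu_{r}$ as an $L^{2}$-index and verifying that it is $\mathbb{Z}$-valued requires the full machinery of Kasparov $KK$-theory and the construction of the assembly map. Once integrality is established, the remaining passage from $K$-theory to idempotents is a formal application of traciality, positivity, and faithfulness of $\tau$.
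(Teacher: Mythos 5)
The statement you were asked to prove is in fact stated in the paper as a \emph{conjecture}---the Kaplansky--Kadison conjecture is an open problem for general torsion-free groups, and the paper makes no attempt to prove it. What the paper does is cite \cite[Proposition 4.20]{AparicioJulgValetteBaumConnesConjecture} for the \emph{conditional} statement that surjectivity of the assembly map $\mu_{r}$ implies Kaplansky--Kadison. You correctly read the surrounding text and chose to sketch exactly that implication rather than attempt the (unconditional) conjecture itself, and the route you take---the canonical faithful tracial state $\tau$, the induced map $\tau_{\ast}$ on $K_{0}$, integrality of $\tau_{\ast}\circ\mu_{r}$ via Atiyah's $L^{2}$-index theorem (where torsion-freeness is precisely what rules out denominators $1/|F|$), and the closing spectral and positivity argument reducing idempotents to projections with trace in $\{0,1\}$---is the standard argument found in the cited reference. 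You are also right to flag the integrality step as the place where all the real content lies. Since the paper supplies no proof to compare against, the relevant remark is simply that your sketch reproduces the well-known implication the paper alludes to, not a proof of the conjecture as stated.
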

A weaker conjecture, formulated by Kaplansky, states that the group algebra does not have trivial idempotents. For a proof of the fact that the surjectivity of the assembly map implies the Kaplansky-Kadison conjecture, see \cite[Proposition 4.20.]{AparicioJulgValetteBaumConnesConjecture}.

The surjectivity of the assembly map implies also the vanishing of a topological Whitehead group. We refer to \cite{Whiteheadgroups} for the definitions and a proof of this fact.

\subsection{Status of the conjecture}\label{subsection: status of the conjecture}

To begin, let us remark that no-counter example of the Baum-Connes conjecture is known.

We give here two ways to prove the Baum-Connes conjecture.

The first way is the Haagerup Property.

\begin{theo}\cite{HigsonKasparovHaagerupimpliqueBaum-Connes}

Discrete groups with the Haagerup Property satisfy the Baum-Connes conjecture.
\end{theo}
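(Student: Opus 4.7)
The plan is to apply Kasparov's Dirac--dual Dirac method, in the variant developed by Higson and Kasparov to accommodate proper affine isometric actions on an infinite-dimensional Hilbert space. By the Haagerup property, $G$ admits a proper affine isometric action on a separable real Hilbert space $\mathcal{H}$, encoded by an orthogonal representation $\pi : G \to O(\mathcal{H})$ together with a $1$-cocycle $b : G \to \mathcal{H}$ that is proper as a map from $G$ to $\mathcal{H}$. My strategy is to produce two equivariant $KK$-classes whose Kasparov product equals the identity in $KK^{G}(\mathbb{C}, \mathbb{C})$; a standard descent argument then forces the assembly map $\mu_{r}$ to be an isomorphism.

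First I would fix the geometric setup on $\mathcal{H}$. Since $\mathcal{H}$ is infinite-dimensional, ordinary Bott periodicity is not directly available, and one must replace $C_{0}(\mathcal{H})$ by Kasparov's $\mathbb{Z}/2$-graded $C^{\ast}$-algebra built from the Clifford algebras of finite-dimensional subspaces of $\mathcal{H}$, assembled as a directed colimit; call this algebra $\mathcal{A}(\mathcal{H})$. The \emph{Bott element} $\beta \in KK^{G}(\mathcal{A}(\mathcal{H}), \mathbb{C})$ is built from a Dirac-type unbounded operator on $\mathcal{H}$, while the \emph{dual-Dirac element} $\alpha \in KK^{G}(\mathbb{C}, \mathcal{A}(\mathcal{H}))$ is built from the affine $G$-action; properness of the cocycle $b$ is what makes the associated operator have locally compact resolvent, so that $\alpha$ defines a genuine $KK$-class rather than merely a cycle in an enlarged theory.

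Next I would compute the Kasparov product $\alpha \otimes_{\mathcal{A}(\mathcal{H})} \beta$ and show that it equals $1 \in KK^{G}(\mathbb{C}, \mathbb{C})$. On each finite-dimensional subspace this reduces to the classical Bott--Dirac identity, and the passage to infinite dimension is accomplished by a rotation homotopy combined with the inductive-limit structure of $\mathcal{A}(\mathcal{H})$. Once the equality $\alpha \otimes \beta = 1$ is in hand, the descent functor yields split injectivity of $\mu_{r}$; surjectivity comes by comparing with the Baum--Connes map with coefficients in $\mathcal{A}(\mathcal{H})$, which is an isomorphism because the $G$-action on $\mathcal{A}(\mathcal{H})$ is, thanks to properness of $b$, essentially proper in the sense required by the Green--Julg-type machinery, and which then pushes back to the trivial-coefficient case via $\beta$.

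The main obstacle is the infinite-dimensional analysis: constructing $\mathcal{A}(\mathcal{H})$ as a genuinely $G$-equivariant $C^{\ast}$-algebra, defining the Bott and dual-Dirac unbounded multipliers compatibly across the directed system, and rigorously justifying the Kasparov product in this setting. The identity $\alpha \otimes \beta = 1$ and its consequence for the assembly map follow the same formal pattern as in Kasparov's finite-dimensional argument, but the whole difficulty of the theorem is packed into making these infinite-dimensional constructions behave well, and into verifying that the proper $1$-cocycle $b$ transfers the Haagerup geometric data into the functional-analytic data needed to define $\alpha$.
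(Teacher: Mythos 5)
The paper does not prove this theorem; it simply cites Higson and Kasparov, so there is no internal argument to compare against. Your sketch is a faithful high-level outline of the actual Higson--Kasparov proof: proper affine isometric action on a Hilbert space $\mathcal{H}$ from the Haagerup property, the $\mathbb{Z}/2$-graded $C^{\ast}$-algebra $\mathcal{A}(\mathcal{H})$ assembled as a direct limit of Clifford algebras over finite-dimensional subspaces, the Bott and Dirac $KK^{G}$-classes, the identity $\alpha \otimes \beta = 1 \in KK^{G}(\mathbb{C},\mathbb{C})$, and the transfer of the isomorphism from coefficients in $\mathcal{A}(\mathcal{H})$ back to trivial coefficients.

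Two minor calibrations. First, your attribution of roles is slightly off: properness of the cocycle $b$ is what makes the $G$-action on $\mathcal{A}(\mathcal{H})$ a \emph{proper} action (so that the Baum--Connes map with coefficients in $\mathcal{A}(\mathcal{H})$ is automatically an isomorphism), whereas the locally compact resolvent of the Dirac-type operator defining the class out of $\mathcal{A}(\mathcal{H})$ comes from the harmonic-oscillator/Clifford structure, not from $b$. Second, once $\alpha \otimes \beta = 1$ and the $\mathcal{A}(\mathcal{H})$-coefficient map is known to be an isomorphism, naturality exhibits $\mu_{r}^{\mathbb{C}}$ as a retract of $\mu_{r}^{\mathcal{A}(\mathcal{H})}$, so it is an isomorphism outright; you do not need a separate split-injectivity step followed by a separate surjectivity argument. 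Finally, Higson and Kasparov in places need the extra flexibility of $E$-theory rather than pure $KK$-theory to make the infinite-dimensional constructions go through; your sketch stays entirely in $KK$. You are honest that the hard analysis is where the real work lives, and that assessment is correct.
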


The Baum-Connes conjecture is therefore satisfied by amenable groups \cite{BekkaCherixValetteHaagamenable}, $SO(n,1)$ \cite{VershikGel'fandGraevHaagerupgroupesrelhyp1}, $SU(n,1)$ and their lattices \cite{VershikGel'fandGraevHaagerupgroupesrelhyp2}.
The conjecture is also verified for groups acting properly on trees and, more generally, on median spaces according to \cite{Tmedianviewpoint}
.
It is for this reason that Property $(T)$ is identified as a difficulty in proving the Baum-Connes conjecture.
The first examples of infinite groups with Property $(T)$ that satisfy the Baum-Connes conjecture were given by Vincent Lafforgue using the following theorem.

\begin{theo}\cite{LafforgueBaumConnesConjecture}
Let $G$ be a finitely generated group satisfying the following.

\begin{itemize}
    \item $G$ has the rapid decay property.

    \item $G$ acts properly by isometries on a uniformly locally finite, strongly bolic, weakly geodesic metric space.
\end{itemize}

Then $G$ satisfies the Baum-Connes conjecture.

\end{theo}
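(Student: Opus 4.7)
This is Lafforgue's theorem, whose proof is substantial; I sketch the broad strategy I would follow. The fundamental difficulty is that for groups with Kazhdan's property $(T)$, the classical Dirac/dual-Dirac method of Kasparov in ordinary equivariant $KK$-theory is obstructed: the Hilbert modules involved admit almost invariant vectors that prevent one from homotoping the would-be $\gamma$-element to the identity. The plan is therefore to work in Lafforgue's \emph{Banach $KK$-theory}, $KK^{\mathrm{ban}}$, in which Hilbert modules are replaced by Banach modules, so that the $(T)$ obstruction disappears at the cost of more delicate analytic estimates.

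First, I would use the proper isometric action of $G$ on the strongly bolic, weakly geodesic, uniformly locally finite space $X$ to build a $G$-equivariant Fredholm module in the Banach setting. Bolicity provides a well-behaved midpoint map and convexity estimates on balls; strong bolicity quantifies these into the kind of exponential control one needs for convergence of the series defining commutators and their traces. Weak geodesicity allows one to connect two points by (quasi-)geodesic segments along which one can interpolate, and uniform local finiteness guarantees that all summations are controlled in $\ell^p$-type norms.

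Next, I would build a class $\gamma \in KK^{\mathrm{ban}}_G(\mathbb{C},\mathbb{C})$ from this module and prove that $\gamma = 1$, the homotopy being obtained by geodesically contracting $X$ to a basepoint. A descent/assembly argument would then show that the Banach analogue of the assembly map is an isomorphism onto the $K$-theory of a suitable unconditional Banach completion $A(G)$ of $\mathbb{C} G$. The last step is to transfer this to $C_r^\ast(G)$ using rapid decay: by Jolissaint's theorem, $(RD)$ is equivalent to the existence of a dense Fréchet subalgebra $H^\infty(G) \subset C_r^\ast(G)$ stable under holomorphic functional calculus, and in particular with $K$-theory isomorphic to that of $C_r^\ast(G)$. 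With $A(G)$ arranged to sit appropriately between $H^\infty(G)$ and $C_r^\ast(G)$, the isomorphism transports to the $C^\ast$-algebra.

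The main obstacle is the second step, producing the $\gamma$-element and proving $\gamma = 1$: strong bolicity is precisely the quantitative refinement of bolicity needed to ensure that the operators appearing in this homotopy remain uniformly bounded on the relevant Banach modules, and this is the analytic heart of Lafforgue's argument.
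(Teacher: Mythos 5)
The paper does not prove this theorem; it is cited as Theorem~\ref{theo : Baum-Connes=Rapid decay + Strongly Bolic} from Lafforgue's work \cite{LafforgueBaumConnesConjecture}, and the present article only verifies its hypotheses for relatively hyperbolic groups. There is therefore no internal proof for your sketch to be compared against.

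As an account of Lafforgue's own argument, your sketch is accurate in its broad outline: the passage to Banach $KK$-theory to sidestep the property-$(T)$ obstruction, the construction of a $\gamma$-element from the proper action on the strongly bolic space (with weak geodesicity and uniform local finiteness supplying the analytic control), the homotopy $\gamma = 1$, descent, and finally the transfer from an unconditional completion to $C^{\ast}_{r}(G)$ via the $(RD)$-stable dense subalgebra. Two small remarks: the homotopy $\gamma = 1$ does not literally ``contract $X$ to a basepoint'' (the space need not be contractible as stated; what Lafforgue uses is a one-parameter family of operators built from bolicity that interpolates between the Fredholm module and the identity, and strong bolicity controls the operator norms along this family); and the role of $(RD)$ is more precisely that it makes the Sobolev-type completion $H^{\infty}(G)$ an isomorphism in $K$-theory onto $C^{\ast}_{r}(G)$, while the Banach assembly isomorphism is obtained for a Schwartz-type unconditional completion sitting inside it. These are refinements of phrasing rather than gaps; the strategy you describe is the right one.
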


We will discuss more specifically later on property $(RD)$ and strongly bolic metric spaces.

Thanks to Theorem \ref{theo : Baum-Connes=Rapid decay + Strongly Bolic}, Lafforgue proves the Baum-Connes conjecture for cocompact lattices in simple Lie groups of rank $1$. Cocompact lattices of $Sp(n,1)$ provide the first examples of infinite Property $(T)$ groups which satisfies the Baum-Connes conjecture.
This result was then generalized to all hyperbolic groups according to de la Harpe \cite{delaHarpeRD} and Mineyev and Yu \cite{MineyevYuStrongBolicityhypergroups}.

Using different methods, Vincent Lafforgue proved in \cite{LafforgueBaumConnesàcoeff} a stronger version of the Baum-Connes conjecture for hyperbolic groups, the Baum-Connes conjecture with coefficients.

It should be noted that the Baum-Connes conjecture remains open for the non-uniform lattices of $Sp(n,1)$ until the work of \cite{NishikawaPetrosyanBC}.

Vincent Lafforgue identified in \cite{LafforgueStrongTetBaumConnes} the strong Property $(T)$ as a very important difficulty for proving the Baum-Connes conjecture, more specifically for proving the surjectivity of the assembly map. The only results are known for cocompact lattices in $SL_{3}(\mathbb{R}),SL_{3}(\mathbb{C}),SL_{3}(\mathbb{H})$ and $E_{6}(-26)$. For example, the surjectivity is open for $SL_{3}(\mathbb{Z})$.\\

Although it remains an open problem whether the Baum–Connes conjecture is inherited by subgroups in general, Lafforgue Theorem \ref{theo : Baum-Connes=Rapid decay + Strongly Bolic} pass to subgroups, since the first point of the theorem clearly does so, and property $(RD)$ is also stable under passage to subgroups.\\

According to Oyono \cite{OyonoBcetarbres}, the Baum-Connes conjecture is stable under free or amalgamated products and HNN extensions.

\subsection{Rapid decay Property}\label{subsection: Rd property}

We will be very brief regarding the rapid decay property 
$(RD)$. Introductions to this topic are \cite{ChatterjiRd} and \cite{SapirRd}.\\

The $(RD)$ property is a property of the reduced group $C^{\ast}$-algebra. It holds when the operator norm can be bounded in terms of a simpler, more manageable norm.
This property was studied by Jolissaint in \cite{JolissaintRD}, before being used by Connes and Moscovici to prove the Novikov conjecture for hyperbolic groups \cite{ConnesMoscoviciNovikovConjHypGroup}. The recent interest in this property is Theorem \ref{theo : Baum-Connes=Rapid decay + Strongly Bolic} due to Lafforgue in \cite{LafforgueBaumConnesConjecture}, showing that it is one of the two conditions that allow to prove the Baum-Connes conjecture.\\

According to \cite{JolissaintRD} and \cite{delaHarpeRD}, hyperbolic groups satisfy also the Rapid decay property. For relatively hyperbolic groups, it was first shown by Chatterji and Ruane in \cite{ChatterjiRuaneRdreseauxderangUn} for rank-1 lattices, before being generalized by Drutu and Sapir to groups that are hyperbolic relatively to subgroups with property (RD) \cite{DrutuSapirRdrelivementHyp}. For a more comprehensive list of groups that do or do not satisfy the (RD) property, we refer to \cite{ChatterjiRd}.\\

More generally property $(RD)$ is satisfied by groups admitting a nice definition of quasi-center for triangles, this property is called the \emph{centroid property} in \cite{SapirRd}.

\subsection{Strong bolicity}\label{subsection: strongly bolic}

\paragraph{Strong bolicity}

Kasparov and Skandalis defined bolicity in their work on the Novikov conjecture \cite{KasparovSkandalisBolic1}, \cite{KasparovSkandalisBolic2}. Vincent Lafforgue strengthened this notion in connection with the Baum-Connes conjecture and Theorem \ref{theo : Baum-Connes=Rapid decay + Strongly Bolic}. For a more general introduction to this notion and related properties, see \cite{BucherKarlssonDefBolicSapces}.

\begin{definition}\label{definition: faiblement geodesique}
Let $\eta \geq 0$. A metric space $(X,d)$ is said to be \emph{$\eta$-weakly geodesic}, if for all $x,y \in X$, for all real $t \in [0,d(x,y)]$, there exists $z \in X$ such that $d(x,z)\leq t+\eta$ and $d(z,y) \leq d(x,y)-t+\eta$.

For all $x,y \in X$, $z$ belongs to the $\eta$-geodesic interval between $x$ and $y$ if:
$$d(x,z)+d(z,y)\leq 2\eta.$$

For all $x,y \in X$, a $\eta$-weak-geodesic between $x$ and $y$ is a map $\sigma :[0,d(x,y)]\rightarrow [0,1]$ such that for all $t \in [0,d(x,y)]$:
\begin{itemize}
    \item $d(x,\sigma(t))\leq t+\eta$,
    \item $d(\sigma(t),y)\leq d(x,y)-t+\eta$.
\end{itemize}

$(X,d)$ is said to be  \emph{weakly-geodesic} if there exists $\eta \geq 0$ such that $(X,d)$ is \emph{$\eta$-weakly geodesic}.

\end{definition}

Geodesic metric spaces are $0$-weakly geodesic. A geodesic between two points is a $0$-weak-geodesic between those two points.

Let $(X,d)$, for $x \in X, r \in \mathbb{R}_{+}$, $B(x,r)$ will denote the closed ball centered at $x$ of radius $r$.

\begin{definition}\label{definition: uniformly locally finite}
A metric space is said to be \emph{uniformly locally finite} if for all $r \in \mathbb{R}_{+}$, there exists $K \in \mathbb{N}$ such that, for all $x \in X$, $B(x,r)$ contains at most $K$ points.
\end{definition}

Simplicial graphs of bounded valency are particular examples of uniformly locally finite graphs.

\begin{definition} (Strong bolicity)
A metric space $(X,d)$ is called \emph{strongly bolic} if the following two conditions hold :
\begin{itemize}
    \item \emph{strongly}-$B1$, if for all $\eta,r>0$, there exists $R=R(\eta,r)\geq 0$ such that for all $x_{1}, x_{2}, y_{1}, y_{2} \in X$, with $d(x_{1},y_{1}),d(x_{1},y_{2}),d(x_{2},y_{1}),d(x_{2},y_{2}) \geq R$ and  \newline $d(x_{1},x_{2}),d(y_{1},y_{2}) \leq r$, we have:
$$ |d(x_{1},y_{1})+d(x_{2},y_{2})-d(x_{1},y_{2})-d(x_{2},y_{1})| \leq \eta .$$

    \item  \emph{weakly}-$B2'$, if there exists $\eta>0$ and a map $m: X \times X \rightarrow X$, such that:

    \begin{itemize}

    \item $m$ is a $\eta$-middle point map, that is for all $x,y \in X$,
    $$|2d(x,m(x,y))-d(x,y)|\leq 2 \eta,$$
$$|2d(y,m(x,y))-d(x,y)|\leq 2 \eta,$$

    \item for all $x,y,z \in X$,
    $$ d(m(x,y),z) \le \max(d(x,z),d(y,z)) + 2 \eta, $$

    \item for every $p \in \mathbb{R}_{+}$, there exists $N(p) \in \mathbb{R}_{+}$ such that for all $N \geq N(p) $, with $d(x,z) \leq N$, $d(y,z) \leq N$ and $d(x,y)>N$, we have:
    $$ d(m(x,y),z) \leq N-p. $$

    \end{itemize}

\end{itemize}
\end{definition}

The $B1$-condition is a smoothness condition of the distance see Figure \ref{fig: B1 condition }, whereas the $B2$-condition refers to convexity see Figure \ref{fig: B2 condition }.

\begin{figure}[!ht]
    \centering
   \includegraphics[scale=0.4]{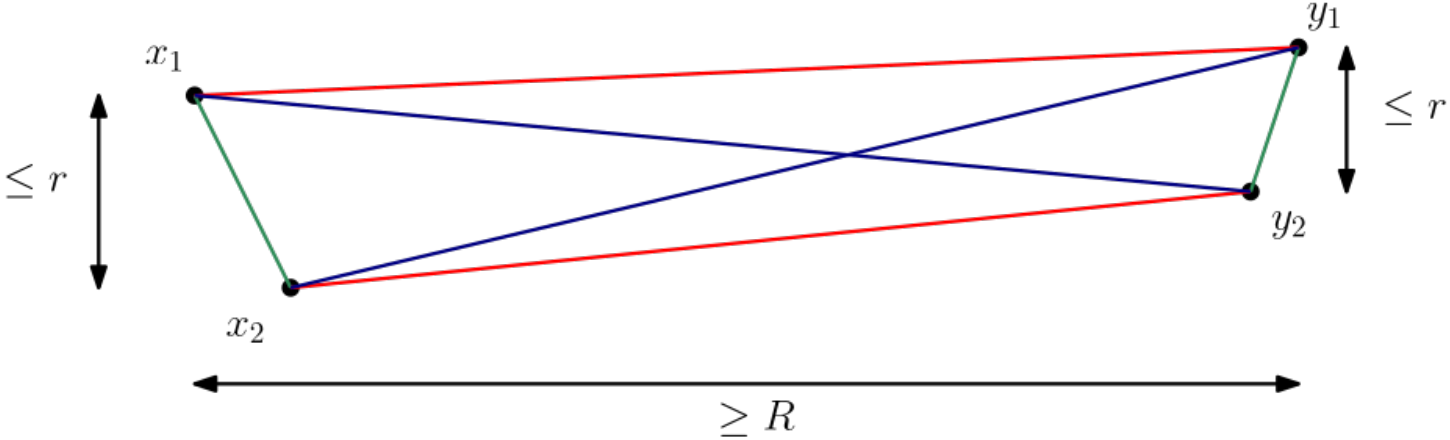}
  \caption{$B1$ condition: smoothness}
   \label{fig: B1 condition }
\end{figure}

\begin{figure}[!ht]
   \centering
   \includegraphics[scale=0.4]{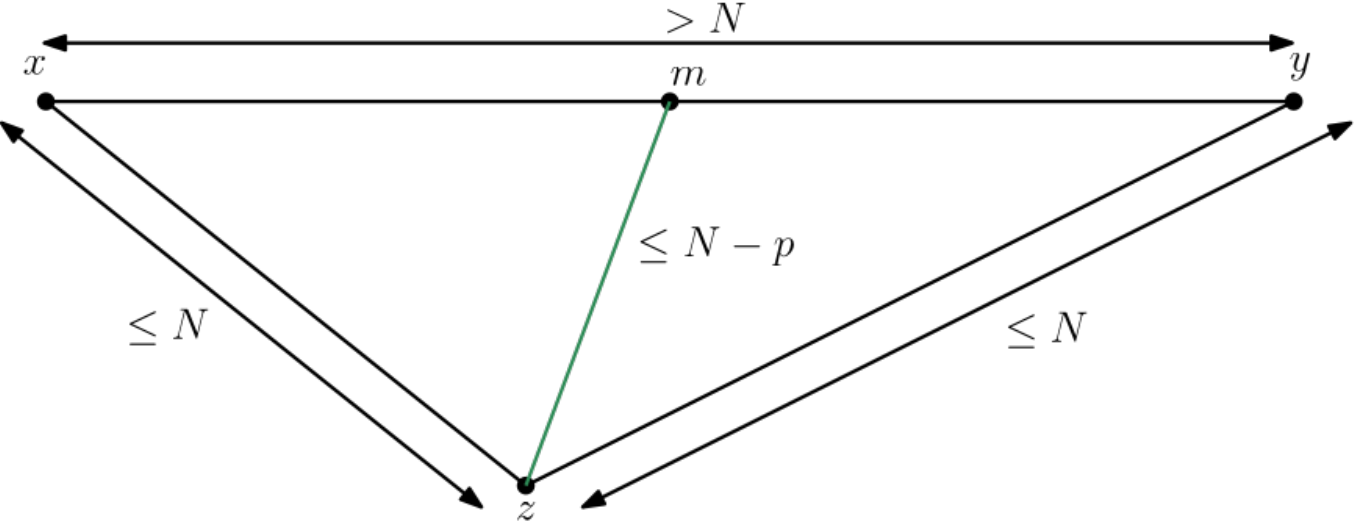}
   \caption{Part of $B2'$ condition: convexity}
   \label{fig: B2 condition }
 \end{figure}

Examples of strongly bolic metric spaces include $L^{p}$ spaces for $1<p< \infty$ as well as $CAT(0)$ spaces. More recently, Haettel, Hoda and Petyt showed in \cite[Theorem 4.18]{ThomasHodaPetytlpcomplexes} that under certain conditions, a cell complex equipped with the piecewise $\ell^{p}$ metric, $1<p< \infty$, is strongly bolic.

Using the fact that $CAT(0)$ spaces are strongly bolic, we know that $CAT(0)$ groups admit a strongly bolic action. As explained in \cite[Chapter 2]{ValetteIntroBaumConnes}, this class of groups contains, in particular, groups, which act properly, cocompactly, isometrically on either a Euclidiean building, or Riemannian symmetric space. Thus, Vincent Lafforgue noted that 'classical' hyperbolic groups, i.e cocompact lattices of rank-one simple Lie groups ($SO(n,1),SU(n,1),Sp(n,1), F_{4(-20)})$ admit a strongly bolic action.

To apply Theorem \ref{theo : Baum-Connes=Rapid decay + Strongly Bolic}, we need to find  a strongly bolic, weakly geodesic metric. Let us note here that being weakly geodesic implies the existence of quasi-midpoints.

\begin{proposition}\label{proposition : midpoint}
Let $\eta \geq 0$, let $(X,d)$ be a $\eta$-weakly geodesic metric space, then $(X,d)$ admits $\eta$-midpoints.

\end{proposition}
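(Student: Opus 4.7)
The plan is to produce the midpoint directly from the weak-geodesic condition by taking the parameter $t = d(x,y)/2$. Given $x, y \in X$, applying the $\eta$-weakly geodesic property at this value of $t$ yields a point $z \in X$ with
\[
d(x,z) \leq \tfrac{1}{2} d(x,y) + \eta \quad\text{and}\quad d(z,y) \leq \tfrac{1}{2} d(x,y) + \eta.
\]
These are the two upper bounds needed; the whole task then reduces to producing matching lower bounds.

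For the lower bounds, the key step is to feed the two upper estimates back into the triangle inequality. From $d(x,y) \leq d(x,z) + d(z,y)$ and the upper bound on $d(z,y)$, I get $d(x,z) \geq d(x,y) - d(z,y) \geq \tfrac{1}{2} d(x,y) - \eta$, and symmetrically $d(z,y) \geq \tfrac{1}{2} d(x,y) - \eta$. Combining with the first display yields
\[
|2 d(x,z) - d(x,y)| \leq 2\eta \quad\text{and}\quad |2 d(z,y) - d(x,y)| \leq 2\eta,
\]
which is exactly the $\eta$-middle-point inequality from the definition of weakly-$B2'$. Defining $m(x,y) := z$ (choosing one such $z$ for each unordered pair) then produces an $\eta$-middle point map in the sense required.

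I do not expect any real obstacle: the proof is a one-line choice of parameter followed by the triangle inequality. The only subtlety worth mentioning, and which I would flag in the write-up, is that \emph{existence of a midpoint map} as a function $m : X \times X \to X$ requires a choice of witness $z$ for each pair $(x,y)$, which is a choice-of-representative issue rather than a geometric one; in the setting of the paper (where $X$ will be a countable group), this choice is harmless. I would present the argument as two lines of inequalities leading to the two displayed absolute-value bounds, and conclude by defining $m(x,y)$ to be any such $z$.
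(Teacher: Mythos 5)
Your proof is correct and takes the same approach as the paper (apply the $\eta$-weakly geodesic condition at $t = d(x,y)/2$ and conclude). In fact you spell out more than the paper's one-line proof does: the paper directly asserts the two-sided bound $|d(x,z) - d(x,y)/2| \leq \eta$, whereas the weakly geodesic definition only hands you upper bounds, and you correctly supply the missing lower bounds via the triangle inequality.
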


\begin{proof}
 For every $x,y \in X$, there exists $z$ such that:
$$|\hat{d}(x,z)-\frac{\hat{d}(x,y)}{2}|\leq \eta,$$
$$|\hat{d}(z,y)-\frac{\hat{d}(x,y)}{2}|\leq \eta ,$$

this implies directly that $(X,\hat{d})$ admits a $\eta$-midpoint map.

\end{proof}

\paragraph{Strong bolicity for hyperbolic groups.} A very interesting class of groups admitting a strongly bolic action are hyperbolic groups. Mineyev and Yu \cite{MineyevYuStrongBolicityhypergroups} showed that for any hyperbolic group $G$ and any of its Cayley graphs $X$, $X$ can be equipped with a strongly bolic $G$-equivariant metric. Moreover, this metric is quasi-isometric to the graph metric of $X$. This result implies the Baum-Connes conjecture for hyperbolic groups and their subgroups.
A simpler proof of this fact was given by Haïssinsky and Matthieu \cite{HaissinskyMatthieu}. Their proof uses the Green function associated to a random walk on the Cayley graph and the fact that for this metric, the Gromov boundary and the horofunction bordification are the same. Another proof of this fact is given by Lafforgue in \cite{LafforgueBaumConnesàcoeff} where he proves that hyperbolic groups satisfy a stronger version of the Baum-Connes conjecture, the Baum-Connes conjecture with coefficients.

In Section \ref{section : bolicité forte pour gp hyperbolique}, we will give a new proof of the fact that hyperbolic groups admit a strongly hyperbolic metric using Proposition \ref{Proposition: angle for hyperbolic metric space}. Then, we will generalize this method to relatively hyperbolic groups whose parabolic subgroups are $CAT(0)$.

\paragraph{Obstruction for actions on strongly bolic metric spaces.}

By studying, once again, the relation between the Gromov boundary and the horofunction bordification, Haettel, Hoda and Petyt identified in \cite{ThomasHodaPetytlpcomplexes} an obstruction to the existence of a semi-simple action on a strongly bolic metric space. This is a splitting result that generalizes a well-known property of $CAT(0)$-spaces \cite[Theorem 6.12]{BridsonHaefliger}. 

\begin{theo}\label{theo: obstructionactionbolic}

Let $G$ be a finitely generated group acting by semi-simple isometries on $X$, a strongly bolic metric space.
If $z$ is a central element in $G$, then some finite index subgroup of $G$ admits $\langle z \rangle$ as a direct factor. 
\end{theo}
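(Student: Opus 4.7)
The plan is to mimic the classical $CAT(0)$ argument, where a central element $z$ produces an $\mathbb{R}$-factor inside its $\mathrm{Min}$ set, but to replace the convex geometry of $\mathrm{Min}(z)$ by the asymptotic/horofunctional geometry available in a strongly bolic space. The key identification of the Gromov boundary with the horofunction bordification for strongly bolic spaces, already exploited in \cite{ThomasHodaPetytlpcomplexes}, should do the work of convexity.

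First I would reduce to the case where $z$ is hyperbolic with translation length $\tau:=\tau(z)>0$. If $z$ is elliptic of finite order the statement is easy by direct inspection, while the elliptic-of-infinite-order case should not arise under the semi-simplicity hypothesis. Assuming $z$ is hyperbolic, the orbit $(z^n \cdot x_0)_{n\in\mathbb{Z}}$ is a $(1,C)$-quasi-geodesic because the translation length is realised. I would then define two boundary points $\xi^{\pm}$ as limits of this quasi-geodesic in the horofunction bordification of $X$. At this step strong bolicity is essential: the $B1$ smoothness condition, combined with weak geodesicity, forces Busemann-type differences along such sequences to converge and to depend on $\xi^{\pm}$ alone. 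Because $z$ is central, $G$ permutes the pair $\{\xi^+,\xi^-\}$, so a subgroup $G_1\le G$ of index at most two fixes each of them.

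The heart of the proof is then the construction of a \emph{height homomorphism} $\beta:G_1\to\mathbb{R}$. Let $b^+$ denote the Busemann function at $\xi^+$; for each $g\in G_1$ the map $x\mapsto b^+(gx)-b^+(x)$ should be (up to a bounded error controlled by strong bolicity) constant in $x$, and $\beta(g)$ is defined as this asymptotic constant. One then checks that $\beta$ is a homomorphism with $\beta(z)=-\tau \ne 0$. Since $G_1$ is finitely generated, $\beta(G_1)$ is a finitely generated subgroup of $\mathbb{R}$ containing $\tau\mathbb{Z}$; after passing to a further finite-index subgroup $G_2\le G_1$ I may assume $\beta(G_2)=\tau\mathbb{Z}$. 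The short exact sequence
$$
1\longrightarrow \ker(\beta|_{G_2})\longrightarrow G_2 \longrightarrow \tau\mathbb{Z}\longrightarrow 1
$$
splits via $z$, yielding $G_2\cong \ker(\beta|_{G_2})\rtimes\langle z\rangle$. Because $z$ is central, this semi-direct product is in fact direct, and $G_2$ is the desired finite-index subgroup of $G$ with $\langle z\rangle$ as a direct factor.

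The main technical obstacle lies in justifying that $b^+$ exists, is essentially unique, and produces a genuinely well-defined cocycle $\beta$ with the stated value on $z$. Concretely, one must show that in a strongly bolic, weakly geodesic space the horofunctions attached to the fixed points of a hyperbolic isometry behave as they do in the $CAT(0)$ or Gromov-hyperbolic settings, in particular that the Busemann cocycle is additive and linear on the axis. Strong bolicity (through $B1$, together with the middle-point structure of weakly-$B2'$) is expected to supply exactly this, but formalising it cleanly in the absence of honest geodesic convexity is where the bulk of the technical work sits.
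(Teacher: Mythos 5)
Note first that the paper does not prove this theorem at all: it is imported verbatim from \cite{ThomasHodaPetytlpcomplexes}, and the text around it only summarizes the mechanism (comparison of horofunction bordification with the visual boundary). So there is no ``paper's own proof'' to compare against; what follows is an assessment of your sketch on its own terms, in the spirit of that reference.

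Your general strategy (Busemann cocycle along the axis of $z$, finite-index stabiliser of the endpoints, splitting via $z$) is the expected one, and you are honest that the horofunctional machinery — existence and well-definedness of $b^{+}$, additivity of the cocycle, $\beta(z)=-\tau$ — is ``where the bulk of the technical work sits.'' That technical work is the entire content of the theorem; without it you have only a plan, not a proof, so the review must flag that as an open gap. Beyond this, one concrete step is wrong as stated: you claim that after passing to a finite-index subgroup $G_{2}\le G_{1}$ one may assume $\beta(G_{2})=\tau\mathbb{Z}$. Since $G_{1}$ is finitely generated, $\beta(G_{1})$ is a free abelian subgroup of $\mathbb{R}$ of some finite rank $n$; but if $n\ge 2$ (e.g.\ $\beta(G_{1})=\mathbb{Z}\oplus\sqrt{2}\,\mathbb{Z}$) then every finite-index subgroup of $G_{1}$ still has rank-$n$ Busemann image, which is never cyclic, so $\tau\mathbb{Z}$ cannot be reached this way. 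The correct move is not to shrink the image but to project: pass to a finite-index subgroup of $G_{1}$ in which $\beta(z)$ is a primitive vector of $\beta(G_{1})\cong\mathbb{Z}^{n}$, choose a complementary direct summand, and compose $\beta$ with the retraction onto $\langle\beta(z)\rangle$; that surjection onto $\mathbb{Z}$ is split by $z$, and centrality turns the semidirect product into a direct one. Finally, the dismissal of the elliptic-infinite-order case (``should not arise under the semi-simplicity hypothesis'') is unjustified: semi-simple only means elliptic or hyperbolic, and central elliptic elements of infinite order do occur for non-proper actions. Since the theorem is precisely an obstruction tool applied to hypothetical actions (Heisenberg, mapping class groups), that case needs an argument rather than a remark.
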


As a corollary \cite[Corollary 4.20.]{ThomasHodaPetytlpcomplexes}, mapping class groups of surfaces of genus at least $3$ do not act by semi-simple isometries on strongly bolic metric spaces.

In the same way, nilpotent groups do not act by semi-simple isometries on a strongly bolic space. Indeed, the Heisenberg group with integer coefficients $H_{3}(\mathbb{Z)}$ is nilpotent, has $\mathbb{Z}$ as its center, but does not virtually split. Thus, to generalize the proof of Section \ref{section : bolicité forte pour gp relativement hyperbolique} to the non-uniform lattices of $SU(n,1),Sp(n,1), F_{4(-20)}$, it would be necessary to construct non-semi-simple actions of the parabolic subgroups.

\section{Hyperbolicity and relatively hyperbolic groups} \label{section : groupe relativement hyperbolic}

In this section, we recall the notions of hyperbolicity and relative hyperbolicity used in this work.

\subsection{Gromov hyperbolicity}

Let $(X,d)$ be metric space and $x,y \in X$, a geodesic from $x$ to $y$ is a map $\phi: I \mapsto X$, where $I=[a,b]$ is an interval of $\mathbb{R}$ such that $\phi$ is an isometry, $\phi(a)=x$ and $\phi(b)=y$. We will often denote by $[x,y]$ a geodesic between $x$ and $y$, even if it is not necessarily unique.

A metric space $(X,d)$ is said to be geodesic if for all $x,y \in X$, there exists a geodesic from $x$ to $y$.  Let assume that $(X,d)$ is geodesic, for all $x,y,z \in X$ a geodesic triangle is the union of three geodesics $[x,y]\cup[y,z]\cup[z,y]$ between those three points. We will often denote by $[x,y,z]$ a geodesic triangle between $x,y$ and $z$ even if it is not necessarily unique.

\begin{definition}\label{definition: hyperbolicite triangle fin}

A geodesic metric space $(X,d)$ is \emph{$\delta$-hyperbolic} for some $\delta \geq 0$ if all geodesic triangles $[x,y,z]$ are $\delta$-thin, i.e. every side of the triangle is contained in the $\delta$-neighbourhood of the union of the other two sides.\\

A geodesic metric space $X$ is said to be \emph{hyperbolic} if there exists $\delta \geq 0$ such that $X$ is $\delta$-hyperbolic.
\end{definition}

The assumption of being geodesic is not mandatory to define hyperbolicity.
We recall the definition of the Gromov-product, let $X$ be a metric space and $x,y,z \in X$, $$(x,z)_{y}=\frac{1}{2}(d(x,y)+d(y,z)-d(x,z)).$$
Note that the Gromov products are $1$-Lipschitz in the three variables.

\begin{definition}\label{definition: hyperbolicite condition des quatres points}

Let $\delta>0$, a metric space $X$ is \emph{$\delta$-hyperbolic} in the sense of Gromov if for all four points $x_{0},x_{1},x_{2},x_{3} \in X$,
$$ (x_{1},x_{3})_{x_0}\ge \min((x_{1},x_{2})_{x_{0}},(x_{2},x_{3})_{x_{0}}) -\delta . $$
\end{definition}

According to \cite[Proposition 21.]{GhysDelaHarpe}, for a geodesic metric space $(X,d)$,
if triangles in $X$ are $\delta$-thin then it is $8\delta$-hyperbolic in the sense of Gromov and conversely if $X$ is $\delta$-hyperbolic in the sense of Gromov, triangles are $4\delta$-thin. From now on, we will speak almost exclusively of geodesic spaces. 

Note that the Gromov product between three points has a particular interpretation in a geodesic hyperbolic space. For all $x,y,z \in X$ in a hyperbolic geodesic metric space, $(x,y)_{z}$ is roughly the distance from $z$ to a geodesic between $x$ and $y$.

\begin{lem} \label{lemma : DrutuKapovichcomparaisonproduitdeGromovet distanceàunegeodesique}\cite[Lemma 11.22]{DrutuKapovichGeometricGroupTheory}

Let $X$ be a geodesic $\delta$-hyperbolic space. For every $x,y,z \in X$ and every geodesic $[x,y]$, we have :
$$(x|y)_{z}\le d(z,[x,y]) \le (x|y)_{z}+2\delta . $$
\end{lem}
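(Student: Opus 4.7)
The plan is to prove the two inequalities separately, the lower bound being essentially a direct consequence of the triangle inequality, and the upper bound exploiting $\delta$-thinness of the triangle $[x,y,z]$ via a specific choice of point on $[x,y]$.

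For the lower bound, I will take an arbitrary $w\in[x,y]$ and use that $w$ lies on a geodesic between $x$ and $y$, so $d(x,w)+d(w,y)=d(x,y)$. Combining this with $d(x,z)\le d(x,w)+d(w,z)$ and $d(y,z)\le d(y,w)+d(w,z)$ and summing gives $d(x,z)+d(y,z)-d(x,y)\le 2d(w,z)$, i.e.\ $(x|y)_z\le d(w,z)$. Taking the infimum over $w\in[x,y]$ yields $(x|y)_z\le d(z,[x,y])$. This part is clean and independent of hyperbolicity.

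For the upper bound, the idea is to pick the ``canonical'' candidate point on $[x,y]$ suggested by the tripod picture. Set $\alpha=(y|z)_x$, $\beta=(x|z)_y$, $\gamma=(x|y)_z$, so that $\alpha+\beta=d(x,y)$, $\alpha+\gamma=d(x,z)$ and $\beta+\gamma=d(y,z)$. Since $0\le\alpha\le d(x,y)$, I can take $w\in[x,y]$ with $d(x,w)=\alpha$ and $d(y,w)=\beta$. By $\delta$-thinness of the triangle $[x,y,z]$, there exists $p$ on $[x,z]\cup[y,z]$ with $d(w,p)\le\delta$ (here I tacitly use the thin-triangles formulation; otherwise I replace $\delta$ by the comparable constant given by the equivalence after Definition~3.2). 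In the case $p\in[x,z]$, I get $d(x,p)\ge d(x,w)-\delta=\alpha-\delta$, hence $d(p,z)=d(x,z)-d(x,p)\le\gamma+\delta$. Symmetrically, if $p\in[y,z]$ then $d(p,z)\le\gamma+\delta$ using $d(y,w)=\beta$. In either case, the triangle inequality gives $d(z,[x,y])\le d(z,w)\le d(z,p)+d(p,w)\le\gamma+2\delta=(x|y)_z+2\delta$.

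I expect no real obstacle: the only subtlety is the convention on $\delta$ (thin-triangles vs.\ four-point condition), which only affects the multiplicative constant in front of $\delta$ and can be absorbed using the equivalence stated right after Definition~3.2. The heart of the argument is simply that the point $w\in[x,y]$ at distance $\alpha$ from $x$ is forced, by thinness, to be close to a point of $[x,z]\cup[y,z]$ whose distance to $z$ is exactly $\gamma$ in the tripod model, with an error of at most $\delta$ in each of the two triangle-inequality steps.
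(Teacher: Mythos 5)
Your proof is correct and complete. The paper states this lemma as a cited result from Drutu--Kapovich without reproducing a proof, so there is nothing internal to compare against; your argument (lower bound from the triangle inequality using that $w$ lies on a geodesic, upper bound by placing $w$ at the internal point with $d(x,w)=(y|z)_x$ and invoking $\delta$-thinness to find $p$ on $[x,z]\cup[y,z]$ with $d(p,z)\leq (x|y)_z+\delta$) is exactly the standard proof one finds in the reference. The parenthetical hedge about the thin-triangles versus four-point convention is unnecessary here, since the paper's Definition~\ref{definition: hyperbolicite triangle fin} already takes ``$\delta$-hyperbolic'' to mean $\delta$-thin triangles, which is what you use.
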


In Section \ref{section : bolicité forte pour gp hyperbolique} and Section \ref{section : bolicité forte pour gp relativement hyperbolique}, we will also use the following lemma about geodesics in hyperbolic space.

\begin{lem} \cite[Lemma 1.15]{BridsonHaefliger} \label{lemme de Bridson} 

Let $X$ be a geodesic space such that geodesic triangles are $\delta$-thin. Let $\gamma,\gamma' : [0,T] \mapsto X$ be geodesics with $\gamma(0)=\gamma'(0)$. If $d(\gamma(t_{0}),im(\gamma'))\le K$, for some $K>0$ and $t_{0} \in [0,T]$, then $d(\gamma(t),\gamma'(t))\le 2\delta$ for all $t \le t_{0}-K-\delta$.

\end{lem}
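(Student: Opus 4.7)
The plan is to reduce the statement to a single geodesic triangle and apply the thin-triangle hypothesis. Let $p=\gamma(t_0)$ and pick $q=\gamma'(s)\in\mathrm{im}(\gamma')$ with $d(p,q)\le K$. I would build a geodesic triangle with vertices $\gamma(0)$, $p$, and $q$, choosing two of its three sides to be the subsegments $\gamma|_{[0,t_0]}$ and $\gamma'|_{[0,s]}$, and the third to be any geodesic segment from $p$ to $q$, of length at most $K$.

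Next, for any $t\le t_0$, the point $\gamma(t)$ lies on the $\gamma$-side of this triangle, so by $\delta$-thinness it is within $\delta$ of a point on one of the other two sides. If it were within $\delta$ of some $r\in[p,q]$, the triangle inequality would give $t_0-t=d(\gamma(t),p)\le \delta+K$, forcing $t\ge t_0-K-\delta$. Thus, whenever $t<t_0-K-\delta$, the point $\gamma(t)$ must lie within $\delta$ of some $\gamma'(s')$ with $s'\in[0,s]$.

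I would then compare parameters along $\gamma'$: since $d(\gamma(0),\gamma(t))=t$, $d(\gamma(0),\gamma'(s'))=s'$, and $d(\gamma(t),\gamma'(s'))\le\delta$, two applications of the triangle inequality give $|t-s'|\le\delta$. Because $\gamma'$ is a geodesic, $d(\gamma'(s'),\gamma'(t))=|s'-t|\le\delta$, and hence $d(\gamma(t),\gamma'(t))\le 2\delta$. The boundary case $t=t_0-K-\delta$ would follow by continuity.

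The step requiring the most care—and the only real subtlety—is the initial setup of the triangle. Because the thin-triangle condition only compares a point on one side to the union of the other two sides, it is essential that two of those sides be honest subsegments of $\gamma$ and $\gamma'$ parametrized by arc length from $\gamma(0)$; otherwise one could only conclude that $\gamma(t)$ is close to the image of $\gamma'$, not to its correctly parametrized point. Once this alignment is in place, the remainder is a short contrapositive argument combined with two uses of the triangle inequality.
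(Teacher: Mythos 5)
Your proof is correct: the triangle $\bigl(\gamma(0),\gamma(t_0),q\bigr)$ with sides $\gamma|_{[0,t_0]}$, $\gamma'|_{[0,s]}$, and a geodesic $[p,q]$ of length at most $K$, combined with $\delta$-thinness, the short contrapositive ruling out proximity to $[p,q]$ when $t<t_0-K-\delta$, and the reverse triangle inequality matching arc-length parameters, is exactly the standard argument given for this lemma in Bridson--Haefliger, which the paper cites without reproducing. The continuity remark for the closed endpoint $t=t_0-K-\delta$ correctly patches the non-strict inequality in the statement.
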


\subsection{Visual pseudo-metric in hyperbolic spaces}\label{subsubsection: angles dans espaces hyperboliques}
Here we describe a less conventional property of hyperbolic spaces, which will be used in Section \ref{section : bolicité forte pour gp hyperbolique}. To metrize $\partial X$, the metric traditionally used is the visual metric, which is a metric comparable to $e^{-\varepsilon(x|y)_{o}}$, where $o$ is a base point and $\varepsilon$ is a parameter. The result we will mention here is a generalization of \cite{AlvarezLafforgueQuotienthyperbolic} and allows to construct a visual metric on an entire hyperbolic space, not just on its boundary.
This notion of angle generalizes the angle of hyperbolic manifolds coming from their CAT(0) geometry.

\begin{proposition}\cite[Proposition 3.2.]{anglehyperbolicHaettelChatterjiDahmaniLecureux}\label{Proposition: angle for hyperbolic metric space}

Fix a $\delta$-hyperbolic geodesic space $X$, and choose $\varepsilon, D > 0$ such that $0\leq \varepsilon \leq \frac{log(2)}{\delta+D}$. Let $\alpha=De^{-2D\varepsilon}$ and $\beta= \frac{8}{\varepsilon}$. For any $a\in X$, there exists a pseudo-distance $d^{a}_{\varepsilon}$ on $X$ such that :
\begin{itemize}
\item $d^{a}_{\varepsilon}(x,y) \leq \beta e^{-\varepsilon (x|y)_{a}}$ for all $x,y \in X$,

\item $\alpha e^{-\varepsilon (x|y)_{a}}\leq d^{a}_{\varepsilon}(x,y)$ for every $x,y \in X$ with $d(x,y) \geq 2D$,

\item for all $g \in Isom(X)$, for all $x,y \in X$, $d^{g.a}_{\varepsilon}(g.x,g.y)=d^{a}_{\varepsilon}(x,y)$.

\end{itemize}
\end{proposition}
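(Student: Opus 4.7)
The plan is to follow the classical visual-metric approach of Frink--Bourdon: produce a pseudo-distance on $X$ by a chain construction applied to a gauge derived from the Gromov product at the basepoint $a$. Concretely, I would set $\rho^a(x,y)=e^{-\varepsilon(x|y)_a}$ for $x\ne y$ and $\rho^a(x,x)=0$, then define
$$d^a_\varepsilon(x,y) = \inf\Biggl\{\sum_{i=0}^{n-1}\rho^a(x_i,x_{i+1}) : n\ge 1,\ x_0=x,\ x_n=y\Biggr\}.$$
Symmetry, the triangle inequality, and vanishing on the diagonal are then formal, and the isometry invariance is direct: $(g.x|g.y)_{g.a}=(x|y)_a$ makes $\rho^a$ equivariant, so every chain from $g.x$ to $g.y$ with base $g.a$ is the $g$-translate of a chain from $x$ to $y$ with base $a$.

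The analytic heart of the argument is an approximate ultrametric inequality for $\rho^a$. Definition~\ref{definition: hyperbolicite condition des quatres points} gives $(x|z)_a \ge \min((x|y)_a, (y|z)_a) - \delta$, and exponentiating yields
$$\rho^a(x,z) \le e^{\varepsilon\delta}\,\max\bigl(\rho^a(x,y),\rho^a(y,z)\bigr).$$
The hypothesis $\varepsilon(\delta+D)\le \log 2$ forces $e^{\varepsilon\delta}\le 2$, so $\rho^a$ is a $2$-quasi-ultrametric. A standard Frink argument then produces a constant $c>0$ with $c\,\rho^a \le d^a_\varepsilon \le \rho^a$. The upper bound of the proposition follows immediately from $d^a_\varepsilon(x,y)\le \rho^a(x,y)=e^{-\varepsilon(x|y)_a}\le \beta\,e^{-\varepsilon(x|y)_a}$ (using $\beta=8/\varepsilon\ge 1$, valid since $\varepsilon\le\log 2<1$), and the Frink side of the estimate already delivers a qualitative lower bound of the form $c\,e^{-\varepsilon(x|y)_a}$.

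To sharpen the lower bound to $\alpha\,e^{-\varepsilon(x|y)_a}$ with $\alpha=De^{-2D\varepsilon}$ under the hypothesis $d(x,y)\ge 2D$, I would exploit Lemma~\ref{lemma : DrutuKapovichcomparaisonproduitdeGromovet distanceàunegeodesique}, which says that any geodesic $[x,y]$ comes within $(x|y)_a+2\delta$ of $a$. When $d(x,y)\ge 2D$, this geodesic carries a sub-arc of length $2D$ around its point closest to $a$, on which the Gromov product at $a$ is at most $(x|y)_a+D+O(\delta)$. Any chain realising $d^a_\varepsilon(x,y)$ must, in a controlled sense, pass through this region, and a careful case analysis---separating chains that ``jump over'' the arc (whose Gromov-product terms inherit the bound) from chains that ``follow'' it (whose accumulated contribution is at least $D$ times the per-unit-length cost $e^{-\varepsilon(x|y)_a-2\varepsilon D}$)---produces the factor $D$ together with the exponential $e^{-2D\varepsilon}$.

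The main obstacle is precisely this sharper lower bound: the purely combinatorial Frink estimate yields a universal constant $c$ that may be smaller than $De^{-2D\varepsilon}$ (the latter peaks at $\tfrac{1}{2e\varepsilon}$ in $D$ for small $\varepsilon$), so one really needs the geometry of hyperbolic geodesics---in particular the thinness of triangles---to route every chain through the closest-to-$a$ sub-arc. The hypothesis $\varepsilon(\delta+D)\le\log 2$ is designed exactly so that the $\delta$-budget preserves the $2$-quasi-ultrametric while the $D$-budget preserves the factor $e^{-2D\varepsilon}$; the residual multiplicative losses are absorbed into the generous constant $\beta=8/\varepsilon$, after which the three stated properties follow directly from the construction.
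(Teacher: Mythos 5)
The proposition is a quoted result from \cite{anglehyperbolicHaettelChatterjiDahmaniLecureux}; the present paper does not reprove it, so there is no in-paper argument to compare against. Judged on its own, your Frink--Bourdon chain approach has a structural flaw that no amount of case analysis can repair. By definition the chain infimum satisfies $d^a_\varepsilon(x,y)\le\rho^a(x,y)=e^{-\varepsilon(x|y)_a}$ for all $x,y$, since the two-point chain $x_0=x$, $x_1=y$ is admissible. Yet the proposition demands $d^a_\varepsilon(x,y)\ge\alpha\,e^{-\varepsilon(x|y)_a}$ with $\alpha=De^{-2D\varepsilon}$ whenever $d(x,y)\ge 2D$, and under the hypothesis $\varepsilon(\delta+D)\le\log 2$ one automatically has $\varepsilon D<\log 2$, hence $e^{-2\varepsilon D}>\tfrac{1}{4}$ and $\alpha>D/4$. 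Thus for every $D>4$ (with any compatible $\delta,\varepsilon$) the constant $\alpha$ exceeds $1$, forcing $d^a_\varepsilon(x,y)>e^{-\varepsilon(x|y)_a}$, which is impossible for a chain infimum over the gauge $\rho^a$. Already for $D>1$ one has $\alpha>\tfrac14$, exceeding the best constant the quasi-ultrametric chain lemma can give. Your sketch of separating chains that ``jump over'' the arc from those that ``follow'' it does not help: a chain that jumps over in a single step contributes exactly $e^{-\varepsilon(x|y)_a}$, already strictly below the required $\alpha\,e^{-\varepsilon(x|y)_a}$.

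The moral is that the pseudo-distance in question cannot be dominated by $e^{-\varepsilon(x|y)_a}$ and so cannot come from a chain construction over that gauge. The factor $D$ in $\alpha$ is a strong hint that the construction must be of length-functional type (integrating a density such as $e^{-\varepsilon d(a,\cdot)}$ along geodesics or flows, in the spirit of Alvarez--Lafforgue): a geodesic from $x$ to $y$ with $d(x,y)\ge 2D$ carries a sub-arc of length $\sim 2D$ on which $d(a,\cdot)\le(x|y)_a+D+O(\delta)$, and integrating over it produces precisely a lower bound of order $D\,e^{-\varepsilon((x|y)_a+2D)}=\alpha\,e^{-\varepsilon(x|y)_a}$. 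To prove the proposition you would need to replace the Frink chain infimum by such a path-based functional and then verify the triangle inequality and the upper bound $\le\beta e^{-\varepsilon(x|y)_a}$ directly from the exponential decay of the density; the chain metrization lemma is not the right tool here.
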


\subsection{Relatively hyperbolic groups}\label{subsection: groupes relativement hyperboliques}

In this subsection, we will describe some useful tools to work on relatively hyperbolic groups.

\subsubsection{Fine graphs and relatively hyperbolic groups}

In this paragraph, we give a definition of relatively hyperbolic groups owing to Farb \cite{Farbrelhyp} and Bowditch \cite{Bowditchrelhyp}. The definition is based on a graph constructed from the Cayley graph, the coned-off graph.

\begin{definition}\label{definition: coned-off}(Coned-off graph)

Let $G$ be a finitely generated group and $H_{1},...,H_{k}$ subgroups of $G$. We consider $Cay(G,S)$ a Cayley graph of $G$ with respect to a finite generating set $S$. The coned-off of $G$ relatively to the subgroups $H_{1},...,H_{k}$ is the graph constructed as follows : for each $i$ and for each left coset $gH_{i}$ of $H_{i}$, add a vertex $\widehat{gH_{i}}$ and for each $h \in H_{i}$, add an edge starting at $\widehat{gH_{i}}$ and ending at $gh$. We denote this graph by $\widehat{Cay}(G,S)$.\end{definition}

\begin{figure}[!ht]
    \centering
   \includegraphics[scale=0.6]{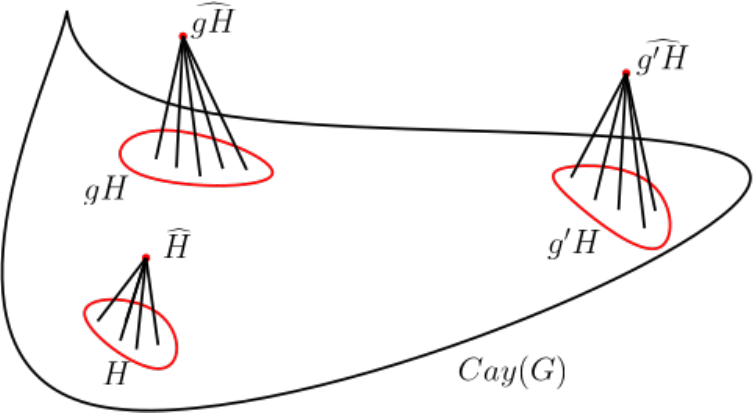}
   \caption{Coned-off graph}
  \label{Coned-off graph}
  \end{figure}

Let us remark that, when the subgroups $H_{i}$ are infinite, the coned-off graph is locally infinite, see Figure \ref{Coned-off graph}.

Informaly speaking, if the subgroups $H_{i}$, $i \in \{1,...,n\}$ and their cosets are the only obstructions of $G$ to be hyperbolic, then this graph must be hyperbolic because each translate of $H_{i}$ has a diameter $2$ for the metric of the new graph. However, in the definition of relatively hyperbolic, we require an additional combinatorial condition on the coned-off graph, called \emph{uniformly finesse}.

\begin{definition}{(Fine graph)} \label{definition: graph fin} \cite[Proposition 2.1.]{Bowditchrelhyp}

 A graph is \emph{fine} if for every edge $e$, for all $L\ge0$, the set of simple simplicial loops of length at most $L$ which contain $e$ is finite. \\

 It is \emph{uniformly fine} if this set has cardinality bounded above by a function depending only on $L$. We denote by $\varphi$ this function of uniform finesse.

\end{definition}

The followings graph are examples of uniformly fine graphs.

\begin{ex}
\begin{itemize}
    \item Any locally finite graph is fine.
    
    \item Trees are fine, since they have no loops.
\end{itemize}
\end{ex}

We remark that finesse is not invariant by quasi-isometry. In fact, $\widehat{Cay}(G,S)$ is quasi-isometric to $Cay(G,S\cup H_{1}\cup...\cup H_{k})$ which is not uniformly fine. Thanks to this quasi-isometry, $Cay(G,S\cup H_{1}\cup...\cup H_{k})$ is hyperbolic if and only if $\widehat{Cay}(G,S)$ is hyperbolic. Therefore, $Cay(G,S\cup H_{1}\cup...\cup H_{k})$ is sometimes used as a model of the coned-off graph, for example, in \cite{OsinRelativelyHyperbolic}.

Let us give now Bowditch's definition \cite[Theorem 7.10.]{Bowditchrelhyp} of relative hyperbolicity.

\begin{definition} \label{definition: groupe relativement hyperbolique}
 
A pair $(G,{H_{1},...,H_{k}})$ of a finitely generated group $G$ and a collection of proper subgroups is relatively hyperbolic, if one (or equivalently every) coned-off Cayley graph $\widehat{Cay}(G)$ over $H_{1},...,H_{k}$ is hyperbolic and uniformly fine.

 \end{definition}

The uniform finesse condition can be replaced by the Bounded Coset Penetration $(BCP)$ defined in \cite{Farbrelhyp}.

When the coned-off Cayley-graph is hyperbolic but not uniformly fine (or does not satisfy the $(BCP)$ property), we refer to weakly relatively hyperbolic groups in \cite{Farbrelhyp}. An interesting example of a weakly relatively hyperbolic group is the mapping class group of a finite type surface. According to Masur and Minsky \cite[Theorem 1.3.]{MasurMinsky}, the coned-off graph of the mapping class groups with respect to a finite collection of stabilizers of curves is hyperbolic, however it is not uniformly fine, if the surface is not a one-punctured torus or a four-times punctured sphere. Moreover, the article \cite[Proposition 10] {AndersonShackletonKennethZAramayonaCriterePasRelHyp} shows that the mapping class group is not relatively hyperbolic with respect to any finite set of infinite-index subgroups. One difficulty of working on relatively hyperbolic groups comes from the fact that these graphs are not locally finite. Angles and cones, which we define in Paragraph \ref{subsubsection: angles and cones}, allow us to solve this difficulty.

\subsubsection{Angles and cones}\label{subsubsection: angles and cones}

In the next two paragraphs, we return to uniformly fine graphs. We begin by discussing very important objects in these graphs: angles and cones. Cones will play the role of neighborhoods in uniformly fine graphs, with the advantage that their cardinality is uniformly controlled. The majority of the results presented here come from \cite{ChatterjiDahmani}, and we prove some other. We start with the definition of angles.

\begin{definition}\label{definition: angle} {(Angle)}

Let $X$ be a graph. Given a vertex $v \in X^{(0)}$ and two unoriented edges $e_{1}=\{v,w\}$ and $e_{2}=\{v,w'\}$ such that  $v \in e_{1}\cap e_{2}$, the \emph{angle} between $e_{2}$ and $e_{1}$ at $v$, denoted by $\measuredangle_{v}(w,w')$, is defined as follows :
$$ \measuredangle_{v}(e_{1},e_{2})=d_{X   \backslash \{ v \} }(w,w').$$

In other words, the \emph{angle} between $e_{1}=\{v,w\}$ and $e_{2}=\{v,w'\}$ at $v$ is the infimum of length of paths from $w$ to $w'$ that avoid $v$.
In particular, the angle could be equal to $+\infty$.

By convention, we will use the following abuse of notation : if $x_{1}, x_{2}$ and $v$ are vertices of $X$, distinct from $v$, we say that $ \measuredangle_{v}(x_{1},x_{2}) > \theta$ if there exist two edges $e_{1}$, $e_{2}$, such that $ v \in e_{1}\cap e_{2} $,  with $e_{i}$ on a geodesic between $v$ and $x_{i}$ $(i=1,2)$ and such that $ \measuredangle_{v}(e_{1},e_{2}) > \theta$. 
We say that $ \measuredangle_{v}(x_{1},x_{2}) \leq \theta$ otherwise, i.e. for all geodesics between $v$ and $x_{i}$ $(i=1,2)$ and starting by the edges $e_{i}$ $(i=1,2)$ such that $v \in e_{1}\cap e_{2}$, then $\measuredangle_{v}(e_{1},e_{2})\leq \theta$.
\end{definition}

\begin{figure}[!ht]
    \centering
   \includegraphics[scale=0.6]{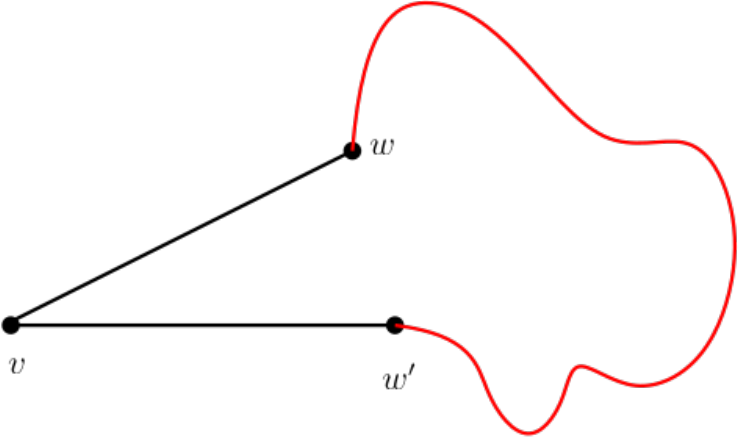}
    \caption{Angles between $e_{1}=\{v,w\}$ and $e_{2}=\{v,w'\}$ seen from $v$}
    \label{ fig: angles }
 \end{figure}

It should be noted that the angle between two points $a$ and $b$ viewed from $c$ depends only on the first edge of the geodesics connecting $a$ to $c$ and, respectively, connecting $b$ to $c$.

 \begin{proposition} \label{proposition: triangle inequality} (Triangle inequality for angles)\\ For all edges $e_{1}$, $e_{2}$ and $e_{3}$ and vertex $v$ such that $ v \in e_{1}\cap e_{2} \cap e_{3}$ , we have:
$$ \measuredangle_{v}(e_{1},e_{3})\le \measuredangle_{v}(e_{1},e_{2}) + \measuredangle_{v}(e_{2},e_{3}) .$$

 Let $a,b,c \in X^{(0)} \setminus \{v\}$, we have also:
 $$ \measuredangle_{v}(a,b)\le \measuredangle_{v}(a,c) + \measuredangle_{v}(c,b) .$$

 \end{proposition}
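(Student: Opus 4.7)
The proof is essentially a direct consequence of the definition, which identifies $\measuredangle_v(e_1,e_2)$ with the graph distance $d_{X\setminus\{v\}}(w,w')$; the triangle inequality for angles is just the triangle inequality for this graph distance. I would split the argument into the edge version and then the vertex version.

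\textbf{Step 1: the edge version.} Write $e_i=\{v,w_i\}$ for $i=1,2,3$. By definition,
\[
\measuredangle_v(e_i,e_j)=d_{X\setminus\{v\}}(w_i,w_j),
\]
so the inequality to prove is
\[
d_{X\setminus\{v\}}(w_1,w_3)\le d_{X\setminus\{v\}}(w_1,w_2)+d_{X\setminus\{v\}}(w_2,w_3),
\]
which is the ordinary triangle inequality in the subgraph $X\setminus\{v\}$. Concretely, if both angles on the right are finite, concatenate a path from $w_1$ to $w_2$ avoiding $v$ realizing the first distance with a path from $w_2$ to $w_3$ avoiding $v$ realizing the second; the resulting walk from $w_1$ to $w_3$ avoids $v$, and its length is the sum of the two angles. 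If either angle on the right is $+\infty$ the inequality is vacuous.

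\textbf{Step 2: from edges to vertices.} Recall the convention: $\measuredangle_v(a,b)\le\theta$ means that for \emph{every} pair of first edges $e_a,e_b$ of geodesics $[v,a],[v,b]$ one has $\measuredangle_v(e_a,e_b)\le\theta$, while $\measuredangle_v(a,c)$ and $\measuredangle_v(c,b)$ provide upper bounds on $\measuredangle_v(e_a,e_c)$ and $\measuredangle_v(e_c,e_b)$ for every choice of $e_a,e_b,e_c$. Fix therefore arbitrary first edges $e_a$ of a geodesic from $v$ to $a$ and $e_b$ of a geodesic from $v$ to $b$, and choose any first edge $e_c$ of a geodesic from $v$ to $c$ (it exists since $c\neq v$). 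By Step 1 applied at $v$ to the three edges $e_a,e_c,e_b$,
\[
\measuredangle_v(e_a,e_b)\le\measuredangle_v(e_a,e_c)+\measuredangle_v(e_c,e_b)\le\measuredangle_v(a,c)+\measuredangle_v(c,b),
\]
where the second inequality uses the convention. Since $e_a,e_b$ were arbitrary among first edges of geodesics to $a$ and $b$, the convention yields $\measuredangle_v(a,b)\le\measuredangle_v(a,c)+\measuredangle_v(c,b)$.

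\textbf{Main obstacle.} There is no real obstacle; the only subtlety is bookkeeping with the convention, namely that $\measuredangle_v(a,b)$ is controlled by the worst (supremum) pair of first edges while $\measuredangle_v(a,c),\measuredangle_v(c,b)$ also dominate particular pairs involving $e_c$. Once one fixes $e_a,e_b$ first and \emph{then} picks $e_c$ freely, the two inequalities compose cleanly, and the proof is immediate from Step 1.
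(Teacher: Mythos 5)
Your proof is correct and follows the same approach as the paper's (very terse) proof: the edge version is the ordinary triangle inequality in $X\setminus\{v\}$, and the vertex version is obtained by unwinding the convention that $\measuredangle_v(a,b)$ is controlled by the supremum over pairs of first edges and feeding in the edge version with a fixed $e_c$. The only difference is that you spell out the bookkeeping that the paper leaves implicit.
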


 \begin{proof}
     The first property comes directly from the triangle inequality in $X \setminus \{v\}$.\\

     To deduce the second inequality with vertices, we just have to come back to the definition of angle and use the first point.

 \end{proof}

In a $\delta$-hyperbolic graph, large angles at a vertex force geodesics to pass through this vertex.

\begin{proposition}\label{proposition: les propirétés des angles}\cite[Proposition 1.2]{ChatterjiDahmani}  
Let $X$ be a $\delta$-hyperbolic graph and $a,b,c$ vertices in $X$, $a,b \neq c$.
\begin{itemize}
    \item if $\measuredangle _{c}(a,b) > 12\delta$, then every geodesic between $a$ and $b$ goes through $c$;

     \item if $e_{1}$, $e_{2}$ are two edges such that $c  \in e_{1}  \cap e_{2}$ and that are on geodesics from $c$ to $a$, then $\measuredangle _{c}(e_{1},e_{2}) \leq 6 \delta$;

     \item for any $\theta \geq 0$, if $\measuredangle _{c}(a,b) > \theta + 12\delta$ then for all choice of edges $e_{1}$, $e_{2}$ at $c$ starting geodesics respectively to $a$ and $b$, $\measuredangle _{c}(e_{1},e_{2}) \geq \theta$.
\end{itemize}
\end{proposition}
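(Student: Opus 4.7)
The plan is to prove the three bullet points in order; the third will follow from the first two via the angle triangle inequality of Proposition \ref{proposition: triangle inequality}. For the first point I would argue by contrapositive: suppose a geodesic $\gamma$ from $a$ to $b$ avoids $c$ and fix any edges $e_1=\{c,w_1\}$, $e_2=\{c,w_2\}$ starting geodesics $\gamma_1=[c,a]$, $\gamma_2=[c,b]$. I would exhibit a path from $w_1$ to $w_2$ in $X\setminus\{c\}$ of length at most $12\delta$, built by travelling outward from $c$ along $\gamma_1$ until $\delta$-thinness of the triangle $\gamma_1\cup\gamma_2\cup\gamma$ lets me jump onto $\gamma$, then following $\gamma$ (which avoids $c$ by hypothesis), and symmetrically returning along $\gamma_2$ to $w_2$. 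Sub-geodesic pieces of $\gamma_i$ starting at $w_i$ and heading toward $a$ or $b$ cannot revisit $c$, and the two transversal jumps of length at most $\delta$ can be forced to avoid $c$ by taking them past the tripod branch point, where the relevant endpoints lie at distance more than $\delta$ from $c$.

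For the second point, I fix two edges $e_1,e_2$ at $c$ starting geodesics to the same vertex $a$, giving adjacent vertices $w_1,w_2$ at distance $d(c,a)-1$ from $a$. Gromov's four-point condition based at $c$ yields
\[(w_1|w_2)_c \geq \min\bigl((w_1|a)_c,(a|w_2)_c\bigr)-\delta = 1-\delta,\]
so $d(w_1,w_2)\leq 2\delta$ in $X$. If a geodesic realising this distance avoids $c$ we already have the bound; otherwise I detour by moving outward along $\gamma_1$ to a point $p_1=\gamma_1(t)$ with $t$ slightly larger than $\delta$, crossing to $p_2=\gamma_2(t)$ (again with $d(p_1,p_2)\leq 2\delta$ by the same four-point estimate, and the crossing path being forced to avoid $c$ because its endpoints lie at distance more than $\delta$ from $c$), and returning along $\gamma_2$ to $w_2$. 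The total length is at most $6\delta$.

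For the third point, given $e_1,e_2$ realising $\measuredangle_c(a,b)>\theta+12\delta$ and any other pair $e_1',e_2'$ at $c$ starting geodesics from $c$ to $a$ and $b$, the second point yields $\measuredangle_c(e_1,e_1')\leq 6\delta$ and $\measuredangle_c(e_2,e_2')\leq 6\delta$. Two applications of the angle triangle inequality from Proposition \ref{proposition: triangle inequality} then give
\[\measuredangle_c(e_1',e_2')\geq \measuredangle_c(e_1,e_2)-\measuredangle_c(e_1,e_1')-\measuredangle_c(e_2,e_2')>\theta.\]

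The main obstacle will be the bookkeeping for the first point: one has to control how far along $\gamma_1$ (respectively $\gamma_2$) one needs to travel before the triangle thinness produces a transversal jump onto $\gamma$ whose endpoints are safely at distance more than $\delta$ from $c$. The two regimes where $(a|b)_c$ is large (long fellow-travelling of $\gamma_1$ and $\gamma_2$ close to $c$, with the jumps occurring far from $c$) versus small (jumps happening near $c$, requiring extra care) must be handled uniformly, and optimising the cutoff between the along-$\gamma_i$ pieces and the transversal jumps is what secures the final constant $12\delta$.
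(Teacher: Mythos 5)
Your reduction of the third bullet to the second via Proposition~\ref{proposition: triangle inequality} is correct, and the detour argument for the second bullet is sound in spirit (though beware that the paper's $\delta$ is the thin-triangle constant, so the four-point inequality holds only with a worse constant; arguing via thinness of the degenerate bigon formed by the two geodesics from $c$ to $a$ is cleaner). The paper itself imports the proposition from Chatterji--Dahmani without proof, so I judge your sketch on its own terms.

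The first bullet has a genuine gap. You commit to routing the $w_1$--$w_2$ detour through $\gamma=[a,b]$, but since $d(c,\gamma)\geq (a|b)_{c}$, any point $p\in\gamma_1$ at distance $s$ from $c$ has $d(p,\gamma)\geq (a|b)_{c}-s$; it is therefore not within $\delta$ of $\gamma$ until $s\geq (a|b)_{c}-\delta$. Your path thus spends at least $2\bigl((a|b)_{c}-\delta-1\bigr)$ travelling along $\gamma_1\cup\gamma_2$ alone, which exceeds $12\delta$ as soon as $(a|b)_{c}$ is large. No choice of cutoff saves this: the jump onto $\gamma$ structurally cannot happen near $c$. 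What is needed is a real case split, not bookkeeping. When $(a|b)_{c}$ is large, $\gamma_1$ and $\gamma_2$ $\delta$-fellow-travel near $c$; go out to distance about $2\delta$ along $\gamma_1$ and cross directly to $\gamma_2$ --- both endpoints of the crossing are then farther than $\delta$ from $c$, so the crossing geodesic misses $c$ --- without ever touching $\gamma$. Only when $(a|b)_{c}$ is small does $\gamma$ pass close to $c$, and there your $\gamma$-route has the required bounded length. Your sketch covers only the latter regime.
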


Following Chatterji and Dahmani in \cite{ChatterjiDahmani}, let us define cones.

\begin{definition} {(Cone)}\\
Let $X$ be a graph. Let $e$ be an edge
and a number $\theta>0$. 

The cone of parameter $\theta$ around the edge $e$, denoted by $Cone_{\theta}(e)$, is the subset of vertices and edges of $X$, $v$ and $e'$ such that there is a path of length at most $\theta$ from $e$ to $e'$ and for which two consecutive edges make an angle at most $\theta$, i.e.: 
$$\begin{aligned} 
         Cone_{\theta}(e) & =\{e' ~|~ \exists e_{0}=e, e_{1}, ..., e_{n}=e'\\
       & \text{such that} ~ n\le \theta,  \forall i , \exists v_{i}\in e_{i} \cap e_{i+1} \text{and} \measuredangle_{v_{i}}(e_{i},e_{i+1}) \le \theta \} \\
         & \cup \{ v ~|~ \exists e'=\{v,w\} ~\text{and}~ e_{0}=e, e_{1}, ..., e_{n}=e' \\
         & \text{such that}~ n\le \theta,  \forall i , \exists v_{i}\in e_{i} \cap e_{i+1} \text{and} \measuredangle_{v_{i}}(e_{i},e_{i+1}) \le \theta \} .
\end{aligned}$$

\end{definition}

\begin{proposition} \label{cardinal cones}\cite[Proposition 1.10]{ChatterjiDahmani} 
In a fine graph, cones are finite.

If the graph is uniformly fine, for all $e\in  X^{(1)} $ and $\theta > 0$, the cardinality of $Cone_{\theta}(e)$ can be bounded above by a function $f$ of $\theta$, called \emph{function of uniforme finesse} of $X$.

\end{proposition}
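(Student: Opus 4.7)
The plan is to bound the branching factor at each step of the cone-construction process by a function of $\theta$ alone, using the definition of angle to convert the angle condition into a short simple loop, which is then controlled by (uniform) finesse.

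First I would record the key geometric observation: whenever two edges $e_i = \{u,v\}$ and $e_{i+1}=\{v,w\}$ share a vertex $v$ with $\measuredangle_{v}(e_i,e_{i+1}) \leq \theta$, Definition \ref{definition: angle} gives a path $\gamma$ of length at most $\theta$ from $u$ to $w$ in $X \setminus \{v\}$. Choosing $\gamma$ to be a shortest such path makes it simple; moreover, since removing the vertex $v$ from the graph also deletes all edges incident to $v$, the path $\gamma$ does not traverse $e_i$ or $e_{i+1}$. Concatenating $e_i$, $\gamma$, and $e_{i+1}$ therefore yields a simple loop $L$ of length at most $\theta + 2$ containing both $e_i$ and $e_{i+1}$.

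Next I would fix $e_i$ and count the candidates for $e_{i+1}$. By finesse, the set of simple loops of length at most $\theta+2$ containing $e_i$ is finite; in the uniformly fine case, its cardinality is at most $\varphi(\theta+2)$. Each such loop has at most $\theta+2$ edges, so the number of edges $e_{i+1}$ sharing a vertex with $e_i$ and forming an angle at most $\theta$ is bounded by $(\theta+2)\,\varphi(\theta+2)$ (and merely finite, with bound depending on $e_i$, in the non-uniform case). Iterating this bound along the admissible path $e_0=e,e_1,\ldots,e_n=e'$ of length $n\leq \theta$, I obtain
\[
   \#\{\text{edges of } Cone_\theta(e)\} \leq \bigl((\theta+2)\,\varphi(\theta+2)\bigr)^{\theta},
\]
and the vertex count is controlled by the same expression (up to a factor of $2$) since each vertex of the cone is an endpoint of one of these edges. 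Setting $f(\theta)$ equal to this bound gives the function of uniform finesse; in the merely fine case the same induction yields finiteness without a uniform bound.

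The main obstacle I expect is the verification that the loop produced from the angle witness is genuinely simple and genuinely contains the edge $e_i$, so that finesse can be applied. This is a mild point but must be handled carefully: one needs the shortest-path choice of $\gamma$ and the fact that deleting the vertex $v$ removes all incident edges, so that $\gamma$ cannot sneak through $e_i$ or $e_{i+1}$. Once this bookkeeping is in place, the induction on the length of the admissible edge-path goes through mechanically.
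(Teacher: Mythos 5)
Your proof is correct and, as far as I can tell, follows the same approach as the cited proof in \cite{ChatterjiDahmani}: convert each small-angle adjacency of consecutive edges into a short simple loop through $e_i$ by choosing a shortest angle-witnessing path in $X\setminus\{v\}$, bound the branching factor by (uniform) finesse applied to loops of length at most $\theta+2$, and induct along the admissible edge-chain. The only bookkeeping slip is that the edge count of $Cone_\theta(e)$ should be bounded by $\sum_{n=0}^{\lfloor\theta\rfloor}\bigl((\theta+2)\,\varphi(\theta+2)\bigr)^{n}$ rather than by the single power $\bigl((\theta+2)\,\varphi(\theta+2)\bigr)^{\theta}$, since edges reached in fewer than $\lfloor\theta\rfloor$ steps also belong to the cone; this is still a function of $\theta$ alone and does not affect the conclusion.
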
  

The following property allows to use cones in relatively hyperbolic groups as a sort of finite neighborhoods of geodesics in the definition of thin triangles.

\begin{proposition} \cite[Proposition 1.10]{ChatterjiDahmani} \label{conetriangle} {(Cones and triangles)} 

In a $\delta$-hyperbolic graph, geodesic triangles are conically fine, i.e. for any geodesic triangle $[a,b,c]$, every edge $e$ on $[a,b]$ is contained in a cone of parameter $50\delta$ around an edge $e'$ that is either on $[a,c]$ at the same distance of $a$ than $e$, either on $[b,c]$ at the same distance of $b$ than $e$. 
\end{proposition}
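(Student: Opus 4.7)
The plan is to exploit $\delta$-thinness of the geodesic triangle $[a,b,c]$ together with fellow-traveling to locate the sibling edge $e'$, and then to build a short edge-path from $e$ to $e'$ whose cone-angles are controlled by detours around a short geodesic quadrilateral.

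Write $e = \{v,w\}$ with $d(a,v) = k$ and $d(a,w) = k+1$. By $\delta$-thinness of $[a,b,c]$, the vertex $v$ lies within $\delta$ of $[a,c] \cup [b,c]$, and a comparison of $k$ with the Gromov product $(b|c)_{a}$ yields two symmetric cases. I treat the case $k \leq (b|c)_{a}$, in which $v$ is close to $[a,c]$; the complementary case is handled identically after swapping $a \leftrightarrow b$ and using $[b,c]$. Lemma \ref{lemme de Bridson} applied to the geodesics $[a,b]$ and $[a,c]$ starting at $a$ shows that the vertices $v'$ and $w'$ of $[a,c]$ at distances $k$ and $k+1$ from $a$ satisfy $d(v,v') \leq 2\delta$ and $d(w,w') \leq 2\delta$. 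Set $e' = \{v',w'\}$, the edge of $[a,c]$ at the same level as $e$.

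Next, pick geodesics $\sigma$ from $v$ to $v'$ and $\tau$ from $w$ to $w'$, each of length at most $2\delta$. Together with $e$ and $e'$, these form a closed loop of perimeter at most $4\delta + 2$. Define the candidate cone-path from $e$ to $e'$ by $e_{0} = e$, then the successive edges of $\sigma$, then $e'$; its total length is at most $2\delta + 2$. At each shared vertex $u$ along this path (namely $v$, $v'$, or an internal vertex of $\sigma$), the cone-angle is bounded by exhibiting a detour between the two other endpoints of the consecutive edges at $u$: simply follow the complementary part of the quadrilateral. That detour avoids $u$ by construction and has length at most $4\delta$, so every cone-angle is at most $4\delta$. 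Combined with the path-length bound $2\delta + 2$, this yields $e \in Cone_{50\delta}(e')$ with ample slack.

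The main obstacle is verifying that the quadrilateral detours genuinely avoid the excluded vertex $u$. A few degenerate configurations must be handled separately, for instance when $\sigma$ happens to pass through $w$, or $\tau$ through $v$. In such cases the corresponding geodesic has length at most $2$, the quadrilateral degenerates, and $e$, $e'$ are essentially adjacent; a local rerouting of the candidate path then restores the bound, and the generous constant $50\delta$ easily absorbs all such adjustments.
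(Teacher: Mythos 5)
Your overall strategy --- locate a sibling edge $e'$ via fellow-traveling, then connect $e$ to $e'$ by a short geodesic $\sigma$ and bound each cone-angle by a detour around the small quadrilateral $e \cup \tau \cup e' \cup \sigma$ --- is a reasonable sketch and is indeed close in spirit to the argument in Chatterji--Dahmani. But as written there are two genuine gaps.

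First, the appeal to Lemma \ref{lemme de Bridson} to conclude $d(v,v')\le 2\delta$ and $d(w,w')\le 2\delta$ is not justified by your hypothesis $k\le (b|c)_a$. That lemma controls $d(\gamma(t),\gamma'(t))$ only for $t\le t_0-K-\delta$, given some $t_0$ with $d(\gamma(t_0),\operatorname{im}\gamma')\le K$; to cover the far endpoint $w=\gamma(k+1)$ you would need $k+1\le t_0-K-\delta$, which forces $k$ to sit a margin of roughly $5\delta$ below $(b|c)_a$. Your case split $k\le (b|c)_a$ versus the symmetric condition with $a\leftrightarrow b$ therefore leaves a band of width $O(\delta)$ around the corner of the tripod where neither branch produces the claimed fellow-traveling estimate as stated; this band would need a separate (and not purely cosmetic) treatment.

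Second, and more seriously, the handling of the degenerate configurations is incorrect. The claim that whenever $\tau$ passes through $v$ ``the corresponding geodesic has length at most $2$'' does not follow: $v\in\tau$ forces only $d(v,w')=d(w,w')-1\le 2\delta-1$. And the assertion that ``a local rerouting restores the bound'' is unsubstantiated. Consider the extreme case $\delta=0$ (a tree). Take $k=(b|c)_a$, so $v$ is the branching vertex of the tripod and $w$ lies on the $b$-side. Your case places this in the $[a,c]$ branch with $v'=v$ and $\sigma$ trivial, so the cone-path is just $e,e'$ sharing the vertex $v$. But $\measuredangle_v(e,e')=d_{X\setminus\{v\}}(w,w')=+\infty$, since $v$ disconnects $w$ from $w'$ in a tree. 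No rerouting is possible within this branch; the proposition survives here only because one can instead choose $e'$ on $[b,c]$, where in fact $e'=e$. This is not a slack-in-the-constant problem that $50\delta$ absorbs; it is a choice-of-side problem. A correct proof must either sharpen the case split (deciding between $[a,c]$ and $[b,c]$ with an $O(\delta)$ margin away from the corner and handling the corner band separately), or construct the detour so that it steps back along the triangle sides away from $v$ before crossing, rather than crossing directly via $\tau$. As written, your argument exhibits neither fix.
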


A corollary of this proposition is the fact that in a uniformly fine graph intervals are uniformly finite even if the graph is not locally finite.
Let us recall, that for a simplicial graph $X$ and two vertices $x,x'$, $I(x,x')$ is the set of vertices which belong to geodesic between $x$ and $x'$, in other words $I(x,x')=\{ u \in X ~ | ~ d(x,u)+d(u,x')=d(x,x') \}$.

\begin{corollary}

In a uniformly fine and $\delta$-hyperbolic graph $X$, for all $x,x' \in X$,  the cardinality of $I(x,x')$ is bounded above by a function depending of $d(x,x')$, of $\delta$ and of the function of uniform finesse $f$.  
\end{corollary}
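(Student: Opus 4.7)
The plan is to stratify $I(x,x')$ by the distance from $x$ and bound each stratum using cones of fixed parameter, then sum over the stratum index. Fix a geodesic $\gamma: \{0,1,\dots,n\}\to X$ from $x$ to $x'$ of length $n=d(x,x')$, and for $1\le k\le n$ let $e_k=\{\gamma(k-1),\gamma(k)\}$ be its $k$-th edge. For any vertex $y\in I(x,x')$, set $k=d(x,y)$; the cases $y=x$ and $y=x'$ contribute only $2$ to the total count, so assume $1\le k\le n-1$. Pick geodesics $\sigma$ from $x$ to $y$ and $\tau$ from $y$ to $x'$: together with $\gamma$, they form a geodesic triangle $[x,x',y]$.

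The main step is to apply conical finesse (Proposition \ref{conetriangle}) to this triangle: the edge $e_k$ lies on the side $[x,x']$ at distance $k$ from $x$ and distance $n-k$ from $x'$, so it is contained in a cone $Cone_{50\delta}(e')$ where $e'$ is either the edge of $\sigma$ at distance $k$ from $x$, or the edge of $\tau$ at distance $n-k$ from $x'$. In both cases, $e'$ is an edge of the triangle that has $y$ as an endpoint (the last edge of $\sigma$, or the first edge of $\tau$). Hence $y$ is a vertex lying on an edge $e'$ belonging to $Cone_{50\delta}(e_k)$, using the symmetry of the cone relation (the defining path from $e_k$ to $e'$ can be reversed to a path from $e'$ to $e_k$ of the same length and with the same consecutive angles). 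Therefore $y$ is a vertex of $Cone_{50\delta}(e_k)$.

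It then follows that every $y\in I(x,x')$ with $d(x,y)=k$ is a vertex of $Cone_{50\delta}(e_k)$. By Proposition \ref{cardinal cones} and the uniform finesse of $X$, the cardinality of $Cone_{50\delta}(e_k)$ is at most $f(50\delta)$. Summing over $k\in\{1,\dots,n-1\}$ and adding the two endpoints $x$ and $x'$ gives
\[
|I(x,x')|\;\le\;2+(d(x,x')-1)\,f(50\delta),
\]
which is the desired bound depending only on $d(x,x')$, $\delta$, and the uniform finesse function $f$.

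The only subtle point is the symmetry step (that $y$ lies in $Cone_{50\delta}(e_k)$ rather than merely that $e_k$ lies in a cone around an edge containing $y$); this follows directly from unpacking the definition of a cone, since a path realizing membership in a cone can be traversed in either direction without changing its length or the angles between consecutive edges. Everything else is a direct application of the two cited propositions.
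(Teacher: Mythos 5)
Your argument is correct and follows the approach the paper clearly intends (the text preceding the corollary flags it as a consequence of Proposition \ref{conetriangle}, and you apply exactly that proposition together with Proposition \ref{cardinal cones}). The key steps all check out: stratifying $I(x,x')$ by $d(x,\cdot)$, applying conical finesse to the triangle $[x,x',y]$ so that $e_k$ lies in $Cone_{50\delta}(e')$ for an edge $e'$ adjacent to $y$, and then using the symmetry of the cone relation (reversing the defining chain of edges preserves its length and the angles at shared vertices) to conclude $y \in Cone_{50\delta}(e_k)$. One small imprecision worth tidying: since $y$ lies on a geodesic from $x$ to $x'$, the sides $[x,y]$ and $[y,x']$ have lengths $k$ and $n-k$ respectively, so the ``$[b,c]$-at-the-same-distance-from-$b$'' alternative of Proposition \ref{conetriangle} has no candidate edge — $e'$ is forced to be the last edge of $\sigma$, not the first edge of $\tau$. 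This does not affect your conclusion, since either way $e'$ has $y$ as an endpoint, and the resulting bound $|I(x,x')|\le 2+(d(x,x')-1)\,f(50\delta)$ is exactly what is claimed.
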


\subsubsection{Distance formula}\label{subsubsection: distance formula}
In this section, we discuss about the distance formula for relatively hyperbolic groups, proved by Sisto in \cite{SistoFormuleDelaDistance}. We give the version presented in \cite{ChatterjiDahmani}: the word metric is equivalent to the coned-off metric to which we add the sum of angles at infinite valence vertices.

\begin{proposition} \cite[Proposition 1.6.]{ChatterjiDahmani} \label{Proposition : formule de la distance}

Let $G$ be a group hyperbolic relative to  $(H_{1},...,H_{k})$, let $d_{G}$ denote the word metric, for a finite generating set.

Let $X_{c}$ denote its coned-off graph with respect to the same generating set, with $d_{c}$ its associated graph metric.

There are $A,B>1$ for which, for all $g,h \in G$, for each choice of geodesic $[g,h]_{c}$ in $X_{c}$, if $\Theta(g,h)$ denotes the sum of angles of edges of $[g,h]_{c}$ at vertices of infinite valence, then :

$$ \frac{1}{A} d_{G }(g,h)- B \leq d_{c}(g,h) + \Theta (g,h) \leq A d_{G}(g,h) +B.$$

\end{proposition}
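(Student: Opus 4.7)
The strategy is to compare an arbitrary coned-off geodesic $\gamma_c$ between $g$ and $h$ with a word geodesic $\alpha$ in $Cay(G,S)$, using the Bounded Coset Penetration (BCP) property (equivalent, by Bowditch, to the uniform finesse of $X_c$) together with the finiteness of cones of bounded parameter, Proposition \ref{cardinal cones}.

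For the upper bound, the inequality $d_c(g,h) \leq d_G(g,h)$ is immediate, since $\alpha$ is already a path of the same length in $X_c$. For the angle sum $\Theta(g,h)$ along a given coned-off geodesic $\gamma_c$, I list the infinite-valence vertices $v_j = \widehat{g_j H_{i_j}}$ on $\gamma_c$ and their entry/exit neighbours $x_j, y_j \in g_j H_{i_j}$. A word path from $x_j$ to $y_j$ uses only Cayley edges and therefore avoids every cone vertex, so
$$\measuredangle_{v_j}(x_j, y_j) = d_{X_c \setminus \{v_j\}}(x_j, y_j) \leq d_G(x_j, y_j).$$
By BCP, $\alpha$ enters and exits each coset $g_j H_{i_j}$ at points within uniformly bounded $d_G$-distance of $x_j$ and $y_j$, so summing the penetrations of $\alpha$ gives $\sum_j d_G(x_j, y_j) \leq d_G(g,h) + Cm$, where $m$ is the number of cone vertices on $\gamma_c$. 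Since $2m \leq d_c(g,h) \leq d_G(g,h)$, this yields $d_c(g,h) + \Theta(g,h) \leq A\, d_G(g,h) + B$.

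For the lower bound, I would lift $\gamma_c$ to a word path from $g$ to $h$: preserve each Cayley edge, and replace each cone-vertex detour $x_j \to v_j \to y_j$ by a word geodesic from $x_j$ to $y_j$ inside the coset $g_j H_{i_j}$. This gives
$$d_G(g,h) \leq (d_c(g,h) - 2m) + \sum_j d_G(x_j, y_j).$$
The key estimate is then $d_G(x_j, y_j) \leq A'\, \measuredangle_{v_j}(x_j, y_j) + B'$. Its proof substitutes the angle-realising path of length $\theta_j = \measuredangle_{v_j}(x_j,y_j)$ from $x_j$ to $y_j$ in $X_c \setminus \{v_j\}$ for the two-edge detour through $v_j$, producing a coned-off path from $g$ to $h$ that cannot be strictly shorter than $\gamma_c$; BCP then forces any other cone vertices along this alternative path to coincide, up to bounded drift, with cone vertices already on $\gamma_c$, and uniform finesse bounds the word-metric diameter of the resulting configuration by a linear function of $\theta_j$.

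The principal obstacle is this last estimate of the lower bound, namely converting the angle $\theta_j$ into a genuine word-metric bound on $d_G(x_j, y_j)$. A small angle only guarantees a short \emph{coned-off} alternative path, whose own cone vertices could a priori conceal arbitrarily large word-metric penetrations, so naively un-coning the angle-realising path fails. Overcoming this requires pitting BCP against the geodesicity of $\gamma_c$ to identify the cone vertices of the alternative path with those already appearing on $\gamma_c$, after which Proposition \ref{cardinal cones} supplies the required $d_G$-control.
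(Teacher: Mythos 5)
The paper cites this proposition from \cite[Proposition 1.6]{ChatterjiDahmani} without proof, so there is no internal argument to compare against; evaluating your attempt on its own terms, the overall framework — play a coned-off geodesic $\gamma_c$ off a word geodesic $\alpha$ via Bounded Coset Penetration — is sound, and the upper bound $d_c + \Theta \lesssim d_G$ is essentially correct, granting that word geodesics are relative quasi-geodesics (so BCP applies to $\alpha$) and that shallow coset penetrations contribute only a bounded amount each to $\Theta$.

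The genuine gap is in the lower bound, and you have located it but not filled it. The estimate $d_G(x_j, y_j) \leq A' \measuredangle_{v_j}(x_j, y_j) + B'$ does not follow from the mechanism you sketch: substituting the angle-realizing path of length $\theta_j$ for the two-edge detour through $v_j$ yields a coned-off path of length $d_c(g,h) + \theta_j - 2 \geq d_c(g,h)$, so geodesicity of $\gamma_c$ imposes no constraint on it; BCP is a statement about pairs of relative \emph{quasi-geodesics}, and this alternative path has no reason to be one; and uniform fineness by itself bounds \emph{cardinalities} of loops and cones, not word-metric distances, so the phrase ``uniform finesse bounds the word-metric diameter \ldots\ by a linear function of $\theta_j$'' is precisely the claim needing proof, not a step of it. The workable route — in fact the one this paper carries out in the lower-bound direction of Proposition~\ref{proposition: l'angle est borné par la distance bolic} — is to take the angle-realizing path $u_0=x_j,\, u_1,\dots,u_{\theta_j}=y_j$ in $X_c \setminus \{v_j\}$ and project each $u_i$ onto the coset $g_j H_{i_j}$, obtaining $\tilde{u}_i$. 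Hyperbolicity of $X_c$ forces $\measuredangle_{v_j}(\tilde{u}_i,\tilde{u}_{i+1})$ to be uniformly bounded (the quadrilateral $\tilde{u}_i, u_i, u_{i+1}, \tilde{u}_{i+1}$ produces a short path from $\tilde{u}_i$ to $\tilde{u}_{i+1}$ avoiding $v_j$); \emph{only then} does uniform fineness act, showing that up to translation there are finitely many group elements realizing such a bounded angle, hence $d_{H_{i_j}}(\tilde{u}_i,\tilde{u}_{i+1}) \leq K$ uniformly. Summing over $i$ gives $d_{H_{i_j}}(x_j,y_j) \leq K\theta_j$, and undistortion of parabolics in relatively hyperbolic groups converts this into the required $d_G$ bound.
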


We will use in Section \ref{section : bolicité forte pour gp relativement hyperbolique} the following reformulation of the distance formula.

\begin{corollary}\label{corollary : reformulation de la formule de la distance}

Let $G$ be a group hyperbolic relative to  $(H_{1},...,H_{k})$, let $d_{G}$ denote the word metric, for a finite generating set.

Let $X_{c}$ denote its coned-off graph with respect to the same generating set, with $d_{c}$ its associated graph metric.

Let $M>0$. For all $g,h \in G$, $\Theta(g,h)$ denotes the sum of angles of edges of a geodesic $[g,h]_{c}$ at vertices of infinite valence and $\Theta_{>M}(g,h)$ denotes the sum of angles greater than $M$ of edges of $[g,h]_{c}$ at vertices of infinite valence.

Then $d_{c}+\Theta_{>M}$ is quasi-isometric to $d_{G}$.
\end{corollary}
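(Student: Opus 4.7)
The plan is to deduce the corollary directly from the distance formula of Proposition \ref{Proposition : formule de la distance} by controlling the "small" angles separately. The upper bound is free: since $\Theta_{>M}(g,h) \leq \Theta(g,h)$ by construction, the distance formula immediately gives
\[
d_{c}(g,h) + \Theta_{>M}(g,h) \leq d_{c}(g,h) + \Theta(g,h) \leq A\, d_{G}(g,h) + B.
\]

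The substantive direction is the lower bound on $d_{c} + \Theta_{>M}$ in terms of $d_{G}$. My key observation is that angles bounded by $M$ can contribute only linearly in the coned-off length. Fix a coned-off geodesic $[g,h]_{c}$ realizing $d_{c}(g,h)$; it has at most $d_{c}(g,h)+1$ internal vertices, and at each infinite-valence vertex the associated angle contributes to $\Theta(g,h)$. Splitting $\Theta$ according to whether the angle is $>M$ or $\leq M$, the latter part is bounded above by $M \cdot d_{c}(g,h)$ (one such angle per vertex at most), so
\[
\Theta(g,h) \leq \Theta_{>M}(g,h) + M \, d_{c}(g,h).
\]

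Plugging this into the distance formula yields
\[
\tfrac{1}{A} d_{G}(g,h) - B \;\leq\; d_{c}(g,h) + \Theta(g,h) \;\leq\; (M+1)\,d_{c}(g,h) + \Theta_{>M}(g,h) \;\leq\; (M+1)\bigl(d_{c}(g,h) + \Theta_{>M}(g,h)\bigr),
\]
which rearranges to $d_{c}(g,h) + \Theta_{>M}(g,h) \geq \frac{1}{A(M+1)} d_{G}(g,h) - \frac{B}{M+1}$, giving the desired quasi-isometry with constants depending on $A$, $B$, and $M$.

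I do not expect a real obstacle here; the only subtle point is to make sure the angles are counted correctly — namely, that each edge of the coned-off geodesic contributes at most one "small" angle at each of its endpoints, so that the total count of small angles is at most $d_{c}(g,h)$ (or $d_{c}(g,h)+1$, an additive constant that can be absorbed into $B$). Since the distance formula does not depend on the particular choice of coned-off geodesic, the resulting inequality with $\Theta_{>M}$ likewise holds for any choice, completing the proof.
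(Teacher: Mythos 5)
Your proof is correct and follows essentially the same route as the paper: split $\Theta$ into the parts above and below $M$, bound $\Theta_{\le M}$ by $M\,d_c$, and then invoke Proposition~\ref{Proposition : formule de la distance}. In fact you handle the multiplicative constant more carefully than the printed proof, whose final display claims $(M+1)d_c + \Theta_{>M} \le M(d_c + \Theta_{>M})$ — which is false in general — whereas your $(M+1)(d_c+\Theta_{>M})$ is the correct bound.
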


\begin{proof}
We will compare $d_{c}+\Theta_{>M}$ with $d_{c}+\Theta$ and conclude with Proposition \ref{Proposition : formule de la distance}.

Obviously, for all $g,h \in G$:
$$d_{c}(g,h)+\Theta_{>M}(g,h) \leq  d_{c}(g,h)+\Theta(g,h) .$$

On the other hand, if $\Theta_{\leq M}(g,h)$ denotes the sum of angles of edges of $[g,h]_{c}$ at vertices of infinite valence lower than $M$, we have:
$$  \Theta_{\leq M}(g,h) \leq M d_{c}(g,h).$$

Therefore:
$$ d_{c}(g,h)+\Theta(g,h) \leq d_{c}(g,h)+ M d_{c}(g,h) + \Theta_{>M}(g,h)  \leq M(d_{c}(g,h) + \Theta_{>M}(g,h)) .$$

Then, according to Proposition \ref{Proposition : formule de la distance}, $d_{c}+\Theta_{>M}$ is quasi-isometric to $d_{G}$.

\end{proof}

The following proposition shows that for two vertices of the coned-off graph, the sum of the angles along two geodesics differs linearly, this will be used in Section \ref{section : bolicité forte pour gp relativement hyperbolique}.

\begin{proposition}\label{proposition: comparaison de la somme des angles le long de deux géodésiques}
Let $G$ be a relatively hyperbolic group and $X_{c}$ one of its coned-off graph. For any $M>0$, there exists $\alpha_{1},\alpha_{2},\beta_{2}>0$ such that for all $x, y \in X_{c}$ and any two geodesics $\gamma$ and $\omega$ between those two points, if we denote by:
\begin{itemize}
\item $\Theta_{\gamma}(x,y)$, $\Theta_{\omega}(x,y)$ the sum of angles along $\gamma$, $\omega$ respectively,

\item $d'_{\gamma}(x,y)=d_{c}(x,y)+\Theta_{\gamma}(x,y)$, $d'_{\omega}(x,y)=d_{c}(x,y)+\Theta_{\omega}(x,y)$,

    \item $\Theta_{>M,\gamma}(x,y)$, $\Theta_{>M,\omega}(x,y)$ the sum of angles greater than $M$ along $\gamma$, $\omega$ respectively,

    \item $d'_{>M,\gamma}(x,y)=d_{c}(x,y)+\Theta_{>M,\gamma}(x,y)$, $d'_{>M,\omega}(x,y)=d_{c}(x,y)+\Theta_{>M,\omega}(x,y)$.
    
\end{itemize}

We have:

\begin{itemize}
    \item  
$ \frac{1}{\alpha_{1}}d'_{\omega}(x,y) \leq d'_{\gamma}(x,y) \leq \alpha_{1} d'_{\omega}(x,y).$

    \item $ \frac{1}{\alpha_{2}}d'_{>M,\omega}(x,y)-\beta_{2} \leq d'_{>M,\gamma}(x,y) \leq \alpha_{2} d'_{>M,\omega}(x,y)+\beta_{2}.$
\end{itemize}

\end{proposition}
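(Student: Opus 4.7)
My plan is to push both comparisons through the word metric $d_G$, which is intrinsic and does not depend on any choice of geodesic in $X_c$, and then use transitivity of quasi-isometries. The case $x=y$ is trivial, so I will assume $x\ne y$; then $d_c(x,y)\ge 1$, so every quantity appearing on either side of either claimed inequality is at least $1$.

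For the first inequality, I would apply Proposition \ref{Proposition : formule de la distance} to both geodesics $\gamma$ and $\omega$. Since the constants $A,B>1$ furnished by the proposition are uniform in the choice of geodesic, this gives
$$ \tfrac{1}{A}\, d_G(x,y) - B \ \le\ d'_\gamma(x,y)\ \le\ A\, d_G(x,y) + B, $$
and the same with $\omega$ in place of $\gamma$. From the lower bound for $\omega$ one gets $d_G(x,y)\le A\, d'_\omega(x,y)+AB$; plugging into the upper bound for $\gamma$ yields $d'_\gamma(x,y)\le A^2\, d'_\omega(x,y)+(A^2+1)B$. Because $d'_\omega(x,y)\ge 1$, the additive constant can be folded into the multiplicative one, producing a bound $d'_\gamma\le\alpha_1\, d'_\omega$. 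The reverse inequality is symmetric.

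For the second inequality, I would run exactly the same chaining, but starting from Corollary \ref{corollary : reformulation de la formule de la distance}, which furnishes constants $A',B'$ such that $d_c+\Theta_{>M,\sigma}$ is $(A',B')$-quasi-isometric to $d_G$ for every geodesic $\sigma$ between $x$ and $y$. Composing the two resulting quasi-isometries produces $d'_{>M,\gamma}\le\alpha_2\, d'_{>M,\omega}+\beta_2$ and its symmetric counterpart. Unlike in the first part, I keep the additive constant explicit, which matches the form stated.

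The only point requiring verification is that the quasi-isometry constants in both Proposition \ref{Proposition : formule de la distance} and Corollary \ref{corollary : reformulation de la formule de la distance} are uniform across the choice of geodesic $[x,y]_c$. For the distance formula this is built into the statement. For the corollary it follows from the estimate $\Theta_{\le M,\sigma}\le M\, d_c(x,y)$, which uses only the number of edges of $\sigma$ and therefore depends only on $d_c(x,y)$, not on the specific geodesic. I do not expect any technical obstacle beyond this bookkeeping; the whole argument is essentially transitivity of quasi-isometries applied through the intrinsic reference $d_G$.
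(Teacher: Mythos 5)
Your route to the first inequality is genuinely different from the paper's. The paper never invokes the distance formula for that inequality: it orders $\gamma=\{u_0,\dots,u_n\}$ and $\omega=\{v_0,\dots,v_n\}$ and compares the angle at $u_{i_0}$ with the angle at $v_{i_0}$ vertex by vertex, using Proposition \ref{proposition: les propirétés des angles}. When $u_{i_0}=v_{i_0}$, two geodesics from this common vertex to $x$ (or to $y$) start with edges making angle at most $6\delta$, so the two angles differ by at most $12\delta$; when $u_{i_0}\neq v_{i_0}$, both angles are at most $12\delta$ by the first item of that proposition. Summing over $i_0$ and dividing gives $\alpha_1$ of order $1+12\delta$ — an explicit and elementary constant obtained entirely from local hyperbolic angle bookkeeping. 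Your chaining through $d_G$ is shorter to write but leans on the (much deeper) distance formula and returns a constant of order $A^2+(A^2+1)B$ that you have no handle on. For the second inequality your route and the paper's coincide: both reduce to Corollary \ref{corollary : reformulation de la formule de la distance}, and your justification of uniformity across geodesics via $\Theta_{\le M,\sigma}\le M\,d_c(x,y)$ is exactly the relevant point.

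There is, however, a genuine gap. The proposition quantifies over all $x,y\in X_c$, including vertices of infinite valence, but both Proposition \ref{Proposition : formule de la distance} and Corollary \ref{corollary : reformulation de la formule de la distance} are stated for $g,h\in G$; the quantity $d_G(x,y)$ through which you push every comparison is simply undefined when $x$ or $y$ lies in $X_c^\infty$. The paper's local argument is indifferent to whether the endpoints have finite or infinite valence (no angle is taken at an endpoint), so it covers the general case automatically. Your argument would need a preliminary reduction, say replacing an endpoint $x\in X_c^\infty$ by a group element on the chosen geodesic, but the two geodesics $\gamma$ and $\omega$ may leave $x$ through different edges, so the truncated geodesics no longer share both endpoints and the comparison cannot be closed without additional work. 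As written, your proof establishes the statement only for $x,y\in G$, while the proposition is later applied in the paper (e.g.\ in Lemma \ref{lem: somme des angles vers la projection} and Lemma \ref{lemme: comparer les angles avec une projection}) with endpoints that may be infinite-valence vertices.
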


\begin{proof}

Let us denote $n:=d_{c}(x,y)$. We order the two geodesics in the following way:
\begin{itemize}
    \item $[x,y]_{c,\gamma}=\{u_{0},u_{1},...,u_{n}\}$ with $u_{0}=x$, $u_{n}=y$ and $d_{c}(u_{i},u_{i+1})=1$, for $0\le i \le n-1$,

    \item $[x,y]_{c,\omega}=\{v_{0},v_{1},...,v_{n}\}$ with $v_{0}=x$, $v_{n}=y$ and $d_{c}(v_{i},v_{i+1})=1$, for $0\le i \le n-1$.
\end{itemize}

 Let $i_{0} \in  \{1,\dots,n-1\}$, there are two cases to consider.

 \begin{itemize}
     \item Either $u_{i_{0}}=v_{i_{0}}$, then according to the second point of Proposition \ref{proposition: les propirétés des angles}, we have:
$$\measuredangle_{u_{i_{0}}}(u_{i_{0}-1},u_{i_{0}+1}) \leq \measuredangle_{v_{i_{0}}}(v_{i_{0}-1},v_{i_{0}+1})+12\delta. $$

     \item Otherwise $ u_{i_{0}}\neq v_{i_{0}} $, then according to the first point of Proposition \ref{proposition: les propirétés des angles}, we have: $ \measuredangle_{v_{i_{0}}}(v_{i_{0}-1},v_{i_{0}+1})\leq 12 \delta$ and $\measuredangle_{u_{i_{0}}}(u_{i_{0}-1},u_{i_{0}+1})\leq 12 \delta$.
 \end{itemize}
    
Then, for all $i_{0} \in  \{1,\dots,n-1\}$, we have:
$$ \measuredangle_{u_{i_{0}}}(u_{i_{0}-1},u_{i_{0}+1}) \leq \measuredangle_{v_{i_{0}}}(v_{i_{0}-1},v_{i_{0}+1})+12\delta, $$
 and therefore: 

 $$ \Theta_{\gamma}(x,y) \leq \Theta_{\omega}(x,y)+12 \delta. $$

Thus:
$$d'_{\gamma}(x,y)\leq 13\delta d'_{\omega}(x,y). $$
Then by symmetry we get:
$$\frac{1}{13\delta} d'_{\omega}(x,y)\leq d'_{\gamma}(x,y)\leq 13\delta d'_{\omega}(x,y), $$

and $\alpha_{1}=13\delta.$

For the second point, we conclude with an application of Corollary \ref{corollary : reformulation de la formule de la distance}.

\end{proof}

\subsubsection{Triangles and angles}\label{subsubsection: triangles and angles}

In this paragraph, we will mention the existence of a particular geodesic triangle between any three points in a fine and hyperbolic graph. Throughout this paragraph, $X$ denotes a $\delta$-fine hyperbolic graph.

\begin{proposition}\cite[Proposition 4.18]{Lajoinie-Dodel}
    
\label{pointloingrandangleexistence}
Let $a,b,c \in X^{(0)}$, we consider the set of points $v$ with the following properties:

\begin{itemize}
    \item $v$ is contained in every geodesic between $a$ and $b$ and between $a$ and $c$,
    \item $\measuredangle_{v}(a,b)> 50\delta$,
    \item $\measuredangle_{v}(a,c)> 50\delta$.
\end{itemize}

If this set is not empty, it contains a unique point at maximal distance of $a$.

\end{proposition}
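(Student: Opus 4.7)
The plan is to separate the statement into two parts: attainment of a maximum for $d(a,\cdot)$ on the set, and uniqueness of the maximizer. Attainment is a finiteness triviality; uniqueness follows directly from the first defining property, without using the angle conditions at all.

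For the existence of a maximum, I would simply observe that every $v$ in the set $S$ (the set of points satisfying the three listed conditions) lies on some geodesic from $a$ to $b$, so $d(a,v)$ is a non-negative integer bounded above by $d(a,b)$. If $S$ is non-empty, then $\{d(a,v) : v \in S\}$ is a non-empty bounded subset of $\mathbb{N}$, and hence admits a maximum $D$. Thus at least one point of $S$ realizes distance $D$ from $a$.

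For uniqueness, let $v, w \in S$ with $d(a,v) = d(a,w) = D$. Fix any geodesic $\gamma : \{0,1,\dots,d(a,b)\} \to X$ from $a$ to $b$. By the first defining property of $S$, both $v$ and $w$ lie on $\gamma$. Since $\gamma$ is an isometric embedding, each of its points is uniquely determined by its distance from $\gamma(0) = a$; hence $v = \gamma(D) = w$.

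I expect no real obstacle here. The angle hypotheses play no role in the uniqueness argument itself --- they only enter implicitly to guarantee, via the first item of Proposition \ref{proposition: les propirétés des angles}, that the set $S$ is compatible with the requirement that $v$ lie on \emph{every} $a$-to-$b$ and $a$-to-$c$ geodesic. The entire proof rests on the elementary observation that on a discrete geodesic, distance from a fixed endpoint is an injective function of the vertex.
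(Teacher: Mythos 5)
Your proof is correct, and there is essentially only one natural way to prove this: the maximum exists because distances from $a$ are integers bounded by $d(a,b)$, and uniqueness follows immediately since both candidate maximizers lie on any fixed geodesic from $a$ to $b$ (an isometric embedding), on which distance from $a$ determines the point. You are also right that the angle hypotheses are irrelevant to uniqueness; they only matter for the surrounding theory.
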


\begin{definition} \cite[Definition 4.19]{Lajoinie-Dodel}\label{pointloingrandangle}
Let $a,b,c \in X^{(0)}$, according to Proposition \ref{pointloingrandangleexistence}, we define $\tilde{a}$ as the furthest point from $a$ among the points $v$ with the following properties:

\begin{itemize}
    \item $v$ is contained in every geodesic between $a$ and $b$ and between $a$ et $c$,
    \item $\measuredangle_{v}(a,b)> 50\delta$,
    \item $\measuredangle_{v}(a,c)> 50\delta$.
\end{itemize}

If the set of points following these three properties is empty, we set $\tilde{a}=a$.\\

We define $\tilde{b}$ and $\tilde{c}$ in the same way.

\end{definition}

\begin{proposition}\cite[Proposition 4.20]{Lajoinie-Dodel}(see Figure \ref{fig: atilde est plus prÃ¨s de a que ctilde_1})

Let $a,b,c \in X^{(0)}$, we have $d(a,\tilde{a})\leq d(a,\tilde{c})$. In other words, when we travel along a geodesic from $a$ to $c$, we pass through $a$, $\tilde{a}$, $\tilde{c}$ and $c$ in this order.

\end{proposition}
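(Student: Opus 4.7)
The plan is to bound $d(a,\tilde{a})$ above by the Gromov product $(b,c)_{a}$, bound $d(c,\tilde{c})$ symmetrically by $(a,b)_{c}$, and then combine these with the elementary identity $(b,c)_{a}+(a,b)_{c}=d(a,c)$. Notably, neither the angle conditions appearing in Definition~\ref{pointloingrandangle} nor the hyperbolicity of $X$ play any role in this argument; only the ``lies on every geodesic'' clauses of the definition are needed.

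First I would unfold the definition: because $\tilde{a}$ lies on every geodesic from $a$ to $b$ and from $a$ to $c$, additivity of distance along geodesics gives
\[
d(a,\tilde{a})+d(\tilde{a},b)=d(a,b), \qquad d(a,\tilde{a})+d(\tilde{a},c)=d(a,c).
\]
Adding these two identities and invoking the triangle inequality $d(\tilde{a},b)+d(\tilde{a},c)\ge d(b,c)$ yields $2d(a,\tilde{a})\le d(a,b)+d(a,c)-d(b,c)=2(b,c)_{a}$, hence $d(a,\tilde{a})\le (b,c)_{a}$. Swapping the roles of $a$ and $c$ in the same argument gives $d(c,\tilde{c})\le (a,b)_{c}$.

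To conclude, the elementary identity $(b,c)_{a}+(a,b)_{c}=d(a,c)$ gives $d(a,\tilde{a})+d(c,\tilde{c})\le d(a,c)$. Since $\tilde{c}$ lies on every geodesic from $c$ to $a$, the same additivity yields $d(a,\tilde{c})=d(a,c)-d(c,\tilde{c})$, so $d(a,\tilde{a})\le d(a,\tilde{c})$, which is the desired inequality; the ordering $a,\tilde{a},\tilde{c},c$ along any geodesic from $a$ to $c$ is then immediate from the fact that both $\tilde{a}$ and $\tilde{c}$ lie on such a geodesic. The degenerate cases in which the defining set of Proposition~\ref{pointloingrandangleexistence} is empty (so that $\tilde{a}=a$ or $\tilde{c}=c$) are handled trivially by the same chain of inequalities.

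I do not foresee any serious obstacle. The only point requiring a moment of care is the identification of ``every geodesic from $c$ to $a$'' with ``every geodesic from $a$ to $c$'', which guarantees that $\tilde{a}$ and $\tilde{c}$ sit on a common geodesic and are therefore directly comparable by their distance from $a$.
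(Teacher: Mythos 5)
Your argument is correct, and it is a clean, self-contained proof. Let me record the verification: from the defining property that $\tilde{a}$ lies on every geodesic $[a,b]$ and $[a,c]$, additivity gives $d(a,\tilde{a})+d(\tilde{a},b)=d(a,b)$ and $d(a,\tilde{a})+d(\tilde{a},c)=d(a,c)$; summing and applying $d(\tilde{a},b)+d(\tilde{a},c)\geq d(b,c)$ yields $d(a,\tilde{a})\leq (b|c)_a$, and symmetrically $d(c,\tilde{c})\leq (a|b)_c$. The identity $(b|c)_a+(a|b)_c=d(a,c)$ and additivity along a geodesic through $\tilde{c}$ then give $d(a,\tilde{a})\leq d(a,c)-(a|b)_c\leq d(a,c)-d(c,\tilde{c})=d(a,\tilde{c})$, which is the claim. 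The degenerate cases $\tilde{a}=a$ or $\tilde{c}=c$ in fact need no special treatment, since $0\leq (b|c)_a$ and $d(a,c)=d(a,c)-0$ make the same chain go through.

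I cannot compare against the paper's own proof, because the paper cites the statement as \cite[Proposition 4.20]{Lajoinie-Dodel} without reproducing the argument. What is worth emphasizing is that your proof uses only the ``lies on every geodesic'' clauses of Definition~\ref{pointloingrandangle}, the triangle inequality, and the elementary identity $(b|c)_a+(a|b)_c=d(a,c)$; neither the angle thresholds $\measuredangle_v(a,b)>50\delta$, $\measuredangle_v(a,c)>50\delta$ in the definition of $\tilde{a}$, nor the hyperbolicity of $X$, nor fineness, play any role. In other words, you have shown the ordering statement to be a purely metric consequence of the ``on every geodesic'' condition in an arbitrary geodesic metric space. Given that the cited source develops $\tilde{a}$, $\tilde{b}$, $\tilde{c}$ precisely through the angle machinery and uses them in the construction of the normalized triangles of Theorem~\ref{formenormaledestriangles}, it is quite plausible that the original proof routes through the angle conditions and Proposition~\ref{proposition: les propirétés des angles}; your argument would then be a genuine simplification, isolating the purely metric content from the hyperbolic-graph structure.
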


\begin{figure}[!ht]
   \centering
   \includegraphics[scale=0.7]{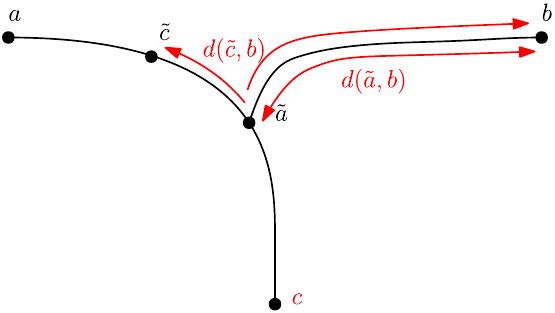}
   \caption{ $\tilde{a}$ is closer to $a$ than $\tilde{c}$ }
    \label{fig: atilde est plus prÃ¨s de a que ctilde_1}
\end{figure}

\newpage
\begin{theo} \cite[Theorem 4.21]{Lajoinie-Dodel}\label{formenormaledestriangles}(see Figure \ref{fig: forme normale triangle})

Let $a,b,c \in X^{(0)}$ and a geodesic $[a,b]$ between $a$ and $b$. There exists a triangle 
$[a,b,c]$ whose side between $a$ and $b$ is 
$[a,b]$ such that :
\begin{itemize}
    \item the sides $[a,b]$ and $[a,c]$ coincide between $a$ and $\tilde{a}$,
    \item the sides $[b,c]$ and $[b,a]$ coincide between $b$ and $\tilde{b}$,
    \item the sides $[c,a]$ and $[c,b]$ coincide between $c$ and $\tilde{c}$,
    \item for each $v \in ]\tilde{a},\tilde{b}[$, if we denote by $e_{1}$ and $e_{2}$ the two different edges of $[\tilde{a},\tilde{b}]$ such that $v \in e_{1} \cap e_{2}$ then $\measuredangle_{v}(e_{1},e_{2}) \leq 100\delta$ and similarly this property is also true for $]\tilde{a},\tilde{c}[$ and $]\tilde{b},\tilde{c}[$.\\

\end{itemize}

\end{theo}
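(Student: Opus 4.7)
The plan is to construct the three sides of the desired triangle explicitly as concatenations whose break-points are the pivots $\tilde{a}$, $\tilde{b}$, $\tilde{c}$ from Definition \ref{pointloingrandangle}.

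First I would gather a few preliminary consequences of Proposition \ref{pointloingrandangleexistence} and of Proposition 4.20 of \cite{Lajoinie-Dodel}. By definition $\tilde{a}$ lies on every geodesic from $a$ to $b$, hence in particular on the prescribed $[a,b]$; similarly for $\tilde{b}$. Applying the order statement twice, the four points $a,\tilde{a},\tilde{b},b$ appear along $[a,b]$ in this order. Moreover, $\tilde{c}$ lies on every geodesic from $c$ to $a$ and from $c$ to $b$, and the order statement forces $\tilde{c}$ to lie between $\tilde{a}$ (resp.\ $\tilde{b}$) and $c$ along any geodesic from $\tilde{a}$ (resp.\ $\tilde{b}$) to $c$.

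Next I would define $[a,c]$ as the concatenation of three pieces: the sub-path of the given $[a,b]$ from $a$ to $\tilde{a}$, followed by an arbitrary geodesic $[\tilde{a},\tilde{c}]$, followed by a once-and-for-all chosen geodesic $[\tilde{c},c]$. Distance additivity $d(a,c)=d(a,\tilde{a})+d(\tilde{a},\tilde{c})+d(\tilde{c},c)$ follows from the pivoting properties above, so this concatenation is again a geodesic. The side $[b,c]$ is built symmetrically, reusing the same segment $[\tilde{c},c]$ so that $[c,a]$ and $[c,b]$ automatically coincide between $c$ and $\tilde{c}$. The first three bullets of the theorem are then immediate from this construction.

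The main obstacle is the fourth bullet. I would argue by contradiction on the segment $]\tilde{a},\tilde{b}[\subset [a,b]$ first. Let $v$ be an interior vertex there and let $e_1,e_2$ be the two edges of $[a,b]$ incident to $v$; suppose $\measuredangle_v(e_1,e_2)>100\delta$. Since each $e_i$ starts a sub-geodesic of $[a,b]$, this gives $\measuredangle_v(a,b)>100\delta$, and Proposition \ref{proposition: les propirétés des angles} then forces every $a$-$b$ geodesic through $v$. In particular $v$ already satisfies the conditions of Definition \ref{pointloingrandangle} concerning the $a$-$b$ pair. Because $d(a,v)>d(a,\tilde{a})$, maximality of $\tilde{a}$ forces $v$ to fail one of the two conditions involving $c$: either some $a$-to-$c$ geodesic misses $v$, or $\measuredangle_v(a,c)\leq 50\delta$. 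Applying the triangle inequality for angles (Proposition \ref{proposition: triangle inequality}), if $\measuredangle_v(a,c)\leq 50\delta$ then for any edge $e_3$ at $v$ on a geodesic to $c$, $\measuredangle_v(e_2,e_3)\geq 100\delta-50\delta=50\delta$, whence $\measuredangle_v(b,c)>50\delta$; by the analogous maximality of $\tilde{b}$ some $b$-to-$c$ geodesic then has to miss $v$. The delicate step is to combine these simultaneous failures of the $c$-conditions with the conical fineness of geodesic triangles (Proposition \ref{conetriangle}) applied to the triangle with vertex $v$ and the $c$-avoiding geodesics, so as to produce a contradiction; this is where the precise value $100\delta$ is used. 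The segments $]\tilde{a},\tilde{c}[$ and $]\tilde{b},\tilde{c}[$ are handled identically after choosing the geodesics $[\tilde{a},\tilde{c}]$ and $[\tilde{b},\tilde{c}]$ among those with minimal interior angles. I expect the bookkeeping of these cases and the careful exploitation of the constants to be the main technical challenge.
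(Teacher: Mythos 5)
Your construction of the three sides and your verification of the first three bullet points are exactly right; in particular the distance additivity $d(a,c) = d(a,\tilde{a}) + d(\tilde{a},\tilde{c}) + d(\tilde{c},c)$ does follow from the pivoting and ordering properties, so the concatenated paths are genuine geodesics.

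The problem is in the fourth bullet, where you stop one step short and then reach for the wrong tool. You correctly deduce, from $\measuredangle_v(e_1,e_2) > 100\delta$ and the maximality of $\tilde{a}$, that $\measuredangle_v(a,c) \leq 50\delta$, and then that $\measuredangle_v(b,c) > 50\delta$ by the angle triangle inequality. At this point the contradiction is already in hand and Proposition \ref{conetriangle} is not what is needed: since $50\delta > 12\delta$, the first item of Proposition \ref{proposition: les propirétés des angles} applied to $\measuredangle_v(b,c) > 50\delta$ forces \emph{every} $b$--$c$ geodesic through $v$. Together with the facts you had already established --- $v$ lies on every $a$--$b$ geodesic and $\measuredangle_v(b,a) > 100\delta > 50\delta$ --- this means $v$ satisfies \emph{all} the defining conditions of $\tilde{b}$, while $d(b,v) > d(b,\tilde{b})$; this contradicts the maximality of $\tilde{b}$, and you are done. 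So the sentence ``some $b$-to-$c$ geodesic then has to miss $v$'' is not an intermediate state requiring a further ``delicate step'': it is the contradiction itself, since the inequality you derived forbids any $b$--$c$ geodesic from missing $v$. Two smaller remarks: your disjunction ``either some $a$-to-$c$ geodesic misses $v$, or $\measuredangle_v(a,c)\leq 50\delta$'' is not a genuine case split, because $\measuredangle_v(a,c) > 50\delta$ would already force every $a$--$c$ geodesic through $v$, so the second alternative must hold in all cases; and no choice of $[\tilde{a},\tilde{c}]$ and $[\tilde{b},\tilde{c}]$ with ``minimal interior angles'' is needed, since the identical argument, permuting the roles of the three vertices, works on $]\tilde{a},\tilde{c}[$ and $]\tilde{b},\tilde{c}[$ for any choice of those geodesics.
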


It should be noted in the proof of \cite[Theorem 4.21]{Lajoinie-Dodel} that one can prescribe a side of the triangle.

\begin{figure}[!ht]
   \centering
   \includegraphics[scale=0.4]{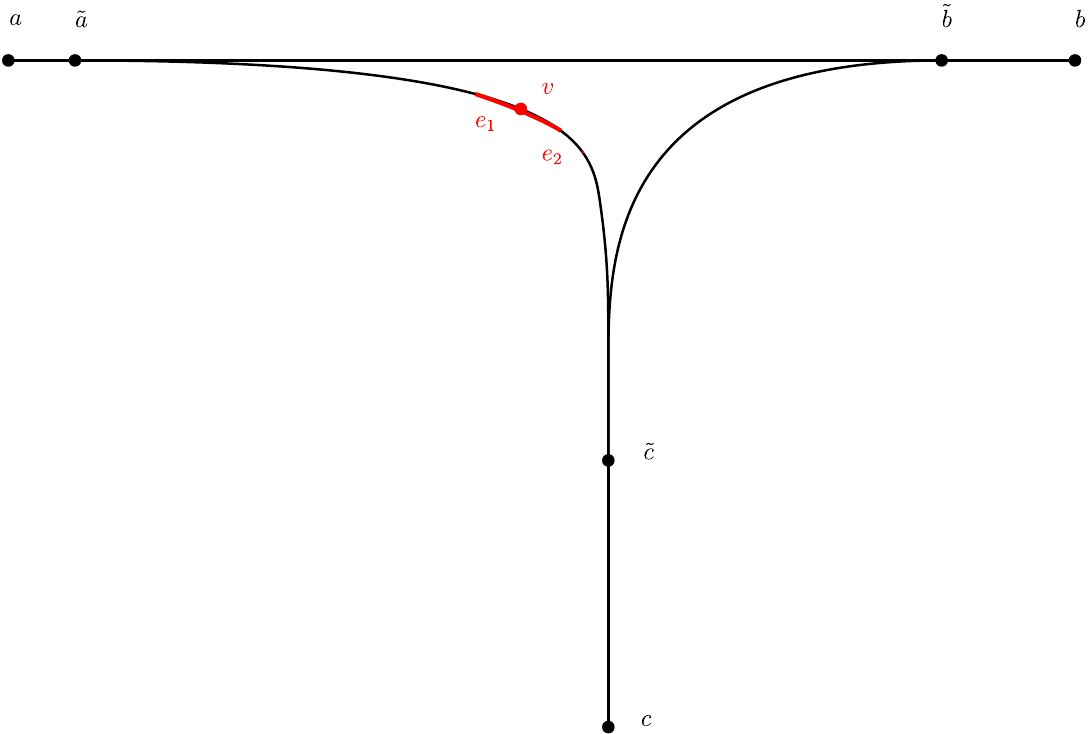}
   \caption{ Particular geodesic triangle between the three points $a$, $b$, $c$ with $\measuredangle_{v}(e_{1},e_{2}) \leq 100\delta$ }
    \label{fig: forme normale triangle}
\end{figure}

\begin{proposition} \label{proposition: angle en a tilde}
Let $a,b,c \in X^{(0)}$ such that $\tilde{a}\neq \tilde{b}\neq \tilde{c}$ and $[a,b,c]$ a triangle as in Theorem \ref{formenormaledestriangles}.
We set $e_{1}$ the edge on $[\tilde{a},\tilde{b}]$ such that $\tilde{a} \in e_{1}$ and $e_{2}$ the edge on $[\tilde{a},\tilde{c}]$ such that $\tilde{a} \in e_{2}$.
We have:
$$ \measuredangle_{\tilde{a}}(e_{1},e_{2})\leq 12 \delta.$$

\end{proposition}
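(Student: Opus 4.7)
The plan is to argue by contrapositive of the first point of Proposition \ref{proposition: les propirétés des angles}: if one can exhibit a geodesic from $b$ to $c$ avoiding $\tilde{a}$, then $\measuredangle_{\tilde{a}}(b, c) \leq 12\delta$ as an assertion quantifying over \emph{every} pair of edges at $\tilde{a}$ starting geodesics to $b$ and $c$, per the convention in Definition \ref{definition: angle}. Since the normal form of Theorem \ref{formenormaledestriangles} makes $[\tilde{a}, \tilde{b}] \cup [\tilde{b}, b]$ into a geodesic from $\tilde{a}$ to $b$ whose first edge at $\tilde{a}$ is $e_1$ (and symmetrically for $e_2$ and $c$), one immediately obtains $\measuredangle_{\tilde{a}}(e_1, e_2) \leq 12 \delta$.

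It therefore suffices to exhibit a geodesic from $b$ to $c$ avoiding $\tilde{a}$. The natural candidate is the normal-form path $\sigma := [b, \tilde{b}] \cup [\tilde{b}, \tilde{c}] \cup [\tilde{c}, c]$ provided by Theorem \ref{formenormaledestriangles}, and I verify $\tilde{a} \notin \sigma$ piece by piece. For the outer pieces, the symmetric version of the monotonicity statement preceding Theorem \ref{formenormaledestriangles} yields $d(a, \tilde{a}) \leq d(a, \tilde{b})$; since $\tilde{a}$ and $\tilde{b}$ both lie on the geodesic $[a, b]$ and $\tilde{a} \neq \tilde{b}$, this inequality is strict, so $\tilde{a}$ sits strictly on the $a$-side of $\tilde{b}$ in $[a, b]$, whence $\tilde{a} \notin [\tilde{b}, b]$. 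The argument for $\tilde{a} \notin [\tilde{c}, c]$ is symmetric.

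The main obstacle is to rule out $\tilde{a}$ being an interior vertex of the middle segment $[\tilde{b}, \tilde{c}]$. I would handle this by contradiction: if $\tilde{a} \in (\tilde{b}, \tilde{c})$, let $f_1$ and $f_2$ be the two edges of $[\tilde{b}, \tilde{c}]$ incident to $\tilde{a}$, oriented toward $\tilde{b}$ and $\tilde{c}$ respectively. Theorem \ref{formenormaledestriangles} gives $\measuredangle_{\tilde{a}}(f_1, f_2) \leq 100 \delta$, and since $f_1$ (resp. $f_2$) starts a geodesic from $\tilde{a}$ to $b$ (resp. $c$) via the appropriate half of $[\tilde{b}, \tilde{c}]$ extended by $[\tilde{b}, b]$ (resp. $[\tilde{c}, c]$), the second point of Proposition \ref{proposition: les propirétés des angles} forces $\measuredangle_{\tilde{a}}(e_1, f_1) \leq 6\delta$ and $\measuredangle_{\tilde{a}}(e_2, f_2) \leq 6\delta$. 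To extract the sharp $12\delta$ bound rather than the crude $112\delta$ obtained from a naive triangle inequality, one must exploit the maximality built into Definition \ref{pointloingrandangle}: the neighbor of $\tilde{a}$ on $[\tilde{b}, \tilde{c}]$ should inherit, under the hypothesis $\measuredangle_{\tilde{a}}(a, b), \measuredangle_{\tilde{a}}(a, c) > 50\delta$, the property of lying on every $[a,b]$- and $[a,c]$-geodesic with large angular separation, contradicting the fact that $\tilde{a}$ is the furthest such vertex from $a$. This final maximality argument is the delicate computational step in the proof.
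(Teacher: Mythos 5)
Your overall plan is attractive and cleaner than the paper's one-line disposal (the paper only refers the reader to the argument of Chatterji--Dahmani for Proposition~\ref{conetriangle}): exhibit a geodesic from $b$ to $c$ that avoids $\tilde{a}$, then invoke the contrapositive of the first bullet of Proposition~\ref{proposition: les propirétés des angles}, which by the convention of Definition~\ref{definition: angle} immediately gives $\measuredangle_{\tilde{a}}(e_{1},e_{2})\leq 12\delta$ since $e_{1}$, $e_{2}$ are edges at $\tilde{a}$ on geodesics to $b$, $c$. Your treatment of the two outer pieces $[b,\tilde{b}]$ and $[\tilde{c},c]$ is correct and complete, using the ordering from the proposition preceding Theorem~\ref{formenormaledestriangles} together with $\tilde{a}\neq\tilde{b}$, $\tilde{a}\neq\tilde{c}$.

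The middle segment is where the proof has a genuine gap. You announce a proof by contradiction that $\tilde{a}\notin(\tilde{b},\tilde{c})$, but what you actually write down derives no contradiction: the chain $\measuredangle_{\tilde{a}}(f_{1},f_{2})\leq 100\delta$ (Theorem~\ref{formenormaledestriangles}), $\measuredangle_{\tilde{a}}(e_{1},f_{1})\leq 6\delta$ and $\measuredangle_{\tilde{a}}(e_{2},f_{2})\leq 6\delta$ (second bullet of Proposition~\ref{proposition: les propirétés des angles}) is perfectly consistent and merely yields the useless bound $\measuredangle_{\tilde{a}}(e_{1},e_{2})\leq 112\delta$. You then pivot to a sketch of a maximality argument, but the claimed mechanism is misconceived: you want the neighbor of $\tilde{a}$ on $[\tilde{b},\tilde{c}]$ to inherit ``the property of lying on every $[a,b]$- and $[a,c]$-geodesic,'' yet that neighbor sits on the side $[b,c]$ of the normal-form triangle, and nothing forces it to lie on even one geodesic from $a$ to $b$ or from $a$ to $c$, let alone all of them. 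The vertices to which the maximality in Definition~\ref{pointloingrandangle} applies are the successors of $\tilde{a}$ on $[a,b]$ and $[a,c]$ (the far endpoints of $e_{1}$ and $e_{2}$), not the $[b,c]$-neighbors (the far endpoints of $f_{1}$, $f_{2}$); these are distinct vertices in general. So the step you yourself flag as ``the delicate computational step'' is not merely left unverified --- the idea proposed for it does not go through, and either a correct argument that $\tilde{a}\notin(\tilde{b},\tilde{c})$ or a different bound in that case must be supplied.
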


\begin{proof}
The proof is similar to the proof of Proposition \ref{conetriangle} (see \cite{ChatterjiDahmani}).

\end{proof}

\section{Strong Bolicity for hyperbolic groups }\label{section : bolicité forte pour gp hyperbolique}

We will reprove in this section the fact that hyperbolic groups admit a strongly bolic metric. It is one of the two key steps to prove the Baum-Connes conjecture without coefficient, according to Theorem \ref{theo : Baum-Connes=Rapid decay + Strongly Bolic} of Lafforgue \cite{LafforgueBaumConnesConjecture}.

The fact that hyperbolic groups satisfy the rapid decay property was proved by de la Harpe  and Jolissaint in \cite{delaHarpeRD} and \cite{JolissaintRD}.

For the fact that hyperbolic groups admit a strongly bolic metric, the following theorem was proved by Mineyev and Yu \cite{MineyevYuStrongBolicityhypergroups}. This result was reproved by Haïssinsky and Matthieu in \cite{HaissinskyMatthieu} using random walks and the Green metric.

\begin{theo}\cite[Theorem 17]{MineyevYuStrongBolicityhypergroups}\label{Theorem : Strongly Bolic metric for hyperbolic groups }
Every hyperbolic group $G$ admits a metric $\hat{d}$ with the following properties.

\begin{itemize}
    \item $\hat{d}$ is $G$-invariant, i.e. $\hat{d}(g.x,g.y)=\hat{d}(x,y)$, for all $g,x,y \in G$,
    
    \item $\hat{d}$ is quasi-isometric to the word metric,
    
    \item The metric space $(G,\hat{d})$ is weakly geodesic and strongly bolic.
\end{itemize}
    
\end{theo}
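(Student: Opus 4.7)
The plan is to enhance the word metric $d$ on a Cayley graph $X$ of $G$ by a bounded, $G$-invariant correction built from the family of visual pseudo-distances $\{d^a_\varepsilon\}_{a \in G}$ supplied by Proposition \ref{Proposition: angle for hyperbolic metric space}. First I would fix a $\delta$-hyperbolicity constant for $X$, choose $D > 0$ and $\varepsilon \in (0, \log 2 / (\delta + D))$, and assemble the family $\{d^a_\varepsilon\}_{a \in G}$. Then I would define
\[ \hat{d}(x,y) := d(x,y) + \sum_{a \in G} w_{x,y}(a)\, \Phi\bigl( d^a_\varepsilon(x,y) \bigr), \]
where $\Phi : [0,\beta] \to [0,+\infty)$ is a bounded monotone function (for instance $\Phi(s) = s$, or a truncated $-\log s$) and the weights $w_{x,y}(a)$ are supported on basepoints in a bounded neighborhood of a word-geodesic from $x$ to $y$ and are equivariant, $w_{g.x, g.y}(g.a) = w_{x,y}(a)$ for all $g \in G$. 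The $G$-invariance of $\hat{d}$ then follows from the third bullet of Proposition \ref{Proposition: angle for hyperbolic metric space}.

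The bounded support of $w_{x,y}$ together with the boundedness of $\Phi$ yields $|\hat{d}(x,y) - d(x,y)| \leq C$ for a uniform constant $C$ depending only on $\delta,\varepsilon, D$ and the cardinality of the generating set. From this estimate, quasi-isometry of $\hat{d}$ with $d$ and uniform local finiteness of $(G, \hat{d})$ are immediate. Weak geodesicity is obtained by picking $z$ at word-distance approximately $t$ from $x$ along a word geodesic and absorbing the bounded error $C$.

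For strong bolicity, the $B1$ condition is where Proposition \ref{Proposition: angle for hyperbolic metric space} plays its decisive role. When $d(x_1,x_2), d(y_1,y_2) \leq r$ and all $d(x_i, y_j) \geq R$, the word metric contributes at most $O(\delta)$ to the $B1$ defect via the four-point inequality (Definition \ref{definition: hyperbolicite condition des quatres points}), but this defect does not shrink as $R \to \infty$. The correction term is designed to smooth it out: for $a$ in the bounded support of the weights, the two-sided estimate $\alpha e^{-\varepsilon (x_i | y_j)_a} \leq d^a_\varepsilon(x_i, y_j) \leq \beta e^{-\varepsilon (x_i | y_j)_a}$, combined with the $1$-Lipschitz dependence of the Gromov product on its entries, shows that $d^a_\varepsilon(x_i, y_j)$ decouples approximately multiplicatively in its two arguments when $R$ is large. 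Hence the alternating $B1$ combination of the correction tends to $0$ exponentially in $R$, absorbing the word-metric defect. For weak $B2'$, I would take $m(x,y)$ to be a word-midpoint of $x$ and $y$; the $\eta$-middle-point and max-convexity conditions follow from $\delta$-thin triangles, and the strict improvement $\hat{d}(m(x,y), z) \leq N - p$ in the fat-triangle case $d(x,z), d(y,z) \leq N < d(x,y)$ follows from a standard thinning argument via Lemma \ref{lemme de Bridson}: the geodesics $[x,z]$ and $[y,z]$ fellow-travel near $z$ for a time that grows with $d(x,y) - N$, pulling $m(x,y)$ toward $z$.

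The main obstacle is the strong $B1$ cancellation. The word metric satisfies only a coarse $\delta$-additive four-point inequality, so the arbitrarily small $\eta$ required by $B1$ cannot come from $d$ alone; all the precision must be produced by the correction. This forces a delicate matching of the weights $w_{x,y}$ to the geodesic geometry so that their variation under perturbation of the endpoints exactly compensates for the defect of $d$. Reconciling this fine-tuning with the uniform boundedness of the correction (needed for quasi-isometry) and with strict $G$-invariance (which forces summation over an unbounded set of basepoints, whose summability is nontrivial given the constraint $\varepsilon \leq \log 2/(\delta+D)$) is the technical heart of the argument.
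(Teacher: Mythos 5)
There is a genuine gap, and you have in fact identified its source yourself without noticing it is fatal to your construction. You write that the word metric's $B1$ defect is $O(\delta)$ and ``does not shrink as $R \to \infty$,'' and then assert that the alternating $B1$ combination of your correction ``tends to $0$ exponentially in $R$, absorbing the word-metric defect.'' But a correction whose alternating sum tends to $0$ cannot cancel a defect that stays bounded away from $0$: if $d(x_1,y_1)+d(x_2,y_2)-d(x_1,y_2)-d(x_2,y_1)$ is some non-vanishing integer and the correction's alternating sum is $o(1)$, the total $B1$ defect of $\hat{d}$ remains that integer. Worse, a \emph{bounded} correction (which you need for the quasi-isometry step to be ``immediate'') can never turn the word metric into a strongly bolic metric, precisely because the word metric's $B1$ error is uniform in $R$. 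Your final paragraph flags the required ``delicate matching of weights'' but offers no construction that produces it, and the two requirements you set out --- bounded support for easy quasi-isometry, unbounded summation for $G$-invariance and $B1$-cancellation --- pull in opposite directions with no resolution.

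The paper sidesteps the problem by \emph{not} using the word metric as a base at all. It sets $D(x,y)=\sum_{a\in X} d^a_\varepsilon(x,y)^p$, summed over \emph{all} basepoints $a$, and then $\hat d = D + C'$. The two ingredients you are missing are (i) raising $d^a_\varepsilon$ to a power $p$ chosen so that $p\varepsilon$ exceeds the exponential growth rate $\tau$ of the group, which makes the full (unbounded) sum converge and at the same time be quasi-isometric to $d$ (Lemma \ref{lemma: somme des exponentielles distance à un point}, Proposition \ref{proposition : D est fini et quasiisom à d}), and (ii) proving the $B1$ estimate directly on $D$ via the mean value theorem applied to $t\mapsto t^p$ term by term (Lemma \ref{lemme: accroissement fini}, Theorem \ref{theorem : dchapeau est fortement B1}), splitting the sum over $a$ according to which half of $[x_1,y_1]$ the projection $\pi(a)$ falls in. Because each term decays exponentially in the distance from $a$ to the relevant geodesic, the alternating sum is made arbitrarily small by pushing $R\to\infty$; there is no residual integer defect, since $D$ itself is real-valued and smooth in this quantitative sense rather than integer-valued plus a small perturbation. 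Your weak $B2'$ suggestion via word-midpoints and Lemma \ref{lemme de Bridson} is in the right spirit, but the paper obtains $B2'$ more cheaply by observing that $(X,\hat d)$ is hyperbolic and weakly geodesic, hence bolic (Corollary \ref{corollary : delta B2}).
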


To reprove Theorem \cite{MineyevYuStrongBolicityhypergroups}, we will use the following notion of an angle defined in Proposition \ref{Proposition: angle for hyperbolic metric space}.

In any graph $(X,d)$, a projection of a vertex $x$ on a closed subset $F \subset X$ is a vertex $t \in F$ such that $d(x,F)=d(x,t)$.

In the rest of the section, $G$ will denote an hyperbolic group and $X$ will denote a Cayley graph of $G$ over a finite generating set. We set $d$ the metric on $G$,  $\delta$ the hyperbolic constant of $X$, $\tau$ the exponential growth rate of of $X$, i.e. $\tau= \displaystyle \limsup_{n\rightarrow \infty }{\frac{\ln{|B_{X}(x,n)|}}{n}}$ where $|B_{X}(x,n)|$ denotes the number of vertices in $B_{X}(x,n)$. For all $x,y \in X$, $[x,y]$ will denote a geodesic between $x$ and $y$. For all $x,y,z \in  X$, $[x,y,z]$ will denote a geodesic triangle between $x$, $y$ and $z$ i.e. a choice of three geodesics between each of these three points. For every closed set\\

We choose $\eta>0$. We fix $\varepsilon>0$ such that $0 < \varepsilon \leq \frac{log(2)}{\delta+\eta}$. According to Proposition \ref{Proposition: angle for hyperbolic metric space}, for any $a \in X$, there exists a pseudo-metric $d_{\varepsilon}^{a}$ on $X$ comparable to $e^{-\varepsilon (x|y)_{a}}$. Let us choose $p$ such that $p\varepsilon>\tau$. We fix $\tau' \in ( \tau, p \varepsilon)$, so that there exists $C>0$, such that for all $n \in \mathbb{N}$, for all $x \in X$, $|B_{X}(x,n)|\leq C e^{\tau' n}$. \\

We will need the following counting lemmas to prove further results.

\begin{lem} \label{lemma : denombrement de g tel que g.o est proche d'une géodesique}

There exists $C>0$, such that for all $x,y \in X$, for all $n \in \mathbb{N}$, we have:
$$  |\{g\in G  ~ |~ d(g.o,[x,y])= n \}| \leq C d(x,y) e^{\tau' n} . $$

\end{lem}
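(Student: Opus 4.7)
The plan is to decompose the set to be counted using the vertices of the geodesic $[x,y]$ and reduce to a counting estimate in a ball of radius $n$. The key observation is that $X$ is a Cayley graph of $G$, so $G$ acts freely and transitively on the vertex set $V(X)$; in particular, the orbit map $g \mapsto g.o$ identifies $G$ with $V(X)$. Consequently, for any vertex $z \in X$,
\[
|\{g \in G : g.o \in B(z,n)\}| = |B(z,n) \cap V(X)| = |B(o,n)|,
\]
and by the choice of $\tau' \in (\tau, p\varepsilon)$ there is a constant $C_0 > 0$ depending only on $X$ and $\tau'$ such that $|B(o,n)| \leq C_0 e^{\tau' n}$ for all $n \in \mathbb{N}$.

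First I would enumerate the vertices on the geodesic. Writing $[x,y]$ as a sequence of vertices $z_0 = x, z_1, \dots, z_{d(x,y)} = y$, the distance from any vertex $g.o$ to $[x,y]$ is attained at one of these $z_i$ since the graph distance between two vertices is realized by a vertex on any geodesic between them. Hence
\[
\{g \in G : d(g.o,[x,y]) = n\} \subseteq \bigcup_{i=0}^{d(x,y)} \{g \in G : d(g.o, z_i) = n\}.
\]

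Next I would apply the ball estimate to each term of the union. For each fixed $i$, the set $\{g : d(g.o, z_i) = n\}$ is contained in $\{g : g.o \in B(z_i,n)\}$, whose cardinality is at most $C_0 e^{\tau' n}$ by the observation above. Summing over the $d(x,y)+1$ vertices $z_i$ gives
\[
|\{g \in G : d(g.o,[x,y]) = n\}| \leq (d(x,y)+1)\, C_0\, e^{\tau' n}.
\]
For $d(x,y) \geq 1$ this is bounded by $2 C_0 \, d(x,y) \, e^{\tau' n}$, and the degenerate case $d(x,y) = 0$ (where $[x,y]$ is a single vertex and the estimate reduces to $|S(x,n)| \leq C_0 e^{\tau' n}$) can be absorbed by taking $C := 2 C_0$ together with the convention that the statement is read for $d(x,y) \geq 1$.

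I do not anticipate any genuine obstacle in this proof: it is a pigeonhole argument over the vertices of the geodesic together with the exponential growth bound for balls in $X$, both of which are standard. The only minor technicality is the handling of the case $x = y$, which is dealt with by passing from $d(x,y)+1$ to $d(x,y)$ at the cost of a larger multiplicative constant.
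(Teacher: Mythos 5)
Your proof is correct and follows essentially the same route as the paper: include the set to be counted in the union over vertices $z_i$ of $[x,y]$ of the balls $B(z_i,n)$, apply the growth estimate $|B(z_i,n)| \leq C_0 e^{\tau' n}$ (using transitivity of the $G$-action on vertices of the Cayley graph), and sum over the at most $d(x,y)+1$ vertices of the geodesic. The only difference is that you are slightly more careful than the paper about the discrepancy between $d(x,y)$ and $d(x,y)+1$ and the degenerate case $x=y$, which the paper silently absorbs into the constant.
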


\begin{proof}

The result is clear with the following inclusion :
$$\{g\in G ~ |~ d(g.o,[x,y])= n \} \subset \bigcup_{x_{i} \in [x,y]} (B(x_{i},n) \cap G.o) .$$

We deduce that :
$$ |\{g\in G | d(g.o,[x,y])= n \}| \leq C d(x,y) e^{\tau' n} .$$
    
\end{proof}

\begin{lem}\label{lemma: somme des exponentielles distance à un point}
There exists a constant $\kappa>0$ such that for all $x \in X$:
$$ \sum_{ a\in X } e^{-\varepsilon pd(a,x)} \leq \kappa. $$

\end{lem}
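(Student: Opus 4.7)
The plan is to stratify the sum by the distance $d(a,x)$ and exploit the bound on ball cardinalities together with the inequality $p\varepsilon > \tau'$ chosen at the start of this section.

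First I would write
\[
\sum_{a \in X} e^{-\varepsilon p \, d(a,x)} = \sum_{n=0}^{\infty} \bigl|\{a \in X : d(a,x) = n\}\bigr| \, e^{-\varepsilon p n}.
\]
The set of points at distance exactly $n$ from $x$ is contained in the ball $B_X(x,n)$, so its cardinality is at most $|B_X(x,n)| \leq C e^{\tau' n}$, where $C$ and $\tau'$ are the constants fixed just before the lemma (depending only on $X$, not on $x$). Hence
\[
\sum_{a \in X} e^{-\varepsilon p \, d(a,x)} \leq C \sum_{n=0}^{\infty} e^{\tau' n} e^{-\varepsilon p n} = C \sum_{n=0}^{\infty} e^{-(\varepsilon p - \tau') n}.
\]

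Next, since $p$ was chosen precisely so that $p\varepsilon > \tau$ and then $\tau' \in (\tau, p\varepsilon)$, we have $\varepsilon p - \tau' > 0$, so the geometric series on the right converges. Setting
\[
\kappa := \frac{C}{1 - e^{-(\varepsilon p - \tau')}},
\]
we obtain a finite bound that does not depend on $x$, which concludes the proof.

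The main potential obstacle is purely bookkeeping: ensuring that the bound on $|B_X(x,n)|$ applies uniformly to every vertex $x$ and that the exponential growth rate $\tau'$ is strictly less than $p\varepsilon$, both of which have been guaranteed by the setup preceding the lemma. Beyond this, no hyperbolicity argument is needed; the lemma is really just a statement that $\ell^1$ convergence follows whenever the growth rate of the group is strictly smaller than the decay rate of the summand.
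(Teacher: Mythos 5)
Your proof is correct and follows exactly the same approach as the paper: stratify the sum by $d(a,x)=n$, bound the sphere cardinality by $|B_X(x,n)|\leq Ce^{\tau' n}$, and sum the resulting geometric series using $\varepsilon p - \tau' > 0$. Your constant $\kappa = C/(1-e^{\tau'-\varepsilon p})$ matches the paper's as well.
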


\begin{proof}

Let $x \in X$:

$$\begin{aligned}
\sum_{ a\in X } e^{-\varepsilon pd(a,x)}&= \displaystyle \sum_{n \in \mathbb{N}} \sum_{\{a \in X |d(a,x)=n\}}  e^{-\varepsilon pn}\\
&\leq \sum_{n \in \mathbb{N}} C e^{\tau' n} e^{-\varepsilon p n }\\
&\leq \sum_{n \in \mathbb{N}} C e^{(\tau'-\varepsilon p) n}\\
&\leq \frac{C}{1-e^{\tau'-\varepsilon p}}~(\text{since } \tau'-\varepsilon p<0).
\end{aligned}$$

With $\kappa=\frac{C}{1-e^{\tau'-\varepsilon p}}$, we get the desired result.
\end{proof}

Here, we define $D$, which will be the strongly bolic metric up to a constant.

\begin{proposition} \label{proposition : D est fini et quasiisom à d}
For all $x,y \in X$,

$$D(x,y):=\sum_{a \in X} d_{\varepsilon}^{a}(x,y)^{p}$$ is finite.

Moreover, there exist $A,B>0$ such that, for all $x,y \in X$:

$$ \frac{1}{A}d(x,y)-B \leq D(x,y) \leq Ad(x,y) + B.$$

\end{proposition}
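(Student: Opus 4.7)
The plan is to obtain the upper bound (which will simultaneously yield finiteness) by a direct computation combining the first estimate of Proposition~\ref{Proposition: angle for hyperbolic metric space} with the counting Lemma~\ref{lemma : denombrement de g tel que g.o est proche d'une géodesique}, and to obtain the lower bound by restricting the defining sum to the vertices lying on a single geodesic between $x$ and $y$.

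For the upper bound, I would start from $d_{\varepsilon}^{a}(x,y)^{p} \leq \beta^{p} e^{-\varepsilon p (x|y)_{a}}$ and invoke Lemma~\ref{lemma : DrutuKapovichcomparaisonproduitdeGromovet distanceàunegeodesique} to replace $(x|y)_{a}$ by $d(a,[x,y]) - 2\delta$. Splitting the sum over $a \in X$ according to the value $n = d(a,[x,y])$ and applying Lemma~\ref{lemma : denombrement de g tel que g.o est proche d'une géodesique} gives
\[
D(x,y) \;\leq\; \beta^{p} e^{2\delta\varepsilon p} \sum_{n \geq 0} C\,d(x,y)\,e^{\tau' n}\, e^{-\varepsilon p n}.
\]
Since $p$ has been chosen so that $\tau' - \varepsilon p < 0$, the geometric series converges to a constant, which yields a bound of the form $D(x,y) \leq A_{1}\,d(x,y)$ for some $A_{1} > 0$ and, along the way, the finiteness of $D(x,y)$.

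For the lower bound, let $\eta$ denote the parameter $D$ from Proposition~\ref{Proposition: angle for hyperbolic metric space} (renamed at the start of this section to avoid a clash with the function $D(x,y)$). Assume first that $d(x,y) \geq 2\eta$, so that the sharp lower estimate $d_{\varepsilon}^{a}(x,y) \geq \alpha e^{-\varepsilon (x|y)_{a}}$ of Proposition~\ref{Proposition: angle for hyperbolic metric space} holds for every $a \in X$. I would then fix a geodesic $[x,y]$ and retain only the $d(x,y) + 1$ vertices $a$ lying on it. For any such $a$, one has $d(a,x) + d(a,y) = d(x,y)$, hence $(x|y)_{a} = 0$, and therefore $d_{\varepsilon}^{a}(x,y)^{p} \geq \alpha^{p}$. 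Summing yields
\[
D(x,y) \;\geq\; \alpha^{p}\bigl(d(x,y) + 1\bigr).
\]
The range $d(x,y) < 2\eta$ contributes only a bounded discrepancy, which is absorbed into the additive constant $B$.

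I do not anticipate a serious obstacle: the whole argument hinges on the balance between the linear-in-$d(x,y)$ count from Lemma~\ref{lemma : denombrement de g tel que g.o est proche d'une géodesique} and the exponential decay $e^{-\varepsilon p\, d(a,[x,y])}$ in the visual pseudo-metric, and the choice $\varepsilon p > \tau'$ is made precisely to ensure this balance. The only mild point of care is the treatment of short distances $d(x,y) < 2\eta$, where the lower estimate of Proposition~\ref{Proposition: angle for hyperbolic metric space} is not available; this is harmless and affects only the additive constants in the conclusion.
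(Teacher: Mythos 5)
Your proposal is correct and follows essentially the same route as the paper: upper bound by combining the $e^{-\varepsilon(x|y)_a}$ estimate of Proposition~\ref{Proposition: angle for hyperbolic metric space} with Lemma~\ref{lemma : DrutuKapovichcomparaisonproduitdeGromovet distanceàunegeodesique} and the counting Lemma~\ref{lemma : denombrement de g tel que g.o est proche d'une géodesique}, lower bound by restricting the sum to vertices on a geodesic where $(x|y)_a = 0$ and handling short distances with an additive constant. Your renaming of the parameter $D$ to $\eta$ to avoid the clash with the function $D(x,y)$ and the use of $\beta^p$ (rather than the paper's slightly imprecise $\beta$) are minor improvements in bookkeeping, not a different argument.
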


\begin{proof}
Let $x,y \in X$ :

$$\begin{aligned} 
           D(x,y) & = \sum_{a \in X} d_{\varepsilon}^{a}(x,y)^{p}\\      
            & \leq  \sum_{a \in X} \beta e^{-\varepsilon p (x|y)_{a}} ~(\text{according to the first point of Proposition \ref{Proposition: angle for hyperbolic metric space}})\\
            & \leq \sum_{a \in X} \beta e^{-\varepsilon p (d(a,[x,y])-2\delta)} ~(\text{according to Lemma \ref{lemma : DrutuKapovichcomparaisonproduitdeGromovet distanceàunegeodesique}})\\
            & \leq \beta e^{2 p \delta \varepsilon} \sum_{n \in \mathbb{N}} \displaystyle \sum_{ \{a\in X | d(a,[x,y])=n \}}  e^{ -\varepsilon p d(a,[x,y]) }\\ 
            & \leq \beta e^{2 p \delta \varepsilon} \sum_{n \in \mathbb{N}} e^{ -\varepsilon p n } C d(x,y) e^{\tau' n}~(\text{according to Lemma \ref{lemma : denombrement de g tel que g.o est proche d'une géodesique}})\\
            & \leq  \beta e^{2 p \delta \varepsilon} \frac{C}{1-e^{\tau'-\varepsilon p}} d(x,y) \\
            & < \infty.
           \end{aligned}$$
Therefore $D(x,y)$ is finite and:

$$ D(x,y) \leq  \beta e^{2 p \delta \varepsilon} \kappa d(x,y),$$

with $\kappa =  \frac{C}{1-e^{\tau'-\varepsilon p}} $ like in Lemma \ref{lemma: somme des exponentielles distance à un point}.

For the other inequality, we remark that for all $a \in [x,y]$, $(x|y)_{a}=0$.

Moreover, according to Proposition \ref{Proposition: angle for hyperbolic metric space}, for every $x,y \in X$ with $d(x,y) \geq 2D$, and every $a \in X$, we have that $ d^{a}_{\varepsilon}(x,y) \geq \alpha e^{-\varepsilon (x|y)_{a}} $. Then if $d(x,y) \geq 2D$, and $a \in  [x,y]$, we get $d^{a}_{\varepsilon}(x,y) \geq \alpha$. \\

From this fact, we deduce that for every $x,y \in X$ such that $d(x,y) \geq 2D$, we have that:
$$ D(x,y)   \geq d(x,y) \alpha^{p} .$$

Then, there exists a constant $K>0$ such that for all $x,y \in X$:

$$ D(x,y) \geq \alpha^{p} d(x,y)-K. $$

If we set $A:= \max(\beta e^{2 p \delta \varepsilon}  \kappa , \frac{1}{\alpha^{p}}) $ and $B:= K$, we get the desired inequalities of the proposition.

\end{proof}

The next lemmas will be useful in the rest of the section. The first is an application of the mean value theorem.

\begin{lem} \label{lemme: accroissement fini}

Let $t,s \in \mathbb{R}_{+}$, then for all $p>1$, we have the following:

$$| t^{p}-s^{p} | \leq p \max(t^{p-1},s^{p-1}) | t-s| .$$

\end{lem}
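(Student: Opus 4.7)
The plan is to reduce to the case $t \ge s \ge 0$ by symmetry (the inequality is symmetric in $t,s$ since $|t^p - s^p|$ and $|t-s|$ are, and $\max(t^{p-1},s^{p-1})$ is as well), and then apply the mean value theorem to the $C^{1}$ function $f:[0,\infty) \to \mathbb{R}$ defined by $f(x) = x^{p}$, whose derivative is $f'(x) = p x^{p-1}$.

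First I would dispose of the trivial cases $t = s$ (where both sides vanish) and $s = 0$ (where the inequality becomes $t^{p} \le p t^{p-1} \cdot t = p t^{p}$, which holds since $p > 1$). In the remaining case $t > s > 0$, the mean value theorem applied to $f$ on $[s,t]$ produces some $c \in (s,t)$ with
$$t^{p} - s^{p} = f'(c)\,(t-s) = p\, c^{p-1} (t - s).$$
Since $p > 1$, the function $x \mapsto x^{p-1}$ is nondecreasing on $[0,\infty)$, so $c^{p-1} \le t^{p-1} \le \max(t^{p-1}, s^{p-1})$. Plugging this bound in and taking absolute values yields the claimed inequality in the assumed ordering.

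Finally, swapping the roles of $t$ and $s$ in the other ordering gives the same bound, so the estimate holds for all $t,s \in \mathbb{R}_{+}$. The only subtlety is checking that the mean value theorem applies at the endpoint $0$ when $s=0$, which is why I treat that case separately; since $f$ is continuous on $[0,t]$ and differentiable on $(0,t)$, the MVT still applies directly, so the separate treatment is in fact optional. There is no real obstacle here: the lemma is a one-line consequence of the mean value theorem together with the monotonicity of $x \mapsto x^{p-1}$ for $p > 1$.
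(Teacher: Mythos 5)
Your proof is correct and follows essentially the same route as the paper's: apply the mean value theorem to $x \mapsto x^{p}$ on the interval between $s$ and $t$, then bound $c^{p-1}$ by $\max(t^{p-1},s^{p-1})$ using monotonicity of $x \mapsto x^{p-1}$. You add a careful treatment of the edge cases $t=s$ and $s=0$, which the paper elides, but the core argument is the same.
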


\begin{proof}
 For all $t,s \in \mathbb{R}_{+}$, if $t<s$, according to the mean value theorem there exists $c \in (t,s)$ such that:

 $$ |t^{p}-s^{p} | = p c^{p-1} | t-s| .$$

 Then clearly,

 $$| t^{p}-s^{p} | \leq p \max(t^{p-1},s^{p-1}) | t-s| .$$

\end{proof}
We will now prove some lemmas from hyperbolic geometry.

For every $x,y,z,a \in X$, when $z$ is close to a geodesic between $x$ and $y$, we prove that when the projection of $a$ onto $[x,z]\cup[z,y]$ lies in $[x,z]$, then the projection of $a$ onto $[z,y]$ is close to $z$.

\begin{figure}[!ht]
    \centering
   \includegraphics[scale=0.6]{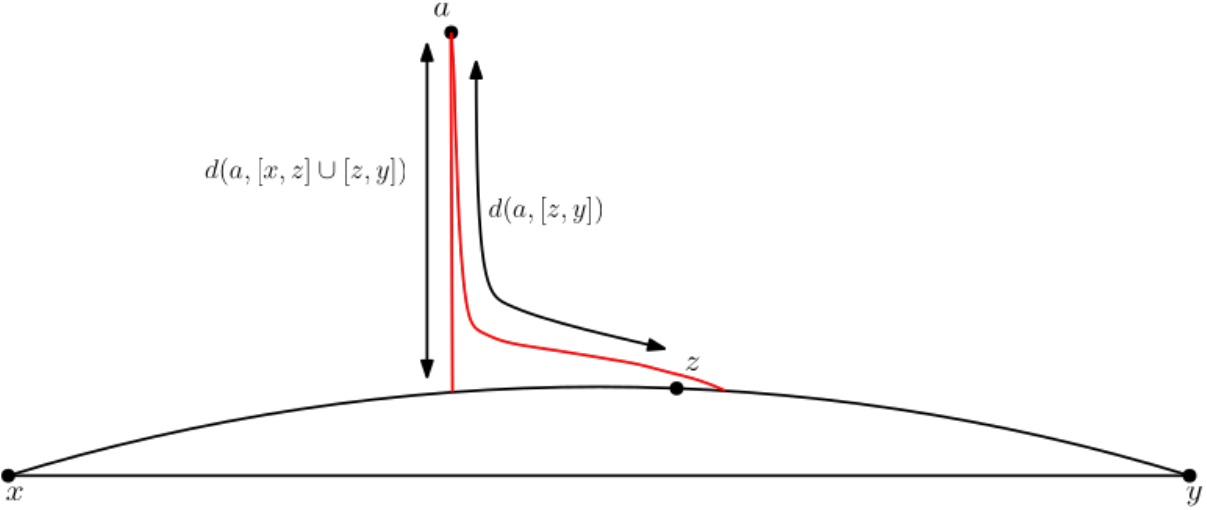}
   \caption{The projection of $a$ lies in $[x,z]$ then $d(a,[z,y])$ is close to $d(a,z)$.}
   \label{ fig: projection sur union de deux geodesiques }
\end{figure}

\begin{lem} \label{lemma : siaprochedegeodentrexetzalorsdistancedeaàzenvironproduitdegromov}
Let $D \geq 0$.
Let $x,y,z \in X$ such that $d(z,[x,y])\leq D$. For every $a \in X$ such that $d(a,[x,z]\cup[z,y])=d(a,[x,z])$, we have:

\begin{itemize}
    \item $d(a,[z,y]) \leq d(a,z) \leq 5\delta + 4D+ d(a,[z,y])$,

    \item $ (z|y)_{a}\leq d(a,z) \leq 7\delta +4D + (z|y)_{a}$.
\end{itemize}

\end{lem}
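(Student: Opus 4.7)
The lower bound $d(a,[z,y])\le d(a,z)$ is immediate from $z\in[z,y]$, and combined with Lemma~\ref{lemma : DrutuKapovichcomparaisonproduitdeGromovet distanceàunegeodesique} (which gives $(z|y)_a\le d(a,[z,y])$) it also yields $(z|y)_a\le d(a,z)$. The content of the lemma is therefore the upper bounds, which boil down to proving one key inequality: $d(a,y)\ge d(a,z)+d(z,y)-O(\delta+D)$, i.e.\ $z$ lies essentially on a geodesic from $a$ to $y$. Indeed, once this holds, $2(z|y)_a=d(a,z)+d(a,y)-d(z,y)\ge 2d(a,z)-O(\delta+D)$ gives $d(a,z)\le (z|y)_a+O(\delta+D)$, and then Lemma~\ref{lemma : DrutuKapovichcomparaisonproduitdeGromovet distanceàunegeodesique} transfers this estimate to $d(a,[z,y])$.

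To prove the key inequality, the plan is to pick $z'\in[x,y]$ with $d(z,z')\le D$ and let $p$ be the projection of $a$ onto $[x,y]$. The standard projection estimate in a $\delta$-hyperbolic space (itself a direct consequence of Lemma~\ref{lemma : DrutuKapovichcomparaisonproduitdeGromovet distanceàunegeodesique}) says that for any $q\in[x,y]$, $d(a,q)\ge d(a,p)+d(p,q)-4\delta$. In the generic case where $p\in[x,z']$, taking $q=y$ gives $d(a,y)\ge d(a,z')+d(z',y)-4\delta$, and replacing $z'$ by $z$ at the cost of $2D$ produces $d(a,y)\ge d(a,z)+d(z,y)-2D-4\delta$, which is exactly what we need.

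The delicate case is when $p\in[z',y]$, and here we must use the hypothesis $d(a,[x,z])\le d(a,[z,y])$ to show $p$ is close to $z'$. Using Lemma~\ref{lemme de Bridson} to fellow-travel $[x,z]$ with $[x,z']$ from $x$ (and $[y,z]$ with $[y,z']$ from $y$), the projection of $a$ on $[x,z]$ is within $2\delta$ of the projection on $[x,z']$, which equals $z'$ in this case (since $p$ is past $z'$). Hence $d(a,[x,z])\ge d(a,z')-O(\delta)\ge d(a,p)+d(p,z')-O(\delta+D)$. Similarly, $d(a,[z,y])\le d(a,p)+O(\delta+D)$ by fellow-travelling $p$ to a point on $[z,y]$. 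The hypothesis then forces $d(p,z')\le O(\delta+D)$, which puts us back (up to constants) in the generic case.

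The main obstacle is bookkeeping the constants: one must track carefully how the two fellow-travelling estimates interact at the endpoints $z$ and $z'$ (where Lemma~\ref{lemme de Bridson} breaks down within a strip of width $D+\delta$), and combine them with the projection estimate so that the final constants are the announced $5\delta+4D$ and $7\delta+4D$. In particular, one has to handle separately the points on $[x,z]$ lying within $D+\delta$ of $z$, where fellow-travel is unavailable and one must instead use $d(a,\cdot)\ge d(a,z)-(D+\delta)$ directly. A routine, but careful, optimization of the slack in each of these estimates should yield exactly the constants stated in the lemma.
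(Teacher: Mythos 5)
Your proposal is correct in outline but follows a genuinely different route from the paper's. The paper projects $a$ onto \emph{both} arms $[x,z]$ and $[z,y]$ (getting $u$ and $v$, with the hypothesis giving $d(a,u)\le d(a,v)$), lifts $u,z,v$ to $u',z',v'\in[x,y]$, and then uses a single thin-triangle disjunction on $[a,u',v']$: since $z'\in[u',v']$, $z'$ is $\delta$-close to $[a,u']$ or $[a,v']$; transferring back, $z$ is close to $[a,u]$ or $[a,v]$, and either branch yields $d(a,z)\le d(a,v)+O(\delta+D)$ directly. You instead project $a$ onto $[x,y]$ itself, aim first for the tripod inequality $d(a,y)\ge d(a,z)+d(z,y)-O(\delta+D)$, and split on whether this projection $p$ lands in $[x,z']$ (immediate from the projection estimate) or in $[z',y]$ (where the hypothesis plus Bridson fellow-travelling of $[x,z]/[x,z']$ and $[y,z]/[y,z']$ forces $d(p,z')$ small); you then convert the Gromov-product estimate into the distance estimate via Lemma~\ref{lemma : DrutuKapovichcomparaisonproduitdeGromovet distanceàunegeodesique}, whereas the paper derives the distance estimate first. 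Both approaches use the hypothesis essentially once, but at different points of the argument. Yours requires a two-sided case analysis plus handling of the fellow-travelling strip of width $D+\delta$ near the endpoints, so it is somewhat longer; it is also unlikely to reproduce the stated numerical constants $5\delta+4D$ and $7\delta+4D$ on the nose (you will get constants of the same linear form in $\delta$ and $D$, which is all that matters downstream, but your final sentence slightly overstates the precision). The paper's single thin-triangle disjunction is more economical; your tripod formulation has the advantage of making the geometric content --- that $z$ is effectively a quasi-centre of $[a,x,y]$ on the $a$-side pointing to $y$ --- more transparent.
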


\begin{proof}
Let $a \in X$ such that $d(a,[x,y]\cup[y,z])=d(a,[x,z])$. We will first prove that:
$$d(a,[z,y]) \leq d(a,z) \leq 5\delta + 4D+ d(a,[z,y]). $$

The inequality on the left is clear. See Figure \ref{ fig: projection sur union de deux geodesiques }, let $u$ be a projection of $a$ on $[x,z]$, let $v$ be a projection of $a$ on $[z,y]$. With the assumption that $d(a,[x,y]\cup[y,z])=d(a,[x,z])$, we know that $d(a,v) \geq d(a,u)$. 

According to \cite[Lemme 11.4]{DrutuKapovichGeometricGroupTheory}, there exists $u',z',v' \in [x,y]$, such that: $d(u,u') \leq 2(D+\delta)$ , $d(z,z') \leq 2(D+\delta)$ , $d(v,v') \leq 2(D+\delta)$ and $u',z',v'$ are arranged in this order on $[x,y]$.

The thin triangle property applied to a geodesic triangle $[a,u',v']$ which contains $z'$, gives the fact that $d(z',[a,u']) \leq \delta$ or $d(z',[a,v']) \leq \delta$. Therefore, we get with another use of the thin triangle property that $d(z,[a,u])\leq 4D+5 \delta $ or $d(z,[a,v])\leq 4D+5 \delta $. Therefore we get:
$$
d(a, z)\leq d(a, v) + 4D+5\delta \leq  d(a, [z, y]) + 4D+5\delta.$$

To conclude, we just have to combine this inequality with the inequalities of Lemma \ref{lemma : DrutuKapovichcomparaisonproduitdeGromovet distanceàunegeodesique}.

\end{proof}

In the following lemma, we show that if a point $a$ projects to $z$ on a geodesic $[x,y]$, then $z$ is close to being a quasi-center of the triangles $[x,z,a]$ and $[y,z,a]$.

\begin{lem}\label{lemme : z est la projection donc c'est un quasi-centre}

Let $a,x,y \in X$ and $z$ a projection of $a$ on $[x,y]$ then:
$$d(a,x) \geq d(a,z)+d(z,x)-24\delta. $$

\end{lem}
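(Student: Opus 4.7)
The inequality is equivalent to showing $(a \mid x)_z \leq 12\delta$, since
\[
d(a,x) \geq d(a,z) + d(z,x) - 24\delta \iff 2(a \mid x)_z \leq 24\delta.
\]
By Lemma \ref{lemma : DrutuKapovichcomparaisonproduitdeGromovet distanceàunegeodesique}, we have $(a \mid x)_z \leq d(z, [a,x])$ for any geodesic $[a,x]$, so the plan is to prove that $z$ lies within a bounded distance (of order $\delta$) of any geodesic from $a$ to $x$. The minimality property of the projection $z$ will be the essential ingredient to convert thinness of triangles into a bound on $d(z,[a,x])$.

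Concretely, I would fix a geodesic $[a,x]$ and consider the geodesic triangle with sides $[a,x]$, a geodesic $[a,z]$, and the subsegment $[z,x] \subset [x,y]$. Since this triangle is $\delta$-thin, every vertex $v$ of $[a,x]$ lies within $\delta$ of $[a,z] \cup [z,x]$. At the endpoint $a$ the side $[a,x]$ is at distance $0$ from $[a,z]$, while at $x$ it is at distance $0$ from $[z,x]$. A discrete intermediate-value argument along the vertices of $[a,x]$ therefore produces two consecutive vertices $v_i, v_{i+1} \in [a,x]$ with $d(v_i, [a,z]) \leq \delta$ and $d(v_{i+1}, [z,x]) \leq \delta$. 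Pick $p \in [a,z]$ and $q \in [z,x]$ realizing these distances.

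Now the projection hypothesis enters: since $q \in [x,y]$, one has $d(a,q) \geq d(a,z)$. On the other hand, using $p \in [a,z]$ and the bounds on $d(v_i,p)$, $d(v_i,v_{i+1})$, $d(v_{i+1},q)$, one estimates
\[
d(a,q) \leq d(a,p) + d(p,q) \leq \bigl( d(a,z) - d(p,z) \bigr) + O(\delta).
\]
Combining the two bounds gives $d(p,z) \leq O(\delta)$, whence $d(v_i,z) \leq d(v_i,p) + d(p,z) \leq O(\delta)$. This yields $d(z,[a,x]) \leq O(\delta)$, and then Lemma \ref{lemma : DrutuKapovichcomparaisonproduitdeGromovet distanceàunegeodesique} gives $(a \mid x)_z \leq O(\delta)$; the constant $24\delta$ stated in the lemma is comfortably large enough.

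The main obstacle is the intermediate-value step and the careful bookkeeping of small constants in a graph setting where ``continuity'' must be handled discretely; once the transition vertex $v_i$ is produced, the rest is a direct triangle-inequality computation exploiting the defining property $d(a,z) = d(a,[x,y])$.
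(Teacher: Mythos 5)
Your proof is correct and rests on the same key observation as the paper's: locate a point $q$ (or $v$ in the paper) on the side $[z,x]\subset[x,y]$ near the center of the triangle $[a,x,z]$, invoke the projection hypothesis $d(a,q)\ge d(a,z)$, and deduce that the corresponding point on $[a,z]$ is within $O(\delta)$ of $z$. The paper produces the center via a $4\delta$-quasi-center while you produce it by a discrete intermediate-value argument along $[a,x]$ and then pass through $d(z,[a,x])$ and the Gromov product, but these are cosmetic variants of the same argument.
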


\begin{proof}

Let us consider a geodesic triangle $[a,x,z]$. We denote by $t$ a quasi-center of this triangle, $u$ a vertex in $[x,a]$ , $v$ a vertex in $[x,y]$ and $w$ a vertex in $[a,x]$ such that $d(t,u) \leq 4 \delta, d(t,v) \leq 4 \delta, d(t,w) \leq 4\delta$.

We have the following:
$$\begin{aligned} 
           d(a,z)&\leq   d(a,v) ~(\text{since } v \in [x,y])  \\
            &\leq  d(a,w)+8 \delta.
\end{aligned}$$

Therefore:
$$ d(w,z) \leq 8 \delta ~(\text{since } w \in [a,z]).$$

And thus:
$$\begin{aligned} 
           d(a,x)&=   d(a,u)+d(u,x) ~(\text{since } u \in [a,x])  \\
            & \geq  d(a,z)+d(z,x)-2d(u,z)\\
            & \geq  d(a,z)+d(z,x)-24\delta.
\end{aligned}$$

\end{proof}

\begin{figure}[!ht]
   \centering
  \includegraphics[scale=0.5]{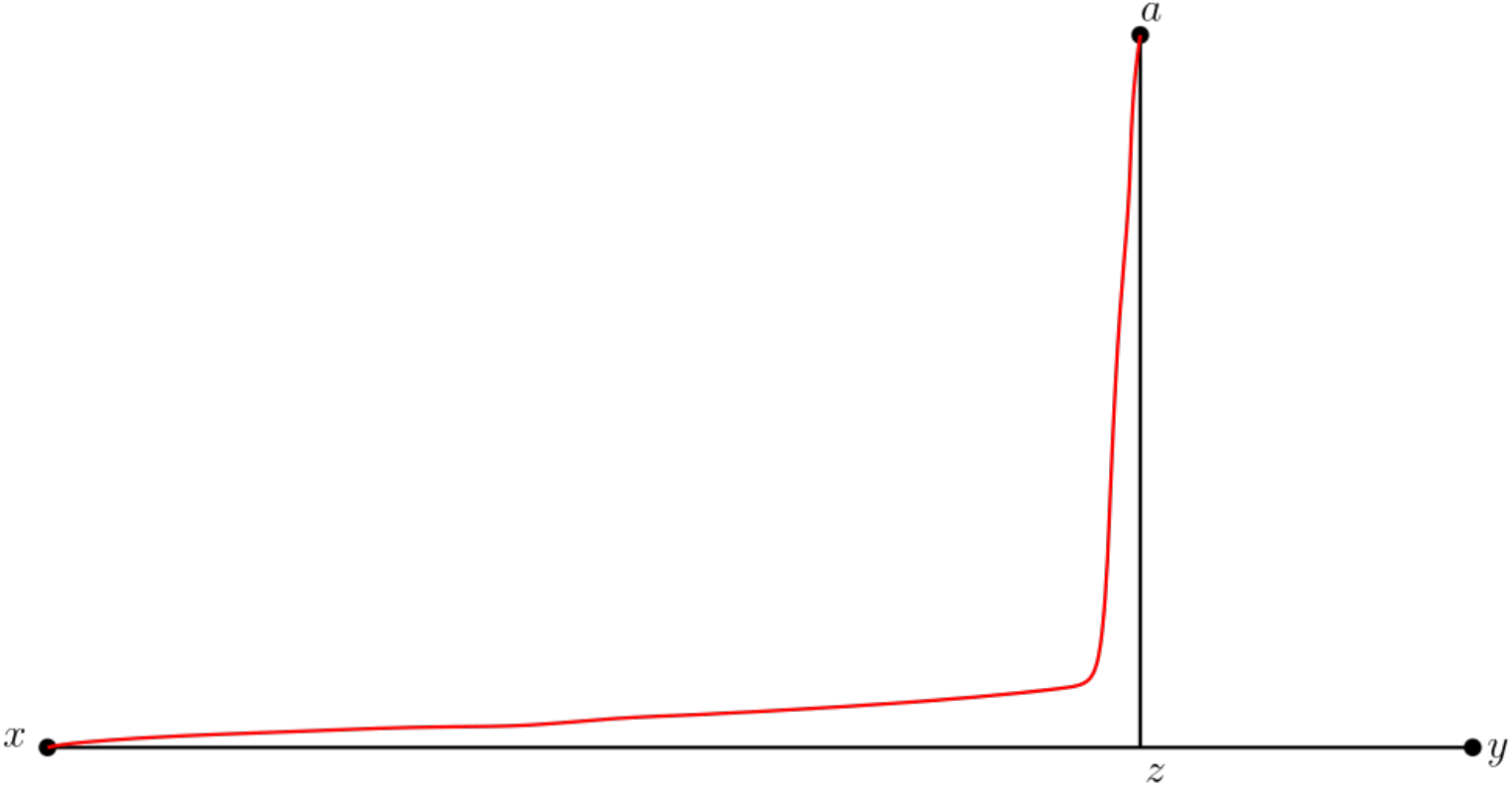}
   \caption{$z$ the projection of $a$ on $[x,y]$ is a quasi-center of the triangle $[x,a,z]$}
    \label{ fig: projection quasicentre }
 \end{figure}

\newpage

\begin{lem}\label{lemme: comparaison projection avec z presque sur la geodesique}
Let $x,y,z \in X$, a geodesic $[x,y]$ between $x$ and $y$ such that $d(z,[x,y])\leq 8 \delta$, and two geodesics $[x,z]$ and $[z,y]$ between $x$, $z$ and $z$, $y$ respectively then:
$$\forall a \in X, d(a,[x,z]\cup[z,y])-11\delta \leq d(a,[x,y])\leq d(a,[x,z]\cup[z,y])+11 \delta.$$
    
\end{lem}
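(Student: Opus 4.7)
The plan is to show that the broken path $[x,z]\cup[z,y]$ and the geodesic $[x,y]$ are Hausdorff-close, with Hausdorff distance bounded by a small multiple of $\delta$. Once such a bound $H$ is established, the claimed inequality is immediate: for any $a\in X$, a projection of $a$ onto one set lies within $H$ of the other, so $|d(a,[x,y])-d(a,[x,z]\cup[z,y])|\le H$.

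First I would choose a projection $z'$ of $z$ onto $[x,y]$, so $d(z,z')\le 8\delta$, and split $[x,y]=[x,z']\cup[z',y]$. The key object is then the geodesic triangle with vertices $x,z,z'$, whose sides I take to be the given geodesic $[x,z]$, the sub-geodesic $[x,z']$ of $[x,y]$, and any geodesic $[z,z']$, which has length at most $8\delta$. Applying $\delta$-thinness to this triangle controls $[x,z]$ in terms of $[x,z']\cup[z,z']$, and symmetrically $[x,z']$ in terms of $[x,z]\cup[z,z']$. The same construction on the triangle with vertices $y,z,z'$ does the corresponding job on the other half.

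Now I would chase the constants. A point $p\in[x,z]$ is within $\delta$ of either $[x,z']\subset[x,y]$ (giving $d(p,[x,y])\le\delta$), or of $[z,z']$ (giving $d(p,[x,y])\le d(p,[z,z'])+\mathrm{diam}([z,z'])\le\delta+8\delta=9\delta$, using $z'\in[x,y]$). Hence $[x,z]$ lies in the $9\delta$-neighbourhood of $[x,y]$, and likewise for $[z,y]$. In the other direction, a point $q\in[x,z']$ is within $\delta$ of $[x,z]\cup[z,z']$, and in the bad case is within $\delta+8\delta=9\delta$ of $z\in[x,z]\cup[z,y]$; symmetrically for $[z',y]$. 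Together these show the Hausdorff distance between $[x,z]\cup[z,y]$ and $[x,y]$ is at most $9\delta$, from which the stated inequalities with $11\delta$ follow.

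I do not expect a real obstacle: the argument uses only the $\delta$-thin triangles property and the hypothesis $d(z,[x,y])\le 8\delta$. The only subtlety is bookkeeping the two different triangles (one for each half of the split) and making sure the additive terms from the short side $[z,z']$ are absorbed by the final constant $11\delta$; the bound $9\delta$ that naturally falls out is comfortably below $11\delta$, so there is slack.
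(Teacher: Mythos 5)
Your argument is correct, and it proves the stated inequality with the better constant $9\delta$ in place of $11\delta$. The paper's proof is a one-line appeal to Lemma \ref{lemme de Bridson} (fellow-travelling of geodesics issuing from a common point) and then claims the Hausdorff bound $11\delta$; you instead pick a projection $z'$ of $z$ on $[x,y]$, split both paths at $z'$ and $z$, and apply $\delta$-thinness of the two triangles $[x,z,z']$ and $[y,z,z']$ directly, using that $[z,z']$ has length at most $8\delta$. The underlying idea is the same — bound the Hausdorff distance between $[x,y]$ and $[x,z]\cup[z,y]$ and conclude by the triangle inequality — but your route is more elementary and self-contained (no appeal to the fellow-travelling lemma, whose application here would also require some care about parametrization since the geodesics have different lengths), and it yields slack on the constant. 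Your constant-chasing is sound: in each triangle, a point is either $\delta$-close to the long opposite side (distance $\le\delta$), or $\delta$-close to the short side $[z,z']$ of diameter $\le 8\delta$ and hence within $9\delta$ of an endpoint of $[z,z']$, and in both the forward and reverse directions that endpoint lies on the target set.
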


\begin{proof}
According to Lemma \ref{lemme de Bridson}, each vertex of $[x,y]$ is at a distance of at most $11\delta$ from a point on $[x,z]\cup[z,y]$, and vice versa.

Then the inequalities of the lemma follow directly.

\end{proof}

To show that $D$ is almost a metric, we will need an inequality close to the triangular inequality. This proposition will be crucial to prove this and crucial to prove that this metric is weakly geodesic.\\

\begin{figure}[!ht]
   \centering
   \includegraphics[scale=0.5]{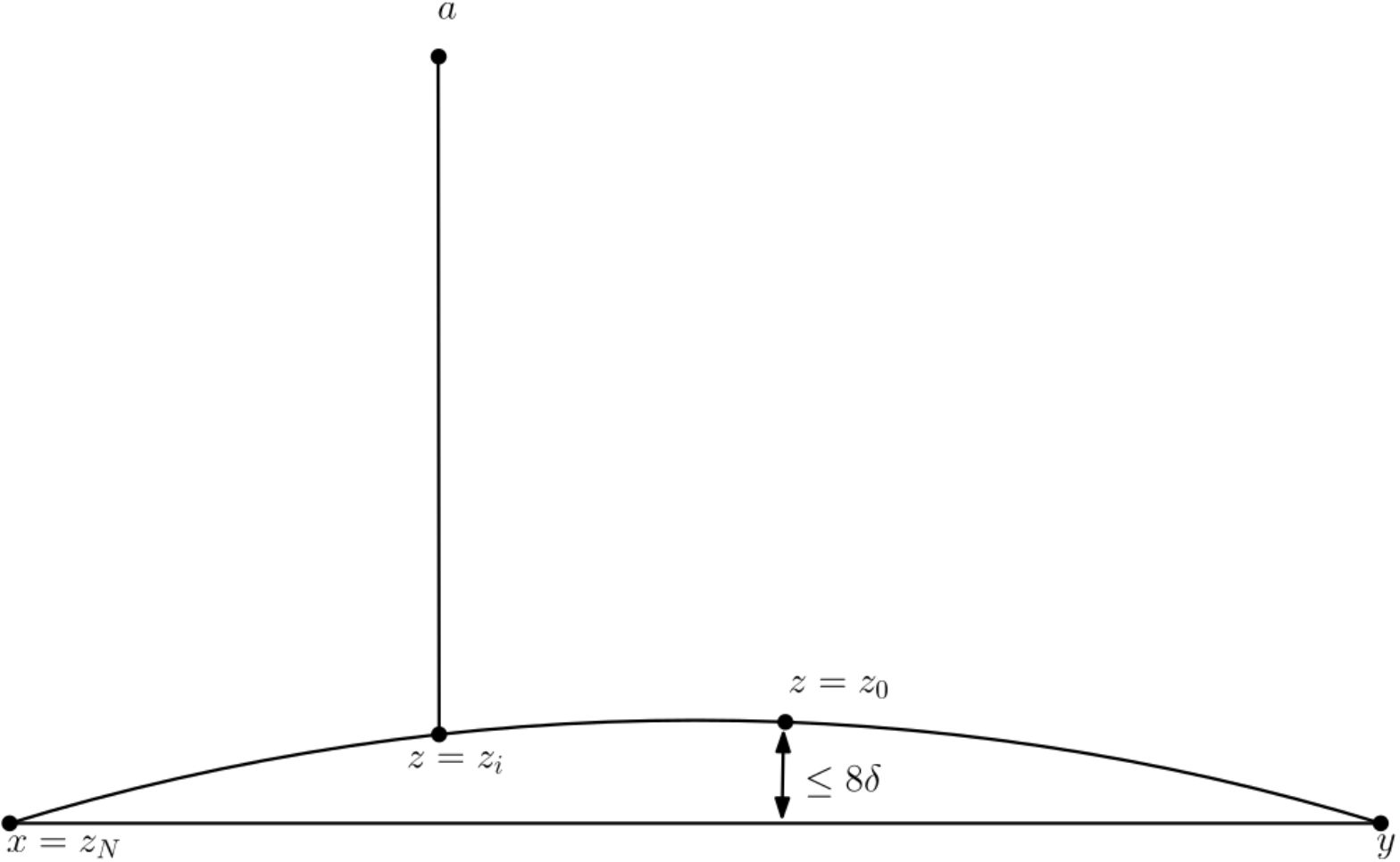}
    \caption{$a$ projects onto $z_{i}$ on $[x,z]\cup[z,y]$}
    \label{fig: proposition 8.9}
 \end{figure}

\newpage

\begin{proposition}\label{proposition : D est faiblement géodésique}
There exists a constant $C>0$ such that for all $x,y \in X$ and $z \in X$, such that $d(z,[x,y])\leq 8\delta$, we have the following:
$$|D(x,y)-D(x,z)-D(z,y)|\leq C.$$

\end{proposition}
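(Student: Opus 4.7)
The plan is to write
\[
D(x,y) - D(x,z) - D(z,y) \;=\; \sum_{a \in X} \big( d_\varepsilon^a(x,y)^p - d_\varepsilon^a(x,z)^p - d_\varepsilon^a(z,y)^p \big),
\]
and to partition $X = A \cup B$ with $A = \{a \in X : d(a,[x,z]) \le d(a,[z,y])\}$. Since the hypothesis $d(z,[x,y]) \le 8\delta$ is symmetric in $x$ and $y$, it suffices to bound the contribution from $A$; the sum over $B$ is handled identically after exchanging the roles of $x$ and $y$.

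For $a \in A$ one has $d(a, [x,z] \cup [z,y]) = d(a, [x,z])$. The pseudo-metric triangle inequality gives $|d_\varepsilon^a(x,y) - d_\varepsilon^a(x,z)| \le d_\varepsilon^a(z,y)$, and Lemma \ref{lemme: accroissement fini} then yields
\[
\big| d_\varepsilon^a(x,y)^p - d_\varepsilon^a(x,z)^p \big| \le p \max\!\big( d_\varepsilon^a(x,y), d_\varepsilon^a(x,z) \big)^{p-1} d_\varepsilon^a(z,y).
\]
Inserting the upper bound $d_\varepsilon^a(u,v) \le \beta e^{-\varepsilon(u|v)_a}$ of Proposition \ref{Proposition: angle for hyperbolic metric space} together with the comparison of Gromov products to distances-to-geodesics (Lemma \ref{lemma : DrutuKapovichcomparaisonproduitdeGromovet distanceàunegeodesique}), the estimate $|d(a,[x,y]) - d(a,[x,z])| \le 11\delta$ (Lemma \ref{lemme: comparaison projection avec z presque sur la geodesique}), and the estimate $(z|y)_a \ge d(a,z) - 39\delta$ (Lemma \ref{lemma : siaprochedegeodentrexetzalorsdistancedeaàzenvironproduitdegromov} applied with $D = 8\delta$), one obtains
\[
\big| d_\varepsilon^a(x,y)^p - d_\varepsilon^a(x,z)^p \big| + d_\varepsilon^a(z,y)^p \;\le\; K \, e^{-(p-1)\varepsilon \rho_a} \, e^{-\varepsilon d(a,z)},
\]
where $\rho_a := d(a,[x,z])$ and $K$ depends only on $\delta,\varepsilon,p,\beta$.

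Pick a projection $u_a \in [x,z]$ of $a$, so $d(a,u_a) = \rho_a$. Applying Lemma \ref{lemme : z est la projection donc c'est un quasi-centre} with geodesic $[z,x]$ and projection point $u_a$ gives $d(a,z) \ge \rho_a + d(u_a,z) - 24\delta$, and hence
\[
e^{-(p-1)\varepsilon \rho_a} \, e^{-\varepsilon d(a,z)} \;\le\; e^{24\varepsilon\delta} \, e^{-p\varepsilon \rho_a} \, e^{-\varepsilon d(u_a,z)}.
\]
Fibering $\sum_{a \in A}$ over the pair $(u_a,\rho_a)$ and using $|\{a \in X : d(a,u)=\rho\}| \le C e^{\tau'\rho}$,
\[
\sum_{a \in A} e^{-p\varepsilon \rho_a} e^{-\varepsilon d(u_a,z)} \;\le\; C \Big( \sum_{u \in [x,z]} e^{-\varepsilon d(u,z)} \Big) \Big( \sum_{\rho \ge 0} e^{(\tau'-p\varepsilon)\rho} \Big).
\]
The second factor converges because $p\varepsilon > \tau'$ by the choice of $p$, and the first factor is bounded by $1/(1-e^{-\varepsilon})$, \emph{independently of $d(x,z)$}, since $u$ runs over the vertices of a single geodesic at integer distances $0,1,\ldots$ from $z$. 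This yields a uniform constant bound on the $A$-contribution, and by the symmetric argument on $B$, completes the proof.

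The main obstacle---the reason a naive triangle-inequality argument fails---is that the crude bound $d_\varepsilon^a(z,y) \le \beta e^{-\varepsilon d(a,z)}$ alone produces a sum of order $d(x,z)$ rather than a constant. The essential point is to split $d(a,z)$ into an orthogonal part $\rho_a$ and a tangential part $d(u_a,z)$ via Lemma \ref{lemme : z est la projection donc c'est un quasi-centre}: the former is then summed against the exponential growth of balls (using $p\varepsilon > \tau'$), and the latter as a geometric series along $[x,z]$. This decoupling is exactly what makes the final bound independent of $x,y,z$.
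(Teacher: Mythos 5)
Your proof is correct and follows essentially the same approach as the paper's: the same decomposition of $X$ by which side of $[x,z]\cup[z,y]$ is closer, the same mean value theorem estimate, the same use of Lemma~\ref{lemme : z est la projection donc c'est un quasi-centre} to split $d(a,z)$ into $\rho_a + d(u_a,z) - 24\delta$, and the same summation strategy pairing a geometric series along $[x,z]$ with a series in $\rho_a$ that converges because $p\varepsilon>\tau'$. The only difference is presentational: the paper bounds $|d_\varepsilon^a(x,y)^p - d_\varepsilon^a(x,z)^p|$ and $d_\varepsilon^a(z,y)^p$ separately, whereas you merge them into a single exponential bound before summing.
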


\begin{proof}
Let $x,y \in X$ and $z \in X$ such that $d(z,[x,y])\leq 8\delta$. We fix two geodesics $[x,z]$ between $x$ and $z$ and $[z,y]$ between $z$ and $y$. We have:
$$\begin{aligned} 
           &~~|D(x,y)-D(x,z)-D(z,y)|\\
           \leq & \sum_{a \in X} | d_{\varepsilon}^{a}(x,y)^{p} -d_{\varepsilon}^{a}(x,z)^{p} - d_{\varepsilon}^{a}(z,y)^{p}
             | \\
            \leq & \displaystyle \sum_{ \{a\in X | d(a,[x,z]\cup[z,y])=d(a,[x,z]) \}} |d_{\varepsilon}^{a}(x,y)^{p} -d_{\varepsilon}^{a}(x,z)^{p} - d_{\varepsilon}^{a}(z,y)^{p}
             | \\
            & + \displaystyle \sum_{ \{a\in X | d(a,[x,z]\cup[z,y])=d(a ,[z,y]) \}} | d_{\varepsilon}^{a}(x,y)^{p} -d_{\varepsilon}^{a}(x,z)^{p} - d_{\varepsilon}^{a}(z,y)^{p}
             |.
\end{aligned}$$

We will bound from above the two obtained sums, we get the following:

$$\begin{aligned} 
           &~~\sum_{ \{a\in X | d(a,[x,z]\cup[z,y]=d(a,[x,z]) \}} | d_{\varepsilon}^{a}(x,y)^{p} -d_{\varepsilon}^{a}(x,z)^{p} - d_{\varepsilon}^{a}(z,y)^{p}|
           &\\
             \leq & \sum_{ \{a\in X | d(a,[x,z]\cup[z,y])=d(a,[x,z]) \}} | d_{\varepsilon}^{a}(x,y)^{p} -d_{\varepsilon}^{a}(x,z)^{p}| + d_{\varepsilon}^{a}(z,y)^{p} \\
            \leq & \sum_{ \{a\in X | d(a,[x,z]\cup[z,y])=d(a,[x,z]) \}} | d_{\varepsilon}^{a}(x,y)^{p} -d_{\varepsilon}^{a}(x,z)^{p}| \\
            & + \displaystyle \sum_{ \{a\in X | d(a,[x,z]\cup[z,y])=d(a,[x,z]) \}} d_{\varepsilon}^{a}(z,y)^{p}.
\end{aligned}$$

For $a \in X$ such that $d(a,[x,z]\cup[z,y])=d(a,[x,z])$, we have the following:
$$\begin{aligned} 
           d_{\varepsilon}^{a}(z,y)^{p} & \leq  \beta e^{-\varepsilon p (z|y)_{a}}\\      
            & \leq \beta e^{-\varepsilon p( d(a,z)-23\delta )} ~(\text{according to Lemma \ref{lemma : siaprochedegeodentrexetzalorsdistancedeaàzenvironproduitdegromov} }).
           \end{aligned}$$

Therefore:
$$\begin{aligned} 
          \displaystyle \sum_{ \{a\in X | d(a,[x,z]\cup[z,y])=d(a,[x,z]) \}} d_{\varepsilon}^{a}(z,y)^{p} & \leq  \beta e^{3\delta p \varepsilon} \sum_{ \{a\in X | d(a,[x,z]\cup[z,y])=d(a,[x,z]) \}}  e^{-\varepsilon p d(a,z)}\\\      
            & \leq \beta e^{23\delta p \varepsilon} \sum_{ a\in X  }  e^{-\varepsilon p d(a,z)} \\
            & \leq \beta e^{23\delta p \varepsilon} \kappa  ~(\text{according to Lemma \ref{lemma: somme des exponentielles distance à un point} })\\
           & \leq K_{1}.
\end{aligned}$$

with $K_{1}=4\beta e^{23\delta p \varepsilon} \kappa$.\\

We will bound the sum $ \displaystyle \sum_{ \{a\in X | d(a,[x,z]\cup[z,y])=d(a,[x,z]) \}} | d_{\varepsilon}^{a}(x,y)^{p} -d_{\varepsilon}^{a}(x,z)^{p}|$.

Let us denote $N:=d(x,z)$. We order $[x,z]$ in the following sense: $[z,x]=\{z_{0},z_{1},...,z_{N}\}$ with $z_{0}=z$, $z_{N}=x$ and $d(z_{i},z_{i+1})=1$ for $0\le i \le N-1$, see Figure \ref{fig: proposition 8.9}.\\

Let $i \in \{0,...,N \}$, such that $d(a,[x,z]\cup[z,y])=d(a,[x,z])=d(a,z_{i})$.
Thus $(x|z)_{a} \geq d(a,z_{i})-2\delta$ and $(x|y)_{a}\geq d(a,z_{i})-13\delta$ according to Lemmas \ref{lemma : DrutuKapovichcomparaisonproduitdeGromovet distanceàunegeodesique} and \ref{lemme: comparaison projection avec z presque sur la geodesique}.

Moreover, the assumption $d(a,[x,z]\cup[z,y])=d(a,[x,z])=d(a,z_{i})$ implies that we can apply Lemma~\ref{lemme : z est la projection donc c'est un quasi-centre} to $a,z_{i}$ and $z$, therefore:
$$d(a,z) \geq d(a,z_{i})+d(z_{i},z)- 24 \delta \geq d(a,z_{i})+i-24\delta.$$

Then we have the following:
$$\begin{aligned} 
           | d_{\varepsilon}^{a}(x,y)^{p} -d_{\varepsilon}^{a}(x,z)^{p}| & \leq  p \max (d_{\varepsilon}^{a}(x,y)^{p-1},d_{\varepsilon}^{a}(x,z)^{p-1}) | d_{\varepsilon}^{a}(x,y) -d_{\varepsilon}^{a}(x,z) | ~(\text{according to Lemma \ref{lemme: accroissement fini} }) \\      
            & \leq  p \max (d_{\varepsilon}^{a}(x,y)^{p-1},d_{\varepsilon}^{a}(x,z)^{p-1}) d_{\varepsilon}^{a}(y,z) \\
             & \leq p \beta^{2}  \max(e^{- \varepsilon(p-1)(x|y)_{a}}, e^{- \varepsilon(p-1)(x|z)_{a}}) e^{-\varepsilon (y|z)_{a}} \\
             & \leq p \beta^{2} e^{-\varepsilon(p-1) (d(a,z_{i})-13\delta)} e^{-\varepsilon( d(a,z)-2\delta )} \\
             & \leq p \beta^{2} e^{p \varepsilon 13 \delta}e^{\varepsilon13\delta} e^{-\varepsilon(p-1)d(a,z_{i})}  e^{-\varepsilon(d(a,z_{i})+i-26\delta)}\\
            & \leq p \beta^{2} e^{ \varepsilon 13 \delta(p+1)} e^{ -\varepsilon p d(a,z_{i}) } e^{- \varepsilon i }.
            \end{aligned}$$

Therefore:
$$\begin{aligned}
 & \sum_{ \{a\in X | d(a,[x,z]\cup[z,y])=d(a,[x,z]) \}} | d_{\varepsilon}^{a}(x,y)^{p} -d_{\varepsilon}^{a}(x,z)^{p}| \\
& \leq \sum_{i=0}^{N} \sum_{ \{a\in X | d(a,[x,z]\cup[z,y])=d(a,z_{i}) \}} | d_{\varepsilon}^{a}(x,y)^{p} -d_{\varepsilon}^{a}(x,z)^{p}| \\
& \leq \sum_{i=0}^{N} \sum_{ \{a\in X | d(a,[x,z]\cup[z,y])=d(a,z_{i}) \}}  p \beta^{2} e^{ \varepsilon 13 \delta(p+1)} e^{ -\varepsilon p d(a,z_{i}) } e^{- \varepsilon i }\\
& \leq p \beta^{2} e^{ \varepsilon 13 \delta(p+1)}  \sum_{i=0}^{N}  e^{- \varepsilon i } \sum_{a \in X} e^{-\varepsilon p d(a,z_{i})}\\
& \leq p \beta^{2} e^{ \varepsilon 13 \delta(p+1)} \kappa  \sum_{i=0}^{N}  e^{- \varepsilon i } \\
& \leq  K_{2}.
\end{aligned}$$

with $K_{2}= p \beta^{2} e^{ \varepsilon 13 \delta(p+1)} \kappa \frac{1}{1-e^{-\varepsilon}}$.

To conclude, we have:
$$\begin{aligned} 
           &~~|D(x,y)-D(x,z)-D(z,y)|\\\
           &\\
             \leq & \sum_{a \in X} | d_{\varepsilon}^{a}(x,y)^{p} -d_{\varepsilon}^{a}(x,z)^{p} - d_{\varepsilon}^{a}(z,y)^{p}
             | \\
            \leq & \displaystyle \sum_{ \{a\in X | d(a,[x,z]\cup[z,y])=d(a,[x,z]) \}} |d_{\varepsilon}^{a}(x,y)^{p} -d_{\varepsilon}^{a}(x,z)^{p} - d_{\varepsilon}^{a}(z,y)^{p}
             | \\
             +&  \displaystyle \sum_{ \{a\in X | d(a,[x,z]\cup[z,y])=d(a ,[z,y]) \}} | d_{\varepsilon}^{a}(x,y)^{p} -d_{\varepsilon}^{a}(x,z)^{p} - d_{\varepsilon}^{a}(z,y)^{p}
             | \\
              \leq & 2 K_{1}+2K_{2}.
\end{aligned}$$

and with $C:=2 K_{1}+2K_{2}$ we get the desired result.
\end{proof}

The next proposition shows that $D$ satisfies a coarse triangular inequality.

\begin{figure}[!ht]
   \centering
  \includegraphics[scale=0.4]{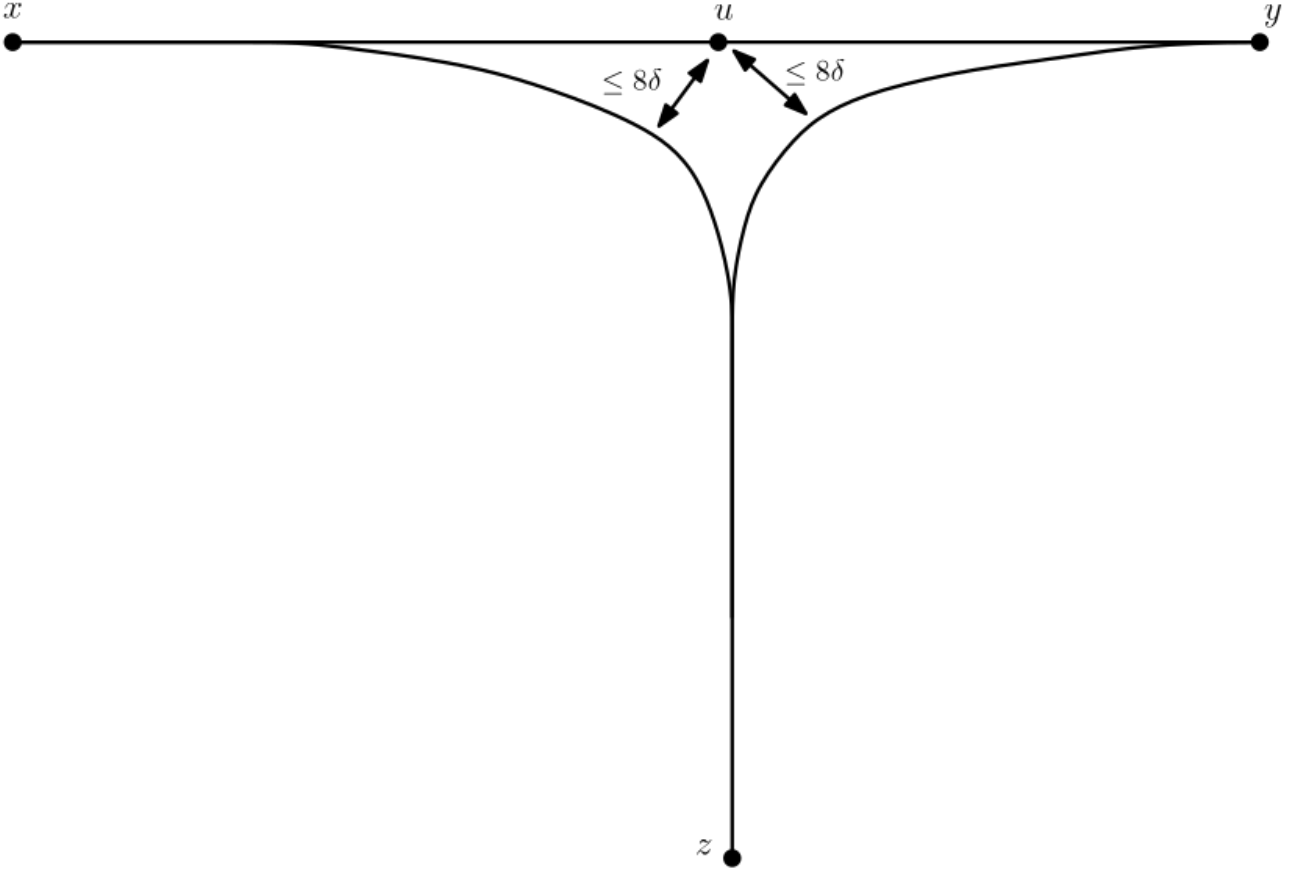}
  \caption{$u$ is at distance $8\delta$ from the three sides of the triangle}
   \label{fig: quasi-centre sur un côté}
 \end{figure}

\begin{proposition}\label{proposition : inegalite triangulaire grossiere}

There exists a constant $C'>0$ such that, for all $x,y,z \in X$, we have the following:
$$D(x,y)\leq D(x,z)+D(z,y) + C'.$$

\end{proposition}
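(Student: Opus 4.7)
The plan is to reduce to the previous proposition by inserting a quasi-center of the triangle $[x,y,z]$. Concretely, it is a standard fact in $\delta$-hyperbolic geodesic spaces (see for instance \cite[Lemme 11.4]{DrutuKapovichGeometricGroupTheory}) that given three vertices $x,y,z \in X$ and any choice of geodesic triangle $[x,y,z]$, there exists a point $u \in X$ (a quasi-center) satisfying $d(u,[x,y]) \leq 8\delta$, $d(u,[x,z]) \leq 8\delta$ and $d(u,[z,y]) \leq 8\delta$. This is exactly the configuration depicted in Figure \ref{fig: quasi-centre sur un côté}.

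Once such a $u$ is fixed, I would apply Proposition \ref{proposition : D est faiblement géodésique} three times, once to each of the three sides: picking a geodesic $[x,y]$ passing through (a point within $8\delta$ of) $u$ yields
\[
D(x,y) \leq D(x,u) + D(u,y) + C,
\]
and similarly applying the proposition to the pairs $(x,z)$ and $(z,y)$ with $u$ in place of the inserted point gives
\[
D(x,z) \geq D(x,u) + D(u,z) - C, \qquad D(z,y) \geq D(u,z) + D(u,y) - C,
\]
since $u$ lies within $8\delta$ of each of those two sides.

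Adding the last two inequalities and subtracting from the first yields
\[
D(x,y) - D(x,z) - D(z,y) \leq -2D(u,z) + 3C \leq 3C,
\]
where the second inequality is the fact that $D(u,z) = \sum_{a \in X} d^{a}_{\varepsilon}(u,z)^{p} \geq 0$ by construction. Setting $C' := 3C$ (with $C$ the constant produced by Proposition \ref{proposition : D est faiblement géodésique}) finishes the argument.

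The argument is essentially mechanical once the quasi-center is available, so no step should be a real obstacle. The only point requiring a little care is the simultaneous choice of the three geodesics $[x,y]$, $[x,z]$, $[z,y]$: Proposition \ref{proposition : D est faiblement géodésique} is stated for an arbitrary geodesic $[x,y]$, so one just fixes the sides of the triangle used to produce $u$ and reuses them when invoking the proposition, which is legitimate since the proposition holds for every choice of geodesic.
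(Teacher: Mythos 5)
Your proposal is correct and follows essentially the same strategy as the paper: both proofs produce a quasi-center $u$ of a fixed geodesic triangle $[x,y,z]$ (the paper takes $u$ on the side $[x,y]$ with $d(u,[x,z]),d(u,[z,y])\leq 8\delta$, you take $u$ within $8\delta$ of all three sides — an immaterial variant) and then apply Proposition~\ref{proposition : D est faiblement géodésique} once per side, summing the three inequalities and dropping the $-2D(u,z)$ term. Your constant $C'=3C$ is somewhat tighter than the paper's $C'=3C+2(A\cdot 8\delta+B)$, but since the statement only asserts the existence of some $C'$ this makes no difference.
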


\begin{proof}

Consider $[x,y,z]$ a geodesic triangle between $x,y ,z$.
According to hyperbolicity, every triangle admits a $4\delta$-quasi-center therefore, there exists $u \in [x,y]$ such that:
$$ d(u,[x,z]) \leq 8\delta, d(u,[y,z]) \leq 8\delta,$$

see Figure \ref{fig: quasi-centre sur un côté}.

Then:
$$\begin{aligned} 
           & D(x,y)-D(x,z)+D(z,y)\\  \leq &D(x,u)+D(u,y)+C\\
            - &D(x,u)-D(u,z)+C-D(z,u)-D(u,y)+C ~(\text{according to Proposition \ref{proposition : D est faiblement géodésique} })   \\      
            \leq & 3C.
            \end{aligned}$$
            
with $C':= 3C+ 2(A8\delta+B) $, we get the desired result.

\end{proof}

\begin{definition}\label{definition : metrique fortement bolique groupe hyperbolique}

For all $\varepsilon > \tau$, we define the function $\hat{d}: X \times X \rightarrow \mathbb{R}_{+}$ by:

$$
\hat{d}(x,y) = \left\{
    \begin{array}{ll}
        D(x,y)+C' & \mbox{if} ~ x \ne y \\
        0 & \mbox{if} ~ x=y.
    \end{array}
\right.
$$

where $C'$ is the constant of Proposition \ref{proposition : inegalite triangulaire grossiere}.

\end{definition}

\begin{proposition}

The function $\hat{d}$ of Definition \ref{definition : metrique fortement bolique groupe hyperbolique} is a metric on $X$.
    
\end{proposition}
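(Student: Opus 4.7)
The plan is to check the four defining axioms of a metric in turn: non-negativity, separation of points, symmetry, and the triangle inequality. The first three are essentially inherited from the corresponding properties of the pseudo-distances $d^{a}_{\varepsilon}$ of Proposition~\ref{Proposition: angle for hyperbolic metric space}, and the only substantive content lies in the triangle inequality, which is precisely why the constant $C'$ was added in the definition of $\hat{d}$.

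For non-negativity and symmetry, I would observe that each $d^{a}_{\varepsilon}$ is a pseudo-distance, so $(x,y)\mapsto d^{a}_{\varepsilon}(x,y)^{p}$ is non-negative and symmetric; summing over $a\in X$ preserves both properties, and $D$ is finite by Proposition~\ref{proposition : D est fini et quasiisom à d}. The additive constant $C' > 0$ (coming from Proposition~\ref{proposition : inegalite triangulaire grossiere}) preserves non-negativity and symmetry for $x\neq y$, and is absent when $x=y$, so $\hat{d}$ is non-negative and symmetric in all cases. For separation of points, $\hat{d}(x,x)=0$ holds by definition, while for $x\neq y$ we have $\hat{d}(x,y)\geq C'>0$, ruling out degenerate zeros.

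The triangle inequality is the main point and proceeds by a short case analysis on whether any of the three points $x,y,z$ coincide. If $x=y$ the inequality $0\leq \hat{d}(x,z)+\hat{d}(z,y)$ is trivial; if $x=z$ or $y=z$ the inequality reduces to $\hat{d}(x,y)\leq \hat{d}(x,y)$. In the remaining case where $x,y,z$ are pairwise distinct, I apply Proposition~\ref{proposition : inegalite triangulaire grossiere} to obtain
\[
D(x,y)\leq D(x,z)+D(z,y)+C',
\]
and adding $C'$ to both sides gives
\[
\hat{d}(x,y)=D(x,y)+C'\leq D(x,z)+D(z,y)+2C'=\hat{d}(x,z)+\hat{d}(z,y),
\]
which is the required inequality.

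There is no substantial obstacle to overcome here, as the geometric work has already been done in establishing the coarse triangle inequality for $D$. The only subtlety is the slight mismatch at the diagonal: one must suppress the additive constant exactly on $\{x=y\}$ so that $\hat{d}$ vanishes there, and the case analysis above merely verifies that this bookkeeping is compatible with the triangle inequality.
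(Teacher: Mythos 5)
Your proof is correct and follows essentially the same approach as the paper: symmetry and positivity come from the definition of $\hat{d}$ and the pseudo-metric properties of the $d^a_{\varepsilon}$, while the triangle inequality is deduced from the coarse triangle inequality of Proposition~\ref{proposition : inegalite triangulaire grossiere} plus the added constant $C'$. Your write-up simply spells out the short case analysis on the diagonal that the paper leaves implicit.
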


\begin{proof}
By definition, $\hat{d}$ is symmetric and $\hat{d}(x,y)=0$ if and only if $x=y$.

The triangle inequality is a direct consequence of Proposition \ref{proposition : inegalite triangulaire grossiere}.
    
\end{proof}

The rest of the section will be dedicated to a proof of the following Theorem.

\begin{theo}\label{Theorem : metric fortmement bolic pour les groupes relativement hyperoblique}

The metric $\hat{d}$ satisfies the following properties.
\begin{itemize}
    \item $\hat{d}$ is $G$-invariant, i.e. $\hat{d}(g.x,g.y)=\hat{d}(x,y)$, for all $g,x,y \in G$,
    
    \item $\hat{d}$ is uniformly locally finite and quasi-isometric to the word metric,
    
    \item The metric space $(G,\hat{d})$ is weakly geodesic and strongly bolic.

\end{itemize}

\end{theo}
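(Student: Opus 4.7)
The first three properties follow from earlier results in this section. For $G$-invariance, substituting $a \mapsto ga$ in $D(gx, gy) = \sum_{a \in X} d_\varepsilon^a(gx, gy)^p$ combined with the equivariance in Proposition \ref{Proposition: angle for hyperbolic metric space} yields $D(gx, gy) = D(x, y)$. The quasi-isometry with the word metric is Proposition \ref{proposition : D est fini et quasiisom à d}, and uniform local finiteness then follows because $\hat{d}$-balls lie in word-metric balls of controlled radius in the locally finite Cayley graph. For weak geodesicity, given $x, y \in X$ and $t \in [0, \hat{d}(x, y)]$, I would walk along a word-metric geodesic $(z_i)_{i=0}^N$ from $x$ to $y$ and apply Proposition \ref{proposition : D est faiblement géodésique} at each $z_i$ (which trivially lies at distance $0$ from $[x, y]$) to obtain $|\hat{d}(x, y) - \hat{d}(x, z_i) - \hat{d}(z_i, y)| \leq C + C'$. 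Since consecutive values $\hat{d}(x, z_i)$ differ by at most $A + B + C'$ by quasi-isometry, choosing $z_i$ with $\hat{d}(x, z_i)$ nearest to $t$ yields an $\eta$-weak midpoint for a suitable constant $\eta$.

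The bulk of the work is strong bolicity. For the $B_1$ condition, fix $\eta, r > 0$ and let $x_1, x_2, y_1, y_2 \in X$ satisfy $d(x_1, x_2), d(y_1, y_2) \leq r$ and $d(x_i, y_j) \geq R$. The starting observation is the $a$-independent identity
$$(x_1|y_1)_a + (x_2|y_2)_a - (x_1|y_2)_a - (x_2|y_1)_a = \tfrac{1}{2}\bigl(d(x_1, y_2) + d(x_2, y_1) - d(x_1, y_1) - d(x_2, y_2)\bigr),$$
whose absolute value is at most $\delta$ once $R > 2r + 2\delta$, by the four-point condition of hyperbolicity. Combined with the two-sided exponential comparison $\alpha e^{-\varepsilon (x|y)_a} \leq d_\varepsilon^a(x, y) \leq \beta e^{-\varepsilon(x|y)_a}$ from Proposition \ref{Proposition: angle for hyperbolic metric space} and a Taylor expansion of the exponential exploiting the constrained cancellation of first-order terms, the per-$a$ contribution should be small; summing over $a$ via the counting estimates of Lemmas \ref{lemma : denombrement de g tel que g.o est proche d'une géodesique} and \ref{lemma: somme des exponentielles distance à un point}, and choosing $R$ large, yields the required bound.

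For the $B_2'$ condition, I would let $m(x, y)$ be a vertex on a word-metric geodesic $[x, y]$ whose $\hat{d}$-distance from $x$ is closest to $\hat{d}(x, y)/2$; such a vertex exists by weak geodesicity, so the middle-point estimate follows by construction. For coarse convexity, $\delta$-thinness of the triangle $[x, y, z]$ places $m(x, y)$ within $\delta$ of a point $m'$ on $[x, z] \cup [y, z]$, and Proposition \ref{proposition : D est faiblement géodésique} applied to the geodesic containing $m'$ gives $\hat{d}(m', z) \leq \hat{d}(x, z) - \hat{d}(x, m') + O(1)$, hence $\hat{d}(m(x, y), z) \leq \hat{d}(x, z) + O(1) \leq \max(\hat{d}(x, z), \hat{d}(y, z)) + 2\eta'$. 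For the strict contraction under $\hat{d}(x, z), \hat{d}(y, z) \leq N$ and $\hat{d}(x, y) > N$, the same decomposition gives $\hat{d}(m(x, y), z) \leq N - \hat{d}(x, m(x, y)) + O(1) \leq N - \hat{d}(x, y)/2 + O(1) \leq N - p$ once $N \geq 2p + O(1)$.

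I expect the main obstacle to be the $B_1$ condition: the sum $\sum_a e^{-\varepsilon p m_a}$ with $m_a = \min_{i, j}(x_i | y_j)_a$ grows linearly in $R$, so a naive first-order Taylor bound falls short; the argument must extract enough additional cancellation from both the four-point algebraic identity and the finer structure of the pseudo-metric $d_\varepsilon^a$ to produce a bound that genuinely tends to $0$ as $R \to \infty$.
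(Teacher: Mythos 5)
Your handling of $G$-invariance, quasi-isometry, uniform local finiteness, and weak geodesicity matches the paper, and your direct construction for $B2'$ is a reasonable, more hands-on alternative to the paper's route via hyperbolicity of $(X,\hat{d})$ and \cite[Proposition~11]{BucherKarlssonDefBolicSapces}.

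The $B_1$ argument, however, does not work as proposed, and you essentially concede this at the end. The four-point identity lives at the level of Gromov products, but $B_1$ requires controlling
$$\sum_{a\in X}\Bigl(d_\varepsilon^a(x_1,y_1)^p + d_\varepsilon^a(x_2,y_2)^p - d_\varepsilon^a(x_1,y_2)^p - d_\varepsilon^a(x_2,y_1)^p\Bigr),$$
and two obstacles block the transfer. First, $d_\varepsilon^a(x,y)$ is only \emph{comparable} to $e^{-\varepsilon(x|y)_a}$ up to the multiplicative constants $\alpha,\beta$ of Proposition~\ref{Proposition: angle for hyperbolic metric space}, and these constants apply to each of the four terms independently, so the exact cancellation of the Gromov products is simply not inherited by the pseudo-distances: the alternating sum can be on the order of $(\beta^p-\alpha^p)e^{-\varepsilon p(x_1|y_1)_a}$ even when the exponent-level identity is exactly zero. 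Second, even granting $d_\varepsilon^a(x,y)=e^{-\varepsilon(x|y)_a}$, the nonlinearity of $t\mapsto t^p$ makes the first-order Taylor bound $O(\delta)\sum_a e^{-\varepsilon p m_a}\sim\delta R$, which grows linearly in $R$; you observe this and leave the needed extra cancellation unproduced.

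The paper's Theorem~\ref{theorem : dchapeau est fortement B1} sidesteps both obstacles by using no four-term cancellation at all. It fixes a quasi-midpoint $m\in[x_1,y_1]$, splits the sum over $a$ according to whether the closest-point projection $\pi(a)$ lands in $[x_1,m]$ or $[m,y_1]$, and on each half pairs the four terms so that the two indices differ only in the \emph{far} vertex: on the $[m,y_1]$ half it groups $|d_\varepsilon^a(x_1,y_1)^p-d_\varepsilon^a(x_2,y_1)^p|$ and $|d_\varepsilon^a(x_2,y_2)^p-d_\varepsilon^a(x_1,y_2)^p|$, applies the mean value theorem (Lemma~\ref{lemme: accroissement fini}) to reduce each to $p\max(\cdots)^{p-1}\,d_\varepsilon^a(x_1,x_2)$, and then uses Lemma~\ref{lemme : z est la projection donc c'est un quasi-centre} to extract the factor $d_\varepsilon^a(x_1,x_2)\lesssim e^{-\varepsilon R/2}e^{-\varepsilon i}e^{-\varepsilon d(a,z_i)}$. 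The $e^{-\varepsilon R/2}$ term is exactly the $a$-uniform, $R$-decaying gain your approach is missing; the remaining factors are summable by the counting lemmas and the geometric series, so the whole sum tends to $0$ as $R\to\infty$.
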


We already have all the material to prove the first two points.

\begin{proposition}\label{proposition : dchapeau est invariante et quasi-isom à la métrique des mots}
$\hat{d}$ is $G$-invariant, uniformly locally finite and quasi-isometric to the word metric.

\end{proposition}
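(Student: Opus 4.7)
The plan is to dispatch the three claims in the order given, each essentially reducing to an earlier lemma with little extra work.

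For $G$-invariance, the starting point is the third bullet of Proposition \ref{Proposition: angle for hyperbolic metric space}, which gives $d_\varepsilon^{g.a}(g.x, g.y) = d_\varepsilon^a(x,y)$ for any $g \in G$ and any $a,x,y \in X$. Plugging into the definition of $\hat{d}$ for $x \neq y$ and reindexing the sum via the substitution $a \mapsto g.a$ (a bijection of $X$ since $G$ acts by left translation on its own Cayley graph) converts $\sum_{a \in X} d_\varepsilon^a(g.x,g.y)^p$ into $\sum_{a' \in X} d_\varepsilon^{a'}(x,y)^p = D(x,y)$, so $\hat{d}(g.x,g.y) = D(x,y) + C' = \hat{d}(x,y)$. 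The case $x = y$ is immediate from the definition.

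The quasi-isometry to the word metric is a direct consequence of Proposition \ref{proposition : D est fini et quasiisom à d}, since $\hat{d}$ and $D$ differ only by the additive constant $C'$ on distinct pairs; one simply replaces the additive constant $B$ by $B + C'$ in the bilateral inequality stated there. For uniform local finiteness, the key observation is that a $\hat{d}$-ball is contained in a word-metric ball of a controlled radius: if $\hat{d}(x,y) \leq R$ then $D(x,y) \leq R$, and the lower inequality of Proposition \ref{proposition : D est fini et quasiisom à d} yields $d(x,y) \leq A(R + B)$. Since $X$ is a Cayley graph of a finitely generated group, its word-metric balls have cardinality depending only on the radius by vertex-transitivity of the left $G$-action, and one obtains a uniform bound $K = K(R)$ on $|B_{\hat{d}}(x, R)|$.

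No step here presents a genuine obstacle: both the quasi-isometry and the uniform local finiteness are mechanical reductions to Proposition \ref{proposition : D est fini et quasiisom à d}, while $G$-invariance reduces to the invariance of the family $(d_\varepsilon^a)_{a \in X}$ under the simultaneous action of $G$ on base points and arguments. The substantive work is all in the construction of $D$ and its estimates, which have already been carried out in the preceding propositions.
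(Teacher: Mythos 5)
Your proof is correct and follows essentially the same route as the paper: $G$-invariance from the $G$-equivariance of the family $(d_\varepsilon^a)_a$ together with a reindexing of the sum, quasi-isometry by adding $C'$ to the bilateral inequality in Proposition \ref{proposition : D est fini et quasiisom à d}, and uniform local finiteness as an immediate consequence of that quasi-isometry and the uniform local finiteness of the word metric. The only difference is that you spell out the reindexing and the ball-containment argument, which the paper leaves implicit.
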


\begin{proof}
The fact that $\hat{d}$ is $G$-invariant comes directly from the definition of $D$ and $\hat{d}$ and the fact that the family pseudo-distance $d_{\varepsilon}^{a}$ are $G$-equivariant according to Proposition \ref{Proposition: angle for hyperbolic metric space}.

According to Proposition \ref{proposition : D est fini et quasiisom à d}, there exist $A,B>0$ such that for all $x,y \in X$:
$$  \frac{1}{A}d(x,y)-B \leq D(x,y) \leq Ad(x,y) + B. $$

Therefore, for all $x,y \in X$:
$$  \frac{1}{A}d(x,y)-B+C' \leq \hat{d}(x,y) \leq Ad(x,y) + B+C'. $$

Thus:

$$  \frac{1}{A}d(x,y)-B-C' \leq \hat{d}(x,y) \leq Ad(x,y) + B+C' ,$$

and $\hat{d}$ is quasi-isometric to $d$.

Since $d$ is uniformly locally finite, $\hat{d}$ is also uniformly locally finite.
\end{proof}

\begin{proposition}\label{proposition : d chapeau est faiblement geodesique}

The metric space $(X,\hat{d})$ is weakly geodesic.
\end{proposition}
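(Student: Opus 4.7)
The plan is to use a word-metric geodesic between $x$ and $y$ as a scaffold along which we can locate a quasi-midpoint for any prescribed parameter $t$. The key observation is that $\hat{d}$ is quasi-isometric to the word metric (Proposition \ref{proposition : dchapeau est invariante et quasi-isom à la métrique des mots}), so along a word geodesic the values of $\hat{d}$ grow in bounded increments; and Proposition \ref{proposition : D est faiblement géodésique} shows that $D$ (hence $\hat{d}$, up to the additive constant $C'$) is almost additive at any vertex close to a word geodesic.

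More concretely, fix $x,y\in X$ and a word geodesic $x=x_{0},x_{1},\dots,x_{n}=y$ with $d(x_{i},x_{i+1})=1$. From the quasi-isometry estimate $\hat{d}\le Ad+B+C'$ of Proposition \ref{proposition : dchapeau est invariante et quasi-isom à la métrique des mots}, I get $\hat{d}(x_{i},x_{i+1})\le K:=A+B+C'$. By the triangle inequality this forces $\bigl|\hat{d}(x,x_{i+1})-\hat{d}(x,x_{i})\bigr|\le K$, so the function $i\mapsto\hat{d}(x,x_{i})$ is $K$-Lipschitz on $\{0,\dots,n\}$, starts at $0$ and ends at $\hat{d}(x,y)$. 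A discrete intermediate-value argument then yields, for every $t\in[0,\hat{d}(x,y)]$, an index $i^{\ast}$ with $t-K\le\hat{d}(x,x_{i^{\ast}})\le t$. Set $z:=x_{i^{\ast}}$; the first condition $\hat{d}(x,z)\le t+\eta$ is immediate.

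For the second condition, note that $z$ lies on the word geodesic $[x,y]$, so $d(z,[x,y])=0\le 8\delta$ and Proposition \ref{proposition : D est faiblement géodésique} applies: $D(z,y)\le D(x,y)-D(x,z)+C$. Translating back via $\hat{d}=D+C'$ (for distinct points), this gives
\[
\hat{d}(z,y)\ \le\ \hat{d}(x,y)-\hat{d}(x,z)+C+C'.
\]
Combining with $\hat{d}(x,z)\ge t-K$ yields $\hat{d}(z,y)\le\hat{d}(x,y)-t+K+C+C'$. Choosing $\eta:=K+C+C'$ therefore realises both inequalities required by Definition \ref{definition: faiblement geodesique}, and $(X,\hat{d})$ is $\eta$-weakly geodesic.

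The only nuisance — rather than a genuine obstacle — is the boundary bookkeeping. Because $\hat{d}$ has a jump at the diagonal (equal to $0$ on the diagonal but $\ge C'$ off it), the cases $i^{\ast}\in\{0,n\}$ must be handled separately: when $t\le\eta$, take $z=x$, and when $t\ge\hat{d}(x,y)-\eta$, take $z=y$; in between the above construction applies and the translation $D\leftrightarrow\hat{d}$ is harmless. All the hyperbolic-geometry content is already packaged into Proposition \ref{proposition : D est faiblement géodésique}, so after that reduction the proof is essentially a one-line intermediate-value argument.
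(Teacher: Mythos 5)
Your proof is correct and takes essentially the same route as the paper's: both locate a quasi-midpoint on a word geodesic by combining the bounded-increment property of $\hat{d}$ (coming from its quasi-isometry with $d$, Proposition \ref{proposition : dchapeau est invariante et quasi-isom à la métrique des mots}), a discrete intermediate-value argument, and the near-additivity of $D$ along word geodesics from Proposition \ref{proposition : D est faiblement géodésique}. The only cosmetic difference is that you give the Lipschitz constant $K=A+B+C'$ explicitly while the paper calls it $M$, and you flag the boundary bookkeeping for the diagonal jump of $\hat{d}$, which the paper passes over silently.
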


\begin{proof}

Applying Proposition \ref{proposition : D est faiblement géodésique} and \ref{proposition : dchapeau est invariante et quasi-isom à la métrique des mots}, the proof of the fact that $(X,\hat{d})$ is weakly geodesic, follows easily and it is the same proof as in \cite{MineyevYuStrongBolicityhypergroups}[Proposition~14]. Let us recall it for the reader's convenience.\\

Let $x,y \in X$ and $z \in [x,y]$, according to Proposition \ref{proposition : D est faiblement géodésique}, we have:
$$\hat{d}(x,z)+\hat{d}(z,y)-\hat{d}(x,y)\leq C+C',$$

where $C$ is the constant of Proposition \ref{proposition : D est faiblement géodésique} and $C'$ the constant of Definition~\ref{definition : metrique fortement bolique groupe hyperbolique}.

It follows that:
$$ \hat{d}(x,z) \leq \hat{d}(x,y) + C+ C'  $$

hence the image of the map:

$$ \hat{d}(x,.):[x,y]\rightarrow ]0,+\infty[  $$
is contained in $[0,\hat{d}(x,y)+C+C']$. Moreover, the image contains $0$ and $\hat{d}(x,y)$.\\

By Proposition \ref{proposition : dchapeau est invariante et quasi-isom à la métrique des mots}, there exists $M\geq 0$ such that, we have:

$$ |\hat{d}(x,z)-\hat{d}(x,z')| \leq M $$

when $d(z,z')=1$. This together with the fact that $[x,y]$ is a geodesic gives the fact that the image of $[x,y]$ by the map $\hat{d}(x,.)$ is $M$-dense in $[0,\hat{d}(x,y)]$, i.e. for every $t \in [0,\hat{d}(x,y)] $, there exists $ z \in [x,y]$ such that :

$$|\hat{d}(x,z)-t| \leq M.$$

Then $-\hat{d}(x,z) \leq M-t$.

By Proposition \ref{proposition : D est faiblement géodésique}, we get the following:

$$ \hat{d}(z,y) \leq \hat{d}(x,y)-t+M+C+C'  .$$

Then $(X,\hat{d})$ is weakly $\eta$-geodesic for $\eta = M+C+C'$.

\end{proof}

Since hyperbolicity is invariant by quasi-isometry for weakly geodesic metric spaces (see \cite{Lafforgue}), we get as a corollary that $(X,\hat{d})$ is hyperbolic.\\

From this, we deduce that $(X,\hat{d})$ verifies the weak-$B2'$ condition.

\begin{corollary} \label{corollary : delta B2}

There exists $\delta_{3}\geq 0$ such that $(X,\hat{d})$ is $\delta_{3}$-$B2'$.

\end{corollary}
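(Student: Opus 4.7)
The plan is to establish weak-$B2'$ for $(X, \hat{d})$ by defining a middle-point map $m \colon X \times X \to X$ as a graph-midpoint in the Cayley graph $(X, d)$ and then exploiting the hyperbolicity of $(X, \hat{d})$, which was noted just before this corollary via quasi-isometry invariance of hyperbolicity for weakly geodesic spaces. Fix a four-point hyperbolicity constant $\delta'$ for $(X, \hat{d})$. For each pair $x, y \in X$ I take $m(x, y)$ to be any vertex on a $d$-geodesic from $x$ to $y$ at $d$-distance $\lfloor d(x, y)/2 \rfloor$ from $x$.

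The first step is to verify that $m$ is an $\eta'$-middle point map in $\hat{d}$. Since $d(m(x,y), [x,y]) = 0 \leq 8\delta$, Proposition~\ref{proposition : D est faiblement géodésique} (combined with the definition of $\hat{d}$) yields $|\hat{d}(x, y) - \hat{d}(x, m(x, y)) - \hat{d}(m(x, y), y)| \leq K_{1}$ for an explicit $K_{1}$; separately, $|d(x, m(x,y)) - d(y, m(x,y))| \leq 1$ together with the quasi-isometry constants of Proposition~\ref{proposition : D est fini et quasiisom à d} gives $|\hat{d}(x, m(x,y)) - \hat{d}(y, m(x,y))| \leq K_{2}$. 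Combining these two estimates bounds $|2\hat{d}(x, m(x,y)) - \hat{d}(x,y)|$ and the symmetric expression by $2\eta' := K_{1} + K_{2}$.

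The heart of the argument is the four-point inequality applied to the quadruple $(z, m(x,y), x, y)$ in the hyperbolic space $(X, \hat{d})$:
$$\hat{d}(z, m(x,y)) + \hat{d}(x, y) \leq \max\!\bigl(\hat{d}(z, x) + \hat{d}(m(x,y), y),\, \hat{d}(z, y) + \hat{d}(m(x,y), x)\bigr) + 2\delta'.$$
Substituting $\hat{d}(m(x,y), x), \hat{d}(m(x,y), y) \leq \hat{d}(x,y)/2 + \eta'$ and rearranging yields
$$\hat{d}(m(x,y), z) \leq \max\!\bigl(\hat{d}(x, z), \hat{d}(y, z)\bigr) + \eta' + 2\delta' - \tfrac{1}{2}\hat{d}(x, y).$$
Dropping the nonpositive last term gives the second bullet of weak-$B2'$. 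Retaining it yields the third: under the hypotheses $\hat{d}(x, z), \hat{d}(y, z) \leq N$ and $\hat{d}(x, y) > N$, the right-hand side is bounded by $N/2 + \eta' + 2\delta'$, so setting $N(p) := 2(p + \eta' + 2\delta')$ forces $\hat{d}(m(x,y), z) \leq N - p$ whenever $N \geq N(p)$. The only real subtlety is to choose a single $\delta_{3}$ dominating all the constants produced in the three steps; the geometric content is entirely carried by the four-point hyperbolicity of $(X, \hat{d})$ and the coarse additivity of $\hat{d}$ along $d$-geodesics furnished by Proposition~\ref{proposition : D est faiblement géodésique}.
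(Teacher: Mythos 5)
Your core argument — fixing a four-point hyperbolicity constant $\delta'$ for $(X,\hat d)$, plugging the quadruple $(z,m(x,y),x,y)$ into the four-point inequality, and extracting the last two bullets of weak-$B2'$ from the resulting
$\hat d(m(x,y),z)\le \max\bigl(\hat d(x,z),\hat d(y,z)\bigr)+\eta'+2\delta'-\tfrac12\hat d(x,y)$
— is correct, and it amounts to an explicit proof of the fact the paper instead obtains by citing \cite{BucherKarlssonDefBolicSapces}[Proposition 11] (hyperbolic metric spaces with a midpoint map are bolic, applied together with Proposition~\ref{proposition : midpoint} and the hyperbolicity of $(X,\hat d)$ noted just before the corollary). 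So the skeleton is the same; yours is just unrolled.

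However, your construction of the midpoint map has a genuine gap. You take $m(x,y)$ to be a $d$-graph midpoint of a $d$-geodesic $[x,y]$ and claim that $|d(x,m)-d(y,m)|\le 1$ plus the quasi-isometry constants of Proposition~\ref{proposition : D est fini et quasiisom à d} forces $|\hat d(x,m)-\hat d(y,m)|\le K_2$. That inference is false: with constants $A>1,B\ge 0$ one only gets $\hat d(x,m)-\hat d(y,m)\le Ad(x,m)-\tfrac1A d(y,m)+2B$, and since $d(x,m)\approx d(y,m)\approx d(x,y)/2$ can be arbitrarily large this difference is unbounded whenever $A>1$. A quasi-isometry does not preserve ``being halfway''; the $d$-midpoint of $[x,y]$ need not be anywhere near an $\hat d$-midpoint, and Proposition~\ref{proposition : D est faiblement géodésique} only controls the sum $\hat d(x,m)+\hat d(m,y)$, not the imbalance between the two terms. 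The fix is exactly the paper's: use the weak geodesicity of $(X,\hat d)$ (Proposition~\ref{proposition : d chapeau est faiblement geodesique}) together with Proposition~\ref{proposition : midpoint} to pick $m(x,y)$ with $\hat d(x,m),\hat d(y,m)$ each within $\eta$ of $\tfrac12\hat d(x,y)$ — in fact the proof of weak geodesicity already shows one may take such an $m$ on the $d$-geodesic $[x,y]$, by continuity of $z\mapsto\hat d(x,z)$ in $M$-steps along that path. With that replacement, your four-point computation goes through verbatim.
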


\begin{proof}
According to \cite{BucherKarlssonDefBolicSapces}[Proposition 11], hyperbolic metric spaces which admit a midpoint map are bolic, therefore they satisfy $\delta_{3}$-$B2'$ for some $\delta_{3} \geq 0$.

$(X,\hat{d})$ is weakly
geodesic and admits a midpoint map according to \ref{proposition : midpoint}, therefore it satisfies $\delta_{3}$-$B2'$ for some $\delta_{3} \geq 0$.

\end{proof}

The last thing to prove is the fact that $(X,\hat{d})$ satisfies the strong-$B1$ condition.

The next lemma will be used to compare the projections of a point $a$ onto the geodesics determined by four points in the $B1$ configuration. We state the inequalities directly in terms of the Gromov product.

\begin{figure}[!ht]
   \centering
   \includegraphics[scale=0.4]{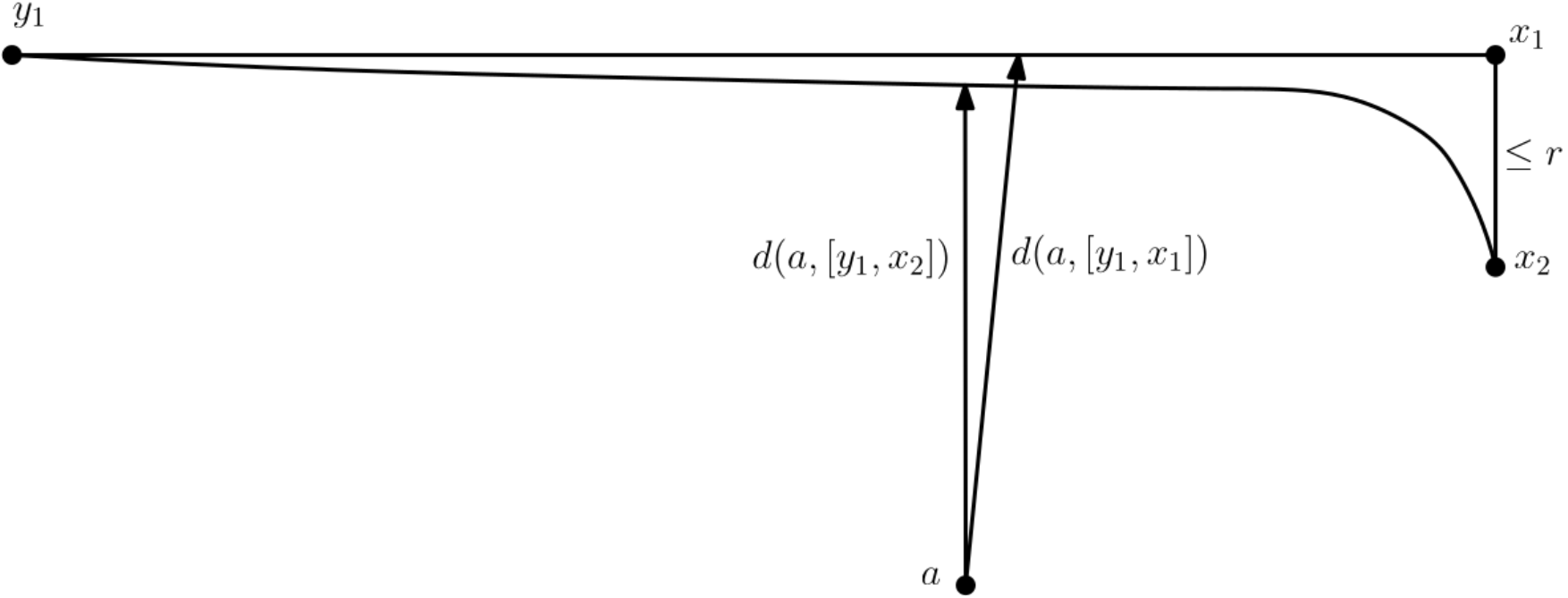}
  \caption{Projection of $a$ on $[x_{1},y_{1}]$ and on $[x_{2},y_{1}]$}
   \label{ fig: projection sur deux geodesiques en situation de delta-b1 }
\end{figure}
\newpage

\begin{lem}\label{lemma: comparaison des produits de gromov}

Consider $r\geq 0$ and $x_{1}$, $x_{2}$, $y_{1}$, $y_{2} \in X$ such that $d(x_{1},x_{2}),d(y_{1},y_{2}) \leq~r$. 

Then for every $a \in X$, we have:

\begin{itemize}

    \item $ (x_{1}|y_{1})_{a}-r \leq (x_{2}|y_{1})_{a} \leq (x_{1}|y_{1})_{a} +r, $

    \item  $ (x_{1}|y_{1})_{a}-r \leq (x_{1}|y_{2})_{a} \leq (x_{1}|y_{1})_{a} +r, $

    \item $ (x_{1}|y_{1})_{a}-2r \leq (x_{2}|y_{2})_{a} \leq (x_{1}|y_{1})_{a} +2r .$
 \end{itemize}

\end{lem}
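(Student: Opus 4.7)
The plan is to use the explicit definition of the Gromov product, namely
\[
(x|y)_a = \tfrac{1}{2}\bigl(d(a,x) + d(a,y) - d(x,y)\bigr),
\]
together with the ordinary triangle inequality in $(X,d)$. The paper already mentions that Gromov products are $1$-Lipschitz in each variable, and the three estimates are essentially direct quantitative versions of this fact, so no hyperbolicity or geometric argument is required.

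For the first inequality, I would start from the expression
\[
(x_2|y_1)_a - (x_1|y_1)_a = \tfrac{1}{2}\bigl((d(a,x_2) - d(a,x_1)) - (d(x_2,y_1) - d(x_1,y_1))\bigr).
\]
The triangle inequality gives $|d(a,x_2) - d(a,x_1)| \leq d(x_1,x_2) \leq r$ and $|d(x_2,y_1) - d(x_1,y_1)| \leq d(x_1,x_2) \leq r$, so the absolute value of the difference is at most $r$. The second inequality is handled identically, swapping the roles of the $x$'s and the $y$'s and using $d(y_1,y_2) \leq r$.

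For the third inequality, I would simply chain the first two through an intermediate point: write
\[
(x_2|y_2)_a - (x_1|y_1)_a = \bigl((x_2|y_2)_a - (x_1|y_2)_a\bigr) + \bigl((x_1|y_2)_a - (x_1|y_1)_a\bigr),
\]
apply the first inequality (with $y_1$ replaced by $y_2$) to the first summand and the second inequality to the second summand, and then use the triangle inequality for absolute values to conclude that the total error is at most $2r$. There is no genuine obstacle here; the only thing to watch is that the bound on $|d(x_1,x_2) - d(a,x_2) + d(a,x_1)|$ type terms is applied correctly, but this is immediate from the triangle inequality.
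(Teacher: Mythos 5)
Your proof is correct and follows essentially the same route as the paper's: the paper invokes the $1$-Lipschitz property of the Gromov product (stated just after Definition 3.3) to bound $|(x_2|y_1)_a-(x_1|y_1)_a|\leq r$, and then chains through an intermediate point for the third inequality, exactly as you do. The only cosmetic difference is that you re-derive the $1$-Lipschitz bound from the explicit formula for the Gromov product and the triangle inequality, rather than citing it as already established.
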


\begin{proof}

For the first point, we have on one side:
$$(x_{2}|y_{1})_{a} \leq (x_{1}|y_{1})_{a}+ |(x_{2}|y_{1})_{a}-(x_{1}|y_{1})_{a}| \leq (x_{1}|y_{1})_{a} +r.$$

And on the other side:
$$ (x_{2}|y_{1})_{a} \geq (x_{1}|y_{1})_{a}- |(x_{2}|y_{1})_{a}-(x_{1}|y_{1})_{a}| \geq (x_{1}|y_{1})_{a} -r. $$

The second point is direct by symmetry of the role of $x_{1}, x_{2}$ and $y_{1},y_{2}$.

The third point follows directly by applying the argument of the first point twice.
    
\end{proof}

\begin{lem}\label{lemma : comparaison des projections}

 Consider $r\geq0$ and $x_{1}$, $x_{2}$, $y_{1}$, $y_{2} \in X$ such that $d(x_{1},x_{2}),d(y_{1},y_{2}) \leq~r$.

 Then for every $a \in X$, we have:

\begin{itemize}
     \item $ d(a,[x_{1},y_{1}])-(r+\delta) \leq d(a,[x_{2},y_{1}])\leq d(a,[x_{1},y_{1}])+(r+\delta), $

    \item $ d(a,[x_{1},y_{1}])-(r+\delta) \leq d(a,[x_{1},y_{2}])\leq d(a,[x_{1},y_{1}])+(r+\delta), $

    \item  $ d(a,[x_{1},y_{1}])-2(r+\delta) \leq d(a,[x_{2},y_{2}])\leq d(a,[x_{1},y_{1}])+2(r+\delta).$
    
 \end{itemize}
  
\end{lem}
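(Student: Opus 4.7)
The plan is to exploit the $\delta$-thin triangle condition directly, rather than route through the Gromov product comparisons of Lemma \ref{lemma: comparaison des produits de gromov}. The underlying geometric fact to isolate is: when two geodesics share one endpoint and have their other endpoints within $r$ of each other, every point on one is within $r+\delta$ of the other. Once this is established, each of the three inequalities follows by applying it to a projection.

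For the first inequality, I would consider the geodesic triangle with vertices $x_1$, $x_2$, $y_1$ (sides $[x_1,x_2]$, $[x_2,y_1]$, $[x_1,y_1]$). By the $\delta$-thin condition, any point $p\in[x_2,y_1]$ satisfies $d(p,[x_1,x_2]\cup[x_1,y_1])\leq\delta$. If the nearest point $q$ lies in $[x_1,x_2]$, then since that geodesic has length $\leq r$, $d(q,x_1)\leq r$, and because $x_1\in[x_1,y_1]$ we get $d(p,[x_1,y_1])\leq r+\delta$; in the other case $d(p,[x_1,y_1])\leq\delta$ trivially. Hence in both cases $d(p,[x_1,y_1])\leq r+\delta$. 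Choosing $p$ to be a projection of $a$ onto $[x_2,y_1]$ and $q\in[x_1,y_1]$ within $r+\delta$ of it gives
\[
d(a,[x_1,y_1])\leq d(a,q)\leq d(a,p)+r+\delta=d(a,[x_2,y_1])+r+\delta.
\]
The reverse inequality is obtained from the same triangle by starting with a projection of $a$ onto $[x_1,y_1]$ and using the $\delta$-thin condition on the side $[x_1,y_1]$ instead.

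The second inequality is identical after swapping the roles of the $x$'s and the $y$'s, using the triangle with vertices $x_1$, $y_1$, $y_2$. The third inequality is a direct combination: inserting the auxiliary geodesic $[x_2,y_1]$ and applying the first inequality then the second (with $x_1$ replaced by $x_2$) yields
\[
d(a,[x_2,y_2])\leq d(a,[x_2,y_1])+(r+\delta)\leq d(a,[x_1,y_1])+2(r+\delta),
\]
and symmetrically for the lower bound.

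There is no real technical obstacle; the lemma is a routine application of $\delta$-thinness, and its role is purely preparatory. It will be used in the next step to estimate differences of the form $|d_\varepsilon^a(x_1,y_1)^p-d_\varepsilon^a(x_2,y_2)^p|$ uniformly in $a$ when the four points $x_1,x_2,y_1,y_2$ are in the strong-$B1$ configuration, so what matters is only that the distance to geodesics is stable up to an additive constant depending on $r$ and $\delta$.
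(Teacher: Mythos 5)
Your proof is correct and follows essentially the same route as the paper: apply the $\delta$-thin triangle condition to the triangle $[x_1,x_2,y_1]$, observe that the short side $[x_1,x_2]$ contributes an extra $r$ to the comparison, and then chain two applications to get the third estimate. The only cosmetic difference is that the paper routes the third inequality through $[x_1,y_2]$ whereas you use $[x_2,y_1]$, and you label the two inequalities in your final chain in the opposite order to the one in which you actually apply them; neither affects correctness.
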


\begin{proof}
For the first point.
 Consider a geodesic triangle $[x_{1},x_{2},y_{1}]$ and a vertex $u \in [x_{1},y_{1}]$ such that $d(a,u)=d(a,[x_{1},y_{1}])$. According to $\delta$-thinness, there exists $v \in [x_{2},y_{1}]$ such that $d(u,v)\leq r+\delta.$
 Indeed, there exists $w \in [x_{1},x_{2}] \cup [x_{2},y_{1}]$ such that $d(u,w) \leq \delta$. If $w \in [x_{2},y_{1}] $, then we take $v=w$, in the other case $w$ is at a distance less than $r$ from $y_{1} $, and then $d(u,y_{1})\leq r+\delta$.

 Therefore:
$$  d(a,[x_{2},y_{1}]) \leq d(a,v)\leq d(a,u) +d(u,v)\leq d(a,[x_{1},y_{1}])+(r+\delta).$$

By the symmetry of the roles of $[x_{1},y_{1}]$ and $[x_{2},y_{1}]$, we get the reverse inequality.\\

The second point is clear by symmetry of the roles of $[x_{1},y_{2}]$ and $[x_{2},y_{1}]$.\\

For the third point, the same proof applied to the triangle $[x_{1},x_{2},y_{2}]$ gives that:
$$ d(a,[x_{1},y_{2}])-(r+\delta) \leq d(a,[x_{2},y_{2}])\leq d(a,[x_{1},y_{2}])+(r+\delta),$$

and combining it with the second point, we get that:
$$ d(a,[x_{1},y_{1}])-2(r+\delta) \leq d(a,[x_{2},y_{2}])\leq d(a,[x_{1},y_{1}])+2(r+\delta), $$

which is the third point.

 \end{proof}

\begin{theo}\label{theorem : dchapeau est fortement B1}
 The metric space $(X,\hat{d})$ satisfies strong-$B1$.

\end{theo}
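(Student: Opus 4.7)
The plan is to reduce the strong-$B1$ inequality for $\hat{d}$ to controlling the alternating sum
\[
\hat{d}(x_1,y_1) + \hat{d}(x_2,y_2) - \hat{d}(x_1,y_2) - \hat{d}(x_2,y_1) \;=\; \sum_{a \in X} \Delta(a),
\]
where $\Delta(a) := d_\varepsilon^a(x_1,y_1)^p + d_\varepsilon^a(x_2,y_2)^p - d_\varepsilon^a(x_1,y_2)^p - d_\varepsilon^a(x_2,y_1)^p$. The additive constant $C'$ from Definition \ref{definition : metrique fortement bolique groupe hyperbolique} cancels in the sum (once $R > 0$ all four pairs are distinct), and the quasi-isometry between $\hat{d}$ and the word metric of Proposition \ref{proposition : dchapeau est invariante et quasi-isom à la métrique des mots} transports the $\hat{d}$-hypothesis to word-metric inequalities $d(x_i,y_j) \geq R_1$ and $d(x_1,x_2), d(y_1,y_2) \leq r_1$, with $R_1 \to \infty$ as $R \to \infty$. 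The aim is then to show $\sum_a |\Delta(a)| \leq \eta$ for $R_1$ sufficiently large.

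For the pointwise estimate, I write $\Delta(a)$ via two different groupings, $(u_{11}^p - u_{21}^p) + (u_{22}^p - u_{12}^p)$ and $(u_{11}^p - u_{12}^p) + (u_{22}^p - u_{21}^p)$, with $u_{ij} := d_\varepsilon^a(x_i,y_j)$. Applying Lemma \ref{lemme: accroissement fini} together with the triangle inequality for the pseudo-distance $d_\varepsilon^a$, the first grouping produces a factor $d_\varepsilon^a(x_1,x_2)$ and the second a factor $d_\varepsilon^a(y_1,y_2)$. Combining with the upper bound from Proposition \ref{Proposition: angle for hyperbolic metric space} and using Lemma \ref{lemma: comparaison des produits de gromov} to compare the four Gromov products $(x_i|y_j)_a$ up to $2r_1$, I expect the bound
\[
|\Delta(a)| \;\leq\; C_1\, e^{-\varepsilon(p-1)(x_1|y_1)_a}\, \min\!\bigl( e^{-\varepsilon(x_1|x_2)_a},\, e^{-\varepsilon(y_1|y_2)_a} \bigr).
\]

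Next, I translate the Gromov products into geometric quantities. Let $z_a$ be a projection of $a$ onto $[x_1,y_1]$, with $t_a = d(z_a,x_1)$, $s_a = d(z_a,y_1)$, and $L = d(x_1,y_1) \geq R_1$. Lemma \ref{lemma : DrutuKapovichcomparaisonproduitdeGromovet distanceàunegeodesique} gives $(x_1|y_1)_a \geq d(a,z_a) - 2\delta$. Lemma \ref{lemme : z est la projection donc c'est un quasi-centre} applied to the triple $(a,x_1,z_a)$, combined with $d(x_1,x_2) \leq r_1$, yields $(x_1|x_2)_a \geq d(a,z_a) + t_a - O(r_1+\delta)$, and symmetrically $(y_1|y_2)_a \geq d(a,z_a) + s_a - O(r_1+\delta)$. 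Taking the maximum and substituting, one obtains $|\Delta(a)| \leq C_2\, e^{-\varepsilon p\, d(a,z_a)}\, e^{-\varepsilon \max(t_a,s_a)}$.

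To conclude, I group the sum by the projection $z$: for each fixed $z$, Lemma \ref{lemma: somme des exponentielles distance à un point} (which uses $p\varepsilon > \tau$) bounds $\sum_{a\,:\,z_a=z} e^{-\varepsilon p\, d(a,z)}$ by the universal constant $\kappa$. The remaining sum $\sum_{t=0}^{L} e^{-\varepsilon \max(t,L-t)}$ is controlled by a standard tail estimate, giving $O(e^{-\varepsilon L/2}) = O(e^{-\varepsilon R_1/2})$. Hence $\sum_a |\Delta(a)| \leq C_3\, e^{-\varepsilon R_1/2}$, which is less than $\eta$ once $R$ is chosen large enough. The main obstacle is the pointwise bound: a single grouping would leave only the factor $e^{-\varepsilon(x_1|x_2)_a}$, which does not decay for points $a$ near the $y$-side of the configuration, producing a sum growing linearly in the unbounded length $L$. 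Using both groupings simultaneously is essential, because it transforms the decay into one in $\max(t_a,s_a)$, which sums to the crucial $e^{-\varepsilon L/2}$ gain.
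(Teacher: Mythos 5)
Your proposal is correct and follows essentially the same strategy as the paper's proof. The paper organizes the argument by fixing a midpoint $m$ of $[x_1,y_1]$ and partitioning $a$ according to whether its projection $\pi(a)$ lies in $[x_1,m]$ or $[m,y_1]$, applying the $x$-grouping in the first half and the $y$-grouping in the second; your pointwise $\min$ over the two groupings, followed by the conversion to $\exp(-\varepsilon\max(t_a,s_a))$ and the tail estimate $\sum_{t} e^{-\varepsilon\max(t,L-t)} = O(e^{-\varepsilon L/2})$, is just a compact reformulation of exactly that case split.
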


\begin{proof}
 According to Proposition \ref{proposition : dchapeau est invariante et quasi-isom à la métrique des mots}, the fact that $(X,\hat{d})$ satisfies strong-$B1$ is equivalent to the fact that for all $r>0$ and $\eta >0$, there exists $R=R(\eta,r)\geq 0$ such that for all $x_{1}$, $x_{2}$, $y_{1}$, $y_{2} \in X$, with $d(x_{1},y_{1}),d(x_{1},y_{2}),d(x_{2},y_{1}),d(x_{2},y_{2}) \geq R$ and $d(x_{1},x_{2}),d(y_{1},y_{2}) \leq r$, we have :

$$  |\hat{d}(x_{1},y_{1})+\hat{d}(x_{2},y_{2})-\hat{d}(x_{1},y_{2})-\hat{d}(x_{2},y_{1})| \leq \eta .$$ 
We will choose $R\in \mathbb{N}$ even and large enough later.
We denote by $m$ a vertex in $[x_{1},y_{1}]$ such that $d(x_{1},m) \geq \frac{R}{2}$ and $d(m,y_{1}) \geq \frac{R}{2}$.  For all $a \in X$, $\pi(a)$ denotes the projection of $a$ onto $[x_{1},y_{1}]$ that is closest to $x_{1}$. More precisely, this means that $ \pi(a) \in [x_{1},y_{1}]$, $d(a,[x_{1},y_{1}])=d(a,\pi(a))$, for all $ u \in [x_{1},y_{1}]$, such that $d(a,[x_{1},y_{1}])=d(a,u)$, we have: $d(x_{1},u) \geq d(x_{1}, \pi(a))$. The fact that $\pi(a)$ is the projection of $a$ closest to $x_{1}$ is used only to define the projection of $a$ in $[x_{1},y_{1}]$ without ambiguity, see Figure \ref{ fig: situation B1 et projection sur x1,y1 }.

\begin{figure}[!ht]
  \centering
  \includegraphics[scale=0.3]{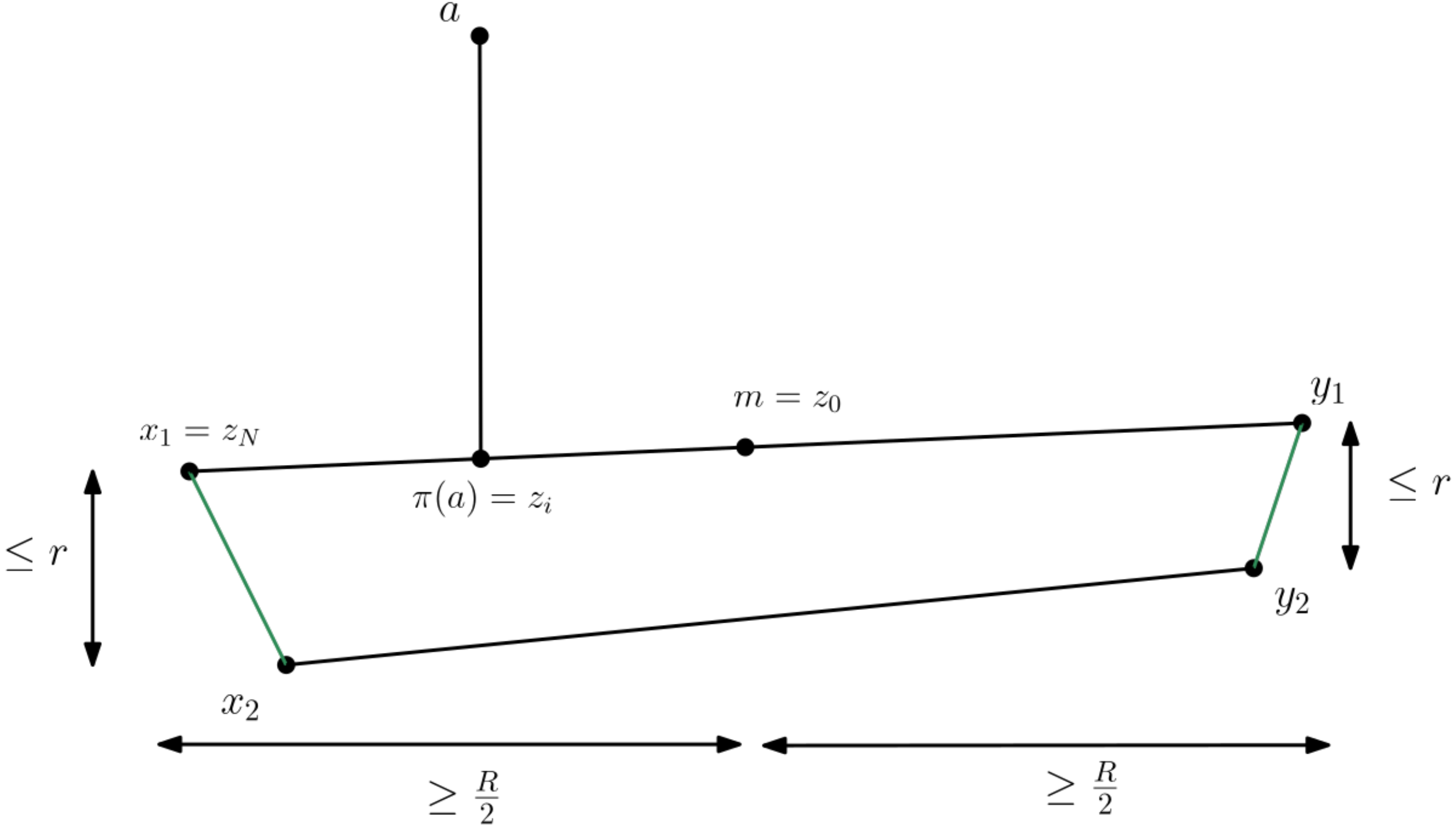}
   \caption{$\pi(a)$ is the projection of $a$ on $[x_{1},y_{1}]$}
   \label{ fig: situation B1 et projection sur x1,y1 }
 \end{figure}

\newpage
We fix $\eta,r>0$.  Consider $R>0$ and four points $x_{1}$, $x_{2}$, $y_{1}$, $y_{2} \in X$, with $d(x_{1},y_{1}),d(x_{1},y_{2}),d(x_{2},y_{1}),d(x_{2},y_{2}) \geq R$ and $d(x_{1},x_{2}),d(y_{1},y_{2}) \leq r$. In particular, we remark that $\hat{d}(x_{1},y_{1}),\hat{d}(x_{1},y_{2}),\hat{d}(x_{2},y_{1}),\hat{d}(x_{2},y_{2}) > 0$. Therefore, for all $i,j \in \{1,2\}$, $\hat{d}(x_{i},y_{j})=D(x_{i},y_{j})+C'$, we have the following:
$$|\hat{d}(x_{1},y_{1})+\hat{d}(x_{2},y_{2})-\hat{d}(x_{1},y_{2})-\hat{d}(x_{2},y_{1})|=|D(x_{1},y_{1})+D(x_{2},y_{2})-D(x_{1},y_{2})-D(x_{2},y_{1})|.$$

Then, we have:
$$\begin{aligned} 
           &~~ |D(x_{1},y_{1})+D(x_{2},y_{2})-D(x_{1},y_{2})-D(x_{2},y_{1})|\\
             \leq & \sum_{a \in X} |d_{\varepsilon}^{a}(x_{1},y_{1})^{p}+d_{\varepsilon}^{a}(x_{2},y_{2})^{p}-d_{\varepsilon}^{a}(x_{1},y_{2})^{p}-d_{\varepsilon}^{a}(x_{2},y_{1})^{p}| \\
            \leq & \displaystyle \sum_{ \{a\in X | \pi(a) \in [x_{1},m] \} } | d_{\varepsilon}^{a}(x_{1},y_{1}^{p})+d_{\varepsilon}^{a}(x_{2},y_{2})^{p}-d_{\varepsilon}^{a}(x_{1},y_{2})^{p}-d_{\varepsilon}^{a}(x_{2},y_{1})^{p}| \\
             +  &\displaystyle \sum_{ \{a\in X | \pi(a) \in [m,y_{1}] \}} | d_{\varepsilon}^{a}(x_{1},y_{1})^{p}+d_{\varepsilon}^{a}(x_{2},y_{2})^{p}-d_{\varepsilon}^{a}(x_{1},y_{2})^{p}-d_{\varepsilon}^{a}(x_{2},y_{1})^{p}|
           .      
\end{aligned}$$

The goal now is to bound from above by some constant the two obtained sums, when $R$ is great enough:

$$\begin{aligned}
 \displaystyle \sum_{ \{a\in X | \pi(a) \in [m,y_{1}]  \} } | d_{\varepsilon}^{a}(x_{1},y_{1})^{p}+d_{\varepsilon}^{a}(x_{2},y_{2})^{p}-d_{\varepsilon}^{a}(x_{1},y_{2})^{p}-d_{\varepsilon}^{a}(x_{2},y_{1})^{p}| & \\
             \leq \displaystyle \sum_{ \{a\in X |\pi(a) \in [m,y_{1}]  \} } | d_{\varepsilon}^{a}(x_{1},y_{1})^{p}-d_{\varepsilon}^{a}(x_{2},y_{1})^{p} | +  | d_{\varepsilon}^{a}(x_{2},y_{2})^{p}-d_{\varepsilon}^{a}(x_{1},y_{2})^{p} | & \\
             \leq \displaystyle \sum_{ \{a\in X | \pi(a) \in [m,y_{1}] \} } p \max(d_{\varepsilon}^{a}(x_{1},y_{1})^{p-1},d_{\varepsilon}^{a}(x_{2},y_{1})^{p-1})d_{\varepsilon}^{a}(x_{1},x_{2})
&\\
+ \sum_{ \{a\in X | \pi(a) \in [m,y_{1}] \} } p \max(d_{\varepsilon}^{a}(x_{2},y_{2})^{p-1},d_{\varepsilon}^{a}(x_{1},y_{2})^{p-1})d_{\varepsilon}^{a}(x_{1},x_{2}).
\end{aligned}$$

We will start to bound from above the sum:

$$ \displaystyle \sum_{ \{a\in X | \pi(a) \in [m,y_{1}] \} } p \max(d_{\varepsilon}^{a}(x_{1},y_{1})^{p-1},d_{\varepsilon}^{a}(x_{2},y_{1})^{p-1})d_{\varepsilon}^{a}(x_{1},x_{2}) .$$

Let us denote $N:=d(m,y_{1})$. We order $[m,y_{1}]$ in the following sense: $[m,y_{1}]=\{z_{0},z_{1},...,z_{N}\}$ with $z_{0}=m$, $z_{N}=y_{1}$ and $d(z_{i},z_{i+1})=1$ for $0\le i \le N-1$, see Figure \ref{fig: situation B1 projection du cote de x1,x2}.

 \begin{figure}[!ht]
   \centering
    \includegraphics[scale=0.5]{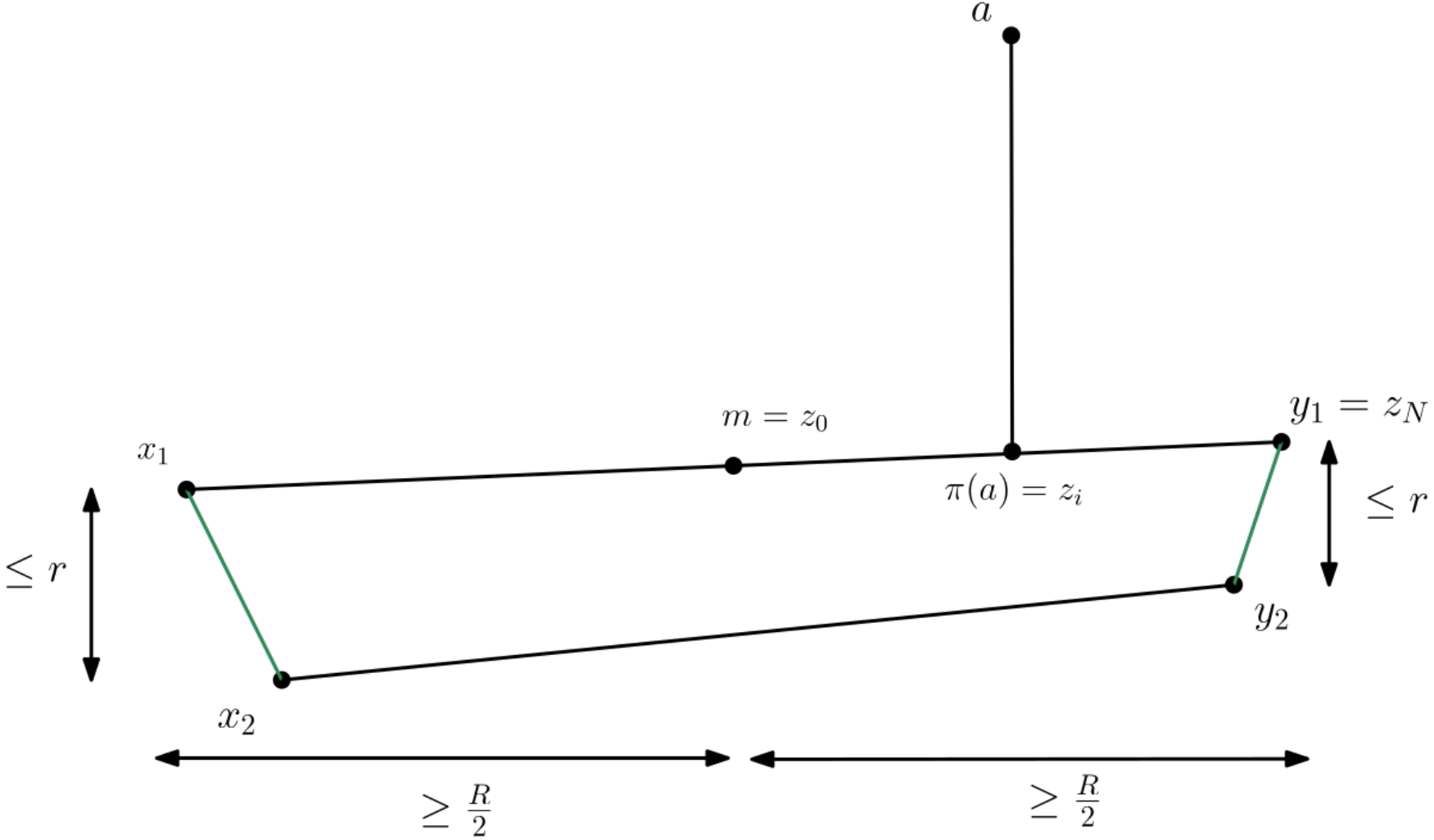}
    \caption{$\pi(a)=z_{i} \in [m,y_{1}]$}
  \label{fig: situation B1 projection du cote de x1,x2}
 \end{figure}

\newpage

According to the definition of $z_{i}$, we have $d(x_{1},z_{i})\geq \frac{R}{2}+i.$

Let $a \in X$ such that $\pi(a)=z_{i}$, then we have the following:
$$\begin{aligned}
 &\max(d_{\varepsilon}^{a}(x_{1},y_{1})^{p-1},d_{\varepsilon}^{a}(x_{2},y_{1})^{p-1})  \\
 &\leq \beta \max(e^{- \varepsilon(p-1)(x_{1}|y_{1})_{a}}, e^{- \varepsilon(p-1)(x_{2}|y_{1})_{a}}) \\
             &\leq  \beta \max(e^{-\varepsilon(p-1) (d(a,z_{i})-2\delta)}, e^{-\varepsilon(p-1) (d(a,z_{i})-2\delta-r) }) ~(\text{according to Lemma \ref{lemma : DrutuKapovichcomparaisonproduitdeGromovet distanceàunegeodesique} and Lemma \ref{lemma: comparaison des produits de gromov} }) \\
& \leq \beta e^{\varepsilon(p-1)(2\delta+r)}e^{-\varepsilon(p-1)d(a,z_{i})}.
\end{aligned}$$

To summarize, we have:
\begin{equation}\label{equation : controle de da(x1,y1) et da(x2,y1) quand a se projette sur zi}
  \max(d_{\varepsilon}^{a}(x_{1},y_{1})^{p-1},d_{\varepsilon}^{a}(x_{2},y_{1})^{p-1})  \leq \beta e^{\varepsilon(p-1)(2\delta+r)}e^{-\varepsilon(p-1)d(a,z_{i})} .
\end{equation}

Still in the case where $\pi(a)=z_{i}$, according to Lemma \ref{lemme : z est la projection donc c'est un quasi-centre}, we have:
$$ d(a,x_{1})\geq d(a,z_{i})+d(z_{i},x_{1})-24\delta \geq d(a,z_{i})+\frac{R}{2}+i-24\delta.$$

Moreover, we remark according to the triangular inequality that:
$$(x_{1},x_{2})_{a} \geq d(a,x_{1})-d(x_{1},x_{2}).$$

Therefore, we get the following:
$$\begin{aligned}
d_{\varepsilon}^{a}(x_{1},x_{2})\leq & \beta e^{-\varepsilon(x_{1},x_{2})_{a}}\\
\leq & \beta e^{- \varepsilon(d(a,x_{1})-d(x_{1},x_{2}))}\\
\leq & \beta e^{-\varepsilon(d(a,z_{i})+\frac{R}{2}+i-r-24\delta)}\\
\leq & \beta e^{\varepsilon(r+24\delta)}e^{-\varepsilon\frac{R}{2}} e^{-\varepsilon i} e^{-\varepsilon d(a,z_{i})}.
\end{aligned}$$

To summarize, we get:
\begin{equation}\label{equation: controle de da(x1,x2) quand a se projette sur zi}
d_{\varepsilon}^{a}(x_{1},x_{2})\leq \beta e^{\varepsilon(r+24\delta)}e^{-\varepsilon\frac{R}{2}} e^{-\varepsilon i} e^{-\varepsilon d(a,z_{i})}. 
\end{equation}

Putting this together, we can bound the sum:
$$\begin{aligned}
 &\displaystyle \sum_{ \{a\in X | \pi(a) \in [m,y_{1}] \} } p \max(d_{\varepsilon}^{a}(x_{1},y_{1})^{p-1},d_{\varepsilon}^{a}(x_{2},y_{1})^{p-1})d_{\varepsilon}^{a}(x_{1},x_{2})  \\
 \leq  & p \displaystyle \sum_{i=0}^{N} \sum_{ \{a\in X | \pi(a)=z_{i} \}} \max(d_{\varepsilon}^{a}(x_{1},y_{1})^{p-1},d_{\varepsilon}^{a}(x_{2},y_{1})^{p-1})d_{\varepsilon}^{a}(x_{1},x_{2}) \\
             \leq & p \beta^{2} \displaystyle \sum_{i=0}^{N} \sum_{ \{a\in X | \pi(a)=z_{i} \}}  e^{\varepsilon(p-1)(2\delta+r)}e^{-\varepsilon(p-1)d(a,z_{i})}   e^{-\varepsilon(d(a,z_{i})+\frac{R}{2}+i-r-24\delta)} ~(\text{according to Equations \ref{equation : controle de da(x1,y1) et da(x2,y1) quand a se projette sur zi} and \ref{equation: controle de da(x1,x2) quand a se projette sur zi}}) \\
\leq  & p \beta^{2} e^{\varepsilon(pr+2p\delta+22\delta)}  e^{-\varepsilon \frac{R}{2}} \displaystyle \sum_{i=0}^{N} \sum_{ \{a\in X | \pi(a)=z_{i} \}} e^{-\varepsilon pd(a,z_{i})} e^{-\varepsilon i} \\
\leq  & p \beta^{2}e^{\varepsilon(pr+2p\delta+22\delta)}  e^{-\varepsilon \frac{R}{2}} \displaystyle \sum_{i=0}^{N} e^{-\varepsilon i} \sum_{ a\in X } e^{-\varepsilon pd(a,z_{i})} \\
\leq  & p \beta^{2}e^{\varepsilon(pr+2p\delta+22\delta)}  e^{-\varepsilon \frac{R}{2}} \kappa  \displaystyle \sum_{i=0}^{N} e^{-\varepsilon i} ~(\text{according to Lemma \ref{lemma: somme des exponentielles distance à un point}}) \\
\leq & p \beta^{2} e^{\varepsilon(pr+2p\delta+22\delta)}  e^{-\varepsilon \frac{R}{2}} \kappa \frac{1-e^{-\varepsilon(N+1)}}{1-e^{-\varepsilon}}\\
\leq & p \beta^{2} e^{\varepsilon(pr+2p\delta+22\delta)}  e^{-\varepsilon \frac{R}{2}} \kappa \frac{1}{1-e^{-\varepsilon}}.
\end{aligned}$$

Therefore for $R \in \mathbb{N}$ large enough and even, $e^{-\varepsilon \frac{R}{2}}$ is arbitrarily small and the sum is also arbitrarily small.

Now, we will bound from above the sum:

$$  \sum_{ \{a\in X | \pi(a) \in [m,y_{1}] \} } p \max(d_{\varepsilon}^{a}(x_{2},y_{2})^{p-1},d_{\varepsilon}^{a}(x_{1},y_{2})^{p-1})d_{\varepsilon}^{a}(x_{1},x_{2}).$$

For all $a$ such that $\pi(a)=z_{i}$, we still have according to Equation \ref{equation: controle de da(x1,x2) quand a se projette sur zi}:
$$d_{\varepsilon}^{a}(x_{1},x_{2})\leq \beta e^{\varepsilon(r+24\delta)}e^{-\varepsilon\frac{R}{2}} e^{-\varepsilon i} e^{-\varepsilon d(a,z_{i})}. $$

For all $a$ such that $\pi(a)=z_{i}$, we have:
$$\begin{aligned}
 &\max(d_{\varepsilon}^{a}(x_{2},y_{2})^{p-1},d_{\varepsilon}^{a}(x_{1},y_{2})^{p-1})  \\
 &\leq \beta \max(e^{- \varepsilon(p-1)(x_{2}|y_{2})_{a}}, e^{- \varepsilon(p-1)(x_{1}|y_{2})_{a}}) \\
             &\leq  \beta \max(e^{-\varepsilon(p-1) (d(a,z_{i})-2\delta-2r)}, e^{-\varepsilon(p-1) (d(a,z_{i})-2\delta-r) }) ~(\text{according to Lemma \ref{lemma : DrutuKapovichcomparaisonproduitdeGromovet distanceàunegeodesique} and Lemma \ref{lemma: comparaison des produits de gromov} })\\
& \leq \beta e^{\varepsilon(p-1)(2\delta+2r)}e^{-\varepsilon(p-1)d(a,z_{i})}.
\end{aligned}$$

To summarize, we have:
\begin{equation}\label{equation: controlee de da(x2,y2)}
  \max(d_{\varepsilon}^{a}(x_{2},y_{2})^{p-1},d_{\varepsilon}^{a}(x_{1},y_{2})^{p-1})\leq \beta e^{\varepsilon(p-1)(2\delta+2r)}e^{-\varepsilon(p-1)d(a,z_{i})}.   
\end{equation}

Putting this together we can bound the sum:
$$\begin{aligned}
 &\displaystyle \sum_{ \{a\in X | \pi(a) \in [m,y_{1}] \} } p \max(d_{\varepsilon}^{a}(x_{2},y_{2})^{p-1},d_{\varepsilon}^{a}(x_{1},y_{2})^{p-1})d_{\varepsilon}^{a}(x_{1},x_{2})  \\
 \leq  & p \displaystyle \sum_{i=0}^{N} \sum_{ \{a\in X | \pi(a)=z_{i} \}} \max(d_{\varepsilon}^{a}(x_{2},y_{2})^{p-1},d_{\varepsilon}^{a}(x_{1},y_{2})^{p-1})d_{\varepsilon}^{a}(x_{1},x_{2}) \\
             \leq & p \beta^{2} \displaystyle \sum_{i=0}^{N} \sum_{ \{a\in X | \pi(a)=z_{i} \}}  e^{\varepsilon(p-1)(2\delta+2r)}e^{-\varepsilon(p-1)d(a,z_{i})} e^{-\varepsilon(d(a,z_{i})+\frac{R}{2}+i-r-24\delta)} ~(\text{according to Equations \ref{equation: controle de da(x1,x2) quand a se projette sur zi} and \ref{equation: controlee de da(x2,y2)}}) \\
\leq  & p \beta^{2} e^{\varepsilon(p2r+2p\delta+22\delta)}  e^{-\varepsilon \frac{R}{2}} \displaystyle \sum_{i=0}^{N} \sum_{ \{a\in X | \pi(a)=z_{i} \}} e^{-\varepsilon pd(a,z_{i})} e^{-\varepsilon i} \\
\leq  & p \beta^{2}e^{\varepsilon(p2r+2p\delta+22\delta)}  e^{-\varepsilon \frac{R}{2}} \displaystyle \sum_{i=0}^{N} e^{-\varepsilon i} \sum_{ a\in X } e^{-\varepsilon pd(a,z_{i})} \\
\leq  & p \beta^{2}e^{\varepsilon(p2r+2p\delta+22\delta)}  e^{-\varepsilon \frac{R}{2}} \kappa  \displaystyle \sum_{i=0}^{N} e^{-\varepsilon i} ~(\text{according to Lemma \ref{lemma: somme des exponentielles distance à un point}}) \\
\leq & p \beta^{2} e^{\varepsilon(p2r+2p\delta+22\delta)}  e^{-\varepsilon \frac{R}{2}} \kappa \frac{1-e^{-\varepsilon(N+1)}}{1-e^{-\varepsilon}}\\
\leq & p \beta^{2} e^{\varepsilon(p2r+2p\delta+22\delta)} e^{-\varepsilon \frac{R}{2}} \kappa \frac{1}{1-e^{-\varepsilon}}.
\end{aligned}$$

Like in the previous case, the sum is arbitrarily small when $R$ is arbitrarily big.

To conclude the proof of the Theorem, we need to bound from above by $\eta$ the sum:

$$\displaystyle \sum_{ \{a\in X | \pi(a) \in [x_{1},m] \} } | d_{\varepsilon}^{a}(x_{1},y_{1}^{p})+d_{\varepsilon}^{a}(x_{2},y_{2})^{p}-d_{\varepsilon}^{a}(x_{1},y_{2})^{p}-d_{\varepsilon}^{a}(x_{2},y_{1})^{p}|.$$

Even if the definition of $\pi(a)$ is not symmetric in $x_{1},x_{2},y_{1},y_{2}$, in the previous case, we just use the fact that $\pi(a)$ is a closest point projection on the geodesic $[x_{1},y_{1}]$ to bound the sum for $a \in X$ such that $\pi(a) \in [m,y_{1}]$, therefore the same proof will work in the case where $\pi(a) \in [x_{1},m]$ and the proof is complete.
\end{proof}

We summarize the results of this section with the proof of Theorem \ref{Theorem : metric fortmement bolic pour les groupes relativement hyperoblique}.

\begin{proof} of Theorem \ref{Theorem : metric fortmement bolic pour les groupes relativement hyperoblique}

Every hyperbolic group admits a metric $\hat{d}$ which is defined in Definition \ref{definition : metrique fortement bolique groupe hyperbolique}.

This metric is $G$-invariant, uniformly locally finite and quasi-isometric to the word metric according to the Proposition \ref{proposition : dchapeau est invariante et quasi-isom à la métrique des mots}.

According to Proposition \ref{proposition : d chapeau est faiblement geodesique}, the metric space $(X,\hat{d})$ is weakly geodesic.

According to Corollary \ref{corollary : delta B2}, $(X,\hat{d})$ verifies $\delta-B2$ for some $\delta$ and according to Theorem \ref{theorem : dchapeau est fortement B1}, $(X,\hat{d})$ satisfies strong-$B1$, thus $(X,\hat{d})$ is strongly bolic.

\end{proof}

\newpage

\section{Strong Bolicity for relatively hyperbolic groups }\label{section : bolicité forte pour gp relativement hyperbolique}

In this section, we prove the following theorem.

\begin{theo}\label{theorem: bolicite forte pour relativement hyperboliques avec paraboliques cat(0)}

Every \emph{relatively hyperbolic group with $CAT(0)$ parabolics} $G$ admits a metric $\hat{d}$ with the following properties.

\begin{itemize}
    \item $\hat{d}$ is $G$-invariant, i.e. $\hat{d}(g.x,g.y)=\hat{d}(x,y)$, for all $g,x,y \in G$,
    
    \item $\hat{d}$ is uniformly locally finite and quasi-isometric to the word metric,
    
    \item the metric space $(G,\hat{d})$ is weakly geodesic and strongly bolic.
\end{itemize}

\end{theo}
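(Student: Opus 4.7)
My strategy is to combine three ingredients: the construction of Section~\ref{section : bolicité forte pour gp hyperbolique} applied to the coned-off graph $X_c$, the strongly bolic $CAT(0)$ metrics on the parabolic subgroups, and the Chatterji–Dahmani masks used to assemble them into a $G$-equivariant metric on $G$. Corollary~\ref{corollary : reformulation de la formule de la distance} is the guiding principle: the word metric on $G$ is quasi-isometric to $d_c + \Theta_{>M}$, so it suffices to replace each summand by a strongly bolic surrogate and to check that their sum inherits the required properties.

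First I would adapt Section~\ref{section : bolicité forte pour gp hyperbolique} to the coned-off Cayley graph $X_c$. Although $X_c$ is not locally finite, it is uniformly fine and hyperbolic; consequently intervals have uniformly bounded cardinality (the corollary following Proposition~\ref{conetriangle}), and the counting Lemmas~\ref{lemma : denombrement de g tel que g.o est proche d'une géodesique} and~\ref{lemma: somme des exponentielles distance à un point} transfer with balls replaced by cones and the exponential growth rate replaced by the uniform finesse function. Feeding this into the visual pseudo-metrics of Proposition~\ref{Proposition: angle for hyperbolic metric space} and imitating the arguments of Section~\ref{section : bolicité forte pour gp hyperbolique} should produce a $G$-invariant, weakly geodesic, strongly bolic metric $\hat{d}_c$ on $X_c$ that is quasi-isometric to $d_c$.

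Next, for each parabolic $H_i$ I take its $CAT(0)$ metric $\hat{d}_{H_i}$, which is automatically strongly bolic. To transfer these parabolic metrics to all of $G$ in a $G$-equivariant fashion I use masks, which yield, for each left coset $gH_i$, a random choice of representative that is equivariant under the permutation action of $G$ on cosets. The composite metric is then defined by
\[
\hat{d}(x,y) \;=\; \hat{d}_c(x,y) \;+\; \sum_{\widehat{gH_i} \in [x,y]_c} \mathbb{E}\bigl[\hat{d}_{H_i}(h_a, h_b)\bigr],
\]
where the sum runs over the infinite-valence vertices visited along a chosen coned-off geodesic $[x,y]_c$, with $a, b$ the entry and exit vertices in $gH_i$, and $h_a, h_b \in H_i$ their $H_i$-coordinates with respect to the (random) mask representative of $gH_i$. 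Proposition~\ref{proposition: comparaison de la somme des angles le long de deux géodésiques} guarantees that changing the coned-off geodesic only perturbs this sum by a quasi-isometric amount; combined with Corollary~\ref{corollary : reformulation de la formule de la distance} and the quasi-isometry between $\hat{d}_{H_i}$ and the word metric on $H_i$, this gives the desired quasi-isometry between $\hat{d}$ and $d_G$. $G$-invariance follows from the equivariance of masks and of $\hat{d}_c$, and uniform local finiteness is then a consequence of the quasi-isometry. Weak geodesicity is obtained by concatenating a $\hat{d}_c$-weak geodesic with genuine $CAT(0)$ geodesics inside each visited parabolic coset.

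The main obstacle is strong-$B1$ for the composite metric. For four points $x_1, x_2, y_1, y_2$ in the $B1$ configuration, hyperbolicity and fineness of $X_c$ should force the four chosen coned-off geodesics to traverse the same parabolic cosets far from the endpoints, with entry/exit points differing by at most $r$ in the word metric. Strong-$B1$ for $\hat{d}_c$ is then handled exactly as in Section~\ref{section : bolicité forte pour gp hyperbolique}, while the parabolic contributions should cancel up to $\eta$ by strong-$B1$ inside each $CAT(0)$ parabolic — provided the mask-averaged distances are stable under bounded perturbations of their arguments. Establishing this stability is the principal technical point and is where the precise equivariance properties of the Chatterji–Dahmani masks become essential. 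Weak-$B2'$ then follows from weak geodesicity, $CAT(0)$ convexity inside the parabolics, and strong-$B1$, in the same spirit as Corollary~\ref{corollary : delta B2} in the hyperbolic case.
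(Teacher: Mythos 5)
Your two-term decomposition $\hat{d} = \hat{d}_c + (\text{parabolic contributions})$ departs substantially from the paper's construction, and the departure hides several gaps that I don't think can be patched.

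The first is that the adaptation of Section~\ref{section : bolicité forte pour gp hyperbolique} to $X_c$ does not go through. The convergence of $D(x,y) = \sum_a d^a_\varepsilon(x,y)^p$ in the hyperbolic case rests on Lemmas~\ref{lemma : denombrement de g tel que g.o est proche d'une géodesique}--\ref{lemma: somme des exponentielles distance à un point}, which count vertices at distance $n$ from a geodesic and exploit that $d^a_\varepsilon(x,y) \lesssim e^{-\varepsilon d(a,[x,y])}$. In $X_c$ the set $\{a : d_c(a,[x,y]_c) = n\}$ is infinite for every $n$ (each infinite-valence vertex has infinitely many neighbours at essentially the same distance from a geodesic), and the visual pseudo-metric sees only $d_c$, not angles, so it cannot distinguish among them. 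Replacing balls by cones, as you suggest, truncates the index set to $a$'s in a cone around $[x,y]_c$, but then the sum depends on the chosen geodesic and you lose well-definedness (and the symmetry needed for a metric). What actually rescues convergence in the paper is that the mask-difference decays as $\kappa^{d_c(a,x)+\Theta(a,x)}$ (Proposition~\ref{proposition : la difference de deux masks est controlé par l'angle}), i.e.\ exponentially in $d'$, which is quasi-isometric to $d_G$ and therefore has genuine polynomial ball growth. That angle-sensitive decay is the reason Section~\ref{section : bolicité forte pour gp relativement hyperbolique} replaces the visual pseudo-metrics by the mask-based $d_a$ of Definition~\ref{definition: da} throughout, rather than keeping them for the "$d_c$ part" and bolting on a parabolic correction.

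The second issue is the geodesic-dependent sum $\sum_{\widehat{gH_i} \in [x,y]_c} \mathbb{E}[\hat{d}_{H_i}(h_a,h_b)]$. Proposition~\ref{proposition: comparaison de la somme des angles le long de deux géodésiques} gives only a multiplicative comparison between two geodesics, so this expression is not a function of $(x,y)$ up to bounded additive error, and symmetry and the triangle inequality will fail. The paper avoids this by defining $D(x,y) = \sum_{a \in X_c} \theta(d_a(x,y))$ as a single sum over all vertices (finite- and infinite-valence), with the function $\theta$ of Definition~\ref{definition: la fonction theta} interpolating between a power $t^P$ for small contributions and a linear behaviour for large ones. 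The convexity/smoothness behaviour of $\theta$ is specifically tuned so that the coarse triangle inequality (Proposition~\ref{proposition: inegalite triangulaire grossiere}) and the analogue of the Mineyev–Yu argument (Proposition~\ref{proposition: inegalite clutch inegalite triangulaire et faiblement geodesique}) can be pushed through; neither is automatic for a naive sum of two strongly bolic pieces.

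Finally, the claim that weak-$B2'$ follows "in the same spirit as Corollary~\ref{corollary : delta B2}" does not apply. That corollary invokes hyperbolicity of $(X,\hat{d})$ inherited by quasi-isometry, which holds for hyperbolic groups but certainly not for a relatively hyperbolic group with infinite parabolics. For relatively hyperbolic $G$, weak-$B2'$ requires a genuine argument, and the paper needs stronger-than-$CAT(0)$ structural properties of the parabolic metrics — the $\eta$-peakless and $\eta$-strongly-convex conditions of Definitions~\ref{definition: peakless} and~\ref{definition: strongly convex}, packaged into the notion of admissible parabolics (Definition~\ref{definition: lesbons parabolic}) and verified for $CAT(0)$ groups via barycenters and Wasserstein comparison (Lemma~\ref{lemme: mesuresintersectentdifbarycentre}, Proposition~\ref{proposition: lesgroupes Cat(0) sont admissibles}). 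The proof of Proposition~\ref{proposition: dchapeau verifie faiblement b2'} is a careful case analysis using these extra convexity properties, not a corollary of strong-$B1$ plus weak geodesicity.
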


More precisely, we prove this theorem for a broader class of relatively hyperbolic groups, defined in Section \ref{subsection: assumptions on parabolic}. We will also show that it includes also virtually abelian parabolics according to \cite[Proposition 8.1]{minasyan}.

\begin{corollary}\label{corollary: Baum-Connes pour relativement hyperboliques avec paraboliques}
Groups hyperbolic relatively to $CAT(0)$ parabolics with the $(RD)$ property and their subgroups satisfy the Baum-Connes conjecture.

\end{corollary}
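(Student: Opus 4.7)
The plan is to assemble three ingredients: the strongly bolic metric produced by Theorem~\ref{theorem: bolicite forte pour relativement hyperboliques avec paraboliques cat(0)}, the rapid decay hypothesis, and Lafforgue's Theorem~\ref{theo : Baum-Connes=Rapid decay + Strongly Bolic}. For the subgroup part, I will use the stability observations recorded in Subsection~\ref{subsection: status of the conjecture}.

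First I would apply Theorem~\ref{theorem: bolicite forte pour relativement hyperboliques avec paraboliques cat(0)} to $G$ to obtain a metric $\hat{d}$ on $G$ that is $G$-invariant, uniformly locally finite, weakly geodesic and strongly bolic, and quasi-isometric to the word metric. The left multiplication action of $G$ on itself is by isometries for $\hat{d}$ by $G$-invariance, and it is proper because $\hat{d}$ is quasi-isometric to the word metric (which has proper action by any finitely generated group on its own Cayley graph). Thus $(G,\hat{d})$ provides the geometric input required by the second bullet of Theorem~\ref{theo : Baum-Connes=Rapid decay + Strongly Bolic}. Combined with the hypothesis that $G$ has property $(RD)$, Lafforgue's theorem yields the Baum--Connes conjecture for $G$ itself.

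For subgroups, I would invoke the stability of both hypotheses of Theorem~\ref{theo : Baum-Connes=Rapid decay + Strongly Bolic} under passage to subgroups, as explicitly noted in Subsection~\ref{subsection: status of the conjecture}. Given any subgroup $H \le G$, property $(RD)$ descends from $G$ to $H$, and the restricted action of $H$ on the same metric space $(G,\hat{d})$ remains proper and by isometries. Hence $(G,\hat{d})$ serves equally as a proper strongly bolic geometric model for $H$, and applying Lafforgue's theorem a second time gives the Baum--Connes conjecture for $H$. Note that although the conjecture is not known to be inherited by subgroups in general, here we do not need such a principle: the subgroup satisfies the two Lafforgue hypotheses directly.

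There is no real obstacle; the only point requiring care is ensuring that the subgroup $H$ is itself finitely generated, as required by Theorem~\ref{theo : Baum-Connes=Rapid decay + Strongly Bolic}. This is built into the statement of $(RD)$ for $H$, which is a property of finitely generated groups. In summary, the proof consists of the single diagram: Theorem~\ref{theorem: bolicite forte pour relativement hyperboliques avec paraboliques cat(0)} supplies the strongly bolic space, the hypothesis supplies $(RD)$, and Theorem~\ref{theo : Baum-Connes=Rapid decay + Strongly Bolic} converts these into Baum--Connes, both for $G$ and, by restriction of the action together with subgroup stability of $(RD)$, for every finitely generated subgroup of $G$.
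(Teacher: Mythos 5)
Your proposal follows essentially the same route as the paper's own proof: obtain the strongly bolic, weakly geodesic, $G$-invariant metric $\hat{d}$ quasi-isometric to the word metric from the main theorem, note that a subgroup $H$ inherits property $(RD)$ and acts properly by isometries on $(G,\hat{d})$, and apply Lafforgue's Theorem~\ref{theo : Baum-Connes=Rapid decay + Strongly Bolic}. The argument is correct and matches the paper's proof in all essentials.
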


More precisely, we obtain the following corollary for families of $CAT(0)$ groups that have $(RD)$ property.

\begin{corollary}\label{corollary: Baum-Connes pour relativement hyperbolique en disant lesquels satisfont RD}
Groups that are hyperbolic relatively to following classes of groups and their subgroups:
\begin{itemize}
    \item virtually abelian,

    \item cocompaclty cubulated,

    \item Coxeter groups,
\end{itemize}

satisfy the Baum-Connes conjecture.

\end{corollary}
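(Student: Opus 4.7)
The plan is to reduce the statement to the previous Corollary \ref{corollary: Baum-Connes pour relativement hyperboliques avec paraboliques}, which already packages Theorem \ref{theorem: bolicite forte pour relativement hyperboliques avec paraboliques cat(0)} with Lafforgue's criterion (Theorem \ref{theo : Baum-Connes=Rapid decay + Strongly Bolic}). The work therefore splits into two independent verifications for each of the three classes $\mathcal{C}$ on the list: (i) every $H\in\mathcal{C}$ is $CAT(0)$, so that a group hyperbolic relative to elements of $\mathcal{C}$ falls under the hypothesis of Theorem \ref{theorem: bolicite forte pour relativement hyperboliques avec paraboliques cat(0)}; and (ii) every $H\in\mathcal{C}$ satisfies the rapid decay property, so that by Drutu--Sapir \cite{DrutuSapirRdrelivementHyp} any group hyperbolic relative to a finite family in $\mathcal{C}$ also has $(RD)$.

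For step (i), virtually abelian groups act properly cocompactly by isometries on a Euclidean space (up to passing to a finite index subgroup, and using that being $CAT(0)$ is a stable property in the sense needed to feed the main theorem, as in \cite[Proposition~8.1]{minasyan} cited in the paper); cocompactly cubulated groups act properly cocompactly on a $CAT(0)$ cube complex by the standard Gromov/Sageev theory; Coxeter groups are $CAT(0)$ by Moussong's theorem (acting on the Davis complex). For step (ii), virtually abelian groups have polynomial growth and hence $(RD)$ by Jolissaint \cite{JolissaintRD}; cocompactly cubulated groups and Coxeter groups satisfy $(RD)$ by the known literature summarized in Section~\ref{subsection: Rd property} (see \cite{ChatterjiRd}, \cite{SapirRd} for a list).

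Combining (i) and (ii), Corollary \ref{corollary: Baum-Connes pour relativement hyperboliques avec paraboliques} applies verbatim to a group $G$ hyperbolic relative to subgroups from the listed classes, yielding the Baum--Connes conjecture for $G$. For the ``and their subgroups'' clause, I invoke the observation from Section~\ref{subsection: status of the conjecture} that Lafforgue's Theorem \ref{theo : Baum-Connes=Rapid decay + Strongly Bolic} passes to subgroups: $(RD)$ is stable under taking subgroups, and if $G$ acts properly by isometries on a strongly bolic, uniformly locally finite, weakly geodesic space then so does every subgroup $G'\leq G$ via the restricted action. Hence each such $G'$ also meets both hypotheses of Theorem \ref{theo : Baum-Connes=Rapid decay + Strongly Bolic} and satisfies the Baum--Connes conjecture.

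The only genuine content is the $(RD)$ verification for Coxeter and for cocompactly cubulated groups, which I would expect to be the main subtlety; for these I would quote the relevant references rather than reprove them, since the body of the paper focuses on the geometric side. The rest is a straightforward concatenation of Theorem \ref{theorem: bolicite forte pour relativement hyperboliques avec paraboliques cat(0)}, the Drutu--Sapir inheritance of $(RD)$ under relative hyperbolicity, and Lafforgue's criterion.
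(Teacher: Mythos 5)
Your proposal is correct and follows essentially the same route as the paper: check that each listed class is $CAT(0)$ (virtually abelian via \cite{minasyan}, cubulated by definition, Coxeter via the Davis/Moussong construction) and satisfies $(RD)$ (Jolissaint for virtually abelian, Chatterji--Ruane for the other two), then feed this into the previous corollary, with the subgroup clause handled by the stability of Lafforgue's two hypotheses under passing to subgroups. The only difference is that you spell out the Drutu--Sapir inheritance of $(RD)$ and the subgroup reduction explicitly, which the paper delegates to the proof of the preceding corollary.
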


\begin{corollary}\label{Corollay: Baum-Connes pour variétés hyperboliques réelles}

Fundamental groups of complete real hyperbolic manifolds admits a strongly bolic metric.

\end{corollary}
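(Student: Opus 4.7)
The plan is to reduce to Theorem~\ref{theorem: bolicite forte pour relativement hyperboliques avec paraboliques cat(0)} by realising $\pi_{1}(M)$ as a relatively hyperbolic group with virtually abelian, hence $CAT(0)$, parabolic subgroups. One first separates the cocompact case, where $M$ is closed and $\pi_{1}(M)$ is a uniform lattice in $SO(n,1)$: then $\pi_{1}(M)$ is a hyperbolic group and the strongly bolic metric is provided directly by Section~\ref{section : bolicité forte pour gp hyperbolique} (equivalently by Mineyev--Yu). The interesting case is the finite-volume non-compact case, where the Margulis thick-thin decomposition yields finitely many cusps, and this is where I would apply the main construction of the present article.

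In the cusped case the plan has three steps. First, associate to each cusp of $M$ a maximal parabolic subgroup $P_{i}\le\pi_{1}(M)$ stabilising a horoball in $\mathbb{H}^{n}$; since $P_{i}$ acts cocompactly by Euclidean isometries on the bounding horosphere, the Bieberbach theorems give that $P_{i}$ is a crystallographic group, in particular virtually abelian, and therefore $CAT(0)$. Second, verify using Bowditch's definition (Definition~\ref{definition: groupe relativement hyperbolique}) that $\pi_{1}(M)$ is hyperbolic relative to the family $\{P_{1},\dots,P_{k}\}$; this is a classical consequence of the fact that the convex core of $M$ minus standard open horoball cusp neighbourhoods is compact, and that the corresponding coned-off Cayley graph is quasi-isometric to a space obtained by collapsing the horospheres in $\mathbb{H}^{n}$, which is $\delta$-hyperbolic and uniformly fine by a standard horoball/cusp counting. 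Third, apply Theorem~\ref{theorem: bolicite forte pour relativement hyperboliques avec paraboliques cat(0)} to obtain the required $G$-invariant, uniformly locally finite, weakly geodesic, strongly bolic metric $\hat{d}$ quasi-isometric to the word metric.

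The main obstacle is not the construction itself but a mild interpretive issue in the statement: for genuinely infinite-volume complete hyperbolic manifolds, $\pi_{1}(M)$ need only be a Kleinian group and may fail to be relatively hyperbolic when it is not geometrically finite. The cleanest reading is that $M$ has finite volume, equivalently that $\pi_{1}(M)$ is a (possibly non-uniform) lattice in $SO(n,1)$, and the strategy above then applies verbatim; the argument extends without change to geometrically finite Kleinian groups, for which maximal parabolic subgroups are again virtually abelian by the same horospherical Bieberbach argument, so that Theorem~\ref{theorem: bolicite forte pour relativement hyperboliques avec paraboliques cat(0)} still applies.
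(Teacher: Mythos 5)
Your proposal follows essentially the same route as the paper: both reduce to Theorem~\ref{theorem: bolicite forte pour relativement hyperboliques avec paraboliques cat(0)} by realising $\pi_{1}(M)$ as a group hyperbolic relative to virtually abelian (hence $CAT(0)$, hence admissible) parabolic subgroups. The paper does this in one sentence via a citation to \cite{BridsonHaefliger} and then concludes through Corollary~\ref{corollary: Baum-Connes pour relativement hyperbolique en disant lesquels satisfont RD}; you spell out the same reduction explicitly, via the Bieberbach theorem for the horospherical parabolics and the horoball-collapse description of the coned-off graph. Your argument actually aligns slightly more directly with the statement as phrased (which asks for a strongly bolic metric, hence for Theorem~\ref{theorem: bolicite forte pour relativement hyperboliques avec paraboliques cat(0)}) than the paper's one-line proof, which concludes with the Baum--Connes conjecture.

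The substantive point you raise is correct and worth recording: ``complete'' is too weak a hypothesis, since for a geometrically infinite Kleinian group $\pi_{1}(M)$ need not be finitely generated, let alone relatively hyperbolic, so the reduction fails. The intended reading is that $M$ has finite volume, i.e.\ $\pi_{1}(M)$ is a lattice in $\mathrm{Isom}(\mathbb{H}^{n})$; in that case the paper's citation and your three-step argument both go through, and as you note the same argument covers geometrically finite non-lattice Kleinian groups as well. This is not a gap in your proof but a small imprecision in the corollary's statement as written.
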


\textbf{Structure of Section \ref{section : bolicité forte pour gp relativement hyperbolique}.}
\begin{itemize}

\item Subsection \ref{subsection: notations} is dedicated to notations.

\item In Subsection \ref{subsection: assumptions on parabolic}, we precisely define the class of admissible parabolics subgroups that interest us and show that this class includes $CAT(0)$ groups and virtually abelian groups.

\item In Subsection \ref{subsection: masks}, we discuss a notion of random representatives on the parabolics, namely masks, defined by Dahmani and Chatterji in \cite{ChatterjiDahmani}. Most of the properties cited come from \cite{ChatterjiDahmani}, while some are new.

\item In Subsection \ref{subsection: definition de da ,control}, we define for all vertices of the coned-off graph, a pseudo-metric, related to the masks. These pseudo-metrics will play, in the definition of strongly bolic metric for relatively hyperbolic groups, the role that angles of Proposition \ref{Proposition: angle for hyperbolic metric space} played in the definition of the strongly bolic metric for hyperbolic groups. We prove an important control result for these pseudo-metrics.

\item In Subsection \ref{subsection: comparaison angles et masques}, we prove that for points of infinite valence, the pseudo-metrics are comparable to the angles of Definition \ref{definition: angle} in the coned-off graph. 

\item Subsection \ref{subsection: la metrique fortment bolique} is dedicated to the definition of the metric $\hat{d}$, which will be the strongly bolic metric. In this subsection, we prove that this is a metric.

\item In Subsection \ref{subsection: metric comparable to the group metric}, we prove that $\hat{d}$ is quasi-isometric to the group metric.

\item In Subsection \ref{subsection: faiblement geodesique}, we prove that the metric space $(G,\hat{d})$ is weakly-geodesic.

\item In Subsection \ref{subsection: metrique bolic verifie B2'}, we prove that $\hat{d}$ satisfies the weak-$B2'$ condition.

\item In Subsection \ref{subsection: metrique bolic verifie B1}, we prove that $\hat{d}$ satisfies the strong-$B1$ condition.

\item Subsection \ref{subsection: conclusion fortement bolic} is a conclusion, we summarize all the previous results. We combined Theorem \ref{Theorem : metric fortmement bolic pour les groupes relativement hyperoblique} and Theorem \ref{theo : Baum-Connes=Rapid decay + Strongly Bolic} to prove the Baum-Connes conjecture for relatively hyperbolic with finitely generated $CAT(0)$ with $(RD)$ property and virtually abelian parabolics.

\end{itemize}

\subsection{Notations} \label{subsection: notations}
In the whole section, $G$ will denote a group hyperbolic relatively to proper subgroups $P_{1},...,P_{n}$. We assume further that all $P_{1},...,P_{n}$ are infinite and finitely generated.\\

Let $d_{G}$ denote a word-distance on $G$ with respect to a finite generating set. Let
$X_{c}$ denote a coned-off graph, defined in Definition \ref{definition: coned-off} of $G$ relative to $P_{1},...,P_{n}$ with the same generating set. Let $d_{c}$ will denote the graph metric of $X_{c}$, and for all $x,y \in X_{c}$, $[x,y]_{c}$ will denote a geodesic for $d_{c}$ between $x$ and $y$. We denote by $X_{c}^{\infty}$ the set of vertices of infinite valence of $X_{c}$. We will denote by $I_{c}(x,y)=\{z \in X_{c} ~|~ d_{c}(x,y)=d_{c}(x,z)+d_{c}(z,y) \} $ the geodesic interval between $x$ and $y$ in $X_{c}$. Let us denote $\delta$ the hyperbolicity constant of $X_{c}$ and we assume that $\delta$ is an integer. In what follows, since parabolics subgroups are assumed to be infinite, we overlook the distinction between $G$ and $X_{c} \backslash X_{c}^{\infty}$.\

For all $x,y \in X_{c}$, we denote $\Theta'(x,y)$ the minimal sum of angles at infinite valence vertices on a geodesic $[x,y]_{c}$ and for all $M>0$, $\Theta'_{>M}(x,y)$ the minimal sum of angles at infinite valence vertices on a geodesic $[x,y]_{c}$ greater than $M$.  We define $d': X_{c} \times X_{c} \rightarrow \mathbb{N}$ as $d'(x,y)=d_{c}(x,y)+\Theta'(x,y)$. We recall that its restriction to $G$ is a metric quasi-isometric to the word metric according to Proposition \ref{Proposition : formule de la distance}. For all $M>0$, we define, in the same way $d'_{>M}: X_{c} \times X_{c} \rightarrow \mathbb{N}$ as $d'_{>M}(x,y)=d_{c}(x,y)+\Theta'_{>M}(x,y)$. According to Corollary \ref{corollary : reformulation de la formule de la distance}, on $G$, $d'_{M}$ is also quasi-isometric to $d_{G}$.   \\

Let $H$ be a finitely generated group with a fixed generating set, let $C>0$ be a constant, we denote by $\text{Proba}_{C}(H)$ the set of finitely supported probability measures on $H$ with support of diameter at most $C$. We observe that $H$ embeds into $\text{Proba}_{C}(H)$ through Dirac measures.

We define the norm $|| \eta ||$ of a measure on a discrete space $X$ as its $\ell^{1}$-norm $|| \eta ||:=\sum_{x\in X}| \eta(x)|$. Moreover, we set $|| \eta \Delta \eta' ||:=|| \eta - \eta' ||$.\\

For all $K>0$, $x \in \mathbb{R}_{+}$, we set:
$$
[x]_{K}:= \left\{
    \begin{array}{ll}
        0 \mbox{ if } x \leq K \\
        x  \mbox{ if } x>K.
    \end{array}
\right.
$$

\subsection{Assumptions on parabolics}\label{subsection: assumptions on parabolic}

In this section, we describe the parabolic subgroups for which our result is applicable. We begin by stating the assumptions we require for the parabolics. We then show that $CAT(0)$ groups satisfy these assumptions.

The following two properties are strengthening of the conditions of the weakly-$B2$' property. We need to assume that the metric defined on the parabolic subgroups satisfies these two properties in order to establish the weakly-$B2'$ property for the relatively hyperbolic group.

The first property, which we call $\eta$-peakless, means that the distance from a point to a geodesic reaches its maximum at the ends of the geodesic, up to a constant $\eta$.

\begin{definition}\label{definition: peakless} ($\eta_{2}$-peakless) Let $\eta_{1}\geq0$ and $\eta_{2}\geq0$.
Let $(X,d)$ be a $\eta_{1}$-weakly geodesic metric space, we say that $(X,d)$ is $\eta_{2}$-\emph{peakless}, if there exists $\eta_{2}>0$ such that, for all $x,y,z \in X$, there exists a $\eta_{1}$-weak geodesic $\sigma: [0,1] \rightarrow X$ from $x$ to $y$, such that for $t \in [0, 1]$, we have:
$$ d(z,\sigma(t))\leq \max(d(x,z),d(y,z))+\eta_{2}.$$

\end{definition}

The second property, named $\eta$-strongly convex, is a non-symmetric reinforcement, up to an additive constant, of classical uniform convexity. In fact, given three points $x$, $y$, and $z$, rather than requiring that $z$ be closer to the midpoint of the geodesic between $x$ and $y$ than to $x$ and $y$, we require here that $z$ be closer to every point on the geodesic that lies sufficiently far from both $x$ and $y$.

\begin{definition}\label{definition: strongly convex}($\eta_{2}$-strongly-convex) Let $\eta_{1}\geq0$ and $\eta_{2}\geq 0$.
Let $(X,d)$ be a $\eta_{1}$-weakly geodesic metric space, we say that $(X,d)$ is $\eta_{2}$-\emph{strongly-convex}, if for all $0<\eta<\frac{1}{2}$, for  $\varepsilon \in (0,1)$, there exists $\delta \in (0,1)$ such for all $x,y,z \in X$, with $d(x,z) \leq N, d(z,y) \leq N$ and $d(x,y)>\varepsilon N$, there exists a $\eta_{1}$-weak geodesic $\sigma: [0,1] \rightarrow X$ from $x$ to $y$, such that for all $t \in [\eta, 1-\eta]$, we have:
$$ d(z,\sigma(t))\leq N(1-\delta)+\eta_{2}.$$
\end{definition}

We remark directly from the definition that a geodesic metric space $(X,d)$, which is  $\eta_{1}$-strongly-convex and $\eta_{2}$-peakless for some $\eta_{1}\geq0$ and $\eta_{2}\geq0$, satisfies weakly-$B2'$.

\begin{definition}\label{definition: lesbons parabolic}

A finitely generated group $P$ is said to be \emph{admissible}, if for all $C>0$, $\text{Proba}_{C}(P)$ admits a $P$-invariant pseudo-metric, denoted $d_{b}$, such that:

\begin{itemize}
    \item $d_{b}$ is weakly geodesic,

     \item  $d_{b}$ is $\eta$-peakless and $\eta$-strongly-convex for some $\eta>0$ and for the same choice of weak geodesic,
     
    \item $d_{b}$ verifies strong-$B1$, weakly-$B2'$,

     \item the inclusion map from $(P,d_{P})$ to $(\text{Proba}_{C}(P),d_{b})$ via the Dirac masses is a quasi-isometry,

     \item for all $\mu,\nu \in \text{Proba}_{C}(P) $ with  $\text{supp}(\mu) \cap \text{supp}(\nu) \neq \emptyset$, we have: $ d_{b}(\mu,\nu) \leq 2C || \mu \Delta \nu || $. 
\end{itemize}

\end{definition}

The rest of Section \ref{section : bolicité forte pour gp relativement hyperbolique} will be dedicated to the proof of the following theorem. We will provide the proof precisely in Subsection \ref{subsection: conclusion fortement bolic}.

\begin{theo}\label{theo: theoremimportant bolicite forte}

Every \emph{group $G$ hyperbolic relatively to admissible subgroups} admits a metric $\hat{d}$ with the following properties.

\begin{itemize}
    \item $\hat{d}$ is $G$-invariant, i.e. $\hat{d}(g.x,g.y)=\hat{d}(x,y)$, for all $g,x,y \in G$,
    
    \item $\hat{d}$ is uniformly locally finite and quasi-isometric to the word metric,
    
    \item the metric space $(G,\hat{d})$ is weakly geodesic and strongly bolic.
\end{itemize}
\end{theo}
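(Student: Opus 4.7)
The plan is to imitate the construction of Section \ref{section : bolicité forte pour gp hyperbolique} on the coned-off graph $X_c$, with two modifications dictated by the presence of infinite-valence cone vertices. First, the ``points'' that play the role of parameters in $d_\varepsilon^a$ must be replaced by \emph{masks}: finitely supported probability measures on cosets $gP_i$, of uniformly bounded diameter for the parabolic $d_b$, as constructed by Chatterji and Dahmani in \cite{ChatterjiDahmani}. Second, for each vertex $a \in X_c$ one must attach a pseudo-metric $d^a$ that mixes the hyperbolic angle pseudo-metric of Proposition \ref{Proposition: angle for hyperbolic metric space} applied in $X_c$ (seeing the hyperbolic directions) with the admissible parabolic metric $d_b$ of Definition \ref{definition: lesbons parabolic} evaluated on the masks above $a$ (seeing the parabolic directions). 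Both pieces are manifestly $G$-equivariant, so their combination is $G$-equivariant as well.

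With such a family $(d^a)_{a \in X_c}$ in hand, I would then set
\[
\hat{d}(x,y) := \sum_{a \in X_c} d^a(x,y)^p + C',
\]
for $p$ chosen above the exponential growth rate of $X_c$ and $C'$ an additive defect taking care of the coarse triangle inequality. The finiteness of the sum and the two-sided estimate $\frac{1}{A}\, d'_{>M}(x,y) - B \leq \hat{d}(x,y) \leq A\, d'_{>M}(x,y) + B$ would be obtained by a counting argument substituting the (missing) local finiteness of $X_c$ with the uniform finiteness of cones (Proposition \ref{cardinal cones}), combined with a comparison between the pseudo-metrics $d^a$ at infinite-valence vertices and the angles of the coned-off graph. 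Plugging this into the distance formula in the form of Corollary \ref{corollary : reformulation de la formule de la distance} gives the quasi-isometry to the word metric, uniform local finiteness, and $G$-invariance. The coarse triangle inequality for $\hat{d}$ (hence the fact that it is a genuine metric) would follow by choosing a normal-form triangle as in Theorem \ref{formenormaledestriangles}, partitioning the index $a$ according to which side realizes the projection, and running on each side the computation of Proposition \ref{proposition : D est faiblement géodésique}; the same partition gives weak geodesicity.

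For strong-$B1$ I would follow the scheme of Theorem \ref{theorem : dchapeau est fortement B1}: fix four points $x_1,x_2,y_1,y_2$ with $d_G(x_i,y_j) \geq R$ and $d_G(x_1,x_2), d_G(y_1,y_2) \leq r$, split $X_c$ according to the projection onto a geodesic $[x_1,y_1]_c$, and estimate each partial sum by a term exponentially small in $R$. At vertices $a$ of finite valence the computation is parallel to the hyperbolic one, whereas at cone vertices the strong-$B1$ clause of Definition \ref{definition: lesbons parabolic} provides exactly the control of $d_b$-variation of the masks that is needed to absorb the perturbation $d_G(x_1,x_2), d_G(y_1,y_2) \leq r$.

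The main obstacle will be the weak-$B2'$ condition. In the purely hyperbolic case of Section \ref{section : bolicité forte pour gp hyperbolique} this came almost for free from hyperbolicity together with weak geodesicity, via \cite{BucherKarlssonDefBolicSapces}, but a mask-valued midpoint is a genuinely two-dimensional object: one has to interpolate simultaneously along the hyperbolic skeleton of $X_c$ \emph{and} inside every cone through which the midpoint passes. This is precisely why admissibility requires not only weak-$B2'$ on the parabolics but also the $\eta$-peakless and $\eta$-strongly-convex strengthenings of Definitions \ref{definition: peakless} and \ref{definition: strongly convex}: peaklessness is exactly what is needed to propagate the inequality $\hat{d}(m(x,y),z) \leq \max(\hat{d}(x,z),\hat{d}(y,z)) + 2\eta$ through the mask over a cone vertex, while strong convexity, in its non-symmetric form, yields the quantitative decay $\hat{d}(m(x,y),z) \leq N - p$ required by the third clause of weakly-$B2'$. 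Once these two ingredients are available, the proof of weak-$B2'$ for $\hat{d}$ should reduce to estimates of the same shape as Proposition \ref{proposition : D est faiblement géodésique}, performed separately on the hyperbolic skeleton and on each traversed cone, then glued along the normal-form triangle.
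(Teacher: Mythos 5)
Your scaffolding is mostly right: you correctly identify masks as the engine, uniformly finite cones as the substitute for local finiteness, $d'_{>M}$ (via Corollary \ref{corollary : reformulation de la formule de la distance}) as the benchmark for quasi-isometry, the normal-form triangles of Theorem \ref{formenormaledestriangles} as the tool for the coarse triangle inequality, the split-by-projection scheme for strong-$B1$, and the role of peaklessness and strong convexity in propagating weak-$B2'$ through cones. But the formula
\[
\hat{d}(x,y) = \sum_{a \in X_c} d^a(x,y)^p + C'
\]
cannot work as stated, and this is not a technicality but the central difficulty of the relatively hyperbolic case. At a cone vertex $a$ on a geodesic $[x,y]_c$, the contribution $d^a(x,y)$ (i.e.\ the $d_b$-distance between the masks of $x$ and $y$ at $a$) can itself be comparable to $d_G(x,y)$, because the parabolics are unbounded and the geodesic may traverse them deeply. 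Raising such a contribution to the $p$-th power with $p>1$ and summing over the (linearly many) cone vertices crossed by $[x,y]_c$ produces a quantity growing like $d_G(x,y)^p$, so the sum is not quasi-isometric to the word metric. Worse, for strong-$B1$ the difference $|d^a(x_1,y_1)^p - d^a(x_2,y_1)^p|$ at such a vertex is of order $p\,d^a(x_1,y_1)^{p-1}\cdot|d^a(x_1,y_1)-d^a(x_2,y_1)|$, which does not go to zero as $R\to\infty$; the perturbation $d_G(x_1,x_2)\leq r$ gets amplified by the factor $d^a(x_1,y_1)^{p-1}$. The paper's fix is the piecewise function $\theta$ of Definition \ref{definition: la fonction theta}, which behaves like $t^P$ for small $t$ (giving the exponential summability needed for convergence and for the ``far'' vertices) but switches to the linear $t\mapsto t_0^{P-1}t$ for $t\geq t_0$ (so that, on each cone vertex the geodesic passes deeply through, the metric $D=\sum_a\theta(d_a)$ contributes $t_0^{P-1}d_a(x,y)$, which scales linearly and for which strong-$B1$ reduces, by Proposition \ref{proposition: fortement B1 dans les paraboliques}, to strong-$B1$ of the admissible $d_b$). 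Without some analogue of $\theta$, both the quasi-isometry and strong-$B1$ fail.

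A second, smaller divergence from the paper: at finite-valence vertices $a$ of $X_c$ the paper sets $d_a(x,y):=\|\mu_x(a)\Delta\mu_y(a)\|$ rather than appealing to the visual pseudo-metric $d_\varepsilon^a$ of Proposition \ref{Proposition: angle for hyperbolic metric space} evaluated in $X_c$. This is not merely cosmetic: the exponential-decay estimates that make the sum converge are phrased against $d'_{(2000\delta)^2}$ (word-distance plus large angles), not against $d_c$, because $X_c$ is not locally finite and Proposition \ref{Proposition: angle for hyperbolic metric space} gives no decay in that direction; the mask-difference metric inherits exactly the right decay through Proposition \ref{proposition : la difference de deux masks est controlé par l'angle} and Theorem \ref{theo: theo stylé sur les barycentres et les masques}, and automatically agrees, up to constants, with the cone-vertex contribution on adjacent vertices (Proposition \ref{proposition: l'angle est borné par la distance bolic}), which is what makes the gluing along a geodesic coherent. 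Your hybrid $d^a$ would have to reproduce this coherence by hand.
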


We show in the remainder of this section that CAT(0) groups are admissible.

We prove here that $(\mathbb{R}^{n},\ell^{2})$ is $0$-peakless. In what follows, 
$||.||$ will denote the Euclidean norm and $d_{\ell^{2}}$ the Euclidean distance.

\begin{lem}\label{lemme: peakless euclidien}
For all $x,y,z \in \mathbb{R}^{n}$, for all $t \in [0,1]$, we have:

$$d_{\ell^{2}}(z,(1-t)x+ty)\leq \max(d_{\ell^{2}}(z,x),d_{\ell^{2}}(z,y)).$$

\end{lem}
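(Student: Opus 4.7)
The plan is to unfold the convex combination inside the norm and apply the triangle inequality, reducing everything to the elementary fact that a convex combination of two real numbers is bounded by their maximum. Writing $w := (1-t)x + ty$, I would first note the algebraic identity
$$ z - w = z - (1-t)x - ty = (1-t)(z-x) + t(z-y), $$
which expresses $z - w$ itself as a convex combination of the vectors $z-x$ and $z-y$.

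Next I would apply the triangle inequality for $\|\cdot\|$ together with positive homogeneity to obtain
$$ \|z - w\| \leq (1-t)\|z-x\| + t\|z-y\|. $$
Setting $M = \max(\|z-x\|, \|z-y\|) = \max(d_{\ell^2}(z,x), d_{\ell^2}(z,y))$, the right-hand side is $(1-t)\|z-x\| + t\|z-y\| \leq (1-t)M + tM = M$, since each of $\|z-x\|$ and $\|z-y\|$ is at most $M$ and the coefficients $1-t, t$ are nonnegative and sum to $1$. This yields exactly the desired inequality $d_{\ell^2}(z, (1-t)x + ty) \leq \max(d_{\ell^2}(z,x), d_{\ell^2}(z,y))$.

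There is no real obstacle here: the statement is essentially the convexity of the Euclidean norm, and the whole proof fits in two lines. The only thing worth emphasizing in context is that the bound holds with constant $0$ (no additive error), which is what justifies calling $(\mathbb{R}^n, \ell^2)$ \emph{$0$-peakless} in the sense of Definition \ref{definition: peakless}, with the canonical linear parametrization $t \mapsto (1-t)x + ty$ serving as the required (genuine) geodesic.
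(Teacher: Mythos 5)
Your proof is correct and is essentially the paper's argument: the paper first translates so that $z=0$ and then invokes the same convexity fact $\|(1-t)x+ty\|\leq\max(\|x\|,\|y\|)$, while you carry $z$ along by writing $z-w=(1-t)(z-x)+t(z-y)$ before applying the triangle inequality and homogeneity — a purely cosmetic difference.
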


\begin{proof}
    By applying a translation, we can suppose that $z=0$.
Therefore, it is equivalent to prove that for all $t \in [0,1]$, we have:
$$||(1-t)x+ty || \leq \max(||x||,||y||). $$

This is true for every norm, then this proves the lemma.
\end{proof}

As a corollary $CAT(0)$ spaces are $0$-peakless.

\begin{corollary}\label{corollary: espace cat(0) sont peakless}

$CAT(0)$ spaces are peakless.
    
\end{corollary}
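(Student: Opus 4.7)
The plan is to deduce the corollary directly from the convexity of the distance function in $CAT(0)$ spaces, which is the standard consequence of the $CAT(0)$ comparison inequality applied to triangles degenerating to a segment and a point. More precisely, let $(X,d)$ be a $CAT(0)$ space, which is automatically geodesic (and therefore $0$-weakly geodesic) with a unique geodesic between any two points. Fix $x,y,z \in X$ and let $\sigma : [0,1] \to X$ be the geodesic joining $x$ to $y$, parametrized proportionally to arc-length. The well-known convexity of the distance to a point in a $CAT(0)$ space (see \cite[Chapter II.2, Proposition 2.2]{BridsonHaefliger}) yields that the function
\[
f : t \longmapsto d(z,\sigma(t))
\]
is convex on $[0,1]$.

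From this I just need the elementary fact that any convex function $f : [0,1] \to \mathbb{R}$ satisfies $f(t) \leq \max(f(0), f(1))$ for all $t \in [0,1]$, which follows by writing $t = (1-t)\cdot 0 + t \cdot 1$ and applying the convexity inequality. Applied to $f(t) = d(z,\sigma(t))$, this gives
\[
d(z,\sigma(t)) \leq \max(d(z,x), d(z,y))
\]
for every $t \in [0,1]$, which is exactly the condition of Definition \ref{definition: peakless} with $\eta_{1} = \eta_{2} = 0$. Hence $CAT(0)$ spaces are (even $0$-)peakless, with the natural geodesic realizing the inequality.

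There is essentially no obstacle in this argument: the only substantive input is the convexity of $t \mapsto d(z,\sigma(t))$, which is a foundational property of $CAT(0)$ spaces. Note that this argument in fact supersedes Lemma \ref{lemme: peakless euclidien}, which is the special case $X = \mathbb{R}^{n}$, and explains why that lemma was stated just beforehand as the prototype of the phenomenon. One may equivalently observe that Lemma \ref{lemme: peakless euclidien} applied in the model Euclidean comparison triangle, together with the $CAT(0)$ inequality, gives the same conclusion without invoking convexity of the distance to a point as a black box.
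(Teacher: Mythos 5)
Your proof is correct. Your primary argument invokes convexity of the distance function $t \mapsto d(z,\sigma(t))$ on a $CAT(0)$ space (a black-box citation of \cite[II.2.2]{BridsonHaefliger}) and then applies the elementary observation that a convex function on $[0,1]$ is bounded by the maximum of its endpoint values. The paper instead takes the more hands-on route that you yourself describe in your closing remark: choose a Euclidean comparison triangle $\tilde{x},\tilde{y},\tilde{z}$, apply the $CAT(0)$ comparison inequality to reduce to the comparison point, and then invoke Lemma \ref{lemme: peakless euclidien} (the Euclidean case) to conclude. The two routes are equivalent in content — convexity of $d(z,\cdot)$ along geodesics is itself proved in Bridson--Haefliger by exactly this comparison-triangle argument — so the difference is one of packaging: the paper proves the Euclidean case by hand first (since it reuses the same template, including a sharper quantitative version, for Lemma \ref{lemme: strongly convex euclidien} and Corollary \ref{corollary: les espaces Cat(0) sont fortement convex}), while your version is shorter but outsources the key convexity fact. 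Both establish the stronger conclusion that $CAT(0)$ spaces are $0$-peakless.
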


\begin{proof}
Let $(X,d)$ a $CAT(0)$ space. Let $x,y,z \in X$ and we consider $\tilde{x},\tilde{y},\tilde{z} \in (\mathbb{R}^{2},d_{\ell^{2}})$ a comparison triangle.

Let $p$ be a point on the geodesic between $x$ and $y$ and $\tilde{p}$ a comparison point, i.e. a point on the geodesic between $\tilde{x}, \tilde{y}$ such that $d_{\ell^{2}}(\tilde {p},\tilde{x})=d(p,x)$ and $d_{\ell^{2}}(\tilde {p},\tilde{y})=d(p,y)$.

We have:
$$\begin{aligned}
d(z,p) &\leq d_{\ell^{2}}(\tilde{z},\tilde{p})~\text{(according to the $CAT(0)$ inequality)}\\
&  \leq \max(d_{\ell^{2}}(\tilde{z},\tilde{x}),d_{\ell^{2}}(\tilde{z},\tilde{y})) ~\text{(according to Lemma \ref{lemme: peakless euclidien})}\\
& \leq \max(d_{\ell^{2}}(z,x),d_{\ell^{2}}(z,y))~\text{(according to the definition of a comparison triangle)}.
\end{aligned}$$

\end{proof}

We prove here that $(\mathbb{R}^{n},\ell^{2})$ is $0$-strongly-convex.
\begin{lem}\label{lemme: strongly convex euclidien}
For all $\eta \in (0,1)$, for all $ \varepsilon \in (0,1)$, there exists $ \delta \in (0, 1)$ such that for all $x,y,z \in \mathbb{R}^{n}$ with  $d_{\ell^{2}}(x,y) \geq \varepsilon \max(d_{\ell^{2}}(x,z),d_{\ell^{2}}(z,y))$, we have for all $t \in (\eta,1-\eta)$:
$$d_{\ell^{2}}(z,(1-t)x+ty)\leq \max(d_{\ell^{2}}(x,z),d_{\ell^{2}}(z,y))(1-\delta). $$
\end{lem}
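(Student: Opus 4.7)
The plan is to reduce to the case $z=0$ by translation invariance, and then exploit the parallelogram identity in Euclidean space. This is the standard route to uniform convexity of the $\ell^2$ norm; the constant $\delta$ will be explicit and depends only on $\eta$ and $\varepsilon$, not on the dimension $n$.

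First, I would translate so that $z=0$. The statement to prove becomes: if $\|x\|\le N$, $\|y\|\le N$ and $\|x-y\|\ge \varepsilon N$, where $N:=\max(\|x\|,\|y\|)$, then for $t\in(\eta,1-\eta)$,
\[
\|(1-t)x+ty\|\le (1-\delta)N.
\]
The case $N=0$ is trivial (both $x$ and $y$ vanish), so I may assume $N>0$.

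The key identity is the polarization formula
\[
\|(1-t)x+ty\|^2=(1-t)\|x\|^2+t\|y\|^2-t(1-t)\|x-y\|^2,
\]
which is a direct computation from $2\langle x,y\rangle=\|x\|^2+\|y\|^2-\|x-y\|^2$. Applying the bounds $\|x\|^2,\|y\|^2\le N^2$ and $\|x-y\|^2\ge\varepsilon^2N^2$ yields
\[
\|(1-t)x+ty\|^2\le N^2\bigl(1-t(1-t)\varepsilon^2\bigr).
\]

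For $t\in[\eta,1-\eta]$ (which is nonempty exactly when $\eta<1/2$; for $\eta\ge 1/2$ the statement is vacuous), the function $t\mapsto t(1-t)$ attains its minimum at the endpoints, so $t(1-t)\ge\eta(1-\eta)>0$. Hence
\[
\|(1-t)x+ty\|\le N\sqrt{1-\eta(1-\eta)\varepsilon^2},
\]
and it suffices to set $\delta:=1-\sqrt{1-\eta(1-\eta)\varepsilon^2}\in(0,1)$. There is no real obstacle here; the only thing to note is that the polarization identity gives the quadratic improvement needed to extract a \emph{multiplicative} gain $(1-\delta)$ rather than the merely additive bound one obtains from the triangle inequality, and that this gain is uniform in $x,y,z$ because both sides scale homogeneously with $N$.
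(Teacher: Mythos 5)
Your proof is correct and follows essentially the same route as the paper: translate $z$ to the origin and apply the median (parallelogram/polarization) identity to extract a multiplicative gain, then set $\delta$ accordingly. The only difference is that you correctly track $\varepsilon^{2}$ in the squared bound where the paper writes $\varepsilon$ (a harmless slip, since $\varepsilon^{2}\le\varepsilon$ on $(0,1)$), and you explicitly flag the vacuous case $\eta\ge\tfrac{1}{2}$.
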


\begin{proof}
Let $ \varepsilon \in (0,1)$, by applying a translation we assume that $z=0$. Let us assume that $||x||\leq ||y|| $ and $ ||x-y||\geq \varepsilon ||y|| $.

We will use the following variation of the median identity, for all $t\in (0,1)$:
$$||(1-t)x+ty||^{2}+t(1-t)||x-y||^{2}=(1-t)||x||^{2}+t||y||^{2}.$$

We get the following:
$$\begin{aligned}
||(1-t)x+ty||^{2}&=(1-t)||x||^{2}+t||y||^{2}-t(1-t)||x-y||^{2}\\
& \leq (1-t) ||y||^{2}+t||y||^{2}-\varepsilon t(1-t)||y||^{2}\\
& \leq (1- \varepsilon t(1-t))||y||^{2}\\
& \leq (1-\varepsilon \eta (1-\eta)) ||y||^{2}.
\end{aligned}$$

Therefore:
$$ ||(1-t)x+ty|| \leq \sqrt{1-\varepsilon \eta (1-\eta)} ||y||,$$

and since $\sqrt{1-\varepsilon \eta (1-\eta)} \in (0,1)$, we set:
 $$\delta:= 1- \sqrt{1-\varepsilon \eta (1-\eta)} ,$$

 and we get the desired result.

\end{proof}

As a corollary, $CAT(0)$ spaces are strongly-convex.

\begin{corollary}\label{corollary: les espaces Cat(0) sont fortement convex}
$CAT(0)$ spaces are strongly-convex.
    
\end{corollary}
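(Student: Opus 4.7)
The plan is to mirror the proof of Corollary \ref{corollary: espace cat(0) sont peakless} almost verbatim, replacing the appeal to Lemma \ref{lemme: peakless euclidien} with an appeal to Lemma \ref{lemme: strongly convex euclidien}. The content is: a CAT(0) space inherits any "triangle inequality" satisfied by Euclidean triangles that can be phrased in terms of distances to vertices, via comparison triangles in $\mathbb{R}^2$. Since strong convexity is of exactly this form, it transfers automatically.

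Concretely, given $\eta \in (0, 1/2)$ and $\varepsilon \in (0, 1)$, I would take the $\delta \in (0,1)$ produced by Lemma \ref{lemme: strongly convex euclidien} for these parameters. Let $(X, d)$ be a CAT(0) space and fix $x, y, z \in X$ with $d(x, z) \leq N$, $d(z, y) \leq N$ and $d(x, y) > \varepsilon N$. CAT(0) spaces are uniquely geodesic, so let $\sigma : [0, d(x,y)] \to X$ be the affinely parametrized geodesic from $x$ to $y$, reparametrized to $[0,1]$. Construct a comparison triangle $\tilde{x}, \tilde{y}, \tilde{z} \in \mathbb{R}^2$ and let $\tilde{\sigma}(t) = (1-t)\tilde{x} + t \tilde{y}$, so that $\tilde{\sigma}(t)$ is precisely the comparison point of $\sigma(t)$.

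The hypotheses of Lemma \ref{lemme: strongly convex euclidien} are met in $\mathbb{R}^2$: one has $d_{\ell^2}(\tilde{x}, \tilde{z}) = d(x, z) \leq N$, $d_{\ell^2}(\tilde{z}, \tilde{y}) = d(z, y) \leq N$, and
$$ d_{\ell^2}(\tilde{x}, \tilde{y}) = d(x, y) > \varepsilon N \geq \varepsilon \max\bigl(d_{\ell^2}(\tilde{x}, \tilde{z}), d_{\ell^2}(\tilde{z}, \tilde{y})\bigr). $$
Hence for every $t \in [\eta, 1-\eta]$,
$$ d_{\ell^2}(\tilde{z}, \tilde{\sigma}(t)) \leq \max\bigl(d_{\ell^2}(\tilde{x}, \tilde{z}), d_{\ell^2}(\tilde{z}, \tilde{y})\bigr)(1 - \delta) \leq N(1 - \delta). $$
Applying the CAT(0) inequality $d(z, \sigma(t)) \leq d_{\ell^2}(\tilde{z}, \tilde{\sigma}(t))$ yields $d(z, \sigma(t)) \leq N(1-\delta)$, which is precisely the strong-convexity condition with additive constant $\eta_2 = 0$ (and $\eta_1 = 0$, since the CAT(0) geodesic $\sigma$ is an honest geodesic).

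I do not anticipate a real obstacle here: the only minor point is to observe that the Euclidean hypothesis $d_{\ell^2}(\tilde{x}, \tilde{y}) \geq \varepsilon \max$ follows from the CAT(0) hypothesis $d(x,y) > \varepsilon N$ because $N$ dominates both $d(x,z)$ and $d(z,y)$. The same choice of $\delta$ works for every $N$, so the estimate is uniform in $N$, matching Definition \ref{definition: strongly convex}.
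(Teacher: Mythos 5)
Your argument is correct and is essentially the same as the paper's: construct a comparison triangle in $\mathbb{R}^2$, verify the hypotheses of Lemma~\ref{lemme: strongly convex euclidien} for the comparison points, and transfer the conclusion back via the $CAT(0)$ comparison inequality. The only difference is cosmetic — you spell out the passage from the $N$-formulation of Definition~\ref{definition: strongly convex} to the hypothesis $d(x,y) \geq \varepsilon\max(d(x,z),d(z,y))$ of the Euclidean lemma, which the paper leaves implicit.
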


\begin{proof}

Let $\eta \in (0,1)$, $\varepsilon \in (0,1)$ and $x,y,z \in X$ such that $d(x,y) \geq \varepsilon \max (d(x,z),d(z,y)$.
We consider $\tilde{x},\tilde{y},\tilde{z} \in (\mathbb{R}^{2},d_{\ell^{2}})$ a comparison triangle for $x,y,z$ and we remark that this points satisfy the assumptions of Lemma \ref{lemme: strongly convex euclidien}.
Let $\sigma$ the geodesic in $X$ between $x$ and $y$ and $\tilde{\sigma}$ the geodesic between $\tilde{x}$ and $\tilde{y}$. For all $t \in [0,1]$, by definition of a comparison point, a comparison point for $\sigma(t)$ is $\tilde{\sigma}(t)$.

According to Lemma \ref{lemme: strongly convex euclidien}, there exists $\delta \in (0,1)$ such that for all $t \in (\eta,1-\eta)$, we have:
$$d_{\ell^{2}}(\tilde{z},\tilde{\sigma}(t))\leq \max(d_{\ell^{2}}(\tilde{x},\tilde{z}),d_{\ell^{2}}(\tilde{z},\tilde{y}))(1-\delta). $$

Therefore, we have:
$$\begin{aligned}
d(z,\sigma(t)) &\leq d_{\ell^{2}}(\tilde{z},\tilde{\sigma}(t))~\text{(according to the $CAT(0)$ inequality)}\\
&  \leq \max(d_{\ell^{2}}(\tilde{x},\tilde{z}),d_{\ell^{2}}(\tilde{z},\tilde{y}))(1-\delta) ~\text{(according to the previous remark)}\\
& \leq \max(d_{\ell^{2}}(x,z),d_{\ell^{2}}(z,y))(1-\delta) ~\text{(according to the definition of a comparison triangle)}.
\end{aligned}$$

\end{proof}

In order to prove that $CAT(0)$ groups are admissible, we will use barycenter of measures on $CAT(0)$ spaces.

\begin{proposition}\cite[Proposition 4.3.]{Sturmbarycentre}

Let $(X,d)$ be a $CAT(0)$ metric space and $\mu:= \sum_{x \in X}\mu_{x} \delta_{x}$ a finitely supported measure on $X$, with $\sum_{x \in X}\mu_{x}=1$ and the family of positive reals $(\mu_{x})_{x\in X}$ is almost zero.\\

The function $z \mapsto \sum_{x \in X} \mu_{x} d^{2}(z,x)$ admits a unique minimum, called its barycenter.

\end{proposition}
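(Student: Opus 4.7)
The plan is to run the standard variational argument built around the $CN$-inequality, which is the quadratic refinement of the $CAT(0)$ condition: for any three points $p, q, r \in X$ and the midpoint $m$ of the geodesic segment $[p,q]$,
$$d^{2}(r, m) \le \tfrac{1}{2}\, d^{2}(r, p) + \tfrac{1}{2}\, d^{2}(r, q) - \tfrac{1}{4}\, d^{2}(p, q).$$
I would first derive this by choosing a Euclidean comparison triangle $(\bar p, \bar q, \bar r)$ for $(p,q,r)$, computing the length of the median from $\bar r$ via the parallelogram identity in $\mathbb{R}^{2}$ (where the displayed inequality is an equality), and transferring it back to $X$ using the $CAT(0)$ inequality applied to the comparison point of $m$.

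Write $F(z) := \sum_{x} \mu_x\, d^{2}(z, x)$, which is a finite non-negative sum. Summing the $CN$-inequality against the weights $\mu_x$ gives the strict convexity estimate
$$F(m) \le \tfrac{1}{2}\, F(y_1) + \tfrac{1}{2}\, F(y_2) - \tfrac{1}{4}\, d^{2}(y_1, y_2)$$
for any $y_1, y_2 \in X$ with midpoint $m$. Uniqueness is then immediate: two distinct minimizers would yield a midpoint with strictly smaller value of $F$. For existence, I would take a minimizing sequence $(z_n)$ with $F(z_n) \to I := \inf_{X} F$ and apply the same estimate to $z_n, z_k$ and their midpoint $m_{n,k}$ to get
$$\tfrac{1}{4}\, d^{2}(z_n, z_k) \le \tfrac{1}{2}\, F(z_n) + \tfrac{1}{2}\, F(z_k) - F(m_{n,k}) \le \tfrac{1}{2}\, F(z_n) + \tfrac{1}{2}\, F(z_k) - I,$$
whose right-hand side tends to $0$. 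Thus $(z_n)$ is Cauchy, completeness produces a limit $z_{\ast} \in X$, and continuity of $F$ gives $F(z_{\ast}) = I$.

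The delicate point is the implicit completeness hypothesis on $X$: without it the Cauchy minimizing sequence need not converge in $X$, so I would read the statement under the standard convention that $CAT(0)$ spaces here are taken to be complete. Coercivity of $F$, which is visible from the lower bound $F(z) \ge \mu_{x_0}\, d^{2}(z, x_0)$ for any $x_0$ in the support, is used only implicitly to ensure that $I$ is finite and that the midpoint construction stays inside $X$, so the whole argument is driven by the $CN$-inequality and not by any compactness assumption on the support of $\mu$.
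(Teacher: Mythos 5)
Your argument is correct and is essentially the standard (Sturm's own) proof: derive the $CN$-inequality from a Euclidean comparison triangle via the parallelogram identity, sum it against the weights $\mu_x$ to obtain the uniform strict convexity of $F$, extract uniqueness directly and existence from the Cauchy property of a minimizing sequence plus completeness. The paper cites this result from Sturm without reproducing a proof, so there is no in-paper argument to contrast with; your reconstruction matches the cited source. Your observation that completeness is an implicit hypothesis is well taken — Sturm works with global NPC spaces, which are complete by definition, and the statement as transcribed in the paper silently adopts that convention.
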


To verify the last condition for admissible groups in the CAT(0) setting, we will use the $L^{1}$ Wasserstein metric, which we define here.

\begin{definition}\label{definition : metrique de wasserstein}\cite[Definition 4.1.]{Sturmbarycentre}

Let $(X,d)$ be a $CAT(0)$ space, let $\mu= \sum_{x \in X}\mu_{x} \delta_{x}$ and $\nu = \sum_{x \in X} \nu_{x} \delta_{x}$ be two finitely supported probability measures on $X$, where $(\mu_{x})_{x\in X}$ and $(\nu_{x})_{x\in X } $ are almost zero families of reals numbers such that $ \sum_{x \in X}\mu_{x}= \sum_{x \in X} \nu_{x}=1 $.\\

A coupling $\Pi$ between $\mu$ and $\nu$ is probability measure on $X^{2}$ such that the marginals are $ \mu$ and $\nu$, i.e $\Pi=\sum_{x,y \in X } \Pi_{x,y} \delta_{(x,y)}$ where
for all $x \in X$, $\sum_{y \in X}\Pi_{x,y}=\mu_{x}$ and for all $y \in X$, $\sum_{x \in X}\Pi_{x,y}=\nu_{y}$.
The cost of a transport plan $\Pi$ denoted by $c(\Pi)$ is defined as:
$c(\Pi):=\sum_{x,y \in X } \Pi_{x,y} d(x,y).$\\

The Wassertein distance between $ \mu $ and $\nu$ is defined as follow:
$$ d^{W}(p,q)=\inf \{  c(\Pi) : \Pi \text{ is a coupling between }\mu \text{ and } \nu \}.$$
    
\end{definition}

The following result, due to \cite{Sturmbarycentre}, allows us to control the difference between barycenters in terms of the Wasserstein distance.

\begin{theo}\label{theo: inegalite avec Wasserstein} \cite[Theorem 6.3]{Sturmbarycentre}

Let $(X,d)$ be a $CAT(0)$ metric space.

Let $\mu$, $\nu$ two finitely supported measures on $(X,d)$.
We have:
$$d(Bar(\mu),Bar(\nu))\leq d^{W}(\mu,\nu).$$

\end{theo}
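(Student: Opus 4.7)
The plan is to prove the inequality by combining the variance inequality for CAT(0) barycenters with a coupling argument and a four-point CAT(0) comparison, following the strategy of Sturm's original proof in \cite{Sturmbarycentre}.

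First, I would establish the variance inequality: for any finitely supported probability measure $\mu$ on $X$ and any $z \in X$,
$$V_\mu(z) - V_\mu(Bar(\mu)) \geq d^2(z, Bar(\mu)), \quad \text{where } V_\mu(z) := \sum_x \mu_x\, d^2(z, x).$$
This is a consequence of the fact that in a CAT(0) space, $V_\mu$ is $2$-uniformly convex along geodesics, which itself follows from the CAT(0) midpoint inequality $d^2(z, m(x,y)) \leq \tfrac{1}{2}d^2(z,x) + \tfrac{1}{2}d^2(z,y) - \tfrac{1}{4}d^2(x,y)$; the quantitative isolation of the minimum then yields the variance gap.

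Next, writing $b_\mu := Bar(\mu)$ and $b_\nu := Bar(\nu)$ and applying the variance inequality twice (to $\mu$ at $z = b_\nu$, and to $\nu$ at $z = b_\mu$), summation gives
$$V_\mu(b_\nu) + V_\nu(b_\mu) - V_\mu(b_\mu) - V_\nu(b_\nu) \geq 2\, d^2(b_\mu, b_\nu).$$
Fix a coupling $\Pi$; using $\mu_x = \sum_y \Pi_{x,y}$ and $\nu_y = \sum_x \Pi_{x,y}$, the left-hand side rewrites as
$$\sum_{x,y} \Pi_{x,y}\bigl[d^2(b_\nu, x) + d^2(b_\mu, y) - d^2(b_\mu, x) - d^2(b_\nu, y)\bigr].$$
The key geometric step is to bound each bracket by $2\, d(b_\mu, b_\nu)\, d(x, y)$. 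Via Reshetnyak's quadrilateral comparison, the four side-distances of the quadrilateral $(b_\mu, x, b_\nu, y)$ can be realized isometrically in the Euclidean plane, where the bracketed expression equals $2\langle \bar b_\mu - \bar b_\nu,\, \bar x - \bar y\rangle$ and is dominated by $2\, d(b_\mu, b_\nu)\, d(x, y)$ after Cauchy--Schwarz. Combining, $2\, d^2(b_\mu, b_\nu) \leq 2\, d(b_\mu, b_\nu) \cdot c(\Pi)$, hence $d(b_\mu, b_\nu) \leq c(\Pi)$; infimizing over couplings yields the conclusion.

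The main obstacle is the four-point inequality: Reshetnyak's comparison preserves the four side-distances isometrically but only controls the diagonals from below in the Euclidean subembedding, so some care is required to ensure the Cauchy--Schwarz bound transfers with the correct sign. An alternative, more robust route is displacement interpolation: parameterize each geodesic $\gamma_{x,y}$ from $x$ to $y$ at constant speed on $[0,1]$, form the interpolating measures $\mu_t := \sum_{x,y} \Pi_{x,y}\, \delta_{\gamma_{x,y}(t)}$, and show that $t \mapsto Bar(\mu_t)$ is a Lipschitz path of length at most $c(\Pi)$ by pairwise application of CAT(0) convexity. Either route reduces the statement to elementary Euclidean/Hilbertian computations leveraged through comparison.
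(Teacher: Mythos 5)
This statement is stated in the paper as a citation to Sturm (\cite[Theorem 6.3]{Sturmbarycentre}) with no proof given, so there is no in-paper argument to compare against; the review below evaluates the proposal on its own terms.

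Your overall architecture is sound and is essentially Sturm's: the variance inequality for CAT(0) barycenters, applied twice and summed, plus a coupling-based rewrite, reduces the claim to the pointwise \emph{four-point bound}
\[
d^{2}(b_{\nu},x)+d^{2}(b_{\mu},y)-d^{2}(b_{\mu},x)-d^{2}(b_{\nu},y)\;\le\;2\,d(b_{\mu},b_{\nu})\,d(x,y).
\]
Steps one through three are correct and cleanly stated. The gap is exactly where you flag it, and it is not resolved by either of the routes you offer. Reshetnyak's majorization embeds the cyclic quadrilateral $(b_{\mu},x,b_{\nu},y)$ into $\mathbb{R}^{2}$ preserving the four side lengths (so the bracket is preserved, since it only involves sides) and making the map $1$-Lipschitz; but that last property gives $|\bar b_{\mu}-\bar b_{\nu}|\ge d(b_{\mu},b_{\nu})$ and $|\bar x-\bar y|\ge d(x,y)$ for the diagonals. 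Euclidean Cauchy--Schwarz then bounds the bracket by $2|\bar b_{\mu}-\bar b_{\nu}||\bar x-\bar y|$, which sits on the \emph{wrong} side of $2\,d(b_{\mu},b_{\nu})\,d(x,y)$. So the argument does not close. The displacement-interpolation alternative does not sidestep this either: when you try to show $t\mapsto Bar(\mu_{t})$ is $c(\Pi)$-Lipschitz via the same variance-inequality-twice argument, the resulting sum has exactly the same four-point bracket structure, now applied to $(b_{s},\gamma_{x,y}(t),b_{t},\gamma_{x,y}(s))$, and you need the same inequality again.

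The inequality you need is the quadruple comparison of Berg and Nikolaev (the ``Cauchy--Schwarz inequality'' for the quasilinearization $\langle\overrightarrow{pq},\overrightarrow{rs}\rangle:=\tfrac12(d^{2}(p,s)+d^{2}(q,r)-d^{2}(p,r)-d^{2}(q,s))$), which holds in every CAT(0) space and in fact characterizes CAT(0) among geodesic spaces. With $p=b_{\mu}$, $q=b_{\nu}$, $r=x$, $s=y$ it gives precisely the bound on your bracket. So the fix is small: replace the Reshetnyak step by a direct invocation of the Berg--Nikolaev inequality (citing it, or deriving it from the $2$-strong convexity of $t\mapsto d^{2}(\sigma(t),z)$ along a geodesic $\sigma$ and the first-variation formula). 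With that substitution your proof is correct and is the standard proof of this fact.
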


\begin{lem}\label{lemme: mesuresintersectentdifbarycentre}
Let $(X,d)$ be a $CAT(0)$ metric space. Let $C>0$. Let $\mu, \nu$ two finitely supported probability measure on $X$ of support diameter bounded by $C$ and let assume further that $\text{supp}(\mu) \cap \text{supp}(\nu)\neq \emptyset$.

Then:
$$ d (Bar(\mu),Bar(\nu)) \leq 2 C|| \mu \Delta \nu ||. $$

\end{lem}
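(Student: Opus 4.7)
The plan is to deduce this from Theorem~\ref{theo: inegalite avec Wasserstein}, which gives $d(Bar(\mu),Bar(\nu)) \leq d^{W}(\mu,\nu)$, so it is enough to bound the $L^{1}$ Wasserstein distance $d^{W}(\mu,\nu)$ by $2C\|\mu\Delta\nu\|$. To do this I would exhibit an explicit coupling between $\mu$ and $\nu$ whose cost is controlled by $\|\mu-\nu\|$ times the diameter of the union of the two supports.

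Writing $\mu=\sum_{x}\mu_{x}\delta_{x}$ and $\nu=\sum_{x}\nu_{x}\delta_{x}$, I set $a_{x}=(\mu_{x}-\nu_{x})_{+}$ and $b_{x}=(\nu_{x}-\mu_{x})_{+}$, the excess masses on each side. A direct computation using $\mu_{x}+\nu_{x}=|\mu_{x}-\nu_{x}|+2\min(\mu_{x},\nu_{x})$ gives $M:=\sum_{x}a_{x}=\sum_{x}b_{x}=\tfrac{1}{2}\|\mu-\nu\|=\tfrac{1}{2}\|\mu\Delta\nu\|$. If $M=0$ then $\mu=\nu$ and there is nothing to prove; otherwise, I consider the transport plan
$$\Pi_{x,y}=\min(\mu_{x},\nu_{x})\,\mathbf{1}_{x=y}+\frac{a_{x}b_{y}}{M},$$
whose marginals are checked immediately to be $\mu$ and $\nu$. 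All the common mass $\min(\mu_{x},\nu_{x})$ is placed on the diagonal, and only the excess mass $M$ needs to be transported off-diagonally.

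The key observation, and the only place where the hypothesis $\mathrm{supp}(\mu)\cap\mathrm{supp}(\nu)\neq\emptyset$ enters, is that picking a common point $p$ in the intersection and applying the triangle inequality yields $d(x,y)\leq d(x,p)+d(p,y)\leq 2C$ for every $x\in\mathrm{supp}(\mu)$ and every $y\in\mathrm{supp}(\nu)$. Since $a_{x}b_{y}>0$ only when $x\in\mathrm{supp}(\mu)$ and $y\in\mathrm{supp}(\nu)$, the cost of $\Pi$ reduces to the off-diagonal contribution, bounded by $\tfrac{2C}{M}\sum_{x,y}a_{x}b_{y}=\tfrac{2C}{M}\cdot M^{2}=2CM=C\|\mu\Delta\nu\|$. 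Combining with Theorem~\ref{theo: inegalite avec Wasserstein} gives the claimed inequality (in fact with the sharper constant $C$ in place of $2C$).

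There is no genuine obstacle in this argument; the construction of the coupling and the cost estimate are routine. The only conceptual point is the diameter bound on $\mathrm{supp}(\mu)\cup\mathrm{supp}(\nu)$, which is exactly what the intersection hypothesis provides. Without this hypothesis, transports between the two excess parts could a priori traverse an arbitrarily long distance in $X$, and the constant would depend on the diameters of the ambient space rather than on $C$ alone, so this is where the assumption is essential.
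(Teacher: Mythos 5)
Your proof is correct and follows the same route as the paper's: bound the Wasserstein distance by an explicit coupling and apply Sturm's inequality (Theorem~\ref{theo: inegalite avec Wasserstein}). The difference is in the execution of the coupling: the paper isolates an initial block of atoms on which $\mu$ and $\nu$ have literally identical weights and transports the remainder by a normalized product measure, whereas you use the canonical decomposition into common part $\sum_x\min(\mu_x,\nu_x)\delta_x$ plus excesses $a_x=(\mu_x-\nu_x)_+$, $b_x=(\nu_x-\mu_x)_+$. Your version is cleaner — it handles the overlap of supports without any ad hoc reindexing, the marginal check is immediate, and the identity $M=\tfrac12\|\mu\Delta\nu\|$ is exact — and it yields the sharper constant $C\|\mu\Delta\nu\|$ in place of $2C\|\mu\Delta\nu\|$, which of course still implies the stated bound.
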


\begin{proof}
Let $\mu$ and $\nu$ be two finitely supported measure of support of diameter bounded by $C$. We assume that $\text{supp}(\mu) \cap \text{supp}(\nu)\neq \emptyset$.
Therefore, we can write:
\begin{itemize}
    \item $\mu=\sum_{i=1}^{p} \mu_{i} \delta_{x_{i}} + \sum_{i>p}^{n}\mu_{i}\delta_{x_i}$ with $\sum_{i=1}^{p} \mu_{i}+\sum_{i>p}^{n}\mu_{i}=1$,

    \item $\nu=\sum_{i=1}^{p} \mu_{i} \delta_{x_{i}} + \sum_{i>p}^{m}\nu_{i}\delta_{y_i}$ with $\sum_{i=1}^{p} \mu_{i}+\sum_{i>p}^{m}\nu_{i}=1$,
\end{itemize}
if we allow the $x_{i}$ and the $y_{i}$ to not all be distinct.

We set the following transport plan on $X\times X$:
$$ \Pi= \sum_{i=1}^{p}\mu_{i}\delta_{(x_{i},x_{i})}+\frac{1}{1-\sum_{i=1}^{p}\mu_{i}}\sum_{i>p}^{n}\sum_{j>p}^{m}\mu_{i}\nu_{j}\delta_{(x_{i},y_{j})}.$$

An easy computation shows that $\Pi$ is in fact a transport plan between $\mu$ and $\nu$.

We set $a:= \sum_{i=1}^{p}\mu_{i}$.

An easy computation shows that:
$$ || \mu \Delta \nu ||=2-a$$

 With another easy computation, we get:

$$c(\Pi)\leq 2C \frac{1}{1-a}\sum_{i>p}^{n}\sum_{j>p}^{m}\mu_{i}\nu_{j} \leq 2C(1-a)\leq 2C(2-a),$$
thus $c(\Pi) \leq 2C|| \mu \Delta \nu ||$.

According to Theorem \ref{theo: inegalite avec Wasserstein} and Definition \ref{definition : metrique de wasserstein}, we get that:
$$ d (Bar(\mu),Bar(\nu)) \leq 2 C|| \mu \Delta \nu ||. $$

\end{proof}

We show here that $CAT(0)$ groups are admissible.

\begin{proposition}\label{proposition: lesgroupes Cat(0) sont admissibles}
$CAT(0)$ groups are admissible.
\end{proposition}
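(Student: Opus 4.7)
The plan is to construct $d_b$ on $\mathrm{Proba}_C(P)$ by pulling back the $CAT(0)$ metric through a barycenter map. Fix a $CAT(0)$ space $Y$ on which $P$ acts properly, cocompactly and isometrically, and fix a basepoint $o \in Y$; let $D_0>0$ be such that $P\cdot o$ is $D_0$-dense in $Y$. For $\mu = \sum_g \mu_g \delta_g \in \mathrm{Proba}_C(P)$, push $\mu$ forward by the orbit map $g \mapsto g\cdot o$ to obtain a finitely supported probability measure on $Y$, and set $B(\mu) := \mathrm{Bar}(\mu)$, its Sturm barycenter in $Y$. Then define
\[
d_b(\mu,\nu) \;:=\; d_Y\bigl(B(\mu),B(\nu)\bigr).
\]
This is a $P$-invariant pseudo-metric on $\mathrm{Proba}_C(P)$, since the barycenter is equivariant under isometric actions: $B(g\cdot\mu) = g\cdot B(\mu)$.

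I would first check the easy bullets of Definition~\ref{definition: lesbons parabolic}. The quasi-isometry with $(P,d_P)$ on Dirac masses follows from $B(\delta_g) = g\cdot o$ and the Schwarz--Milnor lemma applied to the proper cocompact action $P \curvearrowright Y$. The last bullet, $d_b(\mu,\nu) \leq 2C\|\mu\Delta\nu\|$ when the supports meet, is precisely Lemma~\ref{lemme: mesuresintersectentdifbarycentre} applied to the pushforwards (whose $Y$-diameter is bounded by a constant depending only on $C$ and the orbit map, which we absorb by enlarging $C$). Strong-$B1$ and the coarse existence of midpoints follow from the fact that $CAT(0)$ spaces are strongly bolic (recalled in Subsection~\ref{subsection: strongly bolic}): any $B1$/$B2'$ configuration in $\mathrm{Proba}_C(P)$ projects by $B$ to such a configuration in $Y$ with identical distances, so the required inequalities transfer immediately.

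The main work is to produce actual \emph{weak geodesics} in $\mathrm{Proba}_C(P)$ realising the peakless and strongly-convex estimates. Given $\mu,\nu \in \mathrm{Proba}_C(P)$, let $\gamma\colon [0,1]\to Y$ be the $CAT(0)$ geodesic from $B(\mu)$ to $B(\nu)$, and for each $t$ choose $g_t \in P$ with $d_Y(g_t\cdot o,\gamma(t))\leq D_0$; set $\sigma(t):=\delta_{g_t}$. Then $\sigma$ is a $D_0$-weak geodesic between $\mu$ and $\nu$ in $(\mathrm{Proba}_C(P),d_b)$, so $d_b$ is weakly geodesic. For peaklessness, given a third measure $\xi$, Corollary~\ref{corollary: espace cat(0) sont peakless} applied in $Y$ yields
\[
d_Y(B(\xi),\gamma(t)) \;\leq\; \max\bigl(d_Y(B(\xi),B(\mu)),\,d_Y(B(\xi),B(\nu))\bigr),
\]
and passing to $\sigma(t)$ costs an additive $D_0$, giving $\eta$-peaklessness with $\eta := D_0$. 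Exactly the same argument, using Corollary~\ref{corollary: les espaces Cat(0) sont fortement convex} in place of peaklessness, yields $\eta$-strong-convexity for the same choice of $\sigma$, hence weakly-$B2'$.

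The main obstacle is really a bookkeeping one: one must use the \emph{same} weak geodesic $\sigma$ to certify both peaklessness and strong convexity (this is what the second bullet of Definition~\ref{definition: lesbons parabolic} demands), so the proof must build $\sigma$ once from the unique $CAT(0)$ geodesic $\gamma$ in $Y$ and then verify both estimates via that single choice. A minor technical point is that the pushforward $(g\mapsto g\cdot o)_*\mu$ need not itself have support of diameter exactly $C$, but only of diameter controlled by $C$ and the Lipschitz constant of the orbit map, which is harmless since the last bullet of admissibility is insensitive to such a fixed multiplicative rescaling of $C$. Once these estimates are assembled, admissibility of $P$ follows, completing the proof of Proposition~\ref{proposition: lesgroupes Cat(0) sont admissibles}.
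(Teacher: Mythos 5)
Your proposal is correct and follows essentially the same route as the paper: pull back the $CAT(0)$ metric through the Sturm barycenter of the pushforward measure, build weak geodesics by approximating the unique $CAT(0)$ geodesic between barycenters with nearby orbit points, and transfer peaklessness, strong convexity, strong-$B1$, and the Wasserstein bound from the $CAT(0)$ model space. The only (cosmetic) difference is that you instantiate the equivariant quasi-isometry concretely as the orbit map $g\mapsto g\cdot o$ where the paper keeps it as an abstract $\varphi$; you also flag slightly more carefully that the pushforward support diameter is only controlled by, not equal to, $C$.
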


\begin{proof}
Let $P$ be a $CAT(0)$ group. By definition, there exists $(X,d)$ a $CAT(0)$ metric space such that $P$ acts properly and cocompactly by isometries on it. Moreover, there exists $\varphi$ a $P$-invariant quasi-isometry between $P$ equipped with the word metric and $(X,d)$.\\

Let $C>0$, let $ \mu,\nu \in \text{Proba}_{C}(P)$.
We define $\text{Proba}_{C}(X)$, the set of finitely supported probability measure of diameter bounded by $C$ on $X$. We define $\varphi^{\star}$, the map between $\text{Proba}_{C}(P)$ and $\text{Proba}_{C}(X)$ defined by $\varphi$ and by extension on the dirac masses.

We set:
$$ d_{b}(\mu,\nu)=d(\text{Bar}(\varphi^{\star}(\mu)),\text{Bar}(\varphi^{\star}(\nu))).$$

We will prove the $d_{b}$ satisfies all the assumption of Definition \ref{definition: lesbons parabolic}. $d_{b}$ is clearly a pseudo-metric on $\text{Proba}_{C}(P)$.\\

As $P$ acts by isometries on $(X,d)$, $d_{b}$ is clearly $P$ invariant.\\

As $\varphi$ is a quasi-isometry between $P$ and $(X,d)$. There exists $\eta>0$ such that for all $x \in X$, there exists $h \in P$ such that $d(x,\varphi(h))\leq \eta$.
We will show that $d_{b}$ is $\eta$-weakly geodesic.

Let $\mu, \nu\in \text{Proba}_{C}(P) $. We denote by $ \gamma$ the unique geodesic between $\text{Bar}(\varphi^{\star}(\mu))$ and $\text{Bar}(\varphi^{\star}(\nu))$ in $(X,d)$.

For all $t \in [0,1]$, there exists $\tilde{\gamma}(t)$ such that $d(\gamma(t),\varphi(\tilde{\gamma}(t))) \leq \eta$. Thus $\delta_{\tilde{\gamma}(t)} \in\text{Proba}_{C}(P)$ and we have:  
\begin{itemize}
    \item  $d_{b}(\mu,\delta_{\tilde{\gamma}(t)})=d(\text{Bar}(\varphi^{\star}(\mu)), \gamma(t) ) \leq d(\text{Bar}(\varphi^{\star}(\mu)),  \gamma(t))+\eta \leq t+\eta,  $

    \item $d_{b}(\delta_{\tilde{\gamma}(t)}, \nu)=d( \tilde{\gamma}(t), \text{Bar}(\nu)) \leq \eta+ d( \gamma(t), \text{Bar}(\nu)) \leq d_{b}(\mu,\nu)-t+\eta.$ 
\end{itemize}
Then $d_{b}$ is $\eta$-weakly geodesic.\\

We will show that $d_{b}$ is peakless. 
To begin, according to Corollary \ref{corollary: les espaces Cat(0) sont fortement convex}, we know that
for all $x,y,z \in X$ and $\gamma$ the unique geodesic between $x$ and $y$, we have, for all $t \in [0,1]$:
$$d(z,\gamma(t))\leq \max(d_{\ell^{2}}(z,x),d_{\ell^{2}}(z,y)).$$

Let $\mu, \nu,\lambda \in \text{Proba}_{C}(P)$, let  $\gamma$ be the unique geodesic between $\text{Bar}(\varphi^{\star}(\mu))$ and $\text{Bar}(\varphi^{\star}(\nu))$, let $\sigma$ be a weak geodesic between $\mu$ and $\nu$ defined as before. We have the following:
$$\begin{aligned}
d_{b}(\lambda,\sigma(t))&\leq d(\text{Bar}(\varphi^{\star}(\lambda)), \gamma(t) )+\eta \\ 
&\leq \max(d(\text{Bar}(\varphi^{\star}(\lambda)),\text{Bar}(\varphi^{\star}(\mu))),d(\text{Bar}(\varphi^{\star}(\lambda)),\text{Bar}(\varphi^{\star}(\nu)))+\eta .
\end{aligned}$$

Then $d_{b}$ is $\eta$-peakless.\\

We will show that $d_{b}$ is strongly-convex. Let $\eta \in (0,\frac{1}{2}), \varepsilon \in (0,1)$ and $N \in \mathbb{R}_{+}^{\ast}$. Let $\mu, \nu,\lambda \in \text{Proba}_{C}(P)$, with $d_{b}(\mu,\lambda)\leq N, d_{b}(\lambda,\nu) \leq N$ and $d_{b}(\mu,\nu)\geq \varepsilon N$. Let  $\gamma$ be the unique geodesic between $\text{Bar}(\varphi^{\star}(\mu))$ and $\text{Bar}(\varphi^{\star}(\nu))$, let $\sigma$ be a weak geodesic between $\mu$ and $\nu$ defined as before. According to Corollary \ref{corollary: les espaces Cat(0) sont fortement convex}, there exists $\delta \in (0,1)$ such that  for all $t \in [\eta,1-\eta]$:
$$d(\lambda,\gamma(t))\leq N(1-\delta).  $$

Therefore, for all $t \in [\eta,1-\eta] $, we have:
$$\begin{aligned}
d_{b}(\lambda,\sigma(t))&= d(\text{Bar}(\varphi^{\star}(\lambda)),\sigma(t)) \\
&\leq d(\text{Bar}(\varphi^{\star}(\lambda)), \gamma(t) )+\eta \\ 
&\leq N(1-\delta)+\eta .
\end{aligned}$$
Therefore, $d _{b}$ is $\eta$-strongly-convex.\\

As $d_{b}$ is weakly $\eta$-geodesic, $\eta$-peakless and $\eta$-strongly-convex, it satisfies weakly-$B2'$ according .\\

As $(X,d)$ is a $CAT(0)$ metric space, $(\text{Proba}_{C}(P),d_{b})$ satisfies clearly strongly-$B1$.\\

Let $g,h \in P$.
We have by definition:
$d_{b}(\delta_{g},\delta_{h})=d(\varphi(g),\varphi(h))$.
Therefore as $\varphi$ is quasi-isometry, the inclusion from $(P,d_{P})$ to $(\text{Proba}_{C}(P),d_{b})$ via the Dirac masses is a quasi-isometry

Finally, let $\mu,\nu \in \text{Proba}_{C}(P) $ with  $\text{supp}(\mu) \cap \text{supp}(\nu) \neq \emptyset$. Then $\text{supp}(\varphi^{\star}(\mu)) \cap \text{supp}(\varphi^{\star}(\nu)) \neq \emptyset$, therefore according to Lemma \ref{lemme: mesuresintersectentdifbarycentre}, we get that, we have: $$ d_{b}(\mu,\nu)= d(\text{Bar}(\varphi^{\star}(\mu)),\text{Bar}(\varphi^{\star}(\nu))) \leq 2C || \mu \Delta \nu || .$$
\end{proof}

We deduce now that finitely virtually abelian parabolics are admissible.

\begin{corollary}\label{corollary: virtually abelain groups are admissible}
 Virtually abelian parabolics are admissible.   
\end{corollary}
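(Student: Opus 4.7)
The plan is to reduce to Proposition \ref{proposition: lesgroupes Cat(0) sont admissibles} by exhibiting every finitely generated virtually abelian group $P$ as a $CAT(0)$ group.

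Let $A \cong \mathbb{Z}^n$ be a normal subgroup of finite index in $P$ and set $F := P/A$, with $\pi: P \to F$ the projection. The conjugation action of $P$ on $A$ descends to a representation $\rho: F \to \mathrm{GL}_n(\mathbb{Z})$, and averaging the standard Euclidean inner product over $F$ yields an $F$-invariant inner product on $\mathbb{R}^n$ with respect to which $\rho$ takes values in the orthogonal group. I then want to realize $P$ as a group of affine isometries of $\mathbb{R}^n$ via $p \cdot x := \rho(\pi(p))\, x + t(p)$, where $t: P \to \mathbb{R}^n$ must satisfy the $1$-cocycle identity $t(pq) = \rho(\pi(p))\, t(q) + t(p)$ together with the normalization $t|_A = \mathrm{id}_{\mathbb{Z}^n}$ so that $A$ acts by standard translations.

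The existence of such a $t$ follows from the Hochschild--Serre five-term exact sequence associated to $1 \to A \to P \to F \to 1$:
$$0 \to H^1(F, \mathbb{R}^n) \to H^1(P, \mathbb{R}^n) \to H^1(A, \mathbb{R}^n)^F \to H^2(F, \mathbb{R}^n).$$
Both end terms vanish because $F$ is finite and $\mathbb{R}^n$ is a $\mathbb{Q}$-vector space, so $H^1(P, \mathbb{R}^n) \cong H^1(A, \mathbb{R}^n)^F$. The cocycle $t_0 : A \to \mathbb{R}^n$, $a \mapsto a$, is $F$-invariant precisely by the construction of $\rho$; hence it lifts to a cocycle on $P$, and since $A$ acts trivially on $\mathbb{R}^n$, the coboundary group is trivial, so the lift restricts to $t_0$ on the nose.

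The affine action so obtained is cocompact, because $A$ already acts cocompactly by translations, and proper, because any set-theoretic section $F \to P$ reduces the stabilizer question to a finite list of affine maps and shows that point stabilizers are finite. Thus $P$ is a $CAT(0)$ group, and Proposition \ref{proposition: lesgroupes Cat(0) sont admissibles} immediately yields admissibility. The main obstacle is the cohomological existence of the cocycle $t$; beyond that, everything is routine and boils down to the same construction underlying Bieberbach's theorem on crystallographic groups.
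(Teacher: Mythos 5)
Your proof follows the same route as the paper: reduce to Proposition \ref{proposition: lesgroupes Cat(0) sont admissibles} by showing that every finitely generated virtually abelian group is $CAT(0)$, the only difference being that the paper simply cites \cite[Proposition 8.1]{minasyan} for this fact while you carry out the Bieberbach-type cocycle construction in full. The argument is correct — the vanishing of $H^{1}(F,\mathbb{R}^{n})$ and $H^{2}(F,\mathbb{R}^{n})$ for $F$ finite, and the observation that $B^{1}(A,\mathbb{R}^{n})=0$ under the trivial $A$-action (so the lifted cocycle restricts to $t_{0}$ on the nose) are exactly right — so this is a sound self-contained expansion of the citation.
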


The rest of the article will be dedicated to the proof of the following Theorem.

\begin{theo}\label{theo: bolicite forte pour relativement hyperbolique avec paraobliques admissible}

Every \emph{relatively hyperbolic group with admissible parabolics} $G$ admits a metric $\hat{d}$ with the following properties.

\begin{itemize}
    \item $\hat{d}$ is $G$-invariant, i.e. $\hat{d}(g.x,g.y)=\hat{d}(x,y)$, for all $g,x,y \in G$,
    
    \item $\hat{d}$ is uniformly locally finite and quasi-isometric to the word metric,
    
    \item the metric space $(G,\hat{d})$ is weakly geodesic and strongly bolic.
\end{itemize}
    
\end{theo}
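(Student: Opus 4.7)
The plan is to adapt the hyperbolic-case construction of Section~\ref{section : bolicité forte pour gp hyperbolique} to the coned-off graph $X_c$, replacing the combinatorial angle at an infinite-valence vertex — which can be infinite and is unstable under perturbation — by a pseudo-metric built from the Chatterji--Dahmani masks and the admissible metric $d_b$ on $\mathrm{Proba}_{C}(P_i)$. Concretely, for each vertex $a \in X_c$ one constructs a pseudo-distance $d^{a}$ on $X_c$ as follows: at a finite-valence $a$ one uses (a variant of) the angle pseudo-metric $d^{a}_{\varepsilon}$ of Proposition~\ref{Proposition: angle for hyperbolic metric space}, and at an infinite-valence vertex $a = \widehat{gP_i}$ one sets $d^{a}(x,y) = d_{b}(\mu_{e_x}, \mu_{e_y})$, where $\mu_{e_x}, \mu_{e_y}$ are the masks attached to the first edges of geodesics $[a,x]_c$ and $[a,y]_c$. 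One then exponentiates and sums, setting
\[
D(x,y) := \sum_{a \in X_c} d^{a}(x,y)^{p}, \qquad \hat{d}(x,y) := D(x,y) + C',
\]
for $p$ chosen so that $p\varepsilon$ exceeds the exponential growth rate of $(G,d'_{>M})$, and $C'$ the constant making the coarse triangle inequality a genuine triangle inequality.

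For the verification of the structural properties one follows the template of Section~\ref{section : bolicité forte pour gp hyperbolique}. $G$-invariance is immediate from $G$-equivariance of the masks and of $d^{a}_{\varepsilon}$. The quasi-isometry with $d_G$ follows from the distance formula (Proposition~\ref{Proposition : formule de la distance} and Corollary~\ref{corollary : reformulation de la formule de la distance}): the contributions of the finite-valence vertices reproduce $d_c$ up to multiplicative constants, exactly as in Proposition~\ref{proposition : D est fini et quasiisom à d}, while the contributions of the infinite-valence vertices reproduce, via the quasi-isometry $(P_i,d_{P_i}) \hookrightarrow (\mathrm{Proba}_{C}(P_i),d_{b})$ given in Definition~\ref{definition: lesbons parabolic}, the missing angle sum $\Theta_{>M}$. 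Uniform local finiteness is inherited from $d_G$. Weak geodesicity is then proved as in Proposition~\ref{proposition : d chapeau est faiblement geodesique}, after establishing a coarse additivity statement $|D(x,y) - D(x,z) - D(z,y)| \leq C$ for $z$ close to a geodesic $[x,y]_c$, the analogue of Proposition~\ref{proposition : D est faiblement géodésique}.

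The main difficulty, and the bulk of the work, will be the bolic conditions. For weakly-$B2'$, the midpoint map is constructed by combining the coarse midpoint coming from a geodesic in $X_c$ with the midpoint map on $(\mathrm{Proba}_{C}(P_i), d_{b})$ at each infinite-valence vertex visited; the $\eta$-peakless and $\eta$-strongly-convex assumptions on $d_{b}$ together with the triangle form of Theorem~\ref{formenormaledestriangles} and the angle/mask comparison of Subsection~\ref{subsection: comparaison angles et masques} are what allow the convexity estimate to pass from each parabolic to the whole of $(G,\hat{d})$. For strong-$B1$, in the configuration $d'(x_1,x_2), d'(y_1,y_2) \leq r$ and $d'(x_i,y_j) \geq R$ one splits the sum $\sum_{a} (d^{a}(x_1,y_1)^{p} + d^{a}(x_2,y_2)^{p} - d^{a}(x_1,y_2)^{p} - d^{a}(x_2,y_1)^{p})$ according to whether $a$ has finite or infinite valence and according to the projection onto a coned-off geodesic, exactly as in Theorem~\ref{theorem : dchapeau est fortement B1}; finite-valence $a$'s are handled by the exponential decay argument of that theorem, while infinite-valence $a$'s use the strong-$B1$ assumption on $d_b$ together with the key Chatterji--Dahmani property that masks of $d'$-close vertices have overlapping support, so that the last bullet of Definition~\ref{definition: lesbons parabolic} gives $d_b(\mu_{e_{x_1}},\mu_{e_{x_2}}) \leq 2C \|\mu_{e_{x_1}} \Delta \mu_{e_{x_2}}\|$, which is again exponentially small in the relevant parameter. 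The hard part is genuinely technical and splits into many cases according to where the projections of a point $a$ onto the four geodesics $[x_i,y_j]_c$ land relative to infinite-valence vertices; the uniform control provided by the comparison between $d^a$ and $\measuredangle_a$ at infinite-valence vertices is what makes the estimates uniform in the choice of geodesic.
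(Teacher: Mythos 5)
Your overall architecture --- masks at infinite-valence vertices feeding into $d_b$, an exponentially decaying pseudo-distance at finite-valence vertices, summing and invoking the distance formula, and verifying the bolic axioms by splitting on valence and on projections --- is the paper's architecture. But the formula you propose, $D(x,y)=\sum_{a\in X_c} d^a(x,y)^p$, is a genuine gap, and it is not cosmetic: the naive $p$-th power is precisely what the construction must avoid at infinite-valence vertices. There, $d^a(x,y)$ is comparable to $\measuredangle_a(x,y)$ (Proposition \ref{proposition: l'angle est borné par la distance bolic}) and is therefore unbounded. Taking the $p$-th power then breaks both of your claimed conclusions. For the quasi-isometry: the distance formula (Proposition \ref{Proposition : formule de la distance}) says $d_G$ is comparable to $d_c + \sum_a\measuredangle_a$, a \emph{linear} sum of angles, while your $D$ would produce $\sum_a\measuredangle_a^p$. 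For strong-$B1$: in the regime where a single $a_0\in X_c^\infty$ carries an angle of size $\gtrsim\sqrt{R}$ (the case $d_c(x_1,y_1)\le\sqrt{R}$ of Theorem \ref{theo: strong b1 relativement hyperbolic}), the four quantities $u,v,w,z=d_{a_0}(x_i,y_j)$ are all of order $\sqrt{R}$ and differ from one another by at most a constant $K(r)$; strong-$B1$ of $d_b$ controls $|u+v-w-z|$, but $|u^p+v^p-w^p-z^p|$ contains a term of order $p\,u^{p-1}\,|u+v-w-z|$ together with a second-order term of size $\sim p(p-1)K(r)^2 u^{p-2}$, both of which blow up as $R\to\infty$. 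The paper's fix is the piecewise function $\theta$ of Definition \ref{definition: la fonction theta}: $\theta(t)=t^P$ for $t\le t_0$ (preserving the convergence and exponential-tail estimates familiar from the hyperbolic case) but $\theta(t)=t_0^{P-1}t$ for $t\ge t_0$, so that at the single dominating vertex the differences $\theta(d_{a_0}(x_i,y_j))$ are literally scalar multiples of $d_{a_0}(x_i,y_j)$, and strong-$B1$ of $d_b$ passes through with the fixed constant $t_0^{P-1}$. Your sketch of strong-$B1$ covers only the exponential-tail case and omits this large-angle case entirely, and that case is exactly the one that cannot be treated with pure $p$-th powers.

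There is a second, smaller issue: at finite-valence vertices you propose using the Gromov-product pseudo-metric $d^a_\varepsilon$ of Proposition \ref{Proposition: angle for hyperbolic metric space}, which decays like $e^{-\varepsilon(x|y)_a}$, i.e.\ exponentially in the $d_c$-distance from $a$ to $[x,y]_c$. But $X_c$ is not locally finite --- a $d_c$-ball intersected with $G$ is infinite --- so the counting behind Lemma \ref{lemma : denombrement de g tel que g.o est proche d'une géodesique} has no direct analogue. The paper instead sets $d_a(x,y)=\|\mu_x(a)\Delta\mu_y(a)\|$ at \emph{all} finite-valence vertices, because Proposition \ref{proposition : la difference de deux masks est controlé par l'angle} gives decay exponentially in the combined quantity $d_c+\Theta_{>(2000\delta)^2}$, and the counting is then carried out relative to $d'_{(2000\delta)^2}$ (Lemma \ref{lemme: lemme de comptage avec d'deuxmile}), which does have controlled growth on $G$. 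You invoke the right ingredients at the level of intuition, but the particular choice of $d^a_\varepsilon$ does not supply the required decay rate.
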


\subsection{The masks}\label{subsection: masks}

In this subsection,  we discuss an important notion in our proof defined by Dahmani and Chatterji in \cite[Section 2]{ChatterjiDahmani}, namely the masks.\\
To define masks we need to recall the definition of flow for a uniformly locally fine graph defined also in \cite{ChatterjiDahmani}. This concept generalizes the notion of flow introduced by Alvarez and Lafforgue in \cite{AlvarezLafforguehyperboliclp} in the context of uniformly finite hyperbolic graphs.
The flow takes an initial point and a target point and moves the initial point towards the target along "acceptable" paths. In Alvarez-Lafforgue, the acceptable paths are the $\alpha$-geodesics of coarse neighborhoods of geodesics. These are not suitable for uniformly fine graphs because they are not finite. Here, they are replaced by conical $\alpha$-geodesic cones, which behave similarly, but whose cardinality is really well controlled. The final step of the flow is a uniformly bounded support measure contained within the $1$-neighborhood of the target point, called the mask. The idea behind this name is that the support of the mask represents "the set of points that the target point must hide in order to hide the initial point." Masks will also play the role of random representative of cosets.

The most important property of masks is that they converge exponentially fast, i.e., for any pair of nearby points, the difference between their masks is exponentially small in terms of the distance from the source points to the target point.

Most of the results in this section come from \cite{ChatterjiDahmani}; however, we prove certain properties of the masks.
We first recall the definition of $\alpha$-geodesic and conical $\alpha$-geodesic.

\begin{definition}\label{definition : }\cite{ChatterjiDahmani}

Let $X$ be graph, $d$ its graph metric and $\alpha \geq 0$. For any two vertices $x,a \in X$, $t$ belongs to an $\alpha$-geodesic between $x$ and $a$ if :
$$ d(x,t)+d(t,a) \leq d(x,a) + \alpha. $$

\end{definition}

\begin{definition}\label{definition : gedeosic conique}\cite[Definition 1.12.]{ChatterjiDahmani}

Let $X$ be a graph, and $x,a \in X^{(0)}$. For $\rho \geq 0$ we denote by $\mathcal{E}_{a,x}(\rho)$ the set of edges $e$ with $d(a,e)=\rho$ that are contained in a geodesic from $a$ to $x$. We say that $t \in X$ belongs to an $\alpha$-conical-geodesic between $x$ and $a$ if $t$ is on an $\alpha$-geodesic, $d(a,t) \leq d(a,x)$, and :
$$ \displaystyle t \in \bigcap_{e \in \mathcal{E}_{a,x}(d(a,t))} \text{Cone}_{40 \alpha}(e).$$

We denote by $U_{\alpha}[a,x]$ the set of points belonging to an $\alpha$-conical-geodesic between $x$ and $a$.\\

For each $\rho \geq 0$, we denote $\mathcal{S}_{a,x}(\rho)$ the slice of $U_{\alpha}[a,x]$ at distance $\rho$ from $a$, that is the set :
$$\mathcal{S}_{a,x}(\rho) = \{ t \in U_{\alpha}[a,x] ~|~ d(t,a)= \rho \}$$

of points in $U_{\alpha}[a,x]$ at distance $\rho$ from $a$.
    
\end{definition}

In the rest of the text, we set $\alpha=2\delta$.

The following proposition shows that geodesics are included in the conical $\alpha$-geodesics.

\begin{proposition}\label{proposition : propriete des alpha cone geodesic}\cite[Proposition 1.13.(3)]{ChatterjiDahmani}
For every $\rho \geq 0$, for all $ v\in I_{c}(x,a)$ with $d_{c}(v,a)=\rho$, we have:
$$ v \in \mathcal{S}_{a,x}(\rho).$$

\end{proposition}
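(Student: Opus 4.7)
The plan is to verify directly the three conditions defining membership in $U_{\alpha}[a,x]$ (with $\alpha = 2\delta$) and then to conclude from $d_c(a,v) = \rho$ that $v \in \mathcal{S}_{a,x}(\rho)$. Since $v \in I_c(x,a)$, fix a geodesic $\gamma_1 \colon [0,L] \to X_c$ from $a$ to $x$, where $L := d_c(a,x)$, that passes through $v$ with $\gamma_1(\rho) = v$. Trivially $v$ lies on a $0$-geodesic (in particular an $\alpha$-geodesic) from $x$ to $a$, and $d_c(a,v) = \rho \leq L = d_c(a,x)$. The entire content of the statement is therefore the cone inclusion
$$ v \in \bigcap_{e \in \mathcal{E}_{a,x}(\rho)} Cone_{40\alpha}(e). $$

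To prove this, I would fix an arbitrary edge $e \in \mathcal{E}_{a,x}(\rho)$ and let $\gamma_2$ be a geodesic from $a$ to $x$ containing $e$. Then form the (possibly degenerate) geodesic triangle $\Delta$ with vertices $a, v, x$ whose sides are the two sub-geodesics $[a,v]$ and $[v,x]$ of $\gamma_1$ together with $[a,x] := \gamma_2$, and apply Proposition \ref{conetriangle} to $\Delta$. The edge $e$ sits on the side $[a,x]$ at distance $\rho$ from $a$, equivalently at distance $L - \rho - 1$ from $x$, so the proposition produces an edge $e'$ such that $e \in Cone_{50\delta}(e')$, with $e'$ lying either on $[a,v]$ at distance $\rho$ from $a$ or on $[v,x]$ at distance $L - \rho - 1$ from $x$.

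The first option is vacuous because $[a,v]$ has length exactly $\rho$ and therefore carries no edge at distance $\rho$ from $a$. Hence $e'$ lies on $[v,x]$, and since $[v,x]$ is the sub-geodesic of $\gamma_1$ from $\gamma_1(\rho) = v$ to $\gamma_1(L) = x$, the edge at distance $L - \rho - 1$ from $x$ along $[v,x]$ is the very first one, which has $v$ as an endpoint. The cone relation is symmetric in $e$ and $e'$ (reversing the defining angle-bounded path), so $e' \in Cone_{50\delta}(e)$, and thus $v$, being an endpoint of $e'$, belongs to $Cone_{50\delta}(e) \subseteq Cone_{40\alpha}(e)$ (using $40\alpha = 80\delta$ and monotonicity of $Cone_{\theta}$ in $\theta$). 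The boundary cases are routine: if $\rho = 0$ then $v = a$ is already a vertex of every $e \in \mathcal{E}_{a,x}(0)$, and if $\rho = L$ then $\mathcal{E}_{a,x}(L) = \emptyset$, so the intersection is vacuous. The main subtlety in the argument is the choice of the auxiliary triangle; once one takes $a, v, x$ with two sides coinciding along $\gamma_1$, Proposition \ref{conetriangle} automatically singles out the edge of $\gamma_1$ adjacent to $v$, and the rest is bookkeeping.
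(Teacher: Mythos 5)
The paper does not prove this statement; it is quoted directly from \cite[Proposition~1.13.(3)]{ChatterjiDahmani}, so there is no internal proof to compare against. Judged on its own, your argument is correct and self-contained given the tools available in the paper. The two easy conditions (being on an $\alpha$-geodesic and having $d_c(a,v)\leq d_c(a,x)$) follow immediately from $v\in I_c(x,a)$, and your reduction to the cone condition is the right move. Applying Proposition~\ref{conetriangle} to the degenerate triangle with vertices $a,v,x$ whose two sides $[a,v]$ and $[v,x]$ are sub-segments of a single geodesic $\gamma_1$ through $v$ is the key idea, and it is legitimate: such a configuration is still a geodesic triangle. Your observation that the option ``$e'$ on $[a,v]$ at distance $\rho$ from $a$'' is vacuous because $[a,v]$ has length exactly $\rho$ is exactly what forces $e'$ to be the edge of $\gamma_1$ adjacent to $v$ on the $x$-side. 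The symmetry of the cone relation on edges (a chain $e_0,\dots,e_n$ from $e$ to $e'$ with $n\leq\theta$ and all transition angles $\leq\theta$ reverses to a chain from $e'$ to $e$ with the same bounds) gives $e'\in Cone_{50\delta}(e)$, hence $v\in Cone_{50\delta}(e)\subseteq Cone_{80\delta}(e)=Cone_{40\alpha}(e)$ since $\alpha=2\delta$, and the boundary cases $\rho=0$ and $\rho=L$ are handled correctly. The only cosmetic point is that your phrase ``the very first one'' for the distinguished edge of $[v,x]$ is ambiguous without saying ``counting from $v$,'' but the preceding sentence already makes the intent unambiguous.
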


Let us denote by $\text{Proba}(X_{c})$ the set of finitely supported probability measure supported on the vertex of $X_{c}$.

We recall the definition of the flow step:

$$ 
T: \left\{ 
\begin{array}{ccc}
  \text{Proba}(X_{c})\times X_{c}^{(0)} &  \rightarrow & \text{Proba}(X_{c})\times X_{c}^{(0)} \\
   (\eta,a)  & \mapsto & (T_{a}(\eta),a)
\end{array}
\right. $$

We recall here that $\delta$ is assume to be an integer.
Given $a$ and $x$, we say that the step $T_{a}$ at $x$ is:
\begin{itemize}
    \item \emph{initial} if $d_{c}(a,x)> 5 \delta$ is not a multiple of $5\delta$,
    
    \item \emph{regular} if $d_{c}(a,x)> 5 \delta$ and $d(a,x)$ is a multiple of $5\delta$,
    
    \item \emph {ending} if $a$ is of infinite valence and $1< d_{c}(a,x) \leq 5\delta$, or if $a$ has finite valence, $d_{c}(a,x) \leq 5\delta$ and there exists $u$ such that $\measuredangle_{u}(a,x) > 1000\delta$,
    
    \item \emph{stationary} in all other cases, i.e. in any of the following three cases:
      \begin{itemize}
          \item $x=a$,
          
          \item $a$ is of infinite valence and $x$ is a neighbor of $a$,
          
          \item $a$ is of finite valence, $d_{c}(a,x)\leq 5\delta$ and for all $u$, $\measuredangle_{u}(a,x) \leq 1000\delta$.
      \end{itemize}
\end{itemize}

We denote by $\delta_{x}$ the Dirac mass at $x$ and define the flow step $T$ by defining $T_{a}(\delta_{x})$, for $a,x$ two vertices of $X$ and then extend by linearity on probability measures :
$$ T_{a}\big(\sum_{x} \lambda_{x} \delta_{x}\big)= \sum_{x} \lambda_{x} T_{a}(\delta_{x}) .$$

\begin{definition}\cite[Definition 2.1.]{ChatterjiDahmani}\label{definition : le flow step}
Let $a,x \in X_{c}^{(0)}$. We set $r_{a,x}$ to be the largest integer $r$ such that $5\delta r< d_{c}(a,x)$.
\begin{itemize}
    \item If the step $T_{a}$ at $x$ is either initial or regular, then $T_{a}(\delta_{x})$ is the uniform probability measure supported by the slice $\mathcal{S}_{a,x}=\mathcal{S}_{a,x}(5\delta r_{a,x})$ as in Definition \ref{definition : gedeosic conique},
    
    \item If the step $T_{a}$ at $x$ is ending, and $a$ has infinite valence then $T_{a}(\delta_{x})$ is the uniform probability measure supported by the slice $\mathcal{S}_{a,x}=\mathcal{S}_{a,x}(1)$ at distance $1$ from $a$. If the step is ending and $a$ has finite valence, then $T_{a}(\delta_{x})$ is the Dirac mass supported on the unique $c$ minimizing $d_{c}(a,c)$ such that $\measuredangle_{c}(a,x) > 900\delta$,

    \item If the step $T_{a}$ at $x$ is stationary, then $T_{a}(\delta_{x})=\delta_{x}$ (and $\mathcal{S}_{a,x}=\{x \})$.

\end{itemize}
\end{definition}

See Figure \ref{fig: masque}.

This proposition shows that the flow is eventually constant.

\begin{proposition}\cite[Proposition 2.2.]{ChatterjiDahmani}\label{proposition : le flow est stationnaire}

For all $a,x \in X_{c}^{(0)}$, the flow step from \ref{definition : le flow step} is stationary in $k$: if $k>r_{a,x}+1$, one has $T_{a}^{r_{a,x}+1}(\delta_{x})=T_{a}^{k}(\delta_{x})$.
\end{proposition}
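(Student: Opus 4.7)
The plan is to track the distance $d_c(a,\cdot)$ from $a$ to the support of $T_a^k(\delta_x)$ as $k$ grows, and to verify that this distance reaches a configuration in which $T_a$ is stationary after $r_{a,x}+1$ iterations. The key intermediate fact is that when the step $T_a$ at a vertex $y$ is initial or regular (both of which require $d_c(a,y)>5\delta$), the measure $T_a(\delta_y)$ is, by Definition~\ref{definition : le flow step}, the uniform probability measure on the slice $\mathcal{S}_{a,y}$, whose support consists of points at distance exactly $5\delta\,r_{a,y}$ from $a$; in particular it is concentrated on a single sphere about $a$.

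First, I would establish by induction on $k$ that for every $1\le k\le r_{a,x}$, the support of $T_a^k(\delta_x)$ is contained in the sphere of radius $5\delta(r_{a,x}-k+1)$ about $a$. The base case $k=1$ follows from the previous observation since $d_c(a,x)>5\delta$. For the inductive step, each vertex $y$ in the support of $T_a^{k}(\delta_x)$ lies at distance $5\delta(r_{a,x}-k+1)>5\delta$ from $a$ (provided $k<r_{a,x}$), so the step $T_a$ at $y$ is regular and produces a measure supported on a sphere of radius $5\delta(r_{a,x}-k)$ from $a$; by linearity of $T_a$ this propagates to the full support.

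Second, at iteration $k=r_{a,x}$ every vertex $y$ in the support satisfies $d_c(a,y)=5\delta$, so $T_a$ at $y$ is either ending or stationary, depending on the valence of $a$ and the angles of nearby vertices. Consequently the step $r_{a,x}+1$ produces a measure $T_a^{r_{a,x}+1}(\delta_x)$ whose support consists either of neighbors of $a$ (in the infinite-valence ending case, from the slice $\mathcal{S}_{a,y}(1)$), or of the distinguished vertex $c$ at which $\measuredangle_c(a,y)>900\delta$ (in the finite-valence ending case), or of $y$ itself (stationary case).

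Finally, I would verify that $T_a$ is stationary at every vertex $z$ in the support of $T_a^{r_{a,x}+1}(\delta_x)$: in the infinite-valence case $z$ is a neighbor of $a$ and hence automatically stationary by the second bullet of the stationary clause of Definition~\ref{definition : le flow step}; in the finite-valence ending case $z=c$ lies at distance $\le 5\delta$ from $a$, and one must confirm that no vertex $u$ admits $\measuredangle_u(a,c)>1000\delta$, so that the third bullet of the stationary clause applies. This last verification is the main obstacle, since the uniqueness and angle condition defining $c$ only provide information at $c$ itself; I would establish it by combining the uniqueness of $c$ with the triangle inequality for angles of Proposition~\ref{proposition: triangle inequality} and the geodesic-forcing statement of Proposition~\ref{proposition: les propirétés des angles}, which together preclude another vertex $u$ from sustaining a huge angle between $a$ and $c$. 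Granting this, linearity of $T_a$ yields $T_a^{r_{a,x}+2}(\delta_x)=T_a^{r_{a,x}+1}(\delta_x)$, and an immediate induction on $k$ gives the claim for all $k>r_{a,x}+1$.
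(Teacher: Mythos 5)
Your plan is correct and follows the natural inductive structure of the Chatterji–Dahmani argument (the paper itself only cites \cite[Proposition 2.2]{ChatterjiDahmani} without reproving it): track the support of $T_a^k(\delta_x)$ sphere by sphere for $1\le k\le r_{a,x}$ using that initial and regular steps land exactly on $\mathcal{S}_{a,\cdot}(5\delta r_{a,\cdot})$, note that at step $r_{a,x}$ the support sits on the $5\delta$-sphere so step $r_{a,x}+1$ is ending or stationary, and verify stationarity at the output of an ending step.

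The one place you correctly flag as the main obstacle — showing that when $a$ has finite valence the flow is stationary at the vertex $c$ — does close with exactly the tools you name, and here is the missing computation. Suppose toward a contradiction that some $u$ satisfies $\measuredangle_u(a,c) > 1000\delta$. By the first bullet of Proposition~\ref{proposition: les propirétés des angles}, $u$ lies on every geodesic from $a$ to $c$, so $d_c(a,u) < d_c(a,c)$. Since $\measuredangle_c(a,x) > 900\delta > 12\delta$, the same bullet puts $c$ on every geodesic from $a$ to $x$, and combining the two collinearities forces $c$ to lie on a geodesic from $u$ to $x$. Extending a geodesic $[u,c]$ through $c$ to $x$ gives a first edge at $u$ that starts geodesics both to $c$ and to $x$; the second bullet of Proposition~\ref{proposition: les propirétés des angles} then bounds the angle between this edge and any other first edge of a geodesic from $u$ to $c$ (resp.\ to $x$) by $6\delta$, so $\measuredangle_u(c,x) \le 12\delta$. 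The triangle inequality of Proposition~\ref{proposition: triangle inequality} now yields $\measuredangle_u(a,x) \ge \measuredangle_u(a,c) - \measuredangle_u(c,x) > 1000\delta - 12\delta > 900\delta$, and since $d_c(a,u) < d_c(a,c)$ this contradicts the minimality of $d_c(a,c)$ in the definition of $c$. Hence no such $u$ exists, the step at $c$ falls into the third stationary case, and your concluding induction on $k$ goes through.
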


The stationarity of the flow allows to define limits, the masks.

\begin{definition}\cite[Definition 2.3.]{ChatterjiDahmani}\label{defintion : definition du mask}

Given any two vertices $a$ and $x$ of $X_{c}$, the mask of $a$ for $x$ is the probability measure given by:
$$ \mu_{x}(a)=T_{a}^{R(a,x)}(\delta_{x})=\lim_{k \to \infty} T_{a}^{k}(\delta_{x}),$$

where $R(a,x)$ the minimal number of iterations of $T_{a}$ on $\delta_{x}$ so that the stationary step is reached.
\end{definition}

For $x \in X_{c}$, $a \in X_{c}^{\infty}$, $\mu_{x}(a)$ will play the role of a random representative of $x$ on the coset associated to $a$.\\

\begin{lem}\label{lemme: pas regulier du flow de points proches s'interctent}\cite[Lemma 2.6.]{ChatterjiDahmani}
Let $x,y \in X_{c}$, $a \in X_{c}$ and $d_{c}(a,x)=d_{c}(a,y)$.

If $d_{c}(x,y)\leq 8\delta$ and assume for both of them, the flow step is regular. Then we have:
$$ \text{supp}(T_{a}(\delta_{x})) \cap \text{supp}(T_{a}(\delta_{y})) \neq \emptyset . $$

\end{lem}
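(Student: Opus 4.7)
The plan is to exhibit a single vertex lying in both supports. Since $d_c(a,x)=d_c(a,y)$, one has $r_{a,x}=r_{a,y}=:r$, and by Definition \ref{definition : le flow step} both supports coincide with slices $\mathcal{S}_{a,x}(5\delta r)$ and $\mathcal{S}_{a,y}(5\delta r)$ at the common depth $5\delta r = d_c(a,x)-5\delta$ from $a$. Fix geodesics $[a,x]_c$ and $[a,y]_c$ and let $u\in[a,x]_c$, $v\in[a,y]_c$ denote the vertices at that depth; by Proposition \ref{proposition : propriete des alpha cone geodesic}, $u\in\mathcal{S}_{a,x}(5\delta r)$ and $v\in\mathcal{S}_{a,y}(5\delta r)$.

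I would then try to place $u$ in the opposite slice $\mathcal{S}_{a,y}(5\delta r)$ as well. The triangle $[a,x,y]_c$ has third side of length at most $8\delta$, and its $\delta$-thinness places $u$ within $\delta$ of $[a,y]_c\cup[x,y]_c$. In the favorable case where $u$ is within $\delta$ of a vertex $u'\in[a,y]_c$, the bound
\[
d_c(a,u)+d_c(u,y)\leq d_c(a,u)+d_c(u,u')+d_c(u',y)\leq d_c(a,y)+2\delta
\]
shows that $u$ sits on an $\alpha$-geodesic from $a$ to $y$ (recall $\alpha=2\delta$). In the complementary case where $u$ is within $\delta$ of $[x,y]_c$, one has $d_c(u,y)\leq 9\delta$, and either Bridson's Lemma \ref{lemme de Bridson} applied at a slightly earlier depth, or a symmetric analysis exchanging the roles of $x$ and $y$, supplies a common vertex; the fellow-travel of $[a,x]_c$ and $[a,y]_c$ guaranteed by Lemma \ref{lemme de Bridson} on the initial portion of length up to $d_c(a,x)-9\delta$ is the key geometric input here.

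The hard part will be verifying the cone condition $t\in\bigcap_{e\in\mathcal{E}_{a,y}(5\delta r)}\text{Cone}_{40\alpha}(e)$ for the chosen common witness. Any edge $e\in\mathcal{E}_{a,y}(5\delta r)$ is the initial edge, at depth $5\delta r$, of some geodesic from $a$ to $y$, and by $\delta$-thinness of $[a,x,y]_c$ it lies at small simplicial distance and, via Proposition \ref{proposition: les propirétés des angles}, at small angle from an edge of some geodesic $[a,x]_c$ at the same depth. The generous cone parameter $40\alpha=80\delta$ was tuned in \cite{ChatterjiDahmani} precisely to absorb discrepancies of this order, so the inclusion in every such cone follows and the two supports share a vertex.
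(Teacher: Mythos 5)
The overall strategy—take the vertex $u$ of $[a,x]_c$ at depth $\rho=5\delta r_{a,x}=d_c(a,x)-5\delta$ and argue it also lies in $\mathcal S_{a,y}(\rho)$—is reasonable, but both halves of your verification have problems.

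First, your treatment of the $\alpha$-geodesic condition in the complementary case is wrong. You appeal to Bridson's Lemma \ref{lemme de Bridson}, but with $K=d_c(x,y)\leq 8\delta$ it only guarantees $d_c(\gamma(t),\gamma'(t))\leq 2\delta$ for $t\leq d_c(a,x)-K-\delta=d_c(a,x)-9\delta$, which falls short of the slice depth $\rho=d_c(a,x)-5\delta$; applying it ``at a slightly earlier depth'' produces a point strictly closer to $a$ than the slice, which is useless, and the phrase ``symmetric analysis exchanging $x$ and $y$'' does not resolve the case either, since $v$ on $[a,y]_c$ could equally well be $\delta$-close to $[x,y]_c$ rather than to $[a,x]_c$. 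The case is actually handled by an elementary computation you should carry out instead: if $u$ is $\delta$-close to $w\in[x,y]_c$, then since $d_c(u,x)=5\delta$ one has $d_c(x,w)\geq 4\delta$, hence $d_c(w,y)=d_c(x,y)-d_c(x,w)\leq 4\delta$ and $d_c(u,y)\leq 5\delta\leq 5\delta+\alpha$, so $u$ is on an $\alpha$-geodesic from $a$ to $y$. (Your favorable case gives $d_c(u,y)\leq 7\delta$, also within the $\alpha=2\delta$ tolerance.) The real geometric input is $\rho<(x|y)_a=\rho+5\delta-\tfrac12 d_c(x,y)$, not Bridson's Lemma.

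Second, and more seriously, the cone condition $u\in\bigcap_{e\in\mathcal E_{a,y}(\rho)}\mathrm{Cone}_{40\alpha}(e)$ is the substantive content of the lemma and you do not prove it; the remark that ``the cone parameter was tuned to absorb discrepancies of this order'' is not an argument. Note also that the intersection runs over edges on \emph{all} geodesics from $a$ to $y$, not just one, so a uniform statement is needed. The verification has to go through Proposition \ref{conetriangle} (conical fineness of triangles): for a triangle $[a,y,x]$ with one side $[a,x]_c$, an edge $e$ at depth $\rho$ on a geodesic from $a$ to $y$ is in $\mathrm{Cone}_{50\delta}(e')$ for $e'$ either on $[a,x]_c$ at depth $\rho$ from $a$ or on $[x,y]_c$ at depth $5\delta$ from $y$; one then needs to argue that, because $\rho<(x|y)_a$, the first alternative always occurs, so that $e'$ is incident to $u$ and the $50\delta$-path can be reversed to place $u\in\mathrm{Cone}_{50\delta}(e)\subset\mathrm{Cone}_{80\delta}(e)$. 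None of this is in your sketch, and without it the proof is incomplete.
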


 The following proposition shows that masks have uniformly bounded diameter, see Figure \ref{fig: masque}.

\begin{proposition}\cite[Proposition 2.4.]{ChatterjiDahmani}\label{proposition : les masks sont uniformément bornés}

For all $x \in X_{c}$, for all $a \in  X_{c}^{ \infty}$, $a \neq x$, the measure $\mu_{x}(a)$ is supported by the $1$-neighborhood of $a$. Moreover, if $e$ is the first edge of a geodesic segment $[a,x]_{c}$, the measure $\mu_{x}(a)$ is supported by $Cone_{160\delta}(e)$.\\

For all $x \in X_{c}$, for all $a \in X_{c} \backslash X_{c}^{\infty}$, $\mu_{x}(a)$ is supported by $Cone_{1160\delta}(a)$.

In particular, the supports of the masks are uniformly bounded. To be more precise, we set $C=\text{diam} (Cone_{1160\delta}(e)$, and we have $ \text{diam}(\mu_{x}(a))\leq \text{diam} (Cone_{1160\delta}(e) \leq C$.
    
\end{proposition}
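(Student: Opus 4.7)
The plan is to follow the iterative definition of the mask as $\mu_x(a) = \lim_k T_a^k(\delta_x)$ and track the support $\Sigma_k$ of $T_a^k(\delta_x)$ through each flow iteration. By Proposition \ref{proposition : le flow est stationnaire}, only finitely many iterations are needed (at most $r_{a,x}+1$), so it suffices to control $\Sigma_k$ at every initial/regular stage and then handle the final (ending) step separately, in each of the two valence cases.

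Fix a first edge $e$ on a geodesic $[a,x]_c$. For an initial or regular step at some $y \in \Sigma_k$ with $d_c(a,y) > 5\delta$, the measure $T_a(\delta_y)$ is uniformly supported on $\mathcal{S}_{a,y}(5\delta r_{a,y})$; by Definition \ref{definition : gedeosic conique}, every point of this slice lies in $\text{Cone}_{80\delta}(e')$ for every edge $e' \in \mathcal{E}_{a,y}$ starting a geodesic $[a,y]_c$. The key intermediate claim I would establish is that those first edges $e'$ of $[a,y]_c$ themselves all lie in $\text{Cone}_{80\delta}(e)$, which follows from hyperbolicity applied to the triangle $[a,x,y]$ via Proposition \ref{conetriangle} together with the angle control of Proposition \ref{proposition: les propirétés des angles}. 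Combining these two containments by a cone-composition principle --- namely, if $t \in \text{Cone}_{\theta_1}(e')$ and $e' \in \text{Cone}_{\theta_2}(e)$, then concatenating the two cone-realising chains gives $t \in \text{Cone}_{\theta_1+\theta_2}(e)$ --- one obtains $\Sigma_{k+1} \subseteq \text{Cone}_{160\delta}(e)$ by induction on $k$.

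For the ending step I would split into the two cases of Definition \ref{definition : le flow step}. If $a \in X_c^\infty$, the flow deposits mass uniformly on the slice $\mathcal{S}_{a,y}(1)$, which sits in the $1$-neighbourhood of $a$ by construction, and the same cone-composition argument keeps it inside $\text{Cone}_{160\delta}(e)$. If $a \in X_c \setminus X_c^\infty$, the ending step outputs a Dirac mass at the unique vertex $c$ with $d_c(a,c) \leq 5\delta$ and $\measuredangle_c(a,x) > 900\delta$; plugging this large angle into the cone definition (where consecutive edges must form an angle at most $\theta$) and composing with the previously established $\text{Cone}_{160\delta}(e)$ containment yields the looser bound $\text{Cone}_{1160\delta}(a)$, with the extra $1000\delta$ exactly absorbing the single large angle at $c$.

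The main obstacle is the cone-composition step together with the intermediate claim that first edges of geodesics to successive support points stay in $\text{Cone}_{80\delta}(e)$. This is a purely combinatorial angle-bookkeeping argument: one must verify that concatenating two chains of edges with controlled consecutive angles keeps both the total length under $\theta_1+\theta_2$ and the junction angle under control, using the triangle inequality for angles of Proposition \ref{proposition: triangle inequality} and the threshold results of Proposition \ref{proposition: les propirétés des angles}. The bookkeeping is delicate because large angles at infinite-valence vertices along the chain can a priori accumulate, and one must exploit hyperbolicity to show that geodesics $[a,y]_c$ from successive flow points $y$ to $a$ cannot drift far from $[a,x]_c$ in the cone metric.
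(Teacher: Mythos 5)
The paper does not prove this statement; it simply cites \cite[Proposition 2.4.]{ChatterjiDahmani}, so there is no ``paper's own proof'' to compare against. What follows is therefore an assessment of your proposal on its own terms.

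You have identified the right overall structure (track the support through the flow iterations, argue the support stays in a bounded cone around $e$), but there are two concrete gaps. First, you misread Definition \ref{definition : gedeosic conique}: for a point $t$ in the slice $\mathcal{S}_{a,y}(\rho)$ with $\rho=5\delta r_{a,y}$, the cone containment is $t \in \bigcap_{e''\in\mathcal{E}_{a,y}(d(a,t))}\mathrm{Cone}_{80\delta}(e'')$, where the edges $e''$ are those at distance $\rho=d(a,t)$ from $a$ on geodesics $[a,y]_c$ --- not, as you write, the first edges of $[a,y]_c$. These are in general different edges, and the first edge and an edge at distance $5\delta r_{a,y}$ from $a$ on the same geodesic need not be close in the cone metric, because angles along a geodesic at infinite-valence vertices can be arbitrarily large. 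So the chain of containments you need does not start where you think it starts.

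Second, and more seriously, your cone-composition principle (which is correct as you state it: $t\in\mathrm{Cone}_{\theta_1}(e')$ and $e'\in\mathrm{Cone}_{\theta_2}(e)$ give $t\in\mathrm{Cone}_{\theta_1+\theta_2}(e)$) accumulates across iterations. If $\Sigma_k\subset\mathrm{Cone}_{\theta_k}(e)$ and one more flow step places mass in $\mathrm{Cone}_{80\delta}$ of edges that are only controlled to be in $\mathrm{Cone}_{\theta_k}(e)$, the naive composition gives $\Sigma_{k+1}\subset\mathrm{Cone}_{\theta_k+80\delta}(e)$, which grows with $k$. Since the flow runs for $r_{a,x}+1$ steps and $r_{a,x}$ is unbounded, this cannot produce a fixed constant like $160\delta$. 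You flag this yourself as ``the main obstacle,'' but you do not resolve it. The missing ingredient is a monotonicity statement: a conical $\alpha$-geodesic from $a$ to a point $z$ which itself lies on a conical $\alpha$-geodesic from $a$ to $y$ remains inside (or within a \emph{uniformly} bounded cone of) the conical $\alpha$-geodesic from $a$ to $y$, so that the relevant cone parameter stabilises as the flow approaches $a$ rather than accumulating. Without that lemma the induction ``$\Sigma_{k+1}\subset\mathrm{Cone}_{160\delta}(e)$'' is unjustified, and the heuristic arithmetic $80\delta+80\delta=160\delta$ and $160\delta+1000\delta=1160\delta$, while consistent with the statement, is not actually derived from the argument given.
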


\begin{figure}[!ht]
    \centering
   \includegraphics[scale=0.5]{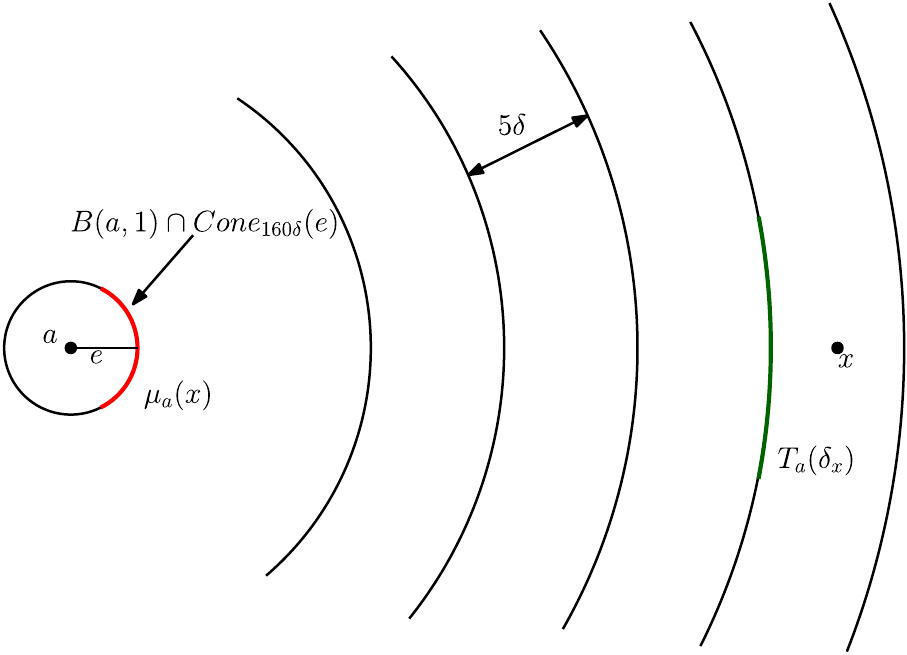}
   \caption{The mask of $x$ on $a$}
   \label{fig: masque}
\end{figure}

In the rest of the section, $C$ will denote the constant of Proposition \ref{proposition : les masks sont uniformément bornés} when we talk about $\text{Proba}_{C}(P_{i})$.

The following proposition says that when the flow passes a large angle, this large angle acts as a check point.

\begin{proposition}\cite[Proposition 2.7.]{ChatterjiDahmani} \label{proposition : grand angle comme des checkpoints}

Let $a,x$ be vertices in $X_{c}$. If there exists $u \in I_{c}(a,x)$ such that $\measuredangle_{u}(a,x)> (2000\delta)^{2}$, then for any $y$ at distance $1$ from $x$, we have that :
$$\mu_{x}(a)=\mu_{y}(a)=\mu_{u}(a).$$
\end{proposition}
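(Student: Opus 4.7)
The plan is to show that the flow sequence $(T_a^k \delta_x)_{k \geq 0}$ funnels through the Dirac mass $\delta_u$ (or becomes indistinguishable from it), so that stationarity of the flow yields $\mu_x(a) = \mu_u(a)$; an identical analysis will handle $y$. The argument has three stages: a geometric stage identifying $u$ as a forced vertex on every geodesic from $a$ to $x$ and from $a$ to $y$; a dynamical stage showing that the $\alpha$-conical geodesic support of the flow collapses at $u$; and a Markov stage reading off equality of the stationary limits.

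First I would show that for $y$ with $d_c(x,y) = 1$ we still have $u \in I_c(a,y)$ together with a very large angle $\measuredangle_u(a,y)$. By the first point of Proposition \ref{proposition: les propirétés des angles}, the hypothesis $\measuredangle_u(a,x) > (2000\delta)^2 > 12\delta$ already forces every geodesic $[a,x]_c$ through $u$, so $d_c(a,x) = d_c(a,u) + d_c(u,x)$. Combining thinness of the triangle $[a,x,y]$, Bridson's Lemma \ref{lemme de Bridson}, and the triangle inequality for angles (Proposition \ref{proposition: triangle inequality}), the angle $\measuredangle_u(x,y)$ loses only a bounded amount, so $\measuredangle_u(a,y) \geq (2000\delta)^2 - O(\delta) \gg 12\delta$ and every geodesic $[a,y]_c$ must also pass through $u$.

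Next I would track the iterates $T_a^k \delta_x$. At the regular step where the slice distance $5\delta(r_{a,x}-k)$ first drops to $d_c(a,u)$ or the closest multiple of $5\delta$, the support of $T_a^k \delta_x$ lies inside $\bigcap_{e \in \mathcal{E}_{a,x}(\rho)} \mathrm{Cone}_{40\alpha}(e)$ by Proposition \ref{proposition : propriete des alpha cone geodesic} and the definition of the slice. Since every geodesic from $a$ to $x$ passes through $u$, any candidate vertex $t$ on the $x$-side of $u$ would require a path across $u$ realising the angle between the $a$-side and $x$-side edges, which by the third point of Proposition \ref{proposition: les propirétés des angles} exceeds $(2000\delta)^2 - 12\delta$; this vastly outruns the cone opening $40\alpha = 80\delta$ and rules $t$ out. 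Hence the slice collapses to $\{u\}$, and after possibly one ending step $T_a^{k_0}\delta_x = \delta_u$.

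Once the flow has produced $\delta_u$, the Markov property of $T_a$ combined with Proposition \ref{proposition : le flow est stationnaire} gives $\mu_x(a) = \lim_k T_a^k\delta_x = \lim_k T_a^k\delta_u = \mu_u(a)$. Running the identical argument starting from $\delta_y$, justified by the first step, yields $\mu_y(a) = \mu_u(a)$. The hardest part is the dynamical stage: one must verify that the precise quadratic threshold $(2000\delta)^2$ simultaneously absorbs the $12\delta$ loss in the angle-to-geodesic correspondence, the $80\delta$ cone spread used to define $\alpha$-conical geodesics, and the extra $O(\delta)$ slack introduced when passing from $x$ to its neighbour $y$, so that the conical slice is forced to shrink exactly to $u$ rather than merely staying close to it.
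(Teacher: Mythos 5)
Your argument has a genuine gap at the dynamical stage, which is exactly the step you yourself flag as "the hardest part."

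The geometric stage is fine: since $\measuredangle_u(a,x)>(2000\delta)^2>12\delta$, Proposition \ref{proposition: les propirétés des angles} forces every geodesic $[a,x]_c$ through $u$, and a triangle‑inequality argument combined with thinness does show that $\measuredangle_u(a,y)$ is still enormous for any neighbour $y$ of $x$, so $u\in I_c(a,y)$ as well. The problem is the claim that the flow iterate $T_a^{k_0}\delta_x$ becomes exactly the Dirac mass $\delta_u$. That claim is not justified, for two independent reasons. First, regular flow steps land the slice at distances $5\delta r$ from $a$ (Definition \ref{definition : le flow step}), so there is in general no iterate whose slice sits at distance $d_c(a,u)$; the flow can jump past $u$ without ever having a slice centered there. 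Second, even if the slice were at distance $d_c(a,u)$, the slice $\mathcal{S}_{a,x}(\rho)$ is the set of vertices at distance $\rho$ on $\alpha$-geodesics that lie in cones of parameter $40\alpha=80\delta$ around the edges $e\in\mathcal{E}_{a,x}(\rho)$; these cones have positive thickness and can contain vertices other than $u$. Your reason for excluding a vertex $t\neq u$ — that $t$ "would require a path across $u$ realising the big angle" — is not correct: a vertex $t$ at the same $a$-distance as $u$ can sit inside $\mathrm{Cone}_{80\delta}(e)$ for an edge $e$ at $u$ via a short path that never needs to register the large angle $\measuredangle_u(a,x)$; that angle measures paths joining the $a$-side and the $x$-side at $u$, which is a different obstruction. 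So the intersection of cones need not be the singleton $\{u\}$, and the collapse to $\delta_u$ does not follow. Since your final Markov step rests entirely on this collapse, the conclusion is unproved.

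The large quadratic threshold is there to guarantee something weaker but sufficient: that every $\alpha$-conical geodesic from $a$ to $x$ (or to $y$) is forced through the same funnel at $u$, so that beyond $u$ (on the $a$-side) the conical‑geodesic data $U_\alpha[a,x]$, $U_\alpha[a,y]$, $U_\alpha[a,u]$ — and hence the flow iterates — agree. That coincidence of supports past the checkpoint, rather than a collapse to a single Dirac mass, is what lets the stationary limits $\mu_x(a)$, $\mu_y(a)$, $\mu_u(a)$ be equal. You would need to carry out that comparison of conical sets explicitly, at the level of slices rather than single vertices, for the argument to close.
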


The next proposition shows that for two sources of the flow, any point on a geodesic between those two sources has disjoint
masks for each of those sources.

\begin{proposition}\cite[Proposition 2.12.]{ChatterjiDahmani} \label{proposition: non-confluence des masks}

If $d_{c}(x,y)\geq 10\delta$ and if $a \in G$, at distance at least $5\delta$ from both $x$ and $y$, and $a \in [x,y]_{c}$ for some geodesic,$$|| \mu_{x}(a) \Delta \mu_{y}(a) ||=2.$$
    
\end{proposition}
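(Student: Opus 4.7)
The strategy is to argue by contradiction: assume some vertex $z$ lies in both $\mathrm{supp}(\mu_x(a))$ and $\mathrm{supp}(\mu_y(a))$, and derive an impossibility using the hypothesis that $a$ lies on a geodesic between $x$ and $y$. The desired conclusion $\|\mu_x(a)\Delta\mu_y(a)\|=2$ is then equivalent to disjointness of supports of these two probability measures.

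The first step I would carry out is to show that every atom of $\mu_x(a)$ lies on an actual geodesic from $a$ to $x$, i.e.\ $\mathrm{supp}(\mu_x(a)) \subseteq I_c(a,x)$. Because $a$ has finite valence (it is an element of $G$), iterating the flow $T_a$ on $\delta_x$ proceeds through initial/regular steps along $\alpha$-conical geodesics toward $a$ (Definition~\ref{definition : gedeosic conique}), and by Proposition~\ref{proposition : le flow est stationnaire} eventually reaches either a stationary state at some vertex $x'$ with $d_c(a,x') \leq 5\delta$, or an ending step producing a Dirac at the closest vertex $c$ to $a$ with $\measuredangle_c(a,x') > 900\delta$. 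In the ending case, Proposition~\ref{proposition: les propirétés des angles} (first point) gives immediately that $c$ lies on every geodesic from $a$ to $x'$. A careful analysis of the cumulative $\alpha = 2\delta$ slack of the $\alpha$-conical-geodesic chain that links the original $x$ to $x'$, together with the large margin $900\delta \gg 12\delta$ and the $\delta$-hyperbolicity of $X_c$, lets one promote $\measuredangle_c(a,x') > 900\delta$ to $\measuredangle_c(a,x) > 12\delta$, which in turn gives $c \in I_c(a,x)$ by the same Proposition. A similar argument deals with the stationary case by showing that $x'$ itself then lies on a geodesic from $a$ to $x$. Symmetrically, $\mathrm{supp}(\mu_y(a)) \subseteq I_c(a,y)$.

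With these inclusions, the contradiction follows cleanly. The assumed common atom $z$ must satisfy
\begin{equation*}
d_c(a,x) = d_c(a,z) + d_c(z,x), \qquad d_c(a,y) = d_c(a,z) + d_c(z,y).
\end{equation*}
Summing the two equations and using $d_c(x,y) = d_c(x,a) + d_c(a,y)$, which is precisely the content of $a \in [x,y]_c$, we obtain
\begin{equation*}
d_c(z,x) + d_c(z,y) = d_c(x,y) - 2\, d_c(a,z).
\end{equation*}
The triangle inequality $d_c(z,x) + d_c(z,y) \geq d_c(x,y)$ then forces $d_c(a,z) \leq 0$, i.e.\ $z = a$, contradicting the fact that the support of the mask is disjoint from its source vertex $a$ (which is guaranteed by $d_c(a,x), d_c(a,y) \geq 5\delta$, forcing at least one genuine, non-stationary flow step to occur before termination).

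The main obstacle I anticipate is the first step: establishing the inclusion $\mathrm{supp}(\mu_x(a)) \subseteq I_c(a,x)$ in the ending case. The flow yields direct angular information only at the intermediate state $x'$, not at the original source $x$, and the chain of $\alpha$-conical geodesics connecting $x$ to $x'$ may be long. One genuinely needs both the hyperbolicity of $X_c$ and the generous numerical gap between $900\delta$ and the threshold $12\delta$ from Proposition~\ref{proposition: les propirétés des angles} to absorb the accumulated slack; the stationary case also requires separate though analogous treatment.
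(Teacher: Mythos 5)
Your overall intuition — that the two masks are located ``on opposite sides'' of $a$, so their supports cannot overlap — is the right one, and your closing triangle computation is clean given the hypothesis you feed it. However, the hypothesis is the problem: the inclusion $\mathrm{supp}(\mu_x(a)) \subseteq I_c(a,x)$ that your whole argument hinges on is not established, and I do not think it is true as stated.

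The flow (Definition~\ref{definition : gedeosic conique} with $\alpha = 2\delta$) builds its slices $\mathcal{S}_{a,x}(\rho)$ out of $\alpha$-conical geodesics, and with $\alpha = 2\delta > 0$ an $\alpha$-geodesic may pass through vertices that are not on any actual geodesic from $a$ to $x$. Proposition~\ref{proposition : propriete des alpha cone geodesic} goes only one way: it says $I_c(a,x)$ lies inside the slices, not conversely. So after each regular step, the mass can spread onto off-geodesic vertices, and the accumulated support of $\mu_x(a)$ need not live in $I_c(a,x)$. You attempt to repair this in the ending case by trading $\measuredangle_c(a,x')>900\delta$ for $\measuredangle_c(a,x) > 12\delta$; this would require first arguing that the geodesics $[a,x]_c$ and $[a,x']_c$ begin with the same edge at $c$, which is precisely the kind of exact statement you cannot get from $\delta$-close geodesics alone. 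More seriously, you treat the stationary case as ``similar'' when it is actually the worse one: there is no large angle to exploit at all, the flow simply halts at any vertex $x'$ with $d_c(a,x') \leq 5\delta$ and all angles $\leq 1000\delta$, and such an $x'$ carried by an off-geodesic $\alpha$-conical path has no reason to lie on $I_c(a,x)$. (Note also that $\mu_x(a)$ is generally not a single Dirac: the slices have several vertices, so in the stationary case the mask is spread over many vertices.)

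The downstream effect is fatal because your final computation is rigid. The identity $d_c(z,x) + d_c(z,y) = d_c(x,y) - 2\,d_c(a,z)$ needs both membership equations to hold with \emph{exact} equality; replacing them by the kind of approximate inclusions that the $\alpha$-slack actually gives you (``$z$ is within $O(\delta)$ of a geodesic'') degrades the conclusion to $d_c(a,z) \leq O(\delta)$, which is not a contradiction and does not even come close once you recall $z$ is allowed to be at distance up to $5\delta$ from $a$. To make the strategy work you need a genuinely robust version of the ``opposite sides'' claim — for instance a quantitative statement that, for $a$ on a geodesic between $x$ and $y$ with $d_c(a,x),d_c(a,y) \geq 5\delta$, the first edges at $a$ of geodesics toward $x$ and toward $y$ make a definite angle, and that the two masks are each confined to the cone around the corresponding edge — rather than the exact inclusion into geodesic intervals that you have asserted.
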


With the following crucial property, we can control in terms of angle the difference between the masks of two points $x$ and $y$ close to each other at a point $a$ far away from them.

\begin{proposition}\cite[Proposition 2.11.]{ChatterjiDahmani}\label{proposition : la difference de deux masks est controlé par l'angle}

There exist two constants $\kappa<1$ and $R_{0}>0$ such that, for all $a \in X_{c}$, for every $x_{1},x_{2} \in X_{c}$ such that $d_{c}(x_{1},x_{2})=1$, if $x$ is among $x_{1},x_{2}$, closer to $a$, then for all geodesic $[x,a]_{c}$, if $\Theta(x,a)$ denotes the sum of angles on the geodesic $[x,a]_{c}$, then if $$d_{c}(a,x)+\Theta(a,x)\geq R_{0},$$ one has:
$$ || \mu_{x_{1}}(a) \Delta \mu_{x_{2}}(a) || \leq \kappa^{d_{c}(a,x)+\Theta(a,x)} . $$

\end{proposition}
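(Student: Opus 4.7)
The plan is to establish a per-step contraction for the flow operator $T_a$ and then iterate. The starting observation is that $T_a$ extends linearly to signed measures, and by Proposition~\ref{proposition : le flow est stationnaire} the two flows stabilize after finitely many iterations; hence $\mu_{x_1}(a) - \mu_{x_2}(a) = T_a^k(\delta_{x_1} - \delta_{x_2})$ for all sufficiently large $k$, so it is enough to bound the $\ell^{1}$-decay of $T_a^k$ applied to the difference $\delta_{x_1} - \delta_{x_2}$.

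For the contraction at a single regular step, the key input is Lemma~\ref{lemme: pas regulier du flow de points proches s'interctent}: when the two sources are close and equidistant from $a$, the slices $\mathcal{S}_{a,x_1}$ and $\mathcal{S}_{a,x_2}$ have a common element. Combined with Proposition~\ref{cardinal cones}, which bounds the cardinality of these slices uniformly in terms of the finesse function of $X_{c}$, one concludes that the uniform measures $T_a(\delta_{x_1})$ and $T_a(\delta_{x_2})$ share a definite proportion of mass, yielding $\|T_a(\delta_{x_1}) - T_a(\delta_{x_2})\|_{1} \le 2(1-c)$ for some $c > 0$ depending only on $\delta$ and the finesse. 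The off-diagonal case $d_{c}(a,x_1) \ne d_{c}(a,x_2)$ is reduced to the equidistant case by applying $T_a$ once more so that the two iterates synchronize their distances from $a$, and one verifies via Proposition~\ref{proposition : les masks sont uniformément bornés} that the two evolving supports remain within a uniform neighborhood of each other at every step, preserving the hypothesis of Lemma~\ref{lemme: pas regulier du flow de points proches s'interctent} at the next iteration. Since each regular step advances the distance from $a$ by $5\delta$, iterating this over the $\lfloor d_{c}(a,x)/(5\delta) \rfloor$ regular steps produces a factor $\kappa_{1}^{d_{c}(a,x)}$ for some $\kappa_{1}\in(0,1)$.

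To incorporate the angular sum $\Theta(a,x)$, I would partition the geodesic $[x,a]_{c}$ at its vertices of infinite valence. Whenever such a vertex $u$ carries an angle $\measuredangle_{u}(a,x) > (2000\delta)^{2}$, Proposition~\ref{proposition : grand angle comme des checkpoints} forces $\mu_{x_{1}}(a) = \mu_{u}(a) = \mu_{x_{2}}(a)$ as soon as the flow has first crossed $u$, so such vertices act as annihilating checkpoints and convert the portion of $\Theta$ they contribute into pure decay. For vertices with moderate angles, the slices $\mathcal{S}_{a,x}$ traversing them have cardinality growing with the angle, so the overlap argument from the previous paragraph yields a contraction factor whose exponent is comparable to the angle at that vertex. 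Summing these contributions over the geodesic provides an extra factor $\kappa_{2}^{\Theta(a,x)}$ for some $\kappa_{2} < 1$, and taking $\kappa := \max(\kappa_{1}^{1/(5\delta)},\kappa_{2})$ gives the announced bound $\kappa^{d_{c}(a,x)+\Theta(a,x)}$, valid once $d_{c}(a,x)+\Theta(a,x) \ge R_{0}$ so that the initial transient of the flow (the initial and ending steps of Definition~\ref{definition : le flow step}) has been absorbed into the constant.

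The main obstacle, in my view, is orchestrating the contraction uniformly across the different flow regimes of Definition~\ref{definition : le flow step}—initial, regular, ending, and stationary—and across vertices of finite versus infinite valence, while maintaining the invariant that the evolving pair of measures remains close enough for Lemma~\ref{lemme: pas regulier du flow de points proches s'interctent} to apply at every subsequent step. The role of $R_{0}$ is precisely to allow one to discard the boundary regimes where the step type changes, and to guarantee that the exponential decay rate one extracts is uniform in all the parameters involved.
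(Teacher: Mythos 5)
This proposition is quoted from Chatterji--Dahmani; the paper does not reprove it, so there is no in-text proof to compare yours against. Your overall scaffolding is plausible: express $\mu_{x_1}(a)-\mu_{x_2}(a)$ as an iterate $T_a^k(\delta_{x_1}-\delta_{x_2})$, use Lemma~\ref{lemme: pas regulier du flow de points proches s'interctent} together with the uniform bound on slice cardinalities from Proposition~\ref{cardinal cones} to extract a per-step $\ell^1$-contraction at regular steps, handle angles $> (2000\delta)^{2}$ with Proposition~\ref{proposition : grand angle comme des checkpoints}, and absorb the boundary regimes into $R_0$.

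There is however a genuine flaw in your treatment of moderate angles. You claim that ``slices traversing [a vertex with moderate angle] have cardinality growing with the angle, so the overlap argument\ldots yields a contraction factor whose exponent is comparable to the angle.'' This is backwards. The per-step contraction factor obtained from the overlap argument is $1 - K/N$, where $N$ bounds the cardinality of the slices and $K$ the size of the overlap: if $N$ grows with the angle while $K$ does not, the factor moves \emph{toward} $1$ and the contraction \emph{weakens}. There is no mechanism in your sketch by which a larger angle at a fixed vertex produces more decay per step. In fact, no such mechanism is needed: once all angles on $[x,a]_{c}$ are bounded by $(2000\delta)^{2}$ (otherwise Proposition~\ref{proposition : grand angle comme des checkpoints} yields $\|\mu_{x_1}(a)\Delta\mu_{x_2}(a)\| = 0$), one has $\Theta(a,x) \leq (2000\delta)^{2}\, d_{c}(a,x)$, so the pure distance-driven contraction $\kappa_0^{\lfloor d_c(a,x)/5\delta\rfloor}$ already implies $\kappa^{d_c(a,x)+\Theta(a,x)}$ after replacing $\kappa_0$ by $\kappa = \kappa_0^{1/(5\delta(1+(2000\delta)^2))}$. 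Your appeal to angle-dependent slice growth is both unjustified and unnecessary, and replacing it with this elementary bookkeeping would repair the argument. You should also be aware that your synchronization claim (``applying $T_a$ once more so that the two iterates synchronize their distances from $a$'') fails in the parity case where $d_c(a,x_1)$ is an exact multiple of $5\delta$ and $d_c(a,x_2)$ is not; the iterates then remain offset by $5\delta$ under $T_a$, and the coupling needs to be set up on the level of the slices (which consist of points equidistant from $a$) rather than by chasing a single parity.
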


The following corollary is a reformulation of Proposition \ref{proposition : la difference de deux masks est controlé par l'angle}.

\begin{corollary}\label{corollary : diffenrece entre deux masks loin nouvelle version}
There exists a constant $\kappa<1$ and $R_{0}>0$ such that, for all $x,y \in X_{c}$ at distance $1$ from each other, for all $a \in X_{c}$ and for all geodesics $[x,a]_{c}$, $[y,a]_{c}$, if $\Theta_{>(2000\delta)^{2}}(x,a)$, $\Theta_{>(2000\delta)^{2}}(y,a)$  denotes respectively the sum of angles greater than $(2000\delta)^{2}$ on the geodesic $[x,a]_{c}$, $[y,a]_{c}$.

For any $R\geq R_{0}$,
if $\Theta_{>(2000\delta)^{2}}(x,a)+d_{c}(x,a) \geq R$ one has:

\begin{itemize}
    \item $ || \mu_{x}(a) \Delta \mu_{y}(a) || \leq \kappa^{R-1} , $

    \item and $ || \mu_{x}(a) \Delta \mu_{y}(a) || \leq \kappa^{\text{min}(\Theta_{>(2000\delta)^{2}}(x,a)+d_{c}(x,a),\Theta_{>(2000\delta)^{2}}(y,a)+d_{c}(y,a))}.$
\end{itemize}

\end{corollary}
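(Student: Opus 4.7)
The plan is to derive this corollary as a direct consequence of Proposition \ref{proposition : grand angle comme des checkpoints} and Proposition \ref{proposition : la difference de deux masks est controlé par l'angle}, by separating the contribution of the ``checkpoint'' angles (those exceeding $(2000\delta)^2$) from the rest of the geodesic data. The whole argument is a short case analysis, and I do not expect a genuine obstacle: the substantive content already lies in the two previous propositions.

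Fix $x,y \in X_c$ with $d_c(x,y)=1$, fix $a \in X_c$, and fix geodesics $[x,a]_c$ and $[y,a]_c$. Suppose first that one of them, say $[x,a]_c$, contains a vertex $u$ at which the angle exceeds $(2000\delta)^2$. Then Proposition \ref{proposition : grand angle comme des checkpoints} applies at $u$ and forces $\mu_x(a) = \mu_y(a) = \mu_u(a)$, so $\lVert \mu_x(a) \Delta \mu_y(a) \rVert = 0$ and both inequalities of the corollary hold trivially. The same argument, with the roles of $x$ and $y$ swapped, handles a checkpoint on $[y,a]_c$.

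It remains to treat the case where neither geodesic contains a checkpoint, so that $\Theta_{>(2000\delta)^2}(x,a) = \Theta_{>(2000\delta)^2}(y,a) = 0$. The hypothesis of the first bullet then reduces to $d_c(x,a) \geq R \geq R_0$. Let $z \in \{x,y\}$ be the point closer to $a$, so that $d_c(z,a) \geq R-1$. Choosing $R_0$ one unit larger than the threshold of Proposition \ref{proposition : la difference de deux masks est controlé par l'angle} ensures that the hypotheses of that proposition are met at $z$, and since $\Theta(z,a) \geq 0$ and $\kappa < 1$, applying it yields
\[
\lVert \mu_x(a) \Delta \mu_y(a) \rVert \;\leq\; \kappa^{d_c(z,a)+\Theta(z,a)} \;\leq\; \kappa^{d_c(z,a)} \;\leq\; \kappa^{R-1},
\]
which is the first bullet. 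The second bullet follows from the same estimate after noticing that in the no-checkpoint case one has $d_c(z,a) = \min(d_c(x,a), d_c(y,a)) = \min\bigl(\Theta_{>(2000\delta)^2}(x,a)+d_c(x,a),\, \Theta_{>(2000\delta)^2}(y,a)+d_c(y,a)\bigr)$. The only care required is the bookkeeping of constants $R_0$ and $\kappa$ so that Proposition \ref{proposition : la difference de deux masks est controlé par l'angle} can be invoked with compatible parameters.
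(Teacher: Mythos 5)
Your proof is correct and follows the same route as the paper: split on whether a geodesic contains a vertex with angle exceeding $(2000\delta)^{2}$, kill the first case via Proposition \ref{proposition : grand angle comme des checkpoints}, and in the no-checkpoint case apply Proposition \ref{proposition : la difference de deux masks est controlé par l'angle} at the point of $\{x,y\}$ closer to $a$, using $\Theta_{>(2000\delta)^2}=0$ to identify $d_c(\cdot,a)$ with the quantity in the exponent. Your bookkeeping of $R_0$ (shifting by $1$) and your reading of the second bullet as $d_c(z,a)=\min(\cdots)$ in the no-checkpoint case match the paper's argument.
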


\begin{proof}

We fix two geodesics $[x,a]_{c}$ and $[y,a]_{c}$. We set $\Theta(x,a)$ the sum of angles on the geodesic $[x,a]_{c}$, respectively $\Theta(y,a)$ for the geodesic $[y,a]_{c}$. 

If $ \Theta_{(2000\delta)^{2}}(x,a)>0 $, then there exists $u \in I_{c}(x,a)$ such that $\measuredangle_{u}(a,x)> (2000\delta)^{2}$, then according to Proposition \ref{proposition : grand angle comme des checkpoints}:
$$ || \mu_{x}(a) \Delta \mu_{y}(a) ||=0. $$

The same holds for $y$, so let us assume that $\Theta_{>(2000\delta)^{2}}(x,a)=0$ and $\Theta_{>(2000\delta)^{2}}(y,a)=0$.

We take $R_{0}$ as in Proposition \ref{proposition : la difference de deux masks est controlé par l'angle} and $R \geq R_{0}$, therefore we get what follows.

If $x$ is closer to $a$ than $y$, then:
$$ || \mu_{x}(a) \Delta \mu_{y}(a) || \leq \kappa^{d_{c}(a,x)+\Theta(a,x)} \leq \kappa^{d_{c}(a,x)+\Theta_{>(2000\delta)^{2}}(a,x)} \leq \kappa^{R-1}.  $$

If $y$ is closer to $a$ than $x$, then $d_{c}(a,x)+\Theta_{>(2000\delta)^{2}}(a,y)=d_{c}(a,y)+\Theta_{>(2000\delta)^{2}}(a,x)-1 \geq R-1$ and:
$$ || \mu_{x}(a) \Delta \mu_{y}(a) || \leq \kappa^{d_{c}(a,y)+\Theta(a,y)} \leq \kappa^{d_{c}(a,y)+\Theta_{>(2000\delta)^{2}}(a,y)} \leq \kappa^{R-1}.$$

Moreover, we remark that:
$$  || \mu_{x}(a) \Delta \mu_{y}(a) || \leq \kappa^{\text{min}(\Theta_{>(2000\delta)^{2}}(x,a)+d_{c}(x,a),\Theta_{>(2000\delta)^{2}}(y,a)+d_{c}(y,a))}. $$

\end{proof}
The following proposition will be crucial for defining the strongly bolic metric. It shows that for $p$ large enough, for all $x, y \in X_{c}$ the difference of masks of $x$ and $y$ belongs to $l_{p}$.

\begin{proposition}\label{proposition : somme lp des différences de mask converge2}

There exists $p_{0}>1$ such that for all $p>p_{0}$ and for all $x, y \in X_{c}$ :
$$\sum_{a \in X_{c} } || \mu_{x}(a) \Delta \mu_{y}(a) ||^{p}  $$

is convergent.
    
\end{proposition}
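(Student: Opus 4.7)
The plan is to split the sum over $a\in X_c$ into a \emph{near part} (where $d_c(x,a)+\Theta'_{>M}(x,a)$ is small, with $M=(2000\delta)^2$) and a \emph{far part}, then bound the near part by finiteness and the far part by combining exponential mask decay from Corollary~\ref{corollary : diffenrece entre deux masks loin nouvelle version} with exponential growth of shells in $X_c$ controlled by the distance formula (Corollary~\ref{corollary : reformulation de la formule de la distance}). Convergence then follows by choosing $p$ large enough that the geometric series converges.

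First I would reduce to the case $d_c(x,y)=1$. For general $x,y$, fix a $d_c$-geodesic $x=x_0,\ldots,x_n=y$ with $n=d_c(x,y)$ and apply the $\ell^1$-triangle inequality together with convexity of $t\mapsto t^p$ to obtain
$$\|\mu_x(a)\Delta\mu_y(a)\|^p \;\le\; n^{p-1}\sum_{i=0}^{n-1}\|\mu_{x_i}(a)\Delta\mu_{x_{i+1}}(a)\|^p,$$
so summability for consecutive pairs in $X_c$ implies it in general. From now on assume $d_c(x,y)=1$, write $N_a:=d_c(x,a)+\Theta'_{>M}(x,a)$, and let $\kappa<1$, $R_0>0$ be as in Corollary~\ref{corollary : diffenrece entre deux masks loin nouvelle version}. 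On the near part $N_a\le R_0+1$, each term is at most $2^p$; the distance formula yields $d_G(x,a)\le A(R_0+1)+B$ for $a\in G$, and each cone vertex in this range corresponds to a coset with a representative in the same $d_G$-ball (see below), so there are only finitely many contributing $a$. On the far part $N_a>R_0+1$, Corollary~\ref{corollary : diffenrece entre deux masks loin nouvelle version} gives $\|\mu_x(a)\Delta\mu_y(a)\|\le\kappa^{N_a}$; grouping $a$ by $\lfloor N_a\rfloor=n$, the far part is bounded by $\sum_{n>R_0}\mathcal{N}(n)\,\kappa^{pn}$, where $\mathcal{N}(n)$ counts the shell.

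The main obstacle is the exponential bound $\mathcal{N}(n)\le Ce^{\lambda n}$, made nontrivial by the non-local-finiteness of $X_c$ at cone vertices. For $a\in G$, the distance formula converts $N_a\le n$ into $d_G(x,a)\le An+B$, and exponential growth of the Cayley graph of $G$ gives an exponential count. For a cone vertex $a=\widehat{gP_i}$ with $N_a\le n$, the penultimate vertex $g'$ of a $d_c$-geodesic from $x$ to $a$ lies in $gP_i$ and satisfies $d_c(x,g')+\Theta'_{>M}(x,g')\le n$ since removing the final edge does not increase either quantity; hence $d_G(x,g')\le An+B$, and because $g'$ determines the coset $gP_i$, the number of such cone vertices is at most the number of parabolics times $|B_G(x,An+B)|$, again exponential in $n$. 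Combining these yields $\mathcal{N}(n)\le Ce^{\lambda n}$ for some $\lambda>0$ independent of $x,y$, and the series $\sum_n e^{\lambda n}\kappa^{pn}$ converges whenever $p>\lambda/\log(1/\kappa)$; this threshold defines $p_0$.
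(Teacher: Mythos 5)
Your argument is correct, but it takes a genuinely different route from the paper's own proof. The paper's proof is essentially a one-liner: it cites Chatterji--Dahmani's Corollary~2.13, which already asserts the convergence for $x\in G$ and $y\in X_c$, and then handles the remaining case $x,y\in X_c^{\infty}$ by the $\ell^{p}$-triangle (Minkowski) inequality through an auxiliary $x_0\in G$:
\[
\Bigl(\sum_{a}\|\mu_x(a)\Delta\mu_y(a)\|^{p}\Bigr)^{1/p}\le\Bigl(\sum_{a}\|\mu_x(a)\Delta\mu_{x_0}(a)\|^{p}\Bigr)^{1/p}+\Bigl(\sum_{a}\|\mu_{x_0}(a)\Delta\mu_y(a)\|^{p}\Bigr)^{1/p}.
\]
You, by contrast, reprove the cited base case from the paper's internal ingredients: reduce to consecutive vertices on a $d_c$-geodesic via convexity of $t\mapsto t^{p}$, then combine the exponential decay of mask differences (Corollary~\ref{corollary : diffenrece entre deux masks loin nouvelle version}) with an exponential shell-count in the $d'_{>(2000\delta)^2}$-metric, handling cone vertices via their penultimate group-element neighbour. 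This counting argument is precisely the content of Lemma~\ref{lemme: lemme de comptage avec d'deuxmile} and is the same mechanism the paper uses later in the proof of Proposition~\ref{Proposition : sum des differences pour des voisins} (which additionally obtains a bound uniform over neighbouring pairs). So your route is more self-contained and transparent about where $p_0$ comes from (the competition $\kappa^{p}\gamma_G<1$), at the cost of duplicating work that the paper delegates to the reference and to the following proposition; the paper's route is shorter but opaque, since the exponent threshold is hidden inside the citation. Two small points to tighten in your write-up: after reducing to $d_c(x,y)=1$ you should note explicitly that you may take $x\in G$ (two cone vertices are never adjacent), since both the distance-formula step and Corollary~\ref{corollary : diffenrece entre deux masks loin nouvelle version} are applied with $x$ as the basepoint; and when invoking the corollary you should take $R=N_a$ for the geodesic realising $\Theta'_{>M}(x,a)$, giving the bound $\kappa^{N_a-1}$ rather than $\kappa^{N_a}$ (harmless, but worth stating).
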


\begin{proof}
According to \cite[Corollary 2.13.]{ChatterjiDahmani}\label{proposition : somme lp des différences de mask converge}, the proposition is true for $x\in G$ and $y\in X_{c}$.

Let assume that $ x,y \in X_{c}^{\infty}$. Let $x_{0} \in G$, we have the following:
$$\begin{aligned}
(\sum_{a \in X_{c}}  || \mu_{x}(a) \Delta \mu_{y}(a) ||^{p} )^{\frac{1}{p}}&\leq ( \sum_{a \in X_{c} } || \mu_{x}(a) \Delta \mu_{x_{0}}(a) ||^{p} ) ^{\frac{1}{p}}+ ( \sum_{a \in X_{c} } || \mu_{x_{0}}(a) \Delta \mu_{y}(a) ||^{p} ) ^{\frac{1}{p}}\\
& < \infty (\text{ according to \cite[Corollary 2.13.]{ChatterjiDahmani}}).
\end{aligned}$$

This proves the proposition.    
\end{proof}

The following proposition shows that, for neighbors, there is a uniform bound on this sum. Although this result is used in \cite{ChatterjiDahmani}, it is not stated explicitly; we include a proof here for the reader’s convenience.

\begin{proposition}\label{Proposition : sum des differences pour des voisins}

Let $p_{0}$ as in Proposition \ref{proposition : somme lp des différences de mask converge}. For all $p>p_{0}$, there exists $D>0$, which depends only on $p$ such that for all $x,y \in X_{c}$ with $d_{c}(x,y)=1$, we have:
$$\sum_{a \in X_{c} } || \mu_{x}(a) \Delta \mu_{y}(a) ||^{p}\leq D  .$$

\end{proposition}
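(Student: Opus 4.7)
The plan is to exploit the $G$-equivariance of the entire mask construction to reduce the uniform bound to finitely many orbit representatives of edges in $X_c$, and then apply Proposition \ref{proposition : somme lp des différences de mask converge2} to each representative.

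First I would check that masks are $G$-equivariant. Every ingredient entering Definition \ref{defintion : definition du mask} and Definition \ref{definition : le flow step} (angles, cones, conical $\alpha$-geodesics, the classification of flow steps, the slices $\mathcal{S}_{a,x}$) is defined purely from the graph structure of $X_c$, on which $G$ acts by isometries preserving vertex valences. Hence for every $g \in G$ and all vertices $x, a \in X_c$ one has $\mu_{gx}(ga) = g_{\ast}\mu_{x}(a)$. Since pushforward by an isometry preserves $\ell^{1}$-norms, the quantity
$$ S(x,y) := \sum_{a \in X_c} \lVert \mu_x(a) \Delta \mu_y(a) \rVert^{p} $$
depends only on the $G$-orbit of the ordered pair $(x,y)$ in $X_c \times X_c$.

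Next I would enumerate the $G$-orbits of edges $\{x,y\}$ of $X_c$. By construction of the coned-off graph, any such edge falls into one of two types: either both endpoints lie in $G$, in which case $y = xs$ for some $s$ in the (finite) generating set $S \cup S^{-1}$; or one endpoint has infinite valence and equals $\widehat{gP_i}$ with the other equal to some $gh \in gP_i$. Translating by $y^{-1}$ in the first case and by $(gh)^{-1}$ in the second case (which sends $\widehat{gP_i}$ to $\widehat{h^{-1}P_i} = \widehat{P_i}$), we see that the orbits of ordered pairs at distance $1$ admit representatives among the finite list $(e,s)_{s \in S \cup S^{-1}}$ together with the pairs $(e, \widehat{P_i})$ and $(\widehat{P_i}, e)$ for $i = 1, \dots, n$. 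There is no edge between two vertices of infinite valence, so the list is exhaustive.

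Setting $D := \max S(x,y)$ over this finite list of representatives then gives the desired uniform bound, each term being finite by Proposition \ref{proposition : somme lp des différences de mask converge2}. The only delicate point I anticipate is the equivariance claim for the mask construction, which must be verified by tracking the equivariance of each individual flow step and of the stopping criterion $R(a,x)$; once this is checked, the reduction to finitely many orbit representatives is straightforward, and the remainder of the argument is a direct appeal to the previous proposition.
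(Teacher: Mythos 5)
Your argument is correct, and it takes a genuinely different (and shorter) route than the paper. The paper also uses equivariance, but only to translate so that $x = e$; it then re-derives a uniform bound on $\sum_a \|\mu_1(a)\Delta\mu_y(a)\|^p$ from scratch via the exponential decay estimate of Corollary \ref{corollary : diffenrece entre deux masks loin nouvelle version} together with the exponential counting bound for the spheres $S_{d'_{(2000\delta)^2}}(R)$, essentially reproving convergence along the way. You observe instead that $S(x,y)$ is constant on $G$-orbits of ordered pairs, that there are only finitely many orbits of edges in $X_c$ (the Cayley edges $\{e,s\}$, $s\in S$, and the cone edges $\{e,\widehat{P_i}\}$, using that no two infinite-valence vertices are adjacent), and that each representative value is finite by Proposition \ref{proposition : somme lp des différences de mask converge2}; taking the maximum over this finite list gives $D$. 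Your observation buys a cleaner proof with no new estimates, at the cost of not producing an explicit value for $D$; the paper's argument is longer but yields a more quantitative bound, and the exponential machinery it invokes is needed elsewhere anyway. The one step you flag as delicate — equivariance of the mask construction — is indeed true (every ingredient in Definitions \ref{definition : gedeosic conique}, \ref{definition : le flow step}, and \ref{defintion : definition du mask} is intrinsic to the graph, and the paper itself uses $\mu_x(a) = x.\mu_1(x^{-1}a)$ without comment), so it is not a gap.
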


\begin{proof}

By Definition \ref{definition: coned-off} of the coned-off graph, two adjacent vertices cannot both have finite valence.
Let assume that $x \in G, y  \in X_{c}$ and that $d_{c}(x,y)=1$. 
For all $a \in X_{c}$:
$$\begin{aligned} 
            || \mu_{x}(a) \Delta \mu_{y}(a) || &=|| x.\mu_{1}(x^{-1}a) \Delta x \mu_{x^{-1}y}(x^{-1}a) ||\\
            & =||  \mu_{1}(x^{-1}a) \Delta  \mu_{x^{-1}y}(x^{-1}a) || .
\end{aligned}$$

Therefore the inequality of the proposition is equivalent to the fact that for all $x \in X_{c}$, such that $d_{c}(1,x)=1$, there exists $D>0$ independent of $x$ such that:

$$\sum_{a \in X_{c} } || \mu_{1}(a) \Delta \mu_{x}(a) ||^{p}\leq D  .$$

Similarly to the proof of Corollary 2.13. in \cite{ChatterjiDahmani}, we consider the set $S_{d'_{(2000\delta)^{2}}}(R,G)= \{ g \in G, d'_{(2000\delta)^{2}}(1,g)=R \}$. Since $d'_{(2000\delta)^{2}}$ is quasi-isometric to $d_{G}$, there exists $A\geq 1$ and $\gamma_{G}>1$ such that $S_{d'_{(2000\delta)^{2}}}(R,G)$ has cardinality less than $A\gamma_{G}^{R}$.
We denote by $S_{d'_{(2000\delta)^{2}}}(R)$ the union of $S_{d'_{(2000\delta)^{2}}}(R,G)$ and of vertices $x \in X_{c}^{\infty}$ such that $d'_{(2000\delta)^{2}}(1,g)=R$. Its cardinality is at most $A'\gamma_{G}^{R}$ for a certain $A'$ related to the number of parabolic subgroups.

$\kappa$ denotes the constant of Proposition \ref{proposition : la difference de deux masks est controlé par l'angle} and Corollary \ref{corollary : diffenrece entre deux masks loin nouvelle version}. Then using Corollary \ref{corollary : diffenrece entre deux masks loin nouvelle version}, for $R$ large enough, we have:

$$ \displaystyle \sum_{a \in S_{d'_{(2000\delta)^{2}}}(R) } || \mu_{1}(a) \Delta \mu_{x}(a) ||^{p} \leq  (\kappa^{R-1})^{p} A' \gamma_{G}^{R}.$$

This inequality with the fact that $p$ is chosen in Proposition \ref{proposition : somme lp des différences de mask converge} such that moreover $ \kappa^{p} \leq \frac{1}{\gamma_{G}}$, proves that the sum $\sum_{a \in X_{c} } || \mu_{1}(a) \Delta \mu_{x}(a) ||^{p}$ converges and does not depend on $x$.

\end{proof}

The next lemma will be useful to describe the vertices of infinite valence, for which masks of neighbors do not intersect. We recall that for all $x,y \in X_{c}$, $$I_{c}(x,y):=\{z \in X_{c} ~|~ d_{c}(x,y)=d_{c}(x,z)+d_{c}(z,y) \}.$$

 \begin{lem}\label{proposition : les points a distance un du parabolique sont dans le support du masque}
For all $x \in X_{c}$ and all $a \in X_{c}^{\infty}$ with $a \neq x $, for all $u \in I_{c}(x,a)$ such that $d_{c}(a,u)=1$, we have:
$$ u \in \text{supp}(\mu_{x}(a)). $$

 \end{lem}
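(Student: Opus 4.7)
The plan is to trace the iterated flow step $T_a$ defining $\mu_x(a)$ and show by induction on the iteration count that some vertex on a fixed geodesic from $x$ to $a$ (terminating at $u$) remains in the support at every stage. Fix a geodesic $[a,x]_c$ that passes through $u$, and for $0 \leq i \leq d_c(a,x)$ let $v_i$ denote the vertex of this geodesic at distance $i$ from $a$, so that $v_0 = a$, $v_1 = u$ and $v_{d_c(a,x)} = x$. The key ingredient is Proposition \ref{proposition : propriete des alpha cone geodesic}: every vertex of a geodesic from $y$ to $a$ at distance $\rho$ from $a$ lies in the slice $\mathcal{S}_{a,y}(\rho)$. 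Consequently, whenever the flow step from a point $y$ on $[a,x]_c$ is initial or regular, the uniform measure $T_a(\delta_y)$ on $\mathcal{S}_{a,y}(5\delta r_{a,y})$ charges $v_{5\delta r_{a,y}}$, which again lies on $[a,x]_c$.

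Set $r_0 = r_{a,x}$. If $d_c(a,x) > 5\delta$, starting from $\delta_x = \delta_{v_{d_c(a,x)}}$ one step of the flow puts $v_{5\delta r_0}$ in the support. Applying the above observation inductively through the regular steps (at each step, $r$ decreases by exactly one because the current point sits at a distance that is a multiple of $5\delta$), we get that after $r_0$ iterations the support contains $v_{5\delta}$. Since $a \in X_c^\infty$ and $1 < 5\delta \leq 5\delta$, the flow step at $v_{5\delta}$ is an ending step by Definition \ref{definition : le flow step}, so $T_a(\delta_{v_{5\delta}})$ is the uniform measure on $\mathcal{S}_{a,v_{5\delta}}(1)$. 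The subsegment of $[a,x]_c$ between $a$ and $v_{5\delta}$ is itself a geodesic, so applying Proposition \ref{proposition : propriete des alpha cone geodesic} once more places $u = v_1$ in this slice, hence in $\mathrm{supp}\bigl(T_a^{r_0+1}(\delta_x)\bigr)$.

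To finish, note that once $u$ appears in the support, the flow step at $u$ is stationary (third bullet of the stationary case, since $a$ is infinite-valent and $u$ is a neighbor of $a$), so $u$ remains in the support of every subsequent iterate, and therefore $u \in \mathrm{supp}(\mu_x(a))$ by Definition \ref{defintion : definition du mask}. The two boundary cases are handled identically: if $d_c(a,x) = 1$, then necessarily $u = x$ and the step at $x$ is already stationary so $\mu_x(a) = \delta_x$; if $1 < d_c(a,x) \leq 5\delta$, the first step at $x$ is already ending and the same slice argument directly places $u$ in $\mathrm{supp}(T_a(\delta_x)) = \mathrm{supp}(\mu_x(a))$.

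The only real obstacle is the bookkeeping of which step type (initial, regular, ending, stationary) occurs at each iteration; no hyperbolic geometry is needed beyond the single fact that geodesic vertices of $I_c(y,a)$ land inside the slices $\mathcal{S}_{a,y}(\rho)$. The hypothesis $a \in X_c^\infty$ is essential precisely because the definition of an ending step when the distance falls in $(1,5\delta]$ requires infinite valence at $a$; at a finite-valent vertex the ending step collapses to a single Dirac and the statement would in general fail.
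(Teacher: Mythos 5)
Your proof is correct and follows essentially the same strategy as the paper's: exploit Proposition~\ref{proposition : propriete des alpha cone geodesic} (geodesic vertices land in the flow slices) and trace the iterated flow through the initial/regular/ending step regimes until an ending step delivers $u$ at distance $1$ from $a$, after which the stationary step locks it in. The one pleasant simplification in your write-up is that you fix a single geodesic $[a,x]_c$ through $u$ at the outset and track one marked vertex $v_{5\delta r}$ down it, whereas the paper's induction carries along the whole set of vertices of $I_c(x,a)$ at each level and then needs a short set-theoretic remark at the end of case~3 to see that these sets nest correctly; your version sidesteps that bookkeeping while proving exactly what the lemma requires.
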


 \begin{proof}

 There are four cases to consider:
\begin{enumerate}
 \item If $d_{c}(a,x)=1$, $x$ is the only vertex in $I_{c}(x,a)$ such that $d_{c}(a,x)=1$, so $u=x$. The first step is stationary, thus $\mu_{a}(x)=\delta_{x}$ and $x\in \text{supp}(\mu_{x}(a)).$

\item If $1 < d_{c}(a,x) \leq 5 \delta$, then the first step of the flow is ending. Therefore $T_{a}(\delta_{x})$ is the uniform probability measure supported by the slice $\mathcal{S}_{a,x}=\mathcal{S}_{a,x}(1)$ and according to Proposition \ref{proposition : propriete des alpha cone geodesic}, for all $u \in I_{c}(x,a)$ such that $d_{c}(a,u)=1$, we have $u \in \mathcal{S}_{a,x}(1)$ so:
$$ u \in \text{supp}(T_{a}(\delta_{x}))  .$$

For the next steps the flow is stationary, therefore for all $u \in I_{c}(x,a)$ such that $d_{c}(a,u)=1$, we have:
$$ u \in \text{supp}(\mu_{x}(a)).$$

Let us assume now that $d_{c}(a,x) > 5 \delta$. We set $r_{a,x}$ as in Definition \ref{definition : le flow step} to be the largest integer $r$ such that $5\delta r<d_{c}(a,x)$.

     \item If the first step of the flow is regular. We can show by induction that for all $k \in \{0,...,r_{a,x}-1\}$, for all $u \in I_{c}(x,a)$ such that $d_{c}(a,u)=(r_{a,x}-k)5\delta$, we have:
$$ u \in \text{supp}(T_{a}^{k+1}(\delta_{x})).$$

For the initialization, $T_{a}(\delta_{x})$ is the uniform probability measure supported by the slice $\mathcal{S}_{a,x}=\mathcal{S}_{a,x}(5\delta r_{a,x})$. Thereby for all $u \in I_{c}(x,a)$, $d_{c}(a,u)=5\delta r_{a,x}$, we have:
$$ u\in \text{supp}(T_{a}^{1}(\delta_{x})). $$

Let us assume that the results hold for some $k \in \{0,...,r_{a,x}-2\}$. Then for all $v \in I_{c}(a,x)$ with $d_{c}(a,v)=(r_{a,x}-k)5\delta$, we have:
$$ v \in \text{supp}(T_{a}^{k+1}(\delta_{x})). $$

A new step of the flow gives the fact that, for all $v \in I_{c}(a,x)$ with $d_{c}(a,v)=(r_{a,x}-k)5\delta$, for all $u \in I_{c}(a,v)$ with $d_{c}(a,u)=(r_{a,x}-(k+1))5\delta$:
$$u \in \text{supp}(T_{a}^{1}(\delta_{v})) \subset \text{supp}(T_{a}^{k+2}(\delta_{x})) . $$

To conclude the proof of the induction, we just have to remark that the set of such $u$ with $d_{c}(a,u)=(r_{a,x}-(k+1))5\delta$ and such that there exists $v \in I_{c}(a,x)$ with $d_{c}(a,v)=(r_{a,x}-k)5\delta$ with $ u \in I_{c}(a,v) $ is exactly the set of $u \in I_{c}(x,a)$ such that $d_{c}(a,u)=(r_{a,x}-(k+1))5\delta.$

Then, for all $v \in I_{c}(a,x)$ with $d_{c}(a,v)=5\delta$, we have:
$$v \in \text{supp}(T_{a}^{r_{a,x}}(\delta_{x})) ,$$
and therefore $\text{supp}(\mu_{v}(a)) \subset \text{supp}(\mu_{x}(a)) $ for such $v$.

A new use of the first point of the proof gives the fact that for all $v \in I_{c}(a,x)$ with $d_{c}(a,v)=5\delta$, for all $u \in I_{c}(v,a)$ such that $d_{c}(a,u)=1$ :
$$ u \in \text{supp}(\mu_{v}(a)).   $$

To conclude, we only have to remark that the set of such $u$ is exactly the set of $u \in I_{c}(x,a)$ such that $d_{c}(a,u)=1$.

\item If the first step of the flow is initial, then $T_{a}(\delta_{x})$ is the uniform probability measure supported by the slice $\mathcal{S}_{a,x}=\mathcal{S}_{a,x}(5\delta r_{a,x})$, therefore, for all $v \in I_{c}(x,a)$ such that $d_{c}(a,v)=5\delta r_{a,x}$, we have:
$$ v \in \text{supp}(T_{a}(\delta_{x})), $$

and in the limit we get: $$ \text{supp}(\mu_{v}(a)) \subset \text{supp}(\mu_{x}(a)) . $$

The previous point gives the fact that for all $v \in I_{c}(v,a)$ such that $d_{c}(a,v)=5\delta r_{a,x}$, and for all $u \in I_{c}(v,a)$ such that $d_{c}(a,u)=1$, we have:
$$ u \in \text{supp}(\mu_{v}(a)).  $$

To conclude, we have to remark that the set of such $u$ is exactly the set of $u \in I_{c}(x,a)$ such that $d_{c}(a,u)=1$.
     
\end{enumerate}
\end{proof}

Here, we provide a description of the vertices for which the masks of neighboring points do not intersect.

\begin{proposition} \label{proposition : les masques s'intersectent sauf pour un}
Let $x,y \in X_{c}$ with $d_{c}(x,y)=1$, for all $a \in X_{c}^{\infty}$ with:
$$\text{supp}(\mu_{x}(a)) \cap \text{supp}(\mu_{y}(a)) = \emptyset, $$

we have:
$$a  \in Cone_{(2000\delta)^{2}}(\{x,y\}). $$
    
\end{proposition}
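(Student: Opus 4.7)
The plan is to argue by contrapositive: assume $a \notin Cone_{(2000\delta)^{2}}(\{x,y\})$ and produce a common point in $\text{supp}(\mu_{x}(a)) \cap \text{supp}(\mu_{y}(a))$. First I would reduce to the case $d_{c}(x,a) = d_{c}(y,a)$: if instead (say) $d_{c}(y,a) = d_{c}(x,a)+1$, then $x$ lies on a geodesic from $y$ to $a$, so any neighbor of $a$ on $[x,a]_{c}$ also lies on the extended geodesic $[y,a]_{c}$ and by Lemma~\ref{proposition : les points a distance un du parabolique sont dans le support du masque} belongs to both supports.

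Set $N := d_{c}(x,a) = d_{c}(y,a)$ and apply the triangle-normal-form Theorem~\ref{formenormaledestriangles} to the triangle $(x,y,a)$. Because $d_{c}(x,y)=1$ the side $[x,y]_{c}$ has no interior vertex, so the definition of the $\tilde{\cdot}$ points forces $\tilde{x}=x$ and $\tilde{y}=y$. If $\tilde{a}\neq a$, the edge of $[a,\tilde{a}]$ incident to $a$ is shared by the geodesics $[a,x]_{c}$ and $[a,y]_{c}$ produced by the normal form, giving a neighbor of $a$ in $I_{c}(x,a)\cap I_{c}(y,a)$ and hence a common point in the two supports. So I may assume $\tilde{a}=a$; then Theorem~\ref{formenormaledestriangles} bounds every interior angle of the resulting geodesic $[x,a]_{c}=x,u_{1},\dots,u_{N-1},a$ by $100\delta$, and Proposition~\ref{proposition: angle en a tilde} applied at $\tilde{x}=x$ yields $\measuredangle_{x}(\{x,y\},\{x,u_{1}\}) \leq 12\delta$. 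In particular $\Theta_{>(2000\delta)^{2}}(x,a)=0$.

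Now Corollary~\ref{corollary : diffenrece entre deux masks loin nouvelle version} gives $\|\mu_{x}(a)\Delta\mu_{y}(a)\| \leq \kappa^{d_{c}(x,a)-1}$ as soon as $d_{c}(x,a)\geq R_{0}$. Since $\kappa<1$ this upper bound is strictly less than $2$, whereas disjoint supports would force the $\ell^{1}$-norm to equal $2$. Hence disjoint supports imply $N<R_{0}$. Taking $(2000\delta)^{2}\geq R_{0}$, the path $e_{0}=\{x,y\},\ e_{1}=\{x,u_{1}\},\ \dots,\ e_{N}=\{u_{N-1},a\}$ then has length $N \leq (2000\delta)^{2}$ with successive angles bounded by $\max(12\delta,100\delta) \leq (2000\delta)^{2}$; this exhibits $a \in Cone_{(2000\delta)^{2}}(\{x,y\})$, contradicting the standing hypothesis.

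The main obstacle is the normal-form step: showing that disjoint supports force $\tilde{a}=a$, so that no common neighbor of $a$ appears in the two geodesic intervals. Once this branching-at-$a$ fact is in place, Proposition~\ref{proposition: angle en a tilde} controls precisely the angle not handled by the geodesic itself — the initial angle at $x$ between $\{x,y\}$ and the first edge of $[x,a]_{c}$ — and the remainder is a bookkeeping combination of the normal form, Corollary~\ref{corollary : diffenrece entre deux masks loin nouvelle version}, and the definition of a cone.
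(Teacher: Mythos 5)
Your proposal is correct in its reductions but diverges from the paper's proof in the key step and relies on an unverified assumption. Let me compare the two and then point out the gap.

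The paper's proof, after reducing to $d_{c}(x,a)=d_{c}(y,a)$ and to angles $\leq(2000\delta)^{2}$ (via Proposition~\ref{proposition : grand angle comme des checkpoints}), assumes $d_{c}(a,x)>10\delta$ and runs a case analysis on the first flow step (regular vs.\ initial), applying Lemma~\ref{lemme: pas regulier du flow de points proches s'interctent} to exhibit an intersection of supports. This yields the strong bound $d_{c}(a,x)\leq 10\delta$, which is manifestly $\leq(2000\delta)^{2}$. You replace this by the normal-form Theorem~\ref{formenormaledestriangles}: disjoint supports and $d_{c}(x,a)=d_{c}(y,a)$ force $\tilde{a}=a$ (otherwise the shared first edge of $[a,x]_{c}$ and $[a,y]_{c}$ gives, via Lemma~\ref{proposition : les points a distance un du parabolique sont dans le support du masque}, a common support point), and $\tilde{x}=x$, $\tilde{y}=y$ since $d_{c}(x,y)=1$ rules out the alternative. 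Then all interior angles of $[x,a]_{c}$ are $\leq 100\delta$ and the initial angle at $x$ is $\leq 12\delta$ by Proposition~\ref{proposition: angle en a tilde}, so $\Theta_{>(2000\delta)^{2}}(x,a)=0$, and you can call Corollary~\ref{corollary : diffenrece entre deux masks loin nouvelle version}. Disjoint supports give $\|\mu_{x}(a)\Delta\mu_{y}(a)\|=2$, while the corollary would force the norm $<2$ if $d_{c}(x,a)\geq R_{0}$; so $d_{c}(x,a)<R_{0}$. This is an elegant route that avoids the flow-step case analysis altogether.

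The gap is the phrase ``Taking $(2000\delta)^{2}\geq R_{0}$''. This inequality is never established in the paper: $R_{0}$ is the threshold constant imported from Proposition~\ref{proposition : la difference de deux masks est controlé par l'angle} (Chatterji--Dahmani's Proposition~2.11), and nothing in the paper relates it to $(2000\delta)^{2}$. Your argument only shows $d_{c}(x,a)<R_{0}$, and without $R_{0}\leq(2000\delta)^{2}$ (or $R_{0}\leq(2000\delta)^{2}+1$) you cannot conclude that the cone path of length $N=d_{c}(x,a)$ satisfies $N\leq(2000\delta)^{2}$, so you cannot place $a$ in $Cone_{(2000\delta)^{2}}(\{x,y\})$ with that specific constant. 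The paper sidesteps this issue entirely because the flow-step analysis gives the explicit bound $10\delta$, which does not depend on the opaque threshold $R_{0}$. To close your argument you would need either to bound $R_{0}$ from the construction of the flow, or to weaken the conclusion to $a\in Cone_{\max(R_{0},(2000\delta)^{2})}(\{x,y\})$, which would in turn force changes everywhere the constant $(2000\delta)^{2}$ is used downstream.
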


\begin{proof}

Let $a \in X_{c}^{\infty}$.
If $a=x$ or $a=y$, the result is clear, so we assume that $a \neq x, a\neq y$.

If $d_{c}(x,a)\neq 
d_{c}(y,a)$, we can assume that $d_{c}(x,a)>d_{c}(y,a)$. Since $d_{c}(x,y)=1$, we know that $y\in I_{c}(x,a)$. According to Proposition \ref{proposition : les points a distance un du parabolique sont dans le support du masque}, for all $u \in I_{c}(y,a)$ and $d_{c}(a,u)=1$, we have:
$$ u \in \text{supp}(\mu_{y}(a)). $$

Moreover, $ u \in I_{c}(y,a) \subset I_{c}(x,a) $, so $ u \in \text{supp}(\mu_{x}(a)) $ and then:
$$ \text{supp}(\mu_{x}(a)) \cap \text{supp}(\mu_{y}(a)) \neq \emptyset. $$\\

Therefore, we can assume that $d_{c}(x,a)=d_{c}(y,a)$. We can also assume that the angles between $a$ and $x$, and between $a$ and $y$ are bounded above by $(2000\delta)^{2}$. Otherwise, a use of Proposition \ref{proposition : grand angle comme des checkpoints} will give that $\text{supp}(\mu_{x}(a))=\text{supp}(\mu_{y}(a))$. Therefore, to prove the proposition, we just have to prove that:
$$ d_{c}(a,x) \leq (2000 \delta)^{2}.$$
We will, in fact, prove that $d_{c}(a,x)\leq 10\delta$.

Let us assume that $d_{c}(a,x) > 10 \delta$.  We will show that $\text{supp}(\mu_{x}(a)) \cap \text{supp}(\mu_{y}(a)) \neq \emptyset$.  We have two cases to consider.

\begin{itemize}
    \item If the flow step for $x$ and $y$ is regular. Since $d_{c}(x,y)=1\leq 8\delta $ and $d_{c}(x,a)=d_{c}(y,a)$, we can apply Lemma \ref{lemme: pas regulier du flow de points proches s'interctent} to see that:
$$ \text{supp}(T_{a}(\delta_{x})) \cap \text{supp}(T_{a}(\delta_{y})) \neq \emptyset , $$

     thus we deduce that $\text{supp}(\mu_{x}(a)) \cap \text{supp}(\mu_{y}(a)) \neq \emptyset$.

    \item If the flow step for $x$ and $y$ is initial. Since $d_{c}(a,x) > 10 \delta$, there exists $k>1$ such that $T_{a}(\delta_{x})$ is the uniform probability measure supported by the slice $\mathcal{S}_{a,x}=\mathcal{S}_{a,x}(5k \delta)$ and $T_{a}(\delta_{y})$ is the uniform probability measure supported by the slice $\mathcal{S}_{a,y}=\mathcal{S}_{a,y}(5 k\delta)$.

    Let us consider $x'\in I_{c}(x,a)$ and $y' \in I_{c}(y,a)$ such that $d_{c}(x',a)=d_{c}(y',a)=5k\delta$, we know that $x' \in \text{supp}(T_{a}(\delta_{x})) $ and that $y' \in \text{supp}(T_{a}(\delta_{y}))$. Moreover, by $\delta$-hyperbolicity, we remark that $d_{c}(x',y')\leq 2\delta +1 \leq 8\delta$. Since $k>1 $, the flow step for $x'$ and $y'$ is regular, then a new use of Lemma \ref{lemme: pas regulier du flow de points proches s'interctent} give that:
$$ \text{supp}(T_{a}(\delta_{x'})) \cap \text{supp}(T_{a}(\delta_{y'})) \neq \emptyset  .$$

    Since $\text{supp}(T_{a}(\delta_{x'})) \cap \text{supp}(T_{a}(\delta_{y'})) \subset \text{supp}(T_{a}^{2}(\delta_{x})) \cap \text{supp}(T_{a}^{2}(\delta_{y}))$, we conclude that:
$$\text{supp}(\mu_{x}(a)) \cap \text{supp}(\mu_{y}(a)) \neq \emptyset.$$

\end{itemize}

Therefore, we know that $d_{c}(a,x) \leq 10 \delta \leq (2000\delta)^{2}$ and thus $a \in Cone_{(2000\delta)^{2}}(\{x,y\}) $.

\end{proof}

With the following corollary, we count the number of masks that do not intersect for each pair of points.

\begin{corollary}\label{corollary : cardinal des points paraboliques avec distance entre les barycentres des masks nest pas bien controlee}

Let us fix $x,y \in X_{c}$. Let us assume that $d_{c}(x,y)=n$.

Consider a geodesic $[ x,y]_{c}$ between $x$ and $y$ in $X_{c}$.

We order $[x,y]_{c}$ in the following sense: $[x,y]_{c}=\{x_{0},x_{1},...,x_{n}\}$ with $x_{0}=x$, $x_{n}=y$ and $d_{c}(x_{i},x_{i+1})=1$ for $0\le i \le n-1$.

We set:
$$\mathcal{A}:= \{ a \in X_{c}^{\infty} ~ | ~ \exists i_{0} \in  \{0,...,n-1\} ~ \text{such that} ~  \text{supp}(\mu_{x_{i_{0}}}(a)) \cap \text{supp}(\mu_{x_{i_{0}+1}}(a)) = \emptyset  \}.$$

Then $\mathcal{A} \subset \text{Cone}_{(2000\delta)^{2}}([x,y]_{c})$ and therefore there exists a constant $L>0$, which depends on the hyperbolicity constant and on the function of uniform finesse of $X_{c}$ only, such that :
$$ |\mathcal{A}| \leq L d_{c}(x,y).   $$

\end{corollary}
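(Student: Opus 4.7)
The plan is to apply the previous proposition (Proposition \ref{proposition : les masques s'intersectent sauf pour un}) edge-by-edge along the geodesic, and then bound the resulting set using the uniform finiteness of cones in uniformly fine graphs (Proposition \ref{cardinal cones}).

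First I would establish the inclusion $\mathcal{A} \subset \text{Cone}_{(2000\delta)^{2}}([x,y]_{c})$. Fix $a \in \mathcal{A}$ and let $i_{0}$ be the index such that $\text{supp}(\mu_{x_{i_{0}}}(a)) \cap \text{supp}(\mu_{x_{i_{0}+1}}(a)) = \emptyset$. Since $d_{c}(x_{i_{0}}, x_{i_{0}+1}) = 1$, Proposition \ref{proposition : les masques s'intersectent sauf pour un} applies directly and yields $a \in \text{Cone}_{(2000\delta)^{2}}(\{x_{i_{0}}, x_{i_{0}+1}\})$. In particular, $a$ lies in the cone of parameter $(2000\delta)^{2}$ around the edge $\{x_{i_{0}}, x_{i_{0}+1}\}$ of $[x,y]_{c}$, which is itself contained in $\text{Cone}_{(2000\delta)^{2}}([x,y]_{c})$, viewed as the union of cones around each of the $n$ consecutive edges of the geodesic.

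Next I would bound the cardinality. The geodesic $[x,y]_{c}$ consists of exactly $d_{c}(x,y) = n$ edges, and $\text{Cone}_{(2000\delta)^{2}}([x,y]_{c})$ is the union of the $n$ cones $\text{Cone}_{(2000\delta)^{2}}(\{x_{i}, x_{i+1}\})$ for $i = 0, \dots, n-1$. By Proposition \ref{cardinal cones}, applied to the uniformly fine coned-off graph $X_{c}$, each such cone has cardinality bounded above by $f((2000\delta)^{2})$, where $f$ is the function of uniform finesse of $X_{c}$ and depends only on $\delta$ and the uniform finesse of $X_{c}$. Setting $L := f((2000\delta)^{2})$, a union bound gives $|\mathcal{A}| \leq n \cdot L = L \cdot d_{c}(x,y)$, completing the proof.

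There is no real obstacle here: the corollary is essentially a packaging of Proposition \ref{proposition : les masques s'intersectent sauf pour un} with the uniform bound on the size of cones in a uniformly fine hyperbolic graph. The only point to be slightly careful about is that $L$ depends only on $\delta$ and on the uniform finesse function of $X_{c}$, not on the pair $(x,y)$, which is exactly what Proposition \ref{cardinal cones} guarantees.
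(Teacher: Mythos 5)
Your proof is correct and follows essentially the same approach as the paper's: apply Proposition \ref{proposition : les masques s'intersectent sauf pour un} edge-by-edge to get the inclusion into $\text{Cone}_{(2000\delta)^{2}}([x,y]_{c})$, then use the uniform cone-size bound from Proposition \ref{cardinal cones} and a union bound over the $n$ edges.
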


\begin{proof}

Let $a \in \mathcal{A}$, then by definition there exists $i_{0} \in  \{0,...,n-1\}$ such that: 
$$ \text{supp}(\mu_{x_{i_{0}}}(a)) \cap \text{supp}(\mu_{x_{i_{0}+1}}(a)) = \emptyset. $$

According to Proposition \ref{proposition : les masques s'intersectent sauf pour un}, we have:
$$a \in \text{Cone}_{(2000\delta)^{2}}(\{x_{i_{0}},x_{i_{0}+1}\}) .$$

According to Proposition \ref{cardinal cones}, there exists $L>0$ such that, for all $e \in X_{c}^{(1)}$:
$$|\text{Cone}_{(2000\delta)^{2}}(e)| \leq L.$$

Thereby :
$$ \mathcal {A} \subset \bigcup \limits_{i=0}^{n-1} \text{Cone}_{(2000\delta)^{2}}(\{x_{i},x_{i+1}\})=\text{Cone}_{(2000\delta)^{2}}([x,y]_{c}) .$$

And to conclude :
$$ |\mathcal{A}| \leq | \bigcup \limits_{i=0}^{n-1} \text{Cone}_{(2000\delta)^{2}}(\{x_{i},x_{i+1}\}) | \leq L n \leq L d_{c}(x,y). $$

\end{proof}

\subsection{Definition of pseudo-metric associated to every vertex of the coned-off graph}\label{subsection: definition de da ,control}
In the rest of the text, $G$ will denote a group hyperbolic relative to admissible subgroups \ref{definition: lesbons parabolic}. Let $X_{c}$ denote one of its coned-off graphs. Let $d_{b}$ denote the strongly bolic pseudo-distance in the set of probability measures with support of diameter at most $C$, without distinction in the parabolic $P_{i}$, where $C$ is the constant of Proposition \ref{proposition : les masks sont uniformément bornés}.
Let $P_{i}$ be a parabolic subgroup,  $g \in G$, $ \mu, \nu \in \text{Proba}_{C}(gP_{i})$, we set:
$$d_{b}(\mu,\nu):=d_{b}(g^{-1}\mu,g^{-1}\nu).$$
This is well defined since $d_{b}$ is $P_{i}$-invariant.

For all $ a \in X_{c}$, we will define a pseudo-metric, which will serve as a crucial tool in the construction of the strongly bolic metric.

\begin{definition}\label{definition: da}

Let $x,y \in G$ we set:

\begin{itemize}
    \item when $a \in X_{c}^{\infty}$, $d_{a}(x,y):=d_{b}(\mu_{x}(a),\mu_{y}(a))$, where $d_{b}$ is the strongly bolic metric on the associated parabolic,

    \item when $a \in X_{c} \backslash X_{c}^{\infty} $, $d_{a}(x,y):= || \mu_{x}(a) \Delta \mu_{y}(a) ||$.
\end{itemize}

\end{definition}

In particular, for all $a \in X_{c}$, the associated function $d_{a}$ is a pseudo-metric.\\

The following lemma will be used to prove the crucial result that allows us to have precise control on $d_{a}$.

We recall that for all $x,y \in X_{c}$, for all $M>0$,
$\Theta'(x,y)$ denotes the minimal sum of angles on a geodesic between $x$ and $y$ and $\Theta'_{>M}(x,y)$ denotes the minimal sum of angles greater than $M$ on a geodesic between $x$ and $y$. 

 \begin{figure}[!ht]
   \centering
   \includegraphics[scale=0.4]{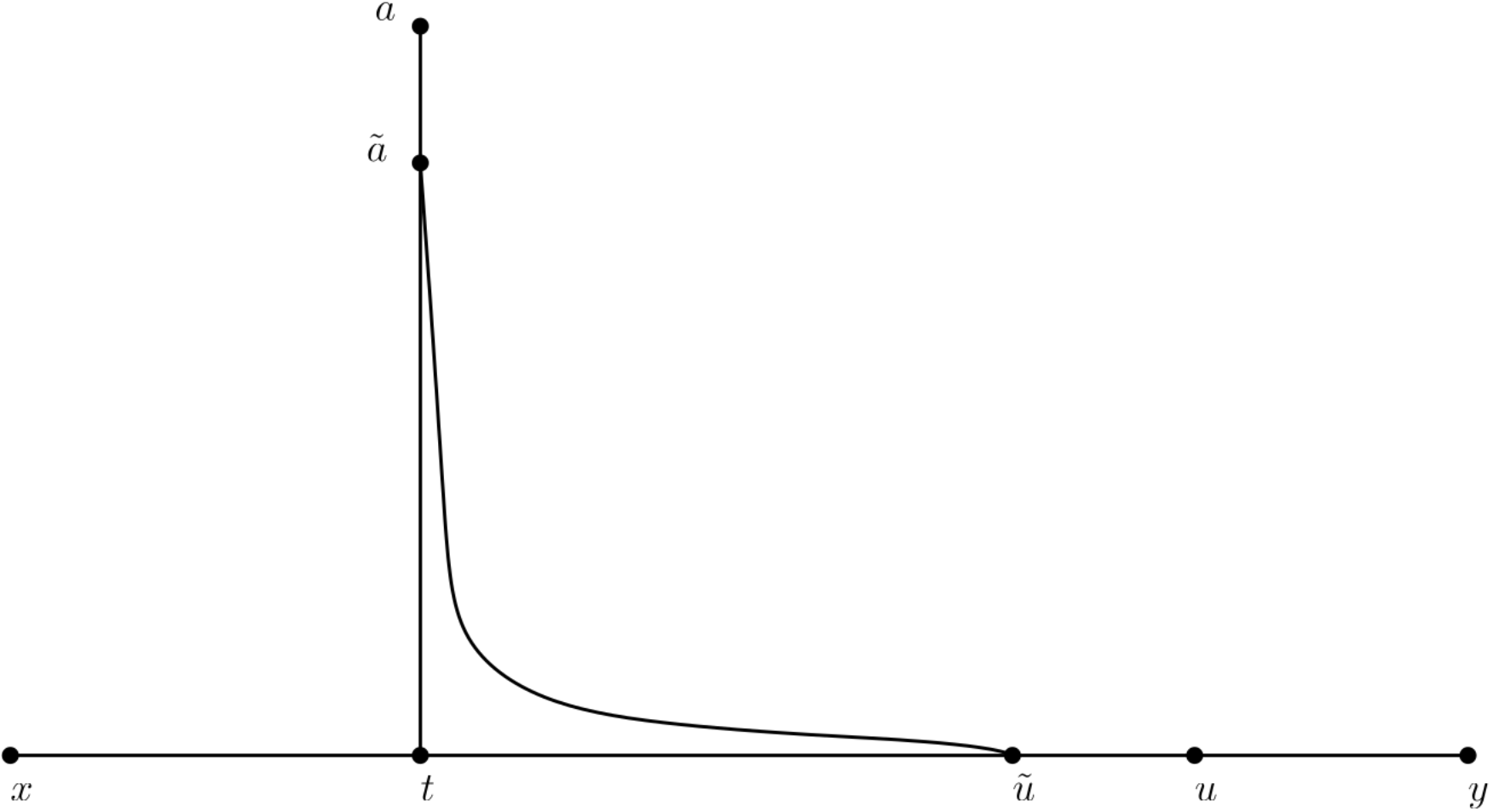}
  \caption{Angles on the sides of triangles $[\tilde{a},t,\tilde{u}]$ are bounded by $100\delta$.}
   \label{fig: somme des angles plus petite vers la projection}
\end{figure}

\begin{lem}\label{lem: somme des angles vers la projection}
Let $[x,y]_{c}$ be a geodesic in $X_{c}$, $u \in [x,y]_{c}$ and $a  \in X_{c}$, and $M>100 \delta$. Let $t \in X_{c}$ be a projection of $a$ on $[x,y]_{c}$.

There exist two geodesics $[u,a]_{c}$, $[t,a]_{c}$ such that:
$$\Theta_{>M}(a,u) \geq \Theta_{>M}(a,t)-12\delta, $$ where $\Theta_{>M}(a,u),\Theta_{>M}(a,t)$ denotes the sum of angles greater than $M$ on $[a,u]_{c}$, $[a,t]_{c}$ respectively.
In particular, we have:
$$\Theta_{>M}(a,u)\geq \Theta_{>M}'(a,t)-12\delta,$$

\end{lem}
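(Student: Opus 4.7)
The plan is to apply the normal form of Theorem \ref{formenormaledestriangles} to the triangle $[a,t,u]$, prescribing the side $[t,u]_{c}$ to be the subsegment of the ambient geodesic $[x,y]_{c}$ from $t$ to $u$. Let $\tilde{a},\tilde{t},\tilde{u}$ be the associated distinguished vertices from Definition \ref{pointloingrandangle}, and denote by $[a,t]_{c}$, $[a,u]_{c}$ the two resulting sides of the triangle.

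The first step is to show that $\tilde{t}=t$. Since $\tilde{t}$ lies on $[t,u]_{c}\subset [x,y]_{c}$, minimality of the projection gives $d_{c}(a,\tilde{t})\geq d_{c}(a,t)$; on the other hand, $\tilde{t}$ lies on the geodesic $[a,t]_{c}$ between $\tilde{a}$ and $t$, so $d_{c}(a,\tilde{t})=d_{c}(a,t)-d_{c}(t,\tilde{t})$. Combining these forces $d_{c}(t,\tilde{t})=0$, hence $\tilde{t}=t$. The two sides $[a,t]_{c}$ and $[a,u]_{c}$ therefore share a common initial segment $[a,\tilde{a}]$ and decompose as $[a,\tilde{a}]\cup[\tilde{a},t]$ and $[a,\tilde{a}]\cup[\tilde{a},\tilde{u}]\cup[\tilde{u},u]$ respectively, with $[\tilde{u},u]$ a subsegment of $[x,y]_{c}$.

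Because $M>100\delta$, the normal form control on interior angles along $[\tilde{a},t]$ and $[\tilde{a},\tilde{u}]$ (all bounded by $100\delta$) kills every interior contribution of these two segments to $\Theta_{>M}$. Only the angles on $[a,\tilde{a})$, at $\tilde{a}$, at $\tilde{u}$, and on the subsegment $(\tilde{u},u]$ of $[x,y]_{c}$ can contribute, and the first bulk is common to both sides. Writing $e_{0}$ for the last edge of $[a,\tilde{a}]$ and $e_{1}, e_{2}$ for the first edges of $[\tilde{a},t]$ and $[\tilde{a},\tilde{u}]$, Proposition \ref{proposition: angle en a tilde} applied to the triangle $[a,t,u]$ gives $\measuredangle_{\tilde{a}}(e_{1},e_{2})\leq 12\delta$, and the triangle inequality for angles (Proposition \ref{proposition: triangle inequality}) bounds the difference of the two angles at $\tilde{a}$ in $[a,t]_{c}$ and $[a,u]_{c}$ by $12\delta$. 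Since the extra contributions of $[a,u]_{c}$ at $\tilde{u}$ and along $(\tilde{u},u]$ are nonnegative, the inequality $\Theta_{>M}(a,u)\geq \Theta_{>M}(a,t)-12\delta$ follows, and the ``in particular'' clause is obtained by taking $[t,a]_{c}$ minimizing $\Theta_{>M}$.

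The main obstacles I expect are handling the degenerate cases where $\tilde{a}\in\{a,t\}$ or $\tilde{u}\in\{\tilde{a},u\}$ (the normal form collapses and a trivial bookkeeping is required), and transferring the $12\delta$ bound from the raw angle difference at $\tilde{a}$ to the truncated quantities entering $\Theta_{>M}$; this last point may require choosing the two geodesics so that the angles at $\tilde{a}$ both lie on the same side of the threshold $M$, which can be arranged by exploiting the freedom in picking the edges $e_{1},e_{2}$ consistently with the normal form.
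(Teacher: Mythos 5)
Your proof follows the same route as the paper: apply the normal form of Theorem~\ref{formenormaledestriangles} to $[a,t,u]$ prescribing the side $[t,u]_{c}\subset[x,y]_{c}$, observe $\tilde{t}=t$ from minimality of the projection, decompose $\Theta_{>M}$ along the two sides using the fact that interior angles on $[\tilde{a},t]$ and $[\tilde{a},\tilde{u}]$ are bounded by $100\delta<M$, and invoke Proposition~\ref{proposition: angle en a tilde} together with the triangle inequality for angles to bound the difference at $\tilde{a}$ by $12\delta$. Your identification of $\tilde{t}=t$ and the degenerate-cases bookkeeping match the paper's treatment.

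The obstacle you raise at the end is real, and it is present in the paper's proof as well: from $|\measuredangle_{\tilde{a}}(e_{1},e_{2}')-\measuredangle_{\tilde{a}}(e_{1},e_{2})|\leq 12\delta$ one does \emph{not} get $[\measuredangle_{\tilde{a}}(e_{1},e_{2}')]_{>M}\geq[\measuredangle_{\tilde{a}}(e_{1},e_{2})]_{>M}-12\delta$, since the threshold can cut between the two angles (e.g.\ $\measuredangle_{\tilde{a}}(e_{1},e_{2})=M+1$, $\measuredangle_{\tilde{a}}(e_{1},e_{2}')=M-10\delta$ gives a truncated difference of $M+1>12\delta$). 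Your proposed remedy --- choosing $e_{1},e_{2}$ so that both angles fall on the same side of $M$ --- is not substantiated: once the normal-form triangle is fixed, the first edges toward $t$ and toward $\tilde{u}$ vary by at most $6\delta$ by Proposition~\ref{proposition: les propirétés des angles}, which does not suffice to push both angles across the threshold. The honest conclusion of the shared argument is $\Theta_{>M}(a,u)\geq\Theta_{>M}(a,t)-(M+12\delta)$, which still serves the downstream applications (they only need a bound depending on $M$ and $\delta$), but the stated constant $12\delta$ is not established by this line of reasoning. In short: your approach replicates the paper's faithfully and you correctly flag the truncation step as the weak point, but the fix you sketch does not resolve it.
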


\begin{proof}
We can assume that $u \in [y,t]_{c}$. We choose a geodesic triangle $[u,a,t]_{c}$ as in Theorem \ref{formenormaledestriangles} in which the triangle $[\tilde{u},\tilde{a},\tilde{t}]_{c}$ fits nicely and is such that $[u,t]_{c} \subset [x,y]_{c}$, see Figure \ref{fig: somme des angles plus petite vers la projection}.

To begin, let us note that $\tilde{t} \in [y,t]_{c} \subset [x,y]_{c} $ and $\tilde{t} \in [t,a]_{c} $ then $\tilde{t}=t$ since $t$ is a projection of $a$ on $[x,y]_{c}$.

In the case, where $t,\tilde{a},\tilde{u}$ are not all different, we have $[a,t]_{c}\subset [a,u]_{c}$ and the result follows directly.

Let us assume that $t,\tilde{a},\tilde{u}$ are all different.
Let us denote, $\Theta_{>M}(a,u)$ the sum of angles greater than $M$ on $[a,u]_{c}$. In the case where $a \neq \tilde{a}$, we denote by $e_{1}$ the edge of $[a,\tilde{a}]_{c}$ such that $\tilde{a}\in e_{1}$. We denote by $e_{2}$ the edge of $[\tilde{a},t]_{c}$ such that $\tilde{a} \in e_{2}$ and $e_{2}'$ the edge of $[\tilde{a},\tilde{u}]_{c}$ such that $\tilde{a} \in e_{2}'$.
In the same way, we denote by $f_{1}$ the edge of $[\tilde{a},\tilde{u}]_{c}$ such that $\tilde{u}\in f_{1}$ and in the case where $\tilde{u}\neq u$, $f_{2}$ the edge of $[\tilde{u},u]_{c}$ with $\tilde{u}\in f_{2}$.
According to Theorem \ref{formenormaledestriangles}, the angles along $[\tilde{a},\tilde{u}]$ are bounded by $
100\delta$, and
therefore:
\begin{itemize}
    \item $\Theta_{>M}(a,u)=\Theta_{>M}(a,\tilde{a})+[\measuredangle_{\tilde{a}}(e_{1},e_{2}')]_{>M}+[\measuredangle_{\tilde{u}}(f_{1},f_{2})]_{>M}+\Theta_{>M}(\tilde{u},u)$,

    \item and  $\Theta_{>M}(a,t)=\Theta_{>M}(a,\tilde{a})+[\measuredangle_{\tilde{a}}(e_{1},e_{2})]_{>M}$,
\end{itemize}

with the convention that the angles $\measuredangle_{\tilde{a}}(e_{1},e_{2}'), \measuredangle_{\tilde{a}}(e_{1},e_{2}), \measuredangle_{\tilde{u}}(f_{1},f_{2})$ are equal to zero when $e_{1}$, respectively $f_{1}$ are not defined.

According to the triangular inequality and to Proposition \ref{proposition: angle en a tilde}, we have:
$$ |\measuredangle_{\tilde{a}}(e_{1},e_{2}')-\measuredangle_{\tilde{a}}(e_{1},e_{2})|\leq \measuredangle_{\tilde{a}}(e_{2},e_{2}')\leq 12 \delta.$$

This gives the first inequality of the proposition, the last point follows directly.

\end{proof}

The following counting lemma will be useful later on.

\begin{lem}\label{lemme: lemme de comptage avec d'deuxmile}

There exists a constant $A\geq 1$ and $\gamma_{G}>1$, such that for all $x \in G$, for all integer $n$:
$$|\{a \in X_{c}~|~d'_{(2000\delta)^{2}}(x,a)\leq n  \}| \leq A \gamma_{G}^{n}.$$
\end{lem}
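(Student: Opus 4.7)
The plan is to split the count into the contribution from group elements (finite-valence vertices of $X_{c}$) and the contribution from cone vertices (elements of $X_{c}^{\infty}$), and to bound the latter by the former up to a multiplicative constant depending only on the number of parabolic subgroups.

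First, by Corollary \ref{corollary : reformulation de la formule de la distance}, the restriction of $d'_{(2000\delta)^{2}}$ to $G$ is quasi-isometric to the word metric $d_{G}$, so there exist constants $\alpha \geq 1$, $\beta \geq 0$ with
\[ \{ g \in G \mid d'_{(2000\delta)^{2}}(x,g) \leq n \} \subset B_{d_{G}}(x, \alpha n + \beta). \]
Since $G$ is finitely generated, word-metric balls grow at most exponentially, and left-invariance of $d_{G}$ makes the bound uniform in $x$; hence there exist constants $A_{0} \geq 1$ and $\gamma_{G} > 1$ such that $|B_{d_{G}}(x, \alpha n + \beta)| \leq A_{0} \gamma_{G}^{n}$ for every integer $n$ and every $x \in G$.

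Next, for the cone vertices, given $a \in X_{c}^{\infty}$ with $d'_{(2000\delta)^{2}}(x,a) \leq n$, I would fix a geodesic $[x,a]_{c}$ realising the infimum that defines $\Theta'_{>(2000\delta)^{2}}(x,a)$ and let $g_{a}$ be the vertex immediately preceding $a$ on it. The vertex $g_{a}$ lies in $G$ because, by construction of the coned-off graph, every neighbour of a cone vertex $a = \widehat{gP_{i}}$ is an element of the coset $gP_{i}$. Truncating yields a geodesic $[x,g_{a}]_{c}$ of $d_{c}$-length $d_{c}(x,a)-1$, whose interior infinite-valence large-angle sum is unchanged since $g_{a}$ has finite valence and contributes zero; taking the minimum over all geodesics from $x$ to $g_{a}$ then gives
\[ d'_{(2000\delta)^{2}}(x,g_{a}) \leq d'_{(2000\delta)^{2}}(x,a) - 1 \leq n-1. \]
Now each $g \in G$ is adjacent in $X_{c}$ to exactly as many cone vertices as there are parabolic subgroups, say $k$ (the vertices $\widehat{gP_{1}}, \dots, \widehat{gP_{k}}$), so the map $a \mapsto g_{a}$ is at most $k$-to-one. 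Consequently, the number of cone vertices in the $d'_{(2000\delta)^{2}}$-ball of radius $n$ is at most $k A_{0} \gamma_{G}^{n}$, and summing with the first contribution gives the desired bound with $A := (1+k)A_{0}$.

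There is no real obstacle; the only delicate point is controlling $\Theta'_{>(2000\delta)^{2}}$ under truncation, and this is immediate once one notes that a finite-valence vertex contributes nothing to the cut-off angle sum, so truncating a minimising geodesic does not increase it.
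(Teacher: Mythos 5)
Your proof is correct and follows essentially the same route as the paper: the paper's proof simply refers to the argument in Proposition~\ref{Proposition : sum des differences pour des voisins}, which likewise bounds the group-element count via the quasi-isometry of $d'_{(2000\delta)^{2}}$ with $d_{G}$ (using Corollary~\ref{corollary : reformulation de la formule de la distance}) and then controls the infinite-valence vertices by a constant ``related to the number of parabolic subgroups''. You have simply spelled out the second step — the at-most-$k$-to-one retraction $a \mapsto g_{a}$ to a neighbouring group element, and the observation that truncating a minimising geodesic at a finite-valence vertex does not increase the cut-off angle sum — which the paper leaves implicit.
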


\begin{proof}

The argument is similar to the proof of Proposition \ref{Proposition : sum des differences pour des voisins} combined with Corollary \ref{corollary : reformulation de la formule de la distance}.
    
\end{proof}

This lemma shows that if a point $a$ belongs to a cone around a geodesic, the sum of the angles between the point $a$ and its projection onto the geodesic is uniformly bounded.

\begin{lem}\label{lemma: si a dans cone alors proche de projection dans le groupe}
Let $x,y \in X_{c}$ and $a \in X_{c}$, and $t \in X_{c}$ a projection of $a$ on $[x,y]_{c}$, there exists an integer $C_{0}>0$ such that $a \in \text{Cone}_{(2000\delta)^{2}}([x,y]_{c}) $ implies that:
$$d'_{(2000\delta)^{2}}(a,t) \leq C_{0}.$$
    
\end{lem}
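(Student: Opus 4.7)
The plan is to bound the two contributions to $d'_{(2000\delta)^{2}}(a,t) = d_c(a,t) + \Theta'_{>(2000\delta)^{2}}(a,t)$ separately; set $N := (2000\delta)^2$. By definition of the cone, there is an edge $e$ of $[x,y]_c$ and a sequence $e = e_0, e_1, \ldots, e_n$ of edges of $X_c$ with $n \leq N$, $a$ an endpoint of $e_n$, and the angle at $e_i \cap e_{i+1}$ at most $N$ for each $i$. Writing $v$ for the vertex of $e \cap e_1$, which lies on $[x,y]_c$, this gives a concrete path of $d_c$-length at most $N+1$ from $v$ to $a$, so $d_c(a,v) \leq N+1$; and since $t$ is a closest-point projection of $a$ on $[x,y]_c$, $d_c(a,t) \leq d_c(a,v) \leq N+1$.

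For the angle contribution I first bound $d'_{>N}(a,v)$ and then transfer the bound to $d'_{>N}(a,t)$. The cone path can be converted into a Cayley-graph path of length controlled by $N$: each traversal of a parabolic vertex $u$ along the cone path has the form $g \to u \to h$ with $g, h \in G$ and angle $\theta_u := \measuredangle_u(g,h) \leq N$; the two-edge subpath $g \to u \to h$ is a geodesic of $X_c$ with $\Theta$-contribution equal to $\theta_u$, so the distance formula (Proposition \ref{Proposition : formule de la distance}) yields $d_G(g,h) \leq A(2 + N) + B$. Summing these local contributions over the at most $N$ edges of the cone path, and using generator edges directly where the cone path stays in $G$, one gets a bound on $d_G(v,a)$ depending only on $N$ and the quasi-isometry constants (the case $v \in X_c^\infty$ is absorbed by shifting to an adjacent $G$-vertex at $d_c$-cost $1$). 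The other direction of the distance formula, via Corollary \ref{corollary : reformulation de la formule de la distance}, then gives $d'_{>N}(a,v) \leq K_1$, and Proposition \ref{proposition: comparaison de la somme des angles le long de deux géodésiques} propagates this bound to every geodesic between $a$ and $v$, producing a uniform estimate $\Theta_{>N}(a,v) \leq K_2$ on each such geodesic.

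Finally, Lemma \ref{lem: somme des angles vers la projection} applied with $u := v$ furnishes a specific geodesic $[a,v]_c$ on which $\Theta_{>N}(a,v) \geq \Theta'_{>N}(a,t) - 12\delta$, so $\Theta'_{>N}(a,t) \leq K_2 + 12\delta$. Adding this to $d_c(a,t) \leq N+1$ produces the desired constant $C_0$. The main difficulty, which I would treat most carefully, is the conversion step from the cone path to a Cayley-graph length bound: the distance formula controls $d_G$ only in terms of $d_c + \Theta$ along a geodesic, not along an arbitrary path, so each parabolic crossing must be handled as its own length-$2$ geodesic and the estimates chained together, with uniformity coming from the fact that every individual angle along the cone path is at most $N$.
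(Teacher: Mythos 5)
Your proof is correct and its skeleton matches the paper's — bound $d_c(a,t)$ directly from the cone definition, then control the angle contribution via Lemma \ref{lem: somme des angles vers la projection} — but the way you control $\Theta'_{>(2000\delta)^2}$ is genuinely different. The paper asserts that the cone hypothesis furnishes a geodesic from some $u\in[x,y]_c$ to $a$ along which every angle is at most $(2000\delta)^2$, so that $\Theta_{>(2000\delta)^2}$ vanishes along it, and feeds this into Lemma \ref{lem: somme des angles vers la projection}. You never make such a claim; instead you convert the cone path into a Cayley-graph path of controlled $d_G$-length, invoke Corollary \ref{corollary : reformulation de la formule de la distance} to turn this into a bound on $d'_{(2000\delta)^2}(a,v)$, and then use Proposition \ref{proposition: comparaison de la somme des angles le long de deux géodésiques} to propagate that bound to the specific geodesic supplied by Lemma \ref{lem: somme des angles vers la projection}. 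This extra indirection produces a larger but still explicit constant, and it also dispenses with the paper's implicit claim that the geodesic built by Theorem \ref{formenormaledestriangles} inside the proof of Lemma \ref{lem: somme des angles vers la projection} happens to have no large angles — a step the paper states briskly without argument — since your invocation of Proposition \ref{proposition: comparaison de la somme des angles le long de deux géodésiques} makes the angle-sum comparison uniform over geodesics. The one spot you should flesh out in a clean write-up is the shifting to $G$-vertices before applying the distance formula: when $a$ or $v$ has infinite valence you move to an adjacent vertex of $G$, and you need to justify that this perturbs $d'_{(2000\delta)^2}$ by only a bounded amount. It does, since the shift changes $d_c$ by at most $1$ and introduces no new large-angle term at an infinite-valence interior vertex of the geodesic (the new endpoint has finite valence), but that observation deserves a sentence rather than a parenthetical.
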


\begin{proof}

Since $a \in \text{Cone}_{(2000\delta)^{2}}([x,y]_{c}) $, we know that $d_{c}(a,t) \leq (2000\delta)^{2}. $

Moreover, there exists a point $u \in [x,y]_{c} $ and a geodesic from $u$ to $a$ such that all angles in this geodesic are bounded by $(2000\delta)^{2}$.

Therefore, according to Lemma \ref{lem: somme des angles vers la projection}, we get that:
$$ \Theta_{(2000\delta)^{2}}'(t,a)\leq 12\delta.$$

Hence:
$$d'_{(2000\delta)^{2}}(a,t) \leq (2000\delta)^{2}+12\delta,$$

and with $C_{0}=(2000\delta)^{2}+12\delta$ we get the desired result.

\end{proof}

With this proposition, we can control the difference between two consecutive masks on a geodesic toward a point $a$ using the distance from $a$ to this geodesic.

\begin{proposition}\label{proposition: controle mask et projection}
Let $x,y \in G$ and $a \in X_{c}$. We choose a geodesic $[x,y]_{c}=\{x_{0},x_{1},...,x_{n}\}$ with $x_{0}=x$, $x_{n}=y$ and $d_{c}(x_{i},x_{i+1})=1$ for $0\le i \le n-1$, with $n=d_{c}(x,y)$. Let $i_{0} \in [\![ 0,n ]\!]$ such that $x_{i_{0}}$ is projection of $a$ on $[x,y]_{c}$. If $d'_{(2000\delta)^{2}}(a,x_{i_{0}})\geq R_{0}$, with $R_{0}$ the constant of Corollary \ref{corollary : diffenrece entre deux masks loin nouvelle version}, we have the following,for all $i \in [\![ 0,n ]\!]$:

$$|| \mu_{x_{i}}(a) \Delta \mu_{x_{i+1}}(a) || \leq \kappa^{-24\delta} \kappa^{d'_{(2000\delta)^{2}}(a,t)+|i-i_{0}|},$$

    where $\kappa$ denotes the constant of Proposition \ref{proposition : la difference de deux masks est controlé par l'angle}.
\end{proposition}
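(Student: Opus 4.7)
The plan is to reduce the claim to Corollary~\ref{corollary : diffenrece entre deux masks loin nouvelle version}, applied to each consecutive pair $\{x_i, x_{i+1}\}$ on the geodesic. Setting $t := x_{i_0}$, the key step is to transfer the lower bound on $d'_{(2000\delta)^2}(a,t)$ at the projection to a lower bound on $d_c(a,x_j) + \Theta_{>(2000\delta)^2}(a,x_j)$ at an arbitrary vertex $x_j$ of $[x,y]_c$, with a loss that is linear in $\delta$ and grows additively in $|j - i_0|$.

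First I would control the distance part. Since $t$ is a projection of $a$ onto $[x,y]_c$ and $x_j$ lies on that geodesic, Lemma~\ref{lemme : z est la projection donc c'est un quasi-centre} applied in $X_c$ yields
\[
d_c(a, x_j) \ge d_c(a, t) + d_c(t, x_j) - 24\delta = d_c(a,t) + |j - i_0| - 24\delta.
\]
Next I would control the angle part. Applying Lemma~\ref{lem: somme des angles vers la projection} with $u = x_j$ and threshold $M = (2000\delta)^2$, one obtains geodesics $[x_j, a]_c$ and $[t, a]_c$ such that $\Theta_{>M}(a, x_j) \ge \Theta'_{>M}(a, t) - 12\delta$. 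Summing the two bounds gives, for suitable geodesics,
\[
d_c(a, x_j) + \Theta_{>M}(a, x_j) \ge d'_{>M}(a, t) + |j - i_0| - C_\delta,
\]
with $C_\delta$ a constant of order $\delta$.

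For each edge $\{x_i, x_{i+1}\}$, I would then apply this estimate to the vertex $x_j \in \{x_i, x_{i+1}\}$ maximizing $|j - i_0|$, so that in particular $|j - i_0| \ge |i - i_0|$. Because $d'_{>M}(a, t) \ge R_0$ by hypothesis, the left-hand side of the displayed inequality is at least $R_0$ (after absorbing the slack $C_\delta$ either into $R_0$ or into the leading multiplicative factor), so Corollary~\ref{corollary : diffenrece entre deux masks loin nouvelle version} applies and yields
\[
\|\mu_{x_i}(a) \Delta \mu_{x_{i+1}}(a)\| \le \kappa^{d'_{>M}(a,t) + |i - i_0| - C_\delta},
\]
which is the desired bound once the leading prefactor $\kappa^{-C_\delta}$ is rewritten as $\kappa^{-24\delta}$ (the paper's bookkeeping evidently lumps the $12\delta$ from the angle lemma into the $24\delta$ coming from the quasi-centre lemma).

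The main obstacle is really the careful bookkeeping at the interface between the two lemmas: Lemma~\ref{lem: somme des angles vers la projection} produces only \emph{some} geodesic $[x_j, a]_c$ realizing the angle bound, and one must check that this choice is compatible with the geodesic used when invoking Corollary~\ref{corollary : diffenrece entre deux masks loin nouvelle version}; fortunately the corollary's hypothesis concerns existence of such a geodesic, so this is painless. A secondary nuisance is the small-$|i - i_0|$ regime where the additive slack $C_\delta$ threatens to invalidate the hypothesis $R \ge R_0$ of the corollary; this is handled either by slightly enlarging $R_0$ to absorb $C_\delta$, or by invoking the trivial bound $\|\mu_{x_i}(a) \Delta \mu_{x_{i+1}}(a)\| \le 2$ and letting the multiplicative constant $\kappa^{-24\delta}$ take up the difference.
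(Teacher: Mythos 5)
Your proof is correct and follows essentially the same route as the paper: you control the distance contribution with the quasi-centre Lemma~\ref{lemme : z est la projection donc c'est un quasi-centre}, the angle contribution with Lemma~\ref{lem: somme des angles vers la projection}, and feed the resulting lower bound on $d_c(a,x_j)+\Theta_{>(2000\delta)^2}(a,x_j)$ into Corollary~\ref{corollary : diffenrece entre deux masks loin nouvelle version}. The only cosmetic difference is that you invoke the first conclusion of that corollary (the $\kappa^{R-1}$ bound, verifying the hypothesis at the endpoint farther from $x_{i_0}$), whereas the paper invokes the second conclusion and lower-bounds the minimum over the two endpoints directly; and your remark about the paper ``lumping'' the $12\delta$ from the angle lemma into the $24\delta$ prefactor is the right reading --- the paper's proof asserts an equality where Lemma~\ref{lem: somme des angles vers la projection} only gives a $12\delta$-loss inequality.
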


\begin{proof}
Let $i \in [\![ 0,i_{0}-1 ]\!]$.
If $d'_{(2000\delta)^{2}}(a,x_{i_{0}})\geq R_{0}$, according to Lemma \ref{lem: somme des angles vers la projection}, there are two geodesics from $a$ to $x_{i}$ and from $a$ to $x_{i+1}$ respectively, such that the sums of angles greater than $(2000\delta)^{2}$ on these geodesics are equal to $\Theta'_{(2000\delta)^{2}}(a,x_{i_{0}})$.\\

Therefore, according to the second point of Corollary \ref{corollary : diffenrece entre deux masks loin nouvelle version}, we have the following:
$$|| \mu_{x_{i}}(a) \Delta \mu_{x_{i+1}}(a) || \leq \kappa^{\Theta'_{(2000\delta)^{2}}(a,x_{i_{0}})+\min(d_{c}(a,x_{i}),d_{c}(a,x_{i+1}))}.$$

According to Lemma \ref{lemme : z est la projection donc c'est un quasi-centre}, since $x_{i_{0}}$ is the projection of $a$ on $[x,y]_{c}$, we have the following:
\begin{itemize}
    \item $ d_{c}(a,x_{i})\geq d_{c}(a,x_{i_{0}})+d_{c}(x_{i_{0}},x_{i})-24\delta \geq d(a,x_{i_{0}})+i_{0}-i-24\delta$,

    \item $d_{c}(a,x_{i+1})\geq d_{c}(a,x_{i_{0}})+d_{c}(x_{i_{0}},x_{i+1})-24\delta \geq d_{c}(a,x_{i_{0}})+i_{0}-(i+1)-24\delta.$
\end{itemize}

Thus $\min(d_{c}(a,x_{i}),d_{c}(a,x_{i+1}))  \geq d_{c}(a,x_{i_{0}})+i_{0}-i-24\delta$ and we have:

$$|| \mu_{x_{i}}(a) \Delta \mu_{x_{i+1}}(a) || \leq \kappa^{-24\delta} \kappa^{-d'_{(2000\delta)^{2}}(a,t)-(i_{0}-i)}.$$

The same proof applies when $i \in [\![i_{0},n-1 ]\!]$.
\end{proof}

The following theorem is the most important result of the section. For all $x,y \in G$ and for all $a\in X_{c}^{\infty}$ sufficiently far from $[x,y]_{c}$, it allows us to control $d_{a}(x,y)$ by a decreasing exponential of the distance in the group from $a$ to the geodesic $[x,y]_{c}$.

\begin{theo}\label{theo: theo stylé sur les barycentres et les masques}
    
Let $x, y \in G$ and $a \in X_{c}$, we choose a geodesic $[x,y]_{c}$ between $x$ and $y$.
Let $t \in X_{c}$ be a projection of $a$ on $[x,y]_{c}$. There exists $C_{1}>0$ and $K_{1}>0$ such that $d'_{(2000\delta)^{2}}(a,t) > C_{1}$ implies that:

$$d_{a}(x,y)\leq K_{1} \kappa^{d'_{(2000\delta)^{2}}(a,t)} ,$$

 where $\kappa$ denotes the constant of Proposition \ref{proposition : la difference de deux masks est controlé par l'angle}.

\end{theo}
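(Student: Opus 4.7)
The plan is to decompose $d_a(x,y)$ along the chosen geodesic and then control each one-step contribution using the exponential decay of the masks established in Proposition \ref{proposition: controle mask et projection}. Order the geodesic as $[x,y]_c = \{x_0, x_1, \ldots, x_n\}$ with $x_0 = x$, $x_n = y$, $d_c(x_i,x_{i+1}) = 1$, and let $i_0$ be such that $x_{i_0} = t$. Since $d_a$ is a pseudo-metric in both cases of Definition \ref{definition: da}, the triangle inequality yields
$$d_a(x,y) \;\leq\; \sum_{i=0}^{n-1} d_a(x_i, x_{i+1}).$$

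The key step is to replace each $d_a(x_i,x_{i+1})$ by the $\ell^1$-difference $\|\mu_{x_i}(a) \Delta \mu_{x_{i+1}}(a)\|$ up to a uniform multiplicative constant. When $a \in X_c \setminus X_c^\infty$ this is immediate from the definition. When $a \in X_c^\infty$, I first choose $C_1 > C_0$, where $C_0$ is the constant of Lemma \ref{lemma: si a dans cone alors proche de projection dans le groupe}, so that $d'_{(2000\delta)^2}(a,t) > C_1$ forces $a \notin \mathrm{Cone}_{(2000\delta)^2}([x,y]_c)$. Then for every edge $\{x_i, x_{i+1}\}$ of the geodesic, Proposition \ref{proposition : les masques s'intersectent sauf pour un} implies that $\mathrm{supp}(\mu_{x_i}(a)) \cap \mathrm{supp}(\mu_{x_{i+1}}(a)) \neq \emptyset$, and the last clause of the admissibility hypothesis (Definition \ref{definition: lesbons parabolic}) gives
$$d_a(x_i, x_{i+1}) \;=\; d_b(\mu_{x_i}(a), \mu_{x_{i+1}}(a)) \;\leq\; 2C\,\|\mu_{x_i}(a) \Delta \mu_{x_{i+1}}(a)\|.$$

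Next, requiring in addition $C_1 \geq R_0$ (where $R_0$ is from Corollary \ref{corollary : diffenrece entre deux masks loin nouvelle version}), I apply Proposition \ref{proposition: controle mask et projection} to each term:
$$\|\mu_{x_i}(a) \Delta \mu_{x_{i+1}}(a)\| \;\leq\; \kappa^{-24\delta}\, \kappa^{\,d'_{(2000\delta)^2}(a,t) + |i - i_0|}.$$
Summing over $i$ and bounding the geometric sum $\sum_i \kappa^{|i - i_0|} \leq \frac{2}{1 - \kappa}$ gives
$$d_a(x,y) \;\leq\; \frac{4C\, \kappa^{-24\delta}}{1 - \kappa}\, \kappa^{\,d'_{(2000\delta)^2}(a,t)},$$
so one may take $K_1 = \tfrac{4C\kappa^{-24\delta}}{1-\kappa}$ (and the same proof, with $2C$ replaced by $1$, handles the finite-valence case).

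The main obstacle is not the summation, which is a standard geometric series, but ensuring that the admissible pseudo-metric $d_b$ can actually be compared to the $\ell^1$-norm on pairs of masks. This is exactly what the intersection property of supports supplies, and it is the reason admissibility requires the bound $d_b(\mu,\nu) \leq 2C\|\mu \Delta \nu\|$ when $\mathrm{supp}(\mu) \cap \mathrm{supp}(\nu) \neq \emptyset$: the geometric decay of Proposition \ref{proposition : la difference de deux masks est controlé par l'angle} lives in $\ell^1$, and without this comparison one cannot transfer it to the bolic distance on the parabolic. The quasi-centre estimate from Lemma \ref{lemme : z est la projection donc c'est un quasi-centre} and the angle-preservation Lemma \ref{lem: somme des angles vers la projection} (both already used to prove Proposition \ref{proposition: controle mask et projection}) are what make $d'_{(2000\delta)^2}(a,t)$ a legitimate replacement for $d'(a,x_i)$ in the exponent, giving a bound that depends only on the projection $t$.
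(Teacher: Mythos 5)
Your proof is correct and follows essentially the same route as the paper's: decompose $d_a(x,y)$ telescopically along the geodesic, use Lemma \ref{lemma: si a dans cone alors proche de projection dans le groupe} (via Proposition \ref{proposition : les masques s'intersectent sauf pour un} / Corollary \ref{corollary : cardinal des points paraboliques avec distance entre les barycentres des masks nest pas bien controlee}) to ensure consecutive masks intersect so the admissibility comparison to the $\ell^1$-norm applies, then feed Proposition \ref{proposition: controle mask et projection} into a geometric series. The only cosmetic difference is that you invoke Proposition \ref{proposition : les masques s'intersectent sauf pour un} edge by edge rather than quoting the corollary, and your constants differ by harmless factors of $2$.
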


\begin{proof}

Let us denote $n:=d_{c}(x,y)$. We order $[ x,y]_{c}$ in the following sense: $[x,y]_{c}=\{x_{0},x_{1},...,x_{n}\}$ with $x_{0}=x$, $x_{n}=y$ and $d_{c}(x_{i},x_{i+1})=1$ for $0\le i \le n-1$. Let $i_{0} \in [\![ 0,n ]\!] $ such that $t=x_{i_{0}}$.  We will start to prove that for all $a  \in X_{c}$ such that for $d'_{(2000\delta)^{2}}(a,t) > C_{0}$, with $C_{0}$ the constant of Lemma \ref{lemma: si a dans cone alors proche de projection dans le groupe}, we have:
$$ d_{a}(x,y) \leq C(|| \mu_{x_{0}}(a) \Delta \mu_{x_{1}}(a) ||+...+|| \mu_{x_{n-1}}(a) \Delta \mu_{x_{n}}(a) ||),$$
with $C$ the constant of Proposition \ref{proposition : les masks sont uniformément bornés}.

If $a$ is of finite valence, this is clear. Indeed:
$$\begin{aligned}d_{a}(x,y)& = || \mu_{x}(a) \Delta \mu_{y}(a) ||\\
& \leq || \mu_{x_{0}}(a) \Delta \mu_{x_{1}}(a) ||+...+|| \mu_{x_{n-1}}(a) \Delta \mu_{x_{n}}(a) ||\\
&\leq  C(|| \mu_{x_{0}}(a) \Delta \mu_{x_{1}}(a) ||+...+|| \mu_{x_{n-1}}(a) \Delta \mu_{x_{n}}(a) || )~(\text{since} ~ C \geq 1 ).
\end{aligned}$$

Assume that $a \in X_{c}^{\infty}$, if $d'_{(2000\delta)^{2}}(a,t) > C_{0} $, we know according to Lemma \ref{lemma: si a dans cone alors proche de projection dans le groupe}, that $a \notin \text{Cone}_{(2000\delta)^{2}}([x,y]_{c})$. Therefore, according to Corollary \ref{corollary : cardinal des points paraboliques avec distance entre les barycentres des masks nest pas bien controlee}, for all $i \in [\![ 0,n-1 ]\!]$, $\text{supp}(\mu_{x_{i}}(a)) \cap \text{supp}(\mu_{x_{i+1}}(a)) \neq \emptyset$ and according to Definition \ref{definition: lesbons parabolic}, we have:
$$ d_{a}(x_{i},x_{i+1}) \leq  C || \mu_{x_{i}}(a) \Delta \mu_{x_{i+1}}(a) ||.$$

Hence:
$$\begin{aligned}d_{a}(x,y)& \leq d_{a}(x_{0},x_{1})+...+d_{a}(x_{n-1},x_{n})\\
&\leq  C (|| \mu_{x_{0}}(a) \Delta \mu_{x_{1}}(a) ||+...+|| \mu_{x_{n-1}}(a) \Delta \mu_{x_{n}}(a) || ).
\end{aligned}$$

Let us assume further that $d'_{(2000\delta)^{2}}(a,t) > R_{0}$, the constant of Proposition \ref{proposition: controle mask et projection}. We can apply Proposition \ref{proposition: controle mask et projection} to get what follows.
$$\begin{aligned}
d_{a}(x,y)
& \leq C (\displaystyle \sum_{i=0}^{n-1} || \mu_{x_{i}}(a) \Delta \mu_{x_{i+1}}(a) || ~(\text{since} ~ C \geq 1 ) \\
& \leq C \kappa^{-24\delta} \sum_{i=0}^{n-1}  \kappa^{d'_{(2000\delta)^{2}}(a,t)+|i-i_{0}|} ~(\text{Proposition \ref{proposition: controle mask et projection}})\\
&  \leq C \kappa^{-24\delta} \kappa^{d'_{(2000\delta)^{2}}(a,t)} (  \sum_{i=0}^{i_{0}}\kappa^{i}+\sum_{i=1}^{n-i_{0}-1}\kappa^{i})\\
& \leq C\kappa^{-24\delta} \kappa^{d'_{(2000\delta)^{2}}(a,t)} \frac{2}{1-\kappa}.
\end{aligned}$$

Then with $C_{1}=\max(C_{0},R_{0})$ and $K_{1}= C\kappa^{-24\delta} \frac{2}{1-\kappa} $, we get the desired result.
\end{proof}

\subsection{Comparison between angles and masks at vertices of infinite valence}\label{subsection: comparaison angles et masques}

In this section, we show that for points $x,y$ in the coned-off graph, for all $a \in X_{c}^{\infty}$, $d_{a}$ is quasi-isometric to the angle between $x$ and $y$ as seen from $a$.

\begin{proposition}\label{proposition: l'angle est borné par la distance bolic}

There exist $A,B>0$ such that for all $x,y \in X_{c}$ and $a \in X_{c}^{\infty}$, $a \neq x,y$:
$$ \frac{1}{A} d_{a}(x,y)-B  \leq \measuredangle_{a}(x,y) \leq  Ad_{a}(x,y)+B.$$

\end{proposition}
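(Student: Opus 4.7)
The plan is to compare both $d_a(x,y)$ and $\measuredangle_a(x,y)$ to a common intermediate quantity, namely the word-distance $d_P(w_1, w_2)$ in the parabolic subgroup $P$ whose coset corresponds to $a$. Here $w_1, w_2$ are chosen first neighbors of $a$ on fixed geodesics $[a,x]_c$ and $[a,y]_c$, and $e_i := \{a, w_i\}$ are the corresponding first edges. Since $d_a$ and $\measuredangle_a$ are $G$-invariant, after translating by a group element one may assume $a = \widehat{P_i}$ for some $i$ and write $P := P_i$, so that $w_1, w_2 \in P$.

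For the comparison $d_a(x,y) \asymp d_P(w_1, w_2)$, I will invoke Lemma~\ref{proposition : les points a distance un du parabolique sont dans le support du masque}, which guarantees that $w_i \in \text{supp}(\mu_{x_i}(a))$ with $x_1 := x$, $x_2 := y$. Consequently $\text{supp}(\delta_{w_i}) \cap \text{supp}(\mu_{x_i}(a)) \ni w_i$ is nonempty, so the last clause of Definition~\ref{definition: lesbons parabolic} gives $d_b(\delta_{w_i}, \mu_{x_i}(a)) \leq 2C \|\delta_{w_i} - \mu_{x_i}(a)\| \leq 4C$. Two applications of the triangle inequality for $d_b$ then yield $|d_a(x,y) - d_b(\delta_{w_1}, \delta_{w_2})| \leq 8C$, and combining this with the quasi-isometric inclusion $(P, d_P) \hookrightarrow (\text{Proba}_C(P), d_b)$ via Dirac masses (another admissibility clause) finishes this step.

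For the comparison $d_P(w_1, w_2) \asymp \measuredangle_a(x,y)$, I will use the specific $d_c$-geodesic $w_1 \to a \to w_2$ of length $2$, whose only angle at an infinite-valence vertex is $\measuredangle_a(e_1, e_2) = d_{X_c \setminus \{a\}}(w_1, w_2)$. The distance formula (Proposition~\ref{Proposition : formule de la distance}) applied to this geodesic yields $d_G(w_1, w_2) \asymp 2 + \measuredangle_a(e_1, e_2)$, and the quasi-isometric embedding of parabolic subgroups into $G$ (a standard consequence of relative hyperbolicity) then gives $d_P(w_1, w_2) \asymp \measuredangle_a(e_1, e_2)$. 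The second item of Proposition~\ref{proposition: les propirétés des angles} implies that any two choices of first edges of geodesics from $a$ to a fixed endpoint have pairwise angle at most $6\delta$, so the supremum-over-choices definition of $\measuredangle_a(x,y)$ differs from $\measuredangle_a(e_1, e_2)$ by at most $12\delta$. The main obstacle is the quasi-isometric embedding $P \hookrightarrow G$: the inequality $d_G \leq d_P$ is immediate, but $d_P \lesssim d_G$ on the parabolic does not follow from the distance formula alone and would have to be invoked from an external reference such as Osin or Drutu--Sapir.
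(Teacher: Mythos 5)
Your argument is correct and reaches the same conclusion, but the second half follows a genuinely different route from the paper. The first half---comparing $d_a(x,y)$ to $d_P(w_1,w_2)$ via Lemma~\ref{proposition : les points a distance un du parabolique sont dans le support du masque}, the last clause of Definition~\ref{definition: lesbons parabolic}, and the quasi-isometric Dirac embedding---is essentially the paper's argument (it in fact computes the constant $4C$ more carefully than the paper's stated $2C$). For the second half, the paper is self-contained: one direction is that a shortest path in the $1$-neighborhood of $a$ realizing $d_H(u,v)$ avoids $a$, so $\measuredangle_a(e,f)\leq d_H(u,v)$; the converse is obtained by projecting a shortest $a$-avoiding path $u=u_0,\ldots,u_n=v$ onto the coset $gH$, observing via thin triangles that consecutive projections $\tilde{u_i},\tilde{u_{i+1}}$ satisfy $\measuredangle_a(\tilde{u_i},\tilde{u_{i+1}})\leq 8\delta+2$, and then using uniform fineness to bound $d_H(\tilde{u_i},\tilde{u_{i+1}})$ by a constant $K$, yielding $d_H(u,v)\leq K\measuredangle_a(u,v)$. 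You instead feed the two-edge geodesic $w_1\to a\to w_2$ into the distance formula to get $d_G(w_1,w_2)\asymp 2+\measuredangle_a(e_1,e_2)$, and then invoke the quasi-isometric embedding of a parabolic into $G$ as an external input. You correctly flag this as the weak point: the inequality $d_P\lesssim d_G$ is a real theorem about relatively hyperbolic groups, not a formal consequence of anything earlier in the paper, whereas the paper's projection-plus-fineness argument avoids the need for that citation entirely. Both routes are valid, but the paper's has a smaller dependency footprint; yours is arguably more conceptual in that it reduces the angle-to-parabolic comparison to two black-boxed facts. One small gap you should close in a fuller write-up: the route through the distance formula silently assumes $d_c(w_1,w_2)=2$ so that $w_1\to a\to w_2$ is a geodesic; the degenerate cases $w_1=w_2$ and $w_1\sim w_2$ leave both sides uniformly bounded, so they can be absorbed into the additive constant $B$, but this should be said.
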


\begin{proof}

We will start to prove that for all $u,v \in X_{c}$ and $a \in X_{c}^{\infty}$ such that $d_{c}(u,a)=1$ and $d_{c}(v,a)=1$, there exist
$A>0$ such that :
$$ \frac{1}{A} d_{b}(\delta_{u},\delta_{v})  \leq \measuredangle_{a}(x,y) \leq  Ad_{b}(\delta_{u},\delta_{v}).$$

 If we denote $gH$ the parabolic coset associated to $a$ for $g \in G$ and $H$ a subgroup of $G$, we denote by
$d_{H}$ the graph distance in $H$.
Therefore $ d_{H}(u,v)$ is the length of the shortest path in the $1$-neighborhood of $a$ between $u$, $v$ in $X_{c}$.

We set the two edges $e=\{a,u\}$ and $f=\{a,v \}$. This path avoids $a$, then by definition of angles:
$$ \measuredangle_{a}(e,f) \leq  d_{H}(u,v). $$

Moreover, according to Definition \ref{definition: lesbons parabolic}, there exists $\beta>1$, such that :
$$ \frac{1}{\beta} d_{b}(\delta_{u},\delta_{v})-\beta \leq d_{H}(u,v) \leq \beta d_{b}(\delta_{u},\delta_{v})+\beta . $$

Therefore,
$$ \measuredangle_{a}(e,f) \leq \beta d_{b}(\delta_{u},\delta_{v})+\beta.  $$

For the converse inequality, assume that $\measuredangle_{a}(u,v)=n$ and denote by $u_{1},...,u_{n}$ the vertices on a shortest path from $u$ to $v$ that avoids $a$. Let us denote $\tilde{u_{i}}$ the projection of each $u_{i}$ on $gH$. Then looking at the triangle $[\tilde{u_{i}},u_{i},\tilde{u_{i+1}}]$, we get a path of length smaller than $8 \delta +2$, which avoids $a$, then $  \measuredangle_{a}(\tilde{u_{i}},\tilde{u_{i+1}}) \leq 8\delta +2 $.
Using the fact that $X_{c}$ is uniformly fine, there are finitely many transitions $\tilde{u_{i}}^{-1}\tilde{u_{i+1}}$. Thus, there exists $K>0$, such that $ d_{H}(\tilde{u_{i}},\tilde{u_{i+1}})\leq K$. Thus, $ d_{H}(u,v) \leq n K \leq \measuredangle_{a}(u,v) K .$

With the first point, we conclude that there exists $A>1$ such that:
$$ \frac{1}{A} d_{b}(\delta_{u},\delta_{v})-A \leq \measuredangle_{a}(u,v) \leq  Ad_{b}(\delta_{u},\delta_{v})+A.$$

In particular, the inequalities apply for all $x,y \in X_{c}$, for all $a \in X_{c}^{\infty}$ and for all $u \in I_{c}(x,a)$ and $ v \in I_{c}(y,a) $ such that $d_{c}(u,a)=d_{c}(v,a)=1$.

Moreover, according to Definition \ref{definition : le flow step}, $\delta_{u}=\mu_{u}(a)$ since the first of the flow is stationary and according to Lemma \ref{proposition : les points a distance un du parabolique sont dans le support du masque}, $u \in \text{supp}(\mu_{x}(a))$. Therefore, by Definition \ref{definition: lesbons parabolic}, we have:
$$ d_{b}(\delta_{u},\mu_{x}(a)) \leq 2C .$$

In the same way, we have $d_{b}(\delta_{v},\mu_{y}(a)) \leq 2C$, then for all $u \in I_{c}(x,a)$ and $ v \in I_{c}(y,a) $ such that $d_{c}(u,a)=d_{c}(v,a)=1$, we have:
$$ \frac{1}{A} d_{b}(\mu_{x}(a)\mu_{y}(a))- \frac{1}{A} 2C-A \leq \measuredangle_{a}(u,v) \leq  Ad_{b}(\mu_{x}(a)\mu_{y}(a))+A2C+A.$$

Since this inequality is true, for all $u \in I_{c}(x,a)$ and $ v \in I_{c}(y,a) $ such that $d_{c}(u,a)=d_{c}(v,a)=1$, we have:
$$ \frac{1}{A} d_{a}(x,y)- B \leq \measuredangle_{a}(x,y) \leq  Ad_{a}(x,y)+B,$$

 with for some $B$ and $d_{a}(x,y)=d_{b}(\mu_{x}(a),\mu_{y}(a))$ by Definition \ref{definition: da}.

\end{proof}

\subsection{The strongly bolic metric}\label{subsection: la metrique fortment bolique}

In this subsection, we will define the strongly bolic metric and prove that it is indeed a metric.

We need some preliminary lemmas before defining it.

\begin{lem}\label{lem: angle vers projection cone}
Let $M>6\delta$.
Let $x,y \in X_{c}$, a geodesic $[x,y]_{c}$ between $x$ and $y$ and $z \in Cone_{M}([x,y]_{c})$.
Then for all $v \in X_{c}$, $v\neq x,y,z$, we have:
\begin{itemize}
    \item $ \measuredangle_{v}(x,z)\leq 2M$, 
    \item or $ \measuredangle_{v}(z,y)\leq 2M.$
\end{itemize}

\end{lem}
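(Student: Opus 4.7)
I argue by contradiction: suppose both $\measuredangle_v(x,z) > 2M$ and $\measuredangle_v(z,y) > 2M$. Since $2M > 12\delta$, Proposition~\ref{proposition: les propirétés des angles}(1) forces $v$ to lie on every geodesic from $x$ to $z$ and on every geodesic from $z$ to $y$. Using that $z \in \mathrm{Cone}_M([x,y]_c)$ gives $d_c(z,[x,y]_c) \leq M$, Lemma~\ref{lemma : DrutuKapovichcomparaisonproduitdeGromovet distanceàunegeodesique} yields $(x|y)_z \leq M$. Expanding $d_c(x,z) + d_c(z,y) = d_c(x,v) + 2 d_c(v,z) + d_c(v,y)$ from the geodesic-passing property and using $d_c(x,v) + d_c(v,y) \geq d_c(x,y)$, one obtains $d_c(v,z) \leq M$.

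Next I fix a cone path $u_0, u_1, \dots, u_n = z$ with $u_0 \in [x,y]_c$, $n \leq M$, and angles $\leq M$ at its intermediate vertices, realizing $z \in \mathrm{Cone}_M(e)$ for some edge $e \subset [x,y]_c$ incident to $u_0$. Concatenating with $[x,y]_c$ yields a path $P : x \to z$ along $[x,u_0]_c$ followed by the cone path, and similarly a path $Q : z \to y$; both have all internal angles bounded by $M + 6\delta$. The plan is then a case analysis on the position of $v$ relative to $[x,u_0]_c$, $[u_0,y]_c$, the cone path, and the complement of all three, producing in each case an explicit pair of first edges of geodesics at $v$ in the directions of $x$ and $z$ (respectively $z$ and $y$) whose angle is small. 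For instance, if $v \in [x,u_0]_c$ strictly, the edge of $[x,y]_c$ at $v$ pointing toward $u_0$ is simultaneously a first edge of a geodesic from $v$ to $y$ and the start of a natural route to $z$ through $u_0$ and the cone path, giving angle at most $6\delta$; on the cone path the cone-angle bound directly provides a choice with angle $\leq M$. By Proposition~\ref{proposition: les propirétés des angles}(3) applied with $\theta = 2M - 12\delta$, any first-edge pair realizing one of the two assumed large angles must have angle at least $2M - 12\delta$, contradicting the small-angle pair one has exhibited.

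The main obstacle is verifying, in each case, that the candidate edges one writes down genuinely start geodesics from $v$ rather than merely short paths: an unexpected shortcut from $v$ could invalidate the geodesicity of the routed paths $P$ and $Q$ restricted to $v$. This is where the bound $d_c(v,z) \leq M$ from the first paragraph becomes essential, combined with Lemma~\ref{lemme de Bridson} on fellow-traveling of geodesics sharing an endpoint (applied to $[x,z]_c$ and $[x,y]_c$, and to $[y,z]_c$ and $[y,x]_c$), which forces $v$ to lie within $2\delta$ of $[x,y]_c$ whenever $d_c(v,z) \geq M + \delta$ and thereby pins down its location tightly enough for the case analysis to go through.
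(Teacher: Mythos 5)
Your first paragraph is correct and yields a bound $d_c(v,z) \leq M$ that the paper does not exploit, but the remainder of the proposal is an outline and leaves the decisive step unfinished. To bound $\measuredangle_v(x,z)$ or $\measuredangle_v(z,y)$ one must control the angle between \emph{first edges of geodesics} at $v$ in the sense of Definition~\ref{definition: angle}, not the angle along your constructed paths $P$ and $Q$. These paths are not geodesics in general, so the edge you describe as ``the start of a natural route to $z$'' need not be a first edge of any geodesic from $v$ to $z$, and exhibiting a single short route does not control the angle against every geodesic initial edge. You flag this yourself as ``the main obstacle'' and gesture at Lemma~\ref{lemme de Bridson}, but you never carry out the check, so the case analysis that is supposed to make up the body of the argument is absent and the proposal does not establish the lemma as written.

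The paper's proof avoids this entirely. After the immediate case $z \in [x,y]_c$, it introduces the projection $t$ of $z$ onto $[x,y]_c$, observes that the cone hypothesis gives $\measuredangle_v(z,t) \leq M$ for all $v \neq t$ together with $\measuredangle_t(x,z), \measuredangle_t(z,y) \leq M$, and then applies the triangle inequality for angles (Proposition~\ref{proposition: triangle inequality}) twice: from $\measuredangle_v(x,z) > 2M$ and $\measuredangle_v(z,y) > 2M$ one deduces $\measuredangle_v(x,t) > M$ and $\measuredangle_v(y,t) > M$, so by Proposition~\ref{proposition: les propirétés des angles} the vertex $v$ would have to lie on $[x,t]_c \cap [t,y]_c$, which is impossible. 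The angle triangle inequality performs precisely the work your case-by-case analysis would have to reproduce by hand, and the projection $t$ is a cleaner pivot than the cone-path decomposition. Completing your route would essentially mean rederiving that machinery, so I would switch to the paper's projection-plus-triangle-inequality argument.
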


\begin{proof}

We will prove by contradiction that there exists no $v $ such that:
\begin{itemize}
    \item $ \measuredangle_{v}(x,z) > 2M$,

    \item $ \measuredangle_{v}(z,y) > 2M.$ 
\end{itemize}

In the case, where $z \in [x,y]_{c}$, if there exists a such $v$ this means according to Proposition \ref{Proposition: angle for hyperbolic metric space} that $v \in [x,z]_{c} \cap [z,y]_{c}$, which is not possible. 

Suppose that $z \notin [x,y]_{c}$, we denote by $t$ a projection of $z$ on $[x,y]_{c}$. Since $z  \in Cone_{M}([x,y]_{c}) $, for all $v \neq t$, we have:
$$ \measuredangle_{v}(z,t) \leq M,$$
moreover we have:
$$ \measuredangle_{t}(x,z) \leq M,  \measuredangle_{t}(z,y) \leq M.$$

This implies directly that $t\neq v$. We apply the triangle inequality two times and we get:
\begin{itemize}
    \item $2M< \measuredangle_{v}(x,z)\leq \measuredangle_{v}(x,t)+\measuredangle_{v}(t,z),$

    \item $2M< \measuredangle_{v}(y,z)\leq \measuredangle_{v}(y,t)+\measuredangle_{v}(t,y)$.
\end{itemize}

Therefore, we find again according to Proposition \ref{Proposition: angle for hyperbolic metric space} that $v \in [x,t]_{c}\cap[t,y]_{c}$, which is a contradiction.
    
\end{proof}

The following lemma is an analogue of Lemma \ref{lemma : siaprochedegeodentrexetzalorsdistancedeaàzenvironproduitdegromov} in the coned-off graph for the coned-off distance plus the sum of angles along a geodesic.

\begin{figure}[!ht]
    \centering
    \includegraphics[scale=0.45]{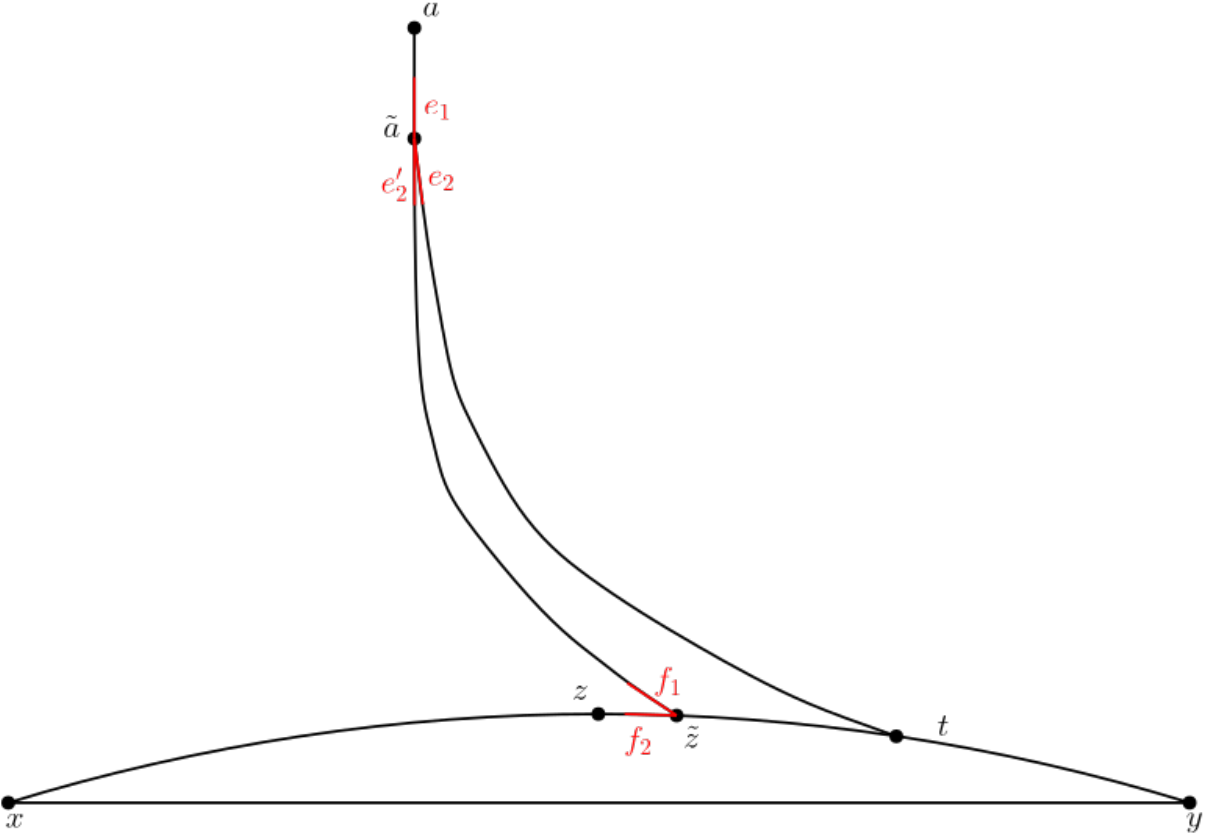}
   \caption{The projection of $a$ on $[x,z]_{c}\cup[z,y]_{c}$ lies in $[x,z]$, $t$ is the projection of $a$ on $[z,y]_{c}$.}
  \label{fig: distance dans le groupe à la projection controle par la distance de t}
\end{figure}

\newpage

\begin{lem}\label{lemme: comparer les angles avec une projection}
 Let $x,y \in X_{c}$ , we choose $[x,y]_{c}$ a geodesic between $x$ and $y$. Let $z \in Cone_{26\delta+2}([x,y]_{c})$, we choose two geodesics $[x,z]_{c}$ and $[z,y]_{c}$ between $x,z$ and $z,y$ respectively. Let $a \in X_{c}$ such that $d_{c}(a,[x,z]_{c}\cup[z,y]_{c})=d_{c}(a,[x,z]_{c})$.
 We denote by $t \in X_{c}$ a projection of $a$ on $[z,y]_{c}$. Then there exists a constant $V\geq 0$ independent of $x,y,z,a$ such that:
$$d'_{(2000\delta)^{2}}(a,t) \geq \frac{1}{\alpha_{2}^{2}} d'_{(2000\delta)^{2}}(a,z)-V-\measuredangle_{t}(a,z),$$
\end{lem}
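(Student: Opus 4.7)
The strategy is to upgrade Lemma~\ref{lemma : siaprochedegeodentrexetzalorsdistancedeaàzenvironproduitdegromov}, which controls the bare coned-off distance $d_c$, to the angle-weighted distance $d'_{(2000\delta)^{2}}=d_c+\Theta'_{(2000\delta)^{2}}$. Both the correction $\measuredangle_t(a,z)$ and the factor $\alpha_2^2$ arise from comparing angle sums along possibly different choices of geodesics.

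First I would bound $d_c(t,z)$ by an absolute constant. Since $z\in\mathrm{Cone}_{26\delta+2}([x,y]_c)$ we have $d_c(z,[x,y]_c)\leq 26\delta+2$, so Lemma~\ref{lemma : siaprochedegeodentrexetzalorsdistancedeaàzenvironproduitdegromov} applied in $X_c$ (using that $d_c(a,[z,y]_c)=d_c(a,t)$) gives $d_c(a,z)\leq d_c(a,t)+C$ for an explicit constant $C$. Combining this with Lemma~\ref{lemme : z est la projection donc c'est un quasi-centre}, which applies because $t$ is a projection of $a$ on $[z,y]_c$ and yields $d_c(a,z)\geq d_c(a,t)+d_c(t,z)-24\delta$, I extract a constant $D_0$ with $d_c(t,z)\leq D_0$. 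This takes care of the $d_c$-part of $d'_{(2000\delta)^{2}}$ up to an additive constant.

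For the angle sums I would fix a geodesic $[a,t]_c^{\star}$ realising $\Theta'_{(2000\delta)^{2}}(a,t)$ and, via Theorem~\ref{formenormaledestriangles} with $[a,t]_c^{\star}$ prescribed as one side, build a normal-form geodesic triangle on $\{a,z,t\}$. In this normal form, $[a,z]_c$ and $[a,t]_c^{\star}$ coincide on $[a,\tilde a]$; the three middle segments $[\tilde a,\tilde z]$, $[\tilde a,\tilde t]$, $[\tilde z,\tilde t]$ carry only angles at most $100\delta<(2000\delta)^{2}$ and so contribute nothing to $\Theta_{>(2000\delta)^{2}}$; and by Proposition~\ref{proposition: angle en a tilde} the two outgoing edges at $\tilde a$ toward $z$ and toward $t$ differ by an angle of at most $12\delta$. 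Consequently the angle sum $\Theta_{>(2000\delta)^{2},[a,z]_c}(a,z)$ differs from $\Theta'_{(2000\delta)^{2}}(a,t)$ only through a residual contribution concentrated near $z$: a kink-angle at $\tilde z$ together with the angle-sum along the short sub-arc $[\tilde z,z]$, whose $d_c$-length is at most $D_0$.

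The main obstacle is to control this residual contribution by $\measuredangle_t(a,z)$ plus a universal constant. The kink-angle at $\tilde z$ (between the outgoing directions toward $a$ and toward $z$) is related to $\measuredangle_t(a,z)$ via the triangle inequality for angles (Proposition~\ref{proposition: triangle inequality}) together with the small distance $d_c(\tilde z,t)\leq D_0$; and the further angle contributions along $[\tilde z,z]$ are absorbed into the additive constant $V$ by exploiting the uniform finesse of $X_c$, which limits the number of infinite-valence vertices that can appear on a geodesic sub-arc of bounded $d_c$-length and carry a $(2000\delta)^{2}$-large angle. Finally, to pass from the specific geodesic $[a,z]_c$ produced by the normal form to a minimising geodesic defining $\Theta'_{(2000\delta)^{2}}(a,z)$---and, symmetrically, to do the analogous passage on the $(a,t)$-side---I apply Proposition~\ref{proposition: comparaison de la somme des angles le long de deux géodésiques} twice. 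This accumulates the quasi-equivalence factor $\alpha_2^{2}$ and, collecting all additive constants into $V$, yields the claimed inequality.
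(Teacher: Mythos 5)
Your plan correctly identifies the overall structure (control $d_c$ via Lemma~\ref{lemma : siaprochedegeodentrexetzalorsdistancedeaàzenvironproduitdegromov}, then control $\Theta_{>(2000\delta)^{2}}$ via a normal-form triangle on $\{a,z,t\}$ and conclude with Proposition~\ref{proposition: comparaison de la somme des angles le long de deux géodésiques}), but the key estimate is handled incorrectly and the proposed fix would not close the gap.

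The problem is the treatment of the residual angle contributions near $z$. You propose to absorb $[\measuredangle_{\tilde z}(f_1,f_2)]_{(2000\delta)^2} + \Theta_{>(2000\delta)^2}(\tilde z,z)$ into an additive constant using (i) the bound $d_c(t,z)\le D_0$ and (ii) uniform finesse, the idea being that a short sub-arc can only carry a bounded number of $(2000\delta)^2$-large angles. This does not work: even if the number of infinite-valence vertices on $[\tilde z,z]$ is bounded by $D_0$, each of the angles at those vertices can be \emph{arbitrarily large}, so the sum $\Theta_{>(2000\delta)^2}(\tilde z,z)$ is not controlled by any constant. Uniform finesse bounds the cardinality of cones and the number of short loops through a fixed edge; it says nothing about the magnitude of an angle (which is the \emph{length} of the shortest avoiding loop, and can be as large as you like). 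Likewise, relating $\measuredangle_{\tilde z}(f_1,f_2)$ to $\measuredangle_t(a,z)$ via the angle triangle inequality and the bound $d_c(\tilde z,t)\le D_0$ is not valid: Proposition~\ref{proposition: triangle inequality} compares angles \emph{at the same vertex}; angles at two nearby vertices are not comparable by any general estimate.

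What actually makes the residual vanish is the geometric hypothesis that was not used in your argument: $z\in\mathrm{Cone}_{26\delta+2}([x,y]_c)$ together with the fact that the projection $u$ of $a$ onto $[x,z]_c\cup[z,y]_c$ lands on $[x,z]_c$. In the paper's proof one shows by contradiction, via Lemma~\ref{lem: angle vers projection cone} and the projection inequality $\measuredangle_v(a,u)\le 12\delta$, that every angle $\measuredangle_v(z,a)$ for $v\in\,]z,\tilde z]$ is strictly less than $(2000\delta)^2$, so the residual sum is \emph{exactly zero}, not merely bounded. This is the step your proof is missing, and the additive term $\measuredangle_t(a,z)$ on the right-hand side cannot rescue it: that term is only used in the tripod case (where $t\in[a,z]_c$ and the identity $\Theta_{>(2000\delta)^2}(a,z)=\Theta_{>(2000\delta)^2}(a,t)+[\measuredangle_t(z,a)]_{(2000\delta)^2}$ holds), not to absorb the contributions near $z$ in the non-degenerate case.
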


where $\alpha_{2}^{2}$ is the constant defined in Proposition \ref{proposition: comparaison de la somme des angles le long de deux géodésiques}.

\begin{proof}
We choose $x,y,z,a,t \in X_{c}$ under the same assumptions as in the lemma. The result is clear when $z = t$, so we suppose that $z \neq t$.
We know according to Lemma \ref{lemma : siaprochedegeodentrexetzalorsdistancedeaàzenvironproduitdegromov} with the right constant that there exists $V>0$, that $d_{c}(a,t)\geq d_{c}(a,z)-V$.
We consider a triangle $[a,z,t]_{c}$ as in Theorem \ref{formenormaledestriangles}. We can choose the side $[z,t]_{c}$ so that $[z,t]_{c} \subset [z,y]_{c}$. We note that as $t$ is the projection of $a$ on $[z,y]_{c}$, $\tilde{t}=t$ (see Figure \ref{fig: distance dans le groupe à la projection controle par la distance de t}). We denote by $\Theta_{>(2000\delta)^{2}}(a,t)$, $\Theta_{>(2000\delta)^{2}}(a,z)$ the sum of angles greater than $2000\delta^{2}$ on $[a,t]_{c}$,$[a,z]_{c}$ respectively. There are two cases to consider.\\

In the first case, we assume that $ \tilde{a},\tilde{z},t$ are all different. If $a\neq \tilde{a}$, we denote by $e_{1}$ the edge on $[a,\tilde{a}]_{c}$ such that $\tilde{a} \in e_{1}$ and $e_{2},e'_{2}$ the edges of $[\tilde{a},t]_{c}$, $[\tilde{a},z]_{c}$ respectively such that $\tilde{a} \in e_{2}\cap e_{2}'$. If $z\neq \tilde{z}$, we denote by $f_{1},f_{2}$ the edges of $[a,\tilde{z}]_{c}$, $[z,\tilde{z}]_{c}$ respectively such that $\tilde{z}\in f_{1}\cap f_{2}$, see Figure \ref{fig: distance dans le groupe à la projection controle par la distance de t}. We consider $\measuredangle_{\tilde{a}}(e_{1},e_{2}), \measuredangle_{\tilde{a}}(e_{1},e'_{2}), \measuredangle_{\tilde{z}}(f_{1},f_{2})$ with the convention that the angles are equal to zero in the cases where the edges are not defined.

Then according to Theorem \ref{formenormaledestriangles}, we have:
\begin{itemize}
    \item $\Theta_{>(2000\delta)^{2}}(a,t)=\Theta_{>(2000\delta)^{2}}(a,\tilde{a})+ [\measuredangle_{\tilde{a}}(e_{1},e_{2})]_{(2000\delta)^{2}}$,

    \item $\Theta_{>(2000\delta)^{2}}(a,z)=\Theta_{>(2000\delta)^{2}}(a,\tilde{a})+ [\measuredangle_{\tilde{a}}(e_{1},e'_{2})]_{(2000\delta)^{2}} + [\measuredangle_{\tilde{z}}(f_{1},f_{2})]_{(2000\delta)^{2}}+\Theta_{>(2000\delta)^{2}}(\tilde{z},z) $.
\end{itemize}

To begin, let us remark that:
$$ |\measuredangle_{\tilde{a}}(e_{1},e_{2})- \measuredangle_{\tilde{a}}(e_{1},e'_{2})|\leq \measuredangle_{\tilde{a}}(e_{2},e'_{2})\leq 12\delta,$$
according to Proposition \ref{proposition: angle en a tilde}.

We will show now by contradiction that:
$$ [\measuredangle_{\tilde{z}}(f_{1},f_{2})]_{(2000\delta)^{2}}+\Theta_{>(2000\delta)^{2}}(\tilde{z},z) = 0. $$

Therefore, we assume that:
\begin{itemize}
\item there exists $v \in ]z,\tilde{z}[$, such that $ \measuredangle_{v}(z,a)> (2000\delta)^{2}$.
    \item or $\measuredangle_{\tilde{z}}(f_{1},f_{2}) >(2000\delta)^{2} $. 
\end{itemize}

We denote by $u$ a projection of $a$ on $[x,z]_{c}\cup [z,y]_{c}$, let us recall that $u \in [x,z]_{c}$.
Assume that we are in the first case, then there exists $v \in ]z,\tilde{z}[$ such that:
$$\measuredangle_{v}(z,y)> (2000\delta)^2.$$
Since $u$ is a projection of $z$ on $[x,z]_{c}\cup [z,y]_{c}$, we also have the following:
$$ \measuredangle_{v}(a,u) \leq 12 \delta. $$
By the triangular inequality, we get the following:
$$\measuredangle_{v}(u,z) \geq 2000\delta -12\delta.$$
Since $u \in [x,z]_{c}$, we have:
$$ \measuredangle_{v}(x,z) \geq 2000\delta -12\delta.$$

Then we get a contradiction with Lemma \ref{lem: angle vers projection cone}.\\

In the case where $\measuredangle_{\tilde{z}}(f_{1},f_{2}) >(2000\delta)^{2} $, we have $\measuredangle_{\tilde{z}}(z,a) >(2000\delta)^{2}$.
By definition of $\tilde{z}$, we have $\measuredangle_{\tilde{z}}(z,y) >(2000 \delta)^{2}-12\delta$ according to Proposition \ref{proposition: angle en a tilde}.  Since $u$ is the projection of $[x,z]_{c}\cup [z,y]_{c}$, we have:
$$ \measuredangle_{\tilde{z}}(a,u) \leq 12 \delta. $$

Therefore:
$$ \measuredangle_{\tilde{z}}(z,u) \geq 2000\delta -12\delta,$$
and since $u\in [x,z]_{c}$, we get,
$$\measuredangle_{\tilde{z}}(x,u) \geq 2000\delta -12\delta $$
then we get a contradiction according to Lemma \ref{lem: angle vers projection cone}.

We conclude by applying Proposition \ref{proposition: comparaison de la somme des angles le long de deux géodésiques}.

In the second case, the triangle $[z,a,t]_{c}$ is a tripod, since $t$ is a projection of $a$ on $[z,y]_{c}$, we have $t \in [a,z]_{c}$. Therefore:
$$\Theta_{>(2000\delta)^{2}}(a,z)=\Theta_{>(2000\delta)^{2}}(a,t)+ [\measuredangle_{t}(z,a)]_{(2000\delta)^{2}}.$$

We conclude again by an application of Proposition \ref{proposition: comparaison de la somme des angles le long de deux géodésiques}.

\end{proof}

This lemma allows us to control the angle $\measuredangle_{t}(a,z)$ from the Lemma \ref{lemme: comparer les angles avec une projection}.

\begin{lem}\label{lem: cas ou langle avec une projection est grand}

 Let $x,y \in X_{c}$ , we choose $[x,y]_{c}$ a geodesic between $x$ and $y$. Let $z \in Cone_{26\delta+2}([x,y]_{c})$, we choose two geodesics $[x,z]_{c}$ and $[z,y]_{c}$ between $x,z$ and $z,y$ respectively. Let $a \in X_{c}$ such that $d_{c}(a,[x,z]_{c}\cup[z,y]_{c})=d_{c}(a,[x,z]_{c})$.
 We denote by $t \in X_{c}$ a projection of $a$ on $[z,y]_{c}$. If $\measuredangle_{t}(z,a) \geq (2000\delta)^{2}+12\delta$ then:
$$ | \theta(d_{a}(x,y))- \theta(d_{a}(x,z))-\theta(d_{a}(z,y))|=0.$$
\end{lem}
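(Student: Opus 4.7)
The plan is to exploit the large angle at $t$ as a checkpoint that collapses the masks $\mu_y(a)$ and $\mu_z(a)$ to the common measure $\mu_t(a)$, via Proposition~\ref{proposition : grand angle comme des checkpoints}. Once we have $\mu_y(a) = \mu_z(a)$, the definition of $d_a$ will give $d_a(z,y) = 0$ and $d_a(x,y) = d_a(x,z)$, and the asserted identity will follow from $\theta(0)=0$.

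First I would show $\mu_z(a) = \mu_t(a)$. Since $\measuredangle_t(z,a) \geq (2000\delta)^2 + 12\delta > 12\delta$, the first bullet of Proposition~\ref{proposition: les propirétés des angles} forces every geodesic from $a$ to $z$ to pass through $t$, so $t \in I_c(a,z)$. Because $\measuredangle_t(a,z) > (2000\delta)^2$, applying Proposition~\ref{proposition : grand angle comme des checkpoints} with the roles $x \leftrightarrow z$, $u \leftrightarrow t$ yields $\mu_z(a) = \mu_t(a)$.

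Next I would propagate this identity to $y$. If $t = y$ the equality $\mu_y(a) = \mu_t(a)$ is immediate, so suppose instead that $t$ lies strictly in the interior of the geodesic $[z,y]_c$. Then the second bullet of Proposition~\ref{proposition: les propirétés des angles} applies at $t$, giving $\measuredangle_t(z,y) \leq 6\delta$. The triangle inequality for angles (Proposition~\ref{proposition: triangle inequality}) then yields
$$\measuredangle_t(y,a) \;\geq\; \measuredangle_t(z,a) - \measuredangle_t(z,y) \;\geq\; (2000\delta)^2 + 12\delta - 6\delta \;>\; (2000\delta)^2.$$
A second application of Propositions~\ref{proposition: les propirétés des angles} and~\ref{proposition : grand angle comme des checkpoints} then gives $t \in I_c(a,y)$ and $\mu_y(a) = \mu_t(a)$.

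Combining, $\mu_y(a) = \mu_z(a)$, so Definition~\ref{definition: da} gives $d_a(z,y) = 0$ and $d_a(x,y) = d_a(x,z)$ (this works uniformly whether $a$ has finite or infinite valence, since both branches of the definition are functions only of the two masks). The quantity in the lemma therefore reduces to $|\theta(0)| = 0$. The only delicate point in the argument is isolating the boundary case $t = y$ before invoking the interior-angle bound $\measuredangle_t(z,y) \leq 6\delta$, which requires $t$ to be a proper interior vertex of $[z,y]_c$; everything else is a direct chaining of the checkpoint property with the two basic angle estimates.
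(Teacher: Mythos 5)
Your first step, $\mu_z(a) = \mu_t(a)$ via the checkpoint property, is correct and matches the paper. The gap is in the second step. You invoke the second bullet of Proposition~\ref{proposition: les propirétés des angles} to get $\measuredangle_t(z,y) \leq 6\delta$ for an interior vertex $t$ of $[z,y]_c$, but that bullet concerns two edges at $c$ lying on geodesics from $c$ to the \emph{same} point $a$ (it measures the divergence of two geodesics to a common target). It says nothing about the angle between the two \emph{different} directions $z$ and $y$ along a single geodesic through $t$. In fact that angle can be arbitrarily large: in a coned-off graph a geodesic $[z,y]_c$ can pass through a cone point $t$ with $\measuredangle_t(z,y)$ as big as you like, and the hypotheses of the lemma ($t$ a projection of $a$, $\measuredangle_t(z,a)$ large) do not preclude this. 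So the chain $\measuredangle_t(y,a) \geq \measuredangle_t(z,a) - \measuredangle_t(z,y) > (2000\delta)^2$ does not go through, and you cannot conclude $\mu_y(a)=\mu_t(a)$.

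There is also a structural sign that the strategy is heading the wrong way: you never use the hypothesis $d_c(a,[x,z]_c\cup[z,y]_c)=d_c(a,[x,z]_c)$, i.e.\ that $a$ projects onto the $[x,z]_c$ side. That hypothesis is exactly what the paper's proof leans on. The paper takes $u$, the projection of $a$ onto $[x,z]_c\cup[z,y]_c$ (which lies in $[x,z]_c$ by assumption), shows $\measuredangle_t(a,u)\le 12\delta$ using the minimality of $u$, and then deduces via the triangle inequality that the angle at $t$ towards the $x$ side is $>(2000\delta)^2$, giving $\mu_x(a)=\mu_t(a)=\mu_z(a)$. Collapsing the $x$ and $z$ masks is the right target here because $a$ is on that side of $t$; collapsing $\mu_y$ to $\mu_z$, which is what you attempt, is not supported by the hypotheses and I do not see how to repair it along your route without essentially reintroducing the point $u$.
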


\begin{proof}
We will use Proposition \ref{proposition : grand angle comme des checkpoints} to prove that in this case:
$$ \mu_{x}(a)=\mu_{z}(a).$$

According to \ref{proposition : grand angle comme des checkpoints}, we have $\mu_{z}(a)=\mu_{t}(a)$.

We denote by $u \in [x,z]_{c}$ a projection of $a$ on $[x,z]_{c}\cup[z,y]_{c}$. Therefore, we have:
$$ \measuredangle_{t}(a,u) \leq 12 \delta. $$

Therefore:
$$ \measuredangle_{t}(z,u) \geq (2000\delta)^{2},$$
and since $u\in [x,z]_{c}$, we get,
$$\measuredangle_{t}(x,u) \geq (2000\delta)^{2}.$$

Then another use of Proposition \ref{proposition : grand angle comme des checkpoints} gives:
$$\mu_{x}(a)=\mu_{t}(a).$$

Therefore, we finally have:
$$\begin{aligned} 
            &| \theta(d_{a}(x,y))- \theta(d_{a}(x,z))-\theta(d_{a}(z,y))|\\
            =&| \theta(d_{b}(\mu_{t}(a),\mu_{y}(a)))
            - \theta(d_{b}(\mu_{t}(a),\mu_{y}(a)))-\theta(d_{b}(\mu_{t}(a),\mu_{t}(a)))|\\
            =& 0 .
\end{aligned}$$

\end{proof}

The following lemma is an analog of Lemma \ref{lemme : z est la projection donc c'est un quasi-centre} in the coned-off graph for the coned-off distance plus the sum of angles along a geodesic.

\begin{lem}\label{lemme: d'deuxmiles projection}

There exists a constant $W\geq 0$ such for all $x,y\in X_{c}$, every geodesic $[x,y]_{c}$, for all $a \in X_{c}$, let $t \in X_{c}$ be a projection of $a$ on $[x,y]_{c}$, for all $z \in [x,y]_{c} $ we have:
$$d'_{(2000\delta)^{2}}(a,z) \geq \frac{1}{\alpha_{2}^{2}}( d'_{(2000\delta)^{2}}(a,t)+d_{c}(t,z))-W, $$

where $\alpha_{2}$ is a constant defined in Proposition \ref{proposition: comparaison de la somme des angles le long de deux géodésiques}.

\end{lem}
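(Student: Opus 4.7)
The plan is to combine three ingredients already available in the paper: the fact that the projection of $a$ onto a geodesic is almost a quasi-center in the hyperbolic graph $X_c$ (essentially Lemma~\ref{lemme : z est la projection donc c'est un quasi-centre} transferred to $X_c$), Lemma~\ref{lem: somme des angles vers la projection} to transfer the angle sum from $[a,t]_c$ to $[a,z]_c$, and Proposition~\ref{proposition: comparaison de la somme des angles le long de deux géodésiques} to pass from the angle sum along one specific geodesic to the minimal angle sum $\Theta'_{>(2000\delta)^2}$.

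First, since $X_c$ is $\delta$-hyperbolic and $t\in [x,y]_c$ is a closest-point projection of $a$, the argument of Lemma~\ref{lemme : z est la projection donc c'est un quasi-centre} applies verbatim inside $X_c$ (it uses only the $\delta$-thinness of triangles and the definition of projection, not local finiteness) and yields
\[ d_c(a,z) \geq d_c(a,t) + d_c(t,z) - 24\delta. \]
Next, apply Lemma~\ref{lem: somme des angles vers la projection} with $u := z$ and $M := (2000\delta)^2$: there exist two specific geodesics $[a,z]_c =: \gamma$ and $[a,t]_c$ such that the sum of large angles along $\gamma$ satisfies
\[ \Theta_{>(2000\delta)^2,\,\gamma}(a,z) \geq \Theta'_{>(2000\delta)^2}(a,t) - 12\delta. \]
Adding these two bounds gives, for this specific choice of geodesic $\gamma$,
\[ d'_{>(2000\delta)^2,\,\gamma}(a,z) \geq d'_{>(2000\delta)^2}(a,t) + d_c(t,z) - 36\delta. \]

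Finally, to convert the angle sum along the chosen $\gamma$ into the minimal angle sum that defines $d'_{(2000\delta)^2}$, apply Proposition~\ref{proposition: comparaison de la somme des angles le long de deux géodésiques}: for any other geodesic $\omega$ between $a$ and $z$,
\[ d'_{>(2000\delta)^2,\,\gamma}(a,z) \leq \alpha_2\, d'_{>(2000\delta)^2,\,\omega}(a,z) + \beta_2, \]
so taking $\omega$ to minimize the right-hand side,
\[ d'_{(2000\delta)^2}(a,z) \geq \frac{1}{\alpha_2}\bigl(d'_{>(2000\delta)^2,\,\gamma}(a,z) - \beta_2\bigr) \geq \frac{1}{\alpha_2}\bigl(d'_{(2000\delta)^2}(a,t) + d_c(t,z) - 36\delta - \beta_2\bigr). \]
Since $\alpha_2 \geq 1$, replacing $\frac{1}{\alpha_2}$ by the weaker $\frac{1}{\alpha_2^2}$ and absorbing the additive error into a single constant $W := \frac{36\delta+\beta_2}{\alpha_2^2}$ (say) produces the inequality in the statement.

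The only subtle point is to make sure that the two applications of Lemma~\ref{lem: somme des angles vers la projection} and of Proposition~\ref{proposition: comparaison de la somme des angles le long de deux géodésiques} can be coupled: Lemma~\ref{lem: somme des angles vers la projection} guarantees the existence of the \emph{specific} geodesic $\gamma$ on which the angle sum dominates $\Theta'_{>(2000\delta)^2}(a,t) - 12\delta$, and Proposition~\ref{proposition: comparaison de la somme des angles le long de deux géodésiques} then allows this specific $\gamma$ to be exchanged against the minimizing geodesic at a multiplicative cost $\alpha_2$ and an additive cost $\beta_2$. I do not expect any genuine obstacle beyond keeping track of these constants; the main thing to be careful about is that the projection lemma in the coned-off graph truly depends only on hyperbolicity and not on local finiteness, which is visible from the original proof.
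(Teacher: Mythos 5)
Your proof follows exactly the same route as the paper's: both invoke Lemma~\ref{lemme : z est la projection donc c'est un quasi-centre} for the distance part, Lemma~\ref{lem: somme des angles vers la projection} (applied with $u=z$, $M=(2000\delta)^2$) for the angle-sum part, and then Proposition~\ref{proposition: comparaison de la somme des angles le long de deux géodésiques} to pass from the chosen geodesics to the minimizing ones. The only difference is that you make the final combination and the absorption of constants explicit, which the paper leaves as "we conclude by an application of Proposition~\ref{proposition: comparaison de la somme des angles le long de deux géodésiques}."
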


\begin{proof}
According to Lemma \ref{lemme : z est la projection donc c'est un quasi-centre}, we have:
$$d_{c}(a,z)\geq d_{c}(a,t)+d_{c}(t,z)-24\delta.$$

Then, according to Lemma \ref{lem: somme des angles vers la projection}, there exist two geodesics $[a,t]_{c}$, $[a,z]_{c}$ such that:
$$\Theta_{(2000\delta)^{2}}(z,a)\geq \Theta_{(2000\delta)^{2}}(t,a)-12\delta, $$
where $\Theta_{(2000\delta)^{2}}(z,a)$ and $\Theta_{(2000\delta)^{2}}(t,a)$ denotes the sum of angles greater than $(2000\delta)^{2}$ on these two geodesics.

Then, we conclude by an application of Proposition \ref{proposition: comparaison de la somme des angles le long de deux géodésiques}.

\end{proof}

The following lemma is an analogue of Lemma \ref{lemme: comparaison projection avec z presque sur la geodesique} in the framework of relatively hyperbolic groups.

\begin{lem}\label{lemme: comparaison projection avec z presque sur la geodesique cas relativement hyperbolique}
Let be $x,y \in X_{c}$, we choose $[x,y]_{c}$ a geodesic between $x$ and $y$. Let be a vertex $z \in cone_{26\delta+2}([x,y]_{c})$. We choose two geodesics between $[x,z]_{c}$ and $[z,y]_{c}$ between $x,z$ and $z,y$ respectively.

Let $a \in X_{c}$, $u \in X_{c}$ a projection of $a$ on $[x,y]_{c}$ and $v \in X_{c}$ a projection of $a$ on $[x,z]_{c}\cup[z,y]_{c}$. There exists $Z>0 $ which depends only on $\delta$, such that:
$$ d'_{(2000\delta)^{2}}(a,u)\geq \frac{1}{\alpha_{2}^{2}} d'_{(2000\delta)^{2}}(a,v)-Z-\measuredangle_{u}(a,v) .$$

\end{lem}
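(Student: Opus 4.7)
The plan is to mirror the strategy of Lemma \ref{lemme: comparer les angles avec une projection}, adapting it to the situation where $v$ is a projection onto the broken geodesic $[x,z]_{c}\cup[z,y]_{c}$ rather than a single geodesic side. The key observation is that the assumption $z\in\text{Cone}_{26\delta+2}([x,y]_{c})$ is used, exactly as in the ambient lemma, to forbid the existence of large angles along certain auxiliary geodesics emanating from $v$.

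First I would consider a geodesic triangle $[a,u,v]_{c}$ in the normal form of Theorem \ref{formenormaledestriangles}, producing the three points $\tilde{a},\tilde{u},\tilde{v}$. Treating the generic case where these three points are distinct, I would then split the angle sum along $[a,u]_{c}$ and $[a,v]_{c}$ into a common part along $[a,\tilde{a}]_{c}$, a ``middle'' part along $[\tilde{a},\tilde{u}]_{c}$ respectively $[\tilde{a},\tilde{v}]_{c}$ where all angles are at most $100\delta$ and so do not contribute to $\Theta_{>(2000\delta)^{2}}$, an ``endpoint'' contribution $\Theta_{>(2000\delta)^{2}}(\tilde{u},u)$ respectively $\Theta_{>(2000\delta)^{2}}(\tilde{v},v)$, plus the angle at $\tilde{a}$ (bounded by $12\delta$ thanks to Proposition \ref{proposition: angle en a tilde}) and the angles at $\tilde{u}$ and $\tilde{v}$.

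Second, I would argue by contradiction that the contribution $\Theta_{>(2000\delta)^{2}}(\tilde{v},v)+[\measuredangle_{\tilde{v}}(\cdot,\cdot)]_{>(2000\delta)^{2}}$ must vanish, copying verbatim the contradiction argument of Lemma \ref{lemme: comparer les angles avec une projection}: any vertex $w\in[\tilde{v},v]_{c}$ (or $w=\tilde{v}$) exhibiting an angle greater than $(2000\delta)^{2}$ between $a$ and $v$ would, after combining with the $12\delta$-triangle inequality for angles coming from the projection $v$, produce an angle greater than $(2000\delta)^{2}-12\delta$ between $x$ (or $y$) and $v$ at $w$, contradicting Lemma \ref{lem: angle vers projection cone} since $z\in\text{Cone}_{26\delta+2}([x,y]_{c})$ forces $v\in\text{Cone}_{2M}([x,y]_{c})$ for an appropriate $M$. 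By contrast, the analogous contribution on the $u$-side is precisely absorbed into $\measuredangle_{u}(a,v)$, which is why this term appears with a minus sign in the statement.

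Finally, adding a uniform bound $24\delta$ coming from Lemma \ref{lemme : z est la projection donc c'est un quasi-centre} for the coned-off distances (since $u$ projects $a$ onto $[x,y]_{c}$ and $v$ lies in a controlled cone around $[x,y]_{c}$, hence $d_{c}(a,u)\geq d_{c}(a,v)-D_{0}$ for some $D_{0}=D_{0}(\delta)$), I would conclude using Proposition \ref{proposition: comparaison de la somme des angles le long de deux géodésiques} applied twice to pass from the particular geodesics $[a,u]_{c}$ and $[a,v]_{c}$ built from the normal form triangle to arbitrary ones, accumulating the multiplicative factor $\alpha_{2}^{2}$ and the additive constant $Z$. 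The two boundary cases to dispose of separately are: when the normal form triangle degenerates into a tripod (then $u\in[a,v]_{c}$ or vice versa, and the argument simplifies via the explicit angle at $u$); and when $\tilde{u}=u$ or $\tilde{v}=v$ (where some of the terms disappear trivially). The main obstacle, as in the model lemma, is the contradiction step ruling out large angles on the $v$-side, and precisely matching the hypothesis ``$z\in\text{Cone}_{26\delta+2}([x,y]_{c})$'' to the angle threshold $(2000\delta)^{2}$ via Lemma \ref{lem: angle vers projection cone}.
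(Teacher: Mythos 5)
Your sketch gets the high-level ingredients right (normal form triangle from Theorem \ref{formenormaledestriangles}, contradiction via Lemma \ref{lem: angle vers projection cone}, Proposition \ref{proposition: comparaison de la somme des angles le long de deux géodésiques} at the end, same $24\delta$-type bound on the coned-off distance), but it elides the core technical step. The paper's proof does \emph{not} proceed by running a contradiction inside the normal form decomposition; it first isolates and proves a stand-alone global fact, namely that $\measuredangle_{w}(u,v)\leq 300\delta$ for \emph{every} vertex $w\neq u,v$. Establishing this requires a separate auxiliary point $t$ (a projection of $z$ on $[x,y]_{c}$) and a three-step argument: bound $\measuredangle_{z}(u,v)$, then $\measuredangle_{t}(u,v)$, then combine by the angular triangle inequality for general $w\neq z,t$ using bounds on $\measuredangle_{w}(u,t)$ and $\measuredangle_{w}(z,v)$. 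Your proposal never identifies this intermediate fact, and "copying verbatim" the contradiction from Lemma \ref{lemme: comparer les angles avec une projection} does not transfer: in that lemma the two points being compared are a projection on $[z,y]_{c}$ and $z$ itself, with $z$ anchored by the cone hypothesis; here the two points are projections on two \emph{different} geodesics, $[x,y]_{c}$ and $[x,z]_{c}\cup[z,y]_{c}$, and $z$ appears nowhere in the target inequality, so a different mediation through $t$ is needed.

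Your sentence "the analogous contribution on the $u$-side is precisely absorbed into $\measuredangle_{u}(a,v)$" also misdescribes the argument. In the paper, $\measuredangle_{u}(a,v)$ enters only in the degenerate tripod sub-case $u\in[a,v]_{c}$, where it is the \emph{exact} angle at $u$ on the common geodesic. In the genuine non-tripod case the $u$-side terms (the angle at $\tilde u$ and $\Theta_{>(2000\delta)^{2}}(\tilde u,u)$) are simply discarded by replacing the equality for $\Theta_{>(2000\delta)^{2}}(a,u)$ with the inequality $\Theta_{>(2000\delta)^{2}}(a,u)\geq\Theta_{>(2000\delta)^{2}}(a,\tilde a)+[\measuredangle_{\tilde a}(e_{1},e_{2}')]_{(2000\delta)^{2}}$; no absorption into $\measuredangle_{u}(a,v)$ takes place. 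Finally, your suggestion that $z\in\text{Cone}_{26\delta+2}([x,y]_{c})$ should imply $v\in\text{Cone}_{2M}([x,y]_{c})$ for some controlled $M$ is not used in (and not obviously equivalent to) the paper's argument; the paper instead routes through the small-angle inequalities at $z$ and $t$, which is what makes the quantitative constants work out. In short, the overall shape is recognizable, but the central lemma-within-a-lemma (the uniform $300\delta$ bound on $\measuredangle_{w}(u,v)$) is missing, and without it your contradiction step has no precise statement to prove.
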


\begin{proof}

According to Lemma \ref{lemme: comparaison projection avec z presque sur la geodesique} with the appropriate constant, there exists $Z>0$ which depends on $\delta$ only such that:
$$ d_{c}(a,u)\geq d_{c}(a,v)-Z.$$

To conclude the proof of the lemma, we need to prove for all $w \in X_{c}$, $w\neq ,v,u$ that:
\begin{equation*}\label{equation: angle entre u et v sont petits}
  \measuredangle_{w}(u,v)\leq 300 \delta
\end{equation*}

First let us prove this fact.

Let $t$ denote a projection of $z$ on $[x,y]_{c}$.

We start by proving by contradiction that:
$$ \measuredangle_{z}(u,v) \leq 300 \delta.$$
Let assume that:
$$ \measuredangle_{z}(u,v) > 300 \delta.$$
Since $v$ is the projection of $a$ on $[x,z]_{c}\cup[z,y]_{c}$, we have that:
$$\measuredangle_{z}(a,v)\leq 12\delta.$$
Let us assume that $v \in [x,z]_{c}$ without loss of generality, then we get:
$$ \measuredangle_{z}(u,x) > 300 \delta,$$
this implies that $z \in [x,u]_{c}\subset[x,y]_{c}$.
To conclude, according to the triangle inequality, we have:
$$\measuredangle_{z}(a,u)\geq \measuredangle_{z}(u,v)-\measuredangle_{z}(a,v)>12 \delta,$$
which contradicts the fact that $u$ is a projection of $a$ on $[x,y]_{c}$.
With the same idea, we have also that:
$$\measuredangle_{t}(u,v)\leq 300\delta.$$

When $w \neq z,t$, we will bound the angles between $u$ and $t$, the angles between $v$ and $z$ and conclude thanks to the triangle inequality and the fact that the angles between $z$ and $t$ are bounded.

Let assume that there exists $w \neq v, w\neq z$ such that:
$$\measuredangle_{w}(v,z)>100\delta.$$

According to Proposition \ref{Proposition: angle for hyperbolic metric space}, this implies that $w \in [x,z]_{c}\cup[z,y]_{c}$. Moreover, since $v$ is the projection of $a$ on $[x,z]_{c}\cup [z,y]_{c}$, we have that:
$$\measuredangle_{w}(a,v)\leq 12\delta.$$

Let assume without loss of generality that $v \in [x,z]_{c}$, then we get that:
$$\measuredangle_{w}(x,z)>100\delta.$$

Thus:
$$\measuredangle_{w}(x,t)\geq \measuredangle_{w}(x,z)-\measuredangle_{w}(x,t)\geq 100\delta-(26\delta+2)>12 \delta. $$
Therefore $w \in [x,t]_{c}\subset [x,y]_{c}$.

However:
$$\measuredangle_{w}(a,u)\geq \measuredangle_{w}(u,v)-\measuredangle_{w}(a,v)>12 \delta,$$

which contradicts the fact that $u$ is a projection of $a$ on $[x,y]_{c}$.
In the same way every angle between $u$ and $t$ is bounded by $100\delta$.
Thus for all $w \neq z,t$, we have:
$$\measuredangle_{w}(u,v)\leq \measuredangle_{w}(u,t)+\measuredangle_{w}(t,z)+\measuredangle_{w}(z,v)\leq 300\delta.$$

We now show that this fact allows us to conclude.
Let us consider a triangle $[a,u,v]$ as in Theorem \ref{formenormaledestriangles}.
Let assume first that this triangle is a tripod.
If $u \in [a,v]_{c}$, we have:
$$\Theta_{>(2000\delta)^{2}}(a,v)=\Theta_{>(2000\delta)^{2}}(a,u)+ [\measuredangle_{u}(a,v)]_{(2000\delta)^{2}} +\Theta_{>(2000\delta)^{2}}(u,v) = \Theta_{>(2000\delta)^{2}}(a,u)+ [\measuredangle_{u}(a,v)]_{(2000\delta)^{2}}, $$
according to Inequality \ref{equation: angle entre u et v sont petits} and we conclude according to Proposition \ref{proposition: comparaison de la somme des angles le long de deux géodésiques}.

If $u \notin [a,v]_{c}$, we denote by $w$ the center of the tripod and we have:
\begin{itemize}
    \item $\Theta_{>(2000\delta)^{2}}(a,v)=\Theta_{>(2000\delta)^{2}}(a,w)+ [\measuredangle_{w}(a,v)]_{(2000\delta)^{2}} +\Theta_{>(2000\delta)^{2}}(w,v)$

    \item $\Theta_{>(2000\delta)^{2}}(a,u)=\Theta_{>(2000\delta)^{2}}(a,w)+ [\measuredangle_{w}(a,u)]_{(2000\delta)^{2}} +\Theta_{>(2000\delta)^{2}}(w,u) $
\end{itemize}

According to Inequality \ref{equation: angle entre u et v sont petits}, we have $\Theta_{>(2000\delta)^{2}}(w,v)=0$. To control $[\measuredangle_{w}(a,v)]_{(2000\delta)^{2}}$, we apply the triangular inequality to get:
$$|\measuredangle_{w}(a,v)-\measuredangle_{w}(a,u)|\leq \measuredangle_{w}(u,v),$$

and we use again Inequality \ref{equation: angle entre u et v sont petits}.

Let assume now that $[a,u,v]$ is not a tripod, then $\tilde{a},\tilde{u},\tilde{v}$ are all different. If $a\neq \tilde{a}$, we denote by $e_{1}$ the edge on $[a,\tilde{a}]_{c}$ such that $\tilde{a} \in e_{1}$ and $e_{2},e'_{2}$ the edges of $[\tilde{a},v]_{c}$, $[\tilde{a},u]_{c}$ respectively such that $\tilde{a} \in e_{2}\cap e_{2}'$. If $v\neq \tilde{v}$, we denote by $f_{1},f_{2}$ the edges of $[a,\tilde{v}]_{c}$, $[v,\tilde{v}]_{c}$ respectively such that $\tilde{v}\in f_{1}\cap f_{2}$. We consider $\measuredangle_{\tilde{a}}(e_{1},e_{2}), \measuredangle_{\tilde{a}}(e_{1},e'_{2}), \measuredangle_{\tilde{v}}(f_{1},f_{2})$ with the convention that the angles are equal to zero in the cases where the edges are not defined. 
We get:

\begin{itemize}
    \item 
$\Theta_{>(2000\delta)^{2}}(a,u)\geq \Theta_{>(2000\delta)^{2}}(a,\tilde{a})+ [\measuredangle_{\tilde{a}}(e_{1},e'_{2})]_{(2000\delta)^{2}}$,

    \item $\Theta_{>(2000\delta)^{2}}(a,v)=\Theta_{>(2000\delta)^{2}}(a,\tilde{a})+ [\measuredangle_{\tilde{a}}(e_{1},e_{2})]_{(2000\delta)^{2}} + [\measuredangle_{\tilde{v}}(f_{1},f_{2})]_{(2000\delta)^{2}}+\Theta_{>(2000\delta)^{2}}(\tilde{v},v) $.

\end{itemize}

According to Inequality \ref{equation: angle entre u et v sont petits}, we have:
$$ \Theta_{>(2000\delta)^{2}}(\tilde{v},v)=0 .$$

According to Proposition \ref{proposition: angle en a tilde}, $|\measuredangle_{\tilde{a}}(e_{1},e'_{2})-\measuredangle_{\tilde{a}}(e_{1},e_{2})|$ is bounded.
and finally according to Proposition \ref{proposition: angle en a tilde}, $\measuredangle_{\tilde{v}}(f_{1},f_{2})$ is close to $\measuredangle_{\tilde{v}}(v,u)$, which is bounded according to Equation \ref{equation: angle entre u et v sont petits}.

We conclude by an application of Proposition \ref{proposition: comparaison de la somme des angles le long de deux géodésiques}.

\end{proof}

This lemma allows us to control the angle $\measuredangle_{u}(v,a)$ from the Lemma \ref{lemme: comparaison projection avec z presque sur la geodesique cas relativement hyperbolique}.
\begin{lem}\label{lem: cas ou langle entre les deux projections est grand}

Let be $x,y \in X_{c}$, we choose $[x,y]_{c}$ a geodesic between $x$ and $y$. Let be a vertex $z \in cone_{26\delta+2}([x,y]_{c})$. We choose two geodesics between $[x,z]_{c}$ and $[z,y]_{c}$ between $x,z$ and $z,y$ respectively.

Let $a \in X_{c}$, $u \in X_{c}$ a projection of $a$ on $[x,y]_{c}$ and $v \in X_{c}$ a projection of $a$ on $[x,z]_{c}\cup[z,y]_{c}$. 
If $\measuredangle_{u}(a,v)\geq (2000\delta)^{2}+12\delta$ then:
$$ | \theta(d_{a}(x,y))- \theta(d_{a}(x,z))-\theta(d_{a}(z,y))|=0.$$

\end{lem}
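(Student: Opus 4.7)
The plan is to mirror the proof of Lemma~\ref{lem: cas ou langle avec une projection est grand}: exploit the large-angle hypothesis to invoke the checkpoint principle of Proposition~\ref{proposition : grand angle comme des checkpoints}, collapsing the masks $\mu_{x}(a)$, $\mu_{z}(a)$, $\mu_{v}(a)$ to a single common measure so that the expression telescopes to zero. I split into two cases according to whether $v\in [x,z]_{c}$ or $v\in [z,y]_{c}$; the two cases are symmetric (swap the roles of $x$ and $y$), so I treat only $v\in [x,z]_{c}$ and aim to prove $\mu_{x}(a)=\mu_{z}(a)$.

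The first step is a checkpoint at $u$. Since $\measuredangle_{u}(a,v)\ge (2000\delta)^{2}+12\delta>12\delta$, Proposition~\ref{proposition: les propirétés des angles} forces every geodesic from $a$ to $v$ to pass through $u$, so $u\in I_{c}(a,v)$; combined with $\measuredangle_{u}(a,v)>(2000\delta)^{2}$, Proposition~\ref{proposition : grand angle comme des checkpoints} yields $\mu_{v}(a)=\mu_{u}(a)$.

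The second, more delicate, step is to propagate this equality to $\mu_{x}(a)$ and $\mu_{z}(a)$. Because $v$ lies on the geodesic $[x,z]_{c}$ and both $u,v$ sit on two geodesics meeting at the corner $x$ of the triangle $[x,y,z]_c$, the Gromov product $(u,v)_{x}$ is $O(\delta)$, so $x$ is $O(\delta)$-close to a geodesic $[u,v]_{c}$; symmetrically, since $v\in [x,z]_{c}$ between $x$ and $z$, the vertex $z$ lies $O(\delta)$-close to a geodesic prolongation of $[u,v]_{c}$ beyond $v$ along $[x,z]_{c}$. Using that $\measuredangle_{u}(\cdot,\cdot)$ is defined as the supremum over first-edge choices along geodesics out of $u$, one selects first edges at $u$ that realize the large angle with $[u,a]_{c}$ and that simultaneously start some geodesic from $u$ to $x$ (respectively to $z$). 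The triangle inequality for angles (Proposition~\ref{proposition: triangle inequality}) then delivers $\measuredangle_{u}(a,x)>(2000\delta)^{2}$ and $\measuredangle_{u}(a,z)>(2000\delta)^{2}$, and two further applications of Proposition~\ref{proposition : grand angle comme des checkpoints} give $\mu_{x}(a)=\mu_{u}(a)$ and $\mu_{z}(a)=\mu_{u}(a)$.

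From $\mu_{x}(a)=\mu_{z}(a)=\mu_{u}(a)$ we deduce $d_{a}(x,z)=d_{b}(\mu_{u}(a),\mu_{u}(a))=0$ and $d_{a}(x,y)=d_{b}(\mu_{u}(a),\mu_{y}(a))=d_{a}(z,y)$, so the signed expression $\theta(d_{a}(x,y))-\theta(d_{a}(x,z))-\theta(d_{a}(z,y))$ vanishes exactly as in Lemma~\ref{lem: cas ou langle avec une projection est grand}. The hard step will be the propagation: the abuse-of-notation convention forces one to be careful in choosing first edges at $u$, and one must verify that the hyperbolicity estimate $(u,v)_{x}=O(\delta)$ (and its analogue at $z$) really does allow a single first edge at $u$ to simultaneously start a geodesic to $v$ and a geodesic to $x$, up to the $12\delta$ slack that is precisely what the hypothesis $(2000\delta)^{2}+12\delta$ builds in. Everything else—the two checkpoint applications and the telescoping of the final expression—is a direct repetition of the argument for Lemma~\ref{lem: cas ou langle avec une projection est grand}.
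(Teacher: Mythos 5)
Your overall strategy is the right one (checkpoint proposition, collapse to $\mu_u(a)$, telescope), and you correctly observe that the two cases $v\in[x,z]_c$ and $v\in[z,y]_c$ are symmetric, but the key step — turning the hypothesis $\measuredangle_u(a,v)\geq(2000\delta)^2+12\delta$ into $\measuredangle_u(a,x),\measuredangle_u(a,z)\geq(2000\delta)^2$ — is where your argument goes wrong. The claim that $(u|v)_x=O(\delta)$ is false in this configuration: since $z\in Cone_{26\delta+2}([x,y]_c)$ the two geodesic paths $[x,y]_c$ and $[x,z]_c\cup[z,y]_c$ fellow-travel, so the two projections $u$ and $v$ of $a$ are close to each other while both can be arbitrarily far from $x$; hence $(u|v)_x\approx d_c(x,u)$ is large, $x$ is far from any geodesic $[u,v]_c$, and there is no reason for a first edge at $u$ toward $v$ to also start a geodesic toward $x$. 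So the ``select a common first edge'' step, which you yourself flag as the hard part, cannot be made to work as stated.

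The missing idea is a short proof by contradiction using only the angle-to-interval property of Proposition \ref{proposition: les propirétés des angles} and the minimality of $v$. By the triangle inequality for angles, $\measuredangle_u(a,x)\geq \measuredangle_u(a,v)-\measuredangle_u(v,x)$, so it suffices to show $\measuredangle_u(v,x)<12\delta$. Suppose not: then $u$ lies on every geodesic from $v$ to $x$, and since the subsegment of $[x,z]_c$ from $v$ to $x$ is such a geodesic, $u\in[x,z]_c\subset[x,z]_c\cup[z,y]_c$. On the other hand, $\measuredangle_u(a,v)>12\delta$ forces $u$ onto every geodesic from $a$ to $v$, so $d_c(a,v)=d_c(a,u)+d_c(u,v)>d_c(a,u)$. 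This contradicts the fact that $v$ minimizes the distance from $a$ to $[x,z]_c\cup[z,y]_c$. Hence $\measuredangle_u(v,x)<12\delta$, giving $\measuredangle_u(a,x)\geq(2000\delta)^2$, and Proposition \ref{proposition : grand angle comme des checkpoints} yields $\mu_x(a)=\mu_u(a)$; an identical argument with $z$ in place of $x$ gives $\mu_z(a)=\mu_u(a)$, after which your telescoping conclusion is correct and finishes the proof.
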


\begin{proof}
We assume whiteout loss of generality that $v \in [x,z]_{c}$. We will use Proposition \ref{proposition : grand angle comme des checkpoints} to prove that in this case:
$$ \mu_{x}(a)=\mu_{z}(a).$$

According to the triangular inequality for angles, we have:
$$ \measuredangle_{u}(a,x)\geq \measuredangle_{u}(a,v)-\measuredangle_{u}(v,x).$$

In the case where $\measuredangle_{u}(v,z)\geq 12\delta$, this implies that $u \in [v,x]_{c}$ and then the case that $\measuredangle_{u}(a,v)\geq (2000\delta)^{2}+12\delta$ is in contradiction with the fact that $v$ is the projection of $a$ on $[x,z]_{c}\cup[z,y]_{c}$. Thus:
$$\measuredangle_{u}(a,x)\geq (2000\delta)^{2}.$$

According to Proposition \ref{proposition : grand angle comme des checkpoints}, we get that $\mu_{x}(a)=\mu_{u}(a)$.

The same idea applies to prove that $\mu_{z}(a)=\mu_{u}(a)$.

Therefore, we finally have:
$$\begin{aligned} 
            &| \theta(d_{a}(x,y))- \theta(d_{a}(x,z))-\theta(d_{a}(z,y))|\\
            =&| \theta(d_{b}(\mu_{u}(a),\mu_{y}(a)))
            - \theta(d_{b}(\mu_{u}(a),\mu_{y}(a)))-\theta(d_{b}(\mu_{u}(a),\mu_{u}(a)))|\\
            =& 0 .
\end{aligned}$$

\end{proof}

\begin{proposition}\label{proposition: nouveau p}

There exists a real number $P\geq p_{0} >1$, where $p_{0}$ is the real number of Proposition \ref{proposition : somme lp des différences de mask converge} and a constant $\sigma>0$ such that for all $x \in G$:

$$ \sum_{ a\in X_{c} } \kappa^{\frac{P}{\alpha_{2}^{4}}d'_{(2000\delta)^{2}}(a,x)} \leq \sigma. $$

where $\kappa$ is the constant of Proposition \ref{proposition : la difference de deux masks est controlé par l'angle}, $ \alpha_{2}$ the constant of Proposition \ref{proposition: comparaison de la somme des angles le long de deux géodésiques}.
    
\end{proposition}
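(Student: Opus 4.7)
The plan is to reduce the estimate to a standard geometric-series bound using the counting lemma for $d'_{(2000\delta)^2}$-balls stated earlier, namely Lemma~\ref{lemme: lemme de comptage avec d'deuxmile}. Since $d_c$ is a graph metric and angles in the coned-off graph take integer values, the quantity $d'_{(2000\delta)^2}(a,x)$ lies in $\mathbb{N}$, so I can slice the sum over $a$ according to the value $n = d'_{(2000\delta)^2}(a,x)$.

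First I would fix $x \in G$ and rewrite
\[
 \sum_{a \in X_c} \kappa^{\frac{P}{\alpha_2^4} d'_{(2000\delta)^2}(a,x)} \;=\; \sum_{n \geq 0} \kappa^{\frac{P}{\alpha_2^4} n}\; \bigl|\{a \in X_c : d'_{(2000\delta)^2}(a,x) = n\}\bigr|.
\]
By Lemma~\ref{lemme: lemme de comptage avec d'deuxmile}, the cardinality of each level set is bounded by $A\gamma_G^n$ uniformly in $x$, with constants $A \geq 1$, $\gamma_G > 1$ depending only on the group and its coned-off graph. Substituting, the sum is dominated by $A \sum_{n \geq 0}\bigl(\gamma_G \kappa^{P/\alpha_2^4}\bigr)^n$.

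Now I would choose $P$. Since $\kappa < 1$ by Proposition~\ref{proposition : la difference de deux masks est controlé par l'angle}, the quantity $\gamma_G \kappa^{P/\alpha_2^4}$ can be made strictly less than $1$ by taking $P$ large enough, namely any $P$ with
\[
 P \;>\; \alpha_2^4 \,\frac{\log \gamma_G}{\log(1/\kappa)}.
\]
I pick $P$ to be the maximum of $p_0$ and any integer satisfying the above inequality; this is the choice of $P$ that will be used in the remainder of the section.

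With this choice, the geometric series converges and
\[
 \sum_{a \in X_c} \kappa^{\frac{P}{\alpha_2^4} d'_{(2000\delta)^2}(a,x)} \;\leq\; \frac{A}{1 - \gamma_G\kappa^{P/\alpha_2^4}} \;=: \sigma,
\]
a constant independent of $x \in G$, which is exactly the claim. There is no real obstacle here — the estimate is a mechanical application of the counting lemma, and the only judgement involved is to ensure that the chosen $P$ is also $\geq p_0$ so that it remains compatible with the summability requirement of Proposition~\ref{proposition : somme lp des différences de mask converge} used elsewhere in the construction.
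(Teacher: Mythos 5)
Your proof is correct and follows the same argument as the paper: slice the sum by the integer value $n = d'_{(2000\delta)^{2}}(a,x)$, invoke Lemma~\ref{lemme: lemme de comptage avec d'deuxmile} to bound the level sets by $A\gamma_{G}^{n}$, and pick $P \geq p_0$ large enough so that $\gamma_{G}\kappa^{P/\alpha_2^4} < 1$, yielding a convergent geometric series whose sum is independent of $x$. The only difference is cosmetic — you make the strictness of the inequality $\gamma_G\kappa^{P/\alpha_2^4} < 1$ explicit, whereas the paper writes a non-strict $\leq$ that is clearly intended to be strict.
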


\begin{proof}
We fix $x \in X_{c}$.

According to Lemma \ref{lemme: lemme de comptage avec d'deuxmile}, there exist  $A\geq 1 $ and $\gamma_{G}>1$ independent of $x$ such that for all $n \in \mathbb{N}$ the set, $S_{d'_{(2000\delta)^{2}}}(n)=\{a \in X_{c}, d'_{(2000\delta)^{2}}(a,x)=n\}$ has cardinality less than $A\gamma_{G}^{n}$ according to Lemma \ref{lemme: lemme de comptage avec d'deuxmile}. Thus, for all $n\in \mathbb{N}$, for all $P \in \mathbb{R}$:
$$ \displaystyle \sum_{a \in S_{d'_{(2000\delta)^{2}}}(n) } \kappa^{\frac{P}{\alpha_{2}^{4}}d'_{(2000\delta)^{2}}(a,x)} \leq  \kappa^{\frac{P}{\alpha_{2}^{4}}n} A \gamma_{G}^{n}.$$
 We choose $P$ such that $\kappa^{\frac{P}{\alpha_{2}^{4}}} \leq \frac{1}{\gamma_{G}}$, therefore the sum converges and does not depend on $x$.
\end{proof}

We define here a function from $\mathbb{R}_{+}$ to $\mathbb{R}_{+}$ which is equal to the identity up to a multiplicative constant for large values and equal to $t\mapsto t^{P}$  for small values. The strongly bolic metric will be defined as the sum of the $d_{a}$, for all $a \in X_{c}$, to which we apply this function. The idea is that when the contribution of a parabolic is small, the metric will be similar to the one in the hyperbolic case. When the contribution of a parabolic is large, we will rely on the strong bolicity of $d_{a}$.

\begin{definition}\label{definition: la fonction theta} 

Let $t_{0}$ be a constant with $t_{0}\geq A ((2000\delta)^{2}+18\delta)+AB+2$, where $A,B$ the constants of Proposition \ref{proposition: l'angle est borné par la distance bolic}.

We define $ \theta: \mathbb{R}_{+}\rightarrow\mathbb{R}_{+} $ as:
$$
\theta(t) = \left\{
    \begin{array}{ll}
        t^{P} & \mbox{if} ~ t \leq t_{0} \\
        t_{0}^{P-1}t & \mbox{if} ~ t\geq t_{0}.
    \end{array}
\right.
$$
 where $P$ is the real number of Proposition \ref{proposition: nouveau p}.
\end{definition}

Let us check that $\theta$ is a lipschitz function.

\begin{lem}\label{lem: theta est lipschtizienne}

$\theta$ is $Pt_{0}^{P-1}$-lipschitz.

\end{lem}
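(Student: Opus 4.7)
The plan is to verify the Lipschitz bound by a straightforward piecewise analysis, since $\theta$ is the concatenation of two smooth pieces that are glued continuously at $t_0$.

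First, I would check that $\theta$ is continuous at $t_0$: both branches give $t_0^P$ at $t = t_0$, so there is no jump. Next, I would compute the derivative on each open piece. On $(0, t_0)$ we have $\theta'(t) = P t^{P-1}$, which is increasing in $t$ (since $P > 1$ by Proposition \ref{proposition: nouveau p}) and therefore bounded above on this interval by $P t_0^{P-1}$. On $(t_0, \infty)$ the function is affine of slope $t_0^{P-1} \leq P t_0^{P-1}$. Hence on each open piece $|\theta'| \leq P t_0^{P-1}$.

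To conclude, I would apply the mean value theorem on each piece separately, then combine them across $t_0$. More precisely, for $0 \leq s \leq t$, either both lie in $[0, t_0]$, both lie in $[t_0, \infty)$, or $s \leq t_0 \leq t$; in the third case one writes
$$
|\theta(t) - \theta(s)| \leq |\theta(t) - \theta(t_0)| + |\theta(t_0) - \theta(s)| \leq P t_0^{P-1}(t - t_0) + P t_0^{P-1}(t_0 - s) = P t_0^{P-1}(t-s).
$$
In the other two cases a single application of the mean value theorem gives the same bound directly.

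There is no real obstacle here; the statement is essentially a routine calculus fact about piecewise functions. The only thing to be careful about is to remember that $P > 1$ so that $P \geq 1$ and hence $t_0^{P-1} \leq P t_0^{P-1}$, which is what allows the affine branch to be controlled by the same constant as the power branch.
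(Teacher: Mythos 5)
Your argument is correct and follows exactly the same route as the paper's proof: bound the derivative on each smooth piece by $P t_0^{P-1}$, deduce the Lipschitz bound piecewise, and glue across $t_0$ with the triangle inequality. You merely make explicit a couple of steps (continuity at $t_0$, the comparison $t_0^{P-1}\leq P t_0^{P-1}$) that the paper leaves implicit.
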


\begin{proof}
$\theta$ is differentiable everywhere except at $t_{0}$ and $|\theta'|\leq Pt_{0}^{P-1}$.

$\theta$ is clearly $Pt_{0}^{P-1}$-lipschitz on $[0,t_{0}]$ and on $[t_{0},\infty[$.

Let $s<t_{0}<t$.
$$\begin{aligned} \theta(t)-\theta(s)& = \theta(t)-\theta(t_{0})+\theta(t_{0})-\theta(s)\\
& \leq Pt_{0}^{P-1} (t-s).
\end{aligned}$$

\end{proof}

We notice here that for all $a \in X_{c}$, $x,y \in G$ when the value $d_{a}(x,y)$ is large then
$a$ has infinite valence.

\begin{lem}\label{lemme: theta est lidentite alors a est dans toute geodesique entre x et y}

Let $x,y \in G$ and $a \in X_{c}$ such that $d_{a}(x,y) \geq t_{0}$, then:
\begin{itemize}
    \item $a \in X_{c}^{\infty}$,

    \item $\measuredangle_{a}(x,y) \geq (2000\delta)^{2}+18\delta $ and in particular every geodesic between $x$ and $y$ goes through $a$.
\end{itemize}
\end{lem}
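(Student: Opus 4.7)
The plan is short because the constant $t_{0}$ in Definition \ref{definition: la fonction theta} was calibrated precisely to make this lemma essentially a transcription of Proposition \ref{proposition: l'angle est borné par la distance bolic}. Two ingredients are involved: the elementary fact that the total-variation distance between two probability measures is bounded by $2$, and the quantitative comparison between $d_{a}$ and the combinatorial angle $\measuredangle_{a}$ at vertices of infinite valence.

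First I would dispatch the finite-valence case. If $a \in X_{c} \setminus X_{c}^{\infty}$, Definition \ref{definition: da} says $d_{a}(x,y) = \lVert \mu_{x}(a) \Delta \mu_{y}(a) \rVert$, and since both $\mu_{x}(a)$ and $\mu_{y}(a)$ are probability measures their $\ell^{1}$-difference is at most $2$. The hypothesis $d_{a}(x,y) \geq t_{0} \geq A((2000\delta)^{2}+18\delta) + AB + 2 > 2$ is incompatible with this, so $a$ must have infinite valence.

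Next, with $a \in X_{c}^{\infty}$, I would apply the left half of Proposition \ref{proposition: l'angle est borné par la distance bolic} to get
\[
\measuredangle_{a}(x,y) \;\geq\; \frac{1}{A}\, d_{a}(x,y) - B \;\geq\; \frac{t_{0}}{A} - B.
\]
By the choice of $t_{0}$, one has $t_{0}/A - B \geq (2000\delta)^{2} + 18\delta$, giving the announced angle bound. Finally, since $(2000\delta)^{2} + 18\delta > 12\delta$, the first item of Proposition \ref{proposition: les propirétés des angles} shows that every geodesic between $x$ and $y$ must pass through $a$.

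There is no real obstacle here: the argument is three lines once the comparison Proposition \ref{proposition: l'angle est borné par la distance bolic} is in hand, and the only subtlety is bookkeeping the constants $A$, $B$ in the definition of $t_{0}$. The genuine work was done earlier, in establishing that $d_{a}$ and the angle $\measuredangle_{a}$ are quasi-isometric at infinite-valence vertices.
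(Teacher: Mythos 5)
Your argument is correct and follows exactly the route taken in the paper: the finite-valence case is excluded because total variation between probability measures is at most $2 < t_0$, then the infinite-valence case is handled via the quasi-isometry of Proposition \ref{proposition: l'angle est borné par la distance bolic} and the angle criterion of Proposition \ref{proposition: les propirétés des angles}. No differences worth noting.
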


\begin{proof}
Let $x,y \in G$ and $a \in X_{c}$, $d_{a}(x,y) \geq t_{0} \geq A ((2000\delta)^{2}+18\delta)+AB$ with $A,B$ the constant of Proposition \ref{proposition: l'angle est borné par la distance bolic}. By definition, for all $a \in G$, for all $x,y \in G$, $d_{a}(x,y)\leq2$. As $t_{0}\geq 2$, $a \in X_{c}^{\infty}$. 

Moreover, according to Proposition \ref{proposition: l'angle est borné par la distance bolic}, we have $\measuredangle_{a}(x,y) \geq (2000\delta)^{2}+18\delta $.

Finally, this fact implies that $a$ belongs to every geodesic between $x$ and $y$ according to Proposition \ref{proposition: les propirétés des angles}.

\end{proof}

\begin{lem}\label{lemme: la somme lp des da est finie}

For all $x,y \in G$, the sum

$$\sum_{a \in X_{c}} d_{a}(x,y)^{P},$$

is finite.
\end{lem}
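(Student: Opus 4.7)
The strategy is to split the sum according to how far $a$ lies from a fixed geodesic joining $x$ to $y$ in the refined metric $d'_{(2000\delta)^{2}}$, and then treat the \emph{far} and \emph{near} regimes separately. First I would fix a geodesic $[x,y]_c = \{x_0,x_1,\dots,x_n\}$ and, for every $a \in X_c$, choose a projection $t(a) \in [x,y]_c$, writing $t(a) = x_{i(a)}$. With $C_1, K_1$ the constants of Theorem \ref{theo: theo stylé sur les barycentres et les masques} and $\sigma$ the constant of Proposition \ref{proposition: nouveau p}, I would split
\[
\sum_{a \in X_c} d_a(x,y)^P \;=\; \sum_{d'_{(2000\delta)^{2}}(a,t(a)) > C_1} d_a(x,y)^P \;+\; \sum_{d'_{(2000\delta)^{2}}(a,t(a)) \leq C_1} d_a(x,y)^P.
\]

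For the far sum, Theorem \ref{theo: theo stylé sur les barycentres et les masques} gives $d_a(x,y) \leq K_1\,\kappa^{d'_{(2000\delta)^{2}}(a,t(a))}$, and raising to the $P$-th power and reindexing by the projection base point yields
\[
\sum_{d'_{(2000\delta)^{2}}(a,t(a)) > C_1} d_a(x,y)^P \;\leq\; K_1^P \sum_{i=0}^{n}\;\sum_{a \in X_c} \kappa^{P\, d'_{(2000\delta)^{2}}(a,x_i)}.
\]
Since $\kappa < 1$ and $\alpha_2 \geq 1$, one has $\kappa^{P} \leq \kappa^{P/\alpha_2^{4}}$, so the inner sum is at most $\sigma$ by Proposition \ref{proposition: nouveau p}, and the far part is bounded by $K_1^P(n+1)\sigma$, which is finite.

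For the near sum, Lemma \ref{lemme: lemme de comptage avec d'deuxmile} applied at each $x_i$ shows that $|\{a \in X_c : d'_{(2000\delta)^{2}}(a, x_i) \leq C_1\}| \leq A\gamma_G^{C_1}$, so the indexing set in the near sum has at most $(n+1)A\gamma_G^{C_1}$ elements. Each individual term $d_a(x,y)$ is finite: by Proposition \ref{proposition : les masks sont uniformément bornés} the masks $\mu_x(a)$ and $\mu_y(a)$ have supports of diameter at most $C$, so both $d_b(\mu_x(a), \mu_y(a))$ and $\|\mu_x(a) \Delta \mu_y(a)\|$ are finite. Hence the near part is a finite sum of finite terms, and adding both pieces gives the claim.

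The only calibration that matters is matching the exponent produced by Theorem \ref{theo: theo stylé sur les barycentres et les masques} against the counting available in Proposition \ref{proposition: nouveau p}; but the real number $P$ was chosen precisely to make the geometric series $\sum_{a} \kappa^{(P/\alpha_2^{4})\,d'_{(2000\delta)^{2}}(a,x_i)}$ converge uniformly in the base point $x_i$, so no extra inequality beyond those already recorded is needed.
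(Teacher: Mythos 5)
You rely on Proposition \ref{proposition: nouveau p} and Lemma \ref{lemme: lemme de comptage avec d'deuxmile} with base point $x_i$ ranging over the chosen geodesic $[x,y]_c$, but both are stated only for base points in $G$, and that restriction is essential. Whenever $x_i \in [x,y]_c$ has infinite valence it has infinitely many neighbours $a$ in the coned-off graph; each such $a$ satisfies $d_c(a,x_i)=1$ and $\Theta'_{>(2000\delta)^2}(a,x_i)=0$, hence $d'_{(2000\delta)^2}(a,x_i)=1$. Therefore $\sum_{a\in X_c}\kappa^{P\,d'_{(2000\delta)^2}(a,x_i)}$ already diverges at the $n=1$ shell, so the bound you write for the far part is trivially $+\infty$, and the near set $\{a \in X_c : t(a)=x_i,\ d'_{(2000\delta)^2}(a,x_i)\le C_1\}$ is infinite rather than of size at most $A\gamma_G^{C_1}$, so the near part is not a ``finite sum of finite terms.'' Since $x,y\in G$, the geodesic $[x,y]_c$ will in general contain interior vertices of $X_c^\infty$, so these bad base points cannot be avoided by changing the reindexing.

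The point the argument is missing is that for such an $x_i\in X_c^\infty$, the infinitely many $a$ in its link almost all give $d_a(x,y)=0$: for all but finitely many neighbours $a$ of $x_i$, the angle at $x_i$ between $a$ and $x$, and between $a$ and $y$, exceeds $(2000\delta)^2$, and Proposition \ref{proposition : grand angle comme des checkpoints} then forces $\mu_x(a)=\mu_{x_i}(a)=\mu_y(a)$. Your proposal never exploits this vanishing, which is why it cannot close; the exponential decay of Theorem \ref{theo: theo stylé sur les barycentres et les masques} is simply the wrong tool near an infinite-valence vertex of $[x,y]_c$. The paper sidesteps the whole issue: Proposition \ref{proposition : somme lp des différences de mask converge2} already says $\sum_a\|\mu_x(a)\Delta\mu_y(a)\|^{P}$ converges, so the set $\mathcal{E}_{x,y}$ of $a$ with disjoint mask supports is finite, and for $a\notin\mathcal{E}_{x,y}$ the last item of Definition \ref{definition: lesbons parabolic} gives $d_a(x,y)\le C\|\mu_x(a)\Delta\mu_y(a)\|$, transferring $\ell^P$-summability of mask differences directly to $\sum_a d_a(x,y)^P$. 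That comparison, not the geodesic-projection estimate, is what controls the infinite links.
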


\begin{proof}

According to Proposition \ref{proposition : somme lp des différences de mask converge}, for all $x,y \in X_{c}$, there is a finite number of $a \in X_{c}$ such that :
$$ || \mu_{x}(a) \Delta \mu_{y}(a) ||=2 .$$

Otherwise the sum $\displaystyle \sum_{a \in X_{c} } || \mu_{x}(a) \Delta \mu_{y}(a) ||^{p_{0}}$ would diverge, with $p_{0}$ the integer of Proposition \ref{proposition : somme lp des différences de mask converge}.

Then the set $ \mathcal{E}_{x,y}:= \{ a \in X_{c} ~ | ~|| \mu_{x}(a) \Delta \mu_{y}(a) ||=2 \} $ is finite.\\

Moreover, for all $a \in X_{c} \backslash \mathcal{E}_{x,y} $, we have $\text{supp}(\mu_{x}(a)) \cap \text{supp}(\mu_{y}(a)) \neq \emptyset$, then according to Definition \ref{definition: lesbons parabolic}:
$$ d_{a}(x,y)\leq C || \mu_{x}(a) \Delta \mu_{y}(a) ||.  $$

Therefore :
$$\begin{aligned} 
            \displaystyle \sum_{a \in X_{c}} d_{a}(x,y)^{P} &=\sum_{a \in \mathcal{E}_{x,y}} d_{a}(x,y)^{P} + \sum_{a \in X_{c} \backslash \mathcal{E}_{x,y}} d_{a}(x,y)^{P}\\
            & \leq \sum_{a \in \mathcal{E}_{x,y}} d_{a}(x,y)^{P} + C^{P} \sum_{a \in X_{c} \backslash \mathcal{E}_{x,y}} || \mu_{x}(a) \Delta \mu_{y}(a) ||^{P} ~(\text{according to the previous remark}) \\
           & \leq \displaystyle \sum_{a \in \mathcal{E}_{x,y}} d_{a}(x,y)^{P} + C^{P} \sum_{a \in X_{c}} || \mu_{x}(a) \Delta \mu_{y}(a) ||^{P}\\
            & < \infty ~(\text{according to Proposition \ref{proposition : somme lp des différences de mask converge} and the fact that } P\geq p_{0})  .
\end{aligned}$$

This proves the lemma.
    
\end{proof}

Just as in the case of hyperbolic groups, we define here a function which will be the metric up to a constant.

\begin{proposition} \label{proposition: D est fini et quasiisom à metric du groupe groupe rel hyp}
For all $x,y \in G$,

$$D(x,y):=\sum_{a \in X_{c}} \theta(d_{a}(x,y))$$ is finite.

\end{proposition}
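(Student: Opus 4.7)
The plan is to split the sum defining $D(x,y)$ according to the two regimes of the function $\theta$ and handle each regime separately.

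First I would write
\[
D(x,y)=\sum_{\{a\in X_c\,:\,d_a(x,y)\le t_0\}}\theta(d_a(x,y))+\sum_{\{a\in X_c\,:\,d_a(x,y)> t_0\}}\theta(d_a(x,y)).
\]
On the first set, Definition \ref{definition: la fonction theta} gives $\theta(d_a(x,y))=d_a(x,y)^{P}$, so this partial sum is dominated by $\sum_{a\in X_c}d_a(x,y)^{P}$, which is already known to be finite by Lemma \ref{lemme: la somme lp des da est finie} (because $P\ge p_0$ and we use the comparison between $d_a(x,y)$ and $\|\mu_x(a)\Delta\mu_y(a)\|$ provided by admissibility of the parabolics, Definition \ref{definition: lesbons parabolic}).

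For the second partial sum, the key observation is that every $a$ with $d_a(x,y)>t_0$ is strongly constrained. Indeed, by Lemma \ref{lemme: theta est lidentite alors a est dans toute geodesique entre x et y}, the choice of $t_0\ge A((2000\delta)^2+18\delta)+AB+2$ forces $a\in X_c^\infty$ and moreover $\measuredangle_a(x,y)\ge (2000\delta)^2+18\delta$, so that $a$ lies on \emph{every} $d_c$-geodesic from $x$ to $y$. In particular, such vertices all lie on a single fixed geodesic $[x,y]_c$, hence there are at most $d_c(x,y)+1$ of them. Since for each such $a$ the value $d_a(x,y)=d_b(\mu_x(a),\mu_y(a))$ is a single finite real number (the pseudo-metric $d_b$ takes real values on $\mathrm{Proba}_C(gP_i)$), we conclude that
\[
\sum_{\{a\,:\,d_a(x,y)>t_0\}}\theta(d_a(x,y))=t_0^{P-1}\!\!\!\sum_{\{a\,:\,d_a(x,y)>t_0\}}\!\!\! d_a(x,y)
\]
is a finite sum of finite terms, hence finite.

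Summing the two contributions gives $D(x,y)<\infty$. No step here looks genuinely hard: the whole proof reduces to invoking Lemma \ref{lemme: la somme lp des da est finie} for the ``small $d_a$'' regime and Lemma \ref{lemme: theta est lidentite alors a est dans toute geodesique entre x et y} for the ``large $d_a$'' regime. The only subtle point, already embedded in those lemmas, was choosing $t_0$ large enough (via the constants $A,B$ from Proposition \ref{proposition: l'angle est borné par la distance bolic}) so that largeness of $d_a(x,y)$ forces an angle at $a$ exceeding $12\delta$, which via Proposition \ref{proposition: les propirétés des angles} forces $a$ to lie on every geodesic between $x$ and $y$.
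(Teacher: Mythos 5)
Your proof is correct and follows essentially the same approach as the paper: the paper splits the sum over $a\in [x,y]_c$ versus $a\notin [x,y]_c$ (and then uses Lemma \ref{lemme: theta est lidentite alors a est dans toute geodesique entre x et y} to conclude that $\theta(d_a(x,y))=d_a(x,y)^P$ off the geodesic), while you split over the regime of $\theta$, but the two decompositions coincide by that same lemma and both then invoke Lemma \ref{lemme: la somme lp des da est finie} for the convergent part and the finiteness of the geodesic for the remainder.
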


\begin{proof}
Let $x,y \in G$, let $[x,y]_{c}$ be a geodesic between $x$ and $y$. Let $a \in X_{c}$ such that $d_{a}(x,y)\geq t_{0}$, then according to Lemma \ref{lemme: theta est lidentite alors a est dans toute geodesique entre x et y}, we have $a \in [x,y]_{c}$.

Therefore we have the following:
$$\begin{aligned} 
            \displaystyle  D(x,y) &=\sum_{a \in [x,y]_{c}} \theta(d_{a}(x,y)) +  \sum_{a \in X_{c}\backslash [x,y]_{c} } \theta(d_{a}(x,y))\\
            & = \sum_{a \in [x,y]_{c}} \theta(d_{a}(x,y)) + \sum_{a \in X_{c}\backslash [x,y]_{c} }  d_{a}(x,y)^{P} ~(\text{according to the previous remark}) \\
           & \leq \displaystyle \sum_{a \in [x,y]_{c}} \theta(d_{a}(x,y)) + \sum_{a \in X_{c}}d_{a}(x,y)^{P}\\
            & < \infty ~(\text{according to Lemma \ref{lemme: la somme lp des da est finie} and the fact that } [x,y]_{c} \text{ is finite.)}  
\end{aligned}$$
\end{proof}

The following property is crucial, as it will allow us to show that $D$ satisfies a coarse triangular inequality. It will also be used to prove that the strongly bolic metric is weakly geodesic. The outline of the proof is very similar to the proof of Proposition \ref{proposition : D est faiblement géodésique} in the case of hyperbolic groups, with some additional technical difficulties. The next proposition applies in particular for two vertices $x,y\in G$ and every $z \in G\cap I_{c}(x,y)$.\\

\begin{figure}[!ht]
   \centering
   \includegraphics[scale=0.5]{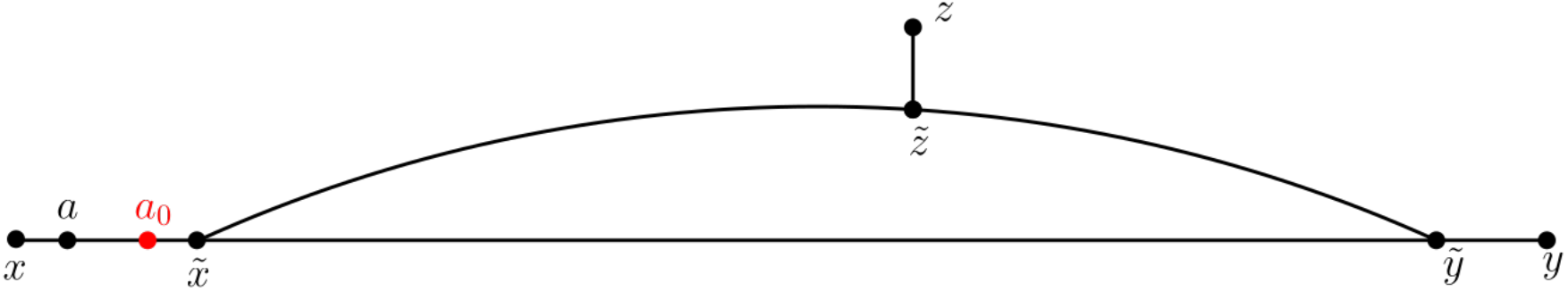}
   \caption{If $a_{0} \in \mathcal{B}$, then $a_{0}$ is a checkpoint and $ \mu_{z}(a)=\mu_{y}(a)$.}
   \label{fig: figure 1 egalite triangulaire relativement hyperbolique}
\end{figure}

\begin{figure}[!ht]
   \centering
   \includegraphics[scale=0.5]{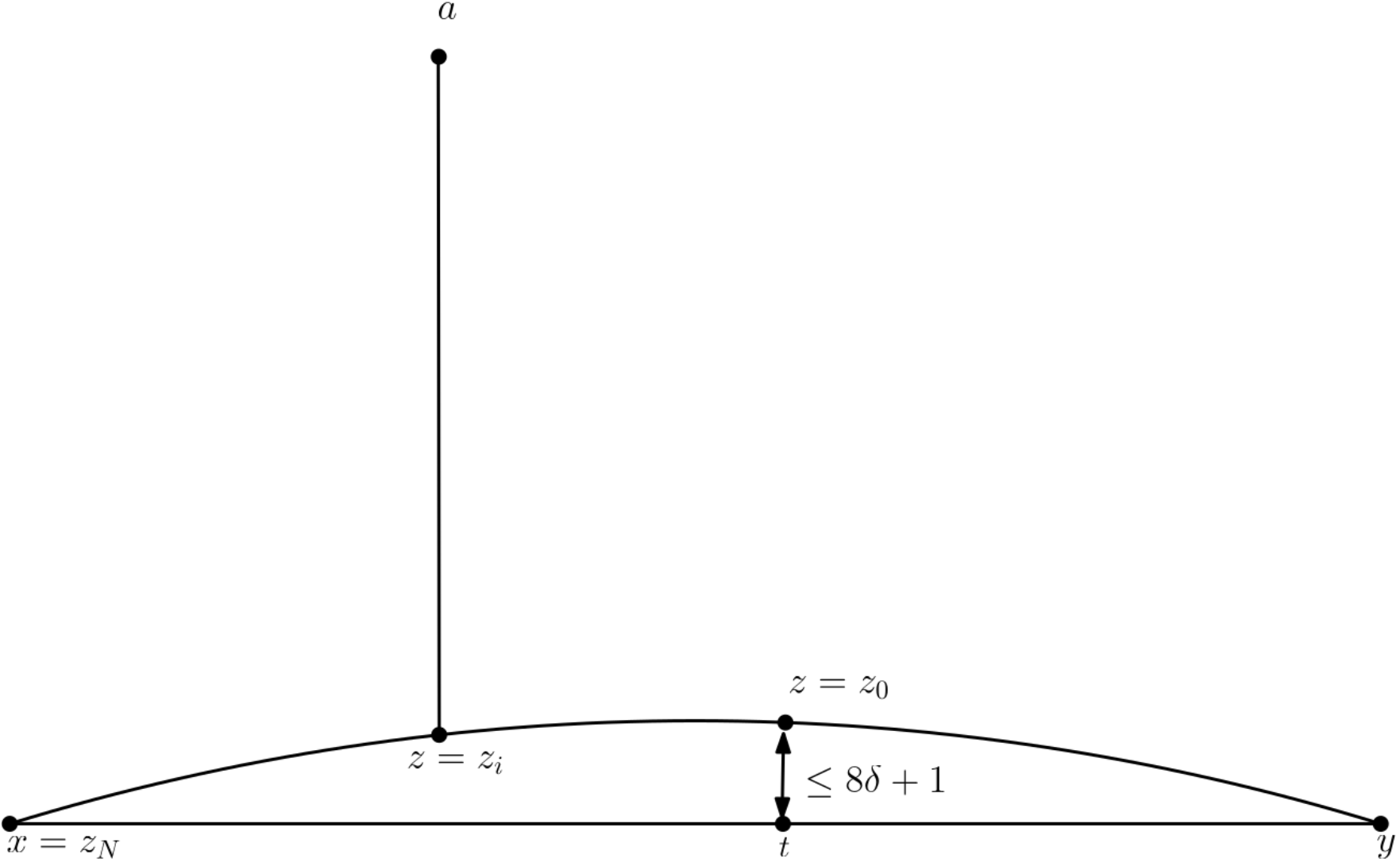}
  \caption{$a$ projects onto $z_{i}$ on $[x,z]_{c}\cup[z,y]_{c}$}
   \label{fig: figure 2 proposition 9.44}
\end{figure}

\newpage

\begin{proposition}\label{proposition: inegalite clutch inegalite triangulaire et faiblement geodesique}
There exists $\nu>0$, such that for all $x,y \in G$ and all geodesic $[x,y]_{c}$ between $x $ and $y$ and for all $z \in  G$ such that $z \in Cone_{26\delta+2}([x,y]_{c})$, we have:
$$|D(x,y)-D(x,z)-D(z,y)|\leq \nu.$$
\end{proposition}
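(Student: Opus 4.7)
The plan is to adapt the proof of Proposition \ref{proposition : D est faiblement géodésique} from the hyperbolic setting, replacing $d_\varepsilon^a$ by the pseudo-metric $d_a$ introduced in Definition \ref{definition: da} and combining the two main tools that control it: Theorem \ref{theo: theo stylé sur les barycentres et les masques}, which gives exponential decay of $d_a$ in $d'_{(2000\delta)^2}(a,\cdot)$-distance to the projection, and the global Lipschitz bound on $\theta$ from Lemma \ref{lem: theta est lipschtizienne}, in place of the mean-value estimate used for the $p$-th power.

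Concretely, I would fix geodesics $[x,z]_c$ and $[z,y]_c$ and split
$$\sum_{a \in X_c} \bigl|\theta(d_a(x,y)) - \theta(d_a(x,z)) - \theta(d_a(z,y))\bigr|$$
according to whether a nearest-point projection of $a$ onto $[x,z]_c \cup [z,y]_c$ lies in $[x,z]_c$ or in $[z,y]_c$. By symmetry it suffices to treat the first subsum. For such an $a$, write $v \in [x,z]_c$ for the chosen projection and let $t$ be a projection of $a$ on $[z,y]_c$. The triangle inequality $|d_a(x,y) - d_a(x,z)| \le d_a(z,y)$ together with Lemma \ref{lem: theta est lipschtizienne} produces the clean estimate
$$\bigl|\theta(d_a(x,y)) - \theta(d_a(x,z)) - \theta(d_a(z,y))\bigr| \le Pt_0^{P-1}\, d_a(z,y) + \theta(d_a(z,y)),$$
so the question becomes how to sum the right-hand side over such $a$.

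Next I would dichotomise on the angle $\measuredangle_t(a,z)$. By Lemma \ref{lem: cas ou langle avec une projection est grand}, the contribution vanishes whenever this angle exceeds $(2000\delta)^2 + 12\delta$; otherwise Lemma \ref{lemme: comparer les angles avec une projection} yields $d'_{(2000\delta)^2}(a,t) \ge \tfrac{1}{\alpha_2^2} d'_{(2000\delta)^2}(a,z) - V'$ for some uniform $V'$. Provided $d'_{(2000\delta)^2}(a,z)$ exceeds a suitable threshold, this forces $d'_{(2000\delta)^2}(a,t) > C_1$ and Theorem \ref{theo: theo stylé sur les barycentres et les masques} then gives $d_a(z,y) \le K_1\kappa^{d'_{(2000\delta)^2}(a,t)} \le t_0$, so that $\theta(d_a(z,y)) = d_a(z,y)^P \le K_1^P \kappa^{P d'_{(2000\delta)^2}(a,t)}$. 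Substituting the lower bound on $d'_{(2000\delta)^2}(a,t)$ and summing over $a$ via Proposition \ref{proposition: nouveau p} applied at $z \in G$ produces a uniform bound on this portion of the sum.

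The finitely many exceptional $a$ (those with $d'_{(2000\delta)^2}(a,z)$ below the threshold) are counted by Lemma \ref{lemme: lemme de comptage avec d'deuxmile}, and each summand is a priori bounded because $d_a$ is uniformly bounded on pairs of points with intersecting mask supports, by Definition \ref{definition: lesbons parabolic} and Proposition \ref{proposition : les masks sont uniformément bornés}. I expect the main obstacle to be reconciling the two exponents at play: the decay rate $1/\alpha_2^2$ lost in Lemma \ref{lemme: comparer les angles avec une projection} must remain summable against the exponential growth captured by Proposition \ref{proposition: nouveau p} with exponent $P/\alpha_2^4$. Since the Lipschitz term $Pt_0^{P-1} d_a(z,y)$ is only linear in $d_a(z,y)$, whereas $\theta(d_a(z,y))$ is $d_a(z,y)^P$, controlling the linear term requires either enlarging $P$ in the definition of $\theta$ so that $P/\alpha_2^4 \le 1/\alpha_2^2$ or re-expressing $d_a(z,y)$ as a higher power using its smallness, and choosing these constants consistently is the delicate point. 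A secondary technical subtlety is the bookkeeping of projections in the coned-off graph, where the triangle $[x,y,z]_c$ need not be thin and where one must invoke the ``normal form'' triangles of Theorem \ref{formenormaledestriangles} to chain Lemmas \ref{lem: cas ou langle avec une projection est grand} and \ref{lemme: comparer les angles avec une projection} cleanly.
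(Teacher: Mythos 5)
Your overall strategy — split on the projection of $a$, use Lemma \ref{lem: cas ou langle avec une projection est grand} and Lemma \ref{lemme: comparer les angles avec une projection} to either kill the term or pass to $d'_{(2000\delta)^2}$-control, invoke Theorem \ref{theo: theo stylé sur les barycentres et les masques} for exponential decay, sum via Proposition \ref{proposition: nouveau p}, and count exceptions with Lemma \ref{lemme: lemme de comptage avec d'deuxmile} — is the right skeleton and matches the paper. But the step where you apply the global Lipschitz bound
$$\bigl|\theta(d_a(x,y)) - \theta(d_a(x,z)) - \theta(d_a(z,y))\bigr| \le Pt_0^{P-1}\, d_a(z,y) + \theta(d_a(z,y))$$
produces a term that is genuinely only linear in $d_a(z,y)$, and you have correctly spotted that this is where the argument breaks, but your suggested fixes do not close the gap. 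After Lemma \ref{lemme: comparer les angles avec une projection}, the best bound on $d_a(z,y)$ decays like $\kappa^{\frac{1}{\alpha_2^2} d'_{(2000\delta)^2}(a,z)}$; but $P$ was chosen precisely so that $\kappa^{P/\alpha_2^4} \le 1/\gamma_G$, i.e.\ $P$ is at least as large as a threshold proportional to $\alpha_2^4$, so there is no room to also demand $P/\alpha_2^4 \le 1/\alpha_2^2$, which is equivalent to $P \le \alpha_2^2$. You cannot simultaneously make $P$ small enough to sum a first-power bound and large enough to make Proposition \ref{proposition: nouveau p} work. The ``re-express $d_a(z,y)$ as a higher power'' idea is not a fix either, because $d_a(z,y)$ can sit just below $t_0$ for infinitely many $a$ unless something forces it to be tiny, and nothing in your scheme does that.

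The missing idea is a \emph{two-stage decomposition in the domain of summation}, not a manipulation of exponents. The paper first isolates the set $\mathcal{B}$ of those $a$ for which at least one of $d_a(x,y),d_a(x,z),d_a(z,y)$ exceeds $t_0$, i.e.\ those $a$ for which $\theta$ might be in its linear regime. On $\mathcal{B}$ there is no decay to exploit, but using a normal-form triangle (Theorem \ref{formenormaledestriangles}) together with the hypothesis $z\in Cone_{26\delta+2}([x,y]_c)$ (via Lemma \ref{lem: angle vers projection cone}), one shows $\mathcal{B}\subset ([x,\tilde x]_c\cup[\tilde y,y]_c)\cap X_c^{\infty}$; then Proposition \ref{proposition : grand angle comme des checkpoints} makes all but the boundedly many extremal points of $\mathcal{B}$ act as checkpoints with identical masks, so each such term contributes $0$. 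The upshot is that $\sum_{a\in\mathcal B}|\cdots|\le 4t_0^P(P+1)$ with no summability issue at all. On the complement $X_c\setminus\mathcal B$ one has $\theta(t)=t^P$ for every term, so the mean value theorem gives $|d_a(x,y)^P-d_a(x,z)^P|\le P\max(d_a(x,y),d_a(x,z))^{P-1}\,d_a(z,y)$, and the factor $\max(\cdots)^{P-1}$ supplies the extra exponential decay $\kappa^{\frac{P-1}{\alpha_2^2}\,d'_{(2000\delta)^2}(a,z_i)}$ that the bare Lipschitz bound lacks. That is what makes the final exponent at least $P/\alpha_2^4$ and hence summable. Without the $\mathcal B$ decomposition and the $(P-1)$-power gain, the sum cannot be controlled.

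A secondary omission: you did not use the hypothesis $z\in Cone_{26\delta+2}([x,y]_c)$ anywhere in the sketch except implicitly via the normal-form triangle remark. In the paper this hypothesis is used twice, and both uses are essential: once (through Lemma \ref{lem: angle vers projection cone}) to guarantee that for every $a$ at least one of $d_a(x,z)$, $d_a(z,y)$ is below $t_0$, which is what makes the preliminary pointwise bound by $(P+1)t_0^P$ possible and what makes $\mathcal B$ disjoint from the interior of $[z,\tilde z]_c$; and once in Lemma \ref{lemme: comparaison projection avec z presque sur la geodesique cas relativement hyperbolique} to compare the projection onto $[x,y]_c$ with the projection onto $[x,z]_c\cup[z,y]_c$, which you need to turn a bound on $d_a(x,y)$ into a bound in terms of the projection onto the broken path.
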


\begin{proof}

Let $x,y \in G$ and $z \in G$ under the assumption of the proposition. We will show that there exists $\nu>0$ such that:
$$|D(x,y)-D(x,z)-D(z,y)|\leq \sum_{a \in X_{c}} | \theta(d_{a}(x,y))- \theta(d_{a}(x,z))-\theta(d_{a}(z,y))|\leq \nu .$$

Let us remark first that as $z \in Cone_{26\delta+2}([x,y]_{c})$, according to Lemma \ref{lem: angle vers projection cone}, for all $a \in X_{c}$, we have:
\begin{equation*}\label{equation: pas grand des deux cotés de z}
d_{a}(x,z) < t_{0} \text{ or } d_{a}(z,y) < t_{0}, 
\end{equation*}

with $t_{0}$ the constant of Definition \ref{definition: la fonction theta}.\\

Therefore for all $a \in X_{c}$, assuming without loss of generality that $d_{a}(z,y) < t_{0}$, we have:
$$\begin{aligned}
| \theta(d_{a}(x,y))- \theta(d_{a}(x,z))-\theta(d_{a}(z,y))|& \leq Pt_{0}^{P-1} |d_{a}(x,y)-d_{a}(x,z)|+\theta(d_{a}(z,y))~(\text{according to Lemma \ref{lem: theta est lipschtizienne}})\\
            & \leq Pt_{0}^{P-1} d_{a}(z,y)+d_{a}(z,y)^{P}\\
            & \leq (P+1) t_{0}^{P},\\
\end{aligned}$$

with $P$ the real number of Proposition \ref{proposition: nouveau p}.\\

Let us denote by $\mathcal{B}$ the set of $a \in X_{c}$ such that:

\begin{itemize}
    \item $d_{a}(x,y) \geq t_{0}$,

    \item or $d_{a}(x,z) \geq t_{0}$,

    \item or $d_{a}(z,y)\geq t_{0}$.

\end{itemize}

We will start to show that:
$$\sum_{a \in \mathcal{B}} | \theta(d_{a}(x,y))- \theta(d_{a}(x,z))-\theta(d_{a}(z,y))| \leq 4t_{0}^{P}(P+1).$$\\
We consider a triangle $[x,y,z]_{c}$ as in Theorem \ref{formenormaledestriangles}.

According to Equation \ref{equation: pas grand des deux cotés de z}, we have that:
\begin{itemize}
    \item $\mathcal{B}\cap [z,\tilde{z}[ =\emptyset$,

    \item and $\tilde{z} \in \mathcal{B}$ implies that $\tilde{z}=\tilde{x}=\tilde{y}$.

\end{itemize}

Therefore, we have:
$$ \mathcal{B}\subset ([x,\tilde{x}]_{c}\cup[\tilde{y},y]_{c}) \cap X_{c}^{\infty}.$$

If $\mathcal{B}\cap[x,\tilde{x}[_{c}\neq \emptyset$, we define $a_{0}$ as the point in $\mathcal{B}\cap [x,\tilde{x}[_{c}$ closest to $\tilde{x}$, see Figure \ref{fig: figure 1 egalite triangulaire relativement hyperbolique}. We will show that, for all $a \in \mathcal{B}\cap[x,\tilde{x}[_{c}, a \neq a_{0}$, we have:
$$\mu_{z}(a)=\mu_{a_{0}}(a) \text{ and } \mu_{y}(a)=\mu_{a_{0}}(a)  ,$$
and this will imply that:
$$\begin{aligned} 
            &| \theta(d_{a}(x,y))- \theta(d_{a}(x,z))-\theta(d_{a}(z,y))|\\
            =&| \theta(d_{b}(\mu_{x}(a),\mu_{a_{0}}(a)))
            - \theta(d_{b}(\mu_{x}(a),\mu_{a_{0}}(a)))-\theta(d_{b}(\mu_{a_{0}}(a),\mu_{a_{0}}(a)))|\\
            =& 0 .
\end{aligned}$$

To do so, we will show that, for such $a$, we have $\measuredangle_{a_{0}}(a,z)\geq (2000\delta)^{2}$, and $\measuredangle_{a_{0}}(a,y)\geq (2000\delta)^{2}$ and we will apply Proposition \ref{proposition : grand angle comme des checkpoints}. Since $a_{0} \in [x,\tilde{x}[_{c}$, we know that $a_{0} \notin [z,y]_{c}$, then according to Lemma \ref{lemme: theta est lidentite alors a est dans toute geodesique entre x et y}, we know that $d_{a_{0}}(z,y)< t_{0}$. Then $a_{0}\in \mathcal{B}$ implies that $d_{a_{0}}(x,y)\geq t_{0}$ or $d_{a_{0}}(z,y)\geq t_{0}$.\\

In the case where $d_{a_{0}}(x,y) \geq t_{0}$, we know according to Lemma \ref{lemme: theta est lidentite alors a est dans toute geodesique entre x et y}, that $\measuredangle_{a_{0}}(x,y)\geq (2000\delta)^{2}+6\delta$. As $a \in I_{c}(x,a_{0})$, $\measuredangle_{a_{0}}(a,y)\geq (2000\delta)^{2}+12\delta>(2000\delta)^{2}$ according to the second point of Proposition \ref{proposition: les propirétés des angles}.
Thus, we deduce that:
$$\measuredangle_{a_{0}}(a,z)\geq \measuredangle_{a_{0}}(a,y)-\measuredangle_{a_{0}}(y,z)\geq (2000\delta)^{2} ,$$
since $a_{0} \notin [y,z]_{c}$. We proceed in the same way for the case $d_{a_{0}}(x,z) \geq t_{0}$.

Therefore, when $\mathcal{B}\cap[x,\tilde{x}[_{c} \neq \emptyset$:
$$\begin{aligned} 
            &\displaystyle \sum_{a \in \mathcal{B}\cap[x,\tilde{x}]_{c}} | \theta(d_{a}(x,y))- \theta(d_{a}(x,z))-\theta(d_{a}(z,y))|\\ &\leq | \theta(d_{a_{0}}(x,y))- \theta(d_{a_{0}}(x,z))-\theta(d_{a_{0}}(z,y))| + | \theta(d_{\tilde{x}}(x,y))- \theta(d_{\tilde{x}}(x,z))-\theta(d_{\tilde{x}}(z,y))|\\
            & \leq 2 (P+1)t_{0}^{P}.\\
\end{aligned}$$

The inequality is trivially true when $\mathcal{B}\cap[x,\tilde{x}]_{c} = \emptyset$, with the same computation for $\mathcal{B}\cap[\tilde{y},y]_{c}$, we deduce that:
$$\begin{aligned} 
            &\displaystyle \sum_{a \in \mathcal{B}} | \theta(d_{a}(x,y))- \theta(d_{a}(x,z))-\theta(d_{a}(z,y))| \\ &= \sum_{a \in \mathcal{B}\cap[x,\tilde{x}]_{c}} | \theta(d_{a}(x,y))- \theta(d_{a}(x,z))-\theta(d_{a}(z,y))|\\
            & + \sum_{a \in \mathcal{B}\cap[\tilde{y},y]_{c}} | \theta(d_{a}(x,y))- \theta(d_{a}(x,z))-\theta(d_{a}(z,y))|\\
            & \leq 4t_{0}^{P}(P+1).\\
\end{aligned}$$

Now, we will show that there exists a constant $\nu_{1}$ such that:
$$\sum_{a \in X_{c}\backslash \mathcal{B}} | d_{a}(x,y)^{P}- d_{a}(x,z)^{P}-d_{a}(z,y)^{P}| \leq \nu_{1}.$$

We have :
$$\begin{aligned} 
           & \sum_{a \in X_{c} \backslash \mathcal{B}} | d_{a}(x,y)^{P} -d_{a}(x,z)^{P} - d_{a}(z,y)^{P}| \\
            \leq & \displaystyle \sum_{ \{a\in X_{c} \backslash \mathcal{B} | d_{c}(a,[x,z]_{c}\cup[z,y]_{c})=d_{c}(a,[x,z]_{c}) \}} |d_{a}(x,y)^{P} -d_{a}(x,z)^{P} - d_{a}(z,y)^{P}
             | \\
            & + \displaystyle \sum_{ \{a\in X_{c} \backslash \mathcal{B} | d_{c}(a,[x,z]_{c}\cup[z,y]_{c})=d_{c}(a ,[z,y]_{c}) \}} | d_{a}(x,y)^{P} -d_{a}(x,z)^{P} - d_{a}(z,y)^{P}
             |.
\end{aligned}$$

We will bound from above the first sum, the same idea will apply to the second sum.
For all $a \in X_{c}$ such that $d_{c}(a,[x,z]_{c}\cup[z,y]_{c})=d_{c}(a,[x,z]_{c})$, we denote by $t$ a projection of $a$ $[z,y]_{c}$. We can suppose that $\measuredangle_{t}(a,z) < (2000\delta)^{2}+12\delta$, otherwise, according to Lemma \ref{lem: cas ou langle avec une projection est grand}, we have:
$$| d_{a}(x,y)^{P} -d_{a}(x,z)^{P} - d_{a}(z,y)^{P}|=0.$$

Then according to Lemma \ref{lemme: comparer les angles avec une projection}, there exists $V>1$, independant of $a$, (up to a change of $V$), such that:
\begin{equation}\label{equation: distance à projection sur z,y en fonction de distance à z}
d'_{(2000\delta)^{2}}(a,t) \geq \frac{1}{\alpha_{2}^{2}} d'_{(2000\delta)^{2}}(a,z)-V.
\end{equation}

In the same way, with $u$ a projection of $a$ on $[x,y]_{c}$ and $v$ a projection on $[x,z]_{c} \cup[z,y]_{c}$, we can always suppose according to Lemma \ref{lem: cas ou langle entre les deux projections est grand}, that $\measuredangle_{u}(a,v)\leq (2000\delta){2}+12\delta$.
According to the Lemma \ref{lemme: comparer les angles avec une projection}, there exists a constant $Z>1$ (up to a change of $Z$), such that:
\begin{equation}\label{equation: distance à projection sur x,y en fonction de distance à proj sur xz et zy}
 d'_{(2000\delta)^{2}}(a,u)\geq \frac{1}{\alpha_{2}^{2}} d'_{(2000\delta)^{2}}(a,v)-Z.
\end{equation}

We obtain the following:
$$\begin{aligned} 
           &~~\sum_{ \{a\in X_{c} \backslash \mathcal{B} | d_{c}(a,[x,z]_{c}\cup[z,y]_{c})=d_{c}(a,[x,z]_{c}) \}} | d_{a}(x,y)^{P} -d_{a}(x,z)^{P} - d_{a}(z,y)^{P}|
           &\\
             \leq & \sum_{ \{a\in X_{c} \backslash \mathcal{B} | d_{c}(a,[x,z]_{c}\cup[z,y]_{c})=d_{c}(a,[x,z]_{c}) \}} | d_{a}(x,y)^{P} -d_{a}(x,z)^{P}| + d_{a}(z,y)^{P} \\
            \leq & \sum_{ \{a\in X_{c} \backslash \mathcal{B} | d_{c}(a,[x,z]_{c}\cup[z,y]_{c})=d_{c}(a,[x,z]_{c}) \}} | d_{a}(x,y)^{P} -d_{a}(x,z)^{P}| \\
            & + \displaystyle \sum_{ \{a\in X_{c} \backslash \mathcal{B} | d_{c}(a,[x,z]_{c}\cup[z,y]_{c})=d_{c}(a,[x,z]_{c}) \}} d_{a}(z,y)^{P}.
\end{aligned}$$

 To start with, we bound from above the sum:
$$ \displaystyle \sum_{ \{a\in X_{c} \backslash \mathcal{B} | d_{c}(a,[x,z]_{c}\cup[z,y]_{c})=d_{c}(a,[x,z]_{c}) \}} d_{a}(z,y)^{P}.$$

Let $a \in X_{c}$ such that $d_{c}(a,[x,z]_{c}\cup[z,y]_{c})=d_{c}(a,[x,z]_{c})$. Let us denote $t \in  X_{c}$ a projection of $a$ on $[z,y]_{c}$. We have as noted in Inequality \ref{equation: distance à projection sur z,y en fonction de distance à z}:
$$d'_{(2000\delta)^{2}}(a,u) \geq \frac{1}{\alpha_{2}^{2}} d'_{(2000\delta)^{2}}(a,z)-V.$$

Then there exists a constant $C_{2}>0$ such that:
$$d'_{(2000\delta)^{2}}(a,z) \geq C_{2} \implies d'_{(2000\delta)^{2}}(a,u) \geq C_{1}.$$

Moreover, according to Corollary \ref{corollary : reformulation de la formule de la distance}, as $z \in G$, the set $\{a \in X_{c}~|~ d'_{(2000\delta)^{2}}(a,z) \leq C_{2} \}$ has cardinality less than $A \gamma_{G}^{C_{2}}$.

Thus, according to Theorem \ref{theo: theo stylé sur les barycentres et les masques} and Lemma \ref{lemme: comparer les angles avec une projection}, for all $a \in X_{c}$ with $d_{c}(a,[x,z]_{c}\cup[z,y]_{c})=d_{c}(a,[x,z]_{c})$ and $d'_{(2000\delta)^{2}}(a,z) \geq C_{2}$, we have:
\begin{equation}\label{equation: contre de da(z,y)}
d_{a}(z,y)^{P} \leq \kappa^{\frac{P}{\alpha_{2}^{2}}d'_{(2000\delta)^{2}}(a,z)-PV}
\end{equation}

Therefore, according to Lemma \ref{lemme: theta est lidentite alors a est dans toute geodesique entre x et y}, for all $a \in X_{c}$ such that $d_{c}(a,[x,z]_{c}\cup[z,y]_{c})=d_{c}(a,[x,z]_{c})$, we get the following: 
\begin{align}\label{equation: bornage de dazy}
           &~~\displaystyle \sum_{ \{a\in X_{c} \backslash \mathcal{B} | d_{c}(a,[x,z]_{c}\cup[z,y]_{c})=d_{c}(a,[x,z]_{c}) \}} d_{a}(z,y)^{P}
            \notag \\
             \leq & \displaystyle \sum_{ \{a\in X_{c} | d_{c}(a,[x,z]_{c}\cup[z,y]_{c})=d_{c}(a,[x,z]_{c}) \}} d_{a}(z,y)^{P} \notag \\
            \leq & A \gamma_{G}^{C_{2}} t_{0}^{P} + \displaystyle \sum_{ a\in X_{c}}\kappa^{\frac{P}{\alpha_{2}^{2}}d'_{(2000\delta)^{2}}(a,z)-PV} \notag \\
            \leq & A \gamma_{G}^{C_{2}} t_{0}^{P} + \kappa^{-PV} \displaystyle \sum_{ a\in X_{c}}\kappa^{\frac{P}{\alpha_{2}^{2}}d'_{(2000\delta)^{2}}(a,z)} \notag \\
            \leq   & A \gamma_{G}^{C_{2}} t_{0}^{P} + \kappa^{-PV} \sigma ~(\text{according to Proposition \ref{proposition: nouveau p}}) .
\end{align}

Now, we will bound from above the sum:
$$\sum_{ \{a\in X_{c} \backslash \mathcal{B} | d_{c}(a,[x,z]_{c}\cup[z,y]_{c})=d_{c}(a,[x,z]_{c}) \}} | d_{a}(x,y)^{P} -d_{a}(x,z)^{P}|.$$

Let us denote $N:=d_{c}(x,z)$. We order $[z,x]_{c}$ in the following sense: $[z,x]_{c}=\{z_{0},z_{1},...,z_{N}\}$ with $z_{0}=z$, $z_{N}=x$ and $d_{c}(z_{i},z_{i+1})=1$ for $0\le i \le N-1$, see Figure \ref{fig: figure 2 proposition 9.44}.

Let us remark that, for all $a \in X_{c}$ with $d_{c}(a,[x,z]_{c}\cup[z,y]_{c})=d_{c}(a,z_{i})$, we have:
$$\begin{aligned} 
           d'_{(2000\delta)^{2}}(a,z) & \geq  d_{c}(a,z) \\      
            & \geq d_{c}(a,z_{i})+d_{c}(z_{i},z)-24\delta \\
             & \geq d_{c}(a,z_{i})+i-24\delta\\
             & \geq i-24\delta.
            \end{aligned}$$

and also according to Lemma \ref{lemme: d'deuxmiles projection}, on the geodesic $[x,z]_{c}$, we have:

$$  d'_{(2000\delta)^{2}}(a,z) \geq \frac{1}{\alpha_{2}^{2}}(i+ d'_{(2000\delta)^{2}}(a,z_{i}))-W .$$

Moreover, for all $i \in \{0, \dots, N \}$, for all $a \in X_{c}$, there exists a constant $C_{3}\geq C_{1}$ such that for all $a \in X_{c}$, $d_{c}(a,z_{i})=d_{c}(a,[x,z]_{c}\cup[z,y]_{c})$ and $d'_{(2000\delta)^{2}}(a,z_{i})\geq C_{3}$, we have according to Theorem \ref{theo: theo stylé sur les barycentres et les masques} and to Inequality \ref{equation: distance à projection sur x,y en fonction de distance à proj sur xz et zy}:
\begin{equation}\label{equation: controle de daxy et daxz}
 d_{a}(x,y)\leq \kappa^{\frac{1}{\alpha_{2}^{2}} d'_{(2000\delta)^{2}}(a,z_{i})-Z} \text{ and }d_{a}(x,z)\leq \kappa^{d'_{(2000\delta)^{2}}(a,z_{i})}.
\end{equation}

Therefore:
\begin{align*}
 & \sum_{ \{a\in X_{c} \backslash \mathcal{B} | d_{c}(a,[x,z]_{c}\cup[z,y]_{c})=d_{c}(a,[x,z]_{c}) \}} | d_{a}(x,y)^{P} -d_{a}(x,z)^{P}| \notag \\
 & \leq \sum_{i=0}^{N} \sum_{ \{a\in X_{c} \backslash \mathcal{B} | d_{c}(a,[x,z]_{c}\cup[z,y]_{c})=d_{c}(a,z_{i}) \}} | d_{a}(x,y)^{P} -d_{a}(x,z)^{P}| \notag \\
& \leq P \sum_{i=0}^{N} \sum_{ \{a\in X_{c} \backslash \mathcal{B} | d_{c}(a,[x,z]_{c}\cup[z,y]_{c})=d_{c}(a,z_{i}) \}} \max(d_{a}(x,y)^{P-1},d_{a}(x,z)^{P-1})d_{a}(z,y) ~(\text{according to the mean value theorem}) \notag \\
& \leq PA \gamma_{G}^{C_{3}} t_{0}^{P} \\
&+P  \sum_{i=0}^{N} \sum_{ \{a\in X_{c} \backslash \mathcal{B} | d_{c}(a,[x,z]_{c}\cup[z,y]_{c})=d_{c}(a,z_{i}) \}} \max(d_{a}(x,y)^{P-1},d_{a}(x,z)^{P-1})\kappa^{\frac{1}{\alpha_{2}^{2}}d'_{(2000\delta)^{2}}(a,z)-V} ~(\text{according to Inequality \ref{equation: bornage de dazy}}) \notag \\
& \leq PA \gamma_{G}^{C_{3}} t_{0}^{P}+ P \kappa^{-V} t_{0}^{P-1}A\gamma_{G}^{C_{1}} \sum_{i=0}^{N} \kappa^{\frac{i}{\alpha_{2}^{2}}} +P \kappa^{\frac{-W}{V}-(P-1)Z-V} \sum_{i=0}^{N} \kappa^{\frac{i}{\alpha_{2}^{4}}} \sum_{a \in X_{c}} \kappa^{(\frac{P-1}{\alpha_{2}^{2}}+\frac{1}{\alpha_{2}^{4}})d'_{(2000\delta)^{2}}(a,z_{i})} ~(\text{according to Inequalities \ref{equation: controle de daxy et daxz}})\notag \\
& \leq  PA \gamma_{G}^{C_{3}} t_{0}^{P}+ P \kappa^{-V} t_{0}^{P-1}A\gamma_{G}^{C_{1}} \frac{1-{\kappa^{ \frac{N+1}{\alpha_{2}^{2}}}}}{1-\kappa^{\frac{1}{\alpha_{2}^{2}}}}+ P \kappa^{\frac{-W}{V}-(P-1)Z-V} \sum_{i=0}^{N} \kappa^{\frac{i}{\alpha_{2}^{4}}} \sum_{a \in X_{c}} \kappa^{\frac{P}{\alpha_{2}^{4}}d'_{(2000\delta)^{2}}(a,z_{i})} ~(\text{according to Proposition \ref{proposition: nouveau p}}) \notag  \\
& \leq  PA \gamma_{G}^{C_{2}} t_{0}^{P}+ P\kappa^{-V} t_{0}^{P-1}A\gamma_{G}^{C_{3}} \frac{1}{1-\kappa^{\frac{1}{\alpha_{2}^{2}}}}+P \kappa^{\frac{-W}{\alpha_{2}^{2}}-(P-1)Z-V} \sigma \sum_{i=0}^{N} \kappa^{\frac{i}{\alpha_{2}^{4}}} \notag  \\
& \leq  PA \gamma_{G}^{C_{3}} t_{0}^{P}+ P \kappa^{-V} t_{0}^{P-1}A\gamma_{G}^{C_{1}} \frac{1}{1-\kappa^{\frac{1}{\alpha_{2}^{2}}}}+P \kappa^{\frac{-W}{\alpha_{2}^{2}}-(P-1)Z-V} \sigma \frac{1-{\kappa^{ \frac{N+1}{\alpha_{2}^{4}}}}}{1-\kappa^{\frac{1}{\alpha_{2}^{4}}}} \notag \\
& \leq  PA \gamma_{G}^{C_{3}} t_{0}^{P}+ P \kappa^{-V} t_{0}^{P-1}A\gamma_{G}^{C_{1}} \frac{1}{1-\kappa^{\frac{1}{\alpha_{2}^{2}}}}+P \kappa^{\frac{-W}{\alpha_{2}^{2}}-(P-1)Z-V} \sigma \frac{1}{1-\kappa^{\frac{1}{\alpha_{2}^{4}}}}.
\end{align*}

Therefore, combined with Inequality \ref{equation: bornage de dazy}, the sum:

$$\sum_{a \in X_{c} \backslash \mathcal{B}} | d_{a}(x,y)^{P} -d_{a}(x,z)^{P} - d_{a}(z,y)^{P}|,$$

is bounded by a constant independent of $x,y$ and $z$. This concludes the proof of the proposition.
\end{proof}

The next proposition shows that for two neighboring points of the same parabolic $a$, $D$ is comparable to $d_{a}$.

\begin{proposition}\label{proposition: pour deux sommets voisins de a un pt parabolique D est presque da}

There exists a constant $\Lambda >0$ such that for all $x,y \in G$ and $a_{0} \in X_{c}^{\infty}$, with $d_{c}(x,a_{0})=d_{c}(a_{0},y)=1$, we have:

$$|D(x,y)-t_{0}^{P-1}d_{a_{0}}(x,y)|\leq \Lambda.$$
\end{proposition}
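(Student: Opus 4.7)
The plan is to decompose $D(x,y)$ by isolating the contribution of the distinguished vertex $a_{0}$ and to show that the sum over all other vertices is uniformly bounded. Write
\begin{equation*}
D(x,y)-t_{0}^{P-1}d_{a_{0}}(x,y)=\bigl(\theta(d_{a_{0}}(x,y))-t_{0}^{P-1}d_{a_{0}}(x,y)\bigr)+\sum_{a\neq a_{0}}\theta(d_{a}(x,y)).
\end{equation*}
By Definition \ref{definition: la fonction theta}, the first bracket vanishes when $d_{a_{0}}(x,y)\geq t_{0}$ and has absolute value at most $t_{0}^{P}$ otherwise (since both $\theta(t)=t^{P}\leq t_{0}^{P}$ and $t_{0}^{P-1}t\leq t_{0}^{P}$ on $[0,t_{0}]$). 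So the task reduces to bounding $\sum_{a\neq a_{0}}\theta(d_{a}(x,y))$ by a constant.

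The first key observation is that for $a\in X_{c}^{\infty}$ with $a\notin\{x,a_{0},y\}$, $d_{a}(x,y)$ is uniformly bounded. Indeed $x\to a_{0}\to y$ is a geodesic in $X_{c}$ of length at most $2$, so if $\measuredangle_{a}(x,y)>12\delta$ then every geodesic between $x$ and $y$ would pass through $a$ by Proposition \ref{proposition: les propirétés des angles}, contradicting the existence of the geodesic through $a_{0}$. Hence $\measuredangle_{a}(x,y)\leq 12\delta$, and Proposition \ref{proposition: l'angle est borné par la distance bolic} gives $d_{a}(x,y)\leq 12A\delta+B=:K$; the choice of $t_{0}$ guarantees $K<t_{0}$, so $\theta(d_{a}(x,y))=d_{a}(x,y)^{P}$ for such $a$. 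The boundary cases $a\in\{x,y\}$ are handled directly via condition (v) of Definition \ref{definition: lesbons parabolic}: the supports of $\delta_{x}$ and $\mu_{y}(x)$ intersect at $x$ by Proposition \ref{proposition : les points a distance un du parabolique sont dans le support du masque}, so $d_{x}(x,y)\leq 4C$, and symmetrically $d_{y}(x,y)\leq 4C$.

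To bound the remaining sum, split $\{a\in X_{c}:a\neq a_{0}\}$ according to $d'_{(2000\delta)^{2}}(a,a_{0})$. For $a$ with $d'_{(2000\delta)^{2}}(a,a_{0})\leq C_{1}$ (where $C_{1}$ is the threshold of Theorem \ref{theo: theo stylé sur les barycentres et les masques}), Lemma \ref{lemme: lemme de comptage avec d'deuxmile} bounds the cardinality uniformly, contributing at most a constant multiple of $\max(K^{P},2^{P})$. For $a$ farther away, the projection of $a$ onto the geodesic $[x,a_{0},y]_{c}$ (whose vertex set is $\{x,a_{0},y\}$) necessarily lies in $\{x,a_{0},y\}$, and Theorem \ref{theo: theo stylé sur les barycentres et les masques} gives $d_{a}(x,y)\leq K_{1}\kappa^{d'_{(2000\delta)^{2}}(a,t)}$; partitioning by the three possible projections and applying Proposition \ref{proposition: nouveau p} produces a uniform bound on $\sum_{a}K_{1}^{P}\kappa^{Pd'_{(2000\delta)^{2}}(a,t)}$. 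Finite valence contributions are controlled analogously, using $d_{a}(x,y)=\|\mu_{x}(a)\Delta\mu_{y}(a)\|\leq 2$, the triangle inequality $\|\mu_{x}(a)\Delta\mu_{y}(a)\|^{P}\leq 2^{P-1}(\|\mu_{x}(a)\Delta\mu_{a_{0}}(a)\|^{P}+\|\mu_{a_{0}}(a)\Delta\mu_{y}(a)\|^{P})$, and Proposition \ref{Proposition : sum des differences pour des voisins}.

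The main obstacle is the uniform angle bound $\measuredangle_{a}(x,y)\leq 12\delta$ for $a\notin\{x,a_{0},y\}$, which is precisely what prevents any other infinite-valence vertex from contributing significantly to $D(x,y)$ and collapses the sum to essentially the single dominant term $\theta(d_{a_{0}}(x,y))\approx t_{0}^{P-1}d_{a_{0}}(x,y)$. Once this geometric reduction is in place, the remaining work is the exponential decay estimate of Theorem \ref{theo: theo stylé sur les barycentres et les masques} together with standard counting in the $d'_{(2000\delta)^{2}}$-metric. Combining the $t_{0}^{P}$ bound from the first bracket with the bounded contributions from the three regions yields the desired constant $\Lambda$.
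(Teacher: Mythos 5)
Your first reduction is sound and matches the paper's: the bracket $\theta(d_{a_0}(x,y))-t_0^{P-1}d_{a_0}(x,y)$ is bounded by $t_0^P$, and for $a\ne a_0$ of infinite valence the angle bound $\measuredangle_a(x,y)\le 12\delta$ (forced by the geodesic $x\to a_0\to y$) combined with Proposition \ref{proposition: l'angle est borné par la distance bolic} gives $d_a(x,y)<t_0$, so $\theta(d_a(x,y))=d_a(x,y)^P$. This is exactly the content of Lemma \ref{lemme: theta est lidentite alors a est dans toute geodesique entre x et y}, which the paper invokes. (A small side point: for $a\in\{x,y\}$ there is no need to invoke condition (v) of Definition \ref{definition: lesbons parabolic}, which concerns the $d_b$-metric on $\text{Proba}_C(P)$; since $x,y\in G$ have finite valence, $d_a$ is just the $\ell^1$-norm on masks and $d_a(x,y)\le 2$ trivially.)

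The gap is in the split by $d'_{(2000\delta)^2}(a,a_0)$ and the claim that ``Lemma \ref{lemme: lemme de comptage avec d'deuxmile} bounds the cardinality uniformly.'' That lemma is stated only for a reference point in $G$, and the conclusion genuinely fails at $a_0\in X_c^\infty$: every one of the infinitely many neighbours $u$ of $a_0$ satisfies $d_c(u,a_0)=1$ and $\Theta'_{(2000\delta)^2}(u,a_0)=0$ (the geodesic is a single edge, no interior vertices, so no angles to sum), hence $d'_{(2000\delta)^2}(u,a_0)=1\le C_1$. So the ``close'' set $\{a:d'_{(2000\delta)^2}(a,a_0)\le C_1\}$ is infinite, and the bound ``a constant multiple of $\max(K^P,2^P)$'' does not follow. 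The same issue recurs when you invoke Proposition \ref{proposition: nouveau p} with the projection $t=a_0\notin G$. Transferring back to $x$ or $y$ is not routine here: Lemma \ref{lemme: d'deuxmiles projection} only gives $d'_{(2000\delta)^2}(a,x)\ge\frac{1}{\alpha_2^2}d'_{(2000\delta)^2}(a,a_0)-W$ when $a_0$ is the projection, which goes in the wrong direction for the counting you need.

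The paper avoids this entirely by splitting according to $a\in\text{Cone}_{(2000\delta)^2}([x,y]_c)$ rather than by a $d'$-ball around $a_0$. Since $d_c(x,y)\le 2$, the cone has uniformly bounded cardinality $\lambda$ by Proposition \ref{cardinal cones}, so that part of the sum is at most $\lambda t_0^P$. For $a$ outside the cone, Corollary \ref{corollary : cardinal des points paraboliques avec distance entre les barycentres des masks nest pas bien controlee} shows the masks of $x$ and $a_0$ (and of $a_0$ and $y$) intersect at $a$, so the last condition of Definition \ref{definition: lesbons parabolic} gives $d_a(x,y)\le C(\|\mu_x(a)\Delta\mu_{a_0}(a)\|+\|\mu_{a_0}(a)\Delta\mu_y(a)\|)$, and the Minkowski inequality together with Proposition \ref{Proposition : sum des differences pour des voisins} closes the estimate with no reference to Theorem \ref{theo: theo stylé sur les barycentres et les masques} or Proposition \ref{proposition: nouveau p} at all. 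Your instinct to reach for the exponential-decay machinery is natural, but it introduces the $a_0\notin G$ counting problem that the cone/mask-intersection route was designed to bypass.
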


\begin{proof}

To begin, we remark that:
$$|\theta(d_{a_{0}}(x,y))-t_{0}^{P-1}d_{a_{0}}(x,y)|\leq 2t_{0}^{P}.$$

In fact, if $\theta(d_{a_{0}}(x,y))=t_{0}^{P-1}d_{a_{0}}(x,y)$, it is trivially true. When $ \theta(d_{a_{0}}(x,y))=d_{a_{0}}(x,y)^{P}$, then:
$$|\theta(d_{a_{0}}(x,y))-t_{0}^{P-1}d_{a_{0}}(x,y)|\leq d_{a_{0}}(x,y)^{P}+t_{0}^{P-1}d_{a_{0}}(x,y)\leq 2t_{0}^{P}.  $$

Moreover, according to Lemma \ref{lemme: theta est lidentite alors a est dans toute geodesique entre x et y}, for all $a \in X_{c},a \neq a_{0}$, we have: $ \theta(d_{a}(x,y))=d_{a}(x,y)^{P}$. Then:
$$|D(x,y)-t_{0}^{P-1}d_{a_{0}}(x,y)|\leq 2t_{0}^{P}+\sum_{a\in X_{c}, a\neq a_{0}}d_{a}(x,y)^{P}.$$

We will bound the last sum from above by a constant.
Let us choose a geodesic $[x,y]_{c}$ between $x$ and $y$. According to Corollary \ref{corollary : cardinal des points paraboliques avec distance entre les barycentres des masks nest pas bien controlee}, if $a \notin Cone_{(2000\delta)^{2}}([x,y]_{c})$, we have, for all $a\neq a_{0}$:
$$d_{a}(x,y)\leq d_{a}(x,a_{0})+d_{a}(a_{0},y) \leq C(|| \mu_{x}(a) \Delta \mu_{a_{0}}(a) ||+|| \mu_{a_{0}}(a) \Delta \mu_{y}(a) ||). $$

As $d_{c}(x,y)\leq2$, there exists a constant $\lambda>0$ independent of $x$ and $y$ such that:
$$|Cone_{(2000\delta)^{2}}([x,y]_{c})|\leq \lambda.$$

Then:
$$\begin{aligned} 
           &~~\sum_{a\in X_{c}, a\neq a_{0}}d_{a}(x,y)^{P}
           &\\
             \leq & \lambda t_{0}^{P}+\sum_{a \in X_{c}}C^{P}(|| \mu_{x}(a) \Delta \mu_{a_{0}}(a) ||+|| \mu_{a_{0}}(a) \Delta \mu_{y}(a) ||)^{P} \\
            \leq & \lambda t_{0}^{P} +C^{P} [(\sum_{ a\in X_{c}}  || \mu_{x}(a) \Delta \mu_{a_{0}}(a) ||^{P})^{\frac{1}{P}} + (\sum_{ a\in X_{c}}  || \mu_{a_{0}}(a) \Delta \mu_{y}(a) ||^{P})^{\frac{1}{P}}]^{P}\\
            \leq & \lambda t_{0}^{P}+C^{P}2^{\frac{1}{P}}D~(\text{according to Proposition \ref{Proposition : sum des differences pour des voisins}}).
\end{aligned}$$

With $\Lambda:=t_{0}^{P}(\lambda+2)+C^{P}2^{\frac{1}{p'}}D$, we get the desired inequality.
\end{proof}

To conclude this subsection, it remains to show that $D$ satisfies a coarse triangular inequality. The next lemma will be used to prove it.

\begin{figure}[!ht]
   \centering
    \includegraphics[scale=0.4]{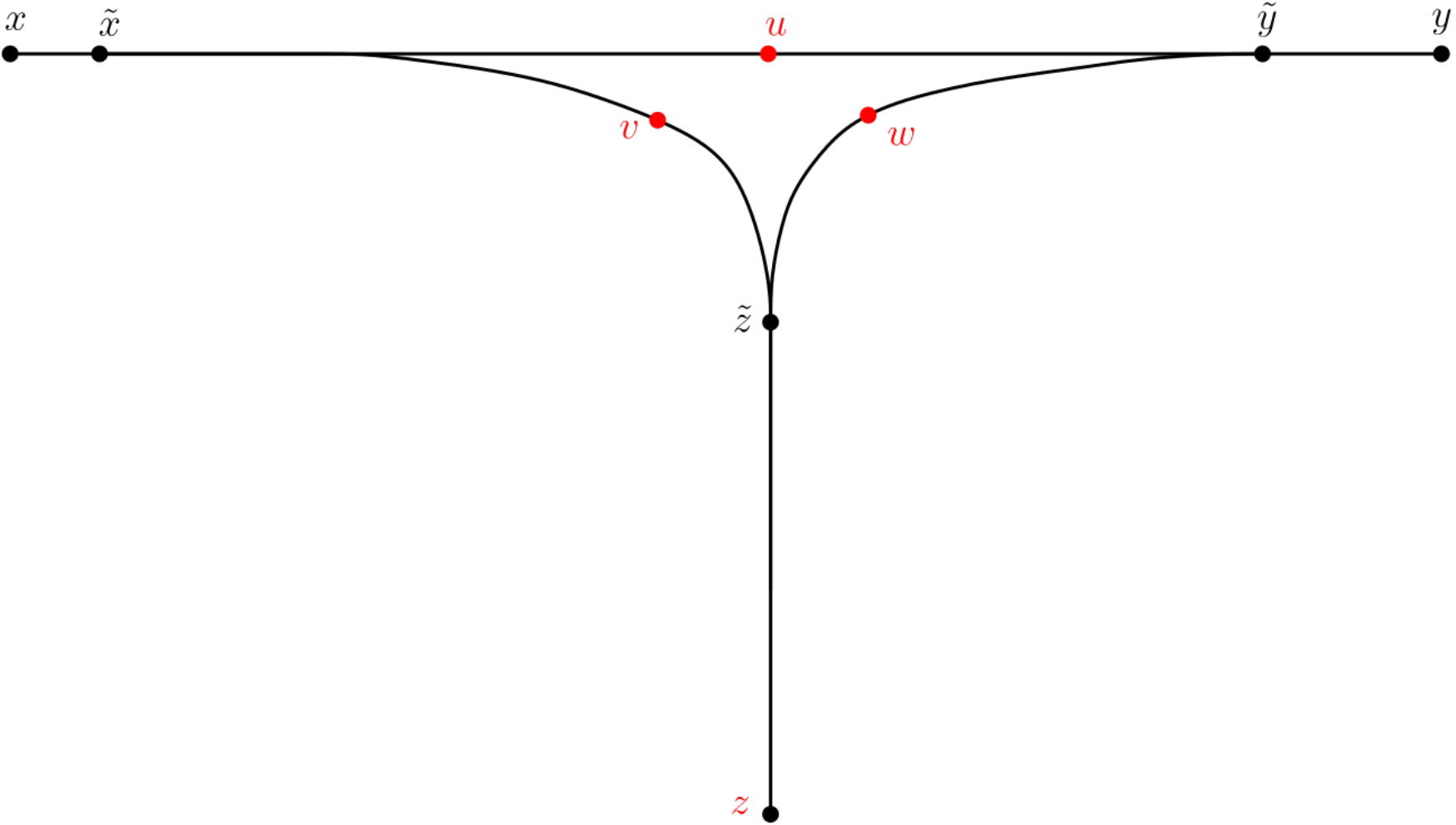}
    \caption{Angles between $u$ and $v$ and between $u$ and $w$ are bounded}
   \label{fig: lemme des points sur les geodesiques avec angles controlés}
 \end{figure}

\newpage
\begin{lem}\label{lem: quasimilieu a angle bornée}
Let us consider $x,y,z \in X_{c}$ such that $I_{c}(x,y)\cap I_{c}(y,z)  \cap I_{c}(z,x)=\emptyset $. Let $[x,y,z]_{c}$ be a triangle between $x,y $ and $z$. There exists $u \in [x,y]_{c} \cap G$ such that:

 \begin{itemize}
     \item $u \in Cone_{26+2}([x,z]_{c})$,

     \item $u \in Cone_{26+2}([z,y]_{c})$.
 \end{itemize}

\end{lem}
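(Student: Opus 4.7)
The strategy is to produce $u$ as a $4\delta$-quasi-center of the triangle on the side $[x,y]_{c}$, then certify cone membership via short geodesic excursions whose transition angles we can control.

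\textbf{Step 1 (existence of a quasi-center).} Standard $\delta$-hyperbolicity gives a vertex $u_{0} \in [x,y]_{c}$ with $d_{c}(u_{0},[x,z]_{c}) \leq 4\delta$ and $d_{c}(u_{0},[y,z]_{c}) \leq 4\delta$. This is a classical consequence of the thin triangle condition applied to $[x,y,z]_{c}$: starting from any $4\delta$-center of the triangle and projecting onto the side $[x,y]_{c}$ using the thin triangle property keeps the distance to the other two sides within $O(\delta)$.

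\textbf{Step 2 (forcing $u \in G$).} If $u_{0} \in X_{c}^{\infty}$, we replace it by a neighbor $u$ on $[x,y]_{c}$ in $X_{c}$ at distance $1$. Since every infinite-valence vertex of $X_{c}$ has only $G$-vertices as neighbors (by the definition of the coned-off graph), such a $u$ lies in $G$. The hypothesis $I_{c}(x,y) \cap I_{c}(y,z) \cap I_{c}(z,x) = \emptyset$ is what rules out the degenerate case in which the only possible quasi-center is an infinite-valence vertex common to all three geodesics. This shift changes $d_{c}$-distances to the other two sides by at most $1$, so $d_{c}(u,[x,z]_{c}) \leq 4\delta + 1$ and $d_{c}(u,[y,z]_{c}) \leq 4\delta + 1$.

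\textbf{Step 3 (cone containment).} Fix a projection $v \in [x,z]_{c}$ with $d_{c}(u,v) \leq 4\delta + 1$, and let $\sigma = [v,u]_{c}$ be a geodesic. By Proposition \ref{proposition: les propirétés des angles}(2), consecutive edges along $\sigma$ make angles $\leq 6\delta$. Let $e_{0}$ be the edge of $[x,z]_{c}$ at $v$ pointing toward $x$ (or $z$, chosen so that the concatenation of $[v,x]_{c}$ and $\sigma$ follows a near-geodesic from $x$ to $u$, as guaranteed by Lemma \ref{lemme : z est la projection donc c'est un quasi-centre}: $d_{c}(u,x) \geq d_{c}(u,v) + d_{c}(v,x) - 24\delta$). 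Then $e_{0}$ and the first edge of $\sigma$ both initiate geodesics from $v$ toward points whose geodesics fellow-travel a near-geodesic from $v$ to $x$ (via $u$), so Proposition \ref{proposition: les propirétés des angles}(2), combined with the $24\delta$-slack from quasi-centrality, bounds their angle at $v$ by $6\delta + 24\delta \leq 26\delta + 2$ (up to absorbing small constants into the $+2$). Concatenating $e_{0}$ with $\sigma$ yields an edge-path from an edge of $[x,z]_{c}$ to $u$ of length at most $4\delta + 2 \leq 26\delta + 2$, with all transition angles $\leq 26\delta + 2$; hence $u \in \mathrm{Cone}_{26\delta+2}([x,z]_{c})$. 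The symmetric argument with $[y,z]_{c}$ in place of $[x,z]_{c}$ completes the proof.

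\textbf{Main obstacle.} The delicate point is bounding the transition angle at $v$ between $\sigma$ and $e_{0}$: this is where the precise constant $26\delta + 2$ is pinned down, and it rests on turning the quasi-geodesic concatenation (certified by Lemma \ref{lemme : z est la projection donc c'est un quasi-centre}) into an honest angle bound via Proposition \ref{proposition: les propirétés des angles}(2). The second subtlety is Step 2: one must verify that the $G$-neighbor substitution is always available, which is exactly where the non-degeneracy hypothesis $I_{c}(x,y) \cap I_{c}(y,z) \cap I_{c}(z,x) = \emptyset$ is used to prevent being stuck at a pathological infinite-valence vertex shared by all three sides.
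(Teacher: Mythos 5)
Your Step 3 rests on a misreading of Proposition \ref{proposition: les propirétés des angles}(2), and this breaks the argument at its core. That proposition bounds the angle at a vertex $c$ between two edges that are both \emph{first edges of geodesics from $c$ to the same target $a$}; it says nothing about consecutive edges along a single geodesic. In the coned-off graph the latter can have arbitrarily large angle even on a geodesic of length $2$: if $v,u$ lie in a common coset $gP_i$ far apart within $P_i$, the geodesic $v \to \widehat{gP_i} \to u$ has length $2$ but angle $\approx d_{P_i}(v,u)$ at the cone vertex, which is unbounded. So the claim ``consecutive edges along $\sigma$ make angles $\leq 6\delta$'' is false, and the subsequent claim that the angle at $v$ between $e_0$ and the first edge of $\sigma$ is bounded by combining ``$6\delta$'' with the $24\delta$ defect from Lemma \ref{lemme : z est la projection donc c'est un quasi-centre} is likewise unsound: quasi-geodesicity defect does not control angles, and Proposition \ref{proposition: les propirétés des angles}(2) does not apply to edges that start geodesics toward \emph{different} targets ($x$ and $u$). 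This is precisely the phenomenon the distance formula is built to capture, and it is the main obstruction that any proof of this lemma must confront.

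The paper takes a genuinely different route to get around this. Rather than trying to bound transition angles one edge at a time via Prop.~\ref{proposition: les propirétés des angles}(2), it bounds \emph{all} angles along the geodesic $\sigma_0=[u,v]_c$ simultaneously by exhibiting a single closed loop $\sigma_0\sigma_1\sigma_2\sigma_3$ of total length at most $26\delta+2$, where $\sigma_1$ is a short extension of $[x,y]_c$ past $u$, $\sigma_2$ is a thin-triangle jump onto $[x,z]_c$, and $\sigma_3$ closes back along $[x,z]_c$ to $v$. For any vertex $c$ in the interior of $\sigma_0$, the complement of the two $\sigma_0$-edges at $c$ inside this loop is a path between them that avoids $c$, so the definition of angle (shortest path avoiding the vertex) bounds the angle at $c$ by the loop length minus $2$. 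This is the mechanism of \cite[Proposition 1.10]{ChatterjiDahmani}, and it is the step you would need to supply to repair your proof. Steps 1 and 2 are in the spirit of the paper's argument (both start from a $4\delta$-quasi-center, and both use the nondegeneracy hypothesis $I_c(x,y)\cap I_c(y,z)\cap I_c(z,x)=\emptyset$ to rule out the tripod case), but Step 3 as written does not work.
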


\begin{proof}

Since $I_{c}(x,y)\cap I_{c}(y,z)  \cap I_{c}(z,x)=\emptyset $ are all different, a triangle $[x,y,z]_{c}$ is not a tripod. According to $\delta$-hyperbolicity of $X_{c}$ and \cite[Proposition 21]{GhysDelaHarpe}, $[x,y,z]_{c}$ admits a $4\delta$-quasicenter. Up to choosing neighbors, we can find:
\begin{itemize}
    \item $u \in G\cap[x,y]_{c}$,

    \item $v \in [z,x]_{c}$,

    \item and $w \in [z,y]_{c} $,

    \item such that $d_{c}(u,v),d_{c}(u,w),d_{c}(v,w)\leq 8\delta+1$,
\end{itemize}
see Figure \ref{fig: lemme des points sur les geodesiques avec angles controlés}.

Moreover, we could choose $u,v,w$ such that:
\begin{itemize}
    \item $ |d_{c}(u,x)-d_{c}(v,x)|\leq 1$,

    \item $|d_{c}(u,y)-d_{c}(w,y)|\leq 1$,

    \item $|d_{c}(v,z)-d_{c}(w,z)|\leq 1.$
\end{itemize}
We will show that every angle between $u$ and $v$ is bounded, the proof is similar to the proof of \cite[Proposition 1.10]{ChatterjiDahmani}. Let us denote by $\sigma_{0}$ a geodesic between $u$ and $v$.
Let us denote by $\sigma_{1}$ the subsegment of $[x,y]_{c}$ obtained from $u$ and extended by $3\delta$ in the direction of $x$, unless it reaches $x$ before. (Otherwise, following the geodesics $[x,y]_{c}$ and $[x,z]_{c}$ provides a path of length at most $14\delta+2$). Denote $\sigma_{1}^{0}=u$ its initial point and $\sigma_{1}^{1}$ its endpoint. By hyperbolicity, there exists a segment at least $2\delta$ long, $\sigma_{2}$ from $\sigma_{1}^{1}$ to $[x,z]_{c}$. Let us denote $\sigma_{3}$, the subsegment of $[x,z]_{c}$ that closes the loop $\sigma_{0} \sigma_{1} \sigma_{2} \sigma_{3}$. By the triangular inequality, $\sigma_{3}$ is at least of length $13\delta+1$. This gives a loop of length $26\delta+2$, then all the angles between $u$ and $v$ are bounded by $26\delta+2$.
 The same proof applies to prove that $\measuredangle_{v}(x,u)\leq 26\delta+2$,$\measuredangle_{v}(u,y)\leq 26\delta +2$.
We conclude by symmetry of the roles of $u,v,w$.

\end{proof}

We now prove that $D$ satisfies a coarse triangular inequality.

 \begin{figure}[!ht]
    \centering
   \includegraphics[scale=0.5]{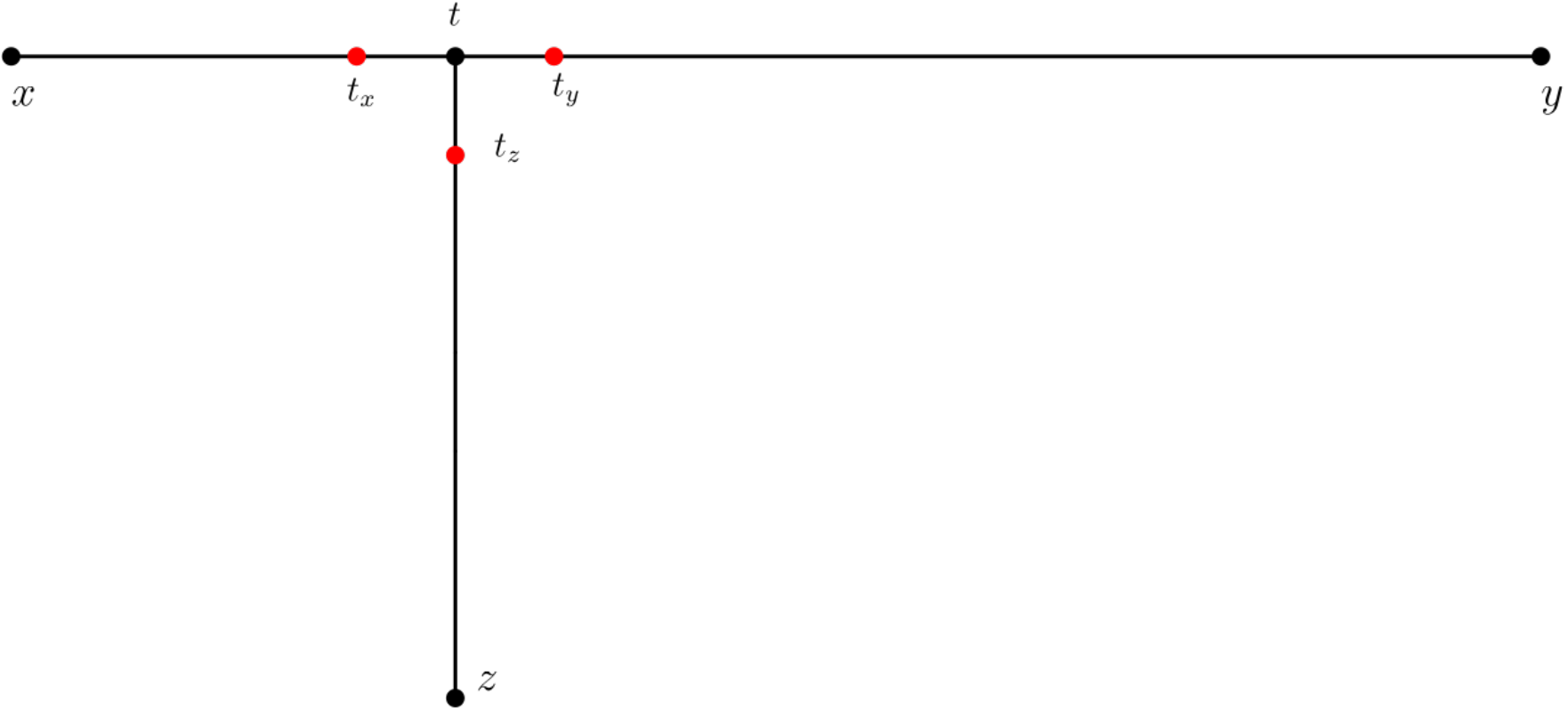}
   \caption{Case where $\tilde{x},\tilde{y},\tilde{z}$ are not all different and $t \in X_{c}^{\infty}$.}
   \label{fig: inégalité tirangulaire t de valence infini}
\end{figure}

\newpage

\begin{proposition}\label{proposition: inegalite triangulaire grossiere}
There exists a constant $C_{\text{triangle}}>0$ such that for all $x,y,z \in G$:
$$D(x,y) \leq D(x,z)+D(z,y)+C_{\text{triangle}}.$$
\end{proposition}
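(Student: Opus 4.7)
My plan is to imitate the hyperbolic case (Proposition \ref{proposition : inegalite triangulaire grossiere}) by locating a ``quasi-center'' $u$ of the geodesic triangle $[x,y,z]_c$ that lies in $G$, and applying the clutch inequality Proposition \ref{proposition: inegalite clutch inegalite triangulaire et faiblement geodesique} three times, once to each pair with $u$ in the middle. The new difficulty compared to the hyperbolic case is that in the coned-off graph a quasi-center might be a vertex of infinite valence, whereas the clutch inequality is only stated for center vertices in $G$. I would therefore split into two cases depending on whether the triangle is ``thick'' or a tripod.

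First I would handle the generic case $I_c(x,y)\cap I_c(y,z)\cap I_c(z,x)=\emptyset$. Lemma \ref{lem: quasimilieu a angle bornée} then produces $u \in [x,y]_c \cap G$ satisfying $u \in \mathrm{Cone}_{26\delta+2}([x,z]_c) \cap \mathrm{Cone}_{26\delta+2}([z,y]_c)$. Three applications of Proposition \ref{proposition: inegalite clutch inegalite triangulaire et faiblement geodesique} with $u$ as center (to each of the pairs $\{x,y\}$, $\{x,z\}$, $\{z,y\}$), combined with $D(u,z)\geq 0$, give $D(x,y)\leq D(x,z)+D(z,y)+3\nu$.

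Next I would handle the tripod case, where some $t$ belongs to all three geodesic intervals. If $t \in G$, the same three-fold clutch argument works verbatim with $t$ in place of $u$. If $t \in X_c^\infty$, I pick vertices $x_t, y_t, z_t \in G$ adjacent to $t$ and lying on geodesics $[x,t]_c$, $[y,t]_c$, $[z,t]_c$, chosen compatibly so that the concatenated paths $[x,x_t,t,y_t,y]$, $[x,x_t,t,z_t,z]$ and $[z,z_t,t,y_t,y]$ are all geodesics of $X_c$; this is possible because $t \in I_c(x,y)\cap I_c(x,z)\cap I_c(z,y)$ and lengths add correctly through $t$. Two applications of the clutch inequality per side then give
\begin{align*}
D(x,y) &\leq D(x,x_t)+D(x_t,y_t)+D(y_t,y)+2\nu ,\\
D(x,z) &\geq D(x,x_t)+D(x_t,z_t)+D(z_t,z)-2\nu ,\\
D(z,y) &\geq D(z,z_t)+D(z_t,y_t)+D(y_t,y)-2\nu .
\end{align*}
Since $x_t,y_t,z_t$ all lie in the parabolic coset at $t$ at coned-off distance one from $t$, Proposition \ref{proposition: pour deux sommets voisins de a un pt parabolique D est presque da} together with the triangle inequality for the strongly bolic pseudo-distance $d_t$ on the associated parabolic yields $D(x_t,y_t) \leq D(x_t,z_t) + D(z_t,y_t) + 3\Lambda$. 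Combining these four estimates and dropping the nonnegative terms $D(z,z_t)$ and $D(z_t,z)$ produces $D(x,y) \leq D(x,z)+D(z,y)+C_\text{triangle}$ with $C_\text{triangle}=6\nu+3\Lambda$ say.

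The main obstacle is precisely the infinite-valence tripod case: one has to organise the compatible choice of the neighbors $x_t, y_t, z_t \in G$ so that each concatenated path is a genuine geodesic in $X_c$, and then recognise that the admissibility hypothesis on parabolics (Definition \ref{definition: lesbons parabolic}) delivers exactly the triangle inequality for $d_t$ that, via Proposition \ref{proposition: pour deux sommets voisins de a un pt parabolique D est presque da}, can be transferred to a coarse triangle inequality at the level of $D$.
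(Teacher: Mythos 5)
Your proof is correct and follows essentially the same route as the paper: the same case split on whether $I_c(x,y)\cap I_c(y,z)\cap I_c(z,x)$ is empty, Lemma~\ref{lem: quasimilieu a angle bornée} for the non-tripod case, and in the infinite-valence tripod case the combination of two clutch inequalities per side, Proposition~\ref{proposition: pour deux sommets voisins de a un pt parabolique D est presque da}, and the triangle inequality for $d_t$. Your accounting of the final constant as $6\nu + 3\Lambda$ is in fact more careful than the paper's displayed computation, which appears to drop the $3\Lambda$ term in passing.
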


\begin{proof}
Let $x,y,z \in X_{c}$. We consider a triangle $[x,y,z]_{c}$. There are two cases to consider wether $I_{c}(x,y)\cap I_{c}(y,z)  \cap I_{c}(z,x)$ is empty or not.

\begin{enumerate}
\item Let us assume that $I_{c}(x,y)\cap I_{c}(y,z)  \cap I_{c}(z,x)\neq \emptyset$. There exists a vertex $t$ belonging to the three intervals.

\begin{itemize}
    \item In the case where $t \in G$, we can apply Proposition \ref{proposition: inegalite clutch inegalite triangulaire et faiblement geodesique}.
    Then we get:
 $$\begin{aligned} 
           &D(x,y)-D(x,z)-D(z,y)
           &\\
             \leq & D(x,t)+D(y,t)+\nu\\
             -&D(x,t)-D(t,z)+\nu-D(y,t)-D(t,z)+\nu ~(\text{according to Proposition \ref{proposition: inegalite clutch inegalite triangulaire et faiblement geodesique}}) \\
            \leq & 3\nu .
\end{aligned}$$

\item In the case where $t \in X_{c}^{\infty}$, we will apply Proposition \ref{proposition: pour deux sommets voisins de a un pt parabolique D est presque da} and the fact that for all $a \in X_{c}$, $d_{a}$ verifies the triangular inequality. We denote $t_{x}$ as the neighbor of $t$ on $[x,t]_{c}$, $t_{y}$ as the neighbor on $[t,y]_{c}$, and $t_{z}$ as the neighbor on $[z,t]_{c}$, see Figure \ref{fig: inégalité tirangulaire t de valence infini}.
Then, we get:
$$\begin{aligned} 
          & D(x,y)-D(x,z)-D(z,y)\\
             \leq & D(x,t_{x})+D(t_{x},t_{y})+D(t_{y},y)+2\nu\\
             -&D(x,t_{x})-D(t_{x},t_{z})-D(t_{z},z)+2\nu\\
             -&D(y,t_{y})-D(t_{y},t_{z})-D(t_{z},z)+2\nu ~(\text{according to Proposition \ref{proposition: inegalite clutch inegalite triangulaire et faiblement geodesique} applied three times}) \\
            \leq & D(t_{x},t_{y})-D(t_{y},t_{z})-D(t_{z},t_{x})+6\nu \\
            \leq & t_{0}^{p'-1}d_{a}(t_{x},t_{y})+\Lambda-t_{0}^{p'-1}d_{a}(t_{x},t_{z})\\
            +&\Lambda-t_{0}^{p'-1}d_{a}(t_{y},t_{z})+\Lambda+6\nu~(\text{according to Proposition \ref{proposition: pour deux sommets voisins de a un pt parabolique D est presque da}})
            \\ \leq & \Lambda t_{0}^{p'-1}(d_{a}(t_{x},t_{y})-d_{a}(t_{x},t_{z})-d_{a}(t_{z},t_{y}))+6\nu\\
            \leq & 6\nu~(\text{according to the triangular inequality for}~d_{a}).
            \end{aligned}$$
\end{itemize}

\item Let us now assume that $I_{c}(x,y)\cap I_{c}(y,z)  \cap I_{c}(z,x)= \emptyset$. According to Lemma \ref{lem: quasimilieu a angle bornée}, see Figure \ref{fig: lemme des points sur les geodesiques avec angles controlés}, there exist: $u \in G\cap[x,y]_{c},v \in [x,z]_{c}$ such that:

 \begin{itemize}
     \item $u \in Cone_{26+2}([x,z]_{c})$,

     \item $u \in Cone_{26+2}([z,y]_{c})$.
 \end{itemize}

According to Proposition \ref{proposition: inegalite clutch inegalite triangulaire et faiblement geodesique}, we have:
\begin{itemize}
    \item $|D(x,y)-D(x,u)-D(u,y)|\leq \nu$,

    \item $|D(x,z)-D(x,u)-D(u,y)|\leq \nu$,

    \item $|D(z,y)-D(z,u)-D(u,y)|\leq \nu$
    
\end{itemize}

and thus:
$$\begin{aligned} 
           &D(x,y)-D(x,z)-D(z,y)
           &\\
             \leq & D(x,u)+D(u,y)+\nu\\
             -&D(x,u)-D(u,z)+\nu-D(y,u)-D(u,z)+\nu ~(\text{according to Proposition \ref{proposition: inegalite clutch inegalite triangulaire et faiblement geodesique}}) \\
            \leq & 3\nu .
\end{aligned}$$

With $C_{\text{triangle}}=6\nu$, we get the result.

\end{enumerate}

\end{proof}

\begin{definition}\label{definition : metrique fortement bolique groupe relativement hyperbolique}

We define the function $\hat{d}: G \times G \rightarrow \mathbb{R}_{+}$ by:

$$
\hat{d}(x,y) = \left\{
    \begin{array}{ll}
        D(x,y)+C_{\text{triangle}}& \mbox{if} ~ x \ne y \\
        0 & \mbox{if} ~ x=y.
    \end{array}
\right.
$$

where $C_{\text{triangle}}$ is the constant of Proposition \ref{proposition: inegalite triangulaire grossiere}.

\end{definition}

\begin{proposition}\label{proposition: la metrique fortement bolique est une metrique}

The function $\hat{d}$ of Definition \ref{definition : metrique fortement bolique groupe relativement hyperbolique} is a metric on $X_{c}$.
    
\end{proposition}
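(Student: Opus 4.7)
The plan is to verify the three metric axioms in turn, noting that the substantive work has already been done in Proposition 9.48, and the current statement is essentially a packaging of that coarse triangle inequality into a genuine triangle inequality via the additive constant $C_{\text{triangle}}$.

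First I would handle positivity and the identity of indiscernibles, both of which are immediate from the piecewise definition: $D(x,y) \geq 0$ and $C_{\text{triangle}} > 0$ force $\hat{d}(x,y) > 0$ when $x \neq y$, while $\hat{d}(x,x) = 0$ by convention. For symmetry, I would observe that for every $a \in X_c$ the function $d_a$ is symmetric, either because $d_b$ is a pseudo-metric on $\text{Proba}_C(P_i)$ (when $a \in X_c^\infty$) or because $\|\mu_x(a)\Delta\mu_y(a)\| = \|\mu_y(a)\Delta\mu_x(a)\|$ (when $a$ has finite valence). Hence $D(x,y) = D(y,x)$, so $\hat{d}(x,y) = \hat{d}(y,x)$.

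The triangle inequality is the only point that requires care, and even there the work reduces to bookkeeping. For three pairwise distinct points $x, y, z$, Proposition 9.48 gives
\[
\hat{d}(x,y) = D(x,y) + C_{\text{triangle}} \leq D(x,z) + D(z,y) + 2C_{\text{triangle}} = \hat{d}(x,z) + \hat{d}(z,y).
\]
The degenerate cases are immediate: if $x = y$ the left side is $0$; if $x = z$ (with $y \neq z$) the inequality becomes $D(x,y) + C_{\text{triangle}} \leq D(z,y) + C_{\text{triangle}}$, which is an equality since $x = z$; the case $z = y$ is symmetric.

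There is no real obstacle here, since Proposition 9.48 is exactly engineered to make this work: it is precisely the role of the additive constant $C_{\text{triangle}}$ in Definition 9.49 to absorb the defect in the coarse triangle inequality for $D$. The only thing one has to be slightly careful about is the jump discontinuity of $\hat{d}$ at the diagonal (one adds $C_{\text{triangle}}$ off the diagonal but nothing on it), which is precisely what prevents this from producing a negative contribution on the right-hand side in the degenerate cases.
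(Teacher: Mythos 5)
Your proposal is correct and follows exactly the same route as the paper: symmetry and the identity of indiscernibles are read off the definition, and the triangle inequality is obtained by combining the coarse triangle inequality for $D$ (Proposition \ref{proposition: inegalite triangulaire grossiere}) with the additive constant $C_{\text{triangle}}$. The only difference is that you spell out the degenerate cases (e.g.\ $x=z$) explicitly, which the paper leaves implicit, but this is just a matter of completeness, not a different argument.
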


\begin{proof}
By definition, $\hat{d}$ is symmetric and $\hat{d}(x,y)=0$ if and only if $x=y$.

The triangle inequality is a direct consequence of Proposition \ref{proposition: inegalite triangulaire grossiere}.
    
\end{proof}

The rest of the article will be dedicated to the proof of

\subsection{The bolic metric is comparable to the group metric}\label{subsection: metric comparable to the group metric}

In this section, we will show that the metric $\hat{d}$ is coarsely smaller than the group metric, that is, there exists an increasing and unbounded function $\rho_{1}$ such that for all $x,y\in G$:
$$\rho_{1}(d_{G}(x,y)) \leq \hat{d}(x,y).$$

For the reverse inequality, since $d$ is $G$-invariant 
$G$ is finitely generated, we directly know that $\hat{d}$ is smaller than $G$ with linear control.

The fact that the strongly bolic metric is coarsely bigger than the group metric will be used to show that this metric is uniformly locally finite.
The other inequality will be used to prove that the strongly bolic metric verifies the strongly-$B1$ condition.

In Subsection \ref{subsection: faiblement geodesique}, we will use the fact that these two metrics are weakly geodesic to show that these two metrics are in fact quasi-isometric.

\begin{proposition}\label{Proposition : la distance parabolique est plus petite que la distance dans le groupe}

There exists $M>0$, such that for all $x,y \in G$:
$$\hat{d}(x,y)\leq M d_{G}(x,y).  $$

\end{proposition}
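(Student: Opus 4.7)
The plan is to exploit the $G$-invariance of $\hat{d}$ together with the finite generation of $G$; this is the standard ``bounded on generators'' argument.

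First I would verify that $\hat{d}$ is $G$-invariant. By the construction of masks in \cite{ChatterjiDahmani}, for every $g,x,a \in X_c$ one has $\mu_{gx}(ga) = g \cdot \mu_x(a)$. Combined with the way $d_b$ was defined on probability measures of each parabolic coset (via the chosen left-transport to $P_i$), and the fact that for finite-valence $a \in X_c$ the pseudo-metric $d_a$ is literally the $\ell^1$-difference of masks, this yields $d_{ga}(gx, gy) = d_a(x,y)$ for all $g \in G$ and all $a, x, y$. Reindexing the sum defining $D$ by the substitution $a \mapsto gb$ then gives
\[
D(gx, gy) = \sum_{a \in X_c} \theta(d_a(gx,gy)) = \sum_{b \in X_c} \theta(d_{gb}(gx,gy)) = \sum_{b \in X_c} \theta(d_b(x,y)) = D(x,y),
\]
and hence $\hat{d}(gx,gy) = \hat{d}(x,y)$.

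Next, let $S$ be the finite generating set of $G$ underlying $d_G$, and set
\[
M := \max_{s \in S} \hat{d}(1, s).
\]
This maximum is finite: $S$ is finite, and for each $s \in S$ we have $\hat{d}(1,s) = D(1,s) + C_{\text{triangle}}$, with $D(1,s)$ finite by Proposition~\ref{proposition: D est fini et quasiisom à metric du groupe groupe rel hyp}. For $x, y \in G$ with $x \neq y$, write $x^{-1}y = s_1 s_2 \cdots s_n$ with $n = d_G(x,y)$ and $s_i \in S$, and set $w_0 = 1$, $w_i = s_1 \cdots s_i$. Using the triangle inequality for $\hat{d}$ (Proposition~\ref{proposition: la metrique fortement bolique est une metrique}) together with $G$-invariance,
\[
\hat{d}(x,y) = \hat{d}(1, x^{-1}y) \leq \sum_{i=1}^{n} \hat{d}(w_{i-1}, w_i) = \sum_{i=1}^{n} \hat{d}(1, s_i) \leq M \cdot d_G(x,y),
\]
which is the desired inequality. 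There is no real obstacle here; the only nontrivial verification is the $G$-equivariance of the family $\{d_a\}_{a \in X_c}$, which follows directly from the $G$-equivariance of masks and the fact that $d_b$ was set up to be $P_i$-invariant on each parabolic (and extended $G$-invariantly across cosets).
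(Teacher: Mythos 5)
Your proposal is correct and is essentially the paper's own argument: define $M$ as the maximum of $\hat{d}(1,s)$ over the finite generating set, then use $G$-invariance of $\hat{d}$ and the triangle inequality to telescope along a geodesic word of length $d_G(x,y)$. Your extra care in justifying $G$-invariance (via equivariance of masks and the $P_i$-invariance of $d_b$) and in noting that $M$ is finite (via Proposition~\ref{proposition: D est fini et quasiisom à metric du groupe groupe rel hyp}) is a welcome addition of detail, but the route is the same.
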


\begin{proof}
We set $S$ a generating of $G$ associated to $d_{G}$.
We set $M:=\max_{s\in S}{\hat{d}(1,s)}$.

Let $x,y \in G$. Let $n:=d_{G}(x,y)$. Then there exists $s_{1},...,s_{n} \in S$ such that $x^{-1}y=s_{1},...,s_{n}$ and we have:
$$\begin{aligned}
\hat{d}(x,y)&=\hat{d}(1,x^{-1}y) \\
& \leq \hat{d}(1,s_{1})+\hat{d}(1,s_{2})+...+\hat{d}(1,s_{n})\\
& \leq M d_{G}(x,y).
\end{aligned}$$

\end{proof}

We prove now the reverse inequality.

\begin{proposition}\label{proposition: la distance dans le groupe est plus petite que la distance bolic}

There exists an increasing and unbounded function $\rho_{1}$ such that, for all $x,y \in G$:
$$ \rho_{1}(d_{G}(x,y)) \leq \hat{d}(x,y). $$

\end{proposition}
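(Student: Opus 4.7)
The plan is to show the existence of constants $\lambda, C > 0$ such that $\hat{d}(x,y) \geq \lambda d_G(x,y) - C$ for all $x,y \in G$; one then obtains $\rho_1$ by setting $\rho_1(t) := \max(0, \lambda t - C)$, which is increasing and unbounded. By Corollary~\ref{corollary : reformulation de la formule de la distance}, $d_G$ is quasi-isometric to $d_c + \Theta'_{>M'}$ for any threshold $M' > 0$, so it suffices to bound $d_c(x,y) + \Theta'_{>M'}(x,y)$ below by $D(x,y)$ up to an additive constant, for a suitable choice of $M'$. We fix a geodesic $[x,y]_c = \{v_0, \ldots, v_n\}$ of $X_c$ with $n = d_c(x,y)$, and control the two summands using disjoint sets of vertices appearing in $D(x,y) = \sum_{a \in X_c} \theta(d_a(x,y))$.

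For the angular contribution, set $M' := \max(A t_0 + B,\, 2B)$, where $A, B$ come from Proposition~\ref{proposition: l'angle est borné par la distance bolic} and $t_0$ from Definition~\ref{definition: la fonction theta}. For each $a \in X_c^\infty \cap [x,y]_c$ with $\measuredangle_a(x,y) > M'$, Proposition~\ref{proposition: l'angle est borné par la distance bolic} gives $d_a(x,y) \geq A^{-1}(\measuredangle_a(x,y) - B) \geq t_0$, so $\theta(d_a(x,y)) = t_0^{P-1} d_a(x,y) \geq \frac{t_0^{P-1}}{2A} \measuredangle_a(x,y)$. Summing over these distinct vertices, and using that the sum of angles along $[x,y]_c$ dominates the infimum $\Theta'_{>M'}(x,y)$, produces $D(x,y) \geq \frac{t_0^{P-1}}{2A} \Theta'_{>M'}(x,y)$. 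For the length contribution, assume $n \geq 10\delta$: by Proposition~\ref{proposition: non-confluence des masks}, each $a \in G \cap [x,y]_c$ with $d_c(a,x), d_c(a,y) \geq 5\delta$ satisfies $d_a(x,y) = \|\mu_x(a) \Delta \mu_y(a)\| = 2$, contributing $\theta(2) = 2^P$ to $D(x,y)$. Since two infinite-valence vertices of $X_c$ are never adjacent (by the very construction of the coned-off graph), the subsegment $v_{5\delta}, \ldots, v_{n-5\delta}$ contains at least $(n-10\delta)/2$ elements of $G$. Adding the disjoint angular and length contributions and then invoking the distance formula produces $\hat{d}(x,y) \geq \lambda d_G(x,y) - C$ whenever $n \geq 10\delta$; the complementary regime $n < 10\delta$ is absorbed into $C$, since $d_c$ is then bounded and the angular bound alone controls $\Theta'_{>M'}$.

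The main obstacle will be the length estimate, which hinges on the combinatorial observation that infinite-valence vertices of $X_c$ are adjacent only to elements of $G$, ensuring that a linear fraction of any geodesic lies in $G$. A secondary technical point is that $\Theta'_{>M'}(x,y)$ is defined as the infimum of angle-sums over all geodesics while we sum along one specific geodesic; this presents no difficulty, since the specific sum is always at least the infimum.
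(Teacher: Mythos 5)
Your proposal is correct and follows the same overall strategy as the paper: the $d_c$-contribution is controlled via the non-confluence of masks (Proposition~\ref{proposition: non-confluence des masks}) together with the combinatorial observation that infinite-valence vertices of $X_c$ are never adjacent, the angular contribution is controlled via Proposition~\ref{proposition: l'angle est borné par la distance bolic} relating $d_a$ to $\measuredangle_a(x,y)$, and the two pieces are combined using the distance formula (Corollary~\ref{corollary : reformulation de la formule de la distance}). The difference is quantitative: the paper contents itself with the coarse statement $D(x,y)\to\infty$ as $d_G(x,y)\to\infty$, treating the two regimes (large $d_c$, large angle) in a more qualitative way, and the linear lower bound is only recovered later in Proposition~\ref{proposition: quasi-isometry} with weak geodesicity as an extra input. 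You aim directly for $\hat{d}(x,y)\geq \lambda d_G(x,y)-C$, which is strictly stronger and, if carried out carefully, would render the converse half of Proposition~\ref{proposition: quasi-isometry} unnecessary. The one step you gloss over is the passage from $\sum_a \measuredangle_a(x,y)$ (where $\measuredangle_a(x,y)$ is the existential/supremal angle of Definition~\ref{definition: angle}) to the sum $\Theta'_{>M'}(x,y)$ of angles between \emph{consecutive edges of a specific geodesic}; this works because the supremal angle dominates the angle along any particular geodesic and because, when the latter exceeds $M'>12\delta$, the former also exceeds $M'$, so your indexing set contains the one used in $\Theta_{>M'}$. It would be worth spelling this out in a final write-up.
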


\begin{proof}

Let $a \in X_{c}^{\infty}$, according to Proposition \ref{proposition: l'angle est borné par la distance bolic},
$$ \measuredangle_{a}(x,y)\rightarrow \infty \Rightarrow d_{a}(x,y) \rightarrow \infty .$$

We fix a geodesic $[x,y]_{c}$. In the construction of the coned-off graph, two vertices of infinite valence cannot be adjacent, therefore:
$$|[x,y]_{c}\cap G| \geq \frac{1}{2}d_{c}(x,y). $$

Moreover, according to Proposition \ref{proposition: non-confluence des masks}, if $d_{c}(x,y)\geq 10 \delta$, for all $a \in [x,y]_{c}$ at distance at least $5\delta$ from $x$ and $y$, we have $||\mu_{x}(a)\Delta\mu_{y}(a)||=2$.
Thus, if we suppose that $d_{c}(x,y)\geq 10 \delta$, we have:
$$\begin{aligned} 
           D(x,y) \geq & \sum_{a \in [x,y]_{c}\cap G} \theta(d_{a}(x,y)),\\
            \geq &\sum_{a \in [x,y]_{c}\cap G}||\mu_{x}(a)\Delta\mu_{y}(a)||^{p}~(\text{according to Definition \ref{definition: da}}),\\
            \geq & \frac{1}{2}d_{c}(x,y)2^{p}-10\delta ~(\text{according to Proposition \ref{proposition: non-confluence des masks}}).
\end{aligned}$$

Then:
$$d_{c}(x,y)\rightarrow \infty \Rightarrow D(x,y)\rightarrow \infty.$$

To conclude, we use Theorem \ref{Proposition : formule de la distance}, the distance formula and Definition \ref{definition : metrique fortement bolique groupe relativement hyperbolique}, to show that there exists an increasing and unbounded function $\rho_{1}$, such that for all $x,y \in G$:
$$\rho_{1}(d_{G}(x,y)) \leq  \hat{d}(x,y).  $$
\end{proof}

\subsection{Weak geodesicity and quasi-isometry with $d_{G}$}\label{subsection: faiblement geodesique}

In this section, we will show that the metric $\hat{d}$ is weakly geodesic. We will use this fact and Subsection \ref{subsection: metric comparable to the group metric} to show that $\hat{d}$ and $d_{G}$ are, in fact, quasi-isometric.

Recall that a metric space $(X,d)$ is said to be weakly geodesic if there exists $\eta \geq 0 $ such that, for every pair of points $x,y  \in X$ and every $t \in [0,d(x,y)]$, there exists a point $z \in X$ such that $d(x,z) \leq t + \eta$ and $d(z,y)\leq d(x,y)-t+\eta$.

Recall that the parabolic subgroups of $G$ are admissible as in Definition \ref{definition: lesbons parabolic}. Therefore, we have assumed that for all parabolic subgroups $P$, for all $C>0$, $\text{Proba}_{C}(P)$ admits a strongly bolic metric and these metrics are weakly geodesic. Thus, by taking the maximum of the weak geodesicity constants, we can assume that there exists $\varepsilon \geq 0$ such that $d_{b}$ is $\varepsilon$-weakly geodesic, where $d_{b}$ denotes the associated strongly bolic metric, without distinction on the parabolic.\\

According to Proposition \ref{proposition: inegalite clutch inegalite triangulaire et faiblement geodesique}, for any pair of vertices $x,y \in X_{c}$, and for any geodesic $[x,y]_{c}$ between these two points, a vertex $z\in G\cap [x,y]_{c}$ satisfies the equality case of the triangle inequality for $\hat{d}$ up to a bounded error.

However, this is not sufficient to show the weak-geodesicity of $\hat{d}$, since two adjacent vertices $u$ and $v$ of a parabolic vertex $a$ can be at an arbitrarily large distance for $\hat{d}$. In this case, we will use the weak-geodesicity of $d_{b}$. The projection of the weak geodesic between $u$ and $v$ on the $1$-neighborhood of $a$ will allow us to construct a path in $X_{c}$ between $u$ and $v$ for which the steps are bounded for $\hat{d}$ and whose vertices satisfy the equality case of the triangle inequality up to a bounded error.

\begin{proposition}\label{faiblement geodesique pour deux points voisins dun parabolique}

For all $a \in X_{c}^{\infty}$, for all $x,y\in G$ with $d_{c}(x,a)=d_{c}(a,y)=1$, for all $t \in [0,d_{a}(x,y)]$, there exists $u \in G$ with $d_{c}(u,a)=1$ such that:

\begin{itemize}
    \item $d_{a}(x,u)\leq t+4C+\varepsilon$,

    \item $d_{a}(u,y) \leq d_{a}(x,y)-t+4C+\varepsilon$,

    \item and $\measuredangle_{a}(x,u)\geq 12 \delta$,  $\measuredangle_{a}(u,y)\geq 12 \delta$,
\end{itemize}

where $\varepsilon$ is the weak geodesic constant of $d_{b}$ and $C$ is the constant of Proposition \ref{proposition : les masks sont uniformément bornés} .
\end{proposition}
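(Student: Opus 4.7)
The proof is a soft lifting argument: on pairs of vertices at $X_c$-distance $1$ from $a$, the pseudo-distance $d_a$ is literally the restriction of the strongly bolic $d_b$ on $\text{Proba}_C(P_a)$ to Dirac masses. Indeed, since $d_c(x,a)=d_c(y,a)=1$ with $a\in X_c^{\infty}$, the flow steps at $x$ and $y$ from $a$ are stationary by Definition~\ref{definition : le flow step}, so $\mu_x(a)=\delta_x$ and $\mu_y(a)=\delta_y$ and hence $d_a(x,y)=d_b(\delta_x,\delta_y)$.

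I then invoke the $\varepsilon$-weak-geodesicity of $d_b$ guaranteed by the first bullet of admissibility (Definition~\ref{definition: lesbons parabolic}). Applied to $\delta_x,\delta_y\in\text{Proba}_C(P_a)$ and the prescribed $t$, it yields $\nu\in\text{Proba}_C(P_a)$ with
\[
d_b(\delta_x,\nu)\le t+\varepsilon\qquad\text{and}\qquad d_b(\nu,\delta_y)\le d_a(x,y)-t+\varepsilon.
\]
The measure $\nu$ need not be Dirac, so I pick any $v\in\mathrm{supp}(\nu)$ and apply the last bullet of admissibility to $\nu$ and $\delta_v$: their supports both contain $v$, so
\[
d_b(\nu,\delta_v)\le 2C\,\|\nu\Delta\delta_v\|\le 4C,
\]
since both are probability measures. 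Setting $u:=v$, the support of $\nu$ sits in the parabolic coset represented by $a$, hence $d_c(u,a)=1$, and the triangle inequality for $d_b$ delivers $d_a(x,u)\le t+4C+\varepsilon$ and $d_a(u,y)\le d_a(x,y)-t+4C+\varepsilon$.

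The subtle point, and the one I expect to be the main obstacle, is the pair of angle inequalities $\measuredangle_a(x,u)\ge 12\delta$ and $\measuredangle_a(u,y)\ge 12\delta$. Here I would use Proposition~\ref{proposition: l'angle est borné par la distance bolic}, which is a genuine quasi-isometry between $\measuredangle_a$ and $d_a$ at vertices of infinite valence: once $d_a(x,u)$ exceeds a fixed threshold $T:=A(12\delta+B)$ depending only on the constants of that proposition, $\measuredangle_a(x,u)\ge 12\delta$ follows automatically (and symmetrically for $\measuredangle_a(u,y)$). It therefore suffices to exhibit such a $v$ in $\mathrm{supp}(\nu)$; if every element of $\mathrm{supp}(\nu)$ fails the threshold on one side, then that side of $t$ is already within $T+4C+\varepsilon$ of $0$ (resp.\ of $d_a(x,y)$), and one simply takes $u=x$ (resp.\ $u=y$), interpreting the angle condition trivially on coincident vertices. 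The choice of threshold is absorbed in the additive constant $4C+\varepsilon$ of the statement, completing the proof.
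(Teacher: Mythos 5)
Your argument is essentially the paper's: identify $d_a(x,y)$ with $d_b(\delta_x,\delta_y)$ via the stationary flow step, apply $\varepsilon$-weak geodesicity of $d_b$, replace the intermediate measure by a Dirac mass at a point of its support using the last bullet of admissibility (cost $4C$), and deduce the angle inequalities from Proposition~\ref{proposition: l'angle est borné par la distance bolic}. You correctly identify the threshold $T=A(12\delta+B)$. The step that does not quite close is your fallback for boundary values of $t$: taking $u=x$ forces $d_a(u,y)=d_a(x,y)$, which satisfies the second inequality only when $t\le 4C+\varepsilon$, not under the weaker hypothesis $t<T+4C+\varepsilon$; moreover $\measuredangle_a(x,x)$ is not defined, so the angle condition cannot simply be read vacuously. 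The paper handles the same difficulty by writing ``by adjusting $\varepsilon$ if necessary, we can assume $d_b(\delta_x,\lambda)$ and $d_b(\lambda,\delta_y)$ exceed the threshold'' --- in effect enlarging the $\varepsilon$ appearing in the statement so that when $t$ is within $T$ of $0$ or of $d_a(x,y)$ one can still place $\lambda$ at the required distance from both endpoints. Your ``absorb the threshold into $4C+\varepsilon$'' remark is pointing at the same move, but without the enlarged constant your $u=x$ case genuinely fails. Once the constant is understood as the paper intends (the weak-geodesicity constant replaced by one at least $A(12\delta+B)+2C$), your proof matches the paper's.
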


\begin{proof}
After applying a translation if needed, we denote by $P$ the parabolic associated to $a$.
Let $x, y \in G$ under the assumptions of the proposition. Recall that $d_{a}(x,y)=d_{b}(\delta_{x},\delta_{y})$. According to the $\varepsilon$-weak geodesicity of $d_{b}$, for $t \in [0,d_{a}(x,y)]$, there exists $\lambda\in \text{Proba}_{C}(P) $ such that:

\begin{itemize}
    \item $d_{b}(\delta_{x},\lambda)\leq t+\varepsilon$,

    \item $d_{b}(\lambda,\delta_{y})\leq  d_{a}(x,y)-t+\varepsilon$.
\end{itemize}

Furthermore, by adjusting $\varepsilon$ if necessary, we can assume that:
$$d_{b}(\delta_{x},\lambda)\geq A12\delta-AB+2C, ~d_{b}(\lambda,\delta_{y})\geq A12\delta-AB+2C,$$

where $A, B$ are the constants of Proposition \ref{proposition: l'angle est borné par la distance bolic}.

Let $u \in  \text{supp}(\lambda)$, then $u \in G$ and according to the last point of Definition \ref{definition: lesbons parabolic}, we have:
$$ d_{b}(\lambda,\delta_{u})\leq 4C.$$

Therefore, we have:
\begin{itemize}
    \item $d_{b}(\delta_{x},\delta_{u})\leq t+4C+\varepsilon$,

    \item $d_{b}(\delta_{u},\delta_{y})\leq  d_{a}(x,y)-t+4C+\varepsilon$,

    \item $ d_{b}(\delta_{x},\delta_{u})\geq A12\delta-AB, ~d_{b}(u,\delta_{y})\geq A12\delta-AB. $
\end{itemize}

According to Proposition \ref{proposition: l'angle est borné par la distance bolic}, the third point gives the following:
$$ \measuredangle_{a}(x,u)\geq 12 \delta,  ~\measuredangle_{a}(u,y)\geq 12 \delta.$$

\end{proof}

The next definition describes the weak geodesic in a parabolic or equivalently in the $1$-neighborhood of a vertex of infinite valency.

\begin{definition}\label{definition: chemin dnas les paraboliques} ( Privileged weak geodesic in a parabolic)

Let $a \in X_{c} ^{\infty}$ and $P$ the associated parabolic subgroup, for all $x,y \in G$ with $d_{c}(x,a)=d_{c}(a,y)=1$, we call a \emph{privileged weak geodesic}, denoted by $[x,y]_{d_{a}}$, a set of vertices in the $1$-neighborhood of $a$ with the following properties:
\begin{itemize}
    \item $x,y\in  [x,y]_{d_{a}}$,

    \item $[x,y]_{d_{a}}$ is a $(4C+\varepsilon)$-weak geodesic,

    \item for all $u \in [x,y]_{d_{a}}, u\neq x,y $,
$$ \measuredangle_{a}(x,u)\geq 12 \delta,  ~\measuredangle_{a}(u,y)\geq 12 \delta.$$

\item $[x,y]_{d_{a}}$ is $\eta$-peakless and $\eta$-strongly-convex for some $\eta\geq 0$.
\end{itemize}

\end{definition}
According to Proposition \ref{faiblement geodesique pour deux points voisins dun parabolique}, for all $a \in X_{c}^{\infty}$, for all $x,y \in G$ with $d_{c}(x,a)=d_{c}(a,y)=1$, there exists such a set of vertices $[x,y]_{d_{a}}$.

\begin{definition}\label{definition: systeme de geodesic faible}
Let $x,y \in G$, such that $d_{c}(x,y)=n$. We consider a geodesic $[x,y]_{c}$ between those two vertices ordered as follows $[x,y]_{c}=\{x_{0},x_{1},...,x_{n}\}$ with $
x_{0}=x$, $x_{n}=y$ and $d_{c}(x_{i},x_{i+1})=1$ for $0\le i \le n-1$.

We set, (even if its depend on the choice of a geodesic): $$[x,y]_{\hat{d}}:=([x,y]_{c}\cap G) \cup \bigcup_{x_{i}\in X_{c}^{\infty}}[x_{i-1},x_{i+1}]_{d_{x_{i}}}.$$

\end{definition}

\begin{figure}[!ht]
   \centering
  \includegraphics[scale=0.8]{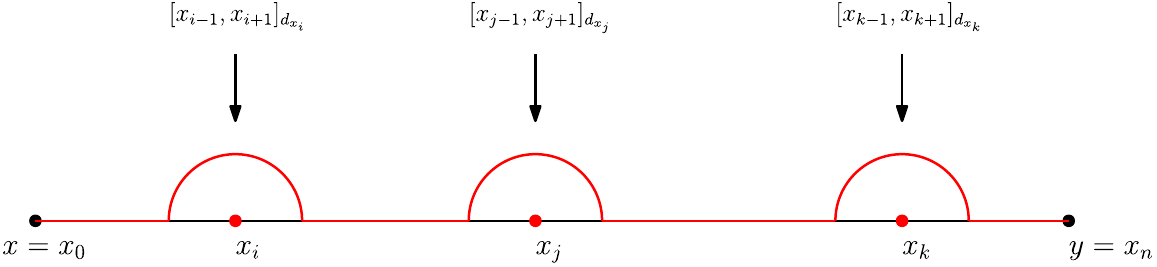}
  \caption{Weak geodesic between $x$ and $y$ with $3$ points of infinite valency}
   \label{ fig: geodesic faible}
\end{figure}

\newpage

\begin{proposition}\label{proposition: faiblement geodesique}
The metric space $(G,\hat{d})$ is weakly geodesic.
\end{proposition}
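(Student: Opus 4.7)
The plan is to use the path $[x,y]_{\hat{d}} \subset G$ from Definition \ref{definition: systeme de geodesic faible} as a candidate weak geodesic, adapting the argument of Proposition \ref{proposition : d chapeau est faiblement geodesique} from the hyperbolic case. Concretely, I will establish (i) a clutch bound saying that every $z \in [x,y]_{\hat{d}}$ satisfies $\hat{d}(x,z) + \hat{d}(z,y) \leq \hat{d}(x,y) + K_{1}$ for some uniform $K_{1}$, and (ii) a bounded-step property saying that $[x,y]_{\hat{d}}$ can be ordered so that consecutive vertices lie within $\hat{d}$-distance $M$ of one another. Granting (i) and (ii), the map $z \mapsto \hat{d}(x,z)$ on $[x,y]_{\hat{d}}$ takes values in a bounded neighborhood of $[0,\hat{d}(x,y)]$ and changes by at most $M$ between consecutive vertices, hence its image is $M$-dense. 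For any $t \in [0,\hat{d}(x,y)]$ we then pick $z$ with $\hat{d}(x,z) \in [t-M, t+M]$; combined with (i), this gives $\hat{d}(z,y) \leq \hat{d}(x,y) - t + M + K_{1}$, proving weak geodesicity with $\eta := M + K_{1}$.

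For step (i), when $z \in [x,y]_{c} \cap G$ the inequality is immediate from Proposition \ref{proposition: inegalite clutch inegalite triangulaire et faiblement geodesique}, since $z$ lies on the chosen coned-off geodesic. When $z$ lies on a privileged weak geodesic $[x_{i-1}, x_{i+1}]_{d_{x_{i}}}$ for some $x_{i} \in X_{c}^{\infty}$ on $[x,y]_{c}$, I would split in two stages. First, two applications of Proposition \ref{proposition: inegalite clutch inegalite triangulaire et faiblement geodesique} at the outer neighbors $x_{i-1}, x_{i+1}$ yield
$$\hat{d}(x,y) = \hat{d}(x, x_{i-1}) + \hat{d}(x_{i-1}, x_{i+1}) + \hat{d}(x_{i+1}, y) + O(1).$$
Second, the angle conditions $\measuredangle_{x_{i}}(x_{i-1}, z),\ \measuredangle_{x_{i}}(z, x_{i+1}) \geq 12\delta$ built into Definition \ref{definition: chemin dnas les paraboliques} force every coned-off geodesic from $x$ to $z$ (respectively $z$ to $y$) to pass near $x_{i-1}$ (resp. $x_{i+1}$) by Proposition \ref{proposition: les propirétés des angles}, so two further applications of Proposition \ref{proposition: inegalite clutch inegalite triangulaire et faiblement geodesique} give $\hat{d}(x,z) = \hat{d}(x, x_{i-1}) + \hat{d}(x_{i-1}, z) + O(1)$ and $\hat{d}(z,y) = \hat{d}(z, x_{i+1}) + \hat{d}(x_{i+1}, y) + O(1)$. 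Finally, Proposition \ref{proposition: pour deux sommets voisins de a un pt parabolique D est presque da} converts $\hat{d}(x_{i-1}, z)$, $\hat{d}(z, x_{i+1})$ and $\hat{d}(x_{i-1}, x_{i+1})$ into $t_{0}^{P-1}$ times the corresponding $d_{x_{i}}$-distances, up to $\Lambda$; since $z$ lies on a $(4C + \varepsilon)$-weak $d_{x_{i}}$-geodesic from $x_{i-1}$ to $x_{i+1}$, the bound $d_{x_{i}}(x_{i-1}, z) + d_{x_{i}}(z, x_{i+1}) \leq d_{x_{i}}(x_{i-1}, x_{i+1}) + 2(4C + \varepsilon)$ closes the estimate.

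For step (ii), consecutive vertices of $[x,y]_{c} \cap G$ lie at $d_{c}$-distance at most $2$ with at most one infinite-valency vertex between them, so the distance formula (Proposition \ref{Proposition : formule de la distance}) bounds their $d_{G}$-distance and Proposition \ref{Proposition : la distance parabolique est plus petite que la distance dans le groupe} transfers this to a bounded $\hat{d}$-gap. Inside a privileged weak $d_{x_{i}}$-geodesic, the $\varepsilon$-weak geodesicity of $d_{b}$ from Definition \ref{definition: lesbons parabolic} lets me extract a finite ordered sequence of $1$-neighbors of $x_{i}$ whose consecutive $d_{b}$-gaps are uniformly bounded, and Proposition \ref{proposition: pour deux sommets voisins de a un pt parabolique D est presque da} then transfers this to uniformly bounded $\hat{d}$-gaps. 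The main obstacle I expect is the angle bookkeeping in step (i): one must carefully verify that the two further splittings of $\hat{d}(x,z)$ and $\hat{d}(z,y)$ through $x_{i-1}$ and $x_{i+1}$ are justified by the cone hypothesis of Proposition \ref{proposition: inegalite clutch inegalite triangulaire et faiblement geodesique}, and this relies precisely on the $12\delta$-angle condition enforced in Definition \ref{definition: chemin dnas les paraboliques} together with the large-angle checkpoint property from Proposition \ref{proposition: les propirétés des angles}.
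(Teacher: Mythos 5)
Your plan mirrors the paper's proof: use $[x,y]_{\hat{d}}$ from Definition \ref{definition: systeme de geodesic faible} as the weak geodesic, establish a uniform clutch bound via Proposition \ref{proposition: inegalite clutch inegalite triangulaire et faiblement geodesique}, use the $12\delta$-angle hypothesis of Definition \ref{definition: chemin dnas les paraboliques} and Proposition \ref{proposition: les propirétés des angles} to force the splits through $x_{i-1}$ and $x_{i+1}$, convert to $d_{x_i}$ with Proposition \ref{proposition: pour deux sommets voisins de a un pt parabolique D est presque da}, and finish by a density argument. That is exactly the paper's route, and your step (i) is sound.

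However, step (ii) contains a genuine error in its first half. You claim that for consecutive vertices of $[x,y]_{c}\cap G$ with one infinite-valency vertex $x_{i}$ between them (so at $d_{c}$-distance $2$), the distance formula bounds their $d_{G}$-distance. This is false: the distance formula reads $d_{G}\sim d_{c}+\Theta$, and $\Theta(x_{i-1},x_{i+1})$ includes the angle $\measuredangle_{x_{i}}(x_{i-1},x_{i+1})$, which is completely uncontrolled — it can be arbitrarily large. Consequently $\hat{d}(x_{i-1},x_{i+1})$ is \emph{not} uniformly bounded, and the jump across a parabolic is precisely the obstruction the construction is designed to handle. The paper avoids this by distinguishing $z\in[x,y]_{\hat{d}}\setminus\bigcup_{x_i\in X_c^{\infty}}[x_{i-1},x_{i+1}]_{d_{x_i}}$ (where both $d_{c}$-neighbors of $z$ on $[x,y]_{c}$ are already in $G$, so the step is a Cayley edge of bounded $\hat{d}$-length) from $z$ inside some $[x_{i-1},x_{i+1}]_{d_{x_i}}$ (where the bounded step comes from the $(4C+\varepsilon)$-weak geodesicity of $d_{x_i}$ together with Proposition \ref{proposition: pour deux sommets voisins de a un pt parabolique D est presque da}). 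In other words, one never jumps from $x_{i-1}$ directly to $x_{i+1}$; one walks through the privileged weak geodesic, which is what makes the steps small. Your second half of step (ii) does record the right ingredient, so the fix is simply to drop the false claim, organize the traversal of $[x,y]_{\hat{d}}$ so that every parabolic gap is bridged through the privileged weak geodesic, and only invoke bounded $d_{G}$-distance for $d_{c}$-distance-$1$ Cayley steps.
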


\begin{proof}
We will show that the set defined in Definition \ref{definition: systeme de geodesic faible} is a weak geodesic between $x$ and $y$.

Let $x,y \in G$, such that $d_{c}(x,y)=n$. We consider a geodesic $[x,y]_{c}$ between those two vertices ordered as follows $[x,y]_{c}=\{x_{0},x_{1},...,x_{n}\}$ with $
x_{0}=x$, $x_{n}=y$ and $d_{c}(x_{i},x_{i+1})=1$ for $0\le i \le n-1$. We consider the associated set $[x,y]_{\hat{d}}:=([x,y]_{c}\cap G) \cup \bigcup_{x_{i}\in X_{c}^{\infty}}[x_{i-1},x_{i+1}]_{d_{x_{i}}}.$

Let $z \in  [x,y]_{c}\cap G $, according to Proposition \ref{proposition: inegalite clutch inegalite triangulaire et faiblement geodesique}, we have:
$$\hat{d}(x,z)+\hat{d}(z,y)-\hat{d}(x,y)\leq \nu + 2C_{triangle}.$$

Let $i_{0} \in \{1,...,n-1\}$ such that $x_{i_{0}}\in X_{c}^{\infty}$. Let $z \in [x_{i_{0}-1},x_{i_{0}+1}]_{d_{x_{i_{0}}}}$.
According to Definition \ref{definition: chemin dnas les paraboliques}, we have:
$$\measuredangle_{x_{i_{0}}}(x_{i_{0}-1},z)\geq 12 \delta,  ~\measuredangle_{x_{i_{0}}}(z,x_{i_{0}+1})\geq 12 \delta.$$

Therefore, since $x_{i_{0}-1} \in I_{c}(x,x_{i_{0}})$ and $x_{i_{0}+1} \in I_{c}(x_{i_{0}},y)$, we get that:
$$\measuredangle_{x_{i_{0}}}(x,z)\geq 12 \delta,  ~\measuredangle_{x_{i_{0}}}(z,y)\geq 12 \delta.$$

Thus, according to Proposition \ref{proposition: les propirétés des angles}, we get that:
\begin{itemize}
    \item $x_{i_{0}-1}\in I_{c}(x,z)$,

    \item and $x_{i_{0}+1}\in I_{c}(z,y).$
\end{itemize}

We get:
$$\begin{aligned}
    &~~~\hat{d}(x,z)+\hat{d}(z,y)-\hat{d}(x,y)\\
     &\leq D(x,x_{i_{0}-1})+ D(x_{i_{0}-1},z)+D(z,x_{i_{0}+1})+D(x_{i_{0}+1},y) \\
     &-D(x,x_{i_{0}-1})-D(x_{i_{0}-1},x_{i_{0}+1})-D(x_{i_{0}+1},y)+ 4\nu +2C_{triangle}~\text{(according to Proposition \ref{proposition: inegalite clutch inegalite triangulaire et faiblement geodesique})}\\
     &\leq  D(x_{i_{0}-1},z)+D(z,x_{i_{0}+1})-D(x_{i_{0}-1},x_{i_{0}+1})+ 4\nu +2C_{triangle}\\
     &\leq  t_{0}^{P-1}(d_{x_{i_{0}}}(x_{i_{0}-1},z)+d_{x_{i_{0}}}(z,x_{i_{0}+1})-d_{x_{i_{0}}}(x_{i_{0}-1},x_{i_{0}+1}))\\
     &+3\Lambda+ 4\nu +2C_{triangle}~ \text{(according to Proposition \ref{proposition: pour deux sommets voisins de a un pt parabolique D est presque da})}\\
      &\leq t_{0}^{P-1}(4C+\varepsilon)+3\Lambda+ 4\nu +2C_{triangle}~\text{(according to Definition \ref{definition: chemin dnas les paraboliques})}.
\end{aligned}$$

Then for all $z \in [x,y]_{\hat{d}}$, we have:
$$ \hat{d}(x,z)+\hat{d}(z,y)-\hat{d}(x,y) \leq  t_{0}^{P-1}(2C+\varepsilon)+3\Lambda+ 4\nu +2C_{triangle}. $$

Hence, the image of map:
$$\hat{d}(x,.): [x,y]_{\hat{d}} \rightarrow [0,\infty) $$
is contained in $[0,\hat{d}(x,y)+t_{0}^{P-1}(2C+\varepsilon)+3\Lambda+ 4\nu +2C_{triangle}]$, it contains $0$ and $\hat{d}(x,y)$.

Let $z \in  [x,y]_{\hat{d}}$.
If $z \in [x,y]_{\hat{d}} - (\bigcup_{x_{i}\in X_{c}^{\infty}}[x_{i-1},x_{i+1}]_{d_{x_{i}}})$, there exists $z' \in [x,y]_{c}\cap G $ with $d_{c}(z,z')=1$.
According to Proposition \ref{Proposition : la distance parabolique est plus petite que la distance dans le groupe}, there exists $\tau>0$ such that for all $z,z' \in G$ with $d_{c}(z,z')=1$, we have:
$$|\hat{d}(x,z)-\hat{d}(x,z')|\leq \hat{d}(z,z')\leq \tau. $$

If $z \in \bigcup_{x_{i}\in X_{c}^{\infty}}[x_{i-1},x_{i+1}]_{d_{x_{i}}}$, there exists $i_{0}\in \{1,...,n\}$, such that $x_{i_{0}}\in [x,y]_{c}\cap X_{c}^{\infty}$ and $z \in [x_{i_{0}-1},x_{i_{0}+1}]_{d_{x_{i_{0}}}}$. According to Definition \ref{definition: chemin dnas les paraboliques} of $[x_{i_{0}-1},x_{i_{0}+1}]_{d_{x_{i_{0}}}}$, there exists $z' \in [x_{i_{0}-1},x_{i_{0}+1}]_{d_{x_{i_{0}}}} $ such that:
$$ |d_{x_{i_{0}}}(x_{i_{0}-1},z)-d_{x_{i_{0}}}(x_{i_{0}-1},z')|\leq  4C+\varepsilon .$$

Therefore:
$$\begin{aligned}
|\hat{d}(x,z)-\hat{d}(x,z')|&\leq|\hat{d}(x_{i_{0}},z)-\hat{d}(x_{i_{0}},z')|+2\nu\\
&\leq t_{0}^{P-1}|d_{x_{i_{0}}}(x_{i_{0}},z)-d_{x_{i_{0}}}(x_{i_{0}},z')|+2\Lambda+2\nu\\
&\leq t_{0}^{P-1}2C+\varepsilon+2\Lambda+2\nu.
\end{aligned}$$

Thus, the image of the map:
$$ \hat{d}(x,.): [x,y]_{\hat{d}} \rightarrow [0,\hat{d}(x,y)+t_{0}^{P-1}(4C+\varepsilon)+3\Lambda+ 4\nu +2C_{triangle}], $$

is $\max(\tau,t_{0}^{P-1}2K+\varepsilon+2\Lambda+2\nu)$-dense in $[0,\hat{d}(x,y)]$, i.e. for every $t \in [0,\hat{d}(x,y)] $, there exists $z \in [x,y]_{\hat{d}} $ such that:
$$ |\hat{d}(x,z)-t|\leq \max(\tau,t_{0}^{P-1}4C+\varepsilon+2\Lambda+2\nu).  $$

It follows that $ \hat{d}(x,z)\leq t +  \max(\tau,t_{0}^{P-1}4
C+\varepsilon+2\Lambda+2\nu)$, and according to the previous discussion, we have:
$$\begin{aligned}
\hat{d}(z,y)&\leq \hat{d}(x,y)-\hat{d}(x,z)+t_{0}^{P-1}(4C+\varepsilon)+3\Lambda+ 4\nu +2C_{tri}\\
& \leq \hat{d}(x,y)-t+\max(\tau,t_{0}^{P-1}4C+\varepsilon+2\Lambda+2\nu)+t_{0}^{P-1}(4C+\varepsilon)+3\Lambda+ 4\nu +2C_{triangle}.
\end{aligned}$$

To conclude $(G,\hat{d})$ is $\eta$-weakly geodesic for $$\eta:=\max(\tau,t_{0}^{P-1}4C+\varepsilon+2\Lambda+2\nu)+t_{0}^{P-1}(4C+\varepsilon)+3\Lambda+ 4\nu +2C_{triangle}.$$ 

\end{proof}

The fact that $(G,\hat{d})$ is weakly geodesic allows us to improve the coarse equivalence between $\hat{d}$ and $d_{G}$ into a quasi-isometry.

\begin{proposition}\label{proposition: quasi-isometry}
$(G,\hat{d})$ and $(G,d_{G})$ are quasi-isometric.
\end{proposition}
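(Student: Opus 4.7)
The upper bound $\hat d(x,y) \leq M\, d_G(x,y)$ is already established in Proposition \ref{Proposition : la distance parabolique est plus petite que la distance dans le groupe}, so the task reduces to producing a matching linear lower bound of the form $d_G(x,y) \leq A\, \hat d(x,y) + B$. The coarse control $\rho_1(d_G(x,y)) \leq \hat d(x,y)$ from Proposition \ref{proposition: la distance dans le groupe est plus petite que la distance bolic} is only a sublinear bound in general, so my plan is to upgrade it by exploiting the $\eta$-weak geodesicity of $\hat d$ established in Proposition \ref{proposition: faiblement geodesique}: I will decompose the pair $(x,y)$ into many pairs whose $\hat d$-distance is bounded by a fixed constant $R$, and then invoke $\rho_1$ pairwise.

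The key step is a binary subdivision. Set $D := \hat d(x,y)$. I claim that for every integer $k \geq 0$ one can construct a chain $z_0 = x, z_1, \dots, z_{2^k} = y$ of points of $G$ satisfying
\[
\hat d(z_i, z_{i+1}) \;\leq\; \frac{D}{2^k} + 2\eta
\]
for all $i$. The construction is recursive: given a pair $(u,v) \in G^2$ with $\hat d(u,v) = D'$, Proposition \ref{proposition: faiblement geodesique} applied at $t = D'/2$ produces an approximate midpoint $w \in G$ with $\hat d(u,w), \hat d(w,v) \leq D'/2 + \eta$. Iterating this midpoint construction $k$ times and telescoping the accumulated error through the convergent geometric sum $\sum_{j\geq 0} 2^{-j} \leq 2$ yields the uniform bound above.

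Now fix $R > 0$ large and choose $k := \lceil \log_2(D/R) \rceil$, so that $D/2^k \leq R$ and $2^k \leq 2D/R$. Each consecutive pair in the chain then satisfies $\hat d(z_i, z_{i+1}) \leq R + 2\eta$, and by Proposition \ref{proposition: la distance dans le groupe est plus petite que la distance bolic} together with the monotonicity and unboundedness of $\rho_1$,
\[
d_G(z_i, z_{i+1}) \;\leq\; N := \max\{\, n \in \mathbb{N} : \rho_1(n) \leq R + 2\eta \,\} \;<\; \infty.
\]
The triangle inequality for $d_G$ then gives $d_G(x,y) \leq 2^k N \leq (2N/R)\,\hat d(x,y)$. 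After separately handling the trivial regime $D \leq R$ (where $\rho_1(d_G(x,y)) \leq R$ directly forces $d_G(x,y) \leq N$), one concludes $d_G(x,y) \leq A\,\hat d(x,y) + B$ for suitable constants, which combined with Proposition \ref{Proposition : la distance parabolique est plus petite que la distance dans le groupe} yields the quasi-isometry.

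The only mild technical point is the uniform control on the additive error in the binary subdivision: since the weak geodesicity introduces an $\eta$-error at every level of recursion, one must verify the accumulation remains bounded as $k \to \infty$, which works because the contributions form a convergent geometric series rather than a divergent sum (a linear subdivision of the weak geodesic, by contrast, does \emph{not} control the $\hat d$-distance between consecutive subdivision points, so the binary refinement is essential here).
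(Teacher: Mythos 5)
Your proof is correct and its binary-midpoint strategy differs from the paper's, which instead works with the explicit weak geodesic $[x,y]_{\hat{d}}$ built in Definition~\ref{definition: systeme de geodesic faible}: the paper subdivides that path linearly into steps of roughly constant size $\beta$, bounds the number of such steps by $\hat{d}(x,y)/(\beta-\varepsilon)$ using a near-additivity estimate, and finishes by invoking the coarse control of Proposition~\ref{proposition: la distance dans le groupe est plus petite que la distance bolic}, exactly as you do at the last stage. Your recursive bisection is more self-contained, in that it only needs the abstract $\eta$-weak-geodesicity axiom and the triangle inequality, with the level-$k$ error bounded by the geometric sum $D/2^k + \eta\,(2 - 2^{1-k}) < D/2^k + 2\eta$; this makes it applicable to any weakly geodesic space, not just this one. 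The paper's linear subdivision, on the other hand, leans on the extra structure that the constructed path $[x,y]_{\hat{d}}$ is nearly additive (Proposition~\ref{proposition: inegalite clutch inegalite triangulaire et faiblement geodesique}), so your closing remark that a linear subdivision ``does not control the $\hat{d}$-distance between consecutive subdivision points'' is not quite accurate as a criticism of the paper --- the control is there, it just comes from the specific construction rather than from the weak-geodesic axiom alone. Both routes are sound; yours trades a little algebra in the telescoped error for independence from the ad hoc near-additivity of the chosen weak geodesic.
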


\begin{proof}
According to Proposition \ref{Proposition : la distance parabolique est plus petite que la distance dans le groupe}, there exists $M>0$ such that for all $x,y \in G$, we have:
$$\hat{d}(x,y)\leq M d_{G}(x,y).$$

For the converse inequality, we use that $\hat{d}$ is $\varepsilon$-weakly geodesic for some $\varepsilon >0$. Let us consider a weak geodesic $\gamma =\{w_{0},w_{1},...,w_{m}\}$ between $x$ and $y$, with $
w_{0}=x$, $w_{m}=y$. We assume further that there exists $\beta>0$ such that $\hat{d}(w_{i},w_{i+1})\in [\beta,2\beta] $ for $0\le i \le m-1$.
We remark that:
$$ \hat{d}(x,y)\geq \sum_{i=0}^{m}\hat{d}(w_{i},w_{i+1})-m\varepsilon\geq m(\beta-\varepsilon).$$

Then: $$ m \leq \frac{1}{\beta-\varepsilon}\hat{d}(x,y).$$

Moreover, according to Proposition \ref{proposition: la distance dans le groupe est plus petite que la distance bolic}, there exists $\alpha_{2}\geq 0$, such that for all $u,v \in G$ with $\hat{d}(u,v)\leq 2\beta$, we have:

$$d_{G}(u,v)\leq \alpha_{2}.$$

Thus:
$$d_{G}(x,y)\leq \sum_{i=0}^{m}d_{G}(w_{i},w_{i+1})\leq m \alpha_{2} \leq \frac{\alpha_{2}}{\beta-\varepsilon}\hat{d}(x,y).$$

\end{proof}

\subsection{$\hat{d}$ verifies weak-$B2'$}\label{subsection: metrique bolic verifie B2'}
In this subsection, we will show that $\hat{d}$ verifies weak-$B2'$. We recall, according to Definitions \ref{definition: lesbons parabolic} and \ref{definition: chemin dnas les paraboliques}, there exists $\eta >0$, such that for all $a \in X_{c}^{\infty}$, $d_{a}$ satisfies $\eta-B2'$, and for all $x,y \in X_{c}$ with $d_{c}(x,a)=d_{c}(a,y)=1$, there exists $[x,y]_{d_{a}}$ a sequence of vertices in the $1$-neighborhood of $a$, such that $[x,y]_{d_{a}}$ is a $\eta$-weak geodesic, $\eta$-peakless and $\eta$-weakly convex. 

\begin{proposition}\label{proposition: stabilite geod faible}
 Let $x_{1},y_{1} \in G$ and $x_{2},y_{2} \in G$ such that $x_{2},y_{2} \in I_{c}(x_{1},y_{1})$. There exists a geodesic between $x_{1}$ and $y_{1}$ and a geodesic between $x_{2}$ and $y_{2}$ in $X_{c}$, such that the associated weak geodesic $[x_{1},y_{1}]_{ \hat{d}}$ and $[x_{2},y_{2}]_{ \hat{d}}$ satisfy: $$[x_{2},y_{2}]_{ \hat{d}}\subset [x_{1},y_{1}]_{ \hat{d}}.$$
\end{proposition}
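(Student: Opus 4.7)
The plan is to choose the geodesics underlying the two constructions \emph{compatibly}. The crucial observation is that since $x_2, y_2 \in I_c(x_1, y_1)$, we can build a single geodesic from $x_1$ to $y_1$ that passes through both $x_2$ and $y_2$. To see this, assume without loss of generality that $d_c(x_1, x_2) \leq d_c(x_1, y_2)$. From the defining identities $d_c(x_1, x_2) + d_c(x_2, y_1) = d_c(x_1, y_1) = d_c(x_1, y_2) + d_c(y_2, y_1)$ together with the triangle inequality $d_c(x_1, y_2) \leq d_c(x_1, x_2) + d_c(x_2, y_2)$, a direct calculation gives $d_c(x_2, y_2) + d_c(y_2, y_1) = d_c(x_2, y_1)$, that is, $y_2 \in I_c(x_2, y_1)$. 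Concatenating any three geodesics $[x_1, x_2]_c$, $[x_2, y_2]_c$ and $[y_2, y_1]_c$ then yields a geodesic $[x_1, y_1]_c$ in $X_c$ passing through both $x_2$ and $y_2$.

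Next I would use this very geodesic to build $[x_1, y_1]_{\hat{d}}$ according to Definition \ref{definition: systeme de geodesic faible}, and use its subsegment between $x_2$ and $y_2$ to build $[x_2, y_2]_{\hat{d}}$. Writing the concatenated geodesic as $[x_1, y_1]_c = \{v_0, \dots, v_n\}$ with $v_0 = x_1$, $v_n = y_1$, $v_j = x_2$, $v_k = y_2$ and $d_c(v_i, v_{i+1}) = 1$, the subsegment $\{v_j, \dots, v_k\}$ is a geodesic between $x_2$ and $y_2$. Since $x_2, y_2 \in G$, both endpoints of the subsegment have finite valence, so the only infinite-valence vertices involved in the construction of $[x_2, y_2]_{\hat{d}}$ are the $v_i$ with $j < i < k$ and $v_i \in X_c^\infty$. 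For each such index, both neighbors $v_{i-1}, v_{i+1}$ coincide in the two geodesics.

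Therefore, making the \emph{same} choice of privileged weak geodesic $[v_{i-1}, v_{i+1}]_{d_{v_i}}$ inside Definition \ref{definition: chemin dnas les paraboliques} in both constructions (which is permitted because this privileged weak geodesic depends only on the vertex $v_i$ and its two neighbors along the ambient geodesic), one obtains
\[
[x_2, y_2]_{\hat{d}} = \bigl(\{v_j,\dots,v_k\} \cap G\bigr) \cup \bigcup_{\substack{j < i < k \\ v_i \in X_c^\infty}} [v_{i-1}, v_{i+1}]_{d_{v_i}} \subset [x_1, y_1]_{\hat{d}},
\]
since the right-hand side of the inclusion contains the corresponding pieces over the larger index range $\{0,\dots,n\}$. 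The only non-routine step is the alignment lemma in the first paragraph; everything else follows tautologically from the constructive definition of $[\,\cdot,\cdot\,]_{\hat{d}}$.
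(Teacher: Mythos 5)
Your alignment lemma in the first paragraph is where the argument breaks. The ``direct calculation'' you describe does not yield an equality; it only re-derives a triangle inequality. From the two interval identities one gets $d_c(x_2,y_1)-d_c(y_2,y_1) = d_c(x_1,y_2)-d_c(x_1,x_2)$, and combining this with $d_c(x_1,y_2)\le d_c(x_1,x_2)+d_c(x_2,y_2)$ produces $d_c(x_2,y_1)\le d_c(x_2,y_2)+d_c(y_2,y_1)$ --- which is just the triangle inequality at $y_2$ and says nothing new. The reverse inequality $d_c(x_2,y_1)\ge d_c(x_2,y_2)+d_c(y_2,y_1)$ does not follow, and in fact two points of $I_c(x_1,y_1)$ need not lie on a common geodesic. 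Concretely, in the $6$-cycle with vertices $v_0,\dots,v_5$ take $x_1=v_0$, $y_1=v_3$, $x_2=v_1$, $y_2=v_4$: both $v_1$ and $v_4$ lie in $I_c(v_0,v_3)$ (on the two disjoint geodesics $v_0v_1v_2v_3$ and $v_0v_5v_4v_3$), yet $d_c(x_2,y_2)=3$ while $d_c(x_2,y_1)-d_c(y_2,y_1)=2-1=1$, so $y_2\notin I_c(x_2,y_1)$ and no geodesic from $x_1$ to $y_1$ contains both. Your argument is purely metric and would apply verbatim to this graph, so it cannot establish the claimed alignment.

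For what it is worth, the paper's own proof simply \emph{posits} a geodesic $[x_1,y_1]_c$ containing both $x_2$ and $y_2$ without justification, so you correctly identified a gap --- but the step you are trying to supply is false in the generality stated. In the only place the proposition is actually invoked (the weak-$B2'$ argument of Proposition~\ref{proposition: dchapeau verifie faiblement b2'}), one of $x_2,y_2$ coincides with an endpoint $x_1$ or $y_1$, in which case the required common geodesic trivially exists. The right repair is therefore either to add the hypothesis that $x_2$ and $y_2$ lie on a common geodesic $[x_1,y_1]_c$ (which is what the paper implicitly assumes and suffices for all its uses), or to restate the proposition with a single intermediate point. Once a common geodesic is given, your second and third paragraphs --- take the subsegment between $x_2$ and $y_2$, observe that the endpoints $x_2,y_2$ have finite valence, and make the same choice of privileged weak geodesic at each shared infinite-valence vertex --- are correct and match the paper's proof.
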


\begin{proof}

Let $x_{1},y_{1} \in G$, such that $d_{c}(x_{1},y_{1})=n$, $x_{2},y_{2} \in G$ such that $d_{c}(x_{1},x_{2})=l \in \mathbb{N}$ and $ d_{c}(x_{1},y_{2})=k $. We consider a geodesic $[x_{1},y_{1}]_{c}$ between those two vertices which contains $x_{2},y_{2}$ between those two vertices ordered as follows $[x_{1},y_{1}]_{c}=\{z_{0},...,z_{l},...,z_{k},...,z_{n}\}$ with $
z_{0}=x_{1}$, $z_{l}=x_{2}$, $z_{k}=y_{2}$ $z_{n}=y_{1}$ and $d_{c}(z_{i},z_{i+1})=1$ for $0\le i \le n-1$.

We set the associated weak geodesic sequence as in Definition \ref{definition: systeme de geodesic faible}:
$$[x_{1},y_{1}]_{\hat{d}}=([x_{1},y_{1}]_{c}\cap G) \cup \bigcup_{z_{i}\in X_{c}^{\infty}}[z_{i-1},z_{i+1}]_{d_{z_{i}}},$$
with $[z_{i-1},z_{i+1}]_{d_{z_{i}}}$ a weak geodesic sequence as in Definition \ref{definition: chemin dnas les paraboliques}.\\

Then $[x_{2},y_{2}]_{\hat{d}}=([x_{2},y_{2}]_{c}\cap G) \cup \displaystyle \bigcup_{z_{i}\in X_{c}^{\infty}, i \in \{k,...,l\}}[z_{i-1},z_{i+1}]_{d_{z_{i}}}$ is a weak geodesic sequence, according to Proposition \ref{proposition: faiblement geodesique} and is included in $[x_{1},y_{1}]_{\hat{d}}$.
    
\end{proof}

\begin{lem}\label{lemme: lemme pour geodesique faible}
Let $x,y \in G$, we denote by $[x,y]_{c}$ a geodesic between $x$ and $y$ and $[x,y]_{\hat{d}}$ the weak geodesic sequence associated to $[x,y]_{c}$ as in Definition \ref{definition: systeme de geodesic faible}. Let $z \in G$, we set $t \in X_{c}$ a projection of $z$ on $[x,y]_{c}$. We set also $t_{x} \in [x,t]_{c}$ such that $d_{c}(t,t_{x})=1$ and $t_{z}\in [t,z]_{c}$ such that $d_{c}(t,t_{z})=1$.
We have the following:
\begin{itemize}
    \item if $t \in G$, then $\hat{d}(z,x)\geq\hat{d}(z,t)+\hat{d}(t,x)-\nu-C_{triangle} $,

    \item otherwise $t_{x},t_{z} \in G$ and $\hat{d}(z,x)\geq\hat{d}(z,t_{z})+\hat{d}(t_{z},t_{x})+\hat{d}(t_{x},x)-\nu-C_{triangle} $,
\end{itemize}
where $\nu$ is the constant of Proposition \ref{proposition: inegalite clutch inegalite triangulaire et faiblement geodesique} and $C_{triangle}$ is the constant of Proposition \ref{proposition : inegalite triangulaire grossiere}.
    
\end{lem}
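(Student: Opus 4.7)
The plan is to reduce this lemma to one or two applications of Proposition \ref{proposition: inegalite clutch inegalite triangulaire et faiblement geodesique}, which is exactly the approximate equality case of the triangle inequality for $D$ when one vertex lies in a thin cone around the geodesic joining the other two. The preliminary observation I need is that, since $t$ is a closest point of $[x,y]_{c}$ to $z$, the coned-off analog of Lemma \ref{lemme : z est la projection donc c'est un quasi-centre} gives $d_{c}(x,z)\geq d_{c}(x,t)+d_{c}(t,z)-24\delta$, so $t$ is within bounded coned-off distance of any geodesic $[x,z]_{c}$. Moreover, being a projection forces every angle at $t$ between $z$ and points on $[x,y]_{c}$ to be at most $12\delta$ (Proposition \ref{proposition: les propirétés des angles}), exactly as in the quasi-center argument of Lemma \ref{lem: quasimilieu a angle bornée}. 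Together these show that for a suitably chosen geodesic $[x,z]_{c}$ one has $t\in \text{Cone}_{26\delta+2}([x,z]_{c})$.

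In Case 1, where $t\in G$, I would simply invoke Proposition \ref{proposition: inegalite clutch inegalite triangulaire et faiblement geodesique} applied to the triple $(x,z,t)$, yielding $D(x,z)\geq D(x,t)+D(t,z)-\nu$. Using the relation $\hat{d}(a,b)=D(a,b)+C_{\text{triangle}}$ for distinct $a,b$, this rearranges to
\[ \hat{d}(z,x)\geq \hat{d}(z,t)+\hat{d}(t,x)-\nu-C_{\text{triangle}}, \]
which is the claimed inequality.

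In Case 2, where $t\in X_{c}^{\infty}$, the construction of the coned-off graph (Definition \ref{definition: coned-off}) rules out adjacency of two infinite-valence vertices, so $t_{x},t_{z}\in G$ automatically. The same projection/quasi-center reasoning shows that $t_{x}$ lies in $\text{Cone}_{26\delta+2}([x,z]_{c})$ and that $t_{z}$ lies in $\text{Cone}_{26\delta+2}([t_{x},z]_{c})$ for appropriate choices of geodesics in $X_{c}$. I would then chain two applications of Proposition \ref{proposition: inegalite clutch inegalite triangulaire et faiblement geodesique}, first to $(x,z,t_{x})$ and then to $(t_{x},z,t_{z})$, giving
\[ D(x,z)\geq D(x,t_{x})+D(t_{x},z)-\nu\geq D(x,t_{x})+D(t_{x},t_{z})+D(t_{z},z)-2\nu, \]
and translating back into $\hat{d}$ produces the stated inequality (the additive constants $\nu+C_{\text{triangle}}$ being understood up to absorbing a harmless factor, since $C_{\text{triangle}}=6\nu$).

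The main obstacle is verifying the angular part of the cone condition: the distance estimates in $X_{c}$ are routine consequences of hyperbolicity, but controlling the angles along a shortest path from $t$ (respectively $t_{x}$, $t_{z}$) to its projection onto $[x,z]_{c}$ requires rerunning the loop-length argument from the proof of Lemma \ref{lem: quasimilieu a angle bornée}. Once the cone membership $t\in \text{Cone}_{26\delta+2}([x,z]_{c})$ (or $t_{x}\in \text{Cone}_{26\delta+2}([x,z]_{c})$, $t_{z}\in \text{Cone}_{26\delta+2}([t_{x},z]_{c})$) is established, everything else is a formal rewriting using Proposition \ref{proposition: inegalite clutch inegalite triangulaire et faiblement geodesique} and the definition of $\hat{d}$.
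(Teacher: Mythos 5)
Your proof follows the same route as the paper: reduce both cases to the approximate equality case of the triangle inequality for $D$ (Proposition \ref{proposition: inegalite clutch inegalite triangulaire et faiblement geodesique}), after first establishing cone membership of the relevant vertices in the coned-off graph. In Case 1, the paper splits into the subcases $\measuredangle_{t}(x,z)>12\delta$ (where $t$ is forced onto every $[x,z]_{c}$ and hence trivially lies in the cone) and $\measuredangle_{t}(x,z)\leq 12\delta$ (loop-length argument); your unified cone-membership check subsumes both, since a vertex of $[x,z]_{c}$ is automatically in $\text{Cone}_{26\delta+2}([x,z]_{c})$. In Case 2 the paper's proof is terse --- it asserts $t_{x},t_{z}\in \text{Cone}_{26\delta+2}([x,z]_{c})$ and says "conclude", without spelling out how a single application of the proposition produces a three-term lower bound. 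Your explicit chain, applying the proposition to $(x,z,t_{x})$ and then to $(t_{x},z,t_{z})$ via $t_{z}\in\text{Cone}_{26\delta+2}([t_{x},z]_{c})$, is the honest way to make this step precise, and it is what the argument actually needs.

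One point to flag on the constant. Your chain yields
\[
\hat{d}(z,x)\geq \hat{d}(z,t_{z})+\hat{d}(t_{z},t_{x})+\hat{d}(t_{x},x)-2\nu-2C_{\text{triangle}},
\]
whereas the lemma as stated claims $-\nu-C_{\text{triangle}}$. Your remark that this is "harmless since $C_{\text{triangle}}=6\nu$" does not actually reconcile the two numbers ($2\nu+2C_{\text{triangle}}=14\nu$ versus $\nu+C_{\text{triangle}}=7\nu$). However, the discrepancy lies with the statement, not your proof: when the lemma is invoked in Proposition \ref{proposition: dchapeau verifie faiblement b2'}, the paper uses exactly the bound $-2\nu-2C_{\text{triangle}}$ and annotates it "applied two times", which matches what your argument produces. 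So your constant is the correct one; you should simply state it rather than wave at an absorption. Other than that the proposal is sound and correctly isolates the genuinely technical step, namely rerunning the loop-length/angle estimate of Lemma \ref{lem: quasimilieu a angle bornée} to verify the cone conditions.
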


\begin{proof}

If $t \in G$, there are two cases to consider.

If $\measuredangle_{t}(x,z) > 12 \delta$, then $t$ belongs to every geodesic between $x$ and $z$,
then $\hat{d}(z,x)\geq\hat{d}(z,t)+\hat{d}(t,x)-\nu-C_{triangle} $ according to Proposition \ref{proposition: inegalite clutch inegalite triangulaire et faiblement geodesique}.
Otherwise, we set a the triangle $[x,z,t]$.  We notice that angles between $t$ and its projection on $[x,z]_{c}$ are bounded by $26 \delta+2$, then $t \in cone_{26\delta+2}([x,z]_{c})$, then it satisfies again the assumptions of Proposition \ref{proposition: inegalite clutch inegalite triangulaire et faiblement geodesique}.

If $t \notin G$, we apply the same method to prove that $t_{x},t_{z} \in  cone_{26\delta+2}([x,z]_{c})$ and we conclude according to Proposition \ref{proposition: inegalite clutch inegalite triangulaire et faiblement geodesique}.

\end{proof}

We prove here that $\hat{d}$ satisfies weak-$B2'$. We conjecture the stronger property that $\hat{d}$ is also $\eta$-strongly convex for some $\eta \geq0$.

\begin{proposition}\label{proposition: dchapeau verifie faiblement b2'}

$(G,\hat{d})$ verifies weak $B2'$, i.e. there exists $\eta_{1}\geq 0$ and a map $m: X \times X \rightarrow X$, such that:

    \begin{itemize}

    \item $m$ is a $\eta_{1}$-middle point map, that is for all $x,y \in X$,
    $$|2\hat{d}(x,m(x,y))-\hat{d}(x,y)|\leq 2 \eta_{1},$$
$$|2\hat{d}(y,m(x,y))-\hat{d}(x,y)|\leq 2 \eta_{1},$$

    \item for all $x,y,z \in X$,
    $$ \hat{d}(m(x,y),z) \le \max(\hat{d}(x,z),\hat{d}(y,z)) + 2 \eta_{1}, $$

    \item for every $p \in \mathbb{R}_{+}$, there exists $N(p) \in \mathbb{R}_{+}$ such that for all $N \geq N(p) $, with $\hat{d}(x,z) \leq N$, $\hat{d}(y,z) \leq N$ and $\hat{d}(x,y)>N$, we have:
    $$ \hat{d}(m(x,y),z) \leq N-p. $$

    \end{itemize}

\end{proposition}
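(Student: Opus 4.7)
The plan is to define the map $m\colon G\times G\to G$ by choosing, for each pair $x,y$, a well-placed vertex on the weak geodesic $[x,y]_{\hat d}$ from Definition~\ref{definition: systeme de geodesic faible}. In each parabolic portion $[z_{i-1},z_{i+1}]_{d_{z_i}}$ we use a privileged weak geodesic that is simultaneously $\eta$-peakless and $\eta$-strongly-convex, which is allowed by Definition~\ref{definition: lesbons parabolic} (the two conditions are imposed \emph{for the same} weak geodesic). Proposition~\ref{proposition: faiblement geodesique} then lets us pick $m(x,y)\in[x,y]_{\hat d}$ with $\hat d(x,m(x,y))$ and $\hat d(m(x,y),y)$ both within an additive constant of $\hat d(x,y)/2$, settling the midpoint bullet.

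For the second bullet, fix $z\in G$ and let $t\in X_c$ be a projection of $z$ onto the coned-off geodesic $[x,y]_c$ underlying $[x,y]_{\hat d}$, with $t_x,t_z$ as in Lemma~\ref{lemme: lemme pour geodesique faible}. The projection $t$ separates $[x,y]_{\hat d}$ into an $x$-arc and a $y$-arc, with an additional parabolic portion around $t$ in the case $t\in X_c^\infty$. If $m(x,y)$ lies strictly on the $y$-arc, then weak geodesicity of $[x,y]_{\hat d}$ gives $\hat d(t_z,m(x,y))\le \hat d(t_z,y)+O(1)$ (tracking the path through $t_z$), and combining with Lemma~\ref{lemme: lemme pour geodesique faible} yields
\[
\hat d(z,m(x,y))\le \hat d(z,t_z)+\hat d(t_z,m(x,y))\le \hat d(z,y)+O(1).
\]
The symmetric argument handles the case where $m(x,y)$ is on the $x$-arc. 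The delicate case is when both $t$ and $m(x,y)$ sit inside the same parabolic portion $[z_{i-1},z_{i+1}]_{d_{z_i}}$; then we project $z$ onto the $1$-neighbourhood of $z_i$ to obtain a nearby vertex $z^\ast$, apply the $\eta$-peakless property of $d_{z_i}$ to the triple $(z_{i-1},z_{i+1},z^\ast)$ along the chosen weak geodesic, and transfer the resulting bound on $d_{z_i}(z^\ast,m(x,y))$ to $\hat d$ using Proposition~\ref{proposition: pour deux sommets voisins de a un pt parabolique D est presque da} together with Proposition~\ref{proposition: inegalite clutch inegalite triangulaire et faiblement geodesique}. In each case the loss is a uniform additive constant, which defines $\eta_1$.

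For the third bullet, the same trichotomy is used, but when $t$ and $m(x,y)$ both lie in a single parabolic portion one invokes instead the $\eta$-strongly-convex property of $d_{z_i}$: the hypothesis $\hat d(x,z),\hat d(y,z)\le N$, $\hat d(x,y)>N$ passes (via Proposition~\ref{proposition: pour deux sommets voisins de a un pt parabolique D est presque da} applied at scale $N$) to a comparable parabolic inequality $d_{z_i}(z^\ast,z_{i-1}),d_{z_i}(z^\ast,z_{i+1})\lesssim N$ with $d_{z_i}(z_{i-1},z_{i+1})\gtrsim\varepsilon N$, for some fixed $\varepsilon>0$ coming from the midpoint property. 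Since the midpoint $m(x,y)$ lies in the middle third of the parabolic weak geodesic whenever it lands in this portion, strong convexity provides a multiplicative improvement $d_{z_i}(z^\ast,m(x,y))\le(1-\delta)N+\eta$; transferring back yields $\hat d(z,m(x,y))\le N-p$ once $N$ is large compared to $p$. The cases where $m(x,y)$ is in an $x$- or $y$-arc are easier because Proposition~\ref{proposition: inegalite clutch inegalite triangulaire et faiblement geodesique} directly gives a subtractive improvement from moving away from an endpoint.

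The main obstacle I anticipate is the bookkeeping in the mixed case, i.e. when the projection $t$ and the midpoint $m(x,y)$ both sit deep inside the \emph{same} parabolic neighbourhood: there one must verify that the scales in the coned-off setting and the scales in the parabolic pseudo-metric $d_b$ match up so that strong convexity can actually be triggered. This is precisely where we need the two-sided quasi-isometric comparison in Proposition~\ref{proposition: pour deux sommets voisins de a un pt parabolique D est presque da} and the fact that $d_b$ on $\mathrm{Proba}_C(P)$ is quasi-isometric to the word metric on $P$ via Dirac masses (Definition~\ref{definition: lesbons parabolic}), both to translate the $N$-scale hypothesis from $\hat d$ to $d_{z_i}$ and to translate the $(1-\delta)N$ conclusion back.
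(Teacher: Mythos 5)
Your proposal is correct and follows essentially the same route as the paper: select the midpoint on the weak geodesic system $[x,y]_{\hat d}$ of Definition~\ref{definition: systeme de geodesic faible}, split into cases according to whether the projection of $z$ onto $[x,y]_c$ is a group vertex or a cone vertex, handle the generic cases with Lemma~\ref{lemme: lemme pour geodesique faible} and Proposition~\ref{proposition: inegalite clutch inegalite triangulaire et faiblement geodesique}, and in the delicate case where $m$ sits inside a parabolic portion invoke peaklessness (second bullet) resp.\ strong convexity (third bullet) of the parabolic pseudo-metric, transferring scales via Proposition~\ref{proposition: pour deux sommets voisins de a un pt parabolique D est presque da}. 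The one piece you flag as ``bookkeeping'' — verifying that $m$ really falls in the interior $[\eta,1-\eta]$ of the parabolic arc and that the $\varepsilon$-hypothesis of strong convexity is met — is exactly what the paper resolves via an additional sub-case split on the sizes of $\hat d(y,t_y)$ and $\hat d(x,t_x)$, so your anticipated obstacle and the paper's resolution coincide.
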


\begin{proof}
According to Proposition \ref{proposition: faiblement geodesique} and \ref{proposition : midpoint} , for all $x,y \in G$, there exists $m \in [x,y]_{\hat{d}}$ such that:
$$|2\hat{d}(x,m)-\hat{d}(x,y)|\leq 2 \eta,$$
$$|2\hat{d}(y,m)-\hat{d}(x,y)|\leq 2 \eta.$$

We will start to prove the first point, i.e there exists $\eta_{1}\geq 0$ such that for all $x,y,z \in G$,
\begin{equation}\label{equation: premiere condition de faiblement b2'}
\hat{d}(m,z) \le \max(\hat{d}(x,z),\hat{d}(y,z)) + 2 \eta_{1}.    
\end{equation}

We denote by $t$ a projection of $z$ on $[x,y]_{c}$.
\begin{enumerate}
    \item   \begin{figure}[!ht]
   \centering
   \includegraphics[scale=0.3]{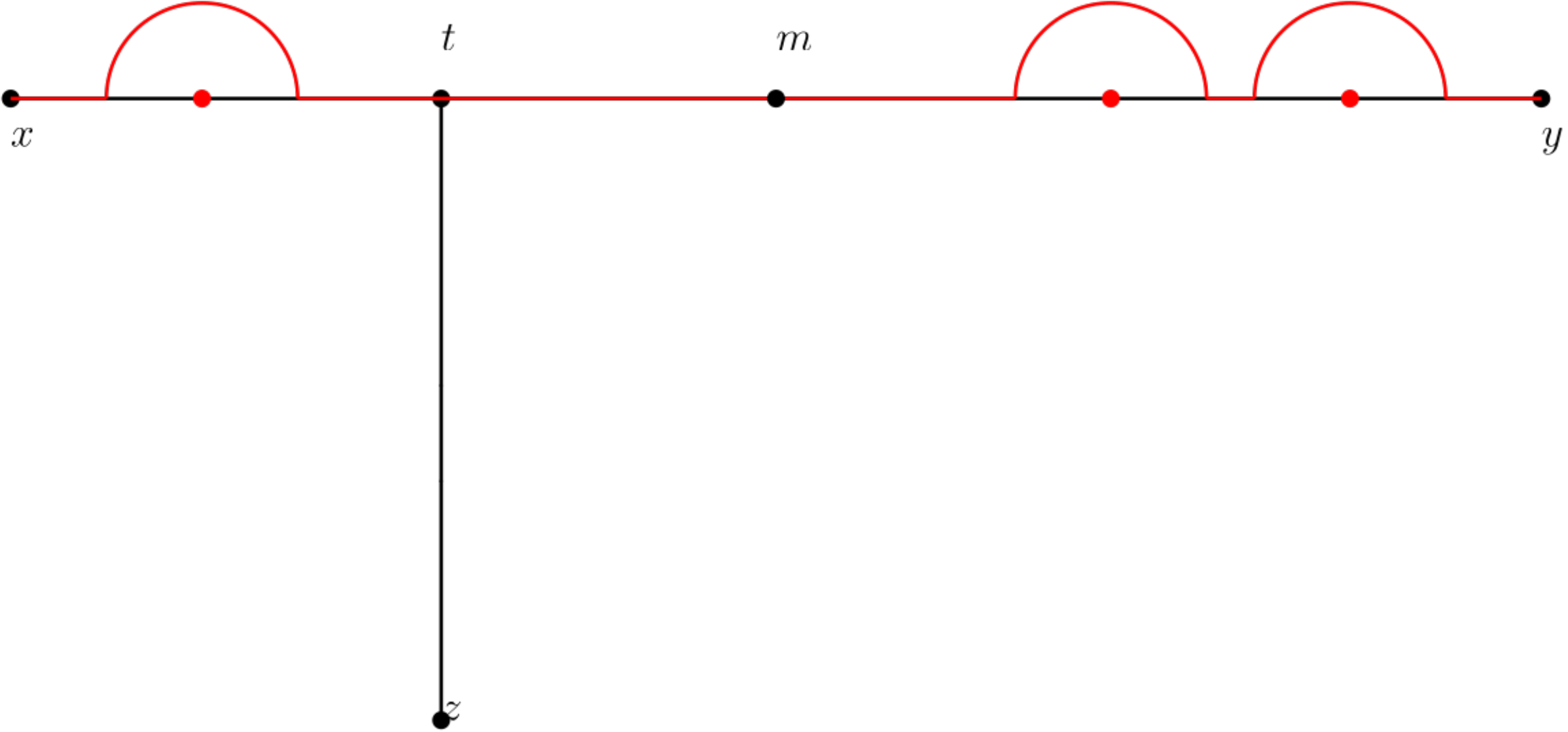}
  \caption{Case where $t$ as finite valence}
  \label{ fig: t as finite valence}
\end{figure}

    To begin, let us assume that $t \in [x,y]_{\hat{d}}$ see Figure \ref{ fig: t as finite valence}. In this case $t \in G$ and we choose the right geodesics such that the associated weak-geodesic of Definition \ref{definition: systeme de geodesic faible} satisfies $[x,y]_{\hat{d}}=[x,t]_{\hat{d}}\cup[t,y]_{\hat{d}}$, according to Proposition \ref{proposition: stabilite geod faible}. By the symmetry of the roles of $x$ and $y$, we can assume that $m \in [t,y]_{\hat{d}}$. In this case, we remark that $\hat{d}(z,y)\geq \hat{d}(z,t)+\hat{d}(t,y)-\nu-C_{triangle} $ according to Lemma \ref{lemme: lemme pour geodesique faible}. Thus:
$$\begin{aligned}
\hat{d}(z,y)&\geq \hat{d}(z,t)+\hat{d}(t,y)-\nu-C_{triangle}\\
           & \geq \hat{d}(z,t)+\hat{d}(t,m)+\hat{d}(m,y)-\nu-C_{triangle}-\eta\\
           & \geq \hat{d}(z,m)-\nu-C_{triangle}-\eta.
\end{aligned}$$

    \item We assume in this case that $t \notin [x,y]_{\hat{d}}$, i.e. $t \in [x,y]_{c}\cap X_{c}^{\infty}$. We set $t_{x}$ the vertex of $[x,t]_{c}$ such that $d_{c}(t_{x},t)=1$, $t_{y}$ the vertex of $[t,y]_{c}$ such that $d_{c}(t_{y},t)=1 $ and $t_{z}$ the vertex of $[t,z]_{c}$ such that $d_{c}(t,t_{z})=1$. 
\begin{itemize}
    \item   \begin{figure}[!ht]
   \centering
   \includegraphics[scale=0.5]{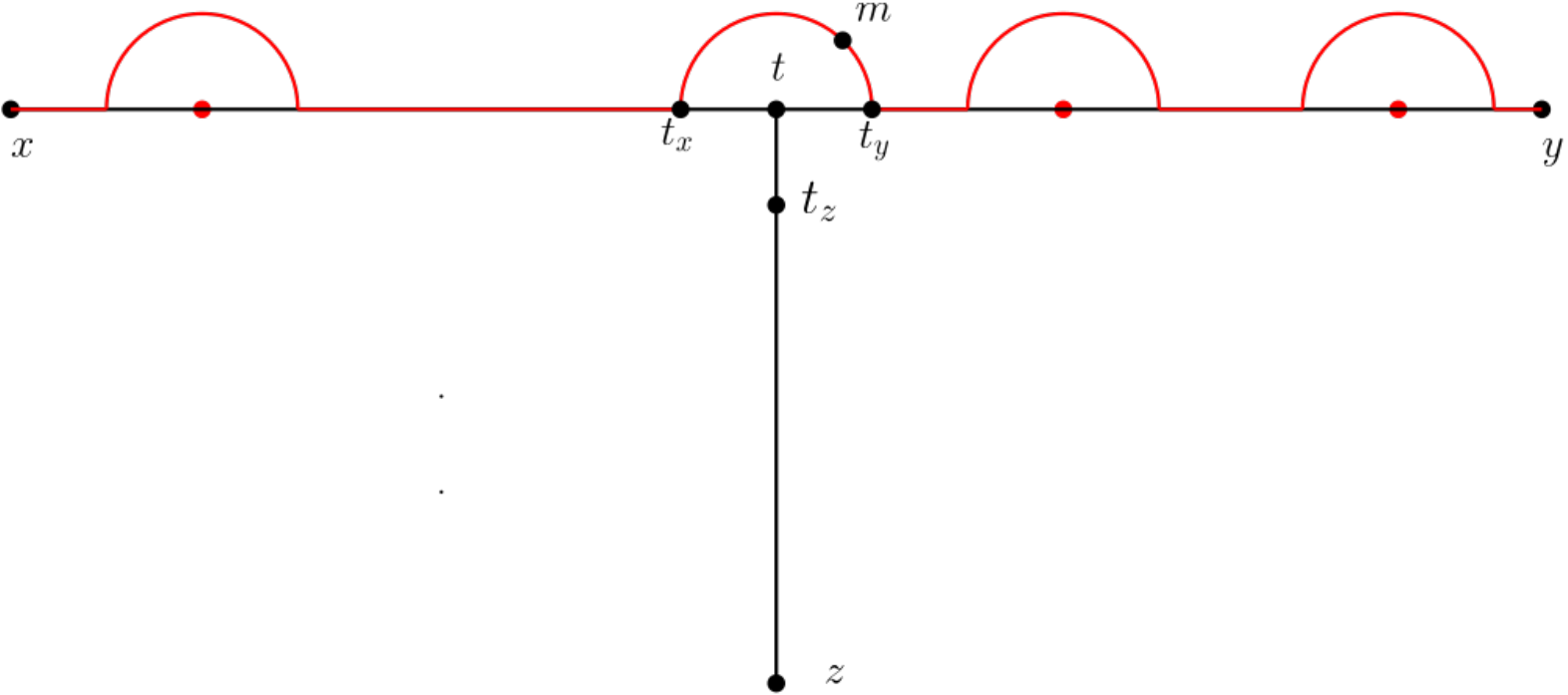}
   \caption{Case where $t$ as infinite valence and $m$ is close to $t$}
  \label{ fig: t as infinite valence m far from t}
\end{figure}

    Let us assume that $d_{c}(m,t)=1$ and $m \notin[x,y]_{c}$ see Figure \ref{ fig: t as infinite valence m far from t}. Equivalently this means that $m \in [t_{x},t_{y}]_{\hat{d}}.$

In this case, as $d_{t}$ is peakless according to Definition \ref{definition: lesbons parabolic}, we have:
$$d_{t}(t_{z},m) \leq \max(d_{t}(t_{z},t_{x}),d_{t}(t_{z},t_{y}))+\eta.$$

Let us assume without loss of generality that: $$ d_{t}(t_{z},m)\leq d_{t}(t_{y},t_{z}) + \eta. $$

Thus:
$$\begin{aligned}
&~~~\hat{d}(z,y)\\
& \geq \hat{d}(z,t_{z})+\hat{d}(t_{z},t_{y})+\hat{d}(t_{y},y)-2\nu-2C_{triangle}\\ &~~~\text{(according to Lemma \ref{lemme: lemme pour geodesique faible} applied two times)}\\
           & \geq \hat{d}(z,t_{z})+t_{0}^{P-1}d_{t}(t_{z},t_{y})+\hat{d}(t_{y},y)-2\nu-2C_{triangle}-\Lambda\\ &~~~\text{(according to Proposition \ref{proposition: pour deux sommets voisins de a un pt parabolique D est presque da})} \\
           & \geq \hat{d}(z,t_{z})+t_{0}^{P-1}d_{t}(m,t_{z})+\hat{d}(t_{y},y)-2\nu-2C_{triangle}-\Lambda-2t_{0}^{P-1}\eta\\
            & \geq \hat{d}(z,t_{z})+\hat{d}(m,t_{z})+\hat{d}(t_{y},y)-2\nu-2C_{triangle}-2\Lambda-2t_{0}^{P-1}\eta \\
            &~~~\text{(according to Proposition \ref{proposition: pour deux sommets voisins de a un pt parabolique D est presque da})}\\
            & \geq \hat{d}(m,z)  -2\nu-2C_{triangle}-2\Lambda-2t_{0}^{P-1}\eta.
\end{aligned}$$

\item \begin{figure}[!ht]
   \centering
   \includegraphics[scale=0.5]{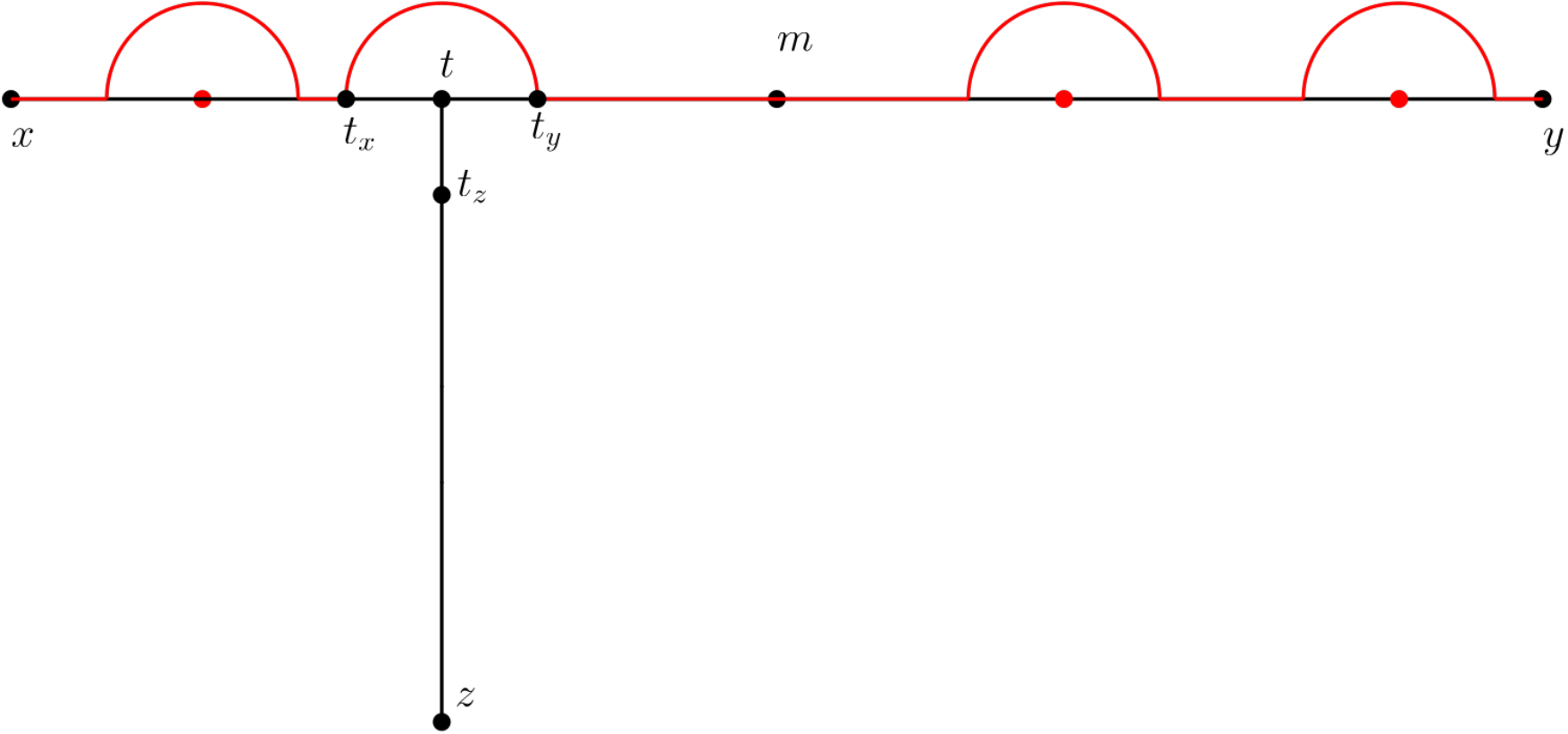}
   \caption{Case where $t$ as infinite valence and $m$ far from $t$}
    \label{fig: t as infinite valence m close to t}
\end{figure}

     Let us assume that $d_{c}(m,t)> 1$ or $m \in [x,y]_{c}$ in this case see Figure \ref{fig: t as infinite valence m close to t}. We can assume by the symmetry of the roles between $x$ and $y$ that $m \in [t_{y},y]_{\hat{d}}$. In this case we apply Lemma \ref{lemme: lemme pour geodesique faible} to $t_{y}$ and we get $\hat{d}(z,y)\geq \hat{d}(z,t_{y})+\hat{d}(t_{y},y)-\nu-C_{triangle} $. Thus:
$$\begin{aligned}
\hat{d}(z,y)&\geq \hat{d}(z,t_{y})+\hat{d}(t_{y},y)-\nu-C_{triangle}\\
           & \geq \hat{d}(z,t_{y})+\hat{d}(t_{y},m)+\hat{d}(m,y)-\nu-C_{triangle}-\eta\\
           & \geq \hat{d}(z,m)-\nu-C_{triangle}-\eta.
\end{aligned}$$
\end{itemize}

To conclude, we have:
$$\hat{d}(z,m)\leq \max(\hat{d}(x,z),\hat{d}(y,z))+ 2\nu+2C_{triangle}+2\Lambda+2t_{0}^{P-1}\eta. $$

Then $\hat{d}$ satisfies Inequality \ref{equation: premiere condition de faiblement b2'} for some $\eta_{1}>0$.
\end{enumerate}

We will now show that for every $p \in \mathbb{R}_{+}$, there exists $N(p) \in \mathbb{R}_{+}$ such that for all $N \geq N(p) $, with $\hat{d}(x,z) \leq N$, $\hat{d}(y,z) \leq N$ and $\hat{d}(x,y)>N$, we have :
    $$ \hat{d}(m,z) \leq N-p. $$

We will apply the same disjunction as the previous point. We still denote by $t$ a projection of $z$ on $[x,y]_{c}$.

\begin{enumerate}
    \item Let us assume in this case that $t \in [x,y]_{\hat{d}}$, i.e $t \in G$. We assume without loss of generalities that $m \in [t,y]_{\hat{d}}$. Thus, according to a previous computation:
$$
\hat{d}(z,y) \geq \hat{d}(z,m)+\hat{d}(m,y)-\nu-C_{triangle}-\eta.$$

By definition of $m$, we have:
$$\hat{d}(y,m)\geq \frac{N}{2}-\eta. $$

Then:
$$\hat{d}(z,m)\leq  \hat{d}(z,y)-\hat{d}(m,y)+\nu+C_{triangle}+\eta \leq N-\frac{N}{2}+ \nu+C_{triangle}+2\eta. $$
Then for every $p \in \mathbb{R}_{+}$, if we take $N$ large enough, we have:
$$\hat{d}(z,m)\leq N-p.$$

\item Let us assume in this case that $t \notin [x,y]_{\hat{d}}$.
We set $t_{x}$ the vertex of $[x,t]_{c}$ such that $d_{c}(t_{x},t)=1$, $t_{y}$ the vertex of $[t,y]_{c}$ such that $d_{c}(t_{y},t)=1 $ and $t_{z}$ the vertex of $[t,z]_{c}$ such that $d_{c}(t,t_{z})=1$.

\begin{itemize}
    \item Let us assume that $d_{c}(m,t)>1$ or $m \in [x,y]_{c}$ in this case. We can assume that $m \in [t_{y},y]_{\hat{d}}$. This case is similar as the previous one. According to a previous computation, we have:
$$\hat{d}(z,y)\geq \hat{d}(z,m)+\hat{d}(m,y)-\nu-C_{triangle}-\eta.$$
We still have:
$$\hat{d}(y,m)\geq \frac{N}{2}-\eta. $$
Then again:
$$\hat{d}(z,m)\leq  N-\frac{N}{2}+ \nu+C_{triangle}+2\eta. $$
Then for every $p \in \mathbb{R}_{+}$, if we take $N$ large enough, we have:
$$\hat{d}(z,m)\leq N-p.$$

    \item Let us assume that $d_{c}(m,t)=1$ and $m \notin[x,y]_{c}$. In this case, we have $m \in [t_{x},t_{y}]_{\hat{d}}$. We rewrite $\sigma:[0,1]\rightarrow [0,\hat{d}(x,y)]$ the weak geodesic $[t_{x},t_{y}]_{d_{t}}$. Let $p>0$.
Let us assume further that $\hat{d}(t_{z},t_{y})\geq \hat{d}(t_{z},t_{x})$,
then according to the fact that $\hat{d}$ satisfies Inequality $\ref{equation: premiere condition de faiblement b2'}$ for some $\eta_{1}>0$, we have $\hat{d}(t_{z},m)\leq \hat{d}(t_{z},t_{y}) +\eta_{1} $.

If we assume that $\hat{d}(y,t_{y})>p+2C_{triangle}+2\nu$+$\eta_{1}$.

We have:
$$\begin{aligned}
\hat{d}(z,m)&\leq \hat{d}(z,t_{z})+\hat{d}(t_{z},m)\\
& \leq \hat{d}(z,t_{z})+\hat{d}(t_{z},t_{y})+\eta_{1} \\
& \leq \hat{d}(z,y)-\hat{d}(y,t_{y})+2 C_{triangle} +2 \nu+\eta_{1}\\
& \leq N-p.
\end{aligned}$$

If we assume that $ \hat{d}(x,t_{x}) > \hat{d}(t_{x},m)+p+C_{triangle}+\nu  $, we have:
$$\begin{aligned}
\hat{d}(z,m)&\leq \hat{d}(z,t_{x})+\hat{d}(t_{x},m)\\
& \leq \hat{d}(z,x)-\hat{d}(x,t_{x})+C_{triangle}+\nu+\hat{d}(t_{x},m)\\
& \leq N-p.
\end{aligned}$$

Let us assume now that $\hat{d}(y,t_{y})\leq p+2C_{triangle}+2\nu$ and $ \hat{d}(x,t_{x}) \leq  \hat{d}(t_{x},m)+p+C_{triangle}+\nu  $. We will use in the case the fact that $d_{t}$ is $\eta_{2}$-strongly convex for some $\eta_{2}>0$.

By definition of $m$, we have $\hat{d}(y,m)\geq \frac{N}{2}-\eta$.

Then:
$$\hat{d}(t_{y},m)\geq \frac{N}{2}-\eta- \hat{d}(y,t_{y}) \geq \frac{N}{2}-\eta -p -2C_{triangle}-2\nu .$$

For the same reason, $\hat{d}(x,m)\geq \frac{N}{2}-\eta$.
Thus:
$$2 \hat{d}(t_{x},m)+p+C_{triangle}+\nu \geq \hat{d}(x,t_{x})+\hat{d}(t_{x},m)\geq \frac{N}{2}-\eta ,$$
then:
$$\hat{d}(t_{x},m) \geq \frac{N}{4}-\frac{C_{triangle}}{2}-\frac{\nu}{2}-\frac{p}{2}-\frac{\eta}{2}.$$

Then for $N$ large enough, there exists $0<\eta< \frac{1}{2}$ such that $m=\sigma(t)$, $t \in [\eta,1-\eta]$.

Moreover:
$$\begin{aligned}
\hat{d}(t_{x},t_{y})&\geq \hat{d}(t_{x},m)+\hat{d}(m,t_{y})-C_{triangle}-\nu\\
& \geq \frac{3}{4}N-\frac{3}{2}(\eta+p)-\frac{5}{2}(C_{triangle}+\nu).
\end{aligned}$$

Therefore, for $N$ large enough, for all $\varepsilon \in (0,\frac{3}{4})$, we have:
$$\hat{d}(t_{x},t_{y})\geq \varepsilon (N+2\nu+2C_{triangle}).$$

In this case:
$$\begin{aligned}
\hat{d}(z,y)
 \geq \hat{d}(z,t_{z})+\hat{d}(t_{z},t_{y})
+\hat{d}(t_{y},y)-2\nu-2C_{triangle}.
\end{aligned}$$

Therefore:
$$ \hat{d}(t_{z},t_{y}) \leq N+2\nu+2C_{triangle} $$
Then:
$$ \hat{d}(t_{x},t_{y}) \geq \varepsilon \hat{d}(t_{z},t_{x}).$$

Then, according to the fact that $d_{t}$ is  $\eta_{2}$-peakless, there exists $\delta \in (0,1)$ such that we have:
$$\hat{d}(t_{z},m)\leq (1-\delta) \hat{d}(t_{z},t_{y})+\eta_{2}.$$

Moreover:
$$\begin{aligned}
2\hat{d}(t_{z},t_{y})\geq& \hat{d}(t_{x},t_{z})+\hat{d}(t_{z},t_{y})\\
 \geq & \hat{d}(t_{x},t_{y})\\
 \geq & \frac{\varepsilon}{2}N.
\end{aligned}$$

Thus, for $N$ large enough, we have:
$$\hat{d}(t_{z},m)\leq \hat{d}(t_{z},t_{y})-p.$$

To conclude, we remark that:
$$\begin{aligned}
\hat{d}(z,m)\leq & \hat{d}(z,t_{z})+\hat{d}(t_{z},m)\\
\leq &  \hat{d}(z,t_{z})+\hat{d}(t_{z},t_{y})+\hat{d}(t_{y},y)-p\\
\leq & N-p.
\end{aligned}$$

This proves finally that $\hat{d}$ satisfies weakly-$B2'$.

\end{itemize}
    
\end{enumerate}

\end{proof}

\subsection{$\hat{d}$ verifies strong-$B1$}\label{subsection: metrique bolic verifie B1}

In this subsection, we prove that $\hat{d}$ verifies strong-$B1$. We need some preliminary lemmas.

This lemma corresponds to Lemma \ref{lemma : comparaison des projections} in the relatively hyperbolic context, as it takes angles into account.

\begin{figure}[!ht]
   \centering
   \includegraphics[scale=0.5]{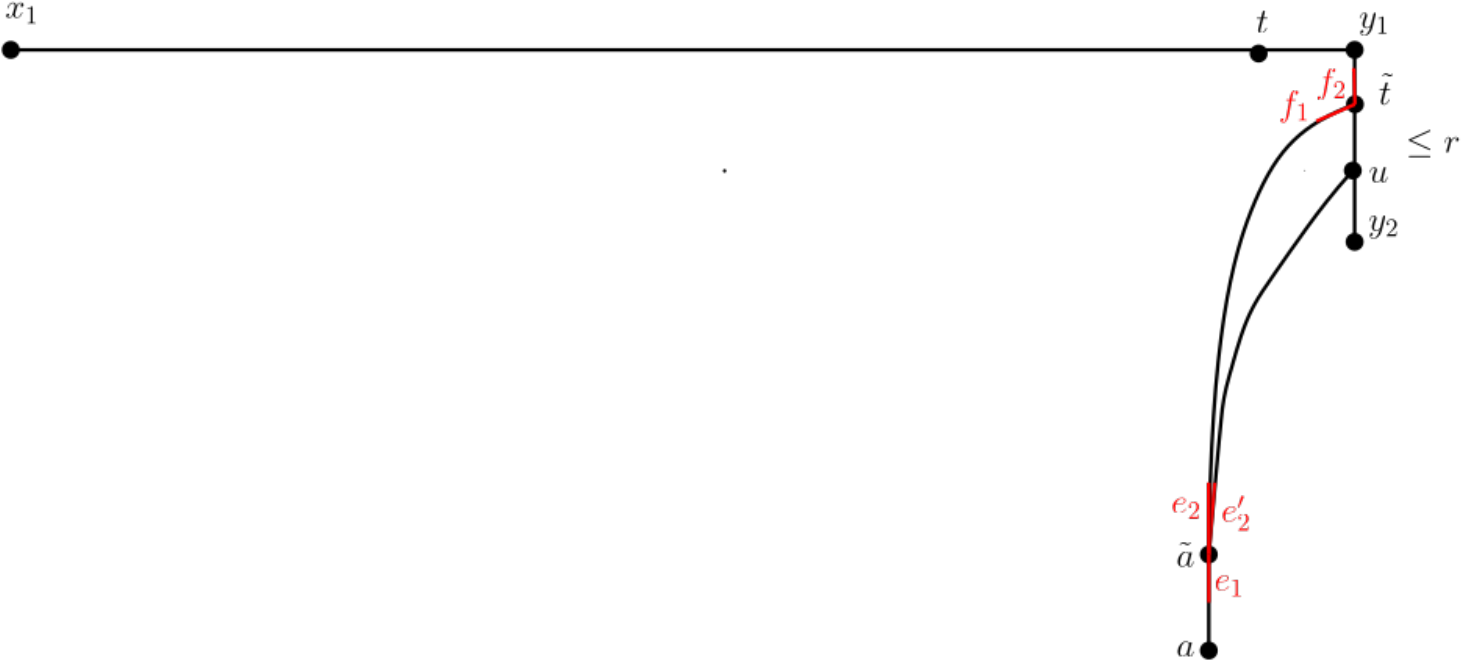}
   \caption{Comparison of projections onto the geodesics 
$[x_{1},y_{1}]_{c}$ and 
$[x_{1},y_{2}]_{c}$ when the angle at $y_{1}$ is large.}
 \label{fig: lemme projections sur deux geod angles}
\end{figure}

\newpage
\begin{lem} \label{lemma: projection sur geodesic proche hyperbolicite relative}
Let $r\geq 0$.
Let $x_{1},x_{2},y_{1},y_{2}\in G$ such that $d'(x_{1},x_{2}),d'(y_{1},y_{2})\leq r$.  Let $a \in X_{c}$ and let us denote:
\begin{itemize}
    \item $t \in [x_{1},y_{1}]_{c}$ a projection of $a$ on $[x_{1},y_{1}]_{c}$,

    \item $u \in [x_{1},y_{2}]_{c}$ a projection of $a$ on $[x_{1},y_{2}]_{c}$,
    
    \item $v \in [x_{2},y_{1}]_{c}$ a projection of $a$ on $[x_{2},y_{1}]_{c}$,
    
    \item $w \in [x_{2},y_{2}]_{c}$ a projection of $a$ on $[x_{2},y_{2}]_{c}$.
\end{itemize}

There exists $ \lambda_{1} \geq0$, which depends on $r$ and $\delta$ only, such that:

\begin{itemize}
     \item $ d'_{(2000\delta)^{2}}(a,u)\geq \frac{1}{\alpha_{2}^{2}}d'_{(2000\delta)^{2}}(a,t)-\measuredangle_{u}(a,t)-\lambda_{1}, $

    \item $ d'_{(2000\delta)^{2}}(a,v)\geq  \frac{1}{\alpha_{2}^{2}}d'_{(2000\delta)^{2}}(a,t)-\measuredangle_{v}(a,t)-\lambda_{1}, $

    \item  $ d'_{(2000\delta)^{2}}(a,w)\geq \frac{1}{\alpha_{2}^{2}}d'_{(2000\delta)^{2}}(a,t)-\measuredangle_{w}(a,t)-\lambda_{1},$
\end{itemize}

where $\alpha_{2}$ is the constant of Proposition \ref{proposition: comparaison de la somme des angles le long de deux géodésiques}.

\end{lem}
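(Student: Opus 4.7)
The plan is to mirror the proof of the hyperbolic analogue Lemma \ref{lemma : comparaison des projections}, which relied on $\delta$-thinness alone, by upgrading every ingredient to the modified distance $d'_{(2000\delta)^{2}}$; the angular defect $\measuredangle_{u}(a,t)$ in the conclusion is precisely what the upgrade cannot absorb into the additive constant. The three inequalities are proved in parallel: I focus on the first (for $u$), obtain the bound on $v$ by the symmetric argument swapping the roles of the $x_{i}$ and $y_{j}$, and get the bound on $w$ by composing the two transitions $[x_{1},y_{1}]_{c}\leadsto[x_{1},y_{2}]_{c}\leadsto[x_{2},y_{2}]_{c}$. In this composition, the angular triangle inequality of Proposition \ref{proposition: triangle inequality} collapses the two angular defects into a single $\measuredangle_{w}(a,t)$, and the factor $1/\alpha_{2}^{4}$ that would naively appear is absorbed into $\lambda_{1}$ by enlarging the additive constant (alternatively, one may run the composite argument directly on a single triangle $[x_{1},y_{1},x_{2}]_{c}\cup[x_{2},y_{1},y_{2}]_{c}$ to preserve $1/\alpha_{2}^{2}$).

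First I form the coned-off geodesic triangle $[x_{1},y_{1},y_{2}]_{c}$ and place it in the normal form of Theorem \ref{formenormaledestriangles}. Since $d_{c}(y_{1},y_{2})\leq d'(y_{1},y_{2})\leq r$, the side $[y_{1},y_{2}]_{c}$ is coned-off short, which forces the tilded vertices $\widetilde{y_{1}},\widetilde{y_{2}}$ to lie within coned-off distance $r+\delta$ of each other, while the interior of the tilded triangle has angles at most $100\delta<(2000\delta)^{2}$. By $\delta$-thinness of $X_{c}$, the projection $t\in[x_{1},y_{1}]_{c}$ admits a companion $u'\in[x_{1},y_{2}]_{c}$ with $d_{c}(t,u')\leq r+\delta$, and the normal form lets me arrange that a path realizing this bound stays along the low-angle interior of the triangle, together with at most two short tilded legs. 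Consequently $d'_{(2000\delta)^{2}}(t,u')$ is bounded by a constant $C_{0}(r,\delta)$, and in particular $d_{c}(a,u)\leq d_{c}(a,u')\leq d_{c}(a,t)+r+\delta$.

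Next I upgrade the projection inequality to the $d'$-level. I choose geodesics $[a,u]_{c}$ and $[a,t]_{c}$ and compare them by appending, at $u$, the local turn towards $t$ followed by the short path from $u$ to $t$ discussed above; the sum $\Theta_{>(2000\delta)^{2}}(a,t)$ exceeds $\Theta_{>(2000\delta)^{2}}(a,u)$ by at most the local turn $\measuredangle_{u}(a,t)$ plus the bounded contribution $\Theta_{>(2000\delta)^{2}}(u,t)\leq C_{0}(r,\delta)$, since Lemma \ref{lem: somme des angles vers la projection} controls how the angle sum changes when one rounds off along the projection side. The distortion factor $1/\alpha_{2}^{2}$ enters only when I pass from the chosen geodesics to arbitrary minimizing ones via Proposition \ref{proposition: comparaison de la somme des angles le long de deux géodésiques}, producing the target inequality with $\lambda_{1}$ depending only on $r$ and $\delta$.

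The hard part will be Step 2: naive $\delta$-thinness only yields coned-off closeness of $t$ and $u'$, not control on $\Theta_{>(2000\delta)^{2}}(u,t)$. Without Theorem \ref{formenormaledestriangles}, a short $d_{c}$-path from $t$ to $u'$ could traverse an infinite-valence vertex at an angle far greater than $(2000\delta)^{2}$, and the resulting additive error would grow with $r$ in an uncontrolled way. The normal form rescues the argument by confining large angles to the three tilded corners and to $u$ itself, where the angular cost is precisely what the $\measuredangle_{u}(a,t)$ term in the conclusion is designed to record.
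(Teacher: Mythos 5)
Your plan is in the right spirit, and for the single transition $[x_{1},y_{1}]_{c}\leadsto[x_{1},y_{2}]_{c}$ it parallels the paper, but it has two genuine gaps.

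First, the handling of $w$ by composing $[x_{1},y_{1}]_{c}\leadsto[x_{1},y_{2}]_{c}\leadsto[x_{2},y_{2}]_{c}$ gives
\[ d'_{(2000\delta)^{2}}(a,w) \geq \tfrac{1}{\alpha_{2}^{4}}\,d'_{(2000\delta)^{2}}(a,t) - \tfrac{1}{\alpha_{2}^{2}}\measuredangle_{u}(a,t) - \measuredangle_{w}(a,u) - \lambda_{1}\bigl(1+\tfrac{1}{\alpha_{2}^{2}}\bigr), \]
and since $\alpha_{2}>1$ the deficit $\bigl(\tfrac{1}{\alpha_{2}^{2}}-\tfrac{1}{\alpha_{2}^{4}}\bigr)d'_{(2000\delta)^{2}}(a,t)$ is \emph{linear} in $d'_{(2000\delta)^{2}}(a,t)$ and hence unbounded; a multiplicative loss cannot be "absorbed into the additive constant $\lambda_{1}$" as you assert. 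Your parenthetical fallback of "running the composite argument directly" is what the paper does — it applies the same one-transition argument to the pair $([x_{1},y_{1}]_{c},[x_{2},y_{2}]_{c})$ using the two-sided form of Lemma \ref{lemma : comparaison des projections} — but as written your proof does not carry this out, and the stated composition route fails.

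Second, in your Step 2 you form the normal-form triangle $[x_{1},y_{1},y_{2}]_{c}$ and then "append" a turn at $u$ followed by a short path to $t$. The projection $u$ is an interior point of the side $[x_{1},y_{2}]_{c}$, not a corner of this triangle, so Theorem \ref{formenormaledestriangles} gives you no control at $u$; concatenating $[a,u]_{c}$ with the turn and then $[u,t]_{c}$ produces a path whose angle sum need not relate to $\Theta_{>(2000\delta)^{2}}(a,t)$ along a geodesic. The paper instead works with the triangle $[a,u,t]_{c}$ in normal form, which is precisely what makes $u$ a tilded corner (indeed $\widetilde{u}=u$, since $u$ is a projection of $a$ onto $[x_{1},y_{2}]_{c}$), so the turning contribution is genuinely $\measuredangle_{u}(a,t)$ and the interior angles of $[\widetilde{a},u,\widetilde{t}]$ are controlled by $100\delta$. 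The paper also splits into the case $\measuredangle_{y_{1}}(x_{1},y_{2})\leq\max(r,12\delta)$ (where one reduces to Lemma \ref{lemme: comparaison projection avec z presque sur la geodesique cas relativement hyperbolique} with $y_{2}$ in a small cone around $[x_{1},y_{1}]_{c}$) and the case of large angle at $y_{1}$ (where $y_{1}\in[x_{1},y_{2}]_{c}$ and the normal form of $[a,u,t]_{c}$ applies cleanly); your single-triangle treatment does not recover either of these structures and leaves the angle bookkeeping at $u$ unjustified.
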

 
 \begin{proof}
We consider the geodesics $[x_{1},y_{1}]_{c}$ and $[x_{1},y_{2}]_{c}$. 
According to Lemma \ref{lemma : comparaison des projections}, we have that:
$$d_{c}(a,u) \geq d_{c}(a,t)-(r+\delta) .$$

To conclude we need to control the sum of angles greater than $(2000\delta)^{2}$  between $a$ and $u$ by the sum of angles greater than $(2000\delta)^{2}$ between $a$ and $t$.\\

\begin{enumerate}
\item  If $\measuredangle_{y_{1}}(x_{1},y_{2})\leq \max(r,12 \delta)$, we have $y_{2}\in cone_{\max(r,12\delta)}([x_{1},y_{1}]_{c})$. In this case, we apply the same proof as in Lemma \ref{lemme: comparaison projection avec z presque sur la geodesique cas relativement hyperbolique}, with $x_{1}$ playing the role of $x$, $y_{1}$ the one $y$ and $y_{2}$ the one of $z$. We get a constant $\lambda_{1}$ which depends on $r$ and $\delta$ only such that:
$$d'_{(2000\delta)^{2}}(a,u)\geq \frac{1}{\alpha_{2}^{2}}d'_{(2000\delta)^{2}}(a,t)-\measuredangle_{u}(a,t)-\lambda_{1}, $$

where $\alpha_{2}$ is the constant of Proposition \ref{proposition: comparaison de la somme des angles le long de deux géodésiques}.

\item It remains the case where $\measuredangle_{y_{1}}(x_{1},y_{2}) > \max(r,12 \delta)$, in this case $y_{1}\in [x_{1},y_{2}]_{c}$ according to Proposition \ref{proposition: les propirétés des angles} and $t \notin I_{c}(a,u)$. We consider a triangle $[a,u,t]$ as in Theorem \ref{formenormaledestriangles} with the side $[t,u]_{c}\subset[x_{1},y_{2}]_{c}$. As $u$ is a projection of $a$ on $[x_{1},y_{2}]_{c}$, $\tilde{u}=u$.

\begin{itemize}
    \item In the case where $[a,u,t]$ is a tripod, we have $u \in [a,t]_{c}$, therefore:
$$ \Theta_{>(2000\delta)^{2}}(a,t)= \Theta_{>(2000\delta)^{2}}(a,u) + \measuredangle_{t}(a,u)+\Theta_{>(2000\delta)^{2}}(u,t). $$

For all $w \in [u,t]_{c}$, we have the following.
If $w \in [y_{1},y_{2}]_{c}$, then $\measuredangle_{w}(u,t)\leq r$.
If $w \in [x_{1},y_{1}]_{c}$, we will show by contradiction that $\measuredangle_{w}(u,t)< (2000\delta)^{2}$. Let us assume then that $\measuredangle_{w}(u,t)\geq (2000\delta)^{2}$. Since $w \in [x_{1},y_{1}]$ and $t$ is a projection of $a$ on $[x_{1},y_{1}]_{c}$, we have $\measuredangle_{w}(a,t)\leq 12\delta$ according to Proposition \ref{proposition: les propirétés des angles}. This implies that:
$$\measuredangle_{w}(a,u)\geq \measuredangle_{w}(u,t)-\measuredangle_{w}(t,u)>12\delta, $$
which is in contradiction with the fact that $w \in [x_{1},y_{2}]_{c}$ and that $u$ is a projection of $a$ on $[x_{1},y_{2}]_{c}$. Thus:
$$\Theta_{>(2000\delta)^{2}}(u,t) \leq r^{2}.$$

\item Let assume now that $[a,u,t]$ is not a tripod, then $\tilde{a},u,\tilde{t}$ are all different (see Figure \ref{fig: lemme projections sur deux geod angles}). If $a\neq \tilde{a}$, we denote by $e_{1}$ the edge on $[a,\tilde{a}]_{c}$ such that $\tilde{a} \in e_{1}$ and $e_{2},e'_{2}$ the edges of $[\tilde{a},t]_{c}$, $[\tilde{a},u]_{c}$ respectively such that $\tilde{a} \in e_{2}\cap e_{2}'$. If $t\neq \tilde{t}$, we denote by $f_{1},f_{2}$ the edges of $[a,\tilde{t}]_{c}$, $[t,\tilde{t}]_{c}$ respectively such that $\tilde{t}\in f_{1}\cap f_{2}$. We consider $\measuredangle_{\tilde{a}}(e_{1},e_{2}), \measuredangle_{\tilde{a}}(e_{1},e'_{2}), \measuredangle_{\tilde{t}}(f_{1},f_{2})$ with the convention that the angles are equal to zero in the cases where the edges are not defined. 
We get:

\begin{itemize}
    \item 
$\Theta_{>(2000\delta)^{2}}(a,u)\geq \Theta_{>(2000\delta)^{2}}(a,\tilde{a})+ [\measuredangle_{\tilde{a}}(e_{1},e'_{2})]_{(2000\delta)^{2}}$,

    \item $\Theta_{>(2000\delta)^{2}}(a,t)=\Theta_{>(2000\delta)^{2}}(a,\tilde{a})+ [\measuredangle_{\tilde{a}}(e_{1},e_{2})]_{(2000\delta)^{2}} + [\measuredangle_{\tilde{t}}(f_{1},f_{2})]_{(2000\delta)^{2}}+\Theta_{>(2000\delta)^{2}}(\tilde{t},t) $.

\end{itemize}

\end{itemize}

In the same way as the previous case, we have $\Theta_{>(2000\delta)^{2}}(\tilde{t},t)\leq r^{2}$. According to Proposition \ref{proposition: angle en a tilde}, we have:
$$|\measuredangle_{\tilde{a}}(e_{1},e'_{2})-\measuredangle_{\tilde{a}}(e_{1},e_{2})|\leq 12\delta.$$
According to Proposition \ref{proposition: angle en a tilde}
again, $\measuredangle_{\tilde{t}}(f_{1},f_{2})$ is $12\delta$ closed to $\measuredangle_{\tilde{t}}(t,u)$ which is smaller than $r$.

We conclude with an application of Proposition \ref{proposition: comparaison de la somme des angles le long de deux géodésiques}.

\end{enumerate}
\end{proof}

The following lemma allows us to handle the case where the angle between the two projections is large in Lemma \ref{lemma: projection sur geodesic proche hyperbolicite relative}.

\begin{lem}\label{lem: projection sur geodesic proche hyperbolicite relative avec grand angle}

Let $r\geq 0$.
Let $x_{1},x_{2},y_{1},y_{2}\in G$ such that $d'(x_{1},x_{2}),d'(y_{1},y_{2})\leq r$.  Let $a \in X_{c}$, we set:
\begin{itemize}
    \item let $t \in [x_{1},y_{1}]_{c}$ a projection of $a$ on $[x_{1},y_{1}]_{c}$,

    \item $u \in [x_{1},y_{2}]_{c}$ a projection of $a$ on $[x_{1},y_{2}]_{c}$,
    
    \item $v \in [x_{2},y_{1}]_{c}$ a projection of $a$ on $[x_{2},y_{1}]_{c}$,
    
    \item $w \in [x_{2},y_{2}]_{c}$ a projection of $a$ on $[x_{2},y_{2}]_{c}$.
\end{itemize}

If one of the following conditions holds:

\begin{itemize}
    \item $\measuredangle_{u}(a,t)\geq (2000\delta)^{2}+12\delta+\max(r,12\delta)$,
    
    \item $\measuredangle_{v}(a,t)\geq (2000\delta)^{2}+12\delta+r+\max(r,12\delta)$,
    
    \item $\measuredangle_{w}(a,t)\geq (2000\delta)^{2}+12\delta+r+\max(r,12\delta)$,
\end{itemize}

    we have:
    $$ |\theta(d_{a}(x_{1},y_{1}))+\theta(d_{a}(x_{2},y_{2}))- \theta(d_{a}(x_{1},y_{2}))-\theta(d_{a}(x_{2},y_{1}))|=0. $$
\end{lem}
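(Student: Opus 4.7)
The strategy mirrors that of Lemmas \ref{lem: cas ou langle avec une projection est grand} and \ref{lem: cas ou langle entre les deux projections est grand}: the plan is to exploit the large-angle hypothesis, via Proposition \ref{proposition : grand angle comme des checkpoints} (large angles act as checkpoints for the flow), to show that the four masks $\mu_{x_1}(a)$, $\mu_{x_2}(a)$, $\mu_{y_1}(a)$, $\mu_{y_2}(a)$ all coincide with a common mask. Once that is established, the four quantities $d_a(x_i,y_j)$ coincide and the alternating sum in the statement vanishes exactly.

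Consider Case 1, where $\measuredangle_u(a,t)\geq (2000\delta)^{2}+12\delta+\max(r,12\delta)$. The geodesics $[x_1,y_1]_c$ and $[x_1,y_2]_c$ share the endpoint $x_1$, so one can bound $\measuredangle_u(t,x_1)\leq \max(r,12\delta)$ by combining the facts that $u$ is a closest-point projection on $[x_1,y_2]_c$ (which controls local angles by $12\delta$ via Proposition \ref{proposition: les propirétés des angles}) and that any large angle at $u$ between $t$ and $x_1$ would force $u$ to lie on a geodesic joining them, contradicting the projection property. The triangle inequality for angles (Proposition \ref{proposition: triangle inequality}) then gives
$$\measuredangle_u(a,x_1)\geq \measuredangle_u(a,t)-\measuredangle_u(t,x_1)\geq (2000\delta)^{2},$$
so Proposition \ref{proposition : grand angle comme des checkpoints} yields $\mu_{x_1}(a)=\mu_u(a)$. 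Propagating the large angle edge-by-edge along a geodesic from $x_1$ to $x_2$ (whose length is controlled by $d'(x_1,x_2)\leq r$), each step absorbed in the $\max(r,12\delta)$ slack, also gives $\mu_{x_2}(a)=\mu_u(a)$. A symmetric argument on the $y$-side (using that $y_1,y_2$ are both close to the geodesic $[x_1,y_2]_c$ on which $u$ lies, and that $d'(y_1,y_2)\leq r$) gives $\mu_{y_1}(a)=\mu_{y_2}(a)=\mu_u(a)$. All four masks coincide and the conclusion follows.

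Cases 2 and 3 are handled identically in spirit, but now the projection point ($v$ on $[x_2,y_1]_c$, or $w$ on $[x_2,y_2]_c$) lies on a geodesic that no longer shares an endpoint with $[x_1,y_1]_c$ on the $x$-side; an additional step of angular propagation from $x_2$ to $x_1$ is therefore required before invoking the checkpoint proposition. This is exactly what the extra additive $r$ in the hypothesis accommodates.

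The main obstacle will be the bookkeeping of the residual angle after each triangular-inequality step: one must verify that at each propagation the remaining angle still strictly exceeds $(2000\delta)^{2}$, so that Proposition \ref{proposition : grand angle comme des checkpoints} applies to give equality of masks rather than merely the weaker statement that a geodesic passes through the vertex. The chosen thresholds $(2000\delta)^{2}+12\delta+\max(r,12\delta)$ and $(2000\delta)^{2}+12\delta+r+\max(r,12\delta)$ are precisely calibrated to leave $(2000\delta)^{2}$ of slack after all such propagations.
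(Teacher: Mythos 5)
Your outline is close to the paper's, but it claims more than it establishes, and more than it needs. The paper's proof does \emph{not} show that all four masks coincide; it only shows that three do, namely $\mu_{x_1}(a)=\mu_{y_1}(a)=\mu_{y_2}(a)=\mu_u(a)$. That is already enough: it gives $d_a(x_1,y_1)=d_a(x_1,y_2)=0$ and $d_a(x_2,y_1)=d_a(x_2,y_2)$ (the latter because $\mu_{y_1}(a)=\mu_{y_2}(a)$), so the alternating sum cancels term-by-term without any information about $\mu_{x_2}(a)$. The structure is: $\measuredangle_u(t,x_1)\le 12\delta$ (else $u$ would lie on every geodesic from $x_1$ to $t$, forcing $u=t$ against the projection property, since $\measuredangle_u(a,t)\ge 12\delta$ also puts $u$ on every geodesic from $a$ to $t$); hence $\measuredangle_u(a,x_1)\ge(2000\delta)^2$ and $\mu_{x_1}(a)=\mu_u(a)$ by Proposition~\ref{proposition : grand angle comme des checkpoints}. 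The same projection trick on $[x_1,y_2]_c$ gives $\measuredangle_u(a,y_2)\ge(2000\delta)^2+r$, hence $\mu_{y_2}(a)=\mu_u(a)$, and then $\measuredangle_u(a,y_1)\ge\measuredangle_u(a,y_2)-\measuredangle_u(y_1,y_2)\ge(2000\delta)^2$ using $d'(y_1,y_2)\le r$, giving $\mu_{y_1}(a)=\mu_u(a)$.

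Your additional step -- ``propagating the large angle edge-by-edge along a geodesic from $x_1$ to $x_2$'' to conclude $\mu_{x_2}(a)=\mu_u(a)$ -- is not justified as stated. You would need $\measuredangle_u(x_1,x_2)\le\max(r,12\delta)$, but $d'(x_1,x_2)\le r$ only controls angles along a fixed geodesic $[x_1,x_2]_c$ at infinite-valence vertices; it does not directly bound the angle seen \emph{from} $u$ between the first edges of geodesics $[u,x_1]_c$ and $[u,x_2]_c$, which depends on where $u$ sits relative to $x_1,x_2$ and needs a separate hyperbolicity argument. Moreover, if you really propagated edge-by-edge through up to $r$ edges, the accumulated error would be on the order of $r\cdot 12\delta$, not the $\max(r,12\delta)$ you claim is available. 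None of this matters for the lemma, since the fourth mask equality is superfluous; but as written it is the weakest link in your argument. Dropping it and invoking only the three-mask identity (as the paper does) yields a cleaner and fully rigorous proof.

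Finally, for cases 2 and 3 the paper simply says ``by symmetry between $x_1,x_2,y_1,y_2$''; your observation that the extra additive $r$ in the hypothesis is calibrated to absorb the additional swap of an $x$-endpoint is consistent with that, and correct.
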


\begin{proof}
Let us assume that $\measuredangle_{u}(a,t)\geq (2000\delta)^{2}+r$, the other cases will follow by symmetry between $x_{1},x_{2},y_{1},y_{2}$. We will prove using Proposition \ref{proposition : grand angle comme des checkpoints} that:
$$\mu_{x_{1}}(a)=\mu_{y_{1}}(a)=\mu_{y_{2}}(a).$$

Let us remark first by contradiction that $\measuredangle_{u}(t,x_{1})\leq 12\delta$. Indeed, if we have $\measuredangle_{u}(t,x_{1})> 12\delta$, this implies that $u \in [x_{1},t]_{c}\subset [x_{1},y_{2}]_{c}$, in contradiction with the fact that $\measuredangle_{u}(a,t)\geq (2000\delta)^{2}+r\geq 12\delta$ and the fact that $t$ is a projection of $a$ on $[x_{1},y_{1}]_{c}$. 

Thus we deduce that:
$$\measuredangle_{u}(a,x_{1})\geq \measuredangle_{u}(a,t)-\measuredangle_{u}(t,x_{1})\geq (2000\delta)^{2}+\max(r,12\delta) \geq(2000\delta)^{2},$$

which implies that $\mu_{x_{1}}(a)=\mu_{u}(a)$ according to Proposition \ref{proposition : grand angle comme des checkpoints}.

In the same way, one can prove that $\measuredangle_{u}(a,y_{2})\geq (2000\delta)^{2}+r$ and that $\mu_{y_{2}}(a)=\mu_{u}(a)$.

From this we deduce that:
$$\measuredangle_{u}(a,y_{1})\geq \measuredangle_{u}(a,y_{2})-\measuredangle_{u}(y_{1},y_{2})\geq (2000\delta)^{2}, $$
since $d'(y_{1},y_{2})\leq r$. Therefore, $\mu_{y_{1}}(a)=\mu_{u}(a)$.

We finally have $\mu_{x_{1}}(a)=\mu_{y_{1}}(a)=\mu_{y_{2}}(a)$ and this implies that:
$$ |\theta(d_{a}(x_{1},y_{1}))+\theta(d_{a}(x_{2},y_{2}))- \theta(d_{a}(x_{1},y_{2}))-\theta(d_{a}(x_{2},y_{1}))|=0. $$

\end{proof}

 \begin{lem}\label{lemme: projection plus angle sur un point qcq pour petite geodesique}
Let $r \geq 0$, $y_{1},y_{2} \in G$ such that $d'(y_{1},y_{2})\leq r$. Let $a \in X_{c}$ and $t \in X_{c}$ a projection of $a$ on $[y_{1},y_{2}]_{c}$, then there exists $\lambda_{2} \geq 0$ which depends only on $r$ and $\delta$ such that:
$$d'_{(2000\delta)^{2}}(a,t)\geq \frac{1}{\alpha_{2}^{2}}d'_{(2000\delta)^{2}}(a,y_{1})-\measuredangle_{t}(a,y_{1})-\lambda_{2},$$
with $\alpha_{2}$ the constant of Proposition \ref{proposition: comparaison de la somme des angles le long de deux géodésiques}.

 \end{lem}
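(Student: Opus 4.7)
The proof should follow closely the templates of Lemma \ref{lemme: comparer les angles avec une projection} and Lemma \ref{lemme: d'deuxmiles projection}. My plan is to analyse a triangle $[a,t,y_{1}]_{c}$ in the normal form of Theorem \ref{formenormaledestriangles}, choosing its side $[t,y_{1}]_{c}$ to be a subsegment of the fixed geodesic $[y_{1},y_{2}]_{c}$, and then to convert geodesic-specific angle estimates into estimates on $d'_{(2000\delta)^{2}}$ by Proposition \ref{proposition: comparaison de la somme des angles le long de deux géodésiques}.

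First I would record the elementary consequences of the hypothesis $d'(y_{1},y_{2})\leq r$: both $d_{c}(y_{1},y_{2})\leq r$ and the total sum of angles along $[y_{1},y_{2}]_{c}$ is at most $r$, so in particular $d_{c}(t,y_{1})\leq r$ and the sum of angles on $[t,y_{1}]_{c}\subset[y_{1},y_{2}]_{c}$ is at most $r$. The projection property of $t$ yields $d_{c}(a,y_{1})\leq d_{c}(a,t)+r$, and by the same argument as in Lemma \ref{lemme: comparer les angles avec une projection} it forces $\tilde{t}=t$ in the normal form of $[a,t,y_{1}]_{c}$: any candidate $\tilde{t}\neq t$ sitting on $[t,y_{1}]$ with large angle toward $a$ would, through Proposition \ref{proposition: les propirétés des angles}, contradict $t$ being the closest point of $[y_{1},y_{2}]_{c}$ to $a$.

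Next I would perform the case analysis dictated by the normal form. In the tripod subcase with apex $c$, either $c=t$, in which case every geodesic $[a,y_{1}]_{c}$ of the normal form passes through $t$ and decomposes as $\Theta_{>(2000\delta)^{2}}(a,t)+[\measuredangle_{t}(a,y_{1})]_{>(2000\delta)^{2}}+\Theta_{>(2000\delta)^{2}}(t,y_{1})$ with the last term bounded by $r$; or $c$ lies on the open segment between $t$ and $y_{1}$ inside $[y_{1},y_{2}]_{c}$, and then the triangle inequality for angles combined with the bound $\measuredangle_{c}(t,y_{1})\leq r$ (a consequence of $c$ being an interior vertex of a geodesic subsegment with total angle at most $r$) controls the discrepancy between the two decompositions up to $O(r)$. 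In the non-tripod subcase I would use the pieces $\tilde{a}$, $\tilde{t}=t$, $\tilde{y_{1}}$: the $a$-to-$\tilde{a}$ portion is common to the two chosen geodesics, Proposition \ref{proposition: angle en a tilde} bounds the difference of the two angles at $\tilde{a}$ between the outgoing edges toward $t$ and toward $y_{1}$ by $12\delta$, Theorem \ref{formenormaledestriangles} bounds every angle along the thin segments $[\tilde{a},t]$ and $[\tilde{a},\tilde{y_{1}}]$ by $100\delta$ (so they contribute nothing to $\Theta_{>(2000\delta)^{2}}$), the subsegment $[\tilde{y_{1}},y_{1}]\subset[y_{1},y_{2}]_{c}$ contributes at most $r$, and the angle at $t$ appearing in the decomposition of $\Theta_{>(2000\delta)^{2},\gamma}(a,y_{1})$ is, up to $12\delta$, the quantity $\measuredangle_{t}(a,y_{1})$.

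Combining all the contributions yields, on chosen specific geodesics $\gamma$ from $a$ to $y_{1}$ and $\omega$ from $a$ to $t$, an estimate of the form $\Theta_{>(2000\delta)^{2},\gamma}(a,y_{1})\leq \Theta_{>(2000\delta)^{2},\omega}(a,t)+\measuredangle_{t}(a,y_{1})+C(r,\delta)$; combined with $d_{c}(a,y_{1})\leq d_{c}(a,t)+r$, this transfers to the same inequality for the $d'_{(2000\delta)^{2}}$-quantities on $\gamma$ and $\omega$. Two invocations of Proposition \ref{proposition: comparaison de la somme des angles le long de deux géodésiques}, one to pass from $\omega$ to a minimizing geodesic from $a$ to $t$ and one to pass from a minimizing geodesic from $a$ to $y_{1}$ to $\gamma$, then introduce the factor $\alpha_{2}^{2}$ and a further bounded additive error, producing a constant $\lambda_{2}$ depending only on $r$ and $\delta$. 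The main obstacle is the careful bookkeeping of the non-tripod subcase, where four distinct angle contributions must be tracked simultaneously without accidentally absorbing the essential $\measuredangle_{t}(a,y_{1})$ term into the constant; but no geometric input beyond Theorem \ref{formenormaledestriangles} and Proposition \ref{proposition: angle en a tilde} is needed to carry it out.
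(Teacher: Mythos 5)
Your plan follows the paper's proof almost step by step: introduce a normal-form triangle $[a,t,y_1]_c$ with $[t,y_1]_c\subset[y_1,y_2]_c$, show $\tilde{t}=t$ from the projection property of $t$, split into the case $t\in[a,y_1]_c$ (your $c=t$ case) and the genuine-triangle case, and finish by applying Proposition~\ref{proposition: comparaison de la somme des angles le long de deux géodésiques} to pass from the specific geodesics to the minimizing ones. That is exactly the paper's argument.

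Two points to tighten. First, your secondary tripod alternative ($c$ on the open segment between $t$ and $y_1$) never occurs: once $\tilde{t}=t$, a degeneration to a tripod forces $\tilde{a}=\tilde{y_1}=\tilde{t}=t$, so the apex must be $t$. Second, and more substantively, in the non-degenerate case you write that ``the angle at $t$ appearing in the decomposition of $\Theta_{>(2000\delta)^{2},\gamma}(a,y_1)$ is, up to $12\delta$, the quantity $\measuredangle_t(a,y_1)$.'' But when $\tilde{a}$, $t$, $\tilde{y_1}$ are distinct, the normal-form geodesic from $a$ to $y_1$ travels $a\to\tilde{a}\to\tilde{y_1}\to y_1$ and does not pass through $t$, so no angle at $t$ enters that decomposition at all. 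What your enumeration is actually missing is the corner angle at $\tilde{y_1}$ between the middle side $[\tilde{a},\tilde{y_1}]_c$ and the coinciding side $[\tilde{y_1},y_1]_c$; Theorem~\ref{formenormaledestriangles} controls angles only on the open segments $]\tilde{a},\tilde{y_1}[$ and not at the corner itself. The paper bounds this corner angle by applying Proposition~\ref{proposition: angle en a tilde} at $\tilde{y_1}$ (bringing it within $12\delta$ of $\measuredangle_{\tilde{y_1}}(t,y_1)$) and then noting that $\tilde{y_1}$ lies on $[y_1,y_2]_c$, so $\measuredangle_{\tilde{y_1}}(t,y_1)$ is one of the angles contributing to $\Theta(y_1,y_2)$, which $d'(y_1,y_2)\leq r$ controls. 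The $\measuredangle_t(a,y_1)$ term of the statement in fact arises only in the degenerate case $t\in[a,y_1]_c$; in the non-degenerate case the inequality holds with that term simply discarded as nonnegative. With these adjustments your argument closes as in the paper.
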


\begin{proof}
To begin we remark that:
$$d_{c}(a,t)\geq d_{c}(a,y_{1})-r. $$
We need now to control the difference between large angles from $a$ to $t$ and large angles from $a$ to $y_{1}$.
In the case where $t \in [a,y_{1}]_{c}$, we have:
$$\Theta_{>(2000\delta)^{2}}(a,y_{1})=\Theta_{>(2000\delta)^{2}}(a,t)+[\measuredangle_{t}(a,y_{1})]_{>(2000\delta)^{2}}+\Theta_{>(2000\delta)^{2}}(t,y_{1}).$$

The fact that $d'(y_{1},y_{2})\leq r$ implies that $\Theta_{>(2000\delta)^{2}}(t,y_{1})\leq r^{2}$ then:
$$\Theta_{>(2000\delta)^{2}}(a,t)\geq \Theta_{>(2000\delta)^{2}}(a,y_{1})-\measuredangle_{t}(a,y_{1})-r^{2}.$$

Proposition \ref{proposition: comparaison de la somme des angles le long de deux géodésiques} gives the desired inequality for some $\lambda_{2}$ which depends on $\delta$ and $r$ only.\\

In the case where $t \notin [y_{1},a]_{c}$, we consider a triangle $[y_{1},t,a]_{c}$ as in Theorem \ref{formenormaledestriangles} where the side $[t,y_{1}]_{c}$ is a subset of $[y_{1},y_{2}]_{c}$.
As $t$ is a projection of $a$ on $[y_{1},y_{2}]_{c}$, we have $t =\tilde{t}$ and $t, \tilde{y_{1}}$ and $a$ are all different.
If $a\neq \tilde{a}$, we denote by $e_{1}$ the edge on $[a,\tilde{a}]_{c}$ such that $\tilde{a} \in e_{1}$ and $e_{2},e'_{2}$ the edges of $[\tilde{a},y_{1}]_{c}$, $[\tilde{a},t]_{c}$ respectively such that $\tilde{a} \in e_{2}\cap e_{2}'$. If $y_{1}\neq \tilde{y_{1}}$, we denote by $f_{1},f_{2}$ the edges of $[a,\tilde{y_{1}}]_{c}$, $[y_{1},\tilde{y_{1}}]_{c}$ respectively such that $\tilde{y_{1}}\in f_{1}\cap f_{2}$. We consider $\measuredangle_{\tilde{a}}(e_{1},e_{2}), \measuredangle_{\tilde{a}}(e_{1},e'_{2}), \measuredangle_{\tilde{y_{1}}}(f_{1},f_{2})$ with the convention that the angles are equal to zero in the cases where the edges are not defined.  If $a\neq \tilde{a}$, we denote by $e_{1}$ the edge on $[a,\tilde{a}]_{c}$ such that $\tilde{a} \in e_{1}$ and $e_{2},e'_{2}$ the edges of $[\tilde{a},y_{1}]_{c}$, $[\tilde{a},t]_{c}$ respectively such that $\tilde{a} \in e_{2}\cap e_{2}'$. If $y_{1}\neq \tilde{y_{1}}$, we denote by $f_{1},f_{2}$ the edges of $[a,\tilde{y_{1}}]_{c}$, $[y_{1},\tilde{y_{1}}]_{c}$ respectively such that $\tilde{y_{1}}\in f_{1}\cap f_{2}$. We consider $\measuredangle_{\tilde{a}}(e_{1},e_{2}), \measuredangle_{\tilde{a}}(e_{1},e'_{2}), \measuredangle_{\tilde{y_{1}}}(f_{1},f_{2})$ with the convention that the angles are equal to zero in the cases where the edges are not defined.
We get:
\begin{itemize}
    \item $\Theta_{>(2000\delta)^{2}}(a,t)= \Theta_{>(2000\delta)^{2}}(a,\tilde{a})+ [\measuredangle_{\tilde{a}}(e_{1},e'_{2})]_{(2000\delta)^{2}}$,

    \item $\Theta_{>(2000\delta)^{2}}(a,y_{1})=\Theta_{>(2000\delta)^{2}}(a,\tilde{a})+ [\measuredangle_{\tilde{a}}(e_{1},e_{2})]_{(2000\delta)^{2}} + [\measuredangle_{\tilde{y_{1}}}(f_{1},f_{2})]_{(2000\delta)^{2}}+\Theta_{>(2000\delta)^{2}}(\tilde{y_{1}},y_{1}) $.
\end{itemize}

As in the previous case, we have $\Theta_{>(2000\delta)^{2}}(\tilde{y_{1}},y_{1})\leq r^{2}$. Moreover according to Proposition \ref{proposition: angle en a tilde}, we have:
$$ |\measuredangle_{\tilde{a}}(e_{1},e'_{2})-\measuredangle_{\tilde{a}}(e_{1},e_{2})|\leq 12\delta.$$
Again according to Proposition \ref{proposition: angle en a tilde}, the angle $\measuredangle_{\tilde{y_{1}}}(f_{1},f_{2})$ is $12\delta$ closed to $\measuredangle_{\tilde{y_{1}}}(y_{1},t)$ which is smaller than $r$ since $d'(y_{1},y_{2})\leq r$. Thus:
$$ \Theta_{>(2000\delta)^{2}}(a,t)\geq \Theta_{>(2000\delta)^{2}}(a,y_{1})-r^{2}-r-12\delta.  $$

We conclude again with an application of Proposition \ref{proposition: comparaison de la somme des angles le long de deux géodésiques}.

\end{proof}

This Lemma allows us to control the angle $\measuredangle_{t}(a,y_{1})$ of Lemma \ref{lemme: projection plus angle sur un point qcq pour petite geodesique}.

\begin{lem}\label{lem: projection et grand angle pour petite geodesique}
Let $r \geq 0$, $y_{1},y_{2} \in G$ such that $d'(y_{1},y_{2})\leq r$. Let $a \in X_{c}$ and $t \in X_{c}$ a projection of $a$ on $[y_{1},y_{2}]_{c}$, if $ \measuredangle_{t}(a,y_{1})\geq (2000\delta)^{2}+r$, then:
$$d_{a}(y_{1},y_{2})=0.$$

\end{lem}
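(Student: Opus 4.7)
The plan is to exploit Proposition~\ref{proposition : grand angle comme des checkpoints}, which says that a large angle on a geodesic acts as a checkpoint for the mask, so that the mask at $a$ does not depend on which side of the checkpoint the source lies. Concretely, I will use the checkpoint $t$ to show that both $\mu_{y_1}(a)$ and $\mu_{y_2}(a)$ coincide with $\mu_t(a)$, and hence $d_a(y_1,y_2) = 0$ (this holds whether $a \in X_c^\infty$, since $d_b$ is a genuine pseudometric, or $a \notin X_c^\infty$, since then $\|\mu_{y_1}(a)\Delta\mu_{y_2}(a)\| = 0$).

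First, I would apply Proposition~\ref{proposition : grand angle comme des checkpoints} directly on the side of $y_1$: since by hypothesis $\measuredangle_t(a,y_1) \geq (2000\delta)^2 + r \geq (2000\delta)^2$ and $t$ lies on a geodesic from $y_1$ to $a$ (which can be constructed by concatenating $[y_1,t]_c \subset [y_1,y_2]_c$ with a geodesic $[t,a]_c$, using that $t$ is a projection of $a$), one obtains $\mu_{y_1}(a) = \mu_t(a)$. Next, I would bound the angle $\measuredangle_t(y_1,y_2)$: since $t$ lies on $[y_1,y_2]_c$ and the sum of all angles along this geodesic is at most $\Theta'(y_1,y_2) \leq r$, the angle at the single vertex $t$ between the two incoming edges of the geodesic is bounded by $r$. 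The triangle inequality for angles (Proposition~\ref{proposition: triangle inequality}) then yields
\[
\measuredangle_t(a,y_2) \;\geq\; \measuredangle_t(a,y_1) - \measuredangle_t(y_1,y_2) \;\geq\; (2000\delta)^2 + r - r \;=\; (2000\delta)^2 .
\]
A second application of Proposition~\ref{proposition : grand angle comme des checkpoints}, now to the side of $y_2$, gives $\mu_{y_2}(a) = \mu_t(a)$, so $\mu_{y_1}(a) = \mu_{y_2}(a)$ and $d_a(y_1,y_2) = 0$ by Definition~\ref{definition: da}.

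The only nonroutine point is the bound $\measuredangle_t(y_1,y_2) \leq r$, which requires being slightly careful when $t$ has finite valence (so that the angle at $t$ does not appear in $\Theta'(y_1,y_2)$); in that case one can replace $t$ by a neighbour of $t$ on $[y_1,y_2]_c$ using the same checkpoint trick, or simply observe that a path around $t$ of length at most $d_c(y_1,y_2) \leq r$ exists via the two subsegments of the geodesic, so the defining infimum for $\measuredangle_t(y_1,y_2)$ is at most $r$. Apart from this small verification, the argument is a clean two-step application of the checkpoint proposition, exactly parallel to Lemma~\ref{lem: cas ou langle avec une projection est grand} and Lemma~\ref{lem: cas ou langle entre les deux projections est grand}.
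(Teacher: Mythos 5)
Your proof is the same as the paper's: two applications of Proposition~\ref{proposition : grand angle comme des checkpoints} around the checkpoint $t$, linked by the angle triangle inequality $\measuredangle_{t}(a,y_{2})\geq \measuredangle_{t}(a,y_{1})-\measuredangle_{t}(y_{1},y_{2})$. Two small remarks on your auxiliary justifications: the fact that $t\in I_{c}(a,y_{1})$, needed to invoke the checkpoint proposition, follows from $\measuredangle_{t}(a,y_{1})>12\delta$ together with Proposition~\ref{proposition: les propirétés des angles}, not merely from $t$ being a projection of $a$ (projections alone do not force concatenations to be geodesics); and your fallback justification of $\measuredangle_{t}(y_{1},y_{2})\leq r$ by ``a path around $t$ \ldots\ via the two subsegments of the geodesic'' does not work as stated, since that path passes through $t$ itself and so is not a path in $X_{c}\setminus\{t\}$ — though the paper also leaves this bound implicit.
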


\begin{proof}

We will prove that if $ \measuredangle_{t}(a,y_{1})\geq (2000\delta)^{2}+r$, we have:
$$\mu_{y_{1}}(a)=\mu_{y_{2}}(a).$$

According to Proposition \ref{proposition : grand angle comme des checkpoints}, we know that:
$$\mu_{y_{1}}(a)=\mu_{t}(a).$$

Moreover:
$$\measuredangle_{t}(a,y_{2})\geq \measuredangle_{t}(a,y_{1})-\measuredangle_{t}(y_{1},y_{2})\geq (2000\delta)^{2},$$

then, we new use of Proposition \ref{proposition : grand angle comme des checkpoints} gives the fact that:
$$\mu_{y_{1}}(a)=\mu_{t}(a),$$

and this implies that $d_{a}(y_{1},y_{2})=0$.

\end{proof}

We now have almost all the ingredients to prove strong-$B1$ for $\hat{d}$. The following proposition simply shows that we can apply strong $B1$ for $d_{a}$ uniformly on $a \in X_{c}^{\infty}$.

\begin{proposition}\label{proposition: fortement B1 dans les paraboliques}
For all $\eta,r>0$, there exists $R=R(\eta,r)\geq 0$ such that for all $a \in X_{c}^{\infty}$, for all $x_{1}, x_{2}, y_{1}, y_{2} \in X$, with $d_{a}(x_{1},y_{1}),d_{a}(x_{1},y_{2}),d_{a}(x_{2},y_{1}),d_{a}(x_{2},y_{2}) \geq R$ and $d_{a}(x_{1},x_{2}),d_{a}(y_{1},y_{2}) \leq r$, we have:

    $$ |d_{a}(x_{1},y_{1})+d_{a}(x_{2},y_{2})-d_{a}(x_{1},y_{2})-d_{a}(x_{2},y_{1})| \leq \eta .$$

\end{proposition}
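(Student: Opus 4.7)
The plan is to directly transfer the strong-$B1$ property from the admissible metric $d_b$ on each $\mathrm{Proba}_C(P_i)$ to the pseudo-metric $d_a$, and then to take a maximum over the finitely many parabolics to obtain uniformity in $a$.

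More precisely, I would proceed as follows. First, since $G$ is hyperbolic relative to finitely many admissible parabolics $P_1,\dots,P_n$, and since the strong-$B1$ condition is part of the definition of admissibility (Definition~\ref{definition: lesbons parabolic}), for each $i\in\{1,\dots,n\}$ and each $\eta,r>0$ there exists a constant $R_i(\eta,r)\geq 0$ witnessing strong-$B1$ for $d_b$ on $\mathrm{Proba}_C(P_i)$. Set
\[
R(\eta,r):=\max_{i=1,\dots,n}R_i(\eta,r).
\]
This maximum is finite precisely because there are only finitely many conjugacy classes of parabolics, which is where the uniformity in $a$ comes from.

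Next, fix $a\in X_c^\infty$. By construction of the coned-off graph, $a$ corresponds to a coset $gP_i$ for some $i$ and some $g\in G$, and by Definition~\ref{definition: da} together with the convention fixed at the beginning of Subsection~\ref{subsection: definition de da ,control}, one has
\[
d_a(x,y)=d_b\bigl(g^{-1}\mu_x(a),\,g^{-1}\mu_y(a)\bigr)
\]
for all $x,y\in G$, where the right-hand side is evaluated in $\mathrm{Proba}_C(P_i)$ (using $P_i$-invariance of $d_b$, and that $\mu_x(a)$ is supported in the $1$-neighborhood of $a$, hence $g^{-1}\mu_x(a)\in\mathrm{Proba}_C(P_i)$ by Proposition~\ref{proposition : les masks sont uniformément bornés}).

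Now given $x_1,x_2,y_1,y_2$ satisfying the hypotheses of the proposition with this choice of $R$, set $\nu_{x_k}:=g^{-1}\mu_{x_k}(a)$ and $\nu_{y_l}:=g^{-1}\mu_{y_l}(a)$ in $\mathrm{Proba}_C(P_i)$. The assumptions translate directly, via the identity above, into
\[
d_b(\nu_{x_1},\nu_{x_2})\leq r,\qquad d_b(\nu_{y_1},\nu_{y_2})\leq r,
\]
and
\[
d_b(\nu_{x_k},\nu_{y_l})\geq R\geq R_i(\eta,r)\quad\text{for all }k,l\in\{1,2\}.
\]
Applying strong-$B1$ for $d_b$ on $\mathrm{Proba}_C(P_i)$ yields
\[
\bigl|d_b(\nu_{x_1},\nu_{y_1})+d_b(\nu_{x_2},\nu_{y_2})-d_b(\nu_{x_1},\nu_{y_2})-d_b(\nu_{x_2},\nu_{y_1})\bigr|\leq \eta,
\]
which is exactly the desired inequality for $d_a$.

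There is essentially no obstacle here: the strong-$B1$ property is baked into admissibility, and finiteness of the number of parabolics makes uniformity free. The content is really just a bookkeeping verification that $d_a$ is, under $P_i$-invariance, the pullback of the strongly bolic $d_b$ on $\mathrm{Proba}_C(P_i)$. This proposition is the last technical ingredient needed before combining it with the hyperbolic-flavored estimates of the previous subsections to establish strong-$B1$ for the global metric $\hat{d}$.
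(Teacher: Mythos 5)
Your proof is correct and takes essentially the same approach as the paper: invoke strong-$B1$ for the admissible metric $d_b$ on each $\mathrm{Proba}_C(P_i)$, then take the maximum of the resulting constants $R_i$ over the finitely many parabolics to obtain uniformity in $a$. You spell out the $P_i$-equivariance bookkeeping (translating masks by $g^{-1}$) a bit more explicitly than the paper does, but the argument is the same.
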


\begin{proof}

Recall that the group $G$ is hyperbolic relative to a family of admissible subgroups $P_{1},…,P_{n}$. Therefore for each $P_{i}$, for each $C>0$, $ \text{Proba}_{C}(P_{i})$ admits a strongly bolic metric. Thus these metrics must satisfy strong-$B1$. For all $\eta,r>0$, there exists $R_{i}$ such that for all $a \in X_{c}^{\infty}$ associated with $P_{i}$, for all $x_{1}, x_{2}, y_{1}, y_{2} \in X$, with $d_{a}(x_{1},y_{1}),d_{a}(x_{1},y_{2}),d_{a}(x_{2},y_{1}),d_{a}(x_{2},y_{2}) \geq R_{i}$ and $d_{a}(x_{1},x_{1}),d_{a}(x_{2},x_{2}) \leq r$, we have:
$$ |d_{a}(x_{1},y_{1})+d_{a}(x_{2},y_{2})-d_{a}(x_{1},y_{2})-d_{a}(x_{2},y_{1})| \leq \eta .$$
    
Taking $R$ as the maximum of the $R_{i}$, for all $a \in X_{\infty}$, for all $x_{1}, x_{2}, y_{1}, y_{2} \in X$, with $d_{a}(x_{1},y_{1}),d_{a}(x_{1},y_{2}),\\d_{a}(x_{2},y_{1}),d_{a}(x_{2},y_{2}) \geq R$ and $d_{a}(x_{1},x_{1}),d_{a}(x_{2},x_{2}) \leq r$, we have:
$$ |d_{a}(x_{1},y_{1})+d_{a}(x_{2},y_{2})-d_{a}(x_{1},y_{2})-d_{a}(x_{2},y_{1})| \leq \eta .$$
\end{proof}

Here, we will prove the most important result for $\hat{d}$, namely strong-$B1$. According to Proposition \ref{Proposition : formule de la distance} the distance formula, there are two cases to consider when the distance $d(x_{1},y_{1})$ is large: the case where there is a large angle between $x_{1}$ and $y_{1}$ in the coned-off graph, or the case where the distance between $x_{1}$ and $y_{1}$ in the coned-off space is large

\begin{theo}\label{theo: strong b1 relativement hyperbolic}
The metric space $(G,\hat{d})$ satisfies strongly-$B1$.
\end{theo}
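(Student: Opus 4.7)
The plan is to mirror the strategy of Theorem \ref{theorem : dchapeau est fortement B1} from the purely hyperbolic case, with new ingredients to handle the contributions of infinite valence vertices. By Proposition \ref{proposition: quasi-isometry} the metric $\hat{d}$ is quasi-isometric to $d_G$ (and thus to $d'$), so after adjusting constants, it suffices to prove the $B1$-type estimate for configurations where the hypotheses $d'(x_1,x_2), d'(y_1,y_2) \leq r$ and $d'(x_i,y_j) \geq R$ hold. Since $\hat{d}(u,v) = D(u,v) + C_{\text{triangle}}$ for $u \neq v$, the four-point $\hat{d}$-difference equals the four-point $D$-difference, and I would reduce the problem to bounding
$$\sum_{a \in X_c} \bigl|\theta(d_a(x_1,y_1)) + \theta(d_a(x_2,y_2)) - \theta(d_a(x_1,y_2)) - \theta(d_a(x_2,y_1))\bigr| \leq \eta.$$

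I would then split $X_c = \mathcal{A} \sqcup (X_c \setminus \mathcal{A})$, where $\mathcal{A}$ is the set of $a$ for which at least one of the four values $d_a(x_i,y_j)$ exceeds $t_0$. By Lemma \ref{lemme: theta est lidentite alors a est dans toute geodesique entre x et y}, every $a \in \mathcal{A}$ belongs to $X_c^\infty$ and satisfies $\measuredangle_a(\cdot,\cdot) \geq (2000\delta)^2 + 18\delta$ for the relevant pair, so by Proposition \ref{proposition: les propirétés des angles} such $a$ lies on every geodesic between that pair. Taking $R$ large and using the triangle inequality together with the hypothesis on $d'(x_1,x_2), d'(y_1,y_2)$, I would argue that each $a \in \mathcal{A}$ is a common vertex of all four geodesics $[x_i,y_j]_c$, so the four masks $\mu_{x_i}(a), \mu_{y_j}(a)$ all lie in $\text{Proba}_C(P_a)$ and carry genuine $d_b$-distances. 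Strong-$B1$ for the parabolic (Proposition \ref{proposition: fortement B1 dans les paraboliques}) then applies, with the small-side bound $d_a(x_1,x_2) = d_b(\mu_{x_1}(a),\mu_{x_2}(a))$ controlled by Corollary \ref{corollary : diffenrece entre deux masks loin nouvelle version}.

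On the complementary set $X_c \setminus \mathcal{A}$, all four values lie in $[0,t_0]$ and $\theta$ acts as $t \mapsto t^P$, so by Lemma \ref{lemme: accroissement fini} the four-point difference is controlled by $P \max_{i,j} d_a(x_i,y_j)^{P-1} \cdot (d_a(x_1,x_2) + d_a(y_1,y_2))$. Following the hyperbolic argument, I would pick $m \in [x_1,y_1]_c$ with $d'(x_1,m), d'(m,y_1) \geq R/2$, split the sum according to whether the projection of $a$ onto $[x_1,y_1]_c$ lies in $[x_1,m]$ or $[m,y_1]$, and apply Theorem \ref{theo: theo stylé sur les barycentres et les masques} to bound each $d_a$ by $\kappa^{d'_{(2000\delta)^2}(a,\text{proj})/\alpha_2^2}$. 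The comparison Lemmas \ref{lemma: projection sur geodesic proche hyperbolicite relative}, \ref{lem: projection sur geodesic proche hyperbolicite relative avec grand angle}, \ref{lemme: projection plus angle sur un point qcq pour petite geodesique} and \ref{lem: projection et grand angle pour petite geodesique} allow me to translate between projections of $a$ onto the four distinct geodesics, at the cost of finite additive constants (depending only on $r$ and $\delta$). The resulting geometric series, summable by Proposition \ref{proposition: nouveau p}, is multiplied by an $e^{-\varepsilon R/2}$-type factor coming from Lemma \ref{lemme : z est la projection donc c'est un quasi-centre} applied to $m$, and therefore tends to $0$ as $R \to \infty$.

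The main obstacle will lie in the first piece: the set $\mathcal{A}$ can have cardinality growing linearly with $d_c(x_1,y_1)$ (as in Corollary \ref{corollary : cardinal des points paraboliques avec distance entre les barycentres des masks nest pas bien controlee}), so naively summing a fixed $B_1$-error over $\mathcal{A}$ would not close. The key is that for $a \in \mathcal{A}$ far from both endpoints, Corollary \ref{corollary : diffenrece entre deux masks loin nouvelle version} forces $d_a(x_1,x_2)$ to decay exponentially in $d'(a,x_1)$; by feeding this exponentially small $r'$ into the parabolic strong-$B1$, the output $\eta''(r')$ can be chosen so that $\sum_{a \in \mathcal{A}} \eta''$ converges and can be made $\leq \eta/2$ by enlarging $R$. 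The few remaining $a$'s close to the endpoints are bounded uniformly via Proposition \ref{cardinal cones}, and for those Proposition \ref{proposition : grand angle comme des checkpoints} can even force the masks to coincide, so their contribution vanishes once $R$ is large enough. Balancing these two regimes, plus the need to verify that $a \in \mathcal{A}$ forces all four pairs to pass through $a$ (not just the pair producing the large $d_a$-value), is the delicate bookkeeping that makes this the main technical step.
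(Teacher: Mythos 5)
Your overall decomposition (split $X_c$ into the set $\mathcal{A}$ of $a$ with some $d_a(x_i,y_j)\geq t_0$ and its complement; on $X_c\setminus\mathcal{A}$ replace $\theta$ by $t\mapsto t^P$ and mirror the hyperbolic projection argument using Lemmas \ref{lemma: projection sur geodesic proche hyperbolicite relative} through \ref{lem: projection et grand angle pour petite geodesique} together with Proposition \ref{proposition: nouveau p}) is exactly the decomposition the paper uses for its second case, and that part of the plan is sound.

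However, there is a genuine gap: you have no case analysis on whether $d_c(x_1,y_1)$ is small or large, and without it the $\mathcal{A}$-piece cannot close. The paper first splits according to $d_c(x_1,y_1)\leq\sqrt{R}$ versus $d_c(x_1,y_1)\geq\sqrt{R}$. In the first regime there must be a single vertex $a_0\in X_c^\infty$ with $\measuredangle_{a_0}(x_1,y_1)\geq\sqrt{R}/2$; Proposition \ref{proposition : grand angle comme des checkpoints} then forces every other term in the sum to vanish, and the remaining single term is handled by Proposition \ref{proposition: fortement B1 dans les paraboliques}. Your plan for $\mathcal{A}$ --- control each term by showing $d_a(x_1,x_2)$ or $d_a(y_1,y_2)$ decays exponentially in the $d'$-distance and sum --- does not work in this regime, because when $d_c(x_1,y_1)$ is bounded every $a\in\mathcal{A}\subset[x_1,y_1]_c$ has $d_c(a,[y_1,y_2]_c)$ bounded, so the hypothesis $d'_{(2000\delta)^2}(a,t)>C_1$ of Theorem \ref{theo: theo stylé sur les barycentres et les masques} need not hold and there is no exponential decay of $d_a(y_1,y_2)$ to exploit. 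That decay only becomes available precisely in the regime $d_c(x_1,y_1)\geq\sqrt{R}$, which is why the dichotomy is not cosmetic.

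A secondary, more minor issue: your proposed mechanism of ``feeding an exponentially small $r'$ into the parabolic strong-$B1$ to get an exponentially small output $\eta''(r')$'' misreads the quantifier structure of strong-$B1$, which fixes $\eta$ and $r$ and produces a threshold $R(\eta,r)$; it does not give a rate $\eta(r)\to0$ as $r\to0$ at fixed $R$. What actually makes the $\mathcal{B}$-sum summable in the paper is the elementary four-fold triangle inequality plus the Lipschitz bound $|\theta\text{-combination}|\leq 2Pt_0^{P-1}\,d_a(y_1,y_2)$ from Lemma \ref{lem: theta est lipschtizienne}, with the exponential decay of $d_a(y_1,y_2)$ coming from Theorem \ref{theo: theo stylé sur les barycentres et les masques}; no appeal to the parabolic's strong-$B1$ is made for those terms. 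Strong-$B1$ of the parabolic is used exactly once, in the $d_c$-small branch, and that is the branch you are missing.
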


\begin{proof}
Let us recall that for all $x,y \in G$, $d'(x,y)=d_{c}(x,y)+\Theta'(x,y)$, where $\Theta'(x,y)$ denote the minimal sum of angles at infinite valence vertices on a geodesic between $[x,y]_{c}$. According to Proposition \ref{proposition: quasi-isometry} and to Proposition \ref{Proposition : formule de la distance} the fact that $(X,\hat{d})$ satisfies strong-$B1$ is equivalent to the fact that for all $r>0$ and $\eta >0$, there exists $R=R(\eta,r)\geq 0$ such that for all $x_{1}$, $x_{2}$, $y_{1}$, $y_{2} \in G$, with $d'(x_{1},y_{1}),d'(x_{1},y_{2}),d'(x_{2},y_{1}),d'(x_{2},y_{2}) \geq R$ and $d'(x_{1},x_{2}),d'(y_{1},y_{2}) \leq r$, we have:
$$  |\hat{d}(x_{1},y_{1})+\hat{d}(x_{2},y_{2})-\hat{d}(x_{1},y_{2})-\hat{d}(x_{2},y_{1})| \leq \eta .$$ 

We will choose $R \in \mathbb{N}$ even and large enough later. Let us assume that:
$$ d'(x_{1},y_{1})\geq R$$
This implies that:
$$d_{c}(x_{1},y_{1})+\Theta(x_{1},y_{1})\geq R.$$
There are two cases to consider.

\begin{enumerate}

\item Let us assume in the first one that:
$$d_{c}(x_{1},y_{1})\leq \sqrt{R}.$$

Thus, if $R\geq 4$, we get:
$$\sqrt{R}\max_{a \in X_{c}^{\infty}}(\measuredangle_{a}(x_{1},y_{1})) \geq \Theta(x_{1},y_{1})\geq \frac{R}{2}.$$

Therefore there exists $a_{0} \in X_{c}^{\infty}$ such that:
$$ \measuredangle_{a_{0}}(x_{1},y_{1}) \geq \frac{\sqrt{R}}{2}.$$

We will show that for $R$ large enough, for all $a \in X_{c}$, $a \neq a_{0}$, we have:

$$|\theta(d_{a}(x_{1},y_{1}))+\theta(d_{a}(x_{2},y_{2})- \theta(d_{a}(x_{1},y_{2}))-\theta(d_{a}(x_{2},y_{1}))|=0.$$

To do so we remark that for all $a \in X_{c}$, $a \neq a_{0}$, we have:
$$ \measuredangle_{a_{0}}(x_{1},a)+\measuredangle_{a_{0}}(a,y_{1})\geq \measuredangle_{a_{0}}(x_{1},y_{1}) \geq \frac{\sqrt{R}}{2 }.$$

Let us assume without loss of generality that:
$$\measuredangle_{a_{0}}(x_{1},a)\geq\frac{\sqrt{R}}{4 }.  $$
Thus we get:
$$ \measuredangle_{a_{0}}(x_{2},a)\geq \measuredangle_{a_{0}}(x_{1},a)-\measuredangle_{a_{0}}(x_{1},x_{2})\geq \frac{\sqrt{R}}{4}-r. $$

Therefore, if we take $R$ large enough such that $\frac{\sqrt{R}}{4}-r\geq (2000\delta)^{2}$, we have:
$$ \measuredangle_{a_{0}}(x_{1},a), \measuredangle_{a_{0}}(x_{2},a) \geq (2000\delta)^{2}.$$

Then according to Proposition \ref{proposition : grand angle comme des checkpoints}, this implies that:
$$\mu_{x_{1}}(a)=\mu_{x_{2}}(a)=\mu_{a_{0}}(a).$$
Thus:
$$\begin{aligned}
 &|\theta(d_{a}(x_{1},y_{1}))+\theta(d_{a}(x_{2},y_{2}))- \theta(d_{a}(x_{1},y_{2}))-\theta(d_{a}(x_{2},y_{1}))|\\
 =&|\theta(d_{b}(\mu_{x_{1}}(a),\mu_{y_{1}}(a)))+\theta(d_{b}(\mu_{x_{1}}(a),\mu_{y_{2}}(a)))- \theta(d_{b}(\mu_{x_{1}}(a),\mu_{y_{1}}(a)))-\theta(d_{a}(\mu_{x_{1}}(a),\mu_{y_{1}}(a)))|\\
 =&0.
\end{aligned}$$

Thus, we get:
$$\begin{aligned}
&|\hat{d}(x_{1},y_{1})+\hat{d}(x_{2},y_{2})-\hat{d}(x_{1},y_{2})-\hat{d}(x_{2},y_{1})| \\
=& |\theta(d_{a_{0}}(x_{1},y_{1}))+\theta(d_{a_{0}}(x_{2},y_{2}))- \theta(d_{a_{0}}(x_{1},y_{2}))-\theta(d_{a_{0}}(x_{2},y_{1}))|.
\end{aligned}$$

We have:
\begin{itemize}
    \item $ \measuredangle_{a_{0}}(x_{1},y_{1}) \geq \frac{\sqrt{R}}{2},$
    \item $ \measuredangle_{a_{0}}(x_{2},y_{1}) \geq \frac{\sqrt{R}}{2}-r,$

    \item $ \measuredangle_{a_{0}}(x_{1},y_{2}) \geq \frac{\sqrt{R}}{2}-r,$
    \item $ \measuredangle_{a_{0}}(x_{2},y_{2}) \geq \frac{\sqrt{R}}{2}-2r.$
\end{itemize}

Thus, according to Proposition \ref{proposition: l'angle est borné par la distance bolic}, if we take $R$ large enough, we have:
$$\begin{aligned} 
&|\hat{d}(x_{1},y_{1})+\hat{d}(x_{2},y_{2})-\hat{d}(x_{1},y_{2})-\hat{d}(x_{2},y_{1})|\\
=&|\theta(d_{a_{0}}(x_{1},y_{1}))+\theta(d_{a_{0}}(x_{2},y_{2}))- \theta(d_{a_{0}}(x_{1},y_{2}))-\theta(d_{a_{0}}(x_{2},y_{1}))|\\
=&t_{0}^{P-1}|d_{a_{0}}(x_{1},y_{1})+d_{a_{0}}(x_{2},y_{2})- d_{a_{0}}(x_{1},y_{2})-d_{a_{0}}(x_{2},y_{1})|.
\end{aligned}$$

We conclude with an application of Proposition \ref{proposition: fortement B1 dans les paraboliques}, we can find $R$ independent of $a_{0}$ such that:
$$|d_{a_{0}}(x_{1},y_{1})+d_{a_{0}}(x_{2},y_{2})- d_{a_{0}}(x_{1},y_{2})-d_{a_{0}}(x_{2},y_{1})| \leq \eta.$$

\item  Let us assume in the second case that $d_{c}(x_{1},y_{1})\geq \sqrt{R}.$ Again, we will choose $R$ large enough later.

Let us denote by $\mathcal{B}$ the set of $a \in X_{c}$ such that:

\begin{itemize}
    \item $d_{a}(x_{1},y_{1})\geq t_{0}$,

    \item or $d_{a}(x_{1},y_{2})\geq t_{0}$,

    \item or $d_{a}(x_{2},y_{1})\geq t_{0}$,

    \item or $ d_{a}(x_{2},y_{2})\geq t_{0}.$

\end{itemize}

We have the following:
$$\begin{aligned}
 &|\hat{d}(x_{1},y_{1})+\hat{d}(x_{2},y_{2})-\hat{d}(x_{1},y_{2})-\hat{d}(x_{2},y_{1})|\\
 & \leq \sum_{a \in \mathcal{B}} | \theta(d_{a}(x_{1},y_{1})+\theta(d_{a}(x_{2},y_{2}))- \theta(d_{a}(x_{1},y_{2}))-\theta(d_{a}(x_{2},y_{1}))|  \\
 &+ \sum_{a \in X_{c}} | d_{a}(x_{1},y_{1})^{P}+d_{a}(x_{2},y_{2})^{P}-d_{a}(x_{1},y_{2})^{P}-d_{a}(x_{2},y_{1})^{P}|. 
\end{aligned}$$

\begin{enumerate}
    \item 

We will start by showing that for $R$ large enough, we have:
$$\sum_{a \in \mathcal{B}} | \theta(d_{a}(x_{1},y_{1})+\theta(d_{a}(x_{2},y_{2}))- \theta(d_{a}(x_{1},y_{2}))-\theta(d_{a}(x_{2},y_{1}))| \leq 4 \eta.$$
We fix four geodesics $[x_{1},y_{1}]_{c},[x_{2},y_{2}]_{c},[x_{1},y_{2}]_{c},[x_{2},y_{1}]_{c}$. According to Lemma \ref{lemme: theta est lidentite alors a est dans toute geodesique entre x et y}, we know that:
$$ \mathcal{B}\subset ([x_{1},y_{1}]_{c}\cup[x_{2},y_{2}]_{c} \cup[x_{1},y_{2}]_{c}\cup[x_{2},y_{1}]_{c})\cap X_{c}^{\infty}.$$

We will show that for $R$ large enough, we have:
$$\sum_{a \in \mathcal{B}\cap[x_{1},y_{1}]_{c}} | \theta(d_{a}(x_{1},y_{1})+\theta(d_{a}(x_{2},y_{2}))- \theta(d_{a}(x_{1},y_{2}))-\theta(d_{a}(x_{2},y_{1}))| \leq \eta.$$
This will implies the same result for $\mathcal{B}\cap[x_{2},y_{1}]_{c},\mathcal{B}\cap[x_{1},y_{2}]_{c},\mathcal{B}\cap[x_{2},y_{2}]_{c}$ by symmetry of the roles of $x_{1},y_{1},x_{2}$ and $y_{2}$. 

 We denote by $m$ a vertex in $[x_{1},y_{1}]_{c}$ such that $d_{c}(x_{1},m) \geq \frac{1}{2}d_{c}(x_{1},y_{1})-1$ and $d_{c}(m,y_{1}) \geq \frac{1}{2}d_{c}(x_{1},y_{1})-1$.
 We remark the following:
 $$\begin{aligned}&\sum_{a \in \mathcal{B}\cap[x_{1},y_{1}]_{c}} | \theta(d_{a}(x_{1},y_{1})+\theta(d_{a}(x_{2},y_{2}))- \theta(d_{a}(x_{1},y_{2}))-\theta(d_{a}(x_{2},y_{1}))| \\
&\leq \sum_{a \in \mathcal{B}\cap[x_{1},y_{1}]_{c},a \in [x_{1},m]_{c}} | \theta(d_{a}(x_{1},y_{1})+\theta(d_{a}(x_{2},y_{2}))- \theta(d_{a}(x_{1},y_{2}))-\theta(d_{a}(x_{2},y_{1}))|\\
&+  \sum_{a \in \mathcal{B}\cap[x_{1},y_{1}]_{c},a \in [m,y_{1}]_{c}} | \theta(d_{a}(x_{1},y_{1})+\theta(d_{a}(x_{2},y_{2}))- \theta(d_{a}(x_{1},y_{2}))-\theta(d_{a}(x_{2},y_{1}))|.
 \end{aligned}$$

 Let us bound from above the first sum, the bound for the second sum will follow by symmetry.

 Let $a \in X_{c}$ such that $a \in [x_{1},m]_{c}$, we have:
$$\begin{aligned}
d_{c}(a,[y_{1},y_{2}]_{c})&\geq d_{c}(a,y_{1})-r~(\text{since } d_{c}(y_{1},y_{2})\leq r)\\
& \geq \frac{1}{2}d_{c}(x_{1},y_{1})-1-r~(\text{since } a \in [x_{1},m]_{c}).
\end{aligned}$$
Thus if $R$ is large enough, we could apply Theorem \ref{theo: theo stylé sur les barycentres et les masques} to $a$ and $[y_{1},y_{2}]_{c}$ and we have:
$$d_{a}(y_{1},y_{2})\leq K_{1} \kappa^{d_{c}(a,[y_{1},y_{2}]_{c})} \leq K_{1} \kappa^{\frac{1}{2}d_{c}(x_{1},y_{1})-1-r},$$

where $K_{1}$ is the constant of Theorem \ref{theo: theo stylé sur les barycentres et les masques}.

Thereby, we get:
$$\begin{aligned}
&\sum_{a \in \mathcal{B}\cap[x_{1},y_{1}]_{c},a \in [x_{1},m]_{c}} | \theta(d_{a}(x_{1},y_{1})+\theta(d_{a}(x_{2},y_{2}))- \theta(d_{a}(x_{1},y_{2}))-\theta(d_{a}(x_{2},y_{1}))| \\
&\leq \sum_{a \in \mathcal{B}\cap[x_{1},y_{1}]_{c},a \in [x_{1},m]_{c}} 2Pt_{0}^{P-1}d_{a}(y_{1},y_{2})~(\text{according to Lemma \ref{lem: theta est lipschtizienne}})\\
&\leq 2Pt_{0}^{P-1} d_{c}(x_{1},y_{1}) \kappa^{\frac{1}{2}d_{c}(x_{1},y_{1})-1-r}\\
& \leq 2Pt_{0}^{P-1}R \kappa^{\frac{1}{2} R -1 -r},
\end{aligned}$$
for $R$ large enough.

The last term converges to $0$ as $R$ tends to positive infinity, thus for $R$ large enough, it is smaller than $\eta$.

To conclude, we proved that:
$$\sum_{a \in \mathcal{B}} | \theta(d_{a}(x_{1},y_{1})+\theta(d_{a}(x_{2},y_{2}))- \theta(d_{a}(x_{1},y_{2}))-\theta(d_{a}(x_{2},y_{1}))| \leq 4 \eta.  $$

\item Now, we will prove that for $R$ large enough, we have:
$$\sum_{a \in X_{c} \backslash \mathcal{B}} | d_{a}(x_{1},y_{1})^{P}+d_{a}(x_{2},y_{2})^{P}-d_{a}(x_{1},y_{2})^{P}-d_{a}(x_{2},y_{1})^{P}|\leq 2 \eta.$$
We will choose $R\in \mathbb{N}$ even and large enough later.
We denote by $m$ a vertex in $[x_{1},y_{1}]_{c}$ such that $d_{c}(x_{1},m) \geq \frac{R}{2}$ and $d_{c}(m,y_{1}) \geq \frac{R}{2}$.  For all $a \in X$, $\pi(a)$ denotes the projection of $a$ onto $[x_{1},y_{1}]_{c}$ that is closest to $x_{1}$. More precisely, this means that $ \pi(a) \in [x_{1},y_{1}]_{c}$, $d_{c}(a,[x_{1},y_{1}]_{c})=d_{c}(a,\pi(a))$, for all $ u \in [x_{1},y_{1}]_{c}$, such that $d_{c}(a,[x_{1},y_{1}]_{c})=d_{c}(a,u)$, we have: $d_{c}(x_{1},u) > d_{c}(x_{1}, \pi(a))$. The fact that $\pi(a)$ is the projection of $a$ closest to $x_{1}$ is used only to define the projection of $a$ in $[x_{1},y_{1}]_{c}$ without ambiguity.

We have the following:
$$\begin{aligned}
 &\sum_{a \in X_{c} \backslash \mathcal{B}} | d_{a}(x_{1},y_{1})^{P}+d_{a}(x_{2},y_{2})^{P}-d_{a}(x_{1},y_{2})^{P}-d_{a}(x_{2},y_{1})^{P}| \\
& \leq \sum_{a \in X_{c} \backslash \mathcal{B}, \pi(a) \in [x_{1},m]_{c}} | d_{a}(x_{1},y_{1})^{P}+d_{a}(x_{2},y_{2})^{P}-d_{a}(x_{1},y_{2})^{P}-d_{a}(x_{2},y_{1})^{P}| \\
 & + \sum_{a \in X_{c} \backslash \mathcal{B}, \pi(a) \in [m,y_{1}]_{c}} | d_{a}(x_{1},y_{1})^{P}+d_{a}(x_{2},y_{2})^{P}-d_{a}(x_{1},y_{2})^{P}-d_{a}(x_{2},y_{1})^{P}|.
\end{aligned}$$

We will bound from above the first sum, the same proof will apply for the second by symmetry.
To do this, we remark that:
$$\begin{aligned}
 &\sum_{a \in X_{c} \backslash \mathcal{B}, \pi(a) \in [x_{1},m]_{c}} | d_{a}(x_{1},y_{1})^{P}+d_{a}(x_{2},y_{2})^{P}-d_{a}(x_{1},y_{2})^{P}-d_{a}(x_{2},y_{1})^{P}|  \\
& \leq \sum_{a \in X_{c} \backslash \mathcal{B}, \pi(a) \in [x_{1},m]_{c}} P \max(d_{a}(x_{1},y_{1})^{P-1},d_{a}(x_{1},y_{2})^{P-1})d_{a}(y_{1},y_{2}) \\
 & + \sum_{a \in X_{c} \backslash \mathcal{B}, \pi(a) \in [x_{1},m]_{c}} P \max(d_{a}(x_{2},y_{1})^{P-1},d_{a}(x_{2},y_{2})^{P-1})d_{a}(y_{1},y_{2}).
\end{aligned}$$

Again, we will bound the first sum. For the second sum, the same proof will apply.
Let us denote $N:=d_{c}(x_{1},m)$. We order $[m,x_{1}]_{c}$ in the following sense: $[m,x_{1}]_{c}=\{z_{0},z_{1},...,z_{N}\}$ with $z_{0}=m$, $z_{N}=x_{1}$ and $d_{c}(z_{i},z_{i+1})=1$ for $0\le i \le N-1$.

\begin{figure}[!ht]
   \centering
   \includegraphics[scale=0.3]{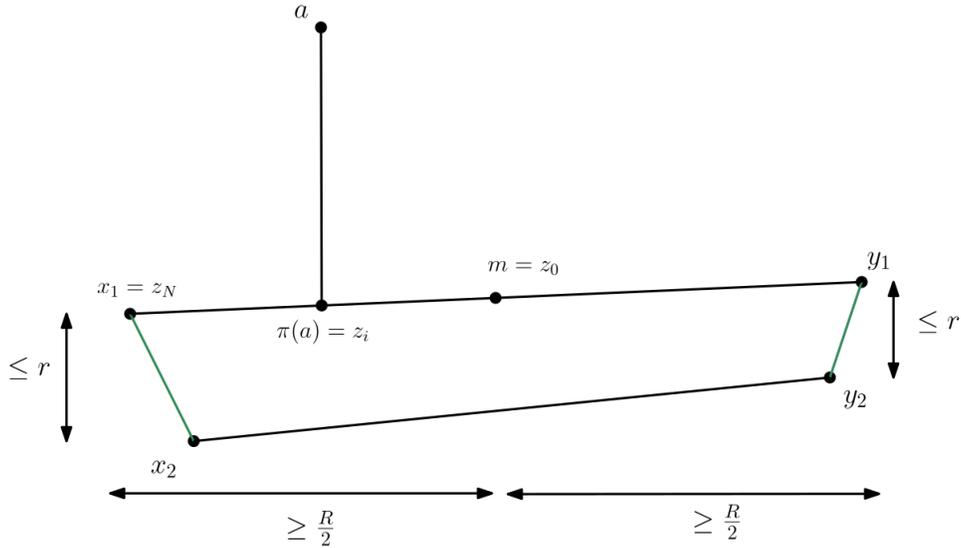}
   \caption{$\pi(a)=z_{i} \in [x_{1},m]$}
   \label{fig: situation B1 projection du cote de x1,x2 relativement hyperbolic}
\end{figure}

If $R$ is large enough, for all $a \in X_{c}$ with $a\in [m,x_{1}]_{c}$, we could apply Theorem \ref{theo: theo stylé sur les barycentres et les masques} to $a$ and $[y_{1},y_{2}]_{c}$, combined with Lemma \ref{lemme: projection plus angle sur un point qcq pour petite geodesique} and Lemma \ref{lem: projection et grand angle pour petite geodesique}, we get:
\begin{equation}\label{equation: control de day1y2}
d_{a}(y_{1},y_{2})\leq K_{1}\kappa^{\frac{1}{\alpha_{2}^{2}}d'_{(2000\delta)^{2}}(a,y_{1})-\omega_{2}},    
\end{equation}
where $\omega_{2}:=\lambda_{2}+(2000\delta)^{2}+r$ with $\lambda_{2}$ the constant of Lemma \ref{lemme: projection plus angle sur un point qcq pour petite geodesique}.

For all $i \in \{1,..., n\}$ and $a \in X_{c}$ such that $\pi(a)=z_{i}$, we have $d_{c}(z_{i},y_{1})\geq i+\frac{R}{2}$, thus:
$$\begin{aligned} 
           d'_{(2000\delta)^{2}}(a,y_{1}) & \geq  d_{c}(a,y_{1})\\      
            & \geq d_{c}(a,z_{i})+d_{c}(z_{i},y_{1})-24\delta \text{(according to Lemma \ref{lemme : z est la projection donc c'est un quasi-centre})} \\
             & \geq d_{c}(a,z_{i})+i+\frac{R}{2}-24\delta\\
             & \geq i+\frac{R}{2}-24\delta.
\end{aligned}$$

We have also according to Lemma \ref{lemme: d'deuxmiles projection} on the geodesic $[x_{1},y_{1}]_{c}$:
$$ d'_{(2000\delta)^{2}}(a,y_{1}) \geq \frac{1}{W}(i+\frac{R}{2}+d'_{(2000\delta)^{2}}(a,z_{i}))-W. $$

Moreover, according to Theorem \ref{theo: theo stylé sur les barycentres et les masques}, for all $i \in \{1,...,n\}$ and $a \in X_{c}$ such that $\pi(a)=z_{i}$ except for a finite number of $a$, we have:
\begin{align}\label{equation: bornage de dax1y1 et dax1y2}
&\max(d_{a}(x_{1},y_{1})^{P-1},d_{a}(x_{1},y_{2})^{P-1}) \notag\\
& \leq K_{1} \max(\kappa^{(P-1)d'_{(2000\delta)^{2}}(a,z_{i})},\kappa^{(P-1)(\frac{1}{\alpha_{2}^{2}}d'_{(2000\delta)^{2}}(a,z_{i})-\omega)}) (\text{according to Lemmata \ref{lemma: projection sur geodesic proche hyperbolicite relative} and \ref{lem: projection sur geodesic proche hyperbolicite relative avec grand angle}}) \notag \\
& \leq K_{1}\kappa^{\frac{P-1}{\alpha_{2}^{2}}d'_{(2000\delta)^{2}}(a,z_{i})-(P-1)\omega_{1}},
\end{align}
where $\omega_{1}:=\lambda_{1}+(2000\delta)^{2}+12\delta+\max(r,12\delta)$ with $\lambda_{1}$ the constant of Lemma \ref{lemma: projection sur geodesic proche hyperbolicite relative}.

Thus, we get:
$$\begin{aligned}
&\sum_{a \in X_{c} \backslash \mathcal{B}, \pi(a) \in [x_{1},m]_{c}} P \max(d_{a}(x_{1},y_{1})^{P-1},d_{a}(x_{1},y_{2})^{P-1})d_{a}(y_{1},y_{2})\\
& \leq P \sum_{i=0}^{N} \sum_{a \in X_{c} \backslash \mathcal{B}, \pi(a)=z_{i}} \max(d_{a}(x_{1},y_{1})^{P-1},d_{a}(x_{1},y_{2})^{P-1})d_{a}(y_{1},y_{2})\\
& \leq P \sum_{i=0}^{N} \sum_{a \in X_{c} \backslash \mathcal{B}, \pi(a)=z_{i}} \max(d_{a}(x_{1},y_{1})^{P-1},d_{a}(x_{1},y_{2})^{P-1})K_{1}\kappa^{\frac{1}{\alpha_{2}^{2}}d'_{(2000\delta)^{2}}(a,y_{1})-\omega_{2}}~(\text{according to Equation \ref{equation: control de day1y2}})\\
& \leq P  K_{1} 2\gamma_{G}^{C_{1}}t_{0}\kappa^{-\frac{24\delta}{\alpha_{2}^{2}}-\omega_{2}} \kappa^{\frac{R}{2\alpha_{2}^{2}}}\sum_{i=0}^{N}\kappa^{\frac{i}{\alpha_{2}^{2}}}~(\text{bound for the $a$ closed to $[x_{1},y_{1}]_{c}$})\\
&+ P \kappa^{\frac{R}{2 \alpha_{2}^{2}}} \kappa^{-(P-1)\omega_{1}-\omega_{2}} 
\sum_{i=0}^{N}\kappa^{\frac{i}{\alpha_{2}^{4}}} \sum_{a \in X_{c} \backslash \mathcal{B}} \kappa^{\frac{P-1}{\alpha_{2}^{2}}d'_{(2000\delta)^{2}}(a,z_{i})} \kappa^{\frac{1}{\alpha_{2}^{4} }d'_{(2000\delta)^{2}}(a,z_{i})}~(\text{according to Equation \ref{equation: bornage de dax1y1 et dax1y2}})\\
&\leq P  K_{1} 2\gamma_{G}^{C_{1}}t_{0}\kappa^{-\frac{24\delta}{\alpha_{2}^{2}}-\omega_{2}} \frac{1}{1-\kappa^{\frac{1}{\alpha_{2}^{2}}}} \kappa^{\frac{R}{2\alpha_{2}^{2}}}~(\text{geometric sum})\\
&+ P \kappa^{\frac{R}{2 \alpha_{2}^{2}}} \kappa^{-(P-1)\omega_{1}-\omega_{2}} 
\kappa^{\frac{R}{2\alpha_{4}}} \sum_{i=0}^{N}\kappa^{\frac{i}{\alpha_{2}^{4}}} \sum_{a \in X_{c}} \kappa^{\frac{P}{\alpha_{2}^{4}}d'_{(2000\delta)^{2}}(a,z_{i})} \\
& \leq P  K_{1} 2\gamma_{G}^{C_{1}}t_{0}\kappa^{-\frac{24\delta}{\alpha_{2}^{2}}-\omega_{2}} \frac{1}{1-\kappa^{\frac{1}{\alpha_{2}^{2}}}} \kappa^{\frac{R}{2\alpha_{2}^{2}}} \\
& + P \kappa^{-(P-1)\omega_{1}-\omega_{2}} 
\kappa^{\frac{R}{2\alpha_{2}^{2}}} \sigma \sum_{i=0}^{N}\kappa^{\frac{i}{\alpha_{2}^{4}}} ~(\text{according to Proposition \ref{proposition: nouveau p}})\\
& \leq P  K_{1} 2\gamma_{G}^{C_{1}}t_{0}\kappa^{-\frac{24\delta}{\alpha_{2}^{2}}-\omega_{2}} \frac{1}{1-\kappa^{\frac{1}{\alpha_{2}^{2}}}} \kappa^{\frac{R}{2\alpha_{2}^{2}}} + P  \kappa^{-(P-1)\omega_{1}-\omega_{2}} 
\sigma \frac{1}{1-\kappa^{\frac{1}{ \alpha_{2}^{4}}}} \kappa^{\frac{R}{\alpha_{2}^{4}}}~(\text{geometric sum}).
\end{aligned}$$

As $R$ goes to infinity the last term goes to zero therefore, for $R$ big enough, we have:
$$\sum_{a \in X_{c} \backslash \mathcal{B}, \pi(a) \in [x_{1},m]_{c}} P \max(d_{a}(x_{1},y_{1})^{P-1},d_{a}(x_{1},y_{2})^{P-1})d_{a}(y_{1},y_{2})\leq \eta.$$

Hence:
$$\sum_{a \in X_{c} \backslash \mathcal{B}} | d_{a}(x_{1},y_{1})^{P}+d_{a}(x_{2},y_{2})^{P}-d_{a}(x_{1},y_{2})^{P}-d_{a}(x_{2},y_{1})^{P}|\leq 2 \eta.$$

This concludes the proof.
\end{enumerate}
\end{enumerate}
    
\end{proof}

\subsection{Conclusion}\label{subsection: conclusion fortement bolic}

We give a proof of Theorem \ref{theo: bolicite forte pour relativement hyperbolique avec paraobliques admissible}.

\begin{proof} of Theorem \ref{theo: bolicite forte pour relativement hyperbolique avec paraobliques admissible}

The function $\hat{d}$ of Definition \ref{definition : metrique fortement bolique groupe relativement hyperbolique} is a metric on $G$ according to Proposition \ref{proposition: la metrique fortement bolique est une metrique}.

By definition $\hat{d}$ is $G$-invariant.

 According to Proposition \ref{proposition: quasi-isometry} $\hat{d}$ is quasi-isometric to the word metric, therefore $\hat{d}$ is uniformly locally finite.

The metric space $(G,\hat{d})$ is weakly geodesic according to Proposition \ref{proposition: faiblement geodesique}.

According to Theorem \ref{theo: strong b1 relativement hyperbolic} $\hat{d}$ satisfies the strong-$B1$ condition and the weak-$B2'$ condition according to Proposition \ref{proposition: dchapeau verifie faiblement b2'}, therefore $\hat{d}$ is strongly bolic.

\end{proof}

As $CAT(0)$ groups are admissible, we deduce Theorem \ref{theorem: bolicite forte pour relativement hyperboliques avec paraboliques cat(0)}.

\begin{proof} of Theorem \ref{theorem: bolicite forte pour relativement hyperboliques avec paraboliques cat(0)}
According to Proposition \ref{proposition: lesgroupes Cat(0) sont admissibles} $CAT(0)$ groups are admissible. We apply Theorem \ref{theo: bolicite forte pour relativement hyperbolique avec paraobliques admissible}.

\end{proof}

According to Lafforgue’s theorem \ref{theo : Baum-Connes=Rapid decay + Strongly Bolic}, we present here our results concerning the Baum-Connes conjecture.

\begin{proof} of Corollary \ref{corollary: Baum-Connes pour relativement hyperboliques avec paraboliques}

Let $H$ be a subgroup of a relatively hyperbolic group $G$ with $CAT(0)$ parabolics with the $(RD)$ property.
 According to Theorem \ref{theo: theoremimportant bolicite forte}, there exists a metric $\hat{d}$ on $G$ such that $(G,\hat{d})$ is weakly geodesic and strongly bolic. According to Proposition \ref{proposition: quasi-isometry}, $\hat{d}$ is quasi-isometric to the word metric on $G$ therefore it is locally uniformly finite and the action of $H$ on $G$ is proper.
Finally, $H$ satisfies the $(RD)$ property according to \cite{ChatterjiRuaneRdreseauxderangUn}.
According to Theorem \ref{theo : Baum-Connes=Rapid decay + Strongly Bolic}, we deduce that $H$ satisfies the Baum-Connes conjecture.

 \end{proof}

 In this corollary, we provide families of $CAT(0)$ groups for which the $(RD)$ property is known.

 \begin{proof} of Corollary \ref{corollary: Baum-Connes pour relativement hyperbolique en disant lesquels satisfont RD}

According to \cite[Proposition 8.1]{minasyan}, virtually abelian groups are $CAT(0)$, therefore they are admissible according to Proposition \ref{proposition: lesgroupes Cat(0) sont admissibles}. They satisfy $(RD)$ property according to Jolissaint \cite{JolissaintRD}.

Cocompactly cubulated groups are $CAT(0)$ by definition, they satisfy $(RD)$ property according to \cite{ChatterjiRuaneRD}.

Coxeter groups are $CAT(0)$ according to \cite{DavisCoxetergroups}, they satisfy $(RD)$ property according to \cite{ChatterjiRuaneRD}.

 \end{proof}

\begin{proof} of Corollary \ref{Corollay: Baum-Connes pour variétés hyperboliques réelles}

According to \cite[Lemma II.10.27]{BridsonHaefliger}, Fundamental groups of real complete hyperbolic manifold are hyperbolic relatively to virtually abelian subgroups, therefore they satisfy the Baum-Connes conjecture according to Corollary \ref{corollary: Baum-Connes pour relativement hyperbolique en disant lesquels satisfont RD}.
\end{proof}

\newpage

\bibliographystyle{alpha}
\bibliography{bibliography}

@Article{Lafforgue,
 Author = {Lafforgue, Vincent},
 Title = {A reinforcement of property ({T})},
 FJournal = {Duke Mathematical Journal},
 Journal = {Duke Math. J.},
 ISSN = {0012-7094},
 Volume = {143},
 Number = {3},
 Pages = {559--602},
 Year = {2008},
 Language = {French},
 DOI = {10.1215/00127094-2008-029},
 Keywords = {22D20,46B04,20E08,20F67,22E35,22E46},
 zbMATH = {5306894},
 Zbl = {1158.46049}
}

@article{ChatterjiDahmani,
     author = {Chatterji, Indira and Dahmani, Fran\c{c}ois},
     title = {Proper actions on $\ell ^p$-spaces for relatively hyperbolic groups},
     journal = {Annales Henri Lebesgue},
     pages = {35--66},
     publisher = {\'ENS Rennes},
     volume = {3},
     year = {2020},
     doi = {10.5802/ahl.26},
     language = {en},
     url = {https://ahl.centre-mersenne.org/articles/10.5802/ahl.26/}
}

@Article{Bowditchrelhyp,
 Author = {Bowditch, B. H.},
 Title = {Relatively hyperbolic groups.},
 FJournal = {International Journal of Algebra and Computation},
 Journal = {Int. J. Algebra Comput.},
 ISSN = {0218-1967},
 Volume = {22},
 Number = {3},
 Pages = {1250016, 66},
 Year = {2012},
 Language = {English},
 DOI = {10.1142/S0218196712500166},
 Keywords = {20F67,20F65,20F10,53C23,57M07,53A35},
 zbMATH = {6042654},
 Zbl = {1259.20052}
}

@Article{Tmedianviewpoint,
 Author = {Chatterji, Indira and Dru{\c{t}}u, Cornelia and Haglund, Fr{\'e}d{\'e}ric},
 Title = {Kazhdan and {Haagerup} properties from the median viewpoint.},
 FJournal = {Advances in Mathematics},
 Journal = {Adv. Math.},
 ISSN = {0001-8708},
 Volume = {225},
 Number = {2},
 Pages = {882--921},
 Year = {2010},
 Language = {English},
 DOI = {10.1016/j.aim.2010.03.012},
 Keywords = {20F65,20F67,22D10,43A07,22F50,46B04},
 zbMATH = {5777258},
 Zbl = {1271.20053}
}

@Book{BridsonHaefliger,
 Author = {Bridson, Martin R. and Haefliger, Andr{\'e}},
 Title = {Metric spaces of non-positive curvature},
 FSeries = {Grundlehren der Mathematischen Wissenschaften},
 Series = {Grundlehren Math. Wiss.},
 ISSN = {0072-7830},
 Volume = {319},
 ISBN = {3-540-64324-9},
 Year = {1999},
 Publisher = {Berlin: Springer},
 Language = {English},
 Keywords = {53-02,53C23,53C70,53C45,20F65,57M07},
 zbMATH = {1385418},
 Zbl = {0988.53001}
}

@Book{GhysDelaHarpe,
 Editor = {Ghys, Etienne and de la Harpe, Pierre},
 Title = {Sur les groupes hyperboliques d'apr{\`e}s {Mikhael} {Gromov}. ({On} the hyperbolic groups {\`a} la {M}. {Gromov})},
 FSeries = {Progress in Mathematics},
 Series = {Prog. Math.},
 ISSN = {0743-1643},
 Volume = {83},
 ISBN = {0-8176-3508-4},
 Year = {1990},
 Publisher = {Boston, MA: Birkh{\"a}user},
 Language = {French},
 Keywords = {20F38,20F05,20-02,53C21,20E07,20E06},
 zbMATH = {49662},
 Zbl = {0731.20025}
}

@Article{Farbrelhyp,
 Author = {Farb, B.},
 Title = {Relatively hyperbolic groups},
 FJournal = {Geometric and Functional Analysis. GAFA},
 Journal = {Geom. Funct. Anal.},
 ISSN = {1016-443X},
 Volume = {8},
 Number = {5},
 Pages = {810--840},
 Year = {1998},
 Language = {English},
 DOI = {10.1007/s000390050075},
 Keywords = {20F67,57M07,57M05,20F05,20E06,20F10},
 zbMATH = {1223299},
 Zbl = {0985.20027}
}

@Article{MasurMinsky,
 Author = {Masur, Howard A. and Minsky, Yair N.},
 Title = {Geometry of the complex of curves. {I}: {Hyperbolicity}},
 FJournal = {Inventiones Mathematicae},
 Journal = {Invent. Math.},
 ISSN = {0020-9910},
 Volume = {138},
 Number = {1},
 Pages = {103--149},
 Year = {1999},
 Language = {English},
 DOI = {10.1007/s002220050343},
 Keywords = {32G15,32Q45},
 zbMATH = {1355494},
 Zbl = {0941.32012}
}

@Misc{Gromovhypgroups,
 Author = {Gromov, M.},
 Title = {Hyperbolic groups},
 Year = {1987},
 Language = {English},
 HowPublished = {Essays in group theory, {Publ}., {Math}. {Sci}. {Res}. {Inst}. 8, 75-263 (1987).},
 Keywords = {20F65,20F06,57M05,20F05,20F34,20F10,53A35,53C22},
 zbMATH = {4031953},
 Zbl = {0634.20015}
}

@Book{DrutuKapovichGeometricGroupTheory,
 Author = {Dru{\c{t}}u, Cornelia and Kapovich, Michael},
 Title = {Geometric group theory. {With} an appendix by {Bogdan} {Nica}},
 FSeries = {Colloquium Publications. American Mathematical Society},
 Series = {Colloq. Publ., Am. Math. Soc.},
 ISSN = {0065-9258},
 Volume = {63},
 ISBN = {978-1-4704-1104-6; 978-1-4704-4164-7},
 Year = {2018},
 Publisher = {Providence, RI: American Mathematical Society (AMS)},
 Language = {English},
 DOI = {10.1090/coll/063},
 Keywords = {20-02,20F65,20F67,20F05,20F16,20F18,20F34,20E08,57M07},
 zbMATH = {6866242},
 Zbl = {1447.20001}
}

@misc{anglehyperbolicHaettelChatterjiDahmaniLecureux,
      title={Tangent bundles of hyperbolic spaces and proper affine actions on $L^p$ spaces}, 
      author={Indira Chatterji and François Dahmani and Thomas Haettel and Jean Lécureux},
      year={2020},
      eprint={1901.07462},
      archivePrefix={arXiv},
      primaryClass={math.GR}
}

@InCollection{ChatterjiRd,
 Author = {Chatterji, Indira},
 Title = {Introduction to the rapid decay property},
 BookTitle = {Around Langlands correspondences. International conference, Universit\'e Paris Sud, Orsay, France, June 17--20, 2015. Proceedings},
 ISBN = {978-1-4704-3573-8; 978-1-4704-4117-3},
 Pages = {53--72},
 Year = {2017},
 Publisher = {Providence, RI: American Mathematical Society (AMS)},
 Language = {English},
 DOI = {10.1090/conm/691/13893},
 Keywords = {20F65,22D40,22D25},
 zbMATH = {6859874},
 Zbl = {1404.20032}
}

@Article{delaHarpeRD,
 Author = {de la Harpe, Pierre},
 Title = {Groupes hyperboliques, alg{\`e}bres d'op{\'e}rateurs et un th{\'e}or{\`e}me de {Jolissaint}. ({Hyperbolic} groups, operator algebras and {Jolissaint}'s theorem)},
 FJournal = {Comptes Rendus de l'Acad{\'e}mie des Sciences. S{\'e}rie I},
 Journal = {C. R. Acad. Sci., Paris, S{\'e}r. I},
 ISSN = {0764-4442},
 Volume = {307},
 Number = {14},
 Pages = {771--774},
 Year = {1988},
 Language = {French},
 Keywords = {46L05,46L55,46L35,46H30},
 zbMATH = {4065708},
 Zbl = {0653.46059}
}

@Article{JolissaintRD,
 Author = {Jolissaint, Paul},
 Title = {Rapidly decreasing functions in reduced {{\(C^*\)}}-algebras of groups},
 FJournal = {Transactions of the American Mathematical Society},
 Journal = {Trans. Am. Math. Soc.},
 ISSN = {0002-9947},
 Volume = {317},
 Number = {1},
 Pages = {167--196},
 Year = {1990},
 Language = {English},
 DOI = {10.2307/2001458},
 Keywords = {46L87,43A15},
 zbMATH = {4169445},
 Zbl = {0711.46054}
}

@Article{ChatterjiRuaneRdreseauxderangUn,
 Author = {Chatterji, I. and Ruane, K.},
 Title = {Some geometric groups with rapid decay},
 FJournal = {Geometric and Functional Analysis. GAFA},
 Journal = {Geom. Funct. Anal.},
 ISSN = {1016-443X},
 Volume = {15},
 Number = {2},
 Pages = {311--339},
 Year = {2005},
 Language = {English},
 DOI = {10.1007/s00039-005-0508-9},
 Keywords = {22D25,20F65,20F67,22E40,43A15},
 zbMATH = {2228644},
 Zbl = {1134.22005}
}

@Article{DrutuSapirRdrelivementHyp,
 Author = {Dru{\c{t}}u, Cornelia and Sapir, Mark},
 Title = {Relatively hyperbolic groups with rapid decay property},
 FJournal = {IMRN. International Mathematics Research Notices},
 Journal = {Int. Math. Res. Not.},
 ISSN = {1073-7928},
 Volume = {2005},
 Number = {19},
 Pages = {1181--1194},
 Year = {2005},
 Language = {English},
 DOI = {10.1155/IMRN.2005.1181},
 Keywords = {22D25,20F67,20E99},
 zbMATH = {2188818},
 Zbl = {1077.22006}
}

@Article{SapirRd,
 Author = {Sapir, Mark},
 Title = {The rapid decay property and centroids in groups.},
 FJournal = {Journal of Topology and Analysis},
 Journal = {J. Topol. Anal.},
 ISSN = {1793-5253},
 Volume = {7},
 Number = {3},
 Pages = {513--541},
 Year = {2015},
 Language = {English},
 DOI = {10.1142/S1793525315500247},
 Keywords = {20F65,20F67,20F69,22D25,43A15,20F05,20F06},
 zbMATH = {6458615},
 Zbl = {1334.20038}
}

@Book{ValetteIntroBaumConnes,
 Author = {Valette, Alain},
 Title = {Introduction to the {Baum}-{Connes} conjecture. {With} notes taken by {Indira} {Chatterji}. {With} an appendix by {Guido} {Mislin}},
 ISBN = {3-7643-6706-7},
 Year = {2002},
 Publisher = {Basel: Birkh{\"a}user},
 Language = {English},
 Keywords = {58J22,19K35,22D25,46L80,46L87},
 zbMATH = {1853001},
 Zbl = {1136.58013}
}

@Article{ConnesMoscoviciNovikovConjHypGroup,
 Author = {Connes, Alain and Moscovici, Henri},
 Title = {Cyclic cohomology, the {Novikov} conjecture and hyperbolic groups},
 FJournal = {Topology},
 Journal = {Topology},
 ISSN = {0040-9383},
 Volume = {29},
 Number = {3},
 Pages = {345--388},
 Year = {1990},
 Language = {English},
 DOI = {10.1016/0040-9383(90)90003-3},
 Keywords = {58J22,19D55,19K56,46L80,57R20},
 zbMATH = {5309},
 Zbl = {0759.58047}
}

@Book{OsinRelativelyHyperbolic,
 Author = {Osin, Denis V.},
 Title = {Relatively hyperbolic groups: intrinsic geometry, algebraic properties, and algorithmic problems.},
 FSeries = {Memoirs of the American Mathematical Society},
 Series = {Mem. Am. Math. Soc.},
 ISSN = {0065-9266},
 Volume = {843},
 ISBN = {978-0-8218-3821-1; 978-1-4704-0444-4},
 Year = {2006},
 Publisher = {Providence, RI: American Mathematical Society (AMS)},
 Language = {English},
 DOI = {10.1090/memo/0843},
 Keywords = {20F67,20F05,20F06,20F10,20F65,20F69,57M05,57M07},
 zbMATH = {5014115},
 Zbl = {1093.20025}
}

@Article{AndersonShackletonKennethZAramayonaCriterePasRelHyp,
 Author = {Anderson, James W. and Aramayona, Javier and Shackleton, Kenneth J.},
 Title = {An obstruction to the strong relative hyperbolicity of a group.},
 FJournal = {Journal of Group Theory},
 Journal = {J. Group Theory},
 ISSN = {1433-5883},
 Volume = {10},
 Number = {6},
 Pages = {749--756},
 Year = {2007},
 Language = {English},
 DOI = {10.1515/JGT.2007.054},
 Keywords = {20F67,20F05,57M07},
 URL = {eprints.soton.ac.uk/50546/1/relhyp.pdf},
 zbMATH = {5222399},
 Zbl = {1188.20041}
}

@Article{AlvarezLafforguehyperboliclp,
 Author = {Alvarez, Aur{\'e}lien and Lafforgue, Vincent},
 Title = {Proper affine isometric actions of hyperbolic groups on {{\(\ell^p\)}} spaces},
 FJournal = {Expositiones Mathematicae},
 Journal = {Expo. Math.},
 ISSN = {0723-0869},
 Volume = {35},
 Number = {1},
 Pages = {103--118},
 Year = {2017},
 Language = {French},
 DOI = {10.1016/j.exmath.2016.06.005},
 Keywords = {20F67,46B04,22D12},
 zbMATH = {6787004},
 Zbl = {1476.20045}
}

@Article{AlvarezLafforgueQuotienthyperbolic,
 Author = {Alvarez, Aur{\'e}lien and Lafforgue, Vincent},
 Title = {Proper affine isometric actions of hyperbolic groups on quotients of {{\(\ell^p\)}} spaces},
 FJournal = {Annales Scientifiques de l'{\'E}cole Normale Sup{\'e}rieure. Quatri{\`e}me S{\'e}rie},
 Journal = {Ann. Sci. {\'E}c. Norm. Sup{\'e}r. (4)},
 ISSN = {0012-9593},
 Volume = {51},
 Number = {6},
 Pages = {1389--1398},
 Year = {2018},
 Language = {French},
 Keywords = {20F67,46B04,22D12},
 URL = {smf.emath.fr/publications/actions-affines-isometriques-propres-des-groupes-hyperboliques-sur-des-quotients},
 zbMATH = {7080300},
 Zbl = {1512.20147}
}

@Article{SistoFormuleDelaDistance,
 Author = {Sisto, Alessandro},
 Title = {Projections and relative hyperbolicity.},
 FJournal = {L'Enseignement Math{\'e}matique. 2e S{\'e}rie},
 Journal = {Enseign. Math. (2)},
 ISSN = {0013-8584},
 Volume = {59},
 Number = {1-2},
 Pages = {165--181},
 Year = {2013},
 Language = {English},
 DOI = {10.4171/LEM/59-1-6},
 Keywords = {20F67,20F65,20F69,57M07},
 zbMATH = {6238401},
 Zbl = {1309.20036}
}

@Article{LafforgueBaumConnesConjecture,
 Author = {Lafforgue, Vincent},
 Title = {Bivariant {{\(K\)}}-theory for {Banach} algebras and the {Baum}-{Connes} conjecture},
 FJournal = {Inventiones Mathematicae},
 Journal = {Invent. Math.},
 ISSN = {0020-9910},
 Volume = {149},
 Number = {1},
 Pages = {1--95},
 Year = {2002},
 Language = {French},
 DOI = {10.1007/s002220200213},
 Keywords = {19K35,46L80,58J22,46M20,46H25},
 zbMATH = {1965420},
 Zbl = {1084.19003}
}

@Article{MineyevYuStrongBolicityhypergroups,
 Author = {Mineyev, Igor and Yu, Guoliang},
 Title = {The {Baum}-{Connes} conjecture for hyperbolic groups.},
 FJournal = {Inventiones Mathematicae},
 Journal = {Invent. Math.},
 ISSN = {0020-9910},
 Volume = {149},
 Number = {1},
 Pages = {97--122},
 Year = {2002},
 Language = {English},
 DOI = {10.1007/s002220200214},
 Keywords = {20F67,19K56,20F65,46L89,46L05,46L80},
 zbMATH = {1965421},
 Zbl = {1038.20030}
}

@Article{BucherKarlssonDefBolicSapces,
 Author = {Bucher, Michelle and Karlsson, Anders},
 Title = {On the definition of {Bolic} spaces},
 FJournal = {Expositiones Mathematicae},
 Journal = {Expo. Math.},
 ISSN = {0723-0869},
 Volume = {20},
 Number = {3},
 Pages = {269--277},
 Year = {2002},
 Language = {English},
 DOI = {10.1016/S0723-0869(02)80024-X},
 Keywords = {54E35},
 zbMATH = {1810509},
 Zbl = {1011.54027}
}

@Article{KasparovSkandalisBolic1,
 Author = {Kasparov, Gennadi and Skandalis, Georges},
 Title = {Groups acting properly on ``bolic'' spaces and the {Novikov} conjecture},
 FJournal = {Annals of Mathematics. Second Series},
 Journal = {Ann. Math. (2)},
 ISSN = {0003-486X},
 Volume = {158},
 Number = {1},
 Pages = {165--206},
 Year = {2003},
 Language = {English},
 DOI = {10.4007/annals.2003.158.165},
 Keywords = {19K35,46L80,57R99},
 zbMATH = {1997275},
 Zbl = {1029.19003}
}

@Article{KasparovSkandalisBolic2,
 Author = {Kasparov, Guennadi and Skandalis, Georges},
 Title = {``{Bolic}'' groups and {Novikov}'s conjecture},
 FJournal = {Comptes Rendus de l'Acad{\'e}mie des Sciences. S{\'e}rie I},
 Journal = {C. R. Acad. Sci., Paris, S{\'e}r. I},
 ISSN = {0764-4442},
 Volume = {319},
 Number = {8},
 Pages = {815--820},
 Year = {1994},
 Language = {French},
 Keywords = {19K35,46L80,53C23,55R40},
 zbMATH = {709482},
 Zbl = {0839.19003}
}

@Article{ThomasHodaPetytlpcomplexes,
 Author = {Haettel, Thomas and Hoda, Nima and Petyt, Harry},
 Title = {{{\( \ell^p\)}} metrics on cell complexes},
 FJournal = {Journal of the London Mathematical Society. Second Series},
 Journal = {J. Lond. Math. Soc., II. Ser.},
 ISSN = {0024-6107},
 Volume = {111},
 Number = {1},
 Pages = {43},
 Note = {Id/No e70062},
 Year = {2025},
 Language = {English},
 DOI = {10.1112/jlms.70062},
 Keywords = {20Fxx,57M60,52A21,51F30,20F65,20F67},
 zbMATH = {7976931}
}

@Article{HigsonKasparovHaagerupimpliqueBaum-Connes,
 Author = {Higson, Nigel and Kasparov, Gennadi},
 Title = {Operator {{\(K\)}}-theory for groups which act properly and isometrically on {Hilbert} space},
 FJournal = {Electronic Research Announcements of the American Mathematical Society},
 Journal = {Electron. Res. Announc. Am. Math. Soc.},
 ISSN = {1079-6762},
 Volume = {3},
 Number = {22},
 Pages = {131--142},
 Year = {1997},
 Language = {English},
 DOI = {10.1090/S1079-6762-97-00038-3},
 Keywords = {46L80},
 URL = {https://eudml.org/doc/231529},
 zbMATH = {1108913},
 Zbl = {0888.46046}
}

@InCollection{AparicioJulgValetteBaumConnesConjecture,
 Author = {Gomez Aparicio, Maria Paula and Julg, Pierre and Valette, Alain},
 Title = {The {Baum}-{Connes} conjecture: an extended survey},
 BookTitle = {Advances in noncommutative geometry. Based on the noncommutative geometry conference, Shanghai, China, March 23 -- April 7, 2017. On the occasion of Alain Connes' 70th Birthday},
 ISBN = {978-3-030-29596-7; 978-3-030-29597-4},
 Pages = {127--244},
 Year = {2019},
 Publisher = {Cham: Springer},
 Language = {English},
 DOI = {10.1007/978-3-030-29597-4_3},
 Keywords = {58B34,46L05},
 zbMATH = {7217273},
 Zbl = {1447.58006}
}

@Article{BaumConnes1,
 Author = {Baum, Paul and Connes, Alain},
 Title = {Geometric {{\(K\)}}-theory for {Lie} groups and foliations},
 FJournal = {L'Enseignement Math{\'e}matique. 2e S{\'e}rie},
 Journal = {Enseign. Math. (2)},
 ISSN = {0013-8584},
 Volume = {46},
 Number = {1-2},
 Pages = {3--42},
 Year = {2000},
 Language = {English},
 Keywords = {46L87,46L80,22D25,58B34,58J20,19K56,46L55},
 zbMATH = {1558755},
 Zbl = {0985.46042}
}

@InCollection{BaumConnes2Nigel,
 Author = {Baum, Paul and Connes, Alain and Higson, Nigel},
 Title = {Classifying space for proper actions and {{\(K\)}}-theory of group {{\(C^*\)}}- algebras},
 BookTitle = {\(C^*\)-Algebras: 1943-1993. A fifty year celebration. AMS special session commemorating the first fifty years of \(C^*\)-algebra theory, January 13-14, 1993, San Antonio, TX, USA},
 ISBN = {0-8218-5175-6},
 Pages = {241--291},
 Year = {1994},
 Publisher = {Providence, RI: American Mathematical Society},
 Language = {English},
 Keywords = {46L80,58J22},
 zbMATH = {705099},
 Zbl = {0830.46061}
}

@InCollection{SurveyNovikov,
 Author = {Ferry, Steven C. and Ranicki, Andrew and Rosenberg, Jonathan},
 Title = {A history and survey of the {Novikov} conjecture},
 BookTitle = {Novikov conjectures, index theorems and rigidity. Vol. 1. Based on a conference of the Mathematisches Forschungsinstitut Oberwolfach held in September 1993},
 ISBN = {0-521-49796-5},
 Pages = {7--66},
 Year = {1995},
 Publisher = {Cambridge: Cambridge University Press},
 Language = {English},
 Keywords = {57R67,01A60,57R20,58J20},
 zbMATH = {1452537},
 Zbl = {0954.57018}
}

@Article{LandInjBcImpliesNovikov,
 Author = {Land, Markus},
 Title = {The analytical assembly map and index theory},
 FJournal = {Journal of Noncommutative Geometry},
 Journal = {J. Noncommut. Geom.},
 ISSN = {1661-6952},
 Volume = {9},
 Number = {2},
 Pages = {603--619},
 Year = {2015},
 Language = {English},
 DOI = {10.4171/JNCG/202},
 Keywords = {19K56},
 zbMATH = {6464171},
 Zbl = {1343.19001}
}

@InCollection{RosenbergStolz,
 Author = {Rosenberg, Jonathan and Stolz, Stephan},
 Title = {A ``stable'' version of the {Gromov}-{Lawson} conjecture},
 BookTitle = {The \v{C}ech centennial. A conference on homotopy theory dedicated to Eduard \v{C}ech on the occasion of his 100th birthday, June 22-26, 1993, Northeastern University, Boston, MA, USA},
 ISBN = {0-8218-0296-8},
 Pages = {405--418},
 Year = {1995},
 Publisher = {Providence, RI: American Mathematical Society},
 Language = {English},
 Keywords = {53C20},
 zbMATH = {737050},
 Zbl = {0818.53057}
}

@Article{Whiteheadgroups,
 Author = {Bettaieb, Hela and Matthey, Michel and Valette, Alain},
 Title = {Unbounded symmetric operators in {{\(K\)}}-homology and the {Baum}-{Connes} conjecture},
 FJournal = {Journal of Functional Analysis},
 Journal = {J. Funct. Anal.},
 ISSN = {0022-1236},
 Volume = {229},
 Number = {1},
 Pages = {184--237},
 Year = {2005},
 Language = {English},
 DOI = {10.1016/j.jfa.2005.01.013},
 Keywords = {19K33,19K35,19K56,19L64},
 zbMATH = {2240080},
 Zbl = {1086.19002}
}

@unpublished{HaissinskyMatthieu,
    author ={Haissïnky Matthieu} ,
    year={2011},
    note={online available},
    title={La conjecture de Baum-connes pour les groupes hyperboliques par les marches aléatoires},
    url= { https://phaissin.perso.math.cnrs.fr/Doc/baumconnesgreen.pdf}}

@Article{OyonoBcetarbres,
 Author = {Oyono-Oyono, Herv{\'e}},
 Title = {Baum-{Connes} conjecture and group actions on trees},
 FJournal = {\(K\)-Theory},
 Journal = {\(K\)-Theory},
 ISSN = {0920-3036},
 Volume = {24},
 Number = {2},
 Pages = {115--134},
 Year = {2001},
 Language = {English},
 DOI = {10.1023/A:1012786413219},
 Keywords = {19K35,46L80,20E08,57M50,58J22},
 zbMATH = {1749299},
 Zbl = {1008.19001}
}

@InCollection{BekkaCherixValetteHaagamenable,
 Author = {Bekka, M. E. B. and Cherix, P.-A. and Valette, A.},
 Title = {Proper affine isometric actions of amenable groups},
 BookTitle = {Novikov conjectures, index theorems and rigidity. Vol. 2. Based on a conference of the Mathematisches Forschungsinstitut Oberwolfach in September 1993},
 ISBN = {0-521-49795-7},
 Pages = {1--4},
 Year = {1995},
 Publisher = {Cambridge: Cambridge University Press},
 Language = {English},
 Keywords = {43A07,22D10},
 zbMATH = {1452552},
 Zbl = {0959.43001}
}

@Article{LafforgueBaumConnesàcoeff,
 Author = {Lafforgue, Vincent},
 Title = {The {Baum}-{Connes} conjecture with coefficients for hyperbolic groups},
 FJournal = {Journal of Noncommutative Geometry},
 Journal = {J. Noncommut. Geom.},
 ISSN = {1661-6952},
 Volume = {6},
 Number = {1},
 Pages = {1--197},
 Year = {2012},
 Language = {French},
 DOI = {10.4171/JNCG/89},
 Keywords = {19K35,19L47,20F67,46L80},
 zbMATH = {6012491},
 Zbl = {1328.19010}
}

@InCollection{LafforgueStrongTetBaumConnes,
 Author = {Lafforgue, Vincent},
 Title = {The strengthened property ({T}) and the {Baum}-{Connes} conjecture},
 BookTitle = {Quanta of maths. Conference on non commutative geometry in honor of Alain Connes, Paris, France, March 29--April 6, 2007},
 ISBN = {978-0-8218-5203-3},
 Pages = {323--345},
 Year = {2010},
 Publisher = {Providence, RI: American Mathematical Society (AMS); Cambridge, MA: Clay Mathematics Institute},
 Language = {French},
 Keywords = {19K35,19K56,46L80},
 zbMATH = {5901998},
 Zbl = {1216.19007}
}

@article{Lajoinie-Dodel,
 author = {Lajoinie-Dodel, Herm{\`e}s},
 title = {Strong property {{\((T)\)}} and relatively hyperbolic groups},
 fjournal = {Mathematische Zeitschrift},
 journal = {Math. Z.},
 issn = {0025-5874},
 volume = {310},
 number = {2},
 pages = {42},
 note = {Id/No 36},
 year = {2025},
 language = {English},
 doi = {10.1007/s00209-025-03742-6},
 keywords = {20F67,20F65,22D10,53C24,19J35},
 zbMATH = {8037432}
}

@misc{NishikawaPetrosyanBC,
      title={Dehn fillings, equivariant homology, and the Baum-Connes conjecture}, 
      author={Shintaro Nishikawa and Nansen Petrosyan},
      year={2025},
      eprint={2509.15070},
      archivePrefix={arXiv},
      primaryClass={math.KT},
      url={https://arxiv.org/abs/2509.15070}, 
}

@misc{minasyan,
      title={Virtual retractions in free constructions}, 
      author={Jon Merladet Urigüen and Ashot Minasyan},
      year={2025},
      eprint={2505.18054},
      archivePrefix={arXiv},
      primaryClass={math.GR},
      url={https://arxiv.org/abs/2505.18054}, 
}

@incollection{Sturmbarycentre,
 author = {Sturm, Karl-Theodor},
 title = {Probability measures on metric spaces of nonpositive curvature},
 booktitle = {Heat kernels and analysis on manifolds, graphs, and metric spaces. Lecture notes from a quarter program on heat kernels, random walks, and analysis on manifolds and graphs, April 16--July 13, 2002, Paris, France},
 isbn = {0-8218-3383-9},
 pages = {357--390},
 year = {2003},
 publisher = {Providence, RI: American Mathematical Society (AMS)},
 language = {English},
 keywords = {60B05,58E99},
 zbMATH = {2051142},
 Zbl = {1040.60002}
}

@article{ChatterjiRuaneRD,
 author = {Chatterji, I. and Ruane, K.},
 title = {Some geometric groups with rapid decay},
 fjournal = {Geometric and Functional Analysis. GAFA},
 journal = {Geom. Funct. Anal.},
 issn = {1016-443X},
 volume = {15},
 number = {2},
 pages = {311--339},
 year = {2005},
 language = {English},
 doi = {10.1007/s00039-005-0508-9},
 keywords = {22D25,20F65,20F67,22E40,43A15},
 zbMATH = {2228644},
 Zbl = {1134.22005}
}

@book{DavisCoxetergroups,
 author = {Davis, Michael W.},
 title = {The geometry and topology of {Coxeter} groups},
 edition = {2nd edition},
 fseries = {Springer Monographs in Mathematics},
 series = {Springer Monogr. Math.},
 issn = {1439-7382},
 isbn = {978-3-031-91302-0; 978-3-031-91303-7},
 year = {2025},
 publisher = {Cham: Springer},
 language = {English},
 doi = {10.1007/978-3-031-91303-7},
 keywords = {20-02,57-02,20F55,20F65,57M07,51F15,20C08},
 zbMATH = {8014470}
}

\end{document}